\def\l@section{\@tocline{1}{10pt}{1pc}{}{}}
\def\l@subsection{\@tocline{2}{0pt}{1pc}{4.6em}{}}
\def\l@subsubsection{\@tocline{3}{0pt}{1pc}{7.6em}{}}
\renewcommand{\tocsection}[3]{%
  \indentlabel{\@ifnotempty{#2}{\makebox[2.3em][l]{%
    \ignorespaces#1 #2.\hfill}}}\textbf{#3}}
\renewcommand{\tocsubsection}[3]{%
  \indentlabel{\@ifnotempty{#2}{\hspace*{2.3em}\makebox[2.3em][l]{%
    \ignorespaces#1 #2.\hfill}}}#3}
\renewcommand{\tocsubsubsection}[3]{%
  \indentlabel{\@ifnotempty{#2}{\hspace*{4.6em}\makebox[3em][l]{%
    \ignorespaces#1 #2.\hfill}}}#3}
\newcommand{\MM}{\mathcal{M}}
\newcommand{\CC}{\mathcal{C}}
\newcommand{\IR}{\mathbb{R}}
\newcommand{\IB}{\mathbb{B}}
\newcommand{\IN}{\mathbb{N}}
\newcommand{\IC}{\mathbb{C}}
\newcommand{\IH}{\mathbb{H}}
\newcommand{\IZ}{\mathbb{Z}}
\newcommand{\LL}{\mathcal{L}}
\newcommand{\TT}{\mathcal{T}}
\renewcommand{\SS}{\mathcal{S}}
\newcommand{\ov}[1]{\overline{#1}}
\newcommand{\un}[1]{\underline{#1}}
\newcommand{\td}[1]{\widetilde{#1}}
\DeclareMathOperator{\Klein}{Klein}
\DeclareMathOperator{\thick}{thick}
\DeclareMathOperator{\thin}{thin}
\DeclareMathOperator{\eucl}{eucl}
\DeclareMathOperator{\stan}{stan}
\DeclareMathOperator{\scal}{scal}
\DeclareMathOperator{\Int}{Int}
\DeclareMathOperator{\hyp}{hyp}
\DeclareMathOperator{\Ric}{Ric}
\DeclareMathOperator{\curv}{curv}
\DeclareMathOperator{\area}{area}
\DeclareMathOperator{\tr}{tr}
\DeclareMathOperator{\dist}{dist}
\DeclareMathOperator{\diam}{diam}
\DeclareMathOperator{\vol}{vol}
\DeclareMathOperator{\const}{const}
\DeclareMathOperator{\Rm}{Rm}
\newcommand{\cangle}{\widetilde{\sphericalangle}}
\newcommand{\EMPTY}[1]{}
\newtheorem{Theorem}{Theorem}[section]
\newtheorem{Lemma}[Theorem]{Lemma}
\newtheorem{Corollary}[Theorem]{Corollary}
\newtheorem{Proposition}[Theorem]{Proposition}
\newtheorem{Definition}[Theorem]{Definition}
\newtheorem{Example}[Theorem]{Example}
\numberwithin{equation}{section}
\newtheorem*{Claim}{Claim}
\newtheorem*{Claim0}{Claim 0}
\newtheorem*{Claim1}{Claim 1}
\newtheorem*{Claim2}{Claim 2}
\newtheorem*{Claim3}{Claim 3}
\newtheorem*{Claim4}{Claim 4}
\newtheorem*{Claim5}{Claim 5}
\title{Long-time analysis of 3 dimensional Ricci flow II}
\author{Richard H Bamler}
\address{Stanford University, Department of Mathematics, 450 Serra Mall, building 380, Stanford, California 94305}
\email{rbamler@stanford.edu}
\date{\today}
\begin{document}
\begin{abstract}
This is the second part of a series of papers analyzing the long-time behaviour of $3$ dimensional Ricci flows with surgery.
We generalize the methods developed in the first part and use them to treat cases in which the initial manifold satisfies a certain purely topological condition which is far more general than the one that we previously had to impose.
Amongst others, we are able to treat initial topologies such as the $3$-torus or $\Sigma \times S^1$ where $\Sigma$ is any surface of genus $\geq 1$.
We prove that under this condition, only finitely many surgeries occur and that after some time the curvature is bounded by $C t^{-1}$.
This partially answers an open question in Perelman's work, which was made more precise by Lott and Tian.
In the process of the proof, we also find an interesting description of the geometry at large times, which even holds when the condition on the initial topology is violated.

The methods presented in this paper will be refined to treat a more general case in a subsequent paper.
\end{abstract}

\maketitle
\tableofcontents

\section{Introduction} \label{sec:Introduction}
\subsection{Statement of the main result}
In this paper we analyze the long-time behaviour of Ricci flows with surgery on $3$ dimensional manifolds which satisfy a certain topological property $\TT_2$ (see Definition \ref{Def:TT2} and subsection \ref{subsec:RemarksTT2} below for more details).
Examples for such manifolds are $\Sigma \times S^1$, where $\Sigma_g$ is a compact surface of genus $g \geq 1$, the $3$-torus $T^3$ or certain glueings of Seifert manifolds.
In rough terms, our main result can be summarized as follows.
We refer to Theorem \ref{Thm:MainTheorem} at the end of this introduction for a more precise statement.

\begin{quote}
\textit{Let $(M,g)$ be a closed $3$ dimensional Riemannian manifold and assume that the topological manifold $M$ satisfies property $\TT_2$. \\
Then there is a long-time existent Ricci flow with only  \emph{finitely} many surgeries whose initial time-slice is $(M, g)$.
Moreover, there is a constant $C$ such that the Riemannian curvature in this flow is bounded everywhere by $C t^{-1}$ for large times $t$.}
\end{quote}

The Ricci flow with surgery has been used by Perelman to solve the Poincar\'e and Geometrization Conjecture (\cite{PerelmanI}, \cite{PerelmanII}, \cite{PerelmanIII}).
Given any initial metric on a closed $3$-manifold, Perelman managed to construct a solution to the Ricci flow with surgery on a maximal time-interval and showed that its surgery times do not accumulate.
Hence every finite time-interval contains only a finite number of surgery times.
Furthermore, he could prove that if the given manifold is a homotopy sphere (or more generally a connected sum of prime, non-aspherical manifolds), then this flow goes extinct in finite time.
This implies that the initial manifold is a sphere if it is simply connected and hence establishes the Poincar\'e Conjecture.
On the other hand, if the Ricci flow continues to exist for infinite time, Perelman could show that the manifold decomposes into a thick part which approaches a hyperbolic metric and an thin part which becomes arbitrarily collapsed on local scales.
Based on this collapse, it is then possible to show that the thin part can be decomposed into more  concrete pieces (\cite{ShioyaYamaguchi}, \cite{MorganTian}, \cite{KLcollapse}).
This decomposition can be reorganized to a geometric decomposition, establishing the Geometrization Conjecture.

Observe that although the Ricci flow with surgery was used to solve such hard problems, some of its basic properties are still unknown, because they surprisingly turned out to be irrelevant in the end.
For example, after Perelman's work the question remained whether in the long-time existent case there are finitely many surgery times, i.e. whether after some time the flow can be continued by a conventional smooth, non-singular Ricci flow defined up to time infinity.
Furthermore, it is still unknown whether and in what way the Ricci flow exhibits the the full geometric decomposition of the manifold.
These questions follow naturally from Perelman's work and are partially explicitly raised there.
It has been conjectured Tian and Lott that they can be answered positively.

In \cite{LottTypeIII}, \cite{LottDimRed} and \cite{LottSesum}, Lott and Lott-Sesum could give a description of the long-time behaviour of certain Ricci flows on manifolds which consist of a single component in their geometric decomposition.
However, they needed to make additional curvature and diameter or symmetry assumptions.
In \cite{Bamler-longtime-I}, the author proved that under a purely topological condition $\TT_1$, which roughly says that the manifold only consists of  hyperbolic components (see Definition \ref{Def:TT1}), there are only finitely many surgeries and the curvature is bounded by $C t^{-1}$ after some time.

In this paper we derive the same conclusions under a far more general topological condition $\TT_2$.
Before we explain this condition and condition $\TT_1$ more precisely, we need to recall some facts on geometric decompositions of 3-manifolds.

\begin{Definition}[Geometric decomposition] \label{Def:geomdec}
Let $M$ be a compact, orientable 3-manifold whose boundary consists of $2$-tori.
A \emph{geometric decomposition} of $M$ is a collection of pairwise disjoint, smoothly embedded $2$-tori $T_1, \ldots, T_m \subset M$ such that
\begin{enumerate}[label=(\roman*)]
\item each torus $T_i$ is incompressible in $M$ (see Definition \ref{Def:incompressible}) and
\item each component of $M \setminus (T_1 \cup \ldots \cup T_m)$ is either \emph{hyperbolic} (i.e. it can be endowed with a complete metric of constant negative sectional curvature and finite volume) or it is \emph{Seifert} (i.e. it carries a Seifert fibration whose exceptional fibers are of cone-type and which can be extended regularly onto the boundary tori).
\end{enumerate}
The decomposition is called \emph{minimal} if no smaller subcollection of tori satisfies properties (i) and (ii).

If all components of $M \setminus (T_1 \cup \ldots \cup T_m)$ are Seifert, then the manifold is called \emph{(prime) graph manifold} and the decomposition is called a \emph{Seifert decomposition}.
\end{Definition}

Note that for a minimal geometric decomposition, the Seifert fibrations coming from either side on each torus $T_i$ are not isotopic and none of the components of $M \setminus (T_1 \cup \ldots \cup T_m)$ are diffeomorphic to $T^2 \times I$ unless $m=1$ and $T_1$ is non-separating.
The statement of the Geometrization Conjecture is that every closed, orientable, irreducible (see Definition \ref{Def:irreducible}) manifold is either a spherical space form or it admits a minimal geometric decomposition.
We also mention that such a minimal geometric decomposition is unique up to isotopy (see \cite[Theorem 1.9]{Hat}).
So it is reasonable to speak of \emph{the} (minimal) geometric decomposition of a manifold.

\begin{Definition}[Property $\TT_1$] \label{Def:TT1}
We say that an orientable, closed and irreducible $3$-manifold $M$ \emph{satisfies property $\TT'_1$} if it only has hyperbolic pieces in its geometric decomposition.

We say that an orientable, closed $3$-manifold $M$ \emph{satisfies property $\TT_1$}, if it is a connected sum of manifolds satisfying condition $\TT'_1$, spherical space forms and copies of $S^1 \times S^2$.
\end{Definition}

\begin{Definition}[Property $\TT_2$] \label{Def:TT2}
Let $M$ be an orientable, closed and irreducible $3$-mani\-fold which is not diffeomorphic to a spherical space form.
Consider a minimal geometric decomposition of $M$ and denote by $M_{\textnormal{hyp}}$ the union of the closures of all its hyperbolic components and by $M_{\textnormal{Seif}}$ the union of the closures of all its Seifert components.

We say that $M$ \emph{satisfies property $\TT'_2$} if there is a map $f : \Sigma \to M$ which is filling for the pair $(M_{\textnormal{hyp}}, M_{\textnormal{Seif}})$ in the sense of Definition \ref{Def:filling} below.

We say that an orientable, closed $3$-manifold $M$ \emph{satisfies property $\TT_2$}, if it is a connected sum of manifolds satisfying condition $\TT'_2$, spherical space forms and copies of $S^1 \times S^2$.
\end{Definition}

\begin{Definition}[filling surface] \label{Def:filling}
Let $M$ be an orientable, closed and irreducible $3$-manifold which is not diffeomorphic to a spherical space form.
Consider a decomposition $M = M_{\textnormal{hyp}} \cup M_{\textnormal{Seif}}$ such that $M_{\textnormal{hyp}} \cap M_{\textnormal{Seif}}$ is the disjoint union of smoothly embedded 2-tori.
Moreover, let $\Sigma$ be a compact, orientable surface (possibly with boundary and not necessarily connected) such that none of its components are spheres.

We say that a continuous map $f : \Sigma \to M$ is \emph{filling for the pair $(M_{\textnormal{hyp}}, M_{\textnormal{Seif}})$} (or sometimes, we call $f(\Sigma)$ a \emph{filling surface}), if the following holds:
\begin{enumerate}
\item $f$ is incompressible, i.e. the induced map $\pi_1(\Sigma) \to \pi_1(M)$ is injective,
\item $f$ maps each boundary loop of $\Sigma$ to an embedded non-contractible loop in one of the boundary tori of $M_{\textnormal{Seif}}$,
\item for every generic Seifert fiber $\gamma \subset M_{\textnormal{Seif}}$ of any (not necessarily minimal) Seifert decomposition of $M_{\textnormal{Seif}}$ and every map $f' : \Sigma \to M$ which is homotopic to $f$ relatively to its boundary, there is a component $\Sigma_0 \subset \Sigma$ such that $f'(\Sigma_0) \cap \gamma \neq \emptyset$ and such that there is no loop $\gamma' : S^1 \to \Sigma_0$ such that $f' \circ \gamma'$ is homotopic to a non-zero multiple of $\gamma$ in $M_{\textnormal{Seif}}$.
\end{enumerate}
\end{Definition}

The reason of why we have to impose condition $\TT_2$ is that we will need to ensure that certain $S^1$-fibers, along which the manifold collapses in certain areas, intersect an area-minimizing representative of the homotopy class of the map $f$.
Loosely speaking, this will give us an upper area bound for the ($2$ dimensional) basis of this $S^1$-fibration.

Condition $\TT_2$ is more general than it might appear at first glance.
For example $\TT_1$ implies $\TT_2$ and, as mentioned in the beginning, the three torus or every manifold of the form $\Sigma_g \times S^1$ for $g \geq 1$ satisfied condition $\TT_2$.
However, e.g. the Heisenberg manifold does not satisfy $\TT_2$.
We refer to subsection \ref{subsec:RemarksTT2} for a more detailed discussion of condition $\TT_2$ and far more general examples.

We now state our main result.
The notions relating to ``Ricci flows with surgery'', which are used in the following, will be introduced in subsection \ref{sec:DefRFsurg}.
\begin{Theorem} \label{Thm:MainTheorem}
Given a surgery model $(M_{\stan}, g_{\stan}, D_{\stan})$, there is a continuous function $\delta : [0, \infty) \to (0, \infty)$ such that the following holds: 

Let $\MM$ be a Ricci flow with surgery with normalized initial conditions which is performed by $\delta(t)$-precise cutoff such that $\MM(0)$ satisfies the topological condition $\TT_2$.

Then $\MM$ has only finitely many surgeries and there are constants $T, C < \infty$ such that $|{\Rm_t}| < C t^{-1}$ on $\MM(t)$ for all $t \geq T$.
\end{Theorem}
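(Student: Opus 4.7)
The plan is to extend the strategy of \cite{Bamler-longtime-I} to the weaker topological hypothesis $\TT_2$. First I would reduce via connected-sum decompositions to the case that $\MM(0)$ is irreducible and satisfies $\TT'_2$, so that previously handled summands (spherical space forms and copies of $S^1 \times S^2$) contribute only finitely many surgeries. Perelman's thick-thin decomposition then gives, for each $w > 0$ and sufficiently large $t$, a splitting $\MM(t) = \MM_{\thick, w}(t) \cup \MM_{\thin, w}(t)$ where the $t^{-1}$-rescaled thick part Gromov--Hausdorff converges to the hyperbolic pieces of the geometric decomposition of $M$, while the thin part is $w$-collapsed at scale $\sqrt{t}$. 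On the thick part the desired bound $|{\Rm_t}| \le C t^{-1}$ is already available; the task is to extend this bound into the collapsed Seifert region and to rule out recurring surgeries there.

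The central new tool is the filling surface $f : \Sigma \to M$ guaranteed by $\TT'_2$. For each large $t$ I would take an area-minimizing representative $f_t$ of the free homotopy class of $f$ with prescribed boundary on the thick-thin interface; $\pi_1$-injectivity prevents $f_t$ from degenerating to spheres. Combining Hamilton's area-evolution formula for minimal surfaces under Ricci flow with Perelman's lower scalar-curvature bound $R \ge -\tfrac{3}{2(t+\const)}$ and a Gauss--Bonnet estimate controlled by $\chi(\Sigma)$, one obtains a sublinear growth bound $\area(f_t(\Sigma)) \le o(t)$ for the minimal-surface area, extending the corresponding estimate used in the hyperbolic case.

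Next, on the collapsed Seifert part I would invoke the local collapse structure theory (Shioya--Yamaguchi, Morgan--Tian, Kleiner--Lott) to identify an approximate $S^1$-fibration over a surface $B_t$. The defining clause (3) of filling, applied to a generic fiber $\gamma \subset M_{\textnormal{Seif}}$, forces every homotopic perturbation of $f_t$ to cross $\gamma$ without factoring through it, and hence yields a local lower bound of the form $\area(f_t(\Sigma)) \gtrsim \area(B_t) \cdot \ell(\gamma_t)$. Together with the volume/collapse relation $\ell(\gamma_t)^2 \cdot \area(B_t) \sim \vol(\text{fibered region})$ and the upper bound $\area(f_t(\Sigma)) \le o(t)$, this contradicts arbitrarily deep $S^1$-collapse and forces $\ell(\gamma_t) \gtrsim \sqrt{t}$ on the fibered region. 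The bounded diameter ratio between fiber and base then enables a blow-up/compactness contradiction showing that the sectional curvature on $\MM_{\thin, w}(t)$ is in fact bounded by $C t^{-1}$ globally. Since surgeries are triggered only at scales $\ll t^{-1/2}$, this uniform curvature bound precludes their occurrence after some finite time $T$.

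The main obstacle, as in \cite{Bamler-longtime-I}, is propagating the filling surface and its area bound across surgery times. A $\delta(t)$-precise surgery excises almost-round $S^2$-necks, so clauses (1)--(2) of filling and the incompressibility of the tori in the geometric decomposition pass to the post-surgery manifold by standard Dehn-lemma and van Kampen arguments, and clause (3) survives because Seifert fibers lie far from the surgery scale. However, the area of the minimizing representative may jump when a surgery occurs, and one must ensure that these jumps are dominated by the margin in the area evolution inequality. This is achieved by a careful choice of the surgery precision function $\delta(t)$ that couples the surgery scale $r(t)$ to both the collapse scale $\sqrt{t}$ and the sublinear growth bound above. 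Assembling these three ingredients --- the area monotonicity of $f_t$, the collapse-based lower area bound from the fibration, and the surgery-precision coupling --- in a simultaneous bootstrap is where the bulk of the technical work will lie.
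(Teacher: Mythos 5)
There is a genuine gap at the heart of your proposal, in the step where you use the filling surface. You claim that the intersection property of the filling surface, combined with the area bound $\area(f_t(\Sigma))\le o(t)$ and a volume/collapse relation, ``contradicts arbitrarily deep $S^1$-collapse and forces $\ell(\gamma_t)\gtrsim\sqrt t$ on the fibered region.'' This conclusion is false: for the flat $3$-torus the flow is static, the whole manifold is $\MM_{\thin}(t)$, the generic fibers have \emph{constant} length (hence $\ell(\gamma_t)/\sqrt t\to 0$), and the filling surface has constant area --- yet $T^3$ satisfies $\TT'_2$ and the theorem holds for it. Collapse of incompressible fibers is not what needs to be excluded; it is compatible with the curvature bound. (Your area estimates are also dimensionally inconsistent: $\area(f_t)\gtrsim \area(B_t)\cdot\ell(\gamma_t)$ compares an area to a volume, and the bound from the area-evolution inequality is $\area\le At$, i.e.\ $O(t)$, not $o(t)$, since the filling surface generally has components of negative Euler characteristic.) Because your argument tries to rule out the collapse itself, it cannot be repaired by adjusting constants; the mechanism is wrong.

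The two mechanisms that are actually needed, and that your proposal omits, are the following. First, where the collapse is along \emph{incompressible} $S^1$- or $T^2$-fibers, one passes to the (local) universal cover: the fibers unwrap, the lifted balls become non-collapsed, and Perelman's $6.8$-type estimate applied to the covering flow gives $|{\Rm}|<Kt^{-1}$ there (this is the ``goodness'' argument of Lemma \ref{Lem:unwrapfibration} and Propositions \ref{Prop:curvcontrolgood}--\ref{Prop:curvboundnotnullinarea}); the filling hypothesis plays no role in this part. Second, the only regions escaping this argument are finitely many solid tori $S_i$ in which the Seifert fiber is contractible, and it is \emph{only here} that $\TT_2$ enters: the filling surface, being forced to meet the fiber without winding around it, supplies a compressing disk of area $\le At_0$ whose boundary is a short loop of controlled geodesic curvature sitting in a long, thin torus collar of $\partial S_i$, and which stays short over a whole time interval $[t_0,Lt_0]$. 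The minimal-disk evolution inequality (your Hamilton-type formula, with the boundary term controlled by the shortness of the loop) forces such a disk to become extinct before time $(1+\tfrac{A}{4(2\pi-\alpha\Gamma)})^4t_0<Lt_0$, a contradiction that bounds $\diam_t S_i$ and hence the curvature inside the solid tori. Without distinguishing the compressible from the incompressible collapse, and without the long-time-interval disk-extinction argument, the proof does not close.
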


During the proof of this theorem, we will be able to give a more detailed description of the geometry of the time-slices $\MM(t)$ for large $t$ which also holds when the topological condition $\TT_2$ is not satisfied (see amongst others Proposition \ref{Prop:firstcurvboundstep2}).

We mention an important direct consequence of Theorem \ref{Thm:MainTheorem} which can be expressed in a more elementary way:
\begin{Corollary}
Let $(M, (g_t)_{t \in [0, \infty)})$ be a non-singular, long-time existent Ricci flow on a compact $3$-manifold $M$ which satisfies the topological condition $\TT_2$.
Then there is a constant $C < \infty$ such that
\[ |{\Rm_t}| < \frac{C}{t+1} \qquad \text{for all} \qquad t \geq 0. \]
\end{Corollary}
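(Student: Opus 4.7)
The plan is to reduce the corollary to Theorem~\ref{Thm:MainTheorem} by a parabolic rescaling and then patch together the bound for large time with an easy short-time bound coming from smoothness and compactness.

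\textbf{Step 1: Reduction to normalized initial conditions.} Since $M$ is compact and $(g_t)$ is a smooth, non-singular Ricci flow, the sectional curvature and the injectivity radius at $t=0$ are both finite. Choose $\lambda>0$ small enough so that the parabolically rescaled flow $\tilde g_t := \lambda^2\, g_{t/\lambda^2}$ satisfies the normalization condition on initial curvature and volume required in the hypothesis of Theorem~\ref{Thm:MainTheorem}. The rescaled flow is again smooth and non-singular on $[0,\infty)$, and since property $\TT_2$ is purely topological, it still holds for $(M,\tilde g_0)$.

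\textbf{Step 2: View as a surgery flow and apply the main theorem.} Let $\delta:[0,\infty)\to(0,\infty)$ be the function provided by Theorem~\ref{Thm:MainTheorem} (for a fixed choice of surgery model). A smooth, non-singular Ricci flow is vacuously a Ricci flow with surgery performed by $\delta(t)$-precise cutoff, simply because the set of surgery times is empty and there is no condition to verify. Applying Theorem~\ref{Thm:MainTheorem} to $(M,\tilde g_t)$ produces constants $\tilde T, \tilde C<\infty$ with $|\widetilde\Rm_t|<\tilde C\, t^{-1}$ for all $t\ge \tilde T$. Undoing the scaling (which multiplies the Riemann tensor by $\lambda^{-2}$ and the time by $\lambda^{-2}$) gives constants $T'=\tilde T/\lambda^2$ and $C'=\tilde C$ such that $|\Rm_s|<C'\,s^{-1}$ for all $s\ge T'$ in the original flow.

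\textbf{Step 3: Short-time bound and combination.} Because the flow is smooth and non-singular and $M$ is compact, the tensor $\Rm$ is continuous on the compact set $M\times[0,T'+1]$, so there is $C''<\infty$ with $|\Rm_s|\le C''$ for all $s\in[0,T'+1]$. On $[0,T']$ we have $|\Rm_s|(s+1)\le C''(T'+1)$, while on $[T',\infty)$ the bound $|\Rm_s|<C' s^{-1}$ combined with $(s+1)/s\le (T'+1)/T'$ gives $|\Rm_s|(s+1)\le C'(T'+1)/T'$. Setting $C$ equal to the maximum of these two constants yields $|\Rm_s|<C/(s+1)$ for all $s\ge 0$, as required.

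There is essentially no obstacle here beyond bookkeeping: the corollary is a formal consequence of Theorem~\ref{Thm:MainTheorem} together with a scaling argument and the trivial observation that a smooth Ricci flow trivially meets any cutoff precision. The only point requiring slight care is the merging of the asymptotic bound with the short-time regime, handled above by a straightforward compactness argument.
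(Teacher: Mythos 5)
Your argument is essentially the paper's intended one (the Corollary is stated there as a direct consequence of Theorem \ref{Thm:MainTheorem}, with exactly this rescale--apply--patch structure), and Steps 1 and 3 as well as the scaling bookkeeping in Step 2 are correct. One inaccuracy: it is not true that a smooth non-singular flow is ``vacuously'' performed by $\delta(t)$-precise cutoff because ``there is no condition to verify.'' Condition (1) of Definition \ref{Def:precisecutoff} requires the metric $g(t)$ to have $t^{-1}$-positive curvature for \emph{all} $t$, which is a genuine constraint on the flow itself and not a statement about surgeries. It does hold here, but you need to say why: for a normalized initial metric the condition is satisfied for small $t$ (the logarithmic term makes the inequality trivial as $t\to 0$ given $|{\Rm}|<1$), and it is then preserved under the flow by Hamilton's pinching result quoted immediately after the definition of $\varphi$-positive curvature. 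With that one sentence added, the proof is complete.
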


Moreover, we obtain the following result which ensures that the condition of the previous Corollary can be satisfied.
\begin{Corollary}
Let $M$ be a compact, orientable $3$-manifold which satisfies the topological condition $\TT'_2$.
Then there is a Riemannian metric $g_0$ on $M$ which is the initial metric of a non-singular, long-time existent Ricci flow $(M, (g_t)_{t \in [0, \infty)})$.
\end{Corollary}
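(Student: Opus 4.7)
The plan is to apply Theorem~\ref{Thm:MainTheorem} to an arbitrary smooth starting metric on $M$, wait for the (finitely many) surgeries to complete, and then declare the resulting post-surgery time-slice to be the initial metric of the desired non-singular flow.

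Concretely, I would fix any smooth Riemannian metric $\hat g$ on $M$ and rescale it so that $(M, \hat g)$ satisfies the normalization condition presupposed in Theorem~\ref{Thm:MainTheorem}; this rescaling does not affect the topological hypothesis $\TT'_2$. Fixing a surgery model $(M_{\stan}, g_{\stan}, D_{\stan})$ and letting $\delta : [0, \infty) \to (0, \infty)$ be the function provided by the theorem, Perelman's construction produces a Ricci flow with surgery $\MM$ whose initial time-slice is $(M, \hat g)$ and which is performed by $\delta(t)$-precise cutoff. Since $\TT'_2$ trivially implies $\TT_2$ (as $M$ is the trivial connected sum with itself), Theorem~\ref{Thm:MainTheorem} gives a time $T < \infty$ after which no surgery in $\MM$ occurs.

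Next I would identify the topological type of $\MM(T)$. Since $M$ satisfies $\TT'_2$, it is irreducible and not a spherical space form, so every embedded $2$-sphere in $M$ bounds a $3$-ball. Perelman surgery proceeds by cutting along embedded $2$-spheres situated inside $\delta$-horns and capping off with standard caps; by irreducibility, one side of each such cut becomes a sphere (or, possibly, a spherical space form / copy of $S^1 \times S^2$ from a previously split-off piece), which goes extinct in finite time by the Colding--Minicozzi--Perelman finite-time extinction theorem, while the other side retains the diffeomorphism type of $M$. Hence the irreducible component of topological type $M$ persists for all times at which the flow is defined. Because no surgery takes place after $T$, no component of $\MM$ can become extinct in $(T, \infty)$ (extinction is itself a surgery), so all spherical or $S^1 \times S^2$ pieces have already died by time $T$. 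Consequently $\MM(T)$ is diffeomorphic to $M$, and the restriction of $\MM$ to $[T, \infty)$ is an ordinary smooth, non-singular Ricci flow on this manifold.

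Finally, choosing a diffeomorphism $\phi : M \to \MM(T)$ and setting $g_t := \phi^* g_{T+t}$ for $t \in [0, \infty)$ produces the desired non-singular, long-time existent Ricci flow on $M$, with initial metric $g_0 = \phi^* g_T$. The only non-routine point in this plan is the topological bookkeeping in the second step: one must be sure that the finitely many Perelman surgeries applied to the irreducible manifold $M$ never destroy its diffeomorphism type. This is a standard consequence of irreducibility together with the structure of Perelman surgery, so no new geometric or analytic input beyond Theorem~\ref{Thm:MainTheorem} is required.
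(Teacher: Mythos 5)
Your proposal is correct and follows essentially the same route as the paper: normalize an arbitrary metric, run Perelman's existence result to get a $\delta(t)$-precise flow with surgery, use irreducibility of $M$ (from $\TT'_2$) to see that all surgeries are trivial so a component diffeomorphic to $M$ persists, invoke Theorem \ref{Thm:MainTheorem} to get a final surgery time $T$, and time-shift the non-singular flow on $[T,\infty)$. The extra bookkeeping you supply about extinction of split-off spherical pieces is consistent with the paper's framework (extinction is registered at a surgery time) and only makes explicit what the paper leaves implicit.
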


In fact, starting from any given normalized (see Definition \ref{Def:normalized}) Riemannian metric $g$ on $M$, Perelman (\cite{PerelmanII}) could construct a long-time existent Ricci flow with surgery $\MM$ on the time-interval $[0, \infty)$ which is performed by $\delta(t)$-precise cutoff (see also Proposition \ref{Prop:RFwsurg-existence}).
Since $M$ is irreducible and aspherical, we conclude that all surgeries on $\MM$ are trivial and hence that every time-slice of $\MM$ has a component which is diffeomorphic to $M$.
By Theorem \ref{Thm:MainTheorem}, there is a final surgery time $T < \infty$ on $\MM$.
So the flow $\MM$ restricted to the time-interval $[T, \infty)$ is a non-singular Ricci flow on a manifold which is diffeomorphic to $M$.
Shifting this flow in time by $-T$ yields the desired Ricci flow.

This paper is organized as follows: In the next subsection \ref{subsec:RemarksTT2}, we discuss the property $\TT_2$ more carefully from a topological point of view.
Subsection \ref{subsec:Outline} contains an outline of the proof.
Section \ref{sec:IntroRFsurg} provides a brief introduction to Ricci flows with surgery.
It includes a proper definition which is general enough to unify most existing terminologies.
In section \ref{sec:3dtopology}, we recall elementary facts of $3$-dimensional topology which will be needed in the following proof.
In section \ref{sec:Perelman}, we review Perelman's long-time analysis of Ricci flows with surgery (cf \cite{PerelmanII}).
The aim of this section is on one hand to show that Perelman's analysis can be carried out using our notion of Ricci flows with surgery and on the other hand, to generalize Perelman's results to the non-compact and boundary cases.
In section \ref{sec:thinpart}, we describe the decomposition of the thin (i.e. locally collapsed) part of the manifold into more concrete pieces as carried out in \cite{ShioyaYamaguchi}, \cite{MorganTian} and \cite{KLcollapse}.
We then deduce important consequences on the geometry and combinatorics of this decomposition.
Section \ref{sec:maintools} contains new curvature estimates which follow from the generalizations of Perelman's results, as obtained in section \ref{sec:Perelman}.
Those estimates will finally be applied in the main part in section \ref{sec:mainargument}.
Section \ref{sec:Preparations} provides useful Lemmas for this discussion.

Note that in the following, all manifolds are assumed to be $3$ dimensional unless stated otherwise.

I would like to thank Gang Tian for his constant help and encouragement and John Lott for many long conversations.
I am also indebted to Bernhard Leeb and Hans-Joachim Hein, who contributed essentially to my understanding of Perelman's work.
Thanks also go to Simon Brendle, Will Cavendish, Daniel Faessler, Robert Kremser, Tobias Marxen, Rafe Mazzeo, Hyam Rubinstein, Richard Schoen, Stephan Stadler and Brian White.

\subsection{Remarks on property $\TT_2$} \label{subsec:RemarksTT2}
In the following, we present a brief discussion of property $\TT_2$.
The goal of this subsection is to provide an idea of how restrictive the topological condition in Theorem \ref{Thm:MainTheorem} is.
It is not known to the author whether there is a more handy condition which is equivalent to property $\TT_2$.
In particular, it is an interesting question whether the following characterization holds:
A compact, connected, orientable, irreducible $3$-manifold $M$ which is not a spherical space form satisfies property $\TT'_2$ if and only if $M$ either it is not graph (i.e. it contains at least one hyperbolic component in its geometric decomposition) or it is graph and contains an immersed, incompressible surface of genus $\geq 2$ or it is a quotient of a $3$-torus.
This characterization seems reasonable considering the following examples and the work of Wang and Yu (\cite{WangYu}) and Neumann (\cite{Neumann}).

We first present examples of manifolds which satisfy property $\TT_2$.
As mentioned before, property $\TT_1$ implies property $\TT_2$.

\begin{Example} \label{Ex:SigmaS1} \rm
Let $\Sigma_g$ be a surface of genus $g \geq 1$ and consider the manifold $M = \Sigma_g \times S^1$, e.g. the 3-torus $M \approx T^3$.
Then $M$ satisfies property $\TT'_2$.
In the case $M \approx T^3$ this is easy to see: choose $3$ embedded $2$-tori in $M$ which generate the second homology $H_2 (M) \cong \IZ^3$.
Then every non-contractible curve $\gamma \subset M$ represents a non-zero homology class in $H_1(M)$ and hence has non-zero intersection number with one of the given $2$-tori.

Now assume that $g \geq 2$.
Then we proceed as follows: Choose simple closed loops $\alpha_1, \ldots, \alpha_{2g} \subset \Sigma_g$ whose complement is an open topological disk.
We now argue that $\Sigma_g \times \{ \textnormal{pt} \} \cup \alpha_1 \times S^1 \cup \ldots \cup \alpha_{2g} \times S^1 \subset M$ is a filling surface for $M$.
Let $\gamma \subset M$ be a non-contractible loop.
If $[\gamma] \in \pi_1(M) \cong \pi_1(\Sigma) \times \IZ$ has a non-zero component in the second factor, then its intersection number with $\Sigma \times \{ \textnormal{pt} \}$ is non-zero.
So assume that it doesn't, i.e. that $[\gamma]$ is the image of an element $[\gamma']$ under the map $\pi_1(\Sigma_g) \to \pi_1(M)$ where $\gamma' \subset \Sigma_g$.
Consider the universal cover $\IH^2$ of $\Sigma_g$.
By elementary hyperbolic geometry, there is a lift $\td{\alpha}_i$ whose endpoints in the boundary at infinity $\partial \IH^2$ separate the endpoints of a lift $\td{\gamma}'$ of $\gamma'$.
So $\td{\alpha}_i$ and $\td{\gamma}'$ have non-zero intersection number.
This implies that the corresponding lift of $\alpha_i \times S^1$ in $\IH^2 \times \IR$ has non-zero intersection number with the corresponding lift $\td\gamma$ of $\gamma$.
This intersection number does not change if we homotope these lifts equivariantly.
So $\gamma$ intersects every homotopic translate of $\alpha_i \times S^1$.

We remark, that by inspecting the argument, we can show that every finite quotient of $\Sigma \times S^1$ under a group which preserves the $S^1$-fibration, satisfies property $\TT'_2$ as well.
\end{Example}

\begin{Example} \rm
Consider a geometric decomposition of an orientable, closed, irreducible 3-manifold $M$ which is not a spherical space form.
It can be shown by similar methods as in Example \ref{Ex:SigmaS1} that if $M$ has at least one hyperbolic component and if no two Seifert components are adjacent to one another, then $M$ satisfies property $\TT'_2$.
\end{Example}

Next, we provide an example of a graph manifold which satisfies property $\TT'_2$ and which has a non-trivial Seifert decomposition.
For simplicity, we will describe a very special construction.
It will become clear how this construction can be generalized.

\begin{Example} \rm
Let $M$ be a manifold whose geometric decomposition consists of four components $M_1, \ldots, M_4$ which are all diffeomorphic to $\Sigma \times S^1$ where $\Sigma$ denotes a surface of genus 1 with two boundary circles.
So each $M_i$ has exactly two boundary components and we assume the components to be arranged in a circle in the given order.
Let $\alpha \subset \Sigma$ be an embedded curve joining the two boundary circles and $\beta_1, \beta_2 \subset \Sigma$ embedded closed loops such that $\Sigma \setminus (\alpha \cup \beta_1 \cup \beta_2)$ is an open topological disk.
In each component $M_i$ define the annulus $F_i = \alpha \times S^1$, $\Sigma_i \subset M_i=\Sigma \times \{ \textnormal{pt} \}$ and $T_{i,1}, T_{i,2} \subset M_i$ to be the tori $\beta_1 \times S^1, \beta_2 \times S^1$.

Now assume that the components $M_1, \ldots, M_4$ are identified in such a way that $\Sigma'_1 = \Sigma_1 \cup F_2 \cup \Sigma_3 \cup F_4$ and $\Sigma'_2 = F_1 \cup \Sigma_2 \cup F_3 \cup \Sigma_4$ are embedded surfaces of genus 3.
We now show that $M$ satisfies property $\TT'_2$ and that $\Sigma'_1 \cup \Sigma'_2 \cup T_{1,1} \cup \ldots \cup T_{4,2}$ is a filling surface.

Consider an arbitrary geometric decomposition of $M$ and $\gamma \subset M$ a generic Seifert fiber of one of the components.
After isotoping and removing some of the cutting tori, we are able to obtain the decomposition $M = M_1 \cup \ldots \cup M_4$ and we can assume that $\gamma \subset M_i$.
Observe that $\pi_1(M_i) \cong \pi_1(\Sigma) \times \IZ$.
If $[\gamma]$ has a non-trivial component in the second factor, then it has non-zero intersection number with $\Sigma'_1$ or $\Sigma'_2$.
On the other hand, assume that $[\gamma]$ is the image of an element $[\gamma']$ under the map $\pi_1(\Sigma) \to \pi_1(M_i)$.
If $\gamma'$ is a parabolic curve then it has non-zero intersection number with either $\Sigma'_1$ or $\Sigma'_2$ or $\gamma'$.
If $\gamma'$ is not parabolic (i.e. if it can be represented by a closed geodesic when we choose a hyperbolic structure on $\Sigma$), then we can argue as in Example \ref{Ex:SigmaS1} that in some cover of $M$ a lift of $\gamma$ has non-zero intersection number with a lift of one of the surfaces $T_{i, 1}$ or $T_{i, 2}$.
\end{Example}

Finally, we discuss manifolds which are not of type $\TT_2$.
\begin{Example} \rm
Consider a closed, orientable manifold $M$ which admits a Seifert fibration over an orbifold $B$ whose underlying surface is not a sphere and which has only isolated cone-singularities.
Then the Seifert fibration $M \to B$ admits a finite-sheeted cover $\widehat{M} \to \widehat{B}$ which is a regular $S^1$-fibration.
We now claim that $M$ satisfies property $\TT'_2$ if and only if this fibration is trivial.
Hence, the Heisenberg manifold (or nilmanifold) is the most elementary example of a manifold which does not satisfy property $\TT'_2$.

The forward direction of the claim is clear by Example \ref{Ex:SigmaS1}.
For the other direction, assume that $f: \Sigma \to M$ is a filling map.
By a result of Hass (\cite{Hass}), the incompressibility of $f$ implies that we can homotope $f$ to an immersion such that the restriction of $f$ to every component $\Sigma'$ of $\Sigma$ is either horizontal (i.e. transverse to the Seifert fibration) or vertical (i.e. $f$ is tangent to the Seifert fibers).
If $f|_{\Sigma'}$ is horizontal, then the map $\Sigma' \to M \to B$ is a covering and hence the pull back of the Seifert fibration onto $\Sigma'$ is trivial.
However, this contradicts the assumption that the $\widehat{M} \to \widehat{B}$ is non-trivial.
So $f$ has to be vertical on all components of $\Sigma$ and hence $f(\Sigma)$ does not intersect every generic Seifert fiber.
\end{Example}

\begin{Example} \rm
Assume that $M$ is irreducible, graph and not a spherical space form.
The paper \cite{Neumann} provides obstructions for the existence of an incompressible maps $\Sigma \to M$ for which $\Sigma$ has negative Euler characteristic.
It is easy to see that if these obstructions apply, then $M$ cannot have any filling surface and hence does not satisfy property $\TT'_2$ unless it is a finite quotient of a $3$-torus.
We refer to \cite[Example 2.2]{Neumann} for an elementary example of a non-trivial graph manifold which does not satisfy property $\TT_2$.
\end{Example}

\subsection{Outline of the proof} \label{subsec:Outline}
The proof of Theorem \ref{Thm:MainTheorem} makes use of the work of Perelman (\cite{PerelmanI}, \cite{PerelmanII}, \cite{PerelmanIII}) and the subsequent analysis of the thin part (\cite{ShioyaYamaguchi}, \cite{MorganTian}, \cite{KLcollapse}).
Perelman could show that for large times $t$ the time-slice $\MM(t)$ of a given Ricci flow with surgery $\MM$ can be decomposed into a thick part $\MM_{\thick} (t)$ and a thin part $\MM_{\thin}(t)$.
The thick part is diffeomorphic to a disjoint union of hyperbolic manifolds and the metric $g(t)$ has sectional curvatures close to $- \frac1{4t}$ there.
On the thin part however, the curvature is a priori not bounded, but there is a positive function $w(t)$ which goes to zero as $t \to \infty$ such that following holds: at every point $x \in \MM_{\thin}(t)$ there is a scale $\rho(x, t)$ such that the sectional curvatures on the ball $B (x, t, \rho(x,t))$ are bounded from below by $-\rho^{-2} (x, t)$ and such that the volume of $B(x, t, \rho(x,t))$ is bounded from above by $w(t) \rho^3(x,t)$ (see Proposition \ref{Prop:thickthindec}).
In other words, the metric on the thin part locally collapses on the scale $\rho(x,t)$.
Morgan-Tian, Kleiner-Lott and others were able to understand this collapse and decompose the thin part into concrete pieces on which the metric can be approximated by Seifert fibrations with small $S^1$-fiber, or models such as $T^2 \times I$ or $S^2 \times I$ with small first factor.
Using elementary topological arguments, it is possible to construct a Seifert or geometric decomposition from this decomposition.

In the case in which the initial manifold is diffeomorphic to a hyperbolic manifold, it can be shown easily that for large times we have $\MM_{\thick}(t) = \MM(t)$.
This immediately implies Theorem \ref{Thm:MainTheorem}.

To understand the more general case, we need to recall an important Lemma (cf \cite[6.8, 7.3]{PerelmanII}, compare also with Corollary \ref{Cor:Perelman68}) which led to a curvature bound on $\MM_{\thick}(t)$ in Perelman's result.
It roughly states that if for some large $t$ and $x \in \MM(t)$ the constant $\rho > 0$ is chosen maximal with the property that the sectional curvatures on the ball $B(x,t, \rho)$ are bounded from below by $- \rho^{-2}$, and if the volume of $B(x, t, \rho)$ is bounded from below by $w \rho^3$, then $\rho > \ov\rho(w) \sqrt{t}$ and the curvature on $B(x, t, \rho)$ is bounded by $K (w) t^{-1}$.
We may assume in the following without loss of generality that the function $\rho(x,t)$ from before also satisfies this maximality property.
Then obviously, Perelman's Lemma fails to provide a good curvature bound on the thin part $\MM_{\thin} (t)$. 
However, if we pass to the universal cover of $\MM(t)$, then in certain cases the normalized volume of the corresponding $\rho(x,t)$-ball around a lift $\td{x}$ of $x$ is again controlled from below (see Lemma \ref{Lem:unwrapfibration}).
More precisely, this improvement occurs if the ball $B(x, t, \rho(x,t))$ collapses along \emph{incompressible} $S^1$ or $T^2$-fibers.
Loosely speaking, the reason for this is that regions which look like $S^1 \times B^2$ or $T^2 \times I$ with small first factor will lift to regions close to $\IR \times B^2$ or $\IR^2 \times I$.
We will call a point $x \in \MM(t)$ ``good'' if we can observe these types of collapses in a neighborhood of $x$ (see Definition \ref{Def:goodness}).
It is now possible to apply Perelman's Lemma to the universal covering flow of $\MM$ and obtain a curvature bound of the form $K t^{-1}$ at good points (see Proposition \ref{Prop:curvcontrolgood}).

This idea is the starting point of our proof.
After a topological discussion of the decomposition of $\MM_{\thin} (t)$, we find that $\MM(t)$ is good outside fintely many pairwise disjoint, embedded solid tori $S_1, \ldots, S_m \approx S^1 \times D^2$ in $\MM_{\thin} (t)$ (see Proposition \ref{Prop:GGpp}).
It hence remains to focus our analysis on these $S_i$.

Using a bounded curvature at bounded distance type estimate (which also makes use of the local collapse), we will be able to establish a distance dependent curvature bound from every good point (see Proposition \ref{Prop:curvcontrolincompressiblecollapse}).
More precisely, for any $A < \infty$, we can bound the curvature on an $A \sqrt{t}$-tubular neighborhood around $\MM (t) \setminus (S_1 \cup \ldots \cup S_m)$ by $K'(A) t^{-1}$.
Hence, it remains to analyze those $S_i$ whose diameter on the scale $\sqrt{t}$ is large.

It will turn out that we can choose each the solid torus $S_i$ in such a way that we can find a collar neighborhood $P_i \subset S_i$, $P_i \approx T^2 \times I$, next to its boundary whose length increases with its diameter (see Lemma \ref{Lem:firstcurvboundstep1} and observe that $P_i = \Int S_i(t) \setminus \Int W_i$ there) and whose cross-sectional tori are bounded in diameter by $\sqrt{t}$.
In order to establish this stronger characterization, we additionally need to make use of a boundary version of Perelman's Lemma (see Proposition \ref{Prop:curvboundinbetween}).
Moreover, we can show that the diameter of $S_i$ at slightly earlier times cannot be much smaller than the diameter at time $t$, i.e. that the diameter cannot grow too fast on a time-interval of uniform size (see Proposition \ref{Prop:slowdiamgrowth}).
These geometric observations are summarized in Lemma \ref{Lem:firstcurvboundstep1}.

Using this diameter bound at earlier times, we are able to find collar neighborhoods similar to $P_i$ around $\partial S_i$ also at earlier times.
This will imply that the normalized volumes of \emph{local} universal covers of $\rho(x,t)$-balls in bounded distance to $\partial S_i$ and at slightly earlier times are bounded from below.
Together with a more powerful localized version of Perelman's Lemma (see Proposition \ref{Prop:curvboundnotnullinarea}) we obtain a uniform curvature bound of the form $K t^{-1}$ on each $P_i$ (see Proposition \ref{Prop:firstcurvboundstep2}).
Here $K$ does not depend on the diameter of $S_i$ or $P_i$.

Now the topological property $\TT_2$ comes into play.
By a well-known estimate on the evolution of areas of minimal surfaces (see Lemma \ref{Lem:evolminsurfgeneral}), we can find immersed surfaces which intersect all $S^1$-fibers of every $S_i \approx S^1 \times D^2$.
We will use these surfaces to find arbitrarily thin an long torus structures $P'_i \subset P_i$, $P'_i \approx T^2 \times I$ (see Proposition \ref{Prop:firstcurvboundstep3}).

Finally, we consider the Ricci flow on a larger time-interval $[ t_0, t_\omega]$, $t_\omega < L t_0$, and we relate the solid tori $S_i$ and the torus structures $P'_i$, which we obtain from our previous analysis applied at each time of $[ t_0, t_\omega]$, towards one another.
Let $S_1, \ldots, S_m$ be the solid tori which arise from the analysis at time $t_\omega$.
We will find that if $\diam_{t_\omega} S_i > A_0(L) \sqrt{t_\omega}$ for some $i$, then the curvature on $P'_i$ is bounded by $K t^{-1}$ at all times $t \in [t_0, t_\omega]$ (see Proposition \ref{Prop:structontimeinterval}).
Using the immersed minimal surface of bounded area from before, we will construct an immersed disk whose area is bounded at time $t_0$ and which is bounded by a loop which is contained in $P'_i$ and which is short on the whole time-interval $[ t_0, t_\omega]$.
For sufficiently large $L$, we can derive a contradiction to the existence of such a disk using a minimal disk argument.
Hence $\diam_t S_i \leq A_0(L) \sqrt{t_\omega}$ for all $i$ and hence the curvature on $S_i$ at time $t_\omega$ is bounded by $K'(A_0(L)) t_\omega^{-1}$.
This finishes the proof of Theorem \ref{Thm:MainTheorem}.

Upon first reading we recommend to consider the case in which $\MM$ is non-singular.
The proof in the general case follows along the lines, but the existence of surgeries adds a number of technical difficulties.

\section{Introduction to Ricci flows with surgery} \label{sec:IntroRFsurg}
\subsection{Definition of Ricci flows with surgery} \label{sec:DefRFsurg}
In this section, we give a precise definition of the Ricci flows with surgery that we are going to analyze.
We will mainly use the language developed in \cite{Bamler-diploma} here.
In a first step, we define Ricci flows with surgery in a very broad sense.
After explaining some useful new notions, we will make more precise how we assume the surgeries to be performed.
This characterization can be found in Definition \ref{Def:precisecutoff}.
Note that here we have chosen a phrasing which unifies the constructions presented in \cite{PerelmanII}, \cite{KLnotes}, \cite{MTRicciflow}, \cite{BBBMP} and \cite{Bamler-diploma} and hence Theorem \ref{Thm:MainTheorem} can be applied to the outcomes of each of these representations.

\begin{Definition}[Ricci flow with surgery] \label{Def:RFsurg}
Consider a time-interval $I \subset \IR$.
Let $T^1 < T^2 < \ldots$ be times of the interior of $I$ which form a possibly infinite, but discrete subset of $\IR$ and divide $I$ into the intervals
\[ I^1 = I \cap (-\infty, T^1), \quad I^2 = [T^1, T^2), \quad I^3 = [T^2, T^3), \quad \ldots \]
and $I^{k+1} = I \cap [T^k,\infty)$ if there are only finitely many $T^i$'s and $T^k$ is the last such time and $I^1 = I$ if there are no such times.
Consider Ricci flows $(M^1 \times I^1, g^1_t), (M^2 \times I^2, g^2_t), \ldots$ on manifolds $M^1, M^2, \ldots$ and time-intervals $I^1, I^2, \ldots$.
Let $\Omega^i \subset M^i$ be open sets on which the metric $g^i_t$ converges smoothly as $t \nearrow T^i$ to some Riemannian metric $g^i_{T^i}$ on $\Omega_i$ and let 
\[ U^i_- \subset \Omega^i \qquad \text{and} \qquad U^i_+ \subset M^{i+1} \]
be open subsets such that there are isometries
\[ \Phi^i : (U^i_-, g_{T^i}^i) \longrightarrow (U^i_+, g_{T^i}^{i+1}), \qquad (\Phi^i)^* g_{T^i}^{i+1} |_{U_+^i} = g^i_{T^i} |_{U^i_-}. \]
We assume moreover that we never have $U_-^i = \Omega^i = M^i$ and $U_+^i = M^{i+1}$ and that every component of $M^{i+1}$ contains a point of $U^i_+$.
Then, we call $\MM = ((T^i)_i, (M^i \times I^i, g_t^i)_i, (\Omega^i)_i, (U^i_{\pm})_i, (\Phi^i)_i)$ a \emph{Ricci flow with surgery on the time-interval $I$} and the times $T^1, T^2, \ldots$ \emph{surgery times}.

If $t \in I^i$, then $(\MM(t), g(t)) = (M^i \times \{ t \}, g_t^i)$ is called the \emph{time-$t$ slice of $\MM$}.
The points in $\MM(T^i) \setminus U^i_+ \times \{T^i \}$ are called \emph{surgery points}.
For $t = T^i$, we define the \emph{(presurgery) time $T^{i-}$-slice} to be $(\MM(T^{i-}), g(T^{i-})) = (\Omega^i \times \{ T^i \}, g^i_{T^i})$.
The points $\Omega^i \times \{ T^i \} \setminus U^i_- \times \{ T^i \}$ are called \emph{presurgery points}.

If $\MM$ has no surgery points, then we call $\MM$ \emph{non-singular} and write $\MM = M \times I$.
\end{Definition}

We will often view $\MM$ in the \emph{space-time picture}, i.e. we imagine $\MM$ as a topological space $\bigcup_{t \in I} \MM (t) = \bigcup_i M^i \times I^i$ where the components in the latter union are glued together via the diffeomorphisms $\Phi^i$.

The following vocabulary will prove to be useful when dealing with Ricci flows with surgery:

\begin{Definition}[Ricci flow with surgery, space-time curve]
Consider a sub-interval $I' \subset I$.
A map $\gamma : I' \to \bigcup_{t \in I'} \MM(t)$ (also denoted by $\gamma : I' \to \MM$) is called a \emph{space-time curve} if $\gamma(t) \in \MM(t)$ for all $t \in I'$, if $\gamma$ restricted to each sub-time-interval $I^i$ is continuous and if $\lim_{t \nearrow T^i} \gamma(t) \in U^i_-$ and $\gamma(T^i) = \Phi^i(\lim_{t \nearrow T^i} \gamma(t))$ for all $i$.
\end{Definition}

So a space-time curve is a continuous curve in $\MM$ in the space-time picture.

\begin{Definition}[Ricci flow with surgery, points in time] \label{Def:pointsurvives}
For $(x,t) \in \MM$, consider a spatially constant space-time curve $\gamma$ in $\MM$ that starts in $(x,t)$ and goes forward or backward in time for some time $\Delta t \in \IR$ and that doesn't hit any (pre- or \hbox{post-)}surgery points except possibly at its endpoints.
Then we say that the point $(x,t)$ \emph{survives until time} $t + \Delta t$ and we denote the other endpoint by $(x, t + \Delta t)$.

Observe that this notion also makes sense, if $(x, t^-) \in \MM$ is a presurgery point and $\Delta t \leq 0$.
\end{Definition}

Note that the point $(x, t+\Delta t)$ is only defined if $(x, t)$ survives until time $t + \Delta t$ which also includes the fact that $\MM$ is defined at time $t + \Delta t$.
Using this definition, we can define parabolic neighborhoods in $\MM$.

\begin{Definition}[Ricci flow with surgery, parabolic neighborhoods] \label{Def:parabnbhd}
Let $(x,t) \in \MM$, $r \geq 0$ and $\Delta t \in \IR$.
Consider the ball $B = B(x,t,r) \subset \MM(t)$.
For each $(x',t) \in B$ consider the union $I^{\Delta t}_{x',t}$ of all points $(x',t+t') \in \MM$ which are well-defined in the sense of Definition \ref{Def:pointsurvives} for $t' \in [0, \Delta t]$ resp. $t' \in [\Delta t, 0]$.
Define the \emph{parabolic neighborhood} $P(x,t,r,\Delta t) = \bigcup_{x' \in B} I^{\Delta t}_{x',t}$.
We call $P(x,t,r,\Delta t)$ \emph{non-singular} if all points in $B(x,t,r)$ survive until time $t+ \Delta t$.
\end{Definition}

The following notion will be used in sections \ref{sec:maintools} and \ref{sec:mainargument}.
\begin{Definition}[sub-Ricci flow with surgery] \label{Def:subRF}
Consider a Ricci flow with surgery $\MM = ((T^i)_i, (M^i \times I^i, g_t^i)_i, (\Omega^i)_i, (U^i_{\pm})_i, (\Phi^i)_i)$ on the time-interval $I$.
Let $I' \subset I$ be a sub-interval and consider the indices $i$ for which the intervals ${I'}^i = I^i \cap I'$ are non-empty.
For each such $i$ consider a submanifold ${M'}^i \subset M^i$ of the same dimension and possibly with boundary.
Let ${g'_t}^i$ be the restriction of $g^i_t$ onto ${M'}^i \times {I'}^i$ and set ${\Omega'}^i = \Omega^i \cap {M'}^i$ and ${U'_-}^i = U^i_- \cap {M'}^i$ as well as ${U'_+}^i = U^i_+ \cap {M'}^{i+1}$.
Assume that for each $i$ for which ${I'}^i$ and ${I'}^{i+1}$ are non-empty, we have $\Phi^i ({U'_-}^i) = {U'_+}^i$ and let ${\Phi'}^i$ be the restriction of $\Phi^i$ to ${U'_-}^i$.

In the case in which ${U'_-}^i = {\Omega'}^i = {M'}^i$ and ${U'_+}^i = {M'}^{i+1}$ for some $i$, we can combine the Ricci flows ${g'_t}^i$ and ${g'_t}^{i+1}$ on ${M'}^i \times {I'}^i$ and ${M'}^{i+1} \times {I'}^{i+1}$ to a Ricci flow on the time-interval ${I'}^i \cup {I'}^{i+1}$ and hence remove $i$ from the list of indices.

Then $\MM' = (({T'}^i)_i, ({M'}^i \times {I'}^i, {g'_t}^i)_i, ({\Omega'}^i)_i, ({U'_{\pm}}^i)_i, ({\Phi'}^i)_i)$ is a Ricci flow with surgery in the sense of Definition \ref{Def:RFsurg}.

Assume that for all $t \in I'$ the boundary points $\partial \MM' (t) \subset \MM(t)$ (by this we mean all points in $\MM(t)$ which don't lie in the interior of $\MM'(t)$ or $\MM(t) \setminus \MM'(t))$ survive until any other time of $I'$ and that $\partial \MM' (t)$ is constant in $t$.
Then we call $\MM'$ a \emph{sub-Ricci flow with surgery} and we write $\MM' \subset \MM$.
\end{Definition}

We will now characterize three important approximate local geometries that we will frequently be dealing with: $\varepsilon$-necks, strong $\varepsilon$-necks and $(\varepsilon, E)$-caps.
The notions below also make sense for presurgery time-slices.

\begin{Definition}[Ricci flow with surgery, $\varepsilon$-necks] \label{Def:epsneck}
Let $\varepsilon > 0$ and consider a Riemannian manifold $(M,g)$.
We call an open subset $U \subset M$ an $\varepsilon$-neck, if there is a diffeomorphism $\Phi : S^2 \times (-\frac1{\varepsilon}, \frac1{\varepsilon}) \to U$ such that there is a $\lambda > 0$ with $\Vert \lambda^{-2} \Phi^* g(t) - g_{S^2 \times \IR} \Vert_{C^{[\varepsilon^{-1}]}} < \varepsilon$ where $g_{S^2 \times \IR}$ is the standard metric on $S^2 \times (-\frac1{\varepsilon}, \frac1{\varepsilon})$ of constant scalar curvature $2$.

We say that $x \in U$ is a \emph{center} of $U$ if $x \in \Phi(S^2 \times \{0\})$ for such a $\Phi$.

If $\MM$ is a Ricci flow with surgery and $(x,t) \in \MM$, then we say that \emph{$(x,t)$ is a center of an $\varepsilon$-neck} if $(x,t)$ is a center of an $\varepsilon$-neck in $\MM(t)$.
\end{Definition}

\begin{Definition}[Ricci flow with surgery, strong $\varepsilon$-necks]
Let $\varepsilon > 0$ and consider a Ricci flow with surgery $\MM$ and a time $t_2$.
Consider a subset $U \subset \MM(t_2)$ and assume that all points of $U$ survive until some time $t_1 <  t_2$.
Then the subset $U \times [t_1,t_2] \subset \MM$ is called a \emph{strong $\varepsilon$-neck} if there is a factor $\lambda > 0$ such that after parabolically rescaling by $\lambda^{-1}$, the flow on $U \times [t_1,t_2]$ is $\varepsilon$-close to the standard flow on $[-1,0]$.
By this we mean $\lambda^{-2} (t_2 - t_1) = 1$ and there is a diffeomorphism $\Phi : S^2 \times (-\frac1{\varepsilon}, \frac1{\varepsilon}) \to U$ such that for all $t \in [-1, 0]$
\[ \Vert \lambda^{-2} \Phi^* g(t_2 + \lambda^2 t) - g_{S^2 \times \IR} (t) \Vert_{C^{[\varepsilon^{-1}]}} < \varepsilon. \]
Here $(g_{S^2 \times \IR}(t))_{t \in (-\infty,0]}$ is the standard Ricci flow on $S^2 \times \IR$ which has constant scalar curvature $2$ at time $0$.

A point $(x, t_2) \in U \times \{ t_2 \}$ is called a \emph{center of $U \times [t_1, t_2]$} if $(x, t_2) \in \Phi ( S^2 \times \{ 0 \} \times \{ t_2 \} )$ for such a $\Phi$.
\end{Definition}

\begin{Definition}[Ricci flow with surgery, $(\varepsilon, E)$-caps]
Let $\varepsilon, E > 0$ and consider a Riemannian manifold $(M, g)$ and an open subset $U \subset M$.
Suppose that $(\diam U)^2 |{\Rm}|(y) < E^2$ for any $y \in U$ and $E^{-2} |{\Rm}|(y_1) \leq |{\Rm}|(y_2) \leq E^2 |{\Rm}|(y_1)$ for any $y_1, y_2 \in U$.
Furthermore, assume that $U$ is either diffeomorphic to $\IB^3$ or $\IR P^3 \setminus \ov{\IB}^3$ and that there is a compact set $K \subset U$ such that $U \setminus K$ is an $\varepsilon$-neck.

Then $U$ is called an \emph{$(\varepsilon, E)$-cap}.
If $x \in K$ for such a $K$, then we say that $x$ is a \emph{center of $U$}.

Analogously as in Definition \ref{Def:epsneck}, we define $(\varepsilon, E)$-caps in Ricci flows with surgery.
\end{Definition}

With these concepts at hand we can now give an exact description of the surgery process that will be assumed to be carried out at each surgery time.
To do this, we first fix a geometry which models the metric with which we will endow the filling $3$-balls after each surgery.

\begin{Definition}[surgery model]
Consider $M_{\stan} = \IR^3$ with its natural $SO(3)$-action and let $g_{\stan}$ be a complete metric on $M_{\stan}$ such that
\begin{enumerate}
\item $g_{\stan}$ is $SO(3)$-invariant,
\item $g_{\stan}$ has non-negative sectional curvature,
\item for any sequence $x_n \in M_{\stan}$ with $\dist(0, x_n) \to \infty$, the pointed Riemannian manifolds $(M_{\stan}, g_{\stan}, x_n)$ smoothly converge to the standard $S^2 \times \IR$ of constant scalar curvature $2$.
\end{enumerate}
For every $r > 0$, we denote the $r$-ball around $0$ by $M_{\stan}(r)$.

Let $D_{\stan} > 0$ be a positive number.
Then we call $(M_{\stan}, g_{\stan}, D_{\stan})$ a \emph{surgery model}.
\end{Definition}

\begin{Definition}[$\varphi$-positive curvature]
We say that a Riemannian metric $g$ on a manifold $M$ has \emph{$\varphi$-positive curvature} for $\varphi > 0$ if for every point $x \in M$ there is an $X > 0$ such that $\sec_x \geq - X$ and
\[ \scal_x \geq - \tfrac32 \varphi \qquad \text{and} \qquad \scal_x \geq 2 X (\log (2 X) - \log \varphi - 3). \]
\end{Definition}
Observe that by \cite{Ham} this condition is improved by Ricci flow in the following sense: If $(M, (g_t)_{t \in [t_0, t_1]})$ is a Ricci flow on a compact $3$-manifold with $t_0 > 0$ and $g_{t_0}$ is $t_0^{-1}$-positive, then the curvature of $g_t$ is $t^{-1}$-positive for all $t \in [t_0, t_1]$.

\begin{Definition}[Ricci flow with surgery, $\delta(t)$-precise cutoff] \label{Def:precisecutoff}
Let $\MM$ be a Ricci flow with surgery defined on some time-interval $I \subset [0,\infty)$, let $(M_{\stan},g_{\stan},D_{\stan})$ be a surgery model and let $\delta : I  \to (0, \infty)$ be a function.
We say that $\MM$ is \emph{performed by $\delta(t)$-precise cutoff (using the surgery model $(M_{\stan},g_{\stan}, D_{\stan})$)} if
\begin{enumerate}
\item For all $t$ the metric $g(t)$ has $t^{-1}$-positive curvature.
\item For every surgery time $T^i$, the subset $\MM(T^i) \setminus U^i_+$ is a disjoint union $D^i_1 \cup D^i_2 \cup \ldots$ of smoothly embedded $3$-disks.
\item For every such $D^i_j$ there is an embedding 
\[ \Phi^i_j : M_{\stan}(\delta^{-1}(T^i)) \longrightarrow \MM(T^i) \]
such that $D^i_j \subset \Phi^i_j (M_{\stan}(D_{\stan}))$ and such that the images $\Phi^i_j (M_{\stan} \linebreak[1] (\delta^{-1} \linebreak[1] (T^i)))$ are pairwise disjoint and there are constants $0 <\lambda^i_j \leq \delta(T^i)$ such that 
\[ \big\Vert g_{\stan} - (\lambda^i_j)^{-2} (\Phi^i_j)^* g(T^i) \big\Vert_{C^{[\delta^{-1}(T^i)]}(M_{\stan}(\delta^{-1}(T^i)))} < \delta(T^i). \]
\item For every such $D^i_j$, the points on the boundary of $U^i_-$ in $\MM(T^{i-})$ corresponding to $\partial D^i_j$ are centers of strong $\delta(T^i)$-necks.
\item For every $D^i_j$ for which the boundary component of $\partial U^i_-$ corresponding to the sphere $\partial D^i_j$ bounds a $3$-disk component $(D')^i_j$ of $M^i \setminus U^i_-$ (i.e. a ``trivial surgery'', see below), the following holds:
For every $\chi > 0$, there is some $t_\chi < T^i$ such that for all $t \in (t_\chi,T^i)$ there is a $(1+\chi)$-Lipschitz map $\xi : (D')^i_j \to D^i_j$ which corresponds to the identity on the boundary.
\item For every surgery time $T^i$, the components of $\MM(T^{i-}) \setminus U^i_-$ are diffeomorphic to one of the following manifolds: $S^2 \times I$, $D^3$, $\IR P^3 \setminus B^3$, a spherical space form, $S^1 \times S^2$, $\IR P^3 \# \IR P^3$ and (in the non-compact case) $S^2 \times [0,\infty)$, $S^2 \times \IR$, $\IR P^3 \setminus \ov{B}^3$.
\end{enumerate}
We will speak of each $D^i_j$ as \emph{a surgery} and if $D^i_j$ satisfies the property described in (5), we call it a \emph{trivial surgery}.

If $\delta > 0$ is a number, we say that $\MM$ is \emph{performed by $\delta$-precise cutoff} if this is true for the constant function $\delta(t) = \delta$.
\end{Definition}
Observe that we have phrased the Definition so that if $\MM$ is a Ricci flow with surgery which is performed by $\delta(t)$-precise cutoff, it is also performed by $\delta'(t)$-precise cutoff whenever $\delta'(t) \geq \delta(t)$ for all $t$.
Note also that trivial surgeries don't change the topology of the component at which they are performed.

\subsection{Existence of Ricci flows with surgery} \label{sec:ExRFsurg}
Ricci flows with surgery and precise cutoff as introduced in Definition \ref{Def:precisecutoff} can indeed be constructed from any given initial metric.
We will make this more precise below.
To simplify things, we restrict the geometries which we want to consider as initial conditions.

\begin{Definition}[Normalized initial conditions] \label{Def:normalized}
We say that a Riemannian $3$-manifold $(M,g)$ is \emph{normalized} if 
\begin{enumerate}
\item $M$ is compact and orientable,
\item $|{\Rm}| < 1$ everywhere and
\item $\vol B(x,1) > \frac{\omega_3}2$ for all $x \in M$ where $\omega_3$ is the volume of a standard Euclidean $3$-ball.
\end{enumerate}
We say that a Ricci flow with surgery $\MM$ has \emph{normalized initial conditions}, if $\MM(0)$ is normalized.
\end{Definition}
Obviously, any Riemannian metric on a compact and orientable $3$-manifold can be rescaled to be normalized.
Moreover, recall
\begin{Definition}[$\kappa$-noncollapsedness]
Let $\MM$ be a Ricci flow with surgery, $(x,t) \in \MM$ and $\kappa, r_0 > 0$.
We say that $\MM$ is \emph{$\kappa$-noncollapsed in $(x,t)$ on scales less than $r_0$} if $\vol_t B(x,t,r) \geq \kappa r^3$ for all $0 < r < r_0$ for which
\begin{enumerate}
\item the ball $B(x,t,r)$ is relatively compact in $\MM(t)$,
\item the parabolic neighborhood $P(x,t,r, -r^2)$ is non-singular and
\item $|{\Rm}| < r^{-2}$ on $P(x,t,r,-r^2)$.
\end{enumerate}
\end{Definition}

We now present a definition of the canonical neighborhood assumptions which is a slight modification from the definitions that can be found in other sources, but which suits better our purposes.
\begin{Definition}[canonical neighborhood assumptions] \label{Def:CNA}
Let $\MM$ be a Ricci flow with surgery, $(x, t) \in \MM$ and $r, \varepsilon, \eta > 0$, $E < \infty$ be constants.
We say that $(x,t)$ satisfies the \emph{canoncial neighborhood assumptions $CNA(r, \varepsilon, E, \eta)$} if either $|{\Rm}|(x,t) < r^{-2}$ or the following three properties hold
\begin{enumerate}[label=(\textit{\arabic*})]
\item $(x,t)$ is a center of a strong $\varepsilon$-neck or an $(\varepsilon, E)$-cap $U \subset \MM (t)$. \\
If $U \approx \IR P^3 \setminus \ov{B}^3$, then there is a time $t_1 < t$ such that all points on $U$ survive until time $t_1$ and such that flow on $U \times [t_1, t]$ lifted to its double cover contains strong $\varepsilon$-necks and both lifts of $(x,t)$ are centers of such strong $\varepsilon$-necks.
\item $|{\nabla |{\Rm}|^{-1/2}}| (x,t) < \eta^{-1}$ and $| \partial_t |{\Rm}|^{-1} | (x,t)  < \eta^{-1}$.
\item $\vol_t B( x, t, r' ) > \eta (r')^3$ for all $0 < r' \leq |{\Rm}|^{-1/2} (x, t)$.
\end{enumerate}
or property (2) holds and the component of $\MM(t)$ in which $x$ lies, is closed and the sectional curvatures are positive and $E$-pinched on this component, i.e. they are contained in an interval of the form $(\lambda, E \lambda)$ for some $\lambda > 0$ (and hence that component is diffeomorphic to a spherical space form).
\end{Definition}

Note that we have added an additional assumption in the case in which $U \approx \IR P^3 \setminus \ov{B}^3$ to ensure that the canonical neighborhood assumptions are stable when taking covers of Ricci flows with surgery (compare with Lemma \ref{Lem:tdMM}).
We remark that a manifold which contains a set diffeomorphic to $\IR P^3 \setminus \ov{B}^3$, admits a double cover in which this set lifts to a set diffeomorphic to $S^2 \times (0,1)$.
So it is possible to verify this extra assumption if all the other canonical neighborhood assumptions hold in any double cover. 

The following proposition provides a characterization of regions of high curvature in a Ricci flow with surgery which is performed by precise cutoff.
The power of this proposition lies in the fact that none of the parameters depends on the number or the preciseness of the preceding surgeries.
Hence, it provides a tool to perform surgeries in a controlled way and hence is used to construct long-time existent Ricci flows with surgery as presented in Proposition \ref{Prop:RFwsurg-existence} below.
It also plays an important role in their long-time analysis and will in particular be used in sections \ref{sec:Perelman} and \ref{sec:maintools}.

\begin{Proposition}[Canonical neighborhood theorem, Ricci flows with surgery] \label{Prop:CNThm-mostgeneral}
For every surgery model $(M_{\stan}, \linebreak[1] g_{\stan}, \linebreak[1] D_{\stan})$ and every $\varepsilon > 0$ there are constants $\un\eta > 0$ and $\un{E}_\varepsilon < \infty$ and decreasing continuous positive functions $\un{r}_\varepsilon, \un\delta_\varepsilon, \un\kappa : [0,\infty) \to (0, \infty)$ such that the following holds:

Let $\MM$ be a Ricci flow with surgery on some time-interval $[0,T)$ which has normalized initial conditions and which is performed by $\un\delta_\varepsilon (t)$-precise cutoff.
Then for every $t \in [0,T)$
\begin{enumerate}[label=(\textit{\alph*})]
\item $\MM$ is $\un\kappa(t)$-noncollapsed on scales less than $\sqrt{t}$ at all points of $\MM(t)$.
\item All points of $\MM (t)$ satisfy the canonical neighborhood assumptions $CNA \linebreak[1] (\un{r}_\varepsilon (t), \linebreak[1] \varepsilon, \linebreak[1] \un{E}_\varepsilon,  \linebreak[1] \un\eta)$.
\end{enumerate}
\end{Proposition}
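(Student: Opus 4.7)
The plan is to follow Perelman's double-induction scheme, as refined in \cite{PerelmanII}, \cite{KLnotes}, \cite{MTRicciflow}, but recast in the surgery formalism of Definition \ref{Def:RFsurg}. I would construct the three functions $\un\kappa$, $\un r_\varepsilon$, $\un\delta_\varepsilon$ and the constants $\un\eta$, $\un E_\varepsilon$ simultaneously by induction along a discretization of time, say on dyadic windows $[2^{k-1},2^k]$, at each stage choosing $\un\delta_\varepsilon$ small enough on that window (relative to the surgery scale and the currently chosen $\un r_\varepsilon, \un\kappa$) so that all of the subsequent arguments close up. The whole estimate is proved at a given $t$ \emph{conditional} on the assumption that (a) and (b) already hold at all earlier times with parameters $\un\kappa(s),\un r_\varepsilon(s)$; since the inductive hypothesis is vacuous until shortly after $t=0$, where it follows from the normalization hypothesis and standard short-time theory, the whole scheme bootstraps.

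For part (a), the $\kappa$-noncollapsing statement, I would use Perelman's $\mathcal{L}$-geometry / reduced-volume monotonicity exactly as in \cite{PerelmanII}\S7, but modified to deal with surgery points on $\MM$. The key point is that reduced volume is monotone along $\mathcal{L}$-geodesics that do not meet surgery points; by choosing $\un\delta_\varepsilon(t)$ small enough one ensures via property (4) of Definition \ref{Def:precisecutoff} that space-time curves entering a $\delta$-surgery cap are either easy to perturb around (using strong $\delta$-necks adjacent to the cap) or else the cap itself forces such large curvature and volume loss that it can be excluded from any near-minimizing $\mathcal{L}$-geodesic from $(x,t)$. This yields a lower bound on the reduced volume back to time $0$, and the normalized initial condition then produces the $\un\kappa(t)$-noncollapsing estimate on scales $<\sqrt{t}$, with $\un\kappa$ depending only on $t$ (and $\varepsilon$ indirectly through $\un\delta_\varepsilon$).

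For part (b), the canonical neighborhood assumptions, I would argue by contradiction: if CNA fails for some $\un r_\varepsilon(t)$ no matter how small, pick a sequence of counterexample points $(x_n,t_n)$ with $t_n$ bounded away from $0$ and scalar curvatures tending to infinity, and parabolically rescale by $|\Rm|(x_n,t_n)$. By part (a) and by the inductive canonical-neighborhood bounds at earlier times, the rescaled flows are $\un\kappa$-noncollapsed and have bounded-curvature neighborhoods of definite size; after passing to a subsequence the pointed Cheeger–Gromov–Hamilton limit exists and is a $\kappa$-solution in the sense of Perelman (an ancient, $\kappa$-noncollapsed, nonnegatively curved Ricci flow with bounded curvature on compact time-slabs). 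The classification of such $\kappa$-solutions forces the limit point to be the center of a strong $\varepsilon$-neck, an $(\varepsilon,\un E_\varepsilon)$-cap, or to lie in a round quotient, yielding CNA for large $n$, a contradiction.

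The main obstacle, and the place where $\un\delta_\varepsilon(t)$ has to be chosen carefully, is excluding the possibility that the blow-up sequence $(x_n,t_n)$ converges into a surgery cap before reaching backward time $-1$ in the rescaled flow. Here one uses the quantitative comparison with the surgery model $(M_\stan,g_\stan,D_\stan)$ built into condition (3) of Definition \ref{Def:precisecutoff}: if $(x_n,t_n)$ is within a bounded rescaled distance of a post-surgery cap, the $\delta(T^i)$-closeness to $g_\stan$ directly gives an $(\varepsilon,\un E_\varepsilon)$-cap structure, so CNA holds after all; otherwise one can perform the $\mathcal L$-geodesic and curvature-limit arguments on the part of space-time not meeting the caps, and $\un\delta_\varepsilon(t)$ is chosen so small that this ``safe'' complement is large enough to contain the entire parabolic neighborhood needed for compactness. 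This delicate choice is the sole place where surgery preciseness enters, and once it is made, Perelman's pre-surgery arguments go through essentially verbatim.
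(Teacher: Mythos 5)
Your proposal is the standard Perelman argument, and that is exactly what the paper relies on: the paper gives no proof of this proposition and simply cites \cite{PerelmanII}, \cite{KLnotes}, \cite{MTRicciflow}, \cite{BBBMP}, \cite{Bamler-diploma}, where the double induction on time windows, the $\mathcal{L}$-geometry noncollapsing with avoidance of surgery regions, and the blow-up to $\kappa$-solutions are carried out in detail. The one ingredient your sketch elides is that for points whose backward parabolic neighborhood runs into a surgery cap one needs the canonical neighborhood theorem for the \emph{evolving} standard solution, not merely the time-$T^i$ closeness to $g_{\stan}$ --- but this too is part of the cited standard machinery, so your outline matches the intended proof.
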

For a proof of this proposition and of Proposition \ref{Prop:RFwsurg-existence} see \cite{PerelmanII}, \cite{KLnotes}, \cite{MTRicciflow}, \cite{BBBMP}, \cite{Bamler-diploma}.
The following proposition provides us an existence result for Ricci flows with surgery.
\begin{Proposition} \label{Prop:RFwsurg-existence}
Given a surgery model $(M_{\stan}, g_{\stan}, D_{\stan})$, there is a continuous function $\un\delta : [0, \infty) \to (0, \infty)$ such that if $\delta' : [0, \infty) \to (0, \infty)$ is a continuous function with $\delta'(t) \leq \un\delta(t)$ for all $t \in [0,\infty)$ and $(M,g)$ is a normalized Riemannian manifold, then there is a Ricci flow with surgery $\MM$ defined for times $[0, \infty)$ such that $\MM(0) = (M,g)$ and which is performed by $\delta'(t)$-precise cutoff. (Observe that we can possibly have $\MM(t) = \emptyset$ for large $t$.)

Moreover, if $\MM$ is a Ricci flow with surgery on some time-interval $[0,T)$ which has normalized initial conditions and which is performed by $\un\delta(t)$-precise cutoff, then $\MM$ can be extended to a Ricci flow on the time-interval $[0, \infty)$ which is performed by $\delta'(t)$-precise cutoff on the time-interval $[T, \infty)$.
\end{Proposition}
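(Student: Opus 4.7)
The plan is to construct $\MM$ inductively using Perelman's surgery procedure, driven by the canonical neighborhood theorem (Proposition~\ref{Prop:CNThm-mostgeneral}). Fix a small enough $\varepsilon > 0$ (e.g.\ so that $\varepsilon$-necks and $(\varepsilon, \un{E}_\varepsilon)$-caps admit a well-defined notion of cross-sectional sphere, curvature comparison, and a canonical ``central'' $2$-sphere), and set $\un\delta(t) \leq \un\delta_\varepsilon(t)$; we will shrink $\un\delta$ further along the way to absorb all the geometric conditions we need. Starting from a normalized $(M,g)$, run the unique smooth Ricci flow on $[0, T^1)$ where $T^1$ is the maximal smooth existence time. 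By Hamilton's maximum principle the flow has $t^{-1}$-positive curvature (condition (1) of Definition~\ref{Def:precisecutoff}), and Proposition~\ref{Prop:CNThm-mostgeneral} gives $\un\kappa(t)$-noncollapsing and canonical neighborhoods at scale $\un{r}_\varepsilon(t)$ throughout.

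At $T^1$, I would identify the open set $\Omega^1 \subset M$ on which the metric smoothly extends to $T^1$, together with its decomposition into the high curvature region $\Omega^1_{\mathrm{hi}}$ (where $|{\Rm}| > r^{-2}$ for a suitable surgery scale $r \ll \un{r}_\varepsilon(T^1)$) and its complement. By the canonical neighborhood assumption, every point of $\Omega^1_{\mathrm{hi}}$ lies in an $\varepsilon$-neck, a strong $\varepsilon$-neck, or an $(\varepsilon, \un{E}_\varepsilon)$-cap; following Perelman, the components of $\Omega^1_{\mathrm{hi}}$ that reach into the singular set form ``$\varepsilon$-horns'' ending in cross-sectional spheres of arbitrarily small diameter, and on each such horn one can select a central $\delta(T^1)$-neck $N$ deep enough that it is a strong $\delta(T^1)$-neck and so that the picture on both sides is essentially conical. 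I perform surgery by excising the far end of each horn along the central sphere of $N$ and gluing in a rescaled copy of $M_{\stan}(D_{\stan})$, using a $\delta(T^1)$-close interpolation on the collar — this is precisely what is required in items (2)--(4) of Definition~\ref{Def:precisecutoff}. Components of $M \setminus U^1_-$ of diffeomorphism types not listed in (6) are killed by the horn selection, and trivial surgeries are performed by the standard continuity/conformal-interpolation argument to guarantee (5). This produces $M^2$, the isometry $\Phi^1$, and $(U^1_\pm)$ in the sense of Definition~\ref{Def:RFsurg}, and the smooth flow can then be restarted on $M^2 \times [T^1, T^2)$. Iterating yields a Ricci flow with surgery on the maximal interval $[0, T_{\max})$.

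The main obstacle is showing $T_{\max} = \infty$, i.e.\ that surgery times do not accumulate and that Proposition~\ref{Prop:CNThm-mostgeneral} continues to apply across surgeries. For the first point, I would pick $\un\delta(t)$ so small that the surgery scale $h(t)$ is much smaller than $\un{r}_\varepsilon(t)$; each surgery then removes a quantum of volume bounded below by a positive function of $t$, while the total volume grows at most like $e^{Ct}$ under smooth Ricci flow, forcing finitely many surgeries on any $[0, T]$. The second, more delicate point is that Proposition~\ref{Prop:CNThm-mostgeneral} is itself only proved under a $\un\delta_\varepsilon$-precise cutoff hypothesis, so its applicability at $T^1$ and beyond is automatic provided $\un\delta \leq \un\delta_\varepsilon$; what must still be checked is that the $\varphi$-positive curvature condition survives each surgery (true because on $M_{\stan}(D_{\stan})$ the curvature is bounded, the glued-in region is small relative to the ambient scale, and $\varphi$-positivity is preserved by Ricci flow by Hamilton--Ivey) and that the noncollapsing and canonical neighborhood constants $\un\kappa, \un{r}_\varepsilon, \un\eta, \un{E}_\varepsilon$ can be chosen independently of the number of preceding surgeries. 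This is the heart of Perelman's analysis and the reason why Proposition~\ref{Prop:CNThm-mostgeneral} is stated with parameters depending only on $t$: the proof proceeds by contradiction/compactness, extracting $\kappa$-solution limits, and the key inputs (the no-local-collapsing theorem via reduced length, and the canonical neighborhood theorem via blow-up limits) each use the surgery precision only through the parameter $\un\delta_\varepsilon$ itself.

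Finally, the extension statement follows from the first: given $\MM$ on $[0, T)$ performed by $\un\delta(t)$-precise cutoff, its final time-slice (taking a limit as $t \nearrow T$ on $\Omega$ and performing a surgery as above if $T$ is itself singular) serves as normalized-equivalent initial data at time $T$, to which the construction of the first half is applied on $[T, \infty)$ with the shifted cutoff function $\delta'$. Since the construction at each step depends only on $\un\delta_\varepsilon$ and on the $t$-dependent canonical neighborhood parameters, the concatenated flow is a valid Ricci flow with surgery on $[0, \infty)$ performed by $\delta'(t)$-precise cutoff, as required.
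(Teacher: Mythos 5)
The paper does not actually prove this proposition — it only cites \cite{PerelmanII}, \cite{KLnotes}, \cite{MTRicciflow}, \cite{BBBMP}, \cite{Bamler-diploma} for it — and your outline is exactly the standard Perelman surgery construction contained in those references (horn selection, excision along a central $\delta$-neck, gluing in the surgery model, volume-quantum argument for non-accumulation of surgery times, and persistence of the canonical neighborhood assumptions via Proposition \ref{Prop:CNThm-mostgeneral}). So your approach matches the intended proof; the only caveat is that what you have written is a road map rather than a complete argument, with the hard analytic content correctly identified but deferred to the cited sources, just as the paper itself does.
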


We point out that the functions $\un\delta_\epsilon (t), \un{r}_\varepsilon(t), \un\kappa (t)$ and the constants $\un{\eta}, \un{E}_\varepsilon$ in Proposition \ref{Prop:CNThm-mostgeneral} as well as the function $\un\delta(t)$ in Proposition \ref{Prop:RFwsurg-existence} depend on the choice of the surgery model.
\begin{quote}
\textit{\textbf{From now on we will fix a surgery model $(M_{\stan}, g_{\stan}, D_{\stan})$ for the rest of this paper and we will not mention this dependence anymore.}}
\end{quote}

\section{Preliminaries on $3$-dimensional topology} \label{sec:3dtopology}
In this section we present important topological facts which we will frequently use in the course of the paper.
A more elaborate discussion of most of these results can be found in \cite{Hat}.
In the following, all manifolds are assumed to be connected and $3$-dimensional.

\begin{Definition}[prime manifold]
A manifold $M$ is called \emph{prime}, if it cannot be represented as a direct sum $M = M_1 \# M_2$ of two manifolds $M_1$, $M_2$ which are not diffeomorphic to spheres ($\approx S^3$).
\end{Definition}

\begin{Definition}[irreducible manifold] \label{Def:irreducible}
A manifold $M$ is called \emph{irreducible}, if every smoothly embedded $2$-sphere $S \subset M$ bounds a smoothly embedded $3$-disk $D \subset M$, $\partial D = S$.
\end{Definition}

Recall a manifold is prime if and only if it is either irreducible or diffeomorphic to $S^1 \times S^2$ (cf \cite[Proposition 1.4]{Hat}).
We furthermore have

\begin{Proposition} \label{Prop:pi2irred}
An orientable manifold $M$ is irreducible if and only if $\pi_2 (M) = 0$.
\end{Proposition}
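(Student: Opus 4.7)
The plan is to prove the two implications separately, relying on classical tools of $3$-dimensional topology, principally the Sphere Theorem of Papakyriakopoulos.

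For the forward direction I would argue by contraposition. Assuming $\pi_2(M) \neq 0$, the Sphere Theorem (see \cite{Hat}) produces a smoothly embedded $2$-sphere $S \subset M$ whose class in $\pi_2(M)$ is nontrivial. If $M$ were irreducible, $S$ would bound a smoothly embedded $3$-disk $D \subset M$, and the radial null-homotopy through $D$ would force $[S] = 0$ in $\pi_2(M)$, contradicting the choice of $S$. Hence $M$ cannot be irreducible.

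For the reverse direction, let $S \subset M$ be a smoothly embedded $2$-sphere; the task is to exhibit a $3$-disk in $M$ bounded by $S$. I would pass to the universal cover $p : \td M \to M$. Since $S$ is simply connected, each component $\td S$ of $p^{-1}(S)$ is mapped diffeomorphically onto $S$ and is itself an embedded $2$-sphere in $\td M$. Covering theory gives $\pi_2(\td M) \cong \pi_2(M) = 0$, and Hurewicz applied to the simply connected space $\td M$ then forces $H_2(\td M;\IZ) = 0$. Thus $\td S$ is null-homologous, so it separates $\td M$ into two pieces, exactly one of which, call it $\td W$, has compact closure $\td D := \ov{\td W}$ with boundary $\td S$. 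It then remains to show that $\td D$ is a smooth $3$-ball and that $p$ embeds it into $M$; the latter is a standard consequence of the fact that deck transformations act freely combined with the Brouwer fixed-point theorem applied to the $3$-ball $\td D$, so that $p(\td D)$ is the sought $3$-disk bounded by $S$.

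The main obstacle is the identification of $\td D$ with a $3$-ball. The slick modern route caps $\td D$ off along $\td S$ with a $3$-ball to produce a closed simply connected $3$-manifold, applies the Poincar\'e conjecture to identify it with $S^3$, and then invokes Alexander's theorem in $S^3$. A traditional treatment in the style of \cite{Hat}, avoiding Poincar\'e, starts instead from a null-homotopy $f : D^3 \to M$ of $S$ (which exists since $\pi_2(M) = 0$), makes $f$ transverse to $S$, and argues by induction on the number of components of $f^{-1}(S)$, using the Loop Theorem and Dehn's Lemma along with an innermost-sphere argument to build the bounding ball directly inside $M$.
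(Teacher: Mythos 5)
Your Sphere-Theorem direction is exactly the paper's argument. For the converse ($\pi_2(M)=0$ implies irreducible) you take a genuinely different, and longer, route: the paper never leaves $M$ --- it quotes \cite[Proposition 3.10]{Hat} to the effect that the null-homotopic embedded sphere $S$ already bounds a compact contractible submanifold $N \subset M$, caps $N$ off, and applies the Poincar\'e conjecture plus Alexander's theorem --- whereas you pass to the universal cover, use Hurewicz to kill $H_2(\td{M})$, extract a compact region $\td{D}$ bounded by the lift $\td{S}$, identify it with a ball by the same cap-off-and-Poincar\'e step, and project back down. The descent is sound (distinct translates $g\td{S}$ are disjoint because $\td{S} \to S$ is one-sheeted and the action is free, so $g\td{D}$ and $\td{D}$ are either disjoint or nested, and nesting contradicts Brouwer applied to $g^{\pm 1}$), but two assertions along the way need more than you give them: that one side of $\td{S}$ has compact closure does \emph{not} follow from separation alone (in the noncompact case one must argue, say, that a proper line joining two ends of $\td{M}$ through $\td{S}$ would have odd intersection number with the null-homologous cycle $\td{S}$; and if $\td{M}$ is compact both sides are compact, so ``exactly one'' is the wrong claim), and the simple connectivity of $\td{D}$, which you need before capping off, should be extracted from van Kampen. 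The paper's route through \cite[Proposition 3.10]{Hat} sidesteps all of this.

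Your closing remark, that a traditional Loop Theorem/Dehn's Lemma argument could ``avoid Poincar\'e,'' is not tenable and is worth correcting: the implication $\pi_2(M)=0 \Rightarrow M$ irreducible is \emph{equivalent} to the Poincar\'e conjecture for this class of manifolds. Indeed, if $\Sigma$ were a homotopy $3$-sphere with $\Sigma \not\approx S^3$, then $M = \Sigma \# \Sigma$ would be a closed, simply connected manifold (hence $\pi_2(M)=0$ by Hurewicz and Poincar\'e duality) in which the connected-sum sphere bounds no ball on either side, since capping off either complementary piece recovers $\Sigma$. This is exactly why the paper invokes the resolution of the Poincar\'e conjecture at this point; no argument predating it can close that step.
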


\begin{proof}
The backward direction follows from the Sphere Theorem (cf \cite[Theorem 3.8]{Hat}).
For the forward direction suppose that $\pi_2(M) = 0$ and let $S \subset M$ be a smoothly embedded sphere.
So $S$ is contractible and by \cite[Proposition 3.10]{Hat}, it bounds a compact contractible submanifold $N \subset M$.
Following \cite[Proposition 3.7]{Hat}, we conclude that if we attach a $3$-disk to $N$, we obtain a closed, simply-connected manifold $M'$.
By the resolution of the Poincar\'e Conjecture, $M' \approx S^3$ and hence $N$ is a $3$-disk.
\end{proof}

\begin{Definition}[incompressibility] \label{Def:incompressible}
Let $X$ be a topological space and $Y \subset X$ a connected subspace.
Then we call $Y$ \emph{(algebraically) incompressible in $X$} if the induced map $\pi_1(Y) \to \pi_1(X)$ is injective.
Otherwise, we call $Y$ \emph{(algebraically) compressible}.
\end{Definition}

\begin{Proposition} \label{Prop:spanningdiskexists}
Let $M$ be a manifold with boundary.
If $C \subset \partial M$ is an embedded circle which is nullhomotopic in $M$, then $C$ bounds an embedded disk $D$ in $M$, i.e. $\partial D = C$ and $D \cap \partial M = C$.
\end{Proposition}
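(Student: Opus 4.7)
The plan is to reduce the statement to the classical Dehn's Lemma of Papakyriakopoulos. The first step is to push $C$ slightly off $\partial M$ into the interior: I would fix a collar embedding $\partial M \times [0, 1) \hookrightarrow M$ with $\partial M \times \{0\} = \partial M$, set $C' := C \times \{1/2\} \subset \Int M$, and let $A := C \times [0, 1/2]$ be the annulus joining $C$ to $C'$ inside the collar. Writing $M' := M \setminus (\partial M \times [0, 1/2))$, the inclusion $M' \hookrightarrow M$ is a homotopy equivalence, so the hypothesis that $C$ is nullhomotopic in $M$ transports to the statement that $C' \subset \partial M'$ is nullhomotopic in $M'$.

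Next I would apply Dehn's Lemma to the pair $(M', C')$ to produce a properly embedded disk $D' \subset M'$ with $\partial D' = C'$ and $D' \cap \partial M' = C'$. To verify its hypotheses, I would extend the inclusion $C' \hookrightarrow M'$ to a continuous map $\phi : D^2 \to M'$, then smooth $\phi$ and perturb it to be transverse to $\partial M'$; this realizes $\phi^{-1}(\partial M')$ as a compact $1$-submanifold of $D^2$ whose components are $\partial D^2$ together with finitely many disjoint simple closed curves in $\Int D^2$. An innermost-circle argument, pushing the corresponding subdisks slightly off $\partial M'$ through the collar of $\partial M'$ in $M'$, eliminates these extra curves one at a time, so that eventually $\phi^{-1}(\partial M') = \partial D^2$. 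The classical Dehn's Lemma then produces the required embedded disk $D'$.

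Finally, gluing $A$ to $D'$ along their common boundary $C'$ and smoothing the corner yields an embedded disk $D \subset M$ with $\partial D = C$; its interior lies in $(\partial M \times (0, 1/2)) \cup \Int D'$, which is disjoint from $\partial M$, so $D \cap \partial M = C$ as required. The only deep input is Dehn's Lemma itself, which I would cite rather than prove; everything else is a routine manipulation using collars, transversality, and innermost disks. The main obstacle, in the sense of genuinely nontrivial content, is therefore entirely packaged inside the black box of Dehn's Lemma — the arrangement above merely puts our data into a form where that black box applies.
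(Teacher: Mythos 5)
Your argument is correct and is essentially the same route the paper takes: the paper's entire proof is a citation of \cite[Corollary 3.2]{Hat}, which is exactly the statement that a nullhomotopic boundary circle bounds an embedded properly-embedded disk, deduced there from Dehn's Lemma/the Loop Theorem just as you do. Your collar push-off and innermost-circle preprocessing are the standard (and correct) way to put the nullhomotopy into the form required by the classical Dehn's Lemma, so the only substantive input in both treatments is the same black box.
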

\begin{proof}
See \cite[Corollary 3.2]{Hat}.
\end{proof}

\begin{Proposition} \label{Prop:incompressibleequiv}
Let $M$ be a manifold (possibly with boundary) and $S \subset M$ a $2$-sided embedded, connected surface.
Then $S$ is algebraically compressible if and only if there is an embedded $C \subset S$ which is homotopically non-trivial in $S$ and which bounds an embedded \emph{compressing disk} $D \subset M$ which meets $S$ only in its boundary, i.e. $\partial D = C$ and $D \cap S = C$.

In particular, the statement holds if $S = \partial M$.
\end{Proposition}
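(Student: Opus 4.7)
The plan is to treat the two implications separately.

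The backward direction is immediate: if an embedded compressing disk $D$ with $\partial D = C$ is given, then $C$ bounds the disk $D$ in $M$ and is therefore nullhomotopic in $M$, while by hypothesis $C$ is nontrivial in $\pi_1(S)$. Hence $\pi_1(S) \to \pi_1(M)$ is not injective and $S$ is algebraically compressible.

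For the forward direction, the strategy is to reduce to the classical Loop Theorem of Papakyriakopoulos and Stallings (cf \cite{Hat}), which states that for a compact $3$-manifold $N$ with boundary component $F$ such that $\pi_1(F) \to \pi_1(N)$ is not injective, there exists an embedded disk $(D,\partial D) \hookrightarrow (N,F)$ whose boundary is nontrivial in $\pi_1(F)$. If $S \subset \partial M$ (in particular if $S = \partial M$), one applies the Loop Theorem directly to $M$ and $F = S$. Otherwise, using that $S$ is $2$-sided, I would choose a bicollar $S \times (-1,1) \subset M$ disjoint from $\partial M$ and form the cut manifold $M^*$ by removing the open bicollar; its boundary contains one copy of $S$ if $S$ is nonseparating and two copies if $S$ separates.

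The subclaim then to verify is that compressibility of $S$ in $M$ transfers to compressibility of a boundary copy $S^+$ of $S$ in $M^*$, i.e.\ that $\pi_1(S) \to \pi_1(M^*)$ via inclusion of $S^+$ is not injective either. This follows from Seifert--van Kampen: in the separating case $\pi_1(M) \cong \pi_1(M_1) *_{\pi_1(S)} \pi_1(M_2)$, and in the nonseparating case $\pi_1(M)$ is an HNN-extension of $\pi_1(M^*)$ over $\pi_1(S)$; in either case the normal form theorems for amalgamated products and HNN-extensions guarantee that the canonical map $\pi_1(M^*) \to \pi_1(M)$ is injective, so any element of $\pi_1(S)$ dying in $\pi_1(M)$ must already die in $\pi_1(M^*)$. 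Applying the Loop Theorem to $M^*$ with $F = S^+$ then yields an embedded disk $D \subset M^*$ with $\partial D \subset S^+$ and $[\partial D]$ nontrivial in $\pi_1(S^+) = \pi_1(S)$. Since the interior of $M^*$ is canonically identified with $M \setminus S$, the disk $D$ embeds into $M$ and meets $S$ only along $\partial D$, which is the required compressing disk.

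The main obstacle is the Loop Theorem itself, whose proof via the Papakyriakopoulos tower construction is deep but can be quoted from the literature; the remaining work is essentially topological bookkeeping around the cut from $M$ to $M^*$ and a brief group-theoretic passage through the normal form theorems.
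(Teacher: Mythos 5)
The paper's own ``proof'' is just a citation to \cite[Corollary 3.3]{Hat}, and your argument is essentially the standard proof behind that citation: the easy backward direction, plus a reduction of the forward direction to the Loop Theorem by cutting $M$ along the bicollared surface. So the route is the right one and matches the source.

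One intermediate claim, however, is false as you state it: the canonical map $\pi_1(M^*)\to\pi_1(M)$ need \emph{not} be injective, and it is not true that every element of $\pi_1(S)$ dying in $\pi_1(M)$ already dies in $\pi_1(M^*)$. For an unknotted torus $S\subset S^3$, the two complementary solid tori each have fundamental group $\mathbb{Z}$, which certainly does not inject into $\pi_1(S^3)=1$; moreover the longitude of one solid torus dies in $S^3$ but survives in that solid torus (it dies only on the other side). The point is that the normal form theorems for amalgamated products and HNN extensions apply only when the edge maps are injective, which is exactly what may fail here. The step should therefore be run in the contrapositive: if $\pi_1(S)$ injects into the fundamental group of each complementary piece (resp.\ both boundary copies of $S$ inject into $\pi_1(M^*)$ in the nonseparating case), then van Kampen exhibits $\pi_1(M)$ as an honest amalgamated product (resp.\ HNN extension), the normal form theorem applies, and $\pi_1(S)\to\pi_1(M)$ is injective. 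Hence compressibility of $S$ in $M$ forces $\ker(\pi_1(S)\to\pi_1(\cdot))\neq 1$ for at least one component of $M^*$ and one boundary copy of $S$, and you apply the Loop Theorem to that component with that copy as $F$. With this repair (and noting that in the separating case the compressing disk is only guaranteed on one side, which is all the proposition asks for), the argument is complete.
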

\begin{proof}
See \cite[Corollary 3.3]{Hat}.
\end{proof}

\begin{Lemma} \label{Lem:compressingtorus}
Let $M$ be a closed, irreducible manifold and let $T \subset M$ be an embedded, $2$-sided, compressible torus.
Then $T$ separates $M$ into two components $U$, $V$ (i.e. $M = U \cup V$ and $U \cap V = T$) and we can distinguish the following cases:
\begin{enumerate}[label=(\alph*)]
\item Neither of the components $U$ or $V$ is diffeomorphic to a solid torus $S^1 \times D^2$.
Then the compressing disks $D$ for $T$ either all lie in $U$ or in $V$ and for each such $D$ a tubular neighborhood of $D \cup V$ or $D \cup U$ (depending on whether $D \subset U$ or $D \subset V$) is diffeomorphic to a $3$-ball.
\item Only one of the components $U$, $V$ is diffeomorphic to a solid torus.
Assume that this component is $U$.
Then $T$ has compressing disks in $U$.
If it also has compressing disks in $V$, then $U$ is contained in an embedded $3$-ball in $M$ and $U$ is compressible in $M$ (i.e. the map $\IZ \cong \pi_1 (U) \to \pi_1 (M)$ is not injective).
\item Both $U$ and $V$ are diffeomorphic to solid tori.
Then $M$ is diffeomorphic to a spherical space-form.
\end{enumerate}
\end{Lemma}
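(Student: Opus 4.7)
The plan is to surger $T$ along the given compressing disk, use irreducibility of $M$ to bound the resulting $2$-sphere by a $3$-ball, and then case split according to which side of this sphere the ball lies on.

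Since $T$ is two-sided and compressible, Proposition \ref{Prop:incompressibleequiv} furnishes a compressing disk $D \subset M$ with $C = \partial D$ essential on $T$. As $C$ is non-separating on the torus, $A := T \setminus \gamma$ is a single annulus, where $\gamma \subset T$ is a thin annular neighborhood of $C$. Pushing $D$ off itself inside a tubular neighborhood $N(D) \approx D^2 \times [-1,1]$ produces two parallel disks $D_0, D_1$ and an embedded $2$-sphere $S' := A \cup D_0 \cup D_1 \subset M$. A chain computation in $H_2(M;\IZ/2)$ using $\partial N(D) = D_0 + D_1 + \gamma$ gives $[T] = [S']$; irreducibility forces $[S'] = 0$, hence $[T] = 0$, and a two-sided closed surface with vanishing $\IZ/2$-fundamental class separates. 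This yields the splitting $M = \bar U \cup_T \bar V$, where $\bar U, \bar V$ denote the closures of the two components $U, V$ from the statement.

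Relabel so that $D \subset \bar U$. Let $\bar U' := \bar U \setminus \Int(N_{\bar U}(D))$ and $W := N_{\bar U}(D) \cup \bar V$; both have boundary $S'$ and $M = \bar U' \cup_{S'} W$. Irreducibility forces at least one of them to be a $3$-ball. In \emph{Case A}, $\bar U'$ is a $3$-ball, and $\bar U$ is recovered from $\bar U'$ by identifying the two disjoint disks $D_0, D_1 \subset \partial \bar U' \approx S^2$, yielding $\bar U \approx S^1 \times D^2$. In \emph{Case B}, $W$ is a $3$-ball, so $\bar V \subset W$ sits inside a $3$-ball of $M$ and $W$ is precisely the tubular neighborhood of $D \cup V$ from the statement.

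I now read off the three cases. For (c), both $\bar U$ and $\bar V$ are solid tori, so $M$ is obtained by gluing two solid tori along $T$; the classification of genus-one Heegaard splittings gives $M \in \{S^3, L(p,q), S^1 \times S^2\}$, and irreducibility (Proposition \ref{Prop:pi2irred}) rules out $S^1 \times S^2$, leaving a spherical space form. For (b), a meridian disk of the solid torus $\bar U$ is a compressing disk on the $U$ side; if also some $D' \subset \bar V$ is a compressing disk, the cutting argument applied on $\bar V$ excludes Case A (since $\bar V$ is not a solid torus), and Case B places $\bar U$ inside a $3$-ball, so $\pi_1(U) \cong \IZ \to \pi_1(M)$ factors through $0$ and is not injective. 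For (a), assume for contradiction that compressing disks exist on both sides; Case A is excluded on both sides, so Case B applies on both, placing $\bar V$ and $\bar U$ each in a $3$-ball and exhibiting $M$ as a union of two $3$-balls, whence $M \approx S^3$. But in $S^3$ every embedded $2$-sphere bounds a $3$-ball on both sides, so Case A would also hold for $S'$, making $\bar U$ a solid torus and contradicting (a). Hence in (a) all compressing disks lie on one preferred side, and Case B there yields the desired $3$-ball neighborhood. The main obstacle I anticipate is the bookkeeping inside the cutting argument -- carefully distinguishing $S'$ from $\partial N(D)$, correctly identifying the two candidate bounding regions in $M$, and pushing the ``both sides compressible'' subcase of (a) to a contradiction by reapplying the cutting argument inside $S^3$; everything else reduces to standard $3$-manifold cut-and-paste together with the classification of genus-one Heegaard splittings.
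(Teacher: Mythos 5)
Your proof is correct and its skeleton matches the paper's: everything is reduced to the dichotomy that, after surgering $T$ along a compressing disk $D$, irreducibility forces either the cut-open side to be a ball (so that side of $T$ is a solid torus) or the union of $N(D)$ with the opposite side to be a ball, and the three cases are then read off exactly as in the paper. The only divergences are that you prove from scratch the two facts the paper simply cites from Hatcher (the separation of $M$ by $T$, via the $\IZ/2$-homology argument, and the dichotomy itself), you close the both-sides subcase of (a) by applying Alexander's sphere theorem to the surgered sphere $S'$ rather than the torus theorem in $S^3$, and in (c) you invoke the genus-one Heegaard classification in place of the paper's explicit $\pi_1$-kernel computation --- all mathematically equivalent, just more self-contained.
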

\begin{proof}
For the first part see \cite[p. 11]{Hat}.
Let $D$ be a compressing disk for $T$ and assume that $D \subset U$.
Again by \cite[p. 11]{Hat}, we know that either $U$ is a solid torus or a tubular neighborhood  of $D \cup V$ is diffeomorphic to a $3$-ball.
So if in case (a) there are compressing disks for $T$ in both $U$ and $V$, then $M$ is covered by two embedded $3$-balls and we have $M \approx S^3$ by Lemma \ref{Lem:coverMbysth}(a) (observe that the proof of Lemma \ref{Lem:coverMbysth}(a) does not make use of this Lemma).
However, this contradicts the fact that an embedded $2$-torus in $S^3$ bounds a solid torus on at least one side (see \cite[p. 11]{Hat}).
Case (b) is clear.

Consider now case (c).
Let $K_1, K_2 \subset \pi_1(T) \cong \IZ^2$ be the kernels of the projections $\pi_1(T) \to \pi_1(U)$ and $\pi_1(T) \to \pi_1(V)$.
If $K_1 = K_2$, then $M \approx S^1 \times S^2$ contradicting the assumptions on $M$.
So $K_1 \not= K_2$.
Let $a_i \in K_i$ be generators.
By an appropriate choice of coordinates, we can assume that $a_1 = (1,0) \in \IZ^2$ and $a_2 = (p,q) \in \IZ^2$ where $0 \leq p < q$.
Then $M$ is diffeomorphic to the lens space $L(p,q)$.
\end{proof}

\begin{Lemma} \label{Lem:coverMbysth}
Let $M$ be closed manifold and assume that $M = U \cup V$.
Then
\begin{enumerate}[label=(\alph*)]
\item If $U$ and $V$ are diffeomorphic to a ball, then $M \approx S^3$.
\item If $U$ is diffeomorphic to a solid torus $S^1 \times D^2$ and $V$ is diffeomorphic to a ball, then $M \approx S^3$.
\item If $U$ and $V$ are diffeomorphic to a solid torus $\approx S^1 \times D^2$, then $M$ is either not irreducible or it is diffeomorphic to a spherical space form.
\end{enumerate}
\end{Lemma}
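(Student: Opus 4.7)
The proof handles the three parts in turn.

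For part (a), the idea is to show that $M$ is obtained by gluing two $3$-balls along a common $S^2$ boundary. After a slight isotopy of the given covering, I would arrange that $\partial V$ lies in the interior of the ball $U$. Since $M = U \cup V$ forces $M \setminus \mathrm{int}\, V \subset U$, the set $M \setminus \mathrm{int}\, V$ is a compact connected $3$-submanifold of the ball $U$ with boundary the $2$-sphere $\partial V$. By the Schoenflies theorem, $\partial V$ bounds a $3$-ball $B \subset U$, and $M \setminus \mathrm{int}\, V$ must coincide with $B$ (being the unique compact $3$-submanifold of $U \approx D^3$ having $\partial V$ as its boundary). Then $M = V \cup B$ is two $3$-balls meeting along their common boundary $S^2$, and Alexander's trick (any homeomorphism of $S^2$ extends to $D^3$) yields $M \approx S^3$.

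For part (b), the same strategy applies with only one adjustment: the ambient $U$ is now a solid torus rather than a ball. To reduce to case (a), I would use that the solid torus $S^1 \times D^2$ is irreducible, so the embedded $S^2$ given by $\partial V$ still bounds a $3$-ball $B \subset U$. Irreducibility follows because the universal cover of $S^1 \times D^2$ is the ball $\IR \times D^2$, hence $\pi_2(S^1 \times D^2) = 0$, and Proposition \ref{Prop:pi2irred} applies (modulo adapting to the case with boundary, which is standard). As in (a), one then identifies $M \setminus \mathrm{int}\, V$ with $B$, and concludes that $M$ is a union of two $3$-balls glued along $S^2$, hence $M \approx S^3$.

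For part (c), I would recognize $M = U \cup V$ with both $U, V$ solid tori as a genus-one Heegaard splitting of $M$. After isotoping so that $\partial U = \partial V$ is a single torus $T \subset M$ with $U, V$ on opposite sides, the diffeomorphism type of $M$ is controlled by the image in $H_1(T) \cong \IZ^2$ of a meridian of $V$, expressed as $p \mu + q \lambda$ in a meridian-longitude basis of $U$ with $\gcd(p,q) = 1$. The standard classification of genus-one Heegaard splittings yields $M \approx S^3$ when $p = \pm 1$, $M \approx S^1 \times S^2$ when $p = 0$, and a lens space $L(p,q)$ otherwise. Since $S^1 \times S^2$ satisfies $\pi_2(S^1 \times S^2) \neq 0$, it is not irreducible by Proposition \ref{Prop:pi2irred}, so the irreducibility assumption rules it out; in the remaining cases, $M$ is a spherical space form, as claimed.

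The main technical obstacle is the initial normalization: in each part, the given covering $M = U \cup V$ may not be in a convenient position for the Schoenflies or Heegaard argument. For (a) and (b), one must isotope $\partial V$ into the interior of $U$, avoiding intersection with $\partial U$ (if $U$ is taken as a closed submanifold); for (c), one must reduce the possibly thick overlap $U \cap V$ to a single Heegaard torus by collapsing any $T^2 \times I$ product region. These are routine moves in $3$-manifold topology, but deserve care when the initial covering is far from generic. Once the setup is normalized, the rest is classical.
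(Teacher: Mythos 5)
Your parts (a) and (b) are correct and essentially identical to the paper's argument: after the standard thickening that puts the frontier of each piece in the interior of the other, one applies Alexander's theorem (resp.\ the irreducibility of the solid torus) to see that the frontier sphere bounds a ball on the compact side, so $M$ is two balls glued along an $S^2$.

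Part (c) has a genuine gap. A covering $M = U \cup V$ by two solid tori is \emph{not} in general isotopic to a genus-one Heegaard splitting, and the overlap $U \cap V$ cannot always be ``collapsed to a $T^2 \times I$''. Concretely: let $U$ be a closed tubular neighborhood of a trefoil in $S^3$ and let $W \subset \Int U$ be a small unknotted solid torus; then $V = S^3 \setminus \Int W$ is a solid torus and $S^3 = U \cup V$, but $\partial U$ is a knotted torus while $\partial V$ is unknotted, so the two boundaries are not isotopic to a common Heegaard surface, and $U \cap V$ is a knotted solid torus minus an open solid torus, not a product region. Hence the classification of genus-one splittings does not apply directly to the given decomposition. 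What is needed --- and what the paper does --- is to note that $T = \partial U$ lies inside the solid torus $V$ and is therefore compressible, take an embedded compressing disk $D$ via Proposition \ref{Prop:incompressibleequiv}, and case on which side of $T$ it lies: if $D \subset U$, surgering $U$ along $D$ yields a ball which together with $V$ covers $M$, reducing to case (b); if $D \subset M \setminus \Int U$, then Lemma \ref{Lem:compressingtorus}(b) shows that either $U$ is contained in an embedded ball (again reducing to case (b)) or $M \setminus \Int U$ is itself a solid torus --- and only in this last situation is $M$ genuinely a genus-one Heegaard splitting, where your meridian/lens-space analysis (which is correct, and is exactly Lemma \ref{Lem:compressingtorus}(c)) finishes the proof.
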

\begin{proof}
In case (a), we can assume that $U$ and $V$ are the interiors of compact embedded $3$-disks.
So $\partial U \subset V$.
By Alexander's Theorem (cf \cite[Theorem 1.1]{Hat}), $\partial U$ bounds a $3$-disk in $V$.
So $\partial U$ bounds a $3$-disk on both sides and hence $M \approx S^3$.

Case (b) follows along the lines; note that every embedded sphere in a solid torus bounds a ball.

For case (c) we can assume that $M$ is irreducible.
Moreover, by adding collar neighborhoods, we can assume that $\partial U \cap \partial V = \emptyset$.
Let $T = \partial U$ and $V' = M \setminus \Int U$.
Then $T$ is compressible in $V$ and by Proposition \ref{Prop:incompressibleequiv}, we find a spanning disk $D \subset \Int V$.
If also $D \subset U$, then $U \setminus D$ is a $3$-ball and $M = ( U \setminus D ) \cup V$ and we are done by case (b).
So assume that $D \subset V'$.
Then by Lemma \ref{Lem:compressingtorus}(b), either $V'$ is a solid torus or $U$ is contained in an embedded $3$-ball $B \subset M$.
In the latter case $M = B \cup V$ and we are again done by case (b).
Finally, if $V'$ is a solid torus, we are done by Lemma \ref{Lem:compressingtorus}(c).
\end{proof}

\begin{Lemma} \label{Lem:Kleinandsolidtorus}
Let $M$ be a manifold and $T \subset M$  an embedded $2$-torus which separates $M$ into two connected components whose closures $U, V \subset M$ are diffeomorphic to $\Klein^2 \td\times I$ and $S^1 \times D^2$ each.
Then $M$ is either not irreducible or it is diffeomorphic to a spherical space form.
\end{Lemma}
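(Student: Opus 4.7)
The plan is to reduce to Lemma \ref{Lem:coverMbysth}(c) by passing to a well-chosen connected double cover $\td M \to M$ built from the orientation character of the Klein bottle. First, I would observe that the orientation double cover of $\Klein^2$ gives a surjection $\chi_U : \pi_1(U) \cong \pi_1(\Klein^2) \to \IZ/2$ whose pullback to $\pi_1(T)$ is trivial, because the inclusion $T = \partial U \hookrightarrow U$ followed by the retraction $U \to \Klein^2$ factors as the orientation double cover $T^2 \to \Klein^2$. Since the map $\pi_1(T) \to \pi_1(V) \cong \IZ$ is surjective (the meridian goes to $0$, a longitude to a generator), the only homomorphism $\chi_V : \pi_1(V) \to \IZ/2$ that agrees with $\chi_U$ on $\pi_1(T)$ is the zero map. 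By Van Kampen applied to $M = U \cup V$, the compatible pair $(\chi_U, \chi_V)$ defines a homomorphism $\chi_M : \pi_1(M) \to \IZ/2$. It is non-trivial because the kernel of $\pi_1(U) \to \pi_1(M)$ is normally generated by the meridian of $V$ viewed inside $U$, which lies in $\ker \chi_U$ by the observation above.

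Next I would identify the resulting double cover $\td M$. By construction, $p^{-1}(U)$ is the connected double cover of $U = \Klein^2 \td\times I$ corresponding to $\chi_U$, which is the untwisted I-bundle $T^2 \times I$; meanwhile $p^{-1}(V) = V_1 \sqcup V_2$ is the trivial double cover. Therefore
\[ \td M \;=\; V_1 \,\cup\, (T^2 \times I) \,\cup\, V_2, \]
which, after absorbing the collar $T^2 \times I$ into one side, is a union of two solid tori meeting along a torus. Lemma \ref{Lem:coverMbysth}(c) applies directly to $\td M$ and shows that $\td M$ is either non-irreducible or a spherical space form. In the first case, $\pi_2(\td M) \neq 0$ by Proposition \ref{Prop:pi2irred}; since covering maps induce isomorphisms on $\pi_n$ for $n \geq 2$, also $\pi_2(M) \neq 0$, so $M$ is non-irreducible (noting that $M$ is orientable as a union of orientable pieces along a torus, and so Proposition \ref{Prop:pi2irred} applies). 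In the second case, the universal cover of $\td M$ is $S^3$, hence so is the universal cover of $M$, making $M$ a spherical space form.

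The main delicate point is the first paragraph: verifying that the orientation character of the Klein bottle descends to a well-defined and non-trivial homomorphism on $\pi_1(M)$. This hinges on the (easy but essential) fact that $\partial U \hookrightarrow U$ realizes the orientation double cover $T^2 \to \Klein^2$, killing the obstruction class. Once this is in place, the identification of $\td M$ as a union of two solid tori and the descent of the dichotomy from $\td M$ to $M$ are straightforward.
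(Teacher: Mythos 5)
Your proof is correct and follows essentially the same route as the paper: pass to the double cover of $M$ induced by the orientation double cover $T^2 \times I \to \Klein^2 \td\times I$, recognize it as a union of two solid tori glued along a torus, apply Lemma \ref{Lem:coverMbysth}(c), and descend. The only difference is in the descent step, where the paper explicitly lists the possible $\IZ/2$-quotients of $S^1 \times S^2$ and of lens spaces (citing Asano), while you argue via $\pi_2$ and the universal cover together with elliptization --- equally valid here, since the paper already relies on the resolution of the Poincar\'e/Geometrization Conjecture elsewhere.
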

\begin{proof}
Consider the double cover $\widehat{U} \to U$ for which $\widehat{U} \approx T^2 \times I$.
This cover extends to a double cover $\widehat{M} \to M$.
Let $T' \subset \widehat{M}$ be the torus which projects to the zero section in $\Klein^2 \td\times I$.
Then as in the last part of the proof of Lemma \ref{Lem:compressingtorus}(c) we can write $\widehat{M} = S_1 \#_{T'} S_2$ where $S_1$ and $S_2$ are solid tori.
So $\widehat{M}$ is either diffeomorphic to $S^1 \times S^2$ or a lens space.
In the first case, $M$ is either diffeomorphic to $S^1 \times S^2$ or $\IR P^3 \# \IR P^3$ and in the second case, $M$ is still spherical (see also \cite{Asa}).
\end{proof}

The following Lemma will be important in the proof of Lemma \ref{Lem:SStori}.

\begin{Lemma} \label{Lem:Seifertfiberincompressible}
Let $M$ be compact, orientable, irreducible manifold (possibly with boundary) which is not diffeomorphic to a spherical space form.
Consider a compact, connected $3$-dimensional submanifold $N \subset M$ whose boundary components are tori and which carries a Seifert fibration that is compatible with these boundary tori.
Assume that each boundary component $T \subset \partial N$ which is compressible in $M$, either bounds a solid torus $\approx S^1 \times D^2$ on the other side or $T$ separates $M$ into two components and is incompressible in the component of $M \setminus T$ which does not contain $N$ (if $T \subset \partial M$, then this component is empty).

Then either there is one boundary torus $T \subset \partial N$ which bounds a solid torus on the same side as $N$ or every boundary component of $N$ either bounds a solid torus on the side opposite to $N$ or it is even incompressible in $M$.
Moreover, in the latter case, the generic Seifert fibers of $N$ are incompressible in $M$.
\end{Lemma}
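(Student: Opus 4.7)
The plan is to case-split on whether $N$ is itself a solid torus. If $N\approx S^1\times D^2$, then its unique boundary torus bounds $N$—a solid torus—on the $N$-side, and the first alternative of the conclusion holds immediately. For the rest I assume $N$ is not a solid torus, and invoke the standard fact from Seifert fibration theory: under this assumption every boundary torus of $N$ is incompressible in $N$, and the generic Seifert fiber $\gamma$ generates an infinite cyclic normal subgroup of $\pi_1(N)$. This will underlie everything that follows.

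For the first half of the second alternative, fix a boundary torus $T\subset\partial N$ that is compressible in $M$; the goal is to rule out the hypothesis' option ``$T$ separates $M$ and is incompressible in the opposite component $V$'', leaving only ``$T$ bounds a solid torus on the opposite side''. By Proposition~\ref{Prop:incompressibleequiv} and the fact that an embedded compressing disk meets $T$ only on its boundary, any compressing disk $D\subset M$ for $T$ lies on one side of $T$; under the contrary assumption it must lie in the $N$-side. I then run an innermost-circle argument on $D\cap\partial N$: a trivial intersection circle, together with the disk it bounds on $\partial N$, forms a $2$-sphere in $M$ enclosing a $3$-ball by irreducibility of $M$, across which $D$ can be isotoped; a nontrivial innermost circle on some other boundary torus $T_j\neq T$ of $N$ would give a compressing disk either inside $N$ (contradicting incompressibility of $T_j$ in $N$) or on the opposite side of $T_j$, which by hypothesis is then a solid torus, inside which the innermost sub-disk can be capped and removed. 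After finitely many reductions the disk lies in $N$, contradicting incompressibility of $T$ in $N$.

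For the generic-fiber incompressibility, suppose $\gamma$ is null-homotopic in $M$ and isotope it along the Seifert structure so that it lies as the fiber slope on a boundary torus $T\subset\partial N$. If $T$ is incompressible in $M$, then injectivity of $\pi_1(T)\to\pi_1(M)$ forces $\gamma$ to be null already in $T$, which is impossible since fiber slopes are primitive in $\pi_1(T)\cong\IZ^2$. If instead $T$ bounds a solid torus $S$ on the opposite side, write $M=M_N\cup_T S$, where $M_N$ is the component of $M$ cut along $T$ that contains $N$, and apply van Kampen's theorem: the kernel of $\pi_1(T)\to\pi_1(M)$ is generated in $\pi_1(T)$ by the meridian of $S$, so nullity of $\gamma$ forces the meridian slope of $S$ to coincide with the fiber slope. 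In this ``fiber-slope filling'' configuration, a collar $T\times[0,\varepsilon]\subset N$ combined with $S$ assembles into an embedded solid torus $S'\subset M$ bounded by the boundary-parallel torus $T':=T\times\{\varepsilon\}$, and $N':=N\setminus T\times[0,\varepsilon)$ is again Seifert fibered, still not a solid torus, with $T'$ bounding $S'$ on the side opposite to $N'$. Applying Lemma~\ref{Lem:compressingtorus} to $T'$—compressible from the $S'$-side, and incompressible on the $N'$-side by the incompressibility of boundary tori of $N'$ as in the first half—one concludes either that $M$ contains an embedded $3$-ball violating irreducibility in a controlled way, or that $M$ must be spherical, both contradicting the standing hypotheses.

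The main obstacle I anticipate is the innermost-disk bookkeeping in the first half, in which the reduction has to be carried out carefully across the multiple boundary tori of $N$ while invoking the lemma's hypothesis for each of them; and the orchestration in the fiber-slope case in the second half, where van Kampen's theorem, the Seifert structure on $N$, Lemma~\ref{Lem:compressingtorus}, and the irreducibility of $M$ must be combined in the right order to produce the final contradiction.
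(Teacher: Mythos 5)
There is a genuine gap, and it occurs at the very first step: you replace the lemma's first alternative ``some boundary torus of $N$ bounds a solid torus on the \emph{same} side as $N$'' by the strictly weaker condition ``$N$ is itself a solid torus'', and then set out to prove alternative (B) whenever $N$ is not a solid torus. That statement is false. Take $M$ to be a nontrivial knot exterior $W$ Dehn-filled by a solid torus $S$ (with the filling chosen so that $M$ is irreducible and not spherical), let $N \approx T^2\times I$ be a collar of $T_0=\partial S$ inside $S$, Seifert-fibered by circles in the meridian slope of $S$, and let $T_1$ be the other boundary torus of $N$, bounding the solid torus $S_1=\overline{S\setminus N}$ on the side opposite to $N$. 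All hypotheses of the lemma hold ($T_1$ bounds a solid torus on the far side; $T_0$ is incompressible in $W$), and $N$ is not a solid torus, yet $T_0$ neither bounds a solid torus on the side opposite to $N$ nor is incompressible in $M$, and the generic fiber is null-homotopic in $M$. The lemma is saved only by alternative (A): $T_0$ bounds the solid torus $S\supset N$ on the same side as $N$. So (A) is genuinely broader than ``$N$ is a solid torus'', and the correct hypothesis under which to prove (B) is the negation of (A) itself — which is exactly what the paper assumes.

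Because the conclusion you are aiming for is false under your hypotheses, both halves of your argument must break down, and they do at identifiable points. In the innermost-circle reduction, when an innermost sub-disk $D'$ lies on the far side of some $T_j$ and that side is a solid torus $S_j$, the circle is a meridian of $S_j$ and $D'$ is a meridian disk; it cannot be ``capped and removed'' — no surgery or isotopy of $D$ decreases $|D\cap T_j|$ here, and in the example above every compressing disk for $T_0$ on the $N$-side must meet $S_1$ essentially. In the fiber-slope-filling case of your second half, the appeal to Lemma \ref{Lem:compressingtorus} yields no contradiction: one side of $T'$ is the solid torus $S'$, so case (b) of that lemma applies, and since $T'$ may well be incompressible on the other side nothing forces $M$ to be reducible or spherical. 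The repair is to assume from the outset that no boundary torus bounds a solid torus on the $N$-side and to use that assumption actively; the paper does so by first extending the fibration across any solid-torus filling in which the fiber survives (reducing to fiber-killing fillings), and then using vertical annuli over arcs in the base orbifold to manufacture non-separating spheres, which constrain the base and rule out the remaining configurations.
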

\begin{proof}
Some of the following arguments can also be found in \cite{Faessler} and \cite{MorganTian}.
Denote the boundary tori of $N$ by  $T_1, \ldots, T_m$.
Assume that there is a component $T_i$ which bounds a solid torus $S_i$ on the side opposite to $N$ such that the Seifert fibers in $T_i$ are incompressible in $S_i$.
Then we can extend the Seifert fibration of $N$ to $S_i$.
So assume in the following that for any $T_i$ which bounds a solid torus $S_i$ on the other side, the Seifert fibers of $T_i$ are nullhomotopic in $S_i$.
Denote by $B$ the base orbifold of the Seifert fibration on $N$ and call the projection $\pi : N \to B$.
We remark that since $M$ is orientable, the only singular points of $B$ are cone points.
Each $T_i$ corresponds to a boundary circle $C_i = \pi(T_i) \subset \partial B$.

We first show that that there is at most one $T_i$ which bounds a solid torus $S_i$ on the side opposite to $N$ (we will call it from now on $T_1$):
Assume, there were two such components $T_1$ and $T_2$ and denote the respective solid tori by $S_1$ and $S_2$.
Let $\alpha \subset B$ be an embedded curve connecting $C_1$ and $C_2$ which does not meet any singular points.
The preimage $Z_{\alpha} = \pi^{-1}(\alpha) \subset N$ is a cylinder whose boundary components are each nullhomotopic in $S_1$ resp. $S_2$.
Let $D_1 \subset S_1$ resp. $D_2 \subset S_2$ be spanning disks for $Z_{\alpha} \cap \partial S_1$ resp. $Z_{\alpha} \cap \partial S_2$.
Then $\Sigma_\alpha = D_1 \cup Z_\alpha \cup D_2$ is an embedded $2$-sphere.
Since $D_1$ or $D_2$ are non-separating in $S_1$ resp. $S_2$, we conclude that $\Sigma_\alpha$ is non-separating in $M$.
This contradicts the assumption that $M$ is irreducible.

Next, we show that if $T_1$ bounds a solid torus $S_1$ on the side opposite to $N$, then the topological surface underlying $B$ is (a disk or) a multi-annulus:
Assume not.
Then there is an embedded, non-separating curve $\alpha \subset B$ whose endpoints are distinct and lie in $C_1$.
As before, this yields a non-separating sphere $\Sigma_\alpha \subset M$ contradiction the irreducibility assumption of $M$.

Assume now for the rest of the proof that none of the tori $T_i$ bound a solid torus on the same side as $N$.
We will show in the following that then none of the tori $T_i$ bounds a solid torus on either side and that all $T_i$ as well as that the generic Seifert fibers on $N$ are incompressible in $M$.

First assume that $T_1$ bounds a solid torus $S_1$ (on the side opposite to $N$).
So the topological surface underlying $B$ is a multi-annulus.
We can find a collection of embedded curves $\alpha_1, \ldots, \alpha_k \subset B$ with endpoints in $C_1$ which do not meet any singular points and which cut $B$ into smaller pieces, each of which contain at most one singular point or one boundary component and which are bounded by at most two of the curves $\alpha_i$ and parts of $C_1$.
The corresponding spheres $\Sigma_{\alpha_1}, \ldots, \Sigma_{\alpha_k} \subset M$ bound closed $3$-balls $B_1, \ldots, B_k \subset M$.
Two such balls are either disjoint or one is contained in the other.
Hence, either there is one $B_i$ containing all other balls or there are two balls $B_i$, $B_j$ such that any ball is contained in one of them.
It is easy to conclude from the position of these balls relatively to $S_1$ that $U = S_1 \cup B_i$ resp. $S_1 \cup B_i \cup B_j$ is diffeomorphic to a solid torus.
We can now distinguish the following cases:
\begin{enumerate}[label=$-$]
\item If $\alpha_i$ (and possibly $\alpha_j$) enclose an orbifold singularity, then the complement of $U$ is a solid torus and we obtain a contradiction using Lemma \ref{Lem:coverMbysth}(c).
\item If they enclose a boundary component $C_k$ of $B$, then we argue as follows:
Let $\alpha'$ be a curve connecting $C_k$ with $C_1$ which does not intersect $\alpha_i \cup \alpha_j$ and choose a spanning disk $D' \subset S_1$ for the curve $Z_{\alpha'} \cap \partial S_1$.
Then $Z_{\alpha'} \cup D'$ is a compressing disk for $T_k$ and $T_k$ does not bound a solid torus.
So by Lemma \ref{Lem:compressingtorus}(a), a tubular neighborhood of $T_k \cup Z_{\alpha'} \cup D'$ is diffeomorphic to a $3$-ball.
This implies that $M$ is covered by a solid torus and a ball and Lemma \ref{Lem:coverMbysth}(b) gives us a contradiction.
\end{enumerate}
Hence, none of the $T_i$ bound a solid torus on either side.

We argue that the generic Seifert fibers of $N$ are incompressible in $N$:
Using Lemma \ref{Lem:coverMbysth}(c), it is easy to see that $B$ cannot be a bad orbifold (i.e. the tear drop or the football) or a quotient of the $2$-sphere.
So, we can find a (possibly non-compact) cover $\widehat B \to B$ such that $\widehat B$ is smooth and corresponding to this a cover $\widehat N \to N$ such that we have an $S^1$-fibration $\widehat N \to \widehat B$.
Observe that $\widehat B$ is not a $2$-sphere, because otherwise by Lemma \ref{Lem:coverMbysth}(c) $\widehat N \approx S^3$ in contradiction to our assumptions.
Using the long exact homotopy sequence and the fact that $\pi_2(\widehat B) = 0$, we see that a lift of any generic $S^1$-fiber $\gamma$ is incompressible in $\widehat N$ implying that $\gamma$ is incompressible in $N$.

Next we show that any generic $S^1$-fiber $\gamma$ of $N$ is incompressible in $M$:
Assume there is a nullhomotopy $f : D^2 \to M$ for a non-zero multiple of $\gamma$.
By a small perturbation, we can assume that $f$ is transversal to the boundary tori $T_1, \ldots, T_m$.
So $f^{-1} ( T_1 \cup \ldots \cup T_m)$ consists of finitely many circles.
Look at one of those circles $\gamma' \subset D^2$ which is innermost in $D^2$ and assume $f(\gamma') \subset T_i$.
If $f|_{\gamma'}$ is homotopically trivial in $T_i$, then we can alter $f$ such that $\gamma'$ is removed from the list.
So assume that $f|_{\gamma'}$ is homotopically non-trivial in $T_i$.
Let $D' \subset D^2$ be the disk which is bounded by $\gamma'$.
Then by Proposition \ref{Prop:incompressibleequiv} and Lemma \ref{Lem:compressingtorus}(b) we have $f(D') \subset N$.
Since the generic Seifert fibers of $N$ are incompressible in $N$, $f|_{\gamma'}$ cannot be homotopic to such a fiber, so it projects down to a curve which is homotopic to a non-zero multiple of the boundary circle $C_i$ under $\pi$.
Hence, a non-zero multiple of $C_i$ is homotopically trivial in $\pi_{1, \textnormal{orbifold}}(B)$.
We conclude that $B$ can only be a disk with possibly one orbifold singularity.
But this implies that $N$ is diffeomorphic to a solid torus, in contradiction to our assumptions.

It remains to show that all tori $T_i$ are incompressible in $M$.
By Lemma \ref{Lem:compressingtorus}(a), we conclude that of $T_i$ is compressible in $M$, then $T_i$ is contained in an embedded $3$-ball.
But this however contradicts the fact that the generic Seifert fibers of $N$ are incompressible in $M$.
\end{proof}

\section{Perelman's long-time analysis results and certain generalizations} \label{sec:Perelman}
\subsection{Perelman's long-time curvature estimates}
In this subsection, we will review some of Perelman's long-time analysis results (see \cite{PerelmanII}).
We will generalize these results to the boundary case and go through most of their proofs.
The most important result of this section will be Proposition \ref{Prop:genPerelman} below and will be used in section \ref{sec:maintools}, however many of the Lemmas leading to this Proposition will also be used in that section.
The boundary case will be important for use, because we want to do analysis in local covers.

The following notation will be used throughout the whole paper.
\begin{Definition} \label{Def:rhoscale}
Let $(M,g)$ be a Riemannian manifold and $x \in M$ a point.
We define
\[ \rho(x) = \sup \{ r \;\; : \;\; \sec \geq - r^{-2} \;\; \text{on} \;\; B(x,r) \}. \]
For $r _0 > 0$ we set furthermore $\rho_{r_0}(x) = \min \{ \rho(x), r_0 \}$.
If $(M, g) = \MM(t)$ is the time-slice of a Ricci flow (with surgery) $\MM$, then we often use the notation $\rho(x,t)$ and $\rho_{r_0} (x,t)$.
\end{Definition}

We now present the main result of this section.
We point out a small inaccuracy which we will encounter in the next Proposition as well as in the following results of this section and which we will from now on accept without further mention:
We will often be dealing with Ricci flows with surgery $\MM$ defined on a time-interval of the form $[ t_0 - r_0^2, t_0]$ and most results require certain canonical neighborhood assumptions to hold on $\MM$.
For times which are very close to $t_0 - r_0^2$ this may be problematic since strong $\varepsilon$-necks might stick out of the time-interval.
This inconsistency however does not create any problems since in our applications $\MM$ will always arise from the restriction of a Ricci flow with surgery defined on the time-interval $[0, \infty)$.
One could resolve this issue e.g. by requiring the canonical neighborhood assumptions to hold on a slightly smaller time-interval $[t_0 - 0.99 r_0^2, t_0]$ or by adapting the definition of strong $\varepsilon$-necks.

\begin{Proposition}[\hbox{\cite[6.8]{PerelmanII}} in the non-compact case] \label{Prop:genPerelman}
There is a constant $\varepsilon_0 > 0$ such that for all $w, r, \eta > 0$ and $E < \infty$ and $1 \leq A < \infty$ and $m \geq 0$ there are $\tau = \tau(w, A, E, \eta), \ov{r} = \ov{r}(w, A, E, \eta), \widehat{r} = \widehat{r}(w, E, \eta), \delta = \delta(r, w, A, E, \eta, m) > 0$ and $K_m = K_m (w, A, E, \eta), C_1 = C_1(w, A, E, \eta), Z = Z(w, A, \linebreak[1] E, \linebreak[1] \eta) \linebreak[1] < \infty$ such that: \\
Let $r_0^2 \leq t_0/2$ and let $\MM$ be a Ricci flow with surgery (whose time-slices are allowed to have boundary) on the time-interval $[t_0 - r_0^2, t_0]$ which is performed by $\delta$-precise cutoff and consider a point $x_0 \in \MM(t_0)$.
Assume that the canonical neighborhood assumptions $CNA (r, \varepsilon_0, E, \eta)$ as described in Definition \ref{Def:CNA} are satisfied on $\MM$.
We also assume that the curvature on $\MM$ is uniformly bounded on compact time-intervals which don't contain surgery times and that all time slices of $\MM$ are complete.

In the case in which some time-slices of $\MM$ have non-empty boundary, we assume that
\begin{enumerate}[label=(\roman*)]
\item For all $t_1, t_2 \in [t_0 - \frac1{10} r_0^2, t_0]$, $t_1 < t_2$ we have:
if some $x \in B(x_0, t_0, r_0)$ survives until time $t_2$ and $\gamma : [t_1, t_2] \to \MM$ is a space-time curve with endpoint $\gamma(t_2) \in B(x, t, (A+3) r_0)$ which meets the boundary $\partial\MM$ somewhere, then it has $\LL$-length $\LL(\gamma) > Z r_0$ (based in $t_2$, see (\ref{eq:defLLlength})).
\item For all $t \in [t_0 - \frac1{10} r_0^2, t_0]$ we have: if some $x \in B(x_0, t_0, r_0)$ survives until time $t$, then $B(x, t, 2(A+3) r_0 + r)$ does not meet the boundary $\MM(t)$.
\end{enumerate}
Now assume that
\begin{enumerate}[label=(\roman*), start=3]
\item $r_0 \leq \ov{r} \sqrt{t_0}$,
\item $\sec_{t_0} \geq - r_0^{-2}$ on $B(x_0, t_0, r_0)$ and
\item $\vol_{t_0} B(x_0, t_0, r_0) \geq w r_0^3$.
\end{enumerate}
Then $|{\nabla^k \Rm}| < K_m r_0^{-2 - k}$ on $B(x_0, t_0, A r_0)$ for all $k \leq m$.
In particular, if $r_0 = \rho(x_0, t_0)$, then $r_0 > \widehat{r} \sqrt{t_0}$.

If moreover $C_1 \delta \leq r_0$, then the parabolic neighborhood $P(x_0, t_0, A r_0, -\tau r_0^2)$ is non-singular and we have $|{\nabla^k \Rm}| < K_k r_0^{-2-k}$ on $P(x_0, t_0, A r_0, -\tau r_0^2)$ for all $k \leq m$.
\end{Proposition}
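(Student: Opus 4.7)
The plan is to adapt Perelman's proof of \cite[6.8]{PerelmanII} via a contradiction-compactness argument; the new content over Perelman's original proof is that assumptions (i) and (ii) are tailored precisely to handle the presence of $\partial\MM$. I first fix $\varepsilon_0$ small enough that every strong $\varepsilon_0$-neck persists in any pointed Cheeger-Gromov-Hamilton limit, and normalize throughout by parabolically rescaling so that $r_0 = 1$.

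Assume for contradiction that a sequence $\MM_n$ satisfies the hypotheses with fixed $w, A, E, \eta$, but admits points $y_n \in B(x_{0,n}, t_{0,n}, A)$ with $|{\Rm}|(y_n, t_{0,n}) \to \infty$. The first step is to establish $\kappa$-noncollapsedness on $B(x_{0,n}, t_{0,n}, A+3)$ at bounded scales. Combining (iv) and (v) with Bishop-Gromov on the Alexandrov space $B(x_{0,n}, t_{0,n}, 1)$ yields a $\kappa(w)$-lower volume bound at every scale inside that ball, which I then propagate outward via Perelman's reduced-volume monotonicity based at $(x_{0,n}, t_{0,n})$. Here (i) enters critically: if $Z$ is chosen larger than an a priori upper bound (depending only on $A, E, \eta$) for the $\LL$-length of any minimizing $\LL$-geodesic from $(x_{0,n}, t_{0,n})$ to a point in the region under consideration, then (i) forces every such minimizing $\LL$-geodesic to stay away from $\partial\MM$, so the standard reduced-volume monotonicity transfers without a boundary-flux correction.

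Next I would run Perelman's point-picking to produce a near-worst point $y'_n \in B(x_{0,n}, t_{0,n}, A+1)$ of curvature $Q_n \to \infty$ which dominates the curvature in a parabolic neighborhood of radius $(\eta Q_n)^{-1/2}$ on the time interval $(t_{0,n} - (\eta Q_n)^{-1}, t_{0,n}]$. The canonical neighborhood assumption then forces $(y'_n, t_{0,n})$ to be a center of a strong $\varepsilon_0$-neck or an $(\varepsilon_0, E)$-cap: the closed $E$-pinched case is ruled out because Bonnet-Myers would force such a component to have diameter $\lesssim E Q_n^{-1/2} \to 0$, incompatible with the bounded distance to $x_{0,n}$ in the same component where $\sec \geq -1$; the $\IR P^3 \setminus \ov{\IB}^3$-cap case is handled by the extra double-cover clause in the $CNA$. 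By assumption (ii), $\partial\MM$ sits at distance $\geq 2(A+3) + r$ from $y'_n$, so after rescaling by $\sqrt{Q_n}$ a pointed Cheeger-Gromov-Hamilton limit exists on arbitrarily large spacetime balls and, by $\kappa$-noncollapsing, is a $\kappa$-solution. Such a limit has bounded curvature on bounded balls, contradicting the unbounded curvatures produced by the point-picking along the minimizing geodesic from $y'_n$ back to $x_{0,n}$. This establishes the zeroth-order bound $|{\Rm}| < K_0 r_0^{-2}$ on $B(x_0, t_0, A r_0)$, and Shi's interior estimates deliver the derivative bounds for $k \leq m$.

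The secondary clause $r_0 > \widehat r\sqrt{t_0}$ when $r_0 = \rho(x_0, t_0)$ follows by a second rescaling: if $r_0/\sqrt{t_0}\to 0$, rescaling to $r_0 = 1$ sends $t_{0,n} \to \infty$, and the Hamilton-Ivey $t^{-1}$-pinching, together with the just-proved curvature bound and $\kappa$-noncollapsing, forces any subsequential Cheeger-Gromov limit of $B(x_{0,n}, t_{0,n}, A)$ to be a nonnegatively curved complete manifold, contradicting $\sec = -1$ somewhere in $B(x_0, 1)$ (which is imposed by $\rho = 1$). For the final statement, choosing $C_1$ large enough that $C_1 \delta \leq r_0$ makes the surgery scale $\delta\sqrt{t_0}$ so small compared to $K_0^{-1/2} r_0$ that any surgery point inside $P(x_0, t_0, A r_0, -\tau r_0^2)$ would carry curvature $\gg K_0 r_0^{-2}$, contradicting the zeroth-order bound extended backward in time by short-time continuity; Shi's parabolic estimates then give the derivative bounds on the full parabolic neighborhood. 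The main obstacle is the noncollapsing propagation in the second paragraph: condition (i) is calibrated precisely so that Perelman's reduced-volume monotonicity survives the presence of $\partial\MM$, and identifying the correct a priori bound for $Z$ requires a careful inspection of $\LL$-geodesic length estimates in the original argument.
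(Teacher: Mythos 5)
Your overall strategy (noncollapsing via $\LL$-geometry with $Z$ calibrated so that minimizing $\LL$-geodesics avoid $\partial\MM$, then point-picking plus a blow-up to a $\kappa$-solution, then Shi, then Hamilton--Ivey for the $r_0=\rho$ clause) is the right family of arguments, and your reading of the roles of (i) and (ii) matches how they are used in the paper's Lemmas \ref{Lem:6.3a} and \ref{Lem:6.3bc}. But there is a genuine gap in your first step, and it is exactly the point where Perelman's 6.8 is delicate. You claim that (iv) and (v) plus Bishop--Gromov give a $\kappa(w)$-volume bound ``at every scale inside that ball,'' and that reduced-volume monotonicity then propagates noncollapsing outward. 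Neither half works as stated. First, Bishop--Gromov from $\vol B(x_0,r_0)\geq w r_0^3$ with small $w$ only controls balls centered at $x_0$; at other points $z\in B(x_0,r_0)$ the normalized volume of small balls can be far below $w$ (this is precisely non-uniform collapse), and the lower sectional bound is only assumed on $B(x_0,r_0)$, not on the larger balls Bishop--Gromov would need. Second, and more seriously, the reduced-volume argument (the maximum-principle bound on $\ov{L}$, as in Lemma \ref{Lem:6.3a}) requires a curvature bound on a \emph{parabolic} neighborhood $P(x_0,t_0,r_0,-r_0^2)$ of the volume-controlled region, i.e.\ backward-in-time control near $x_0$. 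Your hypotheses (iv), (v) are purely time-$t_0$ statements, so you cannot invoke that lemma at scale $r_0$; the backward control is part of what is being proved, and your argument is circular at this point.

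The paper resolves this with a multi-scale structure that is absent from your proposal: Lemma \ref{Lem:6.6} locates a sub-ball $B(y,\theta_0(w)r_0)$ whose normalized volume is bounded below by the \emph{universal} constant $\tfrac1{10}$ (independent of $w$ --- this universality is what makes the induction close), and Lemma \ref{Lem:6.4} runs a point-picking over \emph{scales} $r_0'\mapsto\theta_0 r_0'$, terminating at the canonical-neighborhood scale $r$, which first establishes the parabolic curvature bound at the small scale $\theta_0 r_0$; only then can Lemma \ref{Lem:6.3a} be applied, and Lemma \ref{Lem:6.3bc}(b) (bounded curvature at bounded distance) spreads the bound out to $B(x_0,A r_0)$. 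Without this descent your single blow-up argument has no noncollapsing input at the high-curvature points $y_n'$ at the blow-up scale $Q_n^{-1/2}$, so the limit need not be a $\kappa$-solution. Two smaller omissions: the first conclusion must also hold when $C_1\delta> r_0$ (forcing $r_0<r$), where the parabolic neighborhood may be singular and the derivative bounds at time $t_0$ come from closeness to the surgery model rather than from Shi backward in time; and in Lemma \ref{Lem:6.3a} the reduced volume is based at the point $(x_1,t_0)$ to be shown noncollapsed, not at $(x_0,t_0)$.
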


The following Corollary is a consequence of Propositions \ref{Prop:genPerelman} and \ref{Prop:CNThm-mostgeneral}.

\begin{Corollary}[cf \hbox{\cite[6.8, 7.3]{PerelmanI}}] \label{Cor:Perelman68}
There is a continuous positive function $\delta : [0, \infty) \to (0, \infty)$ such that for every $w > 0$, $1 \leq A < \infty$ and $m \geq 0$ there are constants $\tau = \tau(w, A), \ov{\rho} = \ov{\rho} (w, A), \ov{r} = \ov{r} (w, A), c_1 = c_1(w, A) > 0$ and $T = T(w, A, m), K_m = K_m(w, A) < \infty$ such that:

Let $\MM$ be a Ricci flow with surgery on the time-interval $[0, \infty)$ with normalized initial conditions (whose time-slices are all compact) and which is performed by $\delta(t)$-precise cutoff.
Let $t > T$ and $x \in \MM (t)$.
\begin{enumerate}[label=(\textit{\alph*})]
\item If $0 < r \leq \min \{ \rho(x,t), \ov{r} \sqrt{t} \}$ and $\vol_t B(x,t, r) \geq w r^3$, then $|{\nabla^k \Rm}| < K_m r_0^{-2-k}$ on $B(x,t, Ar)$ for all $k \leq m$.
Moreover, if all surgeries on the time-interval $[t - r^2, t]$ are performed by $c_1 r$-precise cutoff, then the parabolic neighborhood $P(x, t, Ar, - \tau r^2)$ is non-singular and we have $|{\nabla^k \Rm}| < K_m r^{-2 - k}$ on $P(x, t, Ar, - \tau r^2)$ for all $k \leq m$.
\item If $\vol_t B(x,t,\rho(x,t)) \geq w \rho^3 (x,t)$, then $\rho(x,t) > \ov{\rho} \sqrt{t}$ and the parabolic neighborhood $P(x,t, A \sqrt{t}, - \tau t)$ is non-singular and we have $|{\nabla^k \Rm}| < K_m t^{-1-k/2}$ on $P(x,t, A \sqrt{t}, - \tau t)$ for all $k \leq m$.
\end{enumerate}
\end{Corollary}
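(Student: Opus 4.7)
The argument amounts to combining the canonical neighborhood theorem (Proposition \ref{Prop:CNThm-mostgeneral}) with the generalized Perelman estimate (Proposition \ref{Prop:genPerelman}), so the main work is in choosing the various parameters consistently. I would begin by fixing once and for all $\varepsilon = \varepsilon_0$ from Proposition \ref{Prop:genPerelman} and applying Proposition \ref{Prop:CNThm-mostgeneral} at this $\varepsilon$. This yields constants $\un\eta, \un{E}_{\varepsilon_0}$ and continuous positive functions $\un{r}_{\varepsilon_0}(t), \un\delta_{\varepsilon_0}(t), \un\kappa(t)$ such that any Ricci flow with surgery performed by $\un\delta_{\varepsilon_0}(t)$-precise cutoff automatically satisfies the canonical neighborhood assumptions $CNA(\un{r}_{\varepsilon_0}(t), \varepsilon_0, \un{E}_{\varepsilon_0}, \un\eta)$ on all of $\MM$ and is $\un\kappa(t)$-noncollapsed on scales $<\sqrt{t}$. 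These will be the fixed values of $E$ and $\eta$ in all subsequent applications of Proposition \ref{Prop:genPerelman}.

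Next I would define the function $\delta(t)$ of the corollary by taking $\delta(t) = \min\{\un\delta(t), \un\delta_{\varepsilon_0}(t), f(t)\}$ where $\un\delta(t)$ comes from Proposition \ref{Prop:RFwsurg-existence} and $f(t)$ is a continuous positive decreasing function chosen so that $f(t) \to 0$ fast enough. The critical point is that $\delta(t)$ must be independent of the parameters $w, A, m$, while Proposition \ref{Prop:genPerelman} requires preciseness $\delta_P = \delta_P(\un{r}_{\varepsilon_0}(\,\cdot\,), w, A, \un{E}_{\varepsilon_0}, \un\eta, m)$. This conflict is resolved by making $T = T(w, A, m)$ large, depending on $w, A, m$: choose $T$ so that for every $t > T$ and every $r \leq \ov{r}\sqrt{t}$, one has $\delta(t') \leq \delta_P(\un{r}_{\varepsilon_0}(t'), w, A, \un{E}_{\varepsilon_0}, \un\eta, m)$ uniformly on $[t-r^2, t]$. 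Since $\delta(t) \to 0$ while $\un{r}_{\varepsilon_0}$ and the required $\delta_P$ tend to zero only at a prescribed rate, such a $T$ exists.

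For part (a), given $w, A, m$ I would set $\ov{r} = \min\{\ov{r}_P(w, A, \un{E}_{\varepsilon_0}, \un\eta), 1/\sqrt{2}\}$, so that the hypothesis $r \leq \ov{r}\sqrt{t}$ of the corollary implies $r^2 \leq t/2$. Put $\tau = \tau(w, A, \un{E}_{\varepsilon_0}, \un\eta)$ and $K_m = K_m(w, A, \un{E}_{\varepsilon_0}, \un\eta)$ from Proposition \ref{Prop:genPerelman}, and $c_1 = 1/C_1(w, A, \un{E}_{\varepsilon_0}, \un\eta)$. Now given $t > T$ and $x \in \MM(t)$ satisfying the hypotheses, apply Proposition \ref{Prop:genPerelman} on the restriction of $\MM$ to $[t - r^2, t]$ with $r_0 = r$, $t_0 = t$, $x_0 = x$. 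The time-slices of $\MM$ are closed (no boundary), so hypotheses (i) and (ii) of Proposition \ref{Prop:genPerelman} are vacuous, and (iii)--(v) are precisely the hypotheses of the corollary. The conclusion gives the ball estimate $|\nabla^k \Rm| < K_m r^{-2-k}$ on $B(x, t, Ar)$. If furthermore all surgeries on $[t-r^2, t]$ are $c_1 r$-precise, then $C_1 \delta \leq r$ holds on that time interval, and the parabolic neighborhood conclusion of Proposition \ref{Prop:genPerelman} yields the second half of (a).

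For part (b), apply Proposition \ref{Prop:genPerelman} with $r_0 = \rho(x,t)$; the volume and sectional curvature hypotheses are met by assumption. The ``in particular'' clause of Proposition \ref{Prop:genPerelman} gives $\rho(x, t) > \widehat{r}(w, \un{E}_{\varepsilon_0}, \un\eta)\sqrt{t}$, and I would set $\ov\rho = \widehat{r}(w, \un{E}_{\varepsilon_0}, \un\eta)$. Since $\rho(x,t) > \ov\rho\sqrt{t}$, the ball $B(x, t, A\sqrt{t})$ is contained in $B(x, t, (A/\ov\rho)\rho(x,t))$; applying Proposition \ref{Prop:genPerelman} with the parameter $A$ replaced by $A/\ov\rho$ (and adjusting $K_m, \tau, c_1$ correspondingly) gives the claimed bounds on the larger ball and, after checking $\delta(t) \leq \rho(x,t)/C_1$ (which holds for $t > T$ since $\delta(t) \to 0$ while $\rho(x,t) \geq \ov\rho\sqrt{t}$), on the parabolic neighborhood $P(x, t, A\sqrt{t}, -\tau t)$. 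The main technical obstacle throughout is keeping track of the chain of dependencies so that the function $\delta(t)$, independent of $(w, A, m)$, is compatible with all the choices; this is handled by pushing the dependence into $T(w, A, m)$ and exploiting that $\delta(t) \to 0$.
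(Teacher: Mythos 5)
Your proposal is correct and follows essentially the same route as the paper: fix $\varepsilon_0$, invoke Proposition \ref{Prop:CNThm-mostgeneral} to get the canonical neighborhood assumptions and the constants $\un{E}_{\varepsilon_0}, \un\eta$, build the $(w,A,m)$-dependence of the required surgery preciseness into a diagonal choice of $\delta(t)$ that is then absorbed by taking $T(w,A,m)$ large, and feed everything into Proposition \ref{Prop:genPerelman} with the boundary hypotheses (i), (ii) vacuous and $c_1 = C_1^{-1}$. The only (trivially repaired) imprecision is in part (b): Proposition \ref{Prop:genPerelman} can only be applied at scale $r_0 = \rho(x,t)$ when $\rho(x,t) \le \ov{r}\sqrt{t}$, so one should set $\ov\rho = \min\{\ov{r}, \widehat{r}\}$ and distinguish the two cases, as the paper does.
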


In the case $A = 1$, this Corollary implies \cite[6.8]{PerelmanII} and parts of \cite[7.3]{PerelmanII}.

In the following, we will present proofs of Proposition \ref{Prop:genPerelman} and Corollary \ref{Cor:Perelman68}.
They require a few rather complicated Lemmas which we will establish first.
The proofs of Proposition \ref{Prop:genPerelman} and Corollary \ref{Cor:Perelman68} can be found at the end of this section.
Note that the following arguments will be very similar to those presented in \cite{PerelmanII} and \cite{KLnotes} with small modifications according to the author's taste.
Occasionally, we will omit shorter arguments and refer to \cite{KLnotes}.
The main objective in the proofs will be the discussion of the influence of the boundary.
Upon the first reading, it is recommended to skip the rather uninnovative remainder this section.
The boundary case of Proposition \ref{Prop:genPerelman} will only be used in subsection \ref{subsec:curvboundinbetween} and the exact phrasing of conditions (i) and (ii) will turn out to be not as important as it might appear here.

The following distance distortion estimates will be used frequently throughout this paper.
\begin{Lemma}[distance distortion estimates] \label{Lem:distdistortion}
Let $(M, (g_t)_{t \in [t_1, t_2]})$ be a Ricci flow whose time-slices are complete and let $x_1, x_2 \in M$.
Then
\begin{enumerate}[label=(\alph*)]
\item If $\Ric_t \leq K$ along any minimizing geodesic between $x_1$ and $x_2$ in $(M, g_t)$, then at time $t$ we have $\frac{d}{d t} \dist_t (x_1, x_2) \geq - K \dist_t (x_1, x_2)$.
Likewise if $\Ric_t \geq - K$ along any such minimizing geodesic, then $\frac{d}{d t} \dist_t (x_1, x_2) \leq K \dist_t (x_1, x_2)$.
\item If at some time $t$ we have $\dist_t (x_1, x_2) \geq 2 r$ and $\Ric_t \leq r^{-2}$ on $B(x_1, r) \cup B(x_2, r)$ for some $r > 0$, then $\frac{d}{dt^+} \dist_t (x_1, x_2) \geq - \frac{16}3 r^{-1}$.
\end{enumerate}
\end{Lemma}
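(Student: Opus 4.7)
The plan for part (a) is to invoke the first variation formula for arc length under Ricci flow applied to a $g_t$-minimizing geodesic. Fix a time $t$ and a $g_t$-arclength parametrized minimizing geodesic $\gamma : [0, L] \to M$ from $x_1$ to $x_2$. For any fixed $C^1$ curve $\eta$, differentiating $L_s(\eta) = \int \sqrt{g_s(\eta', \eta')}\, d\sigma$ and using $\partial_s g = -2 \Ric$ gives $\frac{d}{ds} L_s(\eta) = -\int \Ric_s(\eta', \eta')/|\eta'|_s\, d\sigma$. Evaluated on $\gamma$ at $s = t$ (where $|\gamma'|_t = 1$), this yields $-\int_0^L \Ric_t(\gamma', \gamma')\, ds$. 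Since $\dist_s(x_1, x_2) \leq L_s(\gamma)$ with equality at $s = t$, we get the one-sided inequality $\frac{d}{ds}\dist_s \leq -\int \Ric_t(\gamma', \gamma')\, ds$ from above, and applying the analogous comparison to the $s$-time minimizer produces the symmetric bound from below. Substituting the hypothesis $\Ric_t \leq K$ gives $-\int \Ric \geq -KL = -K\dist_t(x_1, x_2)$, proving the first assertion; the case $\Ric_t \geq -K$ is symmetric.

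For part (b) the main obstacle is that the pointwise Ricci bound only holds in the two balls of radius $r$ around the endpoints, so the integral $\int_0^L \Ric(\gamma', \gamma')\, ds$ along the full minimizer cannot be controlled pointwise. I would bridge this gap by Hamilton's cutoff second-variation trick. Let $\gamma : [0, L] \to M$ be a $g_t$-arclength minimizer from $x_1$ to $x_2$ with $L \geq 2r$, and choose parallel orthonormal fields $e_2, e_3$ along $\gamma$ perpendicular to $\gamma'$. With the tent function $\phi(s) = \min\{s/r,\, 1,\, (L-s)/r\}$, which vanishes at $s = 0, L$, consider the variations $V_j = \phi\, e_j$ for $j = 2, 3$. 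Since $\gamma$ is minimizing, the second variation of length gives $0 \leq \int_0^L \bigl(|V_j'|^2 - \langle R(\gamma', V_j)\gamma', V_j\rangle\bigr)\, ds$. Summing over $j$ and using $|V_j'|^2 = |\phi'|^2$ together with $\sum_j \langle R(\gamma', e_j)\gamma', e_j\rangle = \Ric(\gamma', \gamma')$ yields
\[
\int_0^L \phi^2 \Ric(\gamma', \gamma')\, ds \;\leq\; 2\int_0^L |\phi'|^2\, ds \;=\; \frac{4}{r}.
\]
For the complementary piece, $1 - \phi^2$ is supported on $[0, r] \cup [L-r, L]$, and on each of these intervals the geodesic lies in $B(x_i, t, r)$ so $\Ric \leq r^{-2}$; combined with the explicit integral $\int_0^r (1 - s^2/r^2)\, ds = 2r/3$ this gives $\int_0^L (1-\phi^2)\Ric(\gamma', \gamma')\, ds \leq 4/(3r)$. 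Adding yields $\int_0^L \Ric_t(\gamma', \gamma')\, ds \leq 16/(3r)$, and the first-variation upper bound from part (a) then delivers $\frac{d}{dt^+}\dist_t(x_1, x_2) \geq -16/(3r)$.

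Beyond the cutoff step, everything is routine: the first variation formula for length under Ricci flow, and the one-sided interpretation of $\frac{d}{dt}\dist_t$ at those rare times when the minimizer is not unique and the distance function fails to be smooth, are standard. The specific constant $16/3$ is tied to the piecewise-linear profile $\phi$; a smoother cutoff would give a slightly different numerical value but the same order in $r$.
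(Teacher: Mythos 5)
Your proposal is correct and is precisely the argument the paper defers to (it cites Perelman 8.3(b) and Kleiner--Lott \S 27 rather than proving the lemma): first variation of length under $\partial_t g = -2\Ric$ for part (a), and Hamilton's tent-function second-variation trick for part (b). The bookkeeping checks out in dimension $3$ --- $(n-1)\int|\phi'|^2 = 4/r$ plus $4/(3r)$ from the two end intervals gives exactly the stated constant $16/(3r)$ --- and the one-sided interpretation of $\tfrac{d}{dt}\dist_t$ at non-smooth times is handled in the standard way you indicate.
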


\begin{proof}
See \cite[sec 27]{KLnotes}, \cite[8.3]{PerelmanI}, \cite[sec 2.3]{Bamler-diploma}.
\end{proof}

We will also need
\begin{Lemma} \label{Lem:shortrangebounds}
Let $\MM$ be a Ricci flow with surgery which satisfies the canonical neighborhood assumptions $CNA (r, \varepsilon, E, \eta)$ for some $r, \varepsilon, E, \eta > 0$, let $(x, t) \in \MM$ and set $Q = |{\Rm}|(x,t)$.
\begin{enumerate}[label=(\alph*)]
\item If $Q \leq r^{-2}$, then $|{\Rm}| < 2 r^{-2}$ on $P(x, t, \frac{\eta}{10} r, - \frac{\eta}{10} r^2)$.
\item If $Q \geq r^{-2}$, then $|{\Rm}| < 2 Q$ on $P(x, t, \frac{\eta}{10} Q^{-1/2}, - \frac{\eta}{10} Q^{-1})$.
\end{enumerate}
\end{Lemma}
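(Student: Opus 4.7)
The plan is to exploit the canonical neighborhood assumption CNA(2), which ensures that wherever $|{\Rm}| \geq r^{-2}$ one has the pointwise bounds $|\nabla |{\Rm}|^{-1/2}| < \eta^{-1}$ and $|\partial_t |{\Rm}|^{-1}| < \eta^{-1}$. Setting $f = |{\Rm}|^{-1/2}$ and $h = |{\Rm}|^{-1} = f^2$, these control the change of $f$ and $h$ along any piecewise smooth space-time path whose image stays in the high-curvature regime $\{f \leq r\}$.

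For part (a), I would fix $(y, s) \in P(x, t, \tfrac{\eta}{10} r, -\tfrac{\eta}{10} r^2)$ and connect $(x, t)$ to $(y, s)$ by a two-leg path: first spatially along a minimizing geodesic $\gamma : [0, L] \to \MM(t)$ at time $t$ from $x$ to $y$, with $L = \dist_t(x,y) \leq \tfrac{\eta}{10} r$, then temporally along the worldline of $y$ from $t$ back to $s$, which is well-defined by the definition of $P$. Along the spatial leg, I would partition $[0, L]$ into the closed set $A = \{\tau : f(\gamma(\tau), t) \leq r\}$ and its complement. For $\tau \notin A$, $f(\gamma(\tau), t) > r$ is automatically large. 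For $\tau \in A$, I would examine the connected component $[a, b]$ of $A$ containing $\tau$: at the left endpoint either $a = 0$, where $f(x, t) = Q^{-1/2} \geq r$ (forcing $f(\gamma(0), t) = r$ by continuity), or $a > 0$, where $f(\gamma(a), t) = r$ by continuity. The pointwise bound $|\nabla f| < \eta^{-1}$ on $[a, \tau]$ then yields $f(\gamma(\tau), t) > r - \eta^{-1} \cdot \tfrac{\eta}{10} r = \tfrac{9}{10} r$, so $f(y, t) \geq \tfrac{9}{10} r$.

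For the temporal leg I would run the analogous component analysis for $s' \mapsto h(y, s')$ on $[s, t]$, using $|\partial_t h| < \eta^{-1}$ on the closed set $\{s' : h(y, s') \leq r^2\}$. Starting from $h(y, t) \geq \tfrac{81}{100} r^2$, the total drop over a time interval of length at most $\tfrac{\eta}{10} r^2$ is at most $\tfrac{1}{10} r^2$, so $h(y, s) > \tfrac{71}{100} r^2 > \tfrac{1}{2} r^2$, giving $|{\Rm}|(y, s) < 2 r^{-2}$ as required.

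Part (b) is handled by the same strategy with $Q^{-1/2}$ and $Q^{-1}$ in place of $r$ and $r^2$. Since $Q \geq r^{-2}$ forces $Q^{-1/2} \leq r$, the initial point $(x, t)$ already lies in the CNA regime, and the argument is in fact slightly cleaner because on any portion of the path where $f > r$ one trivially has $f > r \geq Q^{-1/2} > Q^{-1/2}/\sqrt{2}$. The main obstacle throughout is keeping track of places where $f$ oscillates through the threshold $r$, but the component-wise comparison to the boundary value $r$ handles this uniformly. No surgery issues arise: the spatial leg lives in a single time slice, and on the temporal leg the survival of $y$ is built into the definition of $P$, so $f$ and $h$ are smooth along the entire path.
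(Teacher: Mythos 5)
Your argument is correct and is exactly the standard one: the paper gives no proof of its own here, referring instead to \cite[4.2]{PerelmanII}, \cite[Lemma 70.1]{KLnotes} and \cite[sec 6.2]{Bamler-diploma}, where the bounds $|\nabla |{\Rm}|^{-1/2}| < \eta^{-1}$ and $|\partial_t|{\Rm}|^{-1}| < \eta^{-1}$ from CNA(2) are integrated first along a spatial minimizing geodesic and then backward along the worldline, just as you do. Your component-wise handling of the set where the curvature threshold $r^{-2}$ is crossed, and your observation that survival along the temporal leg is built into the definition of the parabolic neighborhood, are the right ways to make that integration rigorous.
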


\begin{proof}
See \cite[4.2]{PerelmanII}, \cite[Lemma 70.1]{KLnotes}, \cite[sec 6.2]{Bamler-diploma}.
\end{proof}

Before we present the first main Lemma, we recall the $\LL$-functional as defined in \cite[sec 7]{PerelmanI}:
For any smooth space-time curve $\gamma : [t_1, t_2] \to \MM$ ($t_1 < t_2 \leq t_0$) in a Ricci flow with surgery $\MM$ set
\begin{equation} \label{eq:defLLlength}
 \LL(\gamma) = \int_{t_1}^{t_2} \sqrt{t_0 - t'} \big(|\gamma'|^2(t') + \scal (\gamma(t'), t') \big) dt'.
\end{equation}
We say that $\LL$ is \emph{based in $t_0$} and call $\LL(\gamma)$ the \emph{$\LL$-length} of $\gamma$.
Every $\gamma$ which is a critical point of $\LL$ is called \emph{$\LL$-geodesic}.

\begin{Lemma}[cf \hbox{\cite[6.3(a)]{PerelmanII}}] \label{Lem:6.3a}
For any $1 \leq A < \infty$ and $w, r, \varepsilon, E, \eta > 0$ there are $\kappa = \kappa(w, A, \eta), \delta = \delta(w, A, r, \eta) > 0$, $Z = Z(A) < \infty$ such that: \\
Let $r_0^2 < t_0/2$ and let $\MM$ be a Ricci flow with surgery (whose time-slices are allowed to have boundary) on the time-interval $[ t_0 - r_0^2, t_0]$ which is performed by $\delta$-precise cutoff and consider a point $x_0 \in \MM(t_0)$.
Assume that the canonical neighborhood assumptions $CNA (r, \varepsilon, E, \eta)$ are satisfied on $\MM$.
We also assume that the curvature on $\MM$ is uniformly bounded on compact time-intervals which don't contain surgery times and that all time-slices of $\MM$ are complete.

Assume that the parabolic neighborhood $P(x_0, t_0, r_0, - r_0^2)$ is non-singular, that $|{\Rm}| \leq r_0^{-2}$ on $P(x_0, t_0, r_0, - r_0^2)$ and $\vol_{t_0} B(x_0, t_0, r_0) \geq w r_0^3$.

In the case in which some time-slices of $\MM$ have non-empty boundary, we assume that
\begin{enumerate}[label=(\roman*)]
\item every space-time curve $\gamma : [t, t_0] \to \MM$ with $t \in [t_0 - r_0^2, t_0)$ which ends in $\gamma(t_0) \in B(x_0, t_0, A r_0)$ and which meets the boundary $\partial \MM (t')$ at some time $t' \in [t, t_0]$, has $\LL$-length $\LL(\gamma) > Z r_0$ (based in $t_0$),
\item the ball $B (x_0, t_0, (2A+1) r_0 + r)$ does not hit the boundary $\partial \MM(t_0)$ and for every $t \in [t_0 - \frac12 r_0^2, t_0]$ the ball $B(x_0, t, A (1 - 2 (t_0 - t) r_0^{-2} ) r_0 + \frac1{10} r_0)$ does not hit the boundary $\partial \MM(t)$.
\end{enumerate}

Then $\MM$ is $\kappa$-noncollapsed on scales less than $r_0$ at all points in the ball $B(x_0, t_0, A r_0)$.
\end{Lemma}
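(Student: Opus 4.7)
The plan is to extend Perelman's classical reduced-volume argument from \cite[6.3(a)]{PerelmanII} so as to handle the boundary points of $\MM(t)$. I argue by contradiction: suppose there is some $x \in B(x_0, t_0, A r_0)$ and $0 < \sigma < r_0$ such that $P(x, t_0, \sigma, -\sigma^2)$ is non-singular with $|{\Rm}| < \sigma^{-2}$ there, yet $\vol_{t_0} B(x, t_0, \sigma) < \kappa \sigma^3$, with $\kappa = \kappa(w, A, \eta)$ to be chosen. The goal is to derive a contradiction from Perelman's monotonicity of the reduced volume $\widetilde V_{(x, t_0)}(\tau)$ based at $(x, t_0)$ and defined by integration against $\LL$-geodesics going backward in time.

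First I would establish the small-scale upper bound $\widetilde V(\sigma^2) \leq \Phi(\kappa)$ with $\Phi(\kappa) \to 0$ as $\kappa \to 0$, by the standard local computation on the parabolic neighborhood $P(x, t_0, \sigma, -\sigma^2)$. Next I would produce a $\kappa$-independent lower bound for $\widetilde V(r_0^2)$: using the volume hypothesis $\vol_{t_0} B(x_0, t_0, r_0) \geq w r_0^3$, the curvature bound on $P(x_0, t_0, r_0, -r_0^2)$ and Lemma \ref{Lem:distdistortion}, the ball around $x_0$ at time $t_0 - r_0^2$ still has volume $\gtrsim w r_0^3$; shooting $\LL$-geodesics from $(x, t_0)$ backward to this ball, whose endpoints are at spatial distance $\lesssim (A+1) r_0$ and temporal distance $r_0^2$, one finds that they have reduced length bounded by some $C(A) < \infty$, yielding
\begin{equation*}
\widetilde V(r_0^2) \geq \eta_0(w, A) - \mathcal{E}_{\textnormal{surg}} - \mathcal{E}_{\textnormal{bdy}}.
\end{equation*}
The surgery error $\mathcal{E}_{\textnormal{surg}}$ is handled in the standard way (compare \cite[Section 78]{KLnotes}): by taking $\delta$ sufficiently small in terms of $w, A, r, \eta$, the measure of $\LL$-geodesics encountering surgery regions is made negligible.

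The boundary error $\mathcal{E}_{\textnormal{bdy}}$ is where the new hypotheses come into play. Condition (ii) guarantees that the bulk family of $\LL$-geodesics used in the previous step, which stays within the shrinking spatial radius $A(1 - 2(t_0-t)r_0^{-2}) r_0 + \tfrac{1}{10} r_0$ at time $t$, never touches $\partial \MM$. Any $\LL$-geodesic from $(x, t_0)$ that does strike $\partial \MM$ has, by condition (i), $\LL$-length $> Z r_0$, so its reduced length at $\tau = r_0^2$ satisfies $\ell > Z/2$, contributing at most $e^{-Z/2}$ times a controlled factor to $\widetilde V(r_0^2)$. Choosing $Z = Z(A)$ large enough then makes $\mathcal{E}_{\textnormal{bdy}} \leq \eta_0(w,A)/4$. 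The monotonicity inequality $\widetilde V(\sigma^2) \geq \widetilde V(r_0^2)$ finally forces $\Phi(\kappa) \geq \eta_0(w,A)/2$, a contradiction once $\kappa$ is chosen small enough.

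The principal new obstacle, relative to Perelman's original argument, will be verifying that the \emph{good} set of $\LL$-geodesics supporting the reduced-volume lower bound retains definite measure after one simultaneously excises geodesics passing through a surgery $\delta$-neck and geodesics hitting $\partial \MM$. Conditions (i) and (ii) are precisely tailored to this task: (ii) confines the bulk geodesics to the interior of the parabolic region, while (i) provides the quantitative $\LL$-length threshold that permits boundary-hitting geodesics to be discarded via exponential suppression in the integrand of $\widetilde V$.
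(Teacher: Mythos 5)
Your overall strategy (reduced-volume monotonicity based at the test point, with hypothesis (i) used to discard boundary-hitting $\LL$-geodesics and hypothesis (ii) used to keep the main family interior) is the same as the paper's, but there is a genuine gap at the central step. You assert that the backward $\LL$-geodesics from $(x,t_0)$ to the ball around $x_0$ at time $t_0-r_0^2$ ``have reduced length bounded by some $C(A)$'' because their endpoints are at spatial distance $\lesssim (A+1)r_0$ and temporal distance $r_0^2$. This does not follow from distance information alone: the $\LL$-length contains the term $\int \sqrt{t_0-t}\,\scal$, and a test curve joining $(x,t_0)$ to that ball must in general pass through regions where you have no upper curvature control, so its $\LL$-length cannot be estimated from above. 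This is precisely the point of Perelman's maximum-principle argument, which the paper reproduces: one applies $(\partial_t-\triangle)\ov{L}\geq -6$ to a localized quantity $h = \phi\bigl(r_0^{-1}\dist_t(x_0,\cdot) - A(1-2r_0^{-2}(t_0-t))\bigr)\widehat{L}$, whose cutoff $\phi$ is infinite outside the shrinking ball $B(x_0,t,A(1-2(t_0-t)r_0^{-2})r_0+\frac1{10}r_0)$. Hypothesis (ii) is used exactly here — it guarantees that this ball avoids $\partial\MM(t)$, so the minimum of $h(\cdot,t)$ is attained at an interior point where the differential inequality applies. The output is a single point $y$ near $x_0$ at time $t_0-\frac12 r_0^2$ with $L(y)\leq C'(A)r_0$; only then does one concatenate with explicit test curves inside the curvature-controlled neighborhood $P(x_0,t_0,r_0,-r_0^2)$ to bound $L$ on all of $B(x_0,t_0,r_0)$ at time $t_0-r_0^2$. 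Your description of (ii) as ``confining the bulk geodesics to the interior'' misidentifies its role and skips the step that actually produces the bound.

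Two further points. First, the paper handles boundary- and surgery-hitting curves by \emph{exclusion from the domain} $D_t$ over which $\widetilde V$ is defined (Claim 2: any $(y,t)$ with $L(y,t)<\Lambda r_0$ lies in $D_t$, since a curve touching $\partial\MM$ would have $\LL>Zr_0$ by (i), and curves through surgery necks are ruled out by \cite[5.3]{PerelmanII} for $\delta$ small); monotonicity is then applied to $\widetilde V$ restricted to $D_t$. Your alternative of integrating over everything and subtracting ``exponentially suppressed'' error terms $\mathcal{E}_{\textnormal{bdy}}$, $\mathcal{E}_{\textnormal{surg}}$ would require justifying monotonicity for that larger integral, which is not available in the surgery setting. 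Second, you omit the preliminary reduction (Claim 1 of the paper) to test scales $r_1>\frac12 r$: for smaller scales the noncollapsedness follows directly from the canonical neighborhood assumptions, and this reduction is needed because the surgery-exclusion constant $\delta^*$ depends on $r$.
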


\begin{proof}
We first consider the case in which the component of $\MM(t)$ which contains $x_0$ is closed and has positive sectional curvature.
Then the sectional curvature is also positive on the component of $\MM (t_0)$ which contains $x_0$ and we are done by volume comparison.
So in the following, we exclude this case and hence the last option in the Definition \ref{Def:CNA} of the canonical neighborhood assumptions will not occur.

Let $x_1 \in B(x_0, t_0, A r_0)$ and $0 < r_1 < r_0$ such that $B(x_1, t_0, r_1)$ does not hit the boundary $\partial \MM (t_0)$, that $P(x_1, t_0, r_1, - r_1^2)$ is non-singular and $|{\Rm}| < r_1^{-2}$ on $P(x_1, t_0, r_1, - r_1^2)$.

\begin{Claim1}
There is a universal constant $\delta_0 > 0$ such that if $\delta < \delta_0$, then we can restrict ourselves to the case $r_1 > \frac12 r$.
By this we mean that if the Lemma holds under this additional restriction for some $\kappa' = \kappa' (w, A, \eta) > 0$ then it also holds whenever $r_1 \leq \frac12 r$ for some $\kappa = \kappa (w, A, \eta) > 0$.
\end{Claim1}

\begin{proof}
Let $s > 0$ be the supremum over all $r_1$ which satisfy the properties above.
If $s \leq \frac12 r$, then there are several cases:
\begin{enumerate}[label=(\arabic*)]
\item The closure of $B(x_1, t_0, s)$ hits the boundary $\partial \MM(t_0)$.
This case is excluded by conditin (ii).
\item The closure of $P(x_1, t_0, s, -s^2)$ hits a singular point $(x', t')$.
By Definition \ref{Def:precisecutoff}(3), there is a neighborhood $U \subset \MM (t')$ of $(x',t')$ whose geometry is modeled on a standard solution on a scale of at least $c_1 s$ for some universal $c_1 > 0$.
So for sufficiently small $\delta$, we can find point $(x'', t') \in P(x_1, t_0, s, - s^2) \cap U$ such that $B(x'', t', c_2 s) \subset P(x_1, t_0, s, - s^2) \cap U$ for some universal $c_2 > 0$.
Since the standard solution is uniformly noncollapsed, we find $\vol_{t'} B(x'', t', c_2 s) > \kappa' s^3$ for some universal $\kappa' > 0$.
So by volume distortion $\vol_{t_0} B(x_1, t_0, s) > \kappa'' s^3$ for some universal $\kappa'' > 0$.
By volume comparison, this implies $\vol_{t_0} B(x_1, t_0, r_1) > \kappa r_1^3$ for some universal $\kappa > 0$.
\item There is a point $(x', t')$ in the closure of $P(x_1, t_0, s, -s^2)$ with $|{\Rm}|(x',t') = s^{-2}$.
Then by Lemma \ref{Lem:shortrangebounds} and the canonical neighborhood assumptions, we can find a point $(x'', t') \in P(x_1, t_0, s, -s^2) \cap \MM(t')$ such that $|{\Rm}|(x'', t') > \frac12 s^{-2} > r^{-2}$ and $B(x'', t', c_2 s) \subset P(x_1, t_0, s, -s^2)$ for some universal $c_2 > 0$.
By the canonical neighborhood assumptions, we have $\vol_{t'} B(x'', t', c_2 s) > \eta c_2^3 s^3$ and as in case (2) we can conclude that $\vol_{t_0} B(x_1, t_0, r_1) > \kappa r_1^3$ for some universal $\kappa = \kappa(\eta) > 0$.
\item We have $s = r_0$.
So $r_0 \leq \frac12 r$.
In this case choose $0 < d \leq (A+1) r_0$ maximal with the property that $|{\Rm}| < r_0^{-2} = s^{-2}$ on $B(x_1, t_0, d)$.
So $d \geq r_0$.
If $d = (A+1) r_0$, then
\[ \vol_{t_0} B(x_1, t_0, d) \geq \vol_{t_0} B(x_0, t_0, r_0) \geq w r_0^{3} = \frac{w}{(A+1)^3} d^3. \]
So by volume comparison and assumption (ii) we obtain a lower volume bound on the normalized volume of $B(x_1, t_0, r_1)$ since $r_1 < r_0 < d$.
Assume now that $d < (A+1) r_0$.
Then $|{\Rm}|(x', t_0) = r_0^{-2} \geq 4 r^{-2}$ for some $(x', t_0)$ in the closure of $B(x_1, t_0, d)$.
As in case (3), we can find a point $(x'', t_0) \in B(x_1, t_0, d - c_1 r_0)$ with $|{\Rm}|(x'', t_0) > \frac12 r_0^{-2} > r^{-2}$.
By the canonical neighborhood assumptions, we have $\vol_{t_0} B(x_1, t_0, d) \geq \vol_{t_0} B(x'', t_0, c_1 r_0) > \eta c_1^3 r_0^3$.
So again by volume comparison, we find that $\vol_{t_0} B(x_1, t_0, r_1) > \kappa r_1^3$ for some $\kappa = \kappa (\eta, A) > 0$.
\end{enumerate}
Lastly, if $s > \frac12 r$, then the conditions hold for some $r_1' > \frac12 r$.
If the assertion of the Lemma holds for $r'_1$ and some $\kappa' = \kappa'(w, A, \eta) > 0$, then by volume comparison, it also holds for any $r_1 \leq r'_1$ and some $\kappa = \kappa(w, A, \eta) > 0$.
\end{proof}

So assume in the following that $r_1 > \frac12 r$.
We will now set up an $\LL$-geometry argument.
Define for any $t \in [t_0 - r_0^2, t_0]$ and $y \in \MM(t)$
\[
 L(y, t) = \inf \Big\{ \LL(\gamma) \;\; : \;\; \gamma : [t, t_0] \to \MM \; \text{smooth}, \; \gamma(t) = y, \; \gamma(t_0) = x_1 \Big\}.
\]
Moreover, set
\[ \ov{L}(y, t) = 2 \sqrt{t_0 - t} L(y, t) \qquad \text{and} \qquad \ell(y, t) = \frac1{2\sqrt{t_0 - t}} L(y, t). \]
Let
\begin{multline*}
 D_t = \{ y \in \MM (t) \;\; : \;\; \text{there is a minimizing $\LL$-geodesic $\gamma : [t, t_0] \to \MM \setminus \partial \MM$} \\ \text{with $\gamma(t) = y$ and $\gamma(t_0) = x_1$ which does not hit any surgery points} \}.
\end{multline*}
We can then define the reduced volume
\[ \widetilde{V}(t) = (t_0 - t)^{-n/2} \int_{D_t} e^{-\ell(\cdot, t)} d {\vol_t}. \]
It is shown in \cite{PerelmanI} that $\widetilde{V}(t)$ is non-decreasing in $t$.

We will now show that the quantity $\ell (\cdot, t_0 - r_0^2)$ is uniformly bounded on $B(x_0, t_0, r_0)$ by a constant only depending on $A$ if $\delta$ is chosen small enough depending on $A$, $r$ and $\eta$.
To do this we will make use of a maximum principle argument on $D_t$.
The following claim will ensure hereby that extremal points of $L$ lie inside $D_t$.

\begin{Claim2}
For any $\Lambda < \infty$ there is a constant $\delta^* = \delta^* (\Lambda, r, \eta) > 0$ such that whenever $\delta \leq \delta^*$ and $Z \geq \Lambda$, then the following holds:
Assume that $r_1 > \frac12 r$.
If $t \in [t_0 - r_0^2, t_0]$, $y \in \MM(t)$ and $L(y,t) < \Lambda r_0$, then $y \in D_t$ and $(y,t)$ is not a surgery point.
\end{Claim2}

\begin{proof}
Assume that $y \in \MM (t) \setminus D_t$.
Then there is a space-time curve $\gamma : [t, t_0] \to \MM$ with $\LL(\gamma) < \Lambda r_0$ which either touches $\partial \MM$ or a surgery point.
The first case is excluded by assumption (i), so $\gamma$ touches a surgery point.
Now the Claim follows from \cite[5.3]{PerelmanII}, \cite[Lemma 79.3]{KLnotes}, \cite[p 92]{Bamler-diploma}.
Note that this Lemma is still true in the boundary case since by Definition \ref{Def:precisecutoff}(3) for every surgery point $(y', t') \in \MM$, we have $\dist_{t'} (y', \partial \MM(t') ) > c \delta^{-1}$ for some universal constant $c > 0$.
\end{proof}

Observe that for all $t \in [t_0 - r_0^2, t_0]$, we have
\[ \scal (\cdot, t) \geq - \frac{3}{2t} \geq - 3 r_0^{-2}. \]
So
\[ \ov{L}(\cdot, t) \geq - 6 \sqrt{t_0 - t} \int_{t_0 - t}^{t_0} r_0^{-2} \sqrt{t_0 - t'} dt' = - 4 r_0^{-2} (t_0 - t)^2. \]
Hence, for $t \in [t_0 - \frac12 r_0^2, t_0]$ we have 
\[ \widehat{L}(\cdot, t) = \ov{L}(\cdot, t) + 2 r_0 \sqrt{t_0 - t} > 0. \]
Let $\phi$ be a cutoff function which is constantly equal to $1$ on $(-\infty, \frac1{20}]$ and $\infty$ on $[\frac1{10}, \infty)$ and satisfies
\[ 2 \frac{(\phi')^2}{\phi} - \phi '' \geq (2A + 300) \phi' - C(A) \phi. \]
Here $C(A) < \infty$ is a positive constant which only depends on $A$.
Then set for all $t \in [t_0 - \frac12 r_0^2, t_0]$ and $y \in \MM (t)$
\[ h(y, t) = \phi \big( r_0^{-1} \dist_t (x_0, y) - A(1 - 2 r_0^{-2} (t_0 -  t) ) \big) \widehat{L}(y,t). \]
So $h(\cdot, t)$ is infinite outside $B(x_0, t, A(1 - 2(t_0 - t) r_0^{-2}) r_0 + \frac1{10} r_0) \subset \MM (t) \setminus \partial \MM (t)$ (compare with assumption (ii)) and hence it attains a minimum $h_0(t)$ at some interior point $y \in \MM (t)$.

Assume first that $h(y, t) < 2 r_0 \sqrt{t_0 - t} \exp (C(A) + 100)$.
So $L(y, t) < r_0 \exp \linebreak[2] (C(A) + \linebreak[1] 100)$.
Then by Claim 2, assuming $\delta < \delta^* (\exp (C(A) + 100), r, \eta)$,  and $Z > \exp (C(A) + 100)$ we have $y \in D_t$ and we can compute (cf \cite[6.3]{PerelmanII}, \cite[sec 85]{KLnotes})
\[ r_0^2 \Big( \frac{\partial}{\partial t} - \triangle \Big) h(y,t) \geq - C(A) h(y,t) - \Big( 6 + \frac{r_0}{\sqrt{t_0 - t}} \Big) \phi r_0^2.  \]
So we have
\[ r_0^2 \frac{d}{d t} \Big( \log \frac{h_0(t)}{\sqrt{t_0 - t}} \Big) \geq - C(A) - \frac{50 r_0}{\sqrt{t_0 - t}} \]
if $h_0(t) < 2 r_0 \sqrt{t_0 - t} \exp (C(A) + 100)$.

Since $\frac{h_0(t)}{r_0 \sqrt{t_0 - t}} \to 2$ for $t \to t_0$, we find that if $h_0 (t') < 2 r_0 \sqrt{t_0 - t'} \exp (C(A) + 100)$ for all $t' \in [t, t_0]$, then
\begin{multline*}
 h_0 (t) \leq 2 r_0 \sqrt{t_0 - t} \exp \big( C(A) r_0^{-2} (t_0 - t) + 100 r_0^{-1} \sqrt{t_0 - t} \big) \\
  \leq 2 r_0 \sqrt{t_0 - t} \exp (C(A) + 100).
\end{multline*}
This implies that the assumption $h_0 (t) < 2 r_0 \sqrt{t_0 - t} \exp (C(A) + 100)$ is actually satisfied for all $t \in [t_0 - \frac12 r_0^2, t_0]$.
So we can find a $y \in B(x_0, t_0 - \frac12 r_0^2, \frac1{10} r_0)$ such that $L(y, t_0 - \frac12 r_0^2) < r_0 \exp (C(A) + 100) = C'(A) r_0$.

Since by length distortion estimates $B(x_0, t_0 - \frac12 r_0^2, \frac1{10} r_0) \subset B(x_0, t_0, \frac12 r_0)$, we conclude by joining paths that for all $x \in B(x_0, t_0, r_0)$ we have $L(x, t_0 - r_0^2) < C''(A) r_0$.
So assuming $\delta < \delta^* (C''(A), r, \eta)$ and $Z > C''(A)$, we can use Claim 2 to conclude that $P(x_0, t_0, r_0, - r_0^2) \cap \MM(t_0 - r_0^2) \subset D_{t_0 - r_0^2}$ and we have
\[ \widetilde{V}(t_0 - r_0^2) > v(w, A) \]
for some constant $v(w, A) > 0$ which only depends on $A$ and $w$.
This gives us a uniform lower bound on $r_1^{-3} \vol_{t_0} B(x_1, t_0, r_1)$ (cf \cite[7.3]{PerelmanI}, \cite[Theorem 26.2]{KLnotes}, \cite[Lemma 4.2.3]{Bamler-diploma}).
\end{proof}

The noncollapsing result from Lemma \ref{Lem:6.3a} will be applied in Lemma \ref{Lem:6.3bc} below.
Before we continue, we introduce the following type of solutions which will be used as a model for singularities and for regions of high curvature.
The definition makes sense in all dimensions.

\begin{Definition}[$\kappa$-solution]
Let $\kappa > 0$.
An ancient solution to the Ricci flow $(M, (g_t)_{t \in (-\infty, 0]})$ is called a \emph{$\kappa$-solution} if
\begin{enumerate}[label=(\arabic*)]
\item The curvature is uniformly bounded on $M \times (-\infty, 0]$.
\item The metric on every time-slice is complete and has non-negative curvature operator.
\item The scalar curvature at time $0$ is positive.
\item At every point the scalar curvature is non-decreasing in time.
\item The solution is $\kappa$-noncollapsed on all scales at all points.
\end{enumerate}
\end{Definition}
We mention that there is a $\kappa_0 > 0$ such that every $3$ dimensional $\kappa$-solution which is not round, is in fact a $\kappa_0$-solution (cf \cite[11.9]{PerelmanI}, \cite[Proposition 50.1]{KLnotes}).
$\kappa$-solutions can be used to detect strong $\varepsilon$-necks or $(\varepsilon, E)$-caps or more generally to verify the canonical neighborhood assumptions as explained in the next Lemma.

\begin{Lemma} \label{Lem:kappasolCNA}
There is an $\eta > 0$ and for any $\varepsilon > 0$ there is an $E = E(\varepsilon) < \infty$ such that for every orientable $3$ dimensional $\kappa$-solution $(M, (g_t)_{t \in (-\infty, 0]})$ the following holds:
For all $r > 0$, the canonical neighborhood assumptions $CNA (r, \varepsilon, E, \eta)$ hold everywhere on $M \times (- \infty, 0]$.
More precisely, either $M$ is a spherical space form or for any $(x,t) \in M \times (-\infty, 0]$ we have:
\begin{enumerate}[label=(\alph*)]
\item $(x,t)$ is a center of a strong $\varepsilon$-neck or an $(\varepsilon, E)$-cap $U \subset M$. \\ 
If $U \subset \IR P^3 \setminus \ov{B}^3$, then there is a double cover of $M$ such that any lift of $(x,t)$ is the center of a strong $\varepsilon$-neck.
\item $|\nabla |{\Rm}|^{-1/2}| (x,t) < \eta^{-1}$ and $| \partial_t |{\Rm}|^{-1} | (x,t) < \eta^{-1}$.
\item $\vol_t B(x,t, r') > \eta r^3$ for all $0 < r' < |{\Rm}|^{-1/2} (x,t)$.
\end{enumerate}
\end{Lemma}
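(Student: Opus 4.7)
The plan is the standard compactness-plus-classification argument of Perelman. By \cite[11.9]{PerelmanI}, every $3$-dimensional $\kappa$-solution is either a metric quotient of the round shrinking sphere (hence a spherical space form, falling into the exceptional alternative at the start of the lemma) or is a $\kappa_0$-solution for some universal $\kappa_0 > 0$. Consequently, it is enough to produce one $\eta = \eta(\kappa_0) > 0$ and, for each $\varepsilon > 0$, one $E(\varepsilon) < \infty$ that work uniformly for all non-spherical $\kappa_0$-solutions.

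I would argue each of (a), (b), (c) by contradiction. Suppose one of them failed; then there would be a sequence of non-spherical $\kappa_0$-solutions $(M_i, (g^i_t))$ and basepoints $(x_i, t_i) \in M_i \times (-\infty, 0]$ at which the relevant property is violated with parameters $E_i \to \infty$ or $\eta_i \to 0$. After time-translating so that $t_i = 0$ and parabolically rescaling by $|{\Rm}|^{1/2}(x_i, 0)$, Perelman's compactness theorem for $\kappa_0$-solutions (\cite[11.7]{PerelmanI}, \cite[Thm.~46.1]{KLnotes}) extracts a pointed smooth Cheeger-Gromov subsequential limit $(M_\infty, (g^\infty_t), x_\infty)$ which is again a $\kappa_0$-solution with $|{\Rm}|(x_\infty, 0) = 1$.

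On this limit each conclusion holds with uniform constants, and transfers back via smooth convergence to contradict the failure along the sequence. Property (c) is immediate from passing $\kappa_0$-noncollapsing to the limit. Property (b) follows from Shi-type derivative estimates applied on any fixed parabolic neighborhood of $(x_\infty, 0)$, where the curvature is bounded because $|{\Rm}|$ is a bounded positive ancient solution of a heat-type equation and $|{\Rm}|(x_\infty, 0) = 1$. Property (a) uses Perelman's classification of $3$-dimensional $\kappa_0$-solutions (\cite[Sec.~11.8]{PerelmanI}, \cite[Sec.~59]{KLnotes}): $M_\infty$ is either the round shrinking cylinder $S^2 \times \IR$ or its $\IZ_2$-quotient, or a non-compact non-round solution asymptotic to a shrinking round cylindrical soliton and decomposing into an $\varepsilon$-neck end together with a single cap of bounded geometry relative to the neck scale. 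In each case every point is the center of a strong $\varepsilon$-neck or of an $(\varepsilon, E(\varepsilon))$-cap with $E$ depending only on $\varepsilon$.

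The one delicate point, where compactness alone is not enough, is the $\IR P^3 \setminus \ov{B}^3$ addendum to (a). The only way such a cap arises is when the local model is the $\IZ_2$-quotient of the round cylinder, and the orientation-preserving double cover of the cap is $S^2 \times (0,1)$, on which the strong $\varepsilon$-neck condition is automatic at past times. I would handle this by observing that since $M$ is orientable and $\pi_1(\IR P^3 \setminus \ov B^3) = \IZ/2$, the local $\IZ_2$-cover of the cap extends to a genuine double cover of $M$; on this cover both lifts of $(x,t)$ sit in the lifted cylinder and are centers of strong $\varepsilon$-necks at the appropriate earlier time, which is exactly the addendum. This structural verification, rather than the compactness step itself, is the main obstacle, since it relies on the explicit geometry of the quotient cylinder soliton instead of a soft limiting argument.
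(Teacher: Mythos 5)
Your proposal is correct and is precisely the standard compactness-plus-classification argument; the paper does not write out a proof but simply cites \cite[11.8]{PerelmanI}, \cite[Corollary 48.1]{KLnotes} and \cite[Theorem 5.4.11]{Bamler-diploma}, whose proofs are exactly the reduction to $\kappa_0$-solutions, the rescaled limit argument for (a)--(c), and the classification of $\kappa$-solutions that you describe. Your treatment of the $\IR P^3 \setminus \ov{B}^3$ addendum via the global double cover of the $\IZ_2$-quotient of the round cylinder is also the intended one.
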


\begin{proof}
See \cite[11.8]{PerelmanI}, \cite[Corollary 48.1]{KLnotes}, \cite[Theorem 5.4.11]{Bamler-diploma}.
\end{proof}

The following Lemma will enable us to identify $\kappa$-solutions as limits of Ricci flows with surgeries under very weak curvature bound assumptions.

\begin{Lemma} \label{Lem:lmiitswithCNA}
There is an $\varepsilon_0 > 0$ such that:
Let $\MM^\alpha$ be a sequence of $3$-dimensional Ricci flows with surgery on the time-intervals $[- \tau_0^\alpha, 0]$, $\tau^\alpha \leq \tau_0^\alpha$, $x^\alpha_0 \in \MM(0)$ a sequence of basepoints which survive until time $\tau^\alpha$, and $a^\alpha \to \infty$ a sequence of positive numbers such that for $P^\alpha = \{ (x,t) \in \MM^\alpha \;\; : \;\; t \in [- \tau^\alpha, 0], \; \dist_t(x_0^\alpha, x) < a^\alpha \}$ the following statements hold
\begin{enumerate}[label=(\roman*)]
\item the ball $B(x_0^\alpha, t, a^\alpha)$ is relatively compact in $\MM^\alpha (t)$ and does not hit the boundary $\partial \MM^\alpha (t)$ for all $t \in [- \tau^\alpha, 0]$,
\item $|{\Rm}|(x^\alpha,0) \leq 1$,
\item the curvature on $P^\alpha$ is $\varphi^\alpha$-positive for some $\varphi^\alpha \to 0$,
\item all points of $P^\alpha$ are $\kappa$-noncollapsed on scales $< a^\alpha$ for some uniform $\kappa > 0$,
\item all points on $P^\alpha$ satisfy the canonical neighborhood assumptions $CNA(\frac12, \linebreak[1] \varepsilon_0, \linebreak[1] E, \eta)$ for some uniform $E, \eta > 0$,
\item there is a sequence $K^\alpha \to \infty$ such that for every surgery point $(x', t') \in P^\alpha$ we have $R(x', t') > K^\alpha$.
\end{enumerate}
Then the pointed Ricci flows with surgery $(\MM^\alpha, (x_0^\alpha, 0))$ subconverge to some non-singular Ricci flow $(M^\infty, (g_t^\infty)_{t \in (- \tau^\infty, 0]}, (x_0^\infty,0))$ where $\tau^\infty = \limsup_{\alpha \to \infty} \tau^\alpha$.
Moreover, this limiting Ricci flow has complete time-slices and bounded, non-negative sectional curvature.
If $\tau^\infty = \infty$ and $|{\Rm^\infty}|(x^\infty, 0) > 0$, then $(M^\infty, \linebreak[1] (g_t^\infty)_{t \in (- \infty, 0]})$ is a $\kappa$-solution.
\end{Lemma}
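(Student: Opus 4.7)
The plan is to deduce the convergence from Hamilton's compactness theorem for Ricci flows, which requires three things: uniform curvature bounds on compact parabolic neighborhoods of $(x_0^\alpha,0)$, a uniform injectivity radius bound, and the absence of surgery points in the limit. Once convergence is established, the qualitative properties of the limit (completeness, non-negative sectional curvature, ancient $\kappa$-solution structure) fall out of passing the hypotheses to the limit.

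The first and hardest step is a \emph{bounded-curvature-at-bounded-distance} estimate: for every $A<\infty$ there exists $K(A)<\infty$ so that $|{\Rm}|\leq K(A)$ on $B(x_0^\alpha,0,A)\times[-\min(A,\tau^\alpha),0]$ once $\alpha$ is large. I would argue by contradiction: suppose there is a sequence $(y^\alpha,s^\alpha)$ in the parabolic region with $|{\Rm}|(y^\alpha,s^\alpha)\to\infty$ while $\dist_{s^\alpha}(x_0^\alpha,y^\alpha)$ stays bounded. A point-picking argument along a minimizing segment from $x_0^\alpha$ to $y^\alpha$ (as in \cite[12.1]{PerelmanI} or \cite[Lemma 52.7]{KLnotes}) produces a chain of high-curvature points around each of which, by assumption (v), one has a strong $\varepsilon_0$-neck or $(\varepsilon_0,E)$-cap. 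The $\varphi^\alpha$-pinching in (iii), together with the rescaled local structure, forces the sectional curvatures in the rescaled picture to become non-negative, and assumption (vi) together with the surgery scale estimate from Definition \ref{Def:precisecutoff}(3) rules out hitting surgery points after rescaling. Composing the necks using Toponogov in the non-negatively curved rescaled limit, one obtains a metric cone splitting incompatible with the $\kappa$-noncollapsing in (iv), producing the contradiction.

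With these curvature bounds in hand, the injectivity radius bound on compact sets follows from (iv) by the standard estimate $\inj\geq c(\kappa,K(A))$. Hamilton's compactness theorem for Ricci flows then yields a subsequential pointed smooth Cheeger--Gromov--Hamilton limit $(M^\infty,(g^\infty_t)_{t\in(-\tau^\infty,0]},(x_0^\infty,0))$. Assumption (vi) guarantees that the sets where surgeries occur are pushed to curvature infinity and therefore cannot appear in any limit on a compact set where we already have $|{\Rm}|\leq K(A)$; hence the limit is a genuine non-singular Ricci flow. Completeness of each time-slice follows because $a^\alpha\to\infty$ and assumption (i) keeps the approximating balls away from the boundary, so the exhaustion sets swell to all of $M^\infty$. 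Non-negativity of the sectional curvature on $M^\infty$ is inherited from (iii) since $\varphi^\alpha\to 0$, and boundedness of the curvature on each time-slice comes from the estimates of the first step.

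Finally, assume $\tau^\infty=\infty$ and $|{\Rm^\infty}|(x_0^\infty,0)>0$. The limit is then an ancient solution with complete time-slices of bounded, non-negative sectional curvature; in dimension three this is equivalent to non-negative curvature operator. Hamilton's trace Harnack inequality for such ancient solutions gives $\partial_t\scal\geq 0$ at every point, verifying condition (4) of a $\kappa$-solution. The strong maximum principle applied to $\scal$ shows that $|{\Rm^\infty}|(x_0^\infty,0)>0$ forces $\scal>0$ everywhere at time $0$, giving condition (3). The noncollapsing assumption (iv) passes to the limit on every scale (since $a^\alpha\to\infty$), yielding condition (5). Together with the already established conditions (1) and (2), this certifies $(M^\infty,g^\infty_t)$ as a $\kappa$-solution. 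The main obstacle, as noted, is the bounded-curvature-at-bounded-distance estimate in the first step; the remaining structural assertions reduce to routine applications of Hamilton's compactness theorem, the Harnack inequality, and the strong maximum principle.
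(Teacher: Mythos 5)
The paper gives no argument of its own for this lemma --- its proof consists of the citation ``see \cite{Bamler-diploma}, \cite{PerelmanII} or the proofs of \cite{PerelmanI} 12.1 or \cite{KLnotes} Theorem 52.7'' --- and your outline reproduces exactly that standard route (bounded curvature at bounded distance, injectivity radius from noncollapsing, Hamilton compactness, then verification of the $\kappa$-solution axioms), so the approaches coincide. Two points in your sketch are imprecise, though both are resolved in the references you yourself cite. First, the contradiction at the end of the bounded-curvature-at-bounded-distance step is not with $\kappa$-noncollapsedness --- a non-flat metric cone over a nearly round $2$-sphere is perfectly noncollapsed --- but is obtained by blowing up once more at points approaching the vertex, using the \emph{strong} $\varepsilon$-neck structure to flow the cone piece backwards for a definite time, and then applying Hamilton's strong maximum principle: a non-negatively curved Ricci flow whose final time-slice is a non-flat cone would have to split locally as a product, which a cone does not. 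Second, your first step only gives $|{\Rm}| \leq K(A)$ on $B(x_0^\alpha,0,A)$ with $K(A)$ depending on $A$, so ``boundedness of the curvature on each time-slice comes from the estimates of the first step'' does not follow as stated; global boundedness of the curvature of the limit time-$0$ slice needs the separate observation that a complete, non-negatively curved $3$-manifold satisfying the canonical neighborhood assumptions cannot contain $\varepsilon$-necks of arbitrarily small scale (a Toponogov argument), and the bound is then propagated backwards in time using the estimate $|\partial_t |{\Rm}|^{-1}| < \eta^{-1}$ from Definition \ref{Def:CNA}(2) --- this ordering also matters because the trace Harnack inequality you invoke for axiom (4) presupposes bounded curvature. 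With those two corrections your argument is the standard one the paper defers to.
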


\begin{proof}
See \cite[Proposition 6.3.1]{Bamler-diploma}, \cite[4.2]{PerelmanII} or the proofs of \cite[12.1]{PerelmanI} or \cite[Theorem 52.7]{KLnotes}.
\end{proof}

We now state the second main Lemma.

\begin{Lemma}[cf \hbox{\cite[6.3(b)+(c)]{PerelmanII}}] \label{Lem:6.3bc}
There are constants $\eta_0, \varepsilon_0 > 0$ and for every $\varepsilon \in (0, \varepsilon_0]$ and $E < \infty$ there is a constant $E_0 = E_0(\varepsilon) < \infty$ such that: \\
For any $1 \leq A < \infty$, $w, r > 0$, $\eta \in (0, \eta_0]$ and $E \in [E_0, \infty)$ there are constants $K = K(w, A, E, \eta), Z = Z(A) < \infty$ and $\td\rho = \td\rho (w, A, \varepsilon, E, \eta), \ov{r}=\ov{r}(A, w, E, \eta), \delta = \delta (w, A, r, \varepsilon, E, \eta) > 0$ such that: \\
Let $r_0^2 < t_0/2$ and let $\MM$ be a Ricci flow with surgery (whose time-slices are allowed to have boundary) on the time-interval $[ t_0 - r_0^2, t_0]$ which is performed by $\delta$-precise cutoff and consider a point $x_0 \in \MM(t_0)$.
Assume that the canonical neighborhood assumptions $CNA (r, \varepsilon, E, \eta)$ hold on $\MM$.
We also assume that the curvature on $\MM$ is uniformly bounded on compact time-intervals which don't contain surgery times and that all time-slices of $\MM$ are complete.

Assume that the parabolic neighborhood $P(x_0, t_0, r_0, -r_0^2)$ is non-singular, that $|{\Rm}| \leq r_0^{-2}$ on $P(x_0, t_0, r_0, - r_0^2)$ and $\vol_{t_0} B(x_0, t_0, r_0) \geq w r_0^3$.

In the case in which some time-slices of $\MM$ have non-empty boundary, we assume that
\begin{enumerate}[label=(\roman*)]
\item every space-time curve $\gamma : [t_1, t_2] \to \MM$ with $t_2 \in [t_0 - \frac1{10} r_0^2, t_0]$ and $\gamma(t_2) \in B(x_0, t_2, (A+1) r_0)$ which meets the boundary $\partial \MM$ somewhere, has $\LL(\gamma) > Z r_0$ (based in $t_2$),
\item for all $t \in [t_0 - \frac15 r_0^2, t_0]$, the ball $B(x_0, t, 2 (A+3)r_0 + r)$ does not meet the boundary $\partial \MM (t)$.
\end{enumerate}

Then
\begin{enumerate}[label=(\alph*)]
\item Every point $x \in B(x_0, t_0, A r_0)$ satisfies the canonical neighborhood assumptions $CNA( \td\rho r_0, \varepsilon, E , \eta )$.
\item If $r_0 \leq \ov{r} \sqrt{t_0}$, then $|{\Rm}| \leq K r_0^{-2}$ on $B(x_0, t_0, A r_0)$.
\end{enumerate}
\end{Lemma}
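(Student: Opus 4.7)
The plan is to prove both parts by contradiction, in each case extracting a limiting $\kappa$-solution via the compactness statement of Lemma~\ref{Lem:lmiitswithCNA} and obtaining the contradiction from the canonical neighborhood structure of $\kappa$-solutions provided by Lemma~\ref{Lem:kappasolCNA}. Lemma~\ref{Lem:6.3a} is the noncollapsing input throughout; the boundary hypotheses (i) and (ii) are tailored precisely to let Lemma~\ref{Lem:6.3a} apply at every rescaled step, since (ii) keeps the relevant balls in the interior while (i) is the very $\LL$-length hypothesis that Lemma~\ref{Lem:6.3a} requires. The constant $Z$ is therefore dictated by Lemma~\ref{Lem:6.3a}, and $\td\rho, \ov{r}, \delta$ are then extracted from the point-selection arguments below in that order, with $\delta$ made small enough at each stage so that surgery points in the rescaled limits have scalar curvature tending to infinity (and are thereby invisible to Lemma~\ref{Lem:lmiitswithCNA}) via Definition~\ref{Def:precisecutoff}(3),(4).

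For part~(a), suppose the conclusion fails along a sequence $\MM^\alpha$ with $\td\rho^\alpha \to 0$: there are points $y^\alpha \in B(x_0^\alpha, t_0^\alpha, A r_0^\alpha)$ at which $CNA(\td\rho^\alpha r_0^\alpha, \varepsilon, E, \eta)$ fails. In particular $|{\Rm}|(y^\alpha, t_0^\alpha) \geq (\td\rho^\alpha r_0^\alpha)^{-2}$. A standard point-picking modification promotes $y^\alpha$ to a sequence of ``worst'' basepoints $(z^\alpha, t_0^\alpha)$ at which CNA still fails while on a parabolic region about $(z^\alpha, t_0^\alpha)$ of rescaled radius tending to infinity the curvature is controlled by $2Q^\alpha := 2|{\Rm}|(z^\alpha, t_0^\alpha)$. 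After parabolically rescaling by $Q^\alpha \to \infty$, the hypotheses of Lemma~\ref{Lem:lmiitswithCNA} are verified: $\kappa$-noncollapsedness on large scales comes from Lemma~\ref{Lem:6.3a} applied to the original flow at scale $r_0^\alpha$ (whose boundary hypotheses are provided by (i) and (ii)), combined with propagation of noncollapsedness through $CNA$ at higher-curvature points; $\varphi$-positivity becomes honest nonnegative curvature in the limit because $Q^\alpha \to \infty$; and the surgery-free condition is guaranteed by choosing $\delta$ small. Lemma~\ref{Lem:lmiitswithCNA} produces a $\kappa$-solution limit, and Lemma~\ref{Lem:kappasolCNA} then supplies a canonical neighborhood at the limit basepoint, contradicting the failure of CNA at $(z^\alpha, t_0^\alpha)$.

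For part~(b), suppose there is no such $K$: along a sequence $\MM^\alpha$ satisfying all the hypotheses together with $r_0^\alpha \leq \ov{r}\sqrt{t_0^\alpha}$, one has $\sup_{B(x_0^\alpha, t_0^\alpha, A r_0^\alpha)} |{\Rm}| \cdot (r_0^\alpha)^2 \to \infty$. Take minimizing geodesics $\gamma^\alpha$ in $\MM^\alpha(t_0^\alpha)$ from $x_0^\alpha$ to a near-maximizer $y^\alpha$; since $|{\Rm}|(x_0^\alpha, t_0^\alpha) \leq (r_0^\alpha)^{-2}$ while $|{\Rm}|(y^\alpha) \cdot (r_0^\alpha)^2 \to \infty$, the curvature grows from bounded to unbounded along $\gamma^\alpha$. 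Perform a Perelman/Hamilton-type point-selection along $\gamma^\alpha$ to produce basepoints $(z^\alpha, t_0^\alpha)$ with $Q^\alpha := |{\Rm}|(z^\alpha) \to \infty$ for which rescaled parabolic balls of arbitrary fixed size have bounded curvature. Part~(a), already proved, supplies canonical neighborhoods at $(z^\alpha, t_0^\alpha)$ for all sufficiently large $\alpha$. The hypothesis $r_0^\alpha \leq \ov{r}\sqrt{t_0^\alpha}$ plays two roles: first, the rescaled backward time-interval has length $Q^\alpha (r_0^\alpha)^2 \to \infty$, so the limit is ancient; second, Hamilton--Ivey pinching forces the limiting sectional curvature to be nonnegative. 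Lemma~\ref{Lem:lmiitswithCNA} then produces $\kappa$-solution limits along a chain of rescalings traveling down $\gamma^\alpha$ from the bounded-curvature region around $x_0^\alpha$ to the blow-up region around $y^\alpha$. By Lemma~\ref{Lem:kappasolCNA} this chain is covered by centers of strong $\varepsilon$-necks and $(\varepsilon,E)$-caps, and the asymptotic geometry of $\kappa$-solutions is incompatible with such a chain having total length bounded by $A r_0^\alpha$ while the curvature ratio across it diverges, yielding the contradiction. The principal obstacle in both parts is this careful threading of the boundary hypotheses: at each rescaling one must verify that the balls on which compactness, noncollapsing, and the canonical-neighborhood input are used sit in the interior of $\MM$, and that any $\LL$-geodesic meeting $\partial\MM$ is ruled out by the $\LL(\gamma) > Z r_0$ bound coming out of Lemma~\ref{Lem:6.3a}.
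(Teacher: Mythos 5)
Your proposal is correct and follows essentially the same route as the paper: for part (a), a space-time point-picking argument combined with the noncollapsing of Lemma \ref{Lem:6.3a} (whose boundary hypotheses are supplied by (i) and (ii)), compactness via Lemma \ref{Lem:lmiitswithCNA}, and the canonical neighborhood structure of $\kappa$-solutions from Lemma \ref{Lem:kappasolCNA}; for part (b), the standard bounded-curvature-at-bounded-distance contradiction that the paper itself defers to \cite[6.3]{PerelmanII} and \cite[Lemma 70.2]{KLnotes}. The only minor caveat is that the point-picking must be carried out in space-time (the selected bad point may lie at a time earlier than $t_0$, as in the paper's region $P_{\ov{x},\ov{t}}$), not merely within the final time-slice.
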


\begin{proof}
By choosing $\td\rho$ small and $K$ large enough we can again exclude the case in which the component of $\MM(t)$ which contains $x_0$ has positive, $E$-pinched sectional curvature for some time $t \leq t_0$.

We first establish part (a).
Choose $\eta_0$ and $E_0 = E_0 (\varepsilon)$ to be strictly less/larger than the constants $\eta$, $E(\varepsilon)$ in Lemma \ref{Lem:kappasolCNA}.
Assume now that given some small $\td\rho$, there is a point $x \in B(x_0, t_0, A r_0)$ such that $(x, t_0)$ does not satisfy the canonical neighborhood assumptions $CNA(\td\rho r_0, \varepsilon, E, \eta)$, i.e. we have $|{\Rm}|(x, t_0) \geq \td{\rho}^{-2} r_0^{-2}$ and $(x, t_0)$ does not satisfy the assumptions (1)--(3) in Definition \ref{Def:CNA}.
Set for $\ov{t} \in [t_0 - r_0^2, t_0]$, $\ov{x} \in \MM(\ov{t})$
\begin{multline*}
 P_{\ov{x}, \ov{t}} = \big\{ (y,t) \in \MM \;\; : \;\; t \in  [\ov{t} - \tfrac1{20} \td{\rho}^{-2} |{\Rm}|^{-1} (\ov{x},\ov{t}), \ov{t}], \;\; y \in \MM(t), \\
 \;\; \dist_t (x_0, y) \leq \dist_{\ov{t}}(x_0, \ov{x}) + \tfrac14 \td{\rho}^{-1} |{\Rm}|^{-1/2}(\ov{x},\ov{t}) \big\}.
\end{multline*}
We will now find a  particular $(\ov{x}, \ov{t}) \in \MM$ with $\ov{t} \in [t_0 - \frac1{10} r_0^2, t_0]$ and $\ov{x} \in B(x_0, \ov{t}, (A+\frac12) r_0)$ by a point-picking process:
Set first $(\ov{x}, \ov{t}) = (x, t_0)$.
Let $\ov{q} = |{\Rm}|^{-1/2} (\ov{x}, \ov{t}) \leq \td{\rho} r_0$.
If every $(x',t') \in P_{\ov{x}, \ov{t}}$ satisfies the canonical neighborhood assumptions $CNA(\frac12 \ov{q}, \varepsilon, E, \eta)$, then we stop.
If not, we replace $(\ov{x}, \ov{t})$ by such a counterexample an start over.
In every step of this algorithm, $\ov{q}$ decreases by at least a factor of $\frac12$ which implies that the algorithm has to terminate after a finite number of steps since after a finite number of steps we have $\ov{q} < r$ and we can make use of the canonical neighborhood assumptions $CNA(r, \varepsilon, E, \eta)$ given in the assumptions of Lemma.
So the algorithm yields an $(\ov{x}, \ov{t}) \in \MM$ and a $\ov{q} = |{\Rm}|^{-1/2} (\ov{x}, \ov{t}) \leq \td\rho r_0$ such that $(\ov{x}, \ov{t})$ does not satisfy the canonical neighborhood assumptions $CNA (\ov{q}, \varepsilon, E, \eta)$, but all points in $P_{\ov{x}, \ov{t}}$ satisfy the canonical neighborhood assumptions $CNA( \frac12 \ov{q}, \varepsilon, E, \eta)$.
By convergence of the geometric series, we conclude $\ov{t} - \frac1{20} \td\rho^{-2} \ov{q}^2 \in [t_0 - \frac1{10} r_0^2, t_0]$ and $\dist_{\ov{t}}(x_0, \ov{x}) < (A + \frac12) r_0$.
Moreover, for all $(x', t') \in P_{\ov{x}, \ov{t}}$ we have $\dist_{t'} (x_0, x') < (A+1) r_0$.

Now assume that for fixed parameters $w, A, \varepsilon, E, \eta$ there is no $\td\rho$ such that assertion (a) holds for a constant $Z$ and a constant $\delta$ which can additionally depend on $r$.
Then we can find a sequence $\td\rho^\alpha \to 0$ and a sequence of counterexamples $\MM^\alpha$, $t_0^\alpha$, $r_0^\alpha$, $x_0^\alpha$ together with parameters $r^\alpha$ which satisfy the assumptions of the Lemma, but there are points $x^\alpha \in B(x_0^\alpha, t_0^\alpha, A r_0^\alpha)$ such that $(x^\alpha, t_0^\alpha)$ don't satisfy the canonical neighborhood assumptions $CNA (\td{\rho}^\alpha r_0, \varepsilon, E^\alpha, \eta)$.
The choice of the constant $\delta^\alpha$ will be explicit and arise from Lemma \ref{Lem:6.3a}.
We will also assume that $\delta^\alpha / r^\alpha \to 0$ for $\alpha \to \infty$.

First, let $(\ov{x}^\alpha, \ov{t}^\alpha)$ and $\ov{q}^\alpha$ be the points and the constant obtained by the algorithm from the last paragraph.
We now apply Lemma \ref{Lem:6.3a} with 
\begin{multline*}
 r_0 \leftarrow \tfrac1{10} r_0^\alpha, \; x_0 \leftarrow x_0^\alpha, \; t_0 \leftarrow t \in [\ov{t}^\alpha - \tfrac1{20} (\td\rho^\alpha)^{-2} (\ov{q}^\alpha)^{2}, \ov{t}^\alpha], \\ \; w \leftarrow c w, \; A \leftarrow 10(A+1), \; r \leftarrow r^\alpha, \; E \leftarrow E^\alpha
\end{multline*}
to conclude that for sufficiently large $Z$ and small $\delta^\alpha$ (depending on $w, A, r^\alpha, \eta$) for any $(x',t') \in \MM^\alpha$ with $t' \in [\ov{t}^\alpha - \frac1{20} (\td\rho^\alpha)^{-2} (\ov{q}^\alpha)^2, \ov{t}^\alpha]$ and $x' \in B(x_0^\alpha, t', (A+1) r_0^\alpha)$ is $\kappa$-noncollapsed for some uniform $\kappa > 0$ on scales less than $\frac1{10} r_0^\alpha$.
This implies that the points on $P_{\ov{x}^\alpha, \ov{t}^\alpha}$ are $\kappa$-noncollapsed on scales less than $\frac1{10} r_0^\alpha$.

Observe that the assumption on $\delta^\alpha$ and Definition \ref{Def:precisecutoff}(3) imply that there is a universal constant $c' > 0$ such that for every surgery point $(x',t') \in \MM^\alpha$ we have 
\begin{equation} \label{eq:surgpointhighcurv}
 |{\Rm}|(x',t') > c' (\delta^\alpha)^{-2} > c' \Big( \frac{\delta^\alpha}{r^\alpha} \Big)^{-2} (r^\alpha)^{-2} > c' \Big( \frac{\delta^\alpha}{r^\alpha} \Big)^{-2} (\ov{q}^\alpha)^{-2}.
\end{equation}
Here we have made use of the inequality $r^\alpha < \ov{q}^\alpha$ which follows from the fact that the point $(\ov{x}^\alpha, \ov{t}^\alpha)$ satisfies the canonical neighborhood assumptions $CNA (r^\alpha, \linebreak[1] \varepsilon, \linebreak[1] E, \linebreak[1] \eta)$, but not $CNA (\ov{q}^\alpha, \varepsilon, E, \eta)$.

By Lemma \ref{Lem:shortrangebounds}(a), we have $|{\Rm}| < 8 (\ov{q}^\alpha)^{-2}$ on $P(\ov{x}^\alpha, \ov{t}^\alpha, c \ov{q}^\alpha, - c (\ov{q}^\alpha)^2)$ where $c = \frac{\eta}{40}$.
Using (\ref{eq:surgpointhighcurv}), we conclude that this parabolic neighborhood is non-singular for large $\alpha$.
Choose now $\tau \geq 0$ maximal with the property that for all $\tau' < \tau$ there is some $D_{\tau'} < \infty$ such that for infinitely many $\alpha$ the point $\ov{x}^\alpha$ survives until time $\ov{t}^\alpha - \tau' (\ov{q}^\alpha)^2$ and we have $|{\Rm}| (x,t) \leq D_{\tau'} (\ov{q}^\alpha)^{-2}$ whenever $t \in [\ov{t}^\alpha - \tau' (\ov{q}^\alpha)^2, \ov{t}^\alpha]$ and $\dist_t (\ov{x}^\alpha, x) < \frac{c}2 \ov{q}^\alpha$ (so by (\ref{eq:surgpointhighcurv}) none of these points is a surgery point for infinitely many $\alpha$).
After passing to a subsequence, we can assume that for all $\tau' < \tau$ this property even holds for \emph{sufficiently large} $\alpha$.
Obviously, $\tau > 0$ by the result at the beginning of the paragraph.
By distance distortion estimates (Lemma \ref{Lem:distdistortion}(b)) we then obtain for all such $\alpha$ and all $t \in [\ov{t}^\alpha - \tau' (\ov{q}^\alpha)^2, \ov{t}^\alpha]$
\[ \frac{d}{dt} \dist_t (x_0^\alpha, \ov{x}^\alpha) \geq - C \sqrt{D_{\tau'}} (\ov{q}^\alpha)^{-1} . \]
So for all $t \in [\ov{t}^\alpha - \tau' (\ov{q}^\alpha)^2, \ov{t}^\alpha]$
\[ \dist_t (x_0^\alpha, \ov{x}^\alpha) < \dist_{\ov{t}^\alpha}(x_0^\alpha, \ov{x}^\alpha) + C \tau' \sqrt{D_{\tau'}}  \ov{q}^\alpha. \]
This implies that for every $a < \infty$ and $\tau' < \tau$ we have $B(x_0^\alpha, t, a \ov{q}^\alpha) \subset P_{\ov{x}^\alpha, \ov{t}^\alpha}$ for all $t \in [\ov{t}^\alpha - \tau' (\ov{q}^\alpha)^2, \ov{t}^\alpha]$ for large enough $\alpha$.
Hence we can find sequences $a^\alpha \to \infty$ and $\tau^\alpha \to \tau$ such that $B(\ov{x}^\alpha, t, a^\alpha \ov{q}^\alpha) \subset P_{\ov{x}^\alpha, \ov{t}^\alpha}$ for all $\alpha$ and $t \in [\ov{t}^\alpha - \tau^\alpha (\ov{q}^\alpha)^2, \ov{t}^\alpha]$.

After rescaling by $(\ov{q}^\alpha)^{-1}$, the Ricci flows restricted to the time-interval $[\ov{t}^\alpha - \tau^\alpha (\ov{q}^\alpha)^2, \ov{t}^\alpha]$ satisfy the assumptions of Lemma \ref{Lem:lmiitswithCNA} (we also need to make use of assumption (ii) here) and hence they subconverge to some non-singular Ricci flow on $M_\infty \times (- \tau, 0]$ of bounded curvature.
Using Lemma \ref{Lem:shortrangebounds} we can argue that if $\tau$ was finite, then by its choice we could increase it and hence we must have $\tau = \infty$.
So $M_\infty \times (- \infty, 0]$ is a $\kappa$-solution.
Using Lemma \ref{Lem:kappasolCNA}, we finally obtain a contradiction to the assumption that the points $(\ov{x}^\alpha, \ov{t}^\alpha)$ do not satisfy the canonical neighborhood assumptions $CNA(\ov{q}^\alpha, \varepsilon, E, \eta)$.

Part (b) follows exactly the same way as in \cite[6.3]{PerelmanII}.
See also \cite[Lemma 70.2]{KLnotes} and \cite[Proposition 6.2.4]{Bamler-diploma}.
The boundary $\partial \MM (t_0)$ does not create any issues since it is far enough away from $x_0$.
\end{proof}

We now prepare for the proof of the next main result, Lemma \ref{Lem:6.4}.
We believe that we have to alter the following Lemma with respect to \cite[6.5]{PerelmanII} to make it's proof work.
\begin{Lemma}[\hbox{\cite[6.5]{PerelmanII}}] \label{Lem:6.5}
For all $w > 0$ there exists $\tau_0 = \tau_0(w) > 0$ and $K_0 = K_0(w) < \infty$, such that: \\
Let $(g_t)_{t \in (-\tau,0]}$ be a smooth solution to the Ricci flow on a non-singular parabolic neighborhood $P(x_0, 0, 1, -\tau)$, $\tau \leq \tau_0$.
Assume that $\sec \geq -1$ on $B = P(x_0, 0, 1, -\tau) \cap \bigcup_{t \in [-\tau,0]} B(x_0, t, 1)$ and $\vol_0 B(x_0, 0, 1) \geq w$.
Then
\begin{enumerate}[label=(\alph*)]
\item $|{\Rm}| \leq K_0 \tau^{-1}$ in $P(x_0, 0, \frac14, -\tau/2)$.
\item $B(x_0, - \tau, \frac14)$ is relatively compact in $B(x_0, 0, 1)$ and
\item $\vol_{-\tau} B(x_0, -\tau, \frac14) > \frac12 w (\frac14)^3$.
\end{enumerate}
\end{Lemma}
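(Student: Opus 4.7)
\textbf{Plan (Lemma \ref{Lem:6.5}).} The three conclusions are proved in the order (b), (a), (c); the main difficulty is (a).

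\emph{Conclusion (b)} is a bootstrap from the lower Ricci bound $\Ric\geq-2$ coming from $\sec\geq-1$ in dimension three. Let $s^\ast\in[0,\tau]$ be the supremum of those $s$ for which $B(x_0,-s',\tfrac14)\subset B(x_0,0,\tfrac12)$ holds for every $s'\in[0,s]$. On this range, every minimizing $g_{-s'}$-geodesic from $x_0$ to a point of $B(x_0,-s',\tfrac14)$ is contained in $B$, so Lemma~\ref{Lem:distdistortion}(a) yields $\dist_0(x_0,\cdot)\leq e^{2s'}\dist_{-s'}(x_0,\cdot)\leq e^{2\tau_0}/4$. Choosing $\tau_0$ with $e^{2\tau_0}/4<\tfrac12$ makes the inclusion strict, so $s^\ast$ can be enlarged unless it already equals $\tau$, which is (b).

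\emph{Conclusion (a)} is proved by contradiction and parabolic rescaling, in the spirit of Lemma~\ref{Lem:lmiitswithCNA}. Suppose (a) fails for some $w>0$; then there are smooth Ricci flows on $P(x_0^\alpha,0,1,-\tau^\alpha)$ satisfying the hypotheses with $\tau^\alpha\to 0$ and points $(y^\alpha,t^\alpha)\in P(x_0^\alpha,0,\tfrac14,-\tau^\alpha/2)$ at which $\tau^\alpha\,|\Rm|(y^\alpha,t^\alpha)\to\infty$. A Perelman-style point-picking argument inside $P(x_0^\alpha,0,\tfrac14,-\tau^\alpha/2)$ produces $(\ov x^\alpha,\ov t^\alpha)$ with $Q^\alpha:=|\Rm|(\ov x^\alpha,\ov t^\alpha)\to\infty$ such that $|\Rm|\leq 4Q^\alpha$ on a parabolic neighborhood whose $(Q^\alpha)^{1/2}$-rescaled size tends to $\infty$ in both space and past time; the estimate $Q^\alpha\,|\ov t^\alpha|\to\infty$ (which follows from $Q^\alpha\tau^\alpha\to\infty$ and $|\ov t^\alpha|\leq\tau^\alpha/2$ together with the point-picking) ensures that the rescaled past reaches $-\infty$. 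After parabolic rescaling by $(Q^\alpha)^{1/2}$, one has $\sec\geq-(Q^\alpha)^{-1}\to 0$, and Bishop--Gromov comparison applied to $\vol_0 B(x_0^\alpha,0,1)\geq w$ yields uniform $\kappa$-noncollapsing on the rescaled scale. Hamilton's compactness theorem then extracts a complete ancient Ricci flow of bounded, nonnegative sectional curvature that is $\kappa$-noncollapsed, i.e.\ a $\kappa$-solution, carrying $|\Rm|=1$ at the basepoint, in contradiction to Lemma~\ref{Lem:kappasolCNA} and the way $(\ov x^\alpha,\ov t^\alpha)$ was selected. The principal technicalities are verifying the uniform noncollapsing of the rescaled flows---which is exactly what forces the restriction to $P(x_0,0,\tfrac14,-\tau/2)$ rather than the full parabolic neighborhood---and producing a genuinely ancient limit.

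\emph{Conclusion (c)} combines (a) with a pointwise volume-form estimate. By (a), $|\Rm|\leq K_0/\tau$ on $P(x_0,0,\tfrac14,-\tau/2)$, and Lemma~\ref{Lem:distdistortion}(b) with $r=\sqrt{\tau/K_0}$ then gives $B(x_0,0,\tfrac14-C\sqrt\tau)\subset B(x_0,-\tau/2,\tfrac14)$. Reapplying (a) at base time $-\tau/2$ (whose hypotheses persist via (b), $\sec\geq-1$ on $B$, and the pointwise bound $dV_{-\tau/2}\geq e^{-3\tau_0}\,dV_0$ coming from $\scal\geq-6$) and iterating along a dyadic subdivision of $[-\tau,0]$, the accumulated radial loss at step $k$ is $O(2^{-k/2}\sqrt\tau)$ and is therefore summable, giving $B(x_0,0,\tfrac14-C'\sqrt\tau)\subset B(x_0,-\tau,\tfrac14)$ with $C'$ universal. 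The same lower scalar bound combined with $\tfrac{d}{dt}dV_t=-\scal\,dV_t$ yields
\[\vol_{-\tau}B(x_0,-\tau,\tfrac14)\;\geq\;e^{-6\tau_0}\vol_0 B(x_0,0,\tfrac14-C'\sqrt\tau),\]
and Bishop--Gromov comparison at time $0$ using $\vol_0 B(x_0,0,1)\geq w$ bounds the right-hand side below by $\tfrac12 w(\tfrac14)^3$ once $\tau_0$ is chosen small enough.
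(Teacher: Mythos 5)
The paper does not actually prove this lemma; it defers to \cite[Lemma 82.1]{KLnotes} for part (a) and to the proof of \cite[Corollary 45.1(b)]{KLnotes} for part (c), so your sketch has to be measured against that standard argument. Your part (b) is fine, and your blow-up framework for (a) (contradiction, point-picking, rescaling, Hamilton compactness) is the right one. The genuine gap is the final contradiction in (a): producing ``a complete ancient $\kappa$-solution with $|{\Rm}|=1$ at the basepoint'' contradicts nothing --- such solutions exist (shrinking cylinder, Bryant soliton), Lemma \ref{Lem:kappasolCNA} says that $\kappa$-solutions \emph{satisfy} the canonical neighborhood assumptions, and your points $(\ov{x}^\alpha,\ov{t}^\alpha)$ were selected only to have large, locally controlled curvature; Lemma \ref{Lem:6.5} imposes no canonical neighborhood hypothesis that the limit could violate. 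What actually kills the limit is its asymptotic volume ratio: from $\sec\ge -1$ on $B$ and $\vol_0 B(x_0,0,1)\ge w$, Bishop--Gromov (together with $\scal\ge -6$ and the distance estimates to transport the bound to time $\ov{t}^\alpha$) gives $\vol B(\ov{x}^\alpha,\ov{t}^\alpha,r)\ge c(w)r^3$ for all $r$ up to a fixed fraction of $1$, hence after rescaling by $(Q^\alpha)^{1/2}$ for all $r$ up to $c(Q^\alpha)^{1/2}\to\infty$. The limit therefore has \emph{positive} asymptotic volume ratio, which contradicts the vanishing asymptotic volume ratio of non-flat $\kappa$-solutions (\cite[11.4]{PerelmanI}); equivalently, the neck/cap structure supplied by Lemma \ref{Lem:kappasolCNA} forces $r^{-3}\vol B(\ov{x}^\infty,0,r)\to 0$. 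You invoke Bishop--Gromov only to get noncollapsing ``on the rescaled scale'' and never carry the volume lower bound to all scales of the limit, so as written there is no contradiction.

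Two secondary points. First, $Q^\alpha|\ov{t}^\alpha|\to\infty$ does not follow from $Q^\alpha\tau^\alpha\to\infty$ and $|\ov{t}^\alpha|\le\tau^\alpha/2$ (one could have $\ov{t}^\alpha=0$); ancientness of the limit comes from the rescaled amount of available past time, $Q^\alpha(\ov{t}^\alpha+\tau^\alpha)\ge Q^\alpha\tau^\alpha/2\to\infty$. Second, in (c) the phrase ``reapplying (a) at base time $-\tau/2$'' hides a real step: an application centered at $x_0$ only yields curvature control on a ball of one quarter of the radius, which does not reach the points $y$ with $\dist_{-\tau/2}(x_0,y)\approx\tfrac14$ whose backward distance growth you must control via Lemma \ref{Lem:distdistortion}(b) (that lemma needs the Ricci bound on balls around \emph{both} endpoints). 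One has to re-center the application of the lemma at such $y$, using a ball of radius about $\tfrac12$ around $y$ whose volume is controlled by Bishop--Gromov from the ball around $x_0$. This is routine, but it is the actual content of the iteration and should be said.
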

\begin{proof}
See \cite[Lemma 82.1]{KLnotes} for a proof of the first part and the proof of \cite[Corollary 45.1(b)]{KLnotes} for the third.
The second part is due to the lower bound on the sectional curvature.
\end{proof}

\begin{Lemma}[\hbox{\cite[6.6]{PerelmanII}}] \label{Lem:6.6}
For any $w > 0$ there is a $\theta_0 = \theta_0 (w) > 0$ such that:
Let $(M, g)$ be a Riemannian $3$-manifold and $B(x,1) \subset M$ a ball of volume at least $w$, which is relatively compact and does not meet the boundary of $M$.
Assume that $\sec \geq - 1$ on $B(x, 1)$.
Then there exists a ball $B(y,\theta_0) \subset B(x,1)$, such that every subball $B(z,r) \subset B(y, \theta_0)$ of any radius $r$ has volume at least $\frac1{10} r^3$.
\end{Lemma}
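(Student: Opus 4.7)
The plan is to argue by contradiction, using Gromov--Hausdorff compactness to produce a limiting Alexandrov space where a regular point supplies the required ball, and then to transport the conclusion back to the approximating manifolds. Suppose the lemma fails for some $w_0 > 0$. Then for each $k \in \IN$ there exists a $3$-manifold $(M_k, g_k)$ and a point $x_k \in M_k$ with $\sec \geq -1$ on $B(x_k, 1)$ and $\vol B(x_k, 1) \geq w_0$, such that no ball $B(y, 1/k) \subset B(x_k, 1)$ has the property that every sub-ball has volume $\geq \tfrac{1}{10} r^3$. Bishop--Gromov comparison gives the uniform lower bound $\vol B(x_k, 1/2) \geq c(w_0) > 0$.

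I would then apply Gromov's precompactness theorem for metric spaces of curvature bounded below to extract a subsequence along which the closed pointed balls $(\overline{B(x_k, 1)}, x_k)$ converge in the pointed Gromov--Hausdorff topology to a pointed compact length space $(Y, y_\infty)$. The uniform volume lower bound forces this convergence to be non-collapsing, so $Y$ is a $3$-dimensional Alexandrov space of curvature $\geq -1$; by Colding's volume continuity for non-collapsed sequences, the Riemannian volumes of converging balls converge to the corresponding $3$-dimensional Hausdorff measures $\mathcal{H}^3$ on $Y$.

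Next I would invoke the structure theory of Alexandrov spaces: $\mathcal{H}^3$-almost every point of $Y$ is a regular point whose tangent cone is isometric to $\IR^3$. Choose such a regular point $y^* \in B(y_\infty, 1/2)$. Since $T_{y^*} Y = \IR^3$, for any $\eta > 0$ there is $\theta^* > 0$ so small that the rescaling of $(B(y^*, \theta^*), y^*)$ by $(\theta^*)^{-1}$ is $\eta$-close in Gromov--Hausdorff distance, and has $\eta$-close Hausdorff volumes on every sub-ball, to the unit Euclidean $3$-ball. Choosing $\eta$ small enough, every sub-ball $B(z, r) \subset B(y^*, \theta^*)$ then satisfies $\mathcal{H}^3(B(z, r)) > \tfrac{1}{5} r^3$.

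Finally I would transport this back to each $M_k$: pick $y_k \in B(x_k, 1)$ approximating $y^*$ under the Gromov--Hausdorff maps and set $\theta_0 := \tfrac{1}{2} \theta^*$. Non-collapsed volume continuity yields $\vol B(z, r) > \tfrac{1}{5} r^3$ for balls $B(z, r) \subset B(y_k, \theta_0)$ at the fixed scale $r = \theta_0$; for smaller $r$ the Bishop--Gromov monotonicity of $r \mapsto \vol B(z, r)/V_{-1}(r)$ propagates this into $\vol B(z, r) \geq \tfrac{1}{10} r^3$ for all $0 < r \leq \theta_0$ (using $V_{-1}(r)/r^3 \to \tfrac{4\pi}{3}$ as $r \to 0$). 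For $k$ so large that $1/k < \theta_0$ this contradicts our choice of counterexample. The main obstacle is this last transfer step: converting an infinitesimal Euclidean-tangent-cone statement at the single limit point $y^*$ into a \emph{uniform}, center-independent and scale-independent volume bound inside the Riemannian manifold $M_k$ itself, and it is precisely here that the non-collapsed volume continuity must be combined with the Bishop--Gromov comparison to reach scales below those visible to the Gromov--Hausdorff approximation.
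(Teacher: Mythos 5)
Your argument is correct and is essentially the proof the paper relies on: the paper simply cites Kleiner--Lott, Lemma 83.1, whose proof is exactly this contradiction/compactness scheme (non-collapsed Gromov--Hausdorff limit, a regular point of the limiting $3$-dimensional Alexandrov space with tangent cone $\IR^3$, volume convergence, and Bishop--Gromov to push the almost-Euclidean volume bound down to all scales). The transfer step you flag is handled correctly, and the constants leave ample room for the factor $\frac1{10}$.
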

\begin{proof}
See \cite[Lemma 83.1]{KLnotes}.
\end{proof}

\begin{Lemma} \label{Lem:HIbound}
For any $K < \infty$ there is an $\ov{r} = \ov{r} (K) < \infty$ such that:
Let $r_0 \leq \ov{r} \sqrt{t_0}$ and $\frac12 t_0 \leq t \leq t_0$.
Assume that $(M, g)$ is a Riemannian manifold of $t^{-1}$-positive curvature and $|{\Rm}| < K r_0^{-2}$ on $M$.
Then the sectional curvature is bounded from below: $\sec \geq - \frac12 r_0^{-2}$.
\end{Lemma}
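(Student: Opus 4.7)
The plan is to argue by contradiction using the Hamilton--Ivey pinching inequality, which is precisely the content of the $t^{-1}$-positivity hypothesis. Suppose there is a point $x \in M$ with minimum sectional curvature strictly less than $-\tfrac12 r_0^{-2}$; denote this minimum by $-Y$, so $Y > \tfrac12 r_0^{-2}$. The two-sided bound $|{\Rm}| < K r_0^{-2}$ gives $Y \leq K r_0^{-2}$ and, writing the scalar curvature as twice the sum of the three eigenvalues of the curvature operator, the upper bound
\[ \scal(x) \;\leq\; 2\bigl(-Y + 2 K r_0^{-2}\bigr) \;=\; -2Y + 4 K r_0^{-2}. \]

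Next I would unpack the $t^{-1}$-positivity condition at $x$: there exists $X > 0$ with $\sec_x \geq -X$ and $\scal_x \geq 2 X\bigl(\log(2X) + \log t - 3\bigr)$. Since $-X \leq -Y$ forces $X \geq Y$, and since the function $X \mapsto 2X(\log(2X) + \log t - 3)$ is monotonically increasing once $2X \geq e^{2}/t$, I need to know that $Y$ already sits past this threshold. But $t \geq t_0/2 \geq r_0^2/(2 \ov{r}^{\,2})$, so choosing $\ov{r}$ smaller than a universal constant gives $e^2/(2t) \leq \tfrac{1}{2} r_0^{-2} < Y$. Hence the bound applies at $X = Y$, yielding $\scal(x) \geq 2 Y\bigl(\log(2Y) + \log t - 3\bigr)$.

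Combining this with the previous upper bound and rearranging produces
\[ 2 K r_0^{-2} \;\geq\; Y\bigl(\log(2Y) + \log t - 2\bigr). \]
The two logarithmic contributions are now easy to bound from below: $\log(2Y) \geq -2 \log r_0$ because $2Y > r_0^{-2}$, and $\log t \geq 2\log r_0 + 2\log \ov{r}^{-1} - \log 2$ because $t \geq r_0^2/(2 \ov{r}^{\,2})$. The $\log r_0$ terms cancel exactly, leaving
\[ 2 K r_0^{-2} \;\geq\; Y\bigl(2 \log \ov{r}^{-1} - \log 2 - 2\bigr). \]
Using $Y > \tfrac12 r_0^{-2}$ and cancelling $r_0^{-2}$, this reduces to $\log \ov{r}^{-1} \leq 2K + 1 + \tfrac12 \log 2$, so any $\ov{r}$ smaller than $e^{-(2K+2)}$ yields the desired contradiction.

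The argument is essentially bookkeeping once Hamilton--Ivey is invoked; the only genuine subtlety is confirming that $Y$ lies in the regime where the $\varphi$-positivity inequality is strong (rather than the trivial form one obtains at very small $X$), and this is exactly what the smallness of $\ov{r}$ provides through the chain $t \geq t_0/2 \geq r_0^2/(2\ov{r}^{\,2})$.
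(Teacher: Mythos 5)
Your argument is correct and is exactly the computation the paper leaves implicit: the paper's proof just says the case $r_0=1$ is clear and the rest follows from rescaling (using that $\varphi$-positivity is parabolic-scaling covariant), and your direct unwinding of the Hamilton--Ivey condition at scale $r_0$ is precisely what makes that case ``clear.'' The only point worth flagging is that the constants in $\scal \leq -2Y + 4Kr_0^{-2}$ depend on the normalization of $|{\Rm}|$, but this only affects the final value of $\ov{r}(K)$, not the structure of the argument.
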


\begin{proof}
The claim is clear for $r_0 = 1$.
The rest follows from rescaling.
\end{proof}

\begin{Lemma}[\hbox{\cite[6.4]{PerelmanII}}] \label{Lem:6.4}
There is a constant $\varepsilon_0 > 0$ such that for all $r, \eta > 0$ and $E < \infty$ there are constants $\tau = \tau(\eta, E), \ov{r} = \ov{r} (\eta, E), \delta = \delta (r, \eta, E) > 0$ and $K = K(\eta, E),  C_1 = C_1 (\eta, E), Z = Z( \eta, E) < \infty$ such that: \\
Let $r_0^2 < t_0 /2$ and let $\MM$ be a Ricci flow with surgery (whose time-slices are allowed to have boundary) on the time-interval $[ t_0 - r_0^2, t_0]$ which is performed by $\delta$-precise cutoff and consider a point $x_0 \in \MM(t_0)$.
Assume that the canonical neighborhood assumptions $CNA (r, \varepsilon_0, E, \eta)$ hold on $\MM$.
We also assume that the curvature on $\MM$ is uniformly bounded on compact time-intervals which don't contain surgery times and that all time-slices of $\MM$ are complete.

In the case in which some time-slices of $\MM$ have non-empty boundary, we assume that
\begin{enumerate}[label=(\roman*)]
\item For all $t_1 < t_2 \in [t_0 - \frac1{10} r_0^2, t_0]$ we have: if some $x \in B(x_0, t_0, r_0)$ survives until time $t_2$ and $\gamma : [t_1, t_2] \to \MM$ is a space-time curve with endpoint $\gamma(t_2) \in B(x, t, 3 r_0)$ which meets the boundary $\partial\MM$ somewhere, then $\LL(\gamma) > Z r_0$ (where $\LL$ is based in $t_2$).
\item For all $t \in [t_0 - \frac1{10} r_0, t_0]$ we have: if some $x \in B(x_0, t_0, r_0)$ survives until time $t$, then $B(x, t, 5 r_0 + r)$ does not meet the boundary $\partial \MM (t)$.
\end{enumerate}
Now assume that 
\begin{enumerate}[label=(\roman*), start=3]
\item $C_1 \delta \leq r_0 \leq \ov{r} \sqrt{t_0}$,
\item $\sec \geq - r_0^{-2}$ on $B(x_0, t_0, r_0)$ and
\item $\vol_{t_0} B(x_0, t_0, r_0) \geq \frac1{10} r_0^3$.
\end{enumerate}
Then the parabolic neighborhood $P(x_0, t_0, \frac14 r_0, - \tau r_0^2)$ is non-singular and we have $|{\Rm}| < K r_0^{-2}$ on $P(x_0, t_0, \frac14 r_0, - \tau r_0^2)$.
\end{Lemma}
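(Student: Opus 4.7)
The plan is to chain together three results already established above: the point-picking of Lemma \ref{Lem:6.6}, the short-time smoothing Lemma \ref{Lem:6.5}, and the ball-to-ball propagation Lemma \ref{Lem:6.3bc}. The idea is to first replace $x_0$ with a much better base point $y$ of controlled normalized volume in every sub-ball, then extend the flow backward a definite time to obtain bounded curvature and a volume bound at an earlier slice, and finally run Lemma \ref{Lem:6.3bc} at that earlier slice to cover a ball around $x_0$.

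First, after rescaling by $r_0^{-1}$, I apply Lemma \ref{Lem:6.6} (with $w = \frac{1}{10}$) to $B(x_0, t_0, r_0)$ to obtain a sub-ball $B(y, t_0, \theta_0 r_0) \subset B(x_0, t_0, r_0)$ in which every sub-sub-ball has normalized volume at least $\frac{1}{10}$; in particular $\vol_{t_0} B(y, t_0, \theta_0 r_0) \geq \frac{1}{10}(\theta_0 r_0)^3$. For $\delta$ small enough, Definition \ref{Def:precisecutoff}(3) forces every surgery point of $\MM$ to have curvature $\gg r_0^{-2}$, so a continuity argument using Lemma \ref{Lem:shortrangebounds}(a) together with the hypothesis $\sec \geq -r_0^{-2}$ at time $t_0$ shows that a small backward parabolic neighborhood of $y$ is non-singular, with $\sec \geq -(\theta_0 r_0)^{-2}$ inherited by continuity. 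Lemma \ref{Lem:6.5} then produces constants $\tau_1 = \tau_0(\tfrac{1}{10})$, $K_0 = K_0(\tfrac{1}{10})$ depending only on $w = \tfrac{1}{10}$ such that $|{\Rm}| \leq K_0 \tau_1^{-1}(\theta_0 r_0)^{-2}$ on $P(y, t_0, \tfrac14 \theta_0 r_0, -\tau_1(\theta_0 r_0)^2)$, and the time-$t^{\ast}$ slice at $t^{\ast} = t_0 - \tau_1(\theta_0 r_0)^2$ carries the volume bound $\vol_{t^{\ast}} B(y, t^{\ast}, \tfrac14 \theta_0 r_0) \geq \tfrac{1}{20}\cdot\tfrac{1}{10}\bigl(\tfrac14 \theta_0 r_0\bigr)^3$.

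With a bounded-curvature neighborhood and a controlled volume now in hand at time $t^{\ast}$, I apply Lemma \ref{Lem:6.3bc}(b) at base point $y$, with $r_0$ there taken to be $\tfrac14 \theta_0 r_0$ and $A$ chosen large enough (as an absolute constant) so that the resulting ball $B(y, t^{\ast}, A \cdot \tfrac14 \theta_0 r_0)$ contains $B(x_0, t^{\ast}, 2 r_0)$, using Lemma \ref{Lem:distdistortion} to control distance distortion backward from $t_0$ to $t^{\ast}$ (the distortion is uniformly controlled because $\Ric$ is bounded on the neighborhood produced in the previous step). Lemma \ref{Lem:HIbound} first upgrades the upper curvature bound to $\sec \geq -\tfrac12(\tfrac14\theta_0 r_0)^{-2}$ on a sub-ball, feeding the required hypothesis (iv) of Lemma \ref{Lem:6.3bc}. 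The output is $|{\Rm}| \leq K'' r_0^{-2}$ on a ball containing $B(x_0, t^{\ast}, 2 r_0)$, together with canonical neighborhood assumptions $CNA(\tilde\rho r_0, \varepsilon_0, E, \eta)$ at every point. Finally, Lemma \ref{Lem:shortrangebounds}(b) propagates this curvature bound forward from $t^{\ast}$ to $t_0$ on the slightly smaller parabolic neighborhood $P(x_0, t_0, \tfrac14 r_0, -\tau r_0^2)$, where $\tau = \tau(\eta, E)$ is a small multiple of $\eta^2/K''$; non-singularity of this parabolic neighborhood follows from $C_1 \delta \leq r_0$ and the usual high-curvature-at-surgery argument.

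The main obstacle is the bookkeeping for the boundary case: each invocation of Lemma \ref{Lem:6.3a}, \ref{Lem:6.3bc}, and \ref{Lem:6.5} carries its own version of hypotheses (i), (ii) on $\LL$-length and distance to $\partial \MM$, and these must be derived from the present hypotheses (i), (ii) applied at the original base $x_0$ with the wider balls $B(x, t, 3 r_0)$ and $B(x, t, 5 r_0 + r)$. Because $y \in B(x_0, t_0, r_0)$ and all intermediate balls we use have radius at most a controlled multiple of $r_0$, the triangle inequality together with Lemma \ref{Lem:distdistortion} keeps everything inside the region where the present hypotheses were assumed. The constants $Z = Z(\eta, E)$ and $C_1 = C_1(\eta, E)$ claimed in the statement are then dictated by the largest $Z$ appearing in the intermediate applications of Lemmas \ref{Lem:6.3a} and \ref{Lem:6.3bc}, and by the implicit high-curvature threshold at surgery points.
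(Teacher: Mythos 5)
Your plan breaks down at the very first substantive step, and the gap is not a bookkeeping issue but the central difficulty of this lemma. To apply Lemma \ref{Lem:6.5} at $y$ with the definite constants $\tau_1 = \tau_0(\tfrac1{10})$, $K_0(\tfrac1{10})$, you need $\sec \geq -(\theta_0 r_0)^{-2}$ on the \emph{entire} backward parabolic region $P(y,t_0,\theta_0 r_0,-\tau_1(\theta_0 r_0)^2)$, i.e.\ for a length of time comparable to $r_0^2$. The hypothesis of the lemma only gives $\sec \geq -r_0^{-2}$ at the single time $t_0$, and your ``continuity argument using Lemma \ref{Lem:shortrangebounds}(a)'' cannot bridge this: the canonical neighborhood assumptions control the flow backward only on a parabolic neighborhood of size governed by the CNA scale $r$ (or by $|{\Rm}|^{-1/2}$), which is useless precisely in the hard case $r_0 \gg r$. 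A priori the sectional curvature could drop below $-(\theta_0 r_0)^{-2}$ an arbitrarily short time before $t_0$. Establishing that the lower sectional curvature bound persists for a definite fraction of $r_0^2$ \emph{is} the content of the lemma, and the paper obtains it by a two-fold bootstrap that your proposal omits entirely: (1) a point-picking/minimal-counterexample argument over scales, which arranges that the conclusion of the lemma is already available for every sub-ball $B(y,t,\theta_0 r_0)$ produced by Lemma \ref{Lem:6.6} at every earlier time $t$; and (2) an open-closed argument in time (the maximal $\ov\tau$), in which Lemma \ref{Lem:6.3bc}(b) applied at each time slice $t \in [t_0-\ov\tau r_0^2,t_0]$ yields $|{\Rm}| < K^* r_0^{-2}$ on $B(x_0,t,r_0)$, and Lemma \ref{Lem:HIbound} (this is where $r_0 \leq \ov{r}\sqrt{t_0}$ and the $t^{-1}$-positivity enter) converts that upper bound into $\sec \geq -\tfrac12 r_0^{-2}$, contradicting the maximality of $\ov\tau$. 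Without some version of this bootstrap your application of Lemma \ref{Lem:6.5} is unjustified and the argument does not start.

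There is a second, smaller gap at the end. Your single application of Lemma \ref{Lem:6.3bc}(b) produces a curvature bound on a ball around $x_0$ at the earlier time $t^{\ast}$ only, and Lemma \ref{Lem:shortrangebounds} propagates curvature control \emph{backward} from a given point, not forward from $t^{\ast}$ to $t_0$; a bound at one early time slice does not control the curvature at later times on a ball of radius $\tfrac14 r_0$ around $x_0$. The paper avoids this by running Lemma \ref{Lem:6.3bc}(b) at \emph{every} time $t$ in the interval (which is again only possible because the point-picking guarantees curvature control on the sub-balls $(y,t,\theta_0 r_0)$ at each such $t$), so that the final parabolic neighborhood $P(x_0,t_0,\tfrac14 r_0,-\tau r_0^2)$ is covered directly. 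Your treatment of the easy case $r_0 \leq r$ and of the boundary hypotheses is broadly in the right spirit, but the core of the proof needs to be rebuilt around the double induction over scale and time.
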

\begin{proof}
Before we start with the main argument, we first discuss the case in which $r_0 \leq r$.
We will first show that for a universal $K' = K' (E) < \infty$ and sufficiently small $\varepsilon_0$, we can guarantee that $|{\Rm}| < \frac12 K' r_0^{-2}$ on $B(x_0, t_0, \frac14 r_0)$.
The constant $K'$ and the smallness of the constant $\varepsilon_0$ will be determined in the course of this paragraph.
Assume the assumption was wrong, i.e. there is a point $x \in B(x_0, t_0, \frac14 r_0)$ such that $Q = |{\Rm}|(x,t_0) \geq \frac12 K' r_0^{-2}$.
By the canonical neighborhood assumptions $CNA (r, \varepsilon_0, E, \eta)$, we know that $(x,t_0)$ is either a center of a strong $\varepsilon_0$-neck or of an $(\varepsilon_0, E)$-cap or $\MM (t_0)$ has positive $E$-pinched curvature (here we assumed that $K' > 2$).
The latter case cannot occur by assumption (v) for large enough $K'$, so assume that $(x, t_0)$ is a center of a strong $\varepsilon_0$-neck or an $(\varepsilon_0, E)$-cap.
So in both cases there is a $y \in \MM(t_0)$ with $\dist_{t_0}(x, y) < E Q^{-1/2}$ such that $(y,t_0)$ is a center of an $\varepsilon_0$-neck and $E^{-1} Q < |{\Rm}|(y, t_0) < E Q$.
So for every $w > 0$ there is an $\varepsilon'_0 = \varepsilon'_0 (w) > 0$ and a $D = D(w) < \infty$ such that if $\varepsilon_0 < \varepsilon'_0$, then $\vol_{t_0} B(y, t_0, D Q^{-1/2}) < w D^3 Q^{-3/2}$.
Assuming $K' > 4 E^2$, we conclude that $y \in B(x_0, t_0, \frac12 r_0)$.
So by volume comparison, there is a universal constant $w_0 > 0$ such that $\vol_{t_0} B(y, t_0, d) \geq w_0 d^3$ for all $0 < d < \frac12 r_0$.
Assume now that $\varepsilon_0 < \varepsilon'_0 (w_0)$ and $K' > 4 D^2 (w_0)$.
Then we obtain a contradiction for $d = D(w_0) Q^{-1/2} < \frac12 r_0$.
So we indeed have $|{\Rm}| < \frac12 K' r_0^{-2}$ on $B(x_0, t_0, \frac14 r_0)$.
Choose now $C_1 = {K'}^{1/2}$.
Then by Lemma \ref{Lem:shortrangebounds} and the fact that at every surgery point we have $|{\Rm}| > c' \delta^{-2} > C_1^2 r_0^{-2} = K' r_0^{-2}$ at every surgery point (compare with (\ref{eq:surgpointhighcurv})), we conclude finally, that there are $\tau' = \tau' (\eta, E), \delta' = \delta' (\eta, E) > 0$ such that if $\delta \leq \delta'$ then $P(x_0, t_0, \frac14 r_0, - \tau' r_0^2)$ is non-singular and $|{\Rm}| < K' r_0^{-2}$ on $P(x_0, t_0, \frac14 r_0, - \tau' r_0^2)$.

Now we return to the general case.
We will first fix some constants:
Consider the constants $\tau_{0, \ref{Lem:6.5}}$ and $K_{0, \ref{Lem:6.5}}$ from Lemma \ref{Lem:6.5}, $\theta_{0, \ref{Lem:6.6}}$ from Lemma \ref{Lem:6.6}, $K_{\ref{Lem:6.3bc}}$, $\ov{r}_{\ref{Lem:6.3bc}}$, $Z_{\ref{Lem:6.3bc}}$ and $\delta_{\ref{Lem:6.3bc}}$ from Lemma \ref{Lem:6.3bc} and $\ov{r}_{\ref{Lem:HIbound}}$ from Lemma \ref{Lem:HIbound} and set:
\begin{alignat*}{1}
\tau &= \min \{ \tau', \tfrac12 \tau_{0, \ref{Lem:6.5}}(\tfrac1{10}), \tfrac1{100}\} \\
K &= \max\{ K', K_{0, \ref{Lem:6.5}} (\tfrac1{10}) \}, \\
\theta_0 &= \min \{ \tfrac14 \theta_{0, \ref{Lem:6.6}}(\tfrac1{20}), \linebreak[1] \tfrac1{10} \} \\
r^* &= \theta_0 \min \{ \tau^{1/2}, \linebreak[1] K^{-1/2}, \linebreak[1] \tfrac1{10} \} \\
K^* &= (r^*)^{-2} K_{\ref{Lem:6.3bc}}(\tfrac1{100}, 2 (r^*)^{-1}, E, \eta) \\
Z &= Z_{\ref{Lem:6.3bc}}(2 (r^*)^{-1}) \\
\ov{r} &= \min \{ \ov{r}_{\ref{Lem:6.3bc}}(\tfrac1{100}, 2 (r^*)^{-1}, E, \eta), \ov{r}_{\ref{Lem:HIbound}} (K^*) \} \\
\delta &= \min \{ \delta', C_1^{-1} \theta_0 r, \delta_{\ref{Lem:6.3bc}} ( \tfrac1{100}, 2 (r^*)^{-1}, \cdot, E, \eta), {c'}^{1/2} (K^*)^{-1/2} \}
\end{alignat*}
The constants $C_1$ and $\varepsilon_0$ from the first paragraph will be kept.
We will also assume that $\varepsilon_0$ is smaller than the constant from Lemma \ref{Lem:6.3bc}.

Assume that the conclusion of the Lemma is not true for some $x_0$, $t_0$ and $r_0$.
Then $r_0 > r$.
We first carry out a point-picking process.
In the first step set $x'_0 = x_0$, $t'_0 = t_0$ and $r'_0 = r_0$.
If there are $x''_0$, $t''_0$ and $r''_0$,  such that
\begin{enumerate}
\item  $t''_0 \in [t'_0 - 2\tau (r'_0)^2, t'_0]$,
\item the point $x'_0$ survives until time $t''_0$ and for all $t \in (t''_0, t'_0]$ there are no surgery points in $B(x'_0, t, r'_0)$ and $B(x'_0, t, r'_0) \cap \partial \MM (t) = \emptyset$,
\item $\sec \geq - (r'_0)^{-2}$ on $\bigcup_{t \in [t''_0, t'_0]} B(x'_0, t, r'_0)$,
\item $x''_0 \in B(x'_0, t''_0, r'_0/4)$,
\item $r''_0 = \theta_0 r'_0$,
\item $\vol_{t''_0} B(x''_0, t''_0, r''_0) \geq \frac1{10} (r''_0)^3$ and
\item we don't have $|{\Rm}| < K (r''_0)^{-2}$ on $P(x''_0, t''_0, \frac14 r''_0, - \tau (r''_0)^2)$ or the parabolic neighborhood $P(x''_0, t''_0, \frac14 r''_0, - \tau (r''_0)^2)$ is singular,
\end{enumerate}
then we replace $x'_0$, $t'_0$, $r'_0$ by $x''_0$, $t''_0$, $r''_0$ and repeat.
If not, we stop the process.
Observe that here and in the rest of the proof the parabolic neighborhoods are not assumed to be non-singular unless otherwise noted (compare with Definition \ref{Def:parabnbhd}).
Since by the choice of $\delta$ we have $C_1 \delta < \theta_0 r$, we find by the discussion at the beginning of the proof and conditions (2), (3), (6), (7) that we always have $r'_0 > r_0$.
So the process has to terminate after a finite number of steps and yield a triple $(x'_0, t'_0, r'_0)$ with $r'_0 > r_0$.

Observe that by the smallness of $\tau$, $\theta_0$ and distance distortion estimates, we have in every step of this process
\[ P(x''_0, t''_0, r''_0, - \tfrac1{10} (r''_0)^2) \subset P(x'_0, t'_0, r'_0, - \tfrac1{10} (r'_0)^2). \]
So the parabolic neighborhoods of each step are nested have for the final triple $(x'_0, t'_0, r'_0)$
\[  P(x'_0, t'_0, r'_0, - \tfrac1{10} (r'_0)^2) \subset P(x_0, t_0, r_0, - \tfrac1{10} (r_0)^2). \]
So the triple $(x'_0, t'_0, r'_0)$ satisfies assumptions (i) and (ii) of the Lemma.
By (3) and (6), also assumptions (iv) and (v) are satisfied and by the choice of $\delta$, assumption (iii) holds.
However, by (7) the assertion of the Lemma fails for the triple $(x'_0, t'_0, r'_0)$.
Thus, without loss of generality, we can assume that we have $x_0 = x'_0$, $t_0 = t'_0$ and $r_0 = r'_0$ and add to our assumptions that whenever we find $x''_0$, $t''_0$ and $r''_0$ satisfying the assumptions (1)--(6) above, assumption (7) cannot be satisfied (and hence we have curvature control on $P(x''_0, t''_0, \frac14 r''_0, - \tau (r''_0)^2)$).

Now let $\ov{\tau} \leq 2\tau$ be maximal with the property that
\begin{enumerate}[label=$-$]
\item the point $x_0$ survives until time $t_0 - \ov\tau r_0^2$,
\item for all $t \in (t_0 - \ov\tau r_0^2, t_0]$, there are no surgery points in $B(x_0, t, r_0)$ and $B(x_0, t, r_0) \cap \partial\MM(t) = \emptyset$,
\item $\sec \geq - r_0^{-2}$ on $\bigcup_{t \in [t_0 - \ov{\tau} r_0^2, t_0]} B(x_0, t, r_0)$.
\end{enumerate}
If $\ov{\tau} = 2\tau$, then we can conclude the Lemma using Lemma \ref{Lem:6.5}.

So assume now $\ov{\tau} < 2\tau$.
We will show that we then have curvature control at times $[t_0 - \ov\tau r_0^2, t_0]$ which implies a better lower bound on the sectional curvature and hence contradicts the maximality of $\ov\tau$.
Fix for a moment $t \in [t_0 - \ov\tau r_0^2, t_0]$.
By Lemma \ref{Lem:6.5} we first find $\vol_t B(x_0, t, \frac14 r_0) > \frac1{20} (\frac14)^3 r_0^3$.
Hence using Lemma \ref{Lem:6.6}, we can find a ball $B(y, t, \theta_0 r_0) \subset B(x_0, t, \frac14 r_0)$ such that $\vol_t B(y, t, \theta_0 r_0) > \frac1{10} \theta_0^3 r_0^3$.
So the triple $(y, t, \theta_0 r_0)$ satisfies the assumptions (1)--(6) above and hence by choice of the triple $(x_0, t_0, r_0)$, we find that the parabolic neighborhood $P(y, t, \tfrac14 \theta_0 r_0, -\tau \theta_0^2 r_0^2)$ is non-singular and
\[ |{\Rm}| < K \theta_0^{-2} r_0^{-2} \qquad \text{on} \qquad P(y, t, \tfrac14 \theta_0 r_0, - \tau \theta_0^2 r_0^2). \]

This implies that $|{\Rm}| < (r^* r_0)^{-2}$ on $P(y, t, r^* r_0, - (r^* r_0)^2)$.
Applying Lemma \ref{Lem:6.3bc}(b) for $x_0 \leftarrow y$, $t_0 \leftarrow t$, $r_0 \leftarrow r^* r_0$, $w \leftarrow \frac1{100}$, $A \leftarrow A^* = 2 (r^*)^{-1}$, $E \leftarrow E$, $\eta \leftarrow \eta$, $r \leftarrow r$ yields
\[ |{\Rm}|(\cdot, t) < K^* r_0^{-2} \qquad \text{on} \qquad B(y, t, 2 r_0) \qquad \text{for all} \qquad t \in [t_0 - \ov\tau r_0^2, t_0]. \] 
Observe here that by the right choice of $Z$ and assumptions (i), (ii) of this Lemma, the assumptions (i), (ii) of Lemma \ref{Lem:6.3bc} are satisfied.
We conclude that
\[ |{\Rm}|(\cdot, t) < K^* r_0^{-2} \quad \text{on} \quad B(x_0, t, r_0) \quad \text{for all} \quad t \in [t_0 - \ov{\tau} r_0^2, t_0]. \]
In particular by (\ref{eq:surgpointhighcurv}) and the choice of $\delta$, there are no surgery points on $B(x_0, t, r_0)$ for all $t \in [t_0 - \ov{\tau} r_0^2 - (r^* r_0)^2, t_0]$.
By Lemma \ref{Lem:HIbound}, this curvature bound implies $\sec \geq - \frac12 r_0^{-2}$ on $B(x_0, t, r_0)$ for all $t \in [t_0 - \ov{\tau} r_0^2, t_0]$.
So the point $x_0$ survives until some time which is strictly smaller than $t_0 - \ov\tau r_0^2$ and $B(x_0, t, r_0)$ does not contain surgery points or meet the boundary for times which are slightly smaller than $t_0 - \ov\tau r_0^2$.
This contradicts the maximality of $\ov{\tau}$.
\end{proof}

\begin{proof}[Proof of Proposition \ref{Prop:genPerelman}]
By Lemma \ref{Lem:6.6} we can find a ball $B(y, t_0, \theta_0(w) r_0) \subset B(x_0, t_0, r_0)$ with $\vol_{t_0} B(y, t_0, \theta_0(w) r_0) > \frac1{10} (\theta_0 r_0)^3$.
So we can apply Lemma \ref{Lem:6.4} with $t_0 \leftarrow t_0$, $x_0 \leftarrow y$, $r_0 \leftarrow \theta_0 r_0$, $\varepsilon_0 \leftarrow \varepsilon_0$, $E \leftarrow E$, $\eta \leftarrow \eta$, $r \leftarrow r$ and obtain that if 
\[ C_{1, \ref{Lem:6.4}} (\eta, E) \delta \leq \theta_0 r_0, \]
$\delta < \delta_{\ref{Lem:6.4}} (r, \eta, E)$, $Z > Z_{\ref{Lem:6.4}} (r, \eta, E)$ and $r_0 < \ov{r}_{\ref{Lem:6.4}} (\eta, E) \sqrt{t_0}$ then the parabolic neighborhood $P(y, t_0, \frac14 \theta_0 r_0, - \tau_{\ref{Lem:6.4}}(\eta, E) \theta_0^2 r_0^2)$ is non-singular and
\[ |{\Rm}| < K_{\ref{Lem:6.4}} (\eta, E) \theta_0^{-2} r_0^{-2} \qquad \text{on} \qquad P(y, t_0, \tfrac14 \theta_0 r_0, - \tau_{\ref{Lem:6.4}} \theta_0^2 r_0^2). \]
Now choose $r^* = r^*(w, \eta, E)> 0$ so small that $P(y, t, r^* r_0, (r^* r_0)^2) \subset P(y, t_0, \linebreak[1] \frac14 \theta_0 r_0 , \linebreak[1] - \tau_{\ref{Lem:6.4}} \theta^2_0 r_0^2)$ for all $t \in [t_0 - (r^* r_0)^2, t_0]$ and $|{\Rm}| < (r^* r_0)^{-2}$ there.
We can then invoke Lemma \ref{Lem:6.3bc}(b) with $t_0 \leftarrow t \in [t_0 - (r^* r_0)^2, t_0]$, $x_0 \leftarrow y$, $r_0 \leftarrow r^* r_0$, $w \leftarrow \frac1{100}$,  $A \leftarrow (A+2) (r^*)^{-1}$, $r \leftarrow r$, $\eta \leftarrow \eta$, $E \leftarrow E$ and obtain that if $\delta < \delta_{\ref{Lem:6.3bc}} (\frac1{100}, (A+2) (r^*)^{-1}, r, \cdot, E, \eta)$, $Z > Z_{\ref{Lem:6.3bc}} ( (A+2) (r^*)^{-1})$ and $r_0 < \ov{r}_{\ref{Lem:6.3bc}} ( (A+2) (r^*)^{-1}, \frac1{100}, E, \eta) \sqrt{t_0}$, then
\[ |{\Rm}| < K r_0^{-2} \qquad \text{on} \qquad B(y, t, (A+ 2)r_0) \qquad \text{for all} \qquad t \in [t_0 - (r^* r_0)^2, t_0] \]
for $K = K_{\ref{Lem:6.3bc}} (\frac1{100}, (A+2) (r^*)^{-1}, E, \eta) (r^*)^{-2} < \infty$.
As in (\ref{eq:surgpointhighcurv}), we conclude that if $C'_1 \delta \leq r_0$ for some $C'_1 = C'_1 (w, A, \eta, E) < \infty$, then there are no surgery points in $B(y, t, (A+ 2)r_0)$ for all $t \in [t_0 - (r^* r_0)^2, t_0]$ and we can find a $\tau = \tau(w, A, \eta, E) > 0$ such that $P(y, t_0, (A+1) r_0, -\tau r_0^2)$ is non-singular and $B(y, t_0, (A+1) r_0) \subset B(y, t, (A+2) r_0)$ for all $t \in [t_0 - \tau r_0^2, t_0]$.
This implies that $|{\Rm}| < K r_0^{-2}$ on $P(x_0, t_0, A r_0, - \tau r_0^2) \subset P(y, t_0 (A+1) r_0, - \tau r_0^2)$ in the case in which $C_1 \delta \leq r_0$ for some $C_1 = C_1 (w, A, \eta, E) < \infty$.
The higher derivative estimates follow from Shi's estimates on a slightly smaller parabolic neighborhood.

It remains to consider the case $C_1 \delta \leq r_0$.
Assuming $\delta$ to be sufficiently small depending on $r$, we can conclude that then $r_0 < r$.
Let $Q = |{\Rm}|(x_0, t_0)$.
In the next paragraph we show that $Q r_0^2$ is bounded by a constant which only depends on $w$, $E$ and $\eta$.

For this paragraph, fix $w$, $E$ and $\eta$ and assume that $Q r_0^2 > 1$.
Using the same reasoning as in the proof of Lemma \ref{Lem:6.3bc}(b) (compare with the ``bounded curvature at bounded distance''-estimate in \cite[Claim 2, 4.2]{PerelmanII}, see also the proof of \cite[Lemma 89.2]{KLnotes} or \cite[Lemma 70.2]{KLnotes} or \cite[Proposition 6.2.4]{Bamler-diploma}) we can conclude that $|{\Rm}| > K^*_1 (Q r_0^2) r_0^{-2}$ on $B(x_0, t_0, r_0)$ if $r_0 < \ov{r}^*(Q r_0^2) \sqrt{t_0}$ for some functions $K^*_1, \ov{r}^* : [0, \infty) \to (0, \infty)$ with $K^*_1 (s) \to \infty$ and $s \to \infty$ (we remark that for this argument the basepoint has to be chosen at a point $x' \in B(x_0, t_0, r_0)$ with $r_0^{-2} \leq |{\Rm}| (x', t_0) \leq K^*_1(Q r_0^2)$).
So there is some $S_1 < \infty$ such that if $Q r_0^2 > S_1$ and $r_0 < \ov{r}^* (S_1) \sqrt{t_0}$, then all points on $B(x_0, t_0, r_0)$ are centers of strong $\varepsilon$-necks or $(\varepsilon, E)$-caps whose cross-sectional $2$-spheres have diameter at most $C (K^*_1 (Q r_0^2))^{-1/2} r_0$.
These necks and caps can be glued together to give long tubes as described in \cite[Proposition 5.4.7]{Bamler-diploma} or \cite[sec 58]{KLnotes} and we conclude that $\vol_{t_0} B(x_0, t_0, r_0) < w^* (Q r_0^2) r_0^3$ for some function $w^* : [0, \infty) \to (0, \infty)$ with $w^*(s) \to 0$ as $x \to \infty$.
This finally shows that there is a universal constant $S_2 < \infty$ such that $Q r_0^2 < S_2$.

Again, by the same reasoning as before (this time, we choose the basepoint to be $(x_0, t_0)$), we obtain the estimate $|{\Rm}| < K^*_2 r_0^{-2}$ on $B(x_0, t_0, (A+1) r_0)$ for some universal constant $K^*_2 = K^*_2 (w, A, E, \eta) < \infty$ if $r_0 < \ov{r}^{**}(w, A, E, \eta) \sqrt{t_0}$.
Let $\ov\tau \geq 0$ be maximal such that the parabolic neighborhood $P(x_0, t_0, (A+1) r_0, - \ov\tau r_0^2)$ is non-singular.
By Lemma \ref{Lem:shortrangebounds}, we conclude that there is a constant $\tau_0 > 0$ such that  $|{\Rm}| < 2 K^*_2 r_0^{-2}$ on $P (x_0, t_0, (A+1) r_0, - \min \{ \ov\tau, \tau_0 \} r_0^2)$.
If $\tau' \geq \tau_0$, then we can deduce curvature derivative bounds on $B(x_0, t_0, A r_0)$ by Shi's estimates.
On the other hand, if $\ov\tau < \tau_0$, then by assuming $\delta$ to be sufficiently small depending on $m$, we can use Definition \ref{Def:precisecutoff}(3) to conclude that $|{\nabla^m \Rm}| < C_k r_0^{-2-k}$ for all $k \leq m$ on initial time-slice of $P (x_0, t_0, (A+1) r_0, - \tau' r_0^2)$.
So by a modified version of Shi's estimates we obtain a bound on $r_0^{2+k} |{\nabla^k \Rm}|$ in $B(x_0, t_0, A r_0)$ for all $k \leq m$.

Finally, we consider the case $r_0 = \rho(x_0, t_0)$.
Applying the Proposition with $A \leftarrow 1$ yields $|{\Rm}| < K r_0^{-2}$ on $B(x_0, t_0, r_0)$ for some $K = K(w, E, \eta) < \infty$.
So by Lemma \ref{Lem:HIbound}, if we had $r_0 < \ov{r}_{\ref{Lem:HIbound}} (K) \sqrt{t_0}$, then $\sec \geq - \frac12 r_0^{-2}$ on $B(x_0, t_0, r_0)$ which would contradict the choice $r_0$.
\end{proof}

\begin{proof}[Proof of Corollary \ref{Cor:Perelman68}]
Let $\varepsilon_0$ be the constant from Proposition \ref{Prop:genPerelman}.
Observe that by Proposition \ref{Prop:CNThm-mostgeneral} there are constants $\un\eta > 0$, $\un{E}_{\varepsilon_0} < \infty$ and decreasing, continuous, positive functions $\un{r}_{\varepsilon_0}, \un{\delta}_{\varepsilon_0} : [0, \infty) \to (0, \infty)$ such that if $\delta(t) \leq \un\delta_{\varepsilon_0} (t)$ for all $t \in [0, \infty)$, then every point $(x,t) \in \MM$ satisfies the canonical neighborhood assumptions $CNA (\un{r}_{\varepsilon_0} (t), \varepsilon_0, \un{E}_{\varepsilon_0}, \un\eta)$.
Now consider the constant $\delta_{\ref{Prop:genPerelman}} = \delta_{\ref{Prop:genPerelman}}(r, w, A, E, \eta, m)$ from Proposition \ref{Prop:genPerelman}.
We can assume that it depends on its parameters $r$, $w$ and $A$ in a monotone way, i.e. $\delta_{\ref{Prop:genPerelman}}(r', w', A', E, \eta, m') \leq \delta_{\ref{Prop:genPerelman}}(r, w, A, E, \eta, m)$ if $r' \leq r$, $w' \leq w$, $A' \geq A$ and $m' \geq m$.
Assume now that for all $t \geq 0$
\[ \delta(t) < \min \big\{ \delta_{\ref{Prop:genPerelman}} (\un{r}_{\varepsilon_0} (2 t), t^{-1}, t, \un{E}_{\varepsilon_0}, \un\eta, [t^{-1}]), \; \un\delta_{\varepsilon_0} (t) \big\}. \]
Let $w, A, m$ be given.
Choose $T = T(w, A, m) < \infty$ such that $2T^{-1} < w$, $\frac12 T > A$ and $\frac12 T > m$.

Consider now the point $x$, the time $t > T$ and the scale $r$.
Observe that $\MM$ satisfies the canonical neighborhood assumptions $CNA (\un{r}_{\varepsilon_0} (t), \varepsilon_0, \un{E}_{\varepsilon_0}, \un\eta)$ on $[t - r^2, t]$ and the surgeries on $[t - r^2, t]$ are performed by $\delta_{\ref{Prop:genPerelman}} (\un{r}_{\varepsilon_0} (t), 2 t^{-1}, \frac12 t,\un{E}_{\varepsilon_0}, \un\eta, [\frac12 t^{-1}])$-precise cutoff.
So we can apply Proposition \ref{Prop:genPerelman} with $x_0 \leftarrow x$, $t_0 \leftarrow t$, $r_0 \leftarrow r$, $r \leftarrow \un{r}_{\varepsilon_0} (t)$, $w \leftarrow w$, $A \leftarrow A$, $E \leftarrow \un{E}_{\varepsilon_0}$, $\eta \leftarrow \un\eta$, $m \leftarrow m$ to conclude that if $r \leq \ov{r}_{\ref{Prop:genPerelman}} (w, A, \un{E}_{\varepsilon_0}, \un\eta) \sqrt{t_0}$, then $|{\nabla^m \Rm}| < K_{m, \ref{Prop:genPerelman}} (w, A, \un{E}_{\varepsilon_0}, \un\eta) r_0^{-2}$ on $B(x, t, A r)$.
If the surgeries on $[t-r^2, t]$ are performed by $C_{1, \ref{Prop:genPerelman}}^{-1} (w, A, \un{E}_{\varepsilon_0}, \un\eta)$-precise cutoff, then $P(x, t, A r, - \tau_{\ref{Prop:genPerelman}}(w, A, \un{E}_{\varepsilon_0}, \un\eta))$ is non-singular and $|{\nabla^m \Rm}| < K_{m, \ref{Prop:genPerelman}} (w, A, \un{E}_{\varepsilon_0}, \un\eta)  r_0^{-2}$ there.
This establishes assertion (a)

For assertion (b) we argue as follows.
If $\rho(x,t) \leq \ov{r} \sqrt{t}$, then as discussed in the last paragraph for $r = \rho(x,t)$, we obtain $r = \rho(x,t) > \widehat{r} \sqrt{t}$.
So in general, we have $\rho(x,t) > \min \{ \ov{r}, \widehat{r} \} \sqrt{t}$ and we can apply assertion (a) with $r = \ov{r} \sqrt{t}$.
Observe that sufficiently large $T$, we indeed have $C_1 \delta (\frac12 t) < \min \{ \ov{r}, \widehat{r} \} \sqrt{t} < r$.
\end{proof}

\subsection{The thick-thin decomposition} \label{subsec:thickthin}
We now describe how, in the long-time picture, Ricci flows with surgery decompose the manifold into a thick and a thin part.
In this process, the thick part approaches a hyperbolic metric while the thin part collapses on local scales.
Compare this Proposition with \cite[7.3]{PerelmanII} and \cite[Proposition 90.1]{KLnotes}.

\begin{Proposition} \label{Prop:thickthindec}
There is a function $\delta : [0, \infty) \to (0, \infty)$ such that given a Ricci flow with surgery $\MM$ with normalized initial conditions which is performed by $\delta(t)$-precise cutoff defined on the interval $[0,\infty)$, we can find a constant $T_0 < \infty$, a function $w : [T_0, \infty) \to (0, \infty)$ with $w(t) \to 0$ as $t \to \infty$ and a collection of orientable, complete, finite volume hyperbolic (i.e. of constant sectional curvature $-1$) manifolds $(H'_1, g_{\hyp,1}), \ldots, (H'_k, g_{\hyp, k})$ such that: \\
There are finitely many embedded $2$-tori $T_{1,t}, \ldots, T_{m,t} \subset \MM(t)$ for $t \in [T_0, \infty)$ which move by isotopies and don't hit any surgery points and which separate $\MM(t)$ into two (possibly empty) closed subsets $\MM_{\thick}(t), \linebreak[1] \MM_{\thin}(t) \subset \MM(t)$ such that
\begin{enumerate}[label=(\textit{\alph*})]
\item $\MM_{\thick}(t)$ does not contain surgery points for any $t \in [T_0, \infty)$.
\item The $T_{i,t}$ are incompressible in $\MM(t)$ and $t^{-1/2} \diam_t T_{i,t} < w(t)$.
\item The topology of $\MM_{\thick}(t)$ stays constant in $t$ and $\MM_{\thick}(t)$ is a disjoint union of components $H_{1,t}, \ldots, H_{k,t} \subset \MM_{\thick}(t)$ such that the interior of each $H_{i,t}$ is diffeomorphic to $H'_i$.
\item We can find an embedded cross-sectional torus $T'_{j,t}$ at least $w^{-1}(t)$-deep inside each cusp of the $H'_i$ which moves by isotopy and speed at most $w(t)$ such that the following holds:
Chop off the ends of the $H'_i$ along the $T'_{j,t}$ and call the remaining open manifolds $H''_{i,t}$.
Then there are smooth families of diffeomorphisms $\Psi_{i,t} : H''_{i,t} \to H_i$ which become closer and closer to being isometries, i.e.
\[ \big\Vert \tfrac1{4t} \Psi_{i,t}^* g(t) - g_{\hyp,i} \big\Vert_{C^{[w^{-1}(t)]}(H''_{i,t})} < w(t) \]
and which move slower and slower in time, i.e.
\[  \sup_{H''_{i,t}} t^{1/2} | \partial_t \Psi_{i,t} |  < w(t) \]
for all $t \in [T_0, \infty)$ and $i = 1, \ldots, k$.
Moreover, the sectional curvatures on a $w^{-1}(t)$-tubular neighborhood of $\MM_{\thick}(t)$ lie in the interval $(\frac1t (-\frac14 - w(t)), \frac1t (-\frac14 + w(t)))$.
\item A large neighborhood of the part $\MM_{\thin}(t)$ is better and better collapsed, i.e. for every $t \geq T_0$ and $x \in \MM(t)$ with 
\[ \dist_t (x, \MM_{\thin}(t)) < w^{-1}(t) \sqrt{t} \]
we have
\[ \vol_t B ( x,t, \rho_{\sqrt{t}} (x,t) ) < w(t) \rho_{\sqrt{t}}^3 (x,t). \]
\end{enumerate}
\end{Proposition}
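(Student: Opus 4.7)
The plan is to prove the statement essentially as in \cite{PerelmanII} and \cite{KLnotes}, using Corollary \ref{Cor:Perelman68} as the main quantitative input. First I would fix a small $w_0 > 0$ and define a preliminary thick part
\[ \MM^*_{\thick}(t) = \big\{ x \in \MM(t) \;:\; \vol_t B(x, t, \rho_{\sqrt{t}}(x,t)) \geq w_0 \rho_{\sqrt{t}}^3(x,t) \big\}. \]
At every such point Corollary \ref{Cor:Perelman68}(b) yields $\rho(x,t) > \ov\rho \sqrt{t}$ together with curvature and derivative bounds $|{\nabla^m \Rm}| < K_m t^{-1-m/2}$ on a macroscopic non-singular parabolic neighborhood $P(x,t,A\sqrt{t},-\tau t)$. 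Assertion (a) is then immediate: surgery points have $|{\Rm}| \gtrsim \delta(T^i)^{-2}$, which is much larger than any fixed multiple of $t^{-1}$ for $t$ large, so they cannot lie near $\MM^*_{\thick}(t)$. Moreover the Hamilton--Ivey pinching, improved by the $t^{-1}$-positive curvature condition of Definition \ref{Def:precisecutoff}(1), forces the sectional curvatures on $\MM^*_{\thick}(t)$ to lie within an $o(1/t)$-window around $-\tfrac{1}{4t}$ as $t \to \infty$.

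Second, I would extract hyperbolic limits by a Hamilton compactness argument. Given any sequence $t_\alpha \to \infty$ and $x_\alpha \in \MM^*_{\thick}(t_\alpha)$, parabolically rescale by $1/t_\alpha$ at $(x_\alpha,t_\alpha)$. The previous paragraph furnishes uniform curvature and derivative bounds on balls of arbitrarily large fixed radius, and non-collapsing is built into the definition of $\MM^*_{\thick}$. Smooth Cheeger--Gromov--Hamilton compactness then produces a pointed subsequential limit which is a complete Ricci flow of bounded curvature; the improved pinching forces $\Ric = -\tfrac12 g$ in the rescaled coordinates, so after one more rescaling the limit is a complete finite-volume hyperbolic $3$-manifold. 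A standard diagonal exhaustion selects finitely many such limits $(H'_1,g_{\hyp,1}),\ldots,(H'_k,g_{\hyp,k})$ whose union accounts for all the asymptotic geometry of $\MM^*_{\thick}$.

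Third, I would invoke Hamilton's stability machinery for persistent hyperbolic pieces to upgrade the subsequential convergence above to a smooth family of near-isometric diffeomorphisms $\Psi_{i,t}: H''_{i,t} \to H'_i$ as demanded by (d). Existence of $\Psi_{i,t}$ for some large $t$ comes from the compactness step, and its persistence forward in time is given by Hamilton's openness argument in conjunction with Mostow rigidity, which pins down the hyperbolic limit uniquely. The slow-evolution bound $\sup t^{1/2}|\partial_t \Psi_{i,t}| \to 0$ comes from differentiating in $t$ and invoking the sharpened curvature estimate on the parabolic neighborhoods produced above. I would then take each separating torus $T_{i,t}$ to be the $\Psi_{i,t}$-preimage of a standard horospherical torus sitting at least $w^{-1}(t)$-deep in the corresponding cusp of $H'_i$; the diameter and isotopy-speed bounds in (b) and (d) as well as the topology claim (c) are immediate from the cusp geometry and the near-isometry property.

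The main obstacle is the incompressibility claim in (b), which is Hamilton's persistent hyperbolic piece theorem. I would argue by contradiction along the classical route: if $T_{i,t_\alpha}$ admitted a compressing disk in $\MM(t_\alpha)$ for some sequence $t_\alpha \to \infty$, then Proposition \ref{Prop:incompressibleequiv} produces an embedded compressing disk bounded by a curve which is essential on $T_{i,t_\alpha}$; minimizing area in the corresponding relative homotopy class, and using Hamilton's formula for the time derivative of the area of a minimal disk together with the near-hyperbolic geometry on a large collar of $\MM^*_{\thick}(t)$, yields a differential inequality whose integration over $[T_0,t_\alpha]$ contradicts the lower area bound forced by the hyperbolic cusp geometry near $T_{i,t_\alpha}$. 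Finally, property (e) is produced by \emph{defining} $\MM_{\thin}(t)$ to be the closure of $\MM(t) \setminus \MM_{\thick}(t)$ and then \emph{slightly shrinking} $\MM_{\thick}(t)$ so that its $w^{-1}(t)\sqrt{t}$-collar is absorbed into $\MM_{\thin}(t)$; since the normalized volume ratio is continuous, letting $w_0$ degrade into a function $w(t) \to 0$ makes this absorption cost only an $o(1)$ amount of volume ratio and preserves all previously established estimates.
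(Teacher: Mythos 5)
The paper itself gives no proof of this proposition; it is stated as a citation of \cite[7.3]{PerelmanII} and \cite[Proposition 90.1]{KLnotes}, and your outline follows exactly that standard route (volume-ratio thick part, Corollary \ref{Cor:Perelman68} for curvature bounds, Hamilton compactness, Hamilton's stability and incompressibility arguments). Most of the sketch is sound, but there is one genuine gap.

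The gap is in your second step: the assertion that ``the improved pinching forces $\Ric = -\tfrac12 g$ in the rescaled coordinates'' is false. The $t^{-1}$-positivity condition only yields a lower sectional curvature bound of order $-C/t$ at points where the scalar curvature is of order $-1/t$; it does not make the limit Einstein, and a priori the compactness argument could produce a complete, non-collapsed, negatively curved but non-hyperbolic limit. The ingredient that actually forces hyperbolicity (and likewise the claim in (d) that $\sec \in (\tfrac1t(-\tfrac14-w), \tfrac1t(-\tfrac14+w))$, which you also attribute to pinching) is the global monotone quantity of Perelman \S 7.1 / Hamilton's non-singular-solutions analysis: $R_{\min}(t) \geq -\tfrac{3}{2}(t+c)^{-1}$ together with the non-increase of $\widehat{V}(t) = (t+c)^{-3/2}\vol_t \MM(t)$. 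Convergence of $\widehat{V}$ to a positive limit forces asymptotic equality in the evolution inequality for $R_{\min}$, and the rigidity case of that inequality kills the traceless Ricci tensor in the limit, giving an Einstein and hence hyperbolic limit. The same quantity is what makes your ``diagonal exhaustion selects finitely many limits'' work: finiteness requires the upper bound on total normalized volume from $\widehat{V}$ combined with the Margulis lower bound on the volume of a complete finite-volume hyperbolic $3$-manifold, neither of which appears in your argument. The remaining steps (stability via Mostow rigidity, incompressibility via the minimal compressing disk whose area is driven negative by the $-2\pi\chi$ term once the boundary loop is short and of controlled geodesic curvature) are the correct standard arguments, though the contradiction there is simply positivity of area rather than a ``lower area bound from the cusp geometry,'' and one must also verify lower semicontinuity of the minimal disk area across the (trivial) surgery times, as in Lemma \ref{Lem:evolminsurfgeneral}.
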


\section{The analysis of the collapsed part and consequences} \label{sec:thinpart}
Based on property (e) of Proposition \ref{Prop:thickthindec} we can analyze the thin part $\MM_{\thin}(t)$ for large $t$ and recover its graph structure geometrically.
More precisely, we can decompose the thin part into pieces on which the collapse can be approximated by certain models.
We will explain this in the first subsection.
Afterwards we prove important geometric and topological consequences concerning this decomposition.

\subsection{Analysis of the collapse}
The following result follows from the work of Morgan and Tian (\cite{MorganTian}).
We have altered its phrasing to include more geometric information.
The reader can find an explanation below of where to find each of the following conclusions in their paper.
Similar results can also be found in \cite{KLcollapse}, \cite{BBMP2}, \cite{CG} and \cite{Faessler}.

\begin{Proposition} \label{Prop:MorganTianMain}
For every two continuous functions $\ov{r}, K : (0, 1) \to (0, \infty)$ and every $\mu > 0$ there are constants $w_0 = w_0(\mu, \ov{r}, K) > 0$, $0 < s_2 (\mu, \ov{r}, K) < s_1(\mu, \ov{r}, K) < \frac1{10}$ and $a(\mu) > 0$, monotone in $\mu$, such that:

Let $(M,g)$ be a connected, orientable, closed Riemannian manifold and $M' \subset M$ a closed subset such that
\begin{enumerate}[label=(\textit{\roman*})]
\item Each component $T$ of $\partial M'$ is an embedded torus and for each such $T$ there is a closed subset $U'_T \subset M'$ which is diffeomorphic to $T^2 \times I$ such that $T \subset \partial U'_T$ and such that the boundary components of $U'_T$ have distance of at least $2$.
Moreover, there is a fibration $p_T : U \to I$ such that the $T^2$-fiber through every $x \in U'_T$ has diameter $< w_0 \rho_1(x)$.
\item For all $x \in M'$ we have (with $\rho_1 (x) = \min \{ \rho(x), 1 \}$)
\[ \vol B(x, \rho_1(x) ) < w_0 \rho_1^3(x). \]
\item For all $w \in (w_0, 1)$, $r < \ov{r}(w)$ and $x \in M'$ we have: if $\vol B(x,r) > w r^3$ and $r < \rho(x)$, then $|{\Rm}|, r |\nabla \Rm |, r^2 | \nabla^2 \Rm | < K(w) r^{-2}$ on $B(x,r)$.
\end{enumerate}

Then either $M' = M$ and $\diam M < \mu \rho_1(x)$ for all $x \in M$ and $M$ is diffeomorphic to an infra-nilmanifold or to a manifold which carries a metric of non-negative sectional curvature, or the following holds:

There are finitely many embedded $2$-tori $\Sigma^T_i$ and $2$-spheres  $\Sigma^S_i \subset \Int M'$ which are pairwise disjoint as well as closed subsets $V_1, V_2, V'_2 \subset M'$ such that
\begin{enumerate}[label=(a\textit{\arabic*})]
\item $M' = V_1 \cup V_2 \cup V'_2$, the interiors of the sets $V_1, V_2$ and $V'_2$ are pairwise disjoint and $\partial V_1 \cup \partial V_2 \cup \partial V'_2 = \partial M' \cup \bigcup_i \Sigma^T_i \cup \bigcup_i \Sigma^S_i$.
Obviously, no two components of the same set share a common boundary.
\item $\partial V_1 = \partial M' \cup \bigcup_i \Sigma^T_i \cup \bigcup_i \Sigma^S_i$.
In particular, $V_2 \cap V_2' = \emptyset$ and $V_2 \cup V_2'$ is disjoint from $\partial M'$.
\item $V_1$ consists of components diffeomorphic to one of the following manifolds:
\[ T^2 \times I, \; S^2 \times I, \; \Klein^2 \widetilde{\times} I, \; \IR P^2 \widetilde{\times} I, \;  S^1 \times D^2, \; D^3, \]
a $T^2$-bundle over $S^1$, $S^1 \times S^2$ or the union of two (possibly different) components listed above along their $T^2$- or $S^2$-boundary.
\item Every component of $V_2'$ has exactly one boundary component and this component borders $V_1$ on the other side.
More precisely, every component of $V'_2$ is diffeomorphic to one of the following manifolds:
\[ S^1 \times D^2, \; D^3, \; L(p,q) \setminus B^3, \; \Klein^2 \widetilde{\times} I. \]
\end{enumerate}

We can further characterize the components of $V_2$:
In $\Int V_2$ we find embedded $2$-tori $\Xi^T_i$ and $\Xi^O_i$  which are pairwise disjoint.
Furthermore, there are embedded closed $2$-annuli $\Xi^A_i \subset V_2$ whose interior is disjoint from the $\Xi^T_i$, $\Xi^O_i$ and $\partial V_2$ and whose boundary components lie in the components of $\partial V_2$ which are spheres.
Each spherical component of $\partial V_2$ contains exactly two such boundary components which separate the sphere into two (polar) disks and one (equatorial) annulus $\Xi^E_i$.
We also find closed subsets $V_{2,\textnormal{reg}}, \linebreak[1] V_{2, \textnormal{cone}}, \linebreak[1] V_{2, \partial} \subset V_2$ such that
\begin{enumerate}[label=(b\textit{\arabic*})]
\item  $V_{2, \textnormal{reg}} \cup V_{2, \textnormal{cone}} \cup V_{2, \partial} = V_2$ and the interiors of these subsets are pairwise disjoint.
Moreover, $\partial V_{2, \textnormal{reg}}$ is the union of $\bigcup_i \Xi^T_i \cup \bigcup \Xi^O_i \cup \bigcup_i \Xi^A_i \bigcup_i \Xi^E_i$ with the components of $\partial V_2$ which are diffeomorphic to tori.
\item $V_{2, \textnormal{reg}}$ carries an $S^1$-fibration which is compatible with its boundary components and all its annular regions.
\item The components of $V_{2, \textnormal{cone}}$ are diffeomorphic to solid tori ($\approx S^1 \times D^2$) and each of these components is bounded by one of the $\Xi^T_i$ such that the fibers of $V_{2, \textnormal{reg}}$ on each $\Xi^T_i$ are not nullhomotopic inside $V_{2, \textnormal{cone}}$.
\item The components of $V_{2, \partial}$ are diffeomorphic to solid tori ($\approx S^1 \times D^2$) or solid cylinders ($\approx I \times D^2$).
The solid tori are bounded the $\Xi^O_i$ such that the $S^1$-fibers of $V_{2, \textnormal{reg}}$ on the $\Xi^O_i$ are nullhomotopic inside the $V_{2, \partial}$.
The diskal boundary components of each solid torus of $V_{2, \partial}$ are polar disks on spherical components of $\partial V_2$ and the annular boundary component is one of the $\Xi^A_i$.
Every polar disk and every $\Xi^A_i$ bounds such a component on exactly one side.
\end{enumerate}

We now explain the geometric properties of this decomposition:
\begin{enumerate}[label=(c\textit{\arabic*})]
\item If $\CC$ is a component of $V_1$, then there is a closed subset $U \subset \CC$ with smooth boundary, as well as a Riemannian $1$-manifold $J$ of whose diameter is larger than $s_1 \rho_1(x)$ for each $x \in \CC$ and a fibration $p : U \to J$ such that
\begin{enumerate}
\item[({$\alpha$})] If $\CC \approx T^2 \times I$ or $S^2 \times I$, then $U = \CC$ and $J$ is a closed interval. \\
If $\CC \approx S^1 \times D^2, \Klein^2 \td\times I, D^3$ or $\IR P^3 \setminus B^3$, then $U \approx T^2 \times I$ (in the first two cases) or $U \approx S^2 \times I$ (in the latter two cases), $\partial \CC \subset \partial U$, $J$ is a closed interval and for all $x \in \CC \setminus U$ we have $\diam \CC \setminus U < \mu \rho_1(x)$. \\
If $\CC$ is the union of two such components as listed in (a3), then $U \approx T^2 \times I$ or $S^2 \times I$ depending on whether these two components have toroidal or spherical boundary and $\CC \setminus \Int U$ is diffeomorphic to the disjoint union of these two components.
Moreover for all $x \in \CC \setminus U$, the diameter of the component of $\CC \setminus U$ in which $x$ lies, has diameter $< \mu \rho_1(x)$. \\
If $\CC$ is diffeomorphic to a $T^2$-bundle over $S^1$ or to $S^1 \times S^2$, then $J$ is a circle and $U = \CC$.
\item[({$\beta$})] If $U$ is diffeomorphic to $T^2 \times I$, $S^2 \times I$ or $S^1 \times S^2$, then $p$ corresponds to the projection onto the interval or the circle factor.
\item[({$\gamma$})] $p$ is $1$-Lipschitz.
\item[({$\delta$})] For every $x \in U$, the fiber of $p$ through $x$ has diameter less than $\mu s_1 \rho_1(x)$.
\end{enumerate}
\item For every $x \in V_2$, the ball $(B(x,\rho_1(x)), \rho_1^{-1}(x) g, x)$ is $\mu$-close to a $2$-dimensional pointed Alexandrov space $(X, \ov{x})$ of area $> a$.
\item For every $x \in V_{2, \textnormal{reg}}$, the ball $(B(x, s_2 \rho_1(x)), s_2^{-2} \rho_1^{-2} (x) g, x)$ is $\mu$-close to a standard $2$ dimensional Euclidean ball $(B = B_1(0), g_{\eucl}, \ov{x} = 0)$.

Moreover, there is an open subset $U$ with $B(x,\frac12 s_2 \rho_1(x)) \subset U \subset \linebreak[1] B(x, \linebreak[0] s_2 \rho_1 (x))$, a smooth map $p : U \to \IR^2$ such that
\begin{enumerate}
\item[({$\alpha$})] There are smooth vector fields $X_1, X_2$ on $U$ such that $dp (X_i) = \frac{\partial}{\partial x_i}$ and $X_1, X_2$ are almost orthonomal, i.e. $| \langle X_i, X_j \rangle - \delta_{ij} | < \mu$ for all $i,j = 1,2$.
\item[({$\beta$})] $U$ is diffeomorphic to $B^2 \times S^1$ such that $p : U \to p(U)$ corresponds to the projection onto $B^2$ and the $S^1$-fibers are isotopic in $U$ to the $S^1$-fibers of the fibration on $V_{2, \textnormal{reg}}$.
\item[({$\gamma$})] The $S^1$-fibers of $p$ the $S^1$-fibers of $V_{2, \textnormal{reg}}$ on $U$ enclose an angle $< \mu$ with each other and an angle $\in (\frac\pi{2} - \mu, \frac\pi{2} + \mu)$ with $X_1$ and $X_2$.
\item[({$\delta$})] The $S^1$-fiber of the fibration on $V_{2, \textnormal{reg}}$ which passes through $x$ is isotopic in $U$ to the $S^1$-fibers of $p$.
\item[({$\varepsilon$})] The $S^1$-fibers of $p$ as well as the $S^1$-fibers of $V_{2, \textnormal{reg}}$ on $U$ have diameter less than $(\vol U)^{1/3}$.
\end{enumerate}
\item For every $x \in V_{2, \textnormal{cone}}$, the ball $B(x, \mu \rho_1(x))$ covers the component of $V_{2, \textnormal{cone}}$ in which $x$ lies.
\item For every $x \in V_{2, \partial}$ there is an $x' \in V_{2, \textnormal{reg}}$ with $\dist(x,x') < \mu \rho_1(x)$.
\item For every $x \in V'_2$, the ball $B(x, \mu \rho_1(x))$ covers the component of $V'_2$ in which $x$ lies.
\end{enumerate}
\end{Proposition}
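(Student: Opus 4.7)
The plan is to derive this structural result as a repackaging of the collapsing theorems of Morgan--Tian \cite{MorganTian} and Kleiner--Lott \cite{KLcollapse}, refined so as to produce the precise geometric data listed in (c1)--(c8). The first step is to use hypotheses (ii) and (iii) to place each $x \in M'$ in Morgan--Tian's setting: condition (ii) forces local collapse on scale $\rho_1(x)$, while (iii) provides two-sided curvature and derivative bounds at any larger scale of definite normalized volume, which is exactly the ``bounded geometry at definite scale'' condition needed to apply Cheeger--Fukaya--Gromov smoothing. Hypothesis (i) serves to match this collapsed structure across $\partial M'$: the given $T^2 \times I$ collars with small torus fibers play the role of ``incoming'' $V_1$-pieces and can simply be appended to whatever decomposition is produced in the interior.

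The second step is the standard dichotomy. If $M' = M$ and no scale-$\rho_1$ ball escapes its component, then one is looking at a closed $3$-manifold which collapses everywhere; Gromov's almost-flat manifold theorem (in the nilpotent Killing structure version) then identifies $M$ as an infranilmanifold or as a manifold of nonnegative sectional curvature, which is the first alternative in the statement. Otherwise, one stratifies the points of $M'$ by the dimension of the Gromov--Hausdorff limit of $(B(x,\rho_1(x)), \rho_1^{-2}(x) g, x)$: $1$-dimensional limits give the $V_1$ region (fibered by tori or spheres over an interval or a circle), $2$-dimensional limits give $V_2$ (locally $S^1$-fibered over an Alexandrov $2$-space), and $3$-dimensional limits give $V_2'$ (where there is no further collapse but the diameter of the component is already comparable to $\rho_1$). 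The area threshold $a(\mu)$ in (c5) is chosen as the standard Alexandrov area below which a $2$-dimensional limit collapses further to dimension $\leq 1$, which is how the separation between $V_2$ and $V_1$ is made quantitative.

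The third step is to build the local fibrations and glue them. On the $V_1$ region one applies Fukaya's fibration theorem to produce $p : U \to J$ with toroidal or spherical fibers of diameter $< \mu s_1 \rho_1(x)$; the cases in (c1)($\alpha$) exhaust the topological possibilities for the ``end caps,'' which are either pieces of diameter $< \mu \rho_1$ sitting past an essential $T^2 \times I$ or $S^2 \times I$ neck, or for $T^2$-bundles over $S^1$ and $S^1 \times S^2$ a fibration over a circle. On $V_{2,\textnormal{reg}}$ the $2$-dimensional Alexandrov limit combined with the local fibration theorem produces the almost-orthonormal vector fields $X_1, X_2$ and the $S^1$-fibration of (c6). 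The singular strata of the $2$-dimensional base correspond on one hand to cone points (filled by the solid tori of $V_{2,\textnormal{cone}}$, whose $S^1$-fibers are necessarily not homotopic to those of $V_{2,\textnormal{reg}}$) and on the other hand to strata adjacent to polar disks on the spherical boundary components (yielding the solid tori and cylinders of $V_{2,\partial}$ bounded by the $\Xi^O_i$ and $\Xi^A_i$). The cutting surfaces $\Sigma^T_i, \Sigma^S_i, \Xi^T_i, \Xi^O_i, \Xi^A_i, \Xi^E_i$ are produced between these strata as transverse level sets of the local fibrations.

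The main obstacle is ensuring that the various local fibrations are mutually compatible across overlaps, so that the resulting cutting surfaces are globally embedded and pairwise disjoint, and so that the $S^1$-fibers agree up to isotopy wherever two local models meet (needed for (c6)($\beta$), ($\delta$)). This compatibility is the technical heart of \cite{MorganTian} and \cite{KLcollapse} and is achieved by iterated center-of-mass and equivariant smoothing arguments; the scale separation $s_2 \ll s_1$ has to be chosen so that the $2$-dimensional Euclidean model on $s_2\rho_1$-balls refines the (possibly singular) Alexandrov structure visible on $\rho_1$-balls and so that interpolations between the two scales do not destroy the vertical $S^1$-structure. Once these compatibilities are in place, the remaining assertions (c3)--(c4) and (c7)--(c8) simply amount to reading off diameter bounds from the respective local models, and they follow directly from the Alexandrov convergence provided by (ii) and (iii).
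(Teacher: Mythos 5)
Your overall strategy coincides with the paper's: the Proposition is proved by running through Morgan--Tian's collapsing theorem \cite[Theorems 0.2, 1.1]{MorganTian} (hypotheses (ii), (iii) supplying their collapsing and local-control conditions, the first alternative coming from \cite[Corollary 0.13]{FY}-type arguments) and then matching each of their intermediate objects with the sets $V_1, V_2, V_2', V_{2,\textnormal{reg}}, V_{2,\textnormal{cone}}, V_{2,\partial}$ and the cutting surfaces. Two points, however, need attention. First, your trichotomy ``$1$-dimensional limits $\to V_1$, $2$-dimensional limits $\to V_2$, $3$-dimensional limits $\to V_2'$'' mischaracterizes $V_2'$. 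Hypothesis (ii) forces \emph{every} ball $B(x,\rho_1(x))$, $x \in M'$, to be volume-collapsed, so together with the lower curvature bound at scale $\rho_1(x)$ there is no $3$-dimensional stratum at all; your $V_2'$ would be empty. In fact $V_2'$ consists of those components which are Gromov--Hausdorff close to an \emph{interval} at scale $\rho_1$ but which expand to be close to a $2$-dimensional ball at a smaller scale (this is \cite[Lemma 5.9]{MorganTian}); that is precisely why they can be diffeomorphic to $S^1 \times D^2$, $D^3$, $L(p,q)\setminus B^3$ or $\Klein^2 \td\times I$, why each has exactly one boundary component adjacent to $V_1$ (assertion (a4)), and why (c8) asserts they are covered by a single $\mu\rho_1(x)$-ball. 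Under your stratification these pieces would be absorbed into $V_1$ as ``interval-like'' components, and then (a3) fails, since $L(p,q)\setminus B^3$ does not appear in the list of admissible $V_1$-components.

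Second, the boundary hypothesis (i) is not merely a matter of ``appending'' the collars $U'_T$ to the interior decomposition: Morgan--Tian work with a convexity/normalization condition on $\partial M'$ that is being \emph{replaced} here, so one has to check that their construction of interval product structures near the boundary (the analogue of \cite[Proposition 5.2]{MorganTian}) can be patched together with the given fibrations $p_T$ on the $U'_T$; the small-fiber and separation conditions in (i) are exactly what makes this patching possible, and this step should be made explicit rather than dismissed. With the $V_2'$ stratum corrected and the boundary patching carried out, the rest of your sketch (the $S^1$-fibration on $V_{2,\textnormal{reg}}$, the cone and boundary solid tori, and the diameter/closeness assertions (c2)--(c8)) does reduce to reading off \cite[Propositions 4.3, 4.4 and Lemmas 5.3, 5.7]{MorganTian} as in the paper.
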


\begin{proof}
Conditions 1., 3. in \cite[Theorem 0.2]{MorganTian} follow from assumptions (ii), (iii) if we replace $w_0$ by a sequence $w_n \to 0$; except for the higher derivative bounds in 3. resp. (iii) which are not really needed in the proof.
Condition 2. and the condition that the boundary is convex are replaced by the more general condition (i).
We will see that this replacement does not change the proof significantly.

We now go through the proof of \cite[Theorem 0.2]{MorganTian}.
If $(\diam M) \rho_1^{-1} (x)$ is sufficiently small for some $x \in M$, then we can use \cite[Corollary 0.13]{FY} or arguments of the proof of \cite[Lemma 1.5]{MorganTian} to conclude that either $M$ carries a metric of non-negative curvature or it is an infra-nilmanifold.
So in the following we can assume that $(\diam M) \rho_1^{-1}(x) \geq \min \{c, \mu \} > 0$ for all $x \in M$ for some universal $c > 0$.
The arguments in \cite{MorganTian} will still work if we replace Assumption 1 by this bound.
So by choosing the function $\rho_n$ in \cite[Lemma 1.5]{MorganTian} to be equal to $\frac{1}{100} \min \{ c, \mu \} \rho_1$, where $\rho_1$ is our lower sectional curvature scale, we can ensure that the properties of this Lemma are satisfied.
Moreover, we will profit from the fact that whenever we will find a geometric bound of the form $a \rho_n(x)$ for some $a > 0$, then this bound translates to a bound of the form $\frac{1}{100} \min \{ c, \mu \} a \rho_1(x)$.

Using \cite[Theorem 1.1]{MorganTian}, we choose our set $V_1$ to be the set $V_{n,1}^{MT}$ from \cite[Theorem 1.1]{MorganTian} minus the closed $3$-balls which were added in \cite[subsection 5.4.2]{MorganTian}.
Then assertion (a3) is clear (in our presentation we have to include a few more topological possibilities, because we did not impose any topological restrictions on $M$).
By the discussion in \cite[subsections 5.1--5.3]{MorganTian} and \cite[Lemma 4.38]{MorganTian} it is clear that assertion (c1) holds for the sets $U'_{n,1}$ and $W'_{n,1}$ if the constant $\varepsilon'$ there is chosen sufficiently small.
The set $V_{n,1}^{MT}$ differs from $W'_{n, 1}$ by collar neighborhoods of diameter $\ll \rho_n(x) < \frac\mu{100} \rho_1(x)$ for all points $x$ inside these collars (see \cite[subsection 5.5]{MorganTian}) and hence assertion (c1) still holds for $V_1$.
The constant $s_1$ can be determined based on the definition of the function $\rho_n$ (see above) and a lower diameter bound on components of $V_{n,1}^{MT}$ on the $\rho_n$ scale.
We also remark that condition (i) allows us to patch together the subsets $U'_T$ with nearby interval product structures (compare this with the proof of \cite[Proposition 5.2]{MorganTian}).

Let $V'_2$ be the union of the closures of all components of $M' \setminus V_1$ which correspond to components of $M \setminus U'_{n,1}$ which are close to an interval but which expand to be close to a standard $2$ dimensional ball in \cite[Lemma 5.9]{MorganTian}.
The topology of the components of $V'_2$ can be deduced using a better lower bound on the sectional curvature at the local scale.
This establishes assertions (a4) and (c6) (the latter holds by our choice of $\rho_n$).
For our purposes it will just be important that each of these components has small diameter and only one boundary component.

Finally, let $V_2$ be the closure of $M' \setminus (V_1 \cup V'_2)$ and let $V_{2, \textnormal{reg}} \subset V_2$ the set $V_{n,2}^{MT}$ from \cite[Theorem 1.1]{MorganTian}.
Then $V_{2, \textnormal{reg}}$ carries and $S^1$-fibration.
The closure of $V_2 \setminus V_{2, \textnormal{reg}}$ is the union of the closed $3$-balls, which we have previously deleted from $V_{n,1}^{MT}$ with the solid cylinders and the solid tori mentioned in property 6. of \cite[Theorem 1.1]{MorganTian}.
Denote the union of those solid tori in which the $S^1$-fiber of $V_{2, \textnormal{reg}}$ is not nullhomotopic, by $V_{2, \textnormal{cone}}$ and denote by $\Xi^T_i$ their boundaries.
Let $V_{2, \partial}$ be the closure of $V_2 \setminus (V_{2, \textnormal{reg}} \cup V_{2, \textnormal{cone}})$.
So $V_{2, \partial}$ consists of solid tori and circular or linear chains composed of solid cylinders and closed $3$-balls.
Denote the boundaries of these solid tori and circular chains by $\Xi^O_i$ and the annular boundaries of the linear chains by $\Xi^A_i$.
The annular regions on the spherical boundary components of $V_2$ are denoted by $\Xi^E_i$.
After a smoothing argument, assertions (a1), (a2) and (b1)--(b4) are satisfied.
Assertions (c4) and (c5) are a consequence of the construction.

Assertion (c2) can be deduced from \cite[Lemma 5.3]{MorganTian}.
Observe that $V_{2, \textnormal{reg}}$ is contained in the set $U_{2, \textnormal{generic}}^{MT}$ which is introduced in \cite[Lemma 5.7]{MorganTian}.
By construction (see \cite[Proposition 4.4]{MorganTian}), the set $U_{2, \textnormal{generic}}^{MT}$ has the following property: there is a subset $K \subset U_{2, \textnormal{generic}}^{MT}$ such that for all $x \in K$ the ball $(B(x, s_2^{MT} \rho_n (x) ), (s_2^{MT} \rho_n (x))^{-2} g, x)$ is $\widehat\varepsilon^{MT}$-close to a standard $2$ dimensional Euclidean ball (here $s_2^{MT}$ denotes the constant from \cite[Theorem 3.22]{MorganTian} and $\widehat\varepsilon^{MT}$ the constant from \cite[Proposition 4.3]{MorganTian}).
Moreover, we can assume that $U_{2, \textnormal{generic}}^{MT}$ is contained in the union of a tenth of these balls.
So if $\widehat\varepsilon^{MT}$ is sufficiently small, then for all $x \in U_{2, \textnormal{generic}}^{MT}$ the ball $(B(x, s_2 \rho_n(x)), (s_2 \rho_n(x))^{-2} g, x)$ is $\mu$-close to a standard $2$ dimensional Euclidean ball where $s_2$ is a constant depending on $\mu$.
This establishes the very first part of assertion (c3).
The rest of assertion (c3) follows from \cite[Proposition 4.3]{MorganTian}.
Observe that the diameter bound on the fibers in $U$ follows from the fact around every such fiber of diameter $d$, we can find neighborhood inside $B(x, s_2 \rho_1(x))$ which is close to $S^1(d) \times B^2(10 d)$.
\end{proof}

\subsection{Geometric consequences}
We now identify parts in the decomposition of Proposition \ref{Prop:MorganTianMain} which become non-collapsed when we pass to the universal cover or to a local cover.

\begin{Lemma} \label{Lem:unwrapfibration}
There is a constants $\mu_1 > 0$ such that:
Assume that we're in the situation of Proposition \ref{Prop:MorganTianMain} and assume $\mu \leq \mu_1$.
Then there is a constant $w_1 = w_1 (\mu) > 0$ which only depends on $s_2(\mu, \ov{r}, K)$ such that the following holds:
Consider a subset $N \subset M$, a point $x \in N$ such that $B(x,\rho_1(x)) \subset N$.
Assume that we are in one of the following cases:
\begin{enumerate}[label=(\roman*)]
\item $x \in \CC \subset N$ where $\CC$ is a component of $V_2$ with the property that the $S^1$-fiber of $\CC \cap V_{2, \textnormal{reg}}$ is incompressible in $N$.
\item $x \in \CC \subset N$ where $\CC$ is a component of $V_1$ which is diffeomorphic to $T^2 \times I$, $\Klein^2 \td\times I$, a $T^2$-bundle over $S^1$ or the union of two copies of $\Klein^2 \td\times I$ along their boundary and in all of these cases the generic $T^2$-fiber is incompressible in $N$.
\item $x \in \CC$ where $\CC$ is a component of $V_1$ which is diffeomorphic to $S^1 \times D^2$ or to a union of two (possibly different) copies of $S^1 \times D^2$ or $\Klein^2 \td\times I$.
Let $U \subset \CC$ be a subset as described in Lemma \ref{Prop:MorganTianMain}(c1).
Then we assume that $U \subset N$ and that the $T^2$-fiber of $U$ is incompressible in $N$.
\item $x \in \CC \subset N$ where $\CC$ is a component of $V'_2$ which is diffeomorphic to $\Klein^2 \td\times I$ and whose generic $T^2$-fiber is incompressible in $N$.
\item We are in the case $\diam M < \mu \rho_1(y)$ for all $y \in M$ as mentioned in the beginning of Proposition \ref{Prop:MorganTianMain}, $N = M$ and $M$ is either an infra-nilmanifold or a quotient of $T^3$.
\end{enumerate}
Now consider the universal cover $\td{N}$ of $N$ and choose a lift $\td{x} \in \td{N}$ of $x$.
Then we claim that
\[ \vol B(\widetilde{x}, \rho_1(x)) > w_1 \rho_1^3(x). \]
\end{Lemma}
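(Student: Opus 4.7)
The plan is to exploit the incompressibility hypothesis uniformly in all five cases: it ensures that the nearly-collapsed local model near $x$, which contains a short closed fibre $F \approx S^1$ or $F \approx T^2$, unwraps in the universal cover $\td{N}$ to a local model in which $F$ is replaced by its universal cover ($\IR$ or $\IR^2$). The pulled-back metric on this unwrapped model is close to a flat product, so a ball of a uniform fraction of $\rho_1(x)$ around $\td{x}$ in $\td{N}$ will have volume comparable to $\rho_1^3(x)$, and $w_1$ can be taken to depend only on the uniform geometric closeness constants already supplied by Proposition \ref{Prop:MorganTianMain}.

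In case (i), I would invoke Proposition \ref{Prop:MorganTianMain}(c3) to find a neighbourhood $U$ of $x$ with $B(x,\tfrac12 s_2\rho_1(x)) \subset U \subset B(x, s_2\rho_1(x))$, diffeomorphic to $B^2\times S^1$, carrying a rescaled metric $\mu$-close to the standard product of a Euclidean $2$-ball with a short $S^1$, and whose $S^1$-fibres are isotopic in $U$ to the Seifert fibres of $V_{2,\textnormal{reg}}$ through $x$. By hypothesis the Seifert fibres are incompressible in $N$, so $\pi_1(U) \cong \IZ \hookrightarrow \pi_1(N)$; hence the component $\td{U}$ of the preimage of $U$ in $\td{N}$ containing $\td{x}$ is the full universal cover $B^2\times\IR$ of $U$, and the pulled-back metric on $\td{U}$ is $\mu$-close to the Euclidean product of a disc with $\IR$. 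A ball of radius $\tfrac14 s_2\rho_1(x)$ centred at $\td{x}$ in $\td{N}$ projects into $B(x,\tfrac14 s_2\rho_1(x)) \subset U$, hence lies entirely in $\td{U}$, and in the nearly-Euclidean product model it contains a nearly-Euclidean $3$-ball of comparable radius, giving a volume bound of order $s_2^3\rho_1^3(x)$. Cases (ii), (iii), (iv) are handled identically, using the $T^2\times I$ (or $T^2$-bundle) subset $U$ supplied by Proposition \ref{Prop:MorganTianMain}(c1): incompressibility of the generic $T^2$-fibre in $N$ gives $\pi_1(U) \cong \IZ^2 \hookrightarrow \pi_1(N)$, the component of the preimage in $\td{N}$ containing $\td{x}$ is diffeomorphic to $\IR^2\times I$ or $\IR^2\times\IR$ with metric close to a flat product, and a ball of radius a uniform fraction of $s_1\rho_1(x)$ yields the volume bound. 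In case (v), the universal cover of $M$ is $\IR^3$ or the simply-connected nilpotent Lie group underlying the infra-nilmanifold structure; the assumption $\diam M < \mu \rho_1(y)$ for every $y$ lifts to quantitative closeness of the rescaled $\td{M}$ to the appropriate flat or nil model on the scale $\rho_1(x)$, providing the bound directly.

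The main technical point, and what dictates the choice of $\mu_1$, is to verify that the local model identifications furnished by Proposition \ref{Prop:MorganTianMain}(c1),(c3) are quantitatively accurate enough that, after lifting to $\td{N}$, the metric on $\td{U}$ remains close to the relevant flat product on a scale comparable to $\rho_1(x)$; once this closeness is small relative to the constants in Bishop--Gromov comparison, the volume estimate in $\td{N}$ follows automatically from comparison with the flat model. It is essential that we pass to the universal cover of $N$ rather than of $M$, since only the \emph{local} incompressibility assumption is available from the hypothesis and no global control over $\pi_1(M)$ is assumed or needed.
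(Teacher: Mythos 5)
There is a genuine gap. The core idea of your proposal --- use incompressibility to unwrap the short fibre in $\td{N}$ and thereby gain a non-collapsed direction --- is indeed the idea of the paper's proof, but the step you treat as automatic is the entire content of the lemma. You assert that the pulled-back metric on $\td{U}$ is ``$\mu$-close to the Euclidean product of a disc with $\IR$'' and that the ball around $\td{x}$ therefore ``contains a nearly-Euclidean $3$-ball of comparable radius''. No such tensor-level (or even pointed $C^0$) closeness to a flat product is available at the scale $\rho_1(x)$: at that scale one only has the \emph{lower} sectional curvature bound $\sec \geq -\rho_1^{-2}(x)$ (there is no upper bound, since condition (iii) of Proposition \ref{Prop:MorganTianMain} gives curvature control only on non-collapsed balls, and $B(x,\rho_1(x))$ is collapsed by (ii)), and Proposition \ref{Prop:MorganTianMain}(c3) gives only Gromov--Hausdorff closeness of the rescaled ball to a \emph{two}-dimensional Euclidean ball together with the fibration data ($X_1,X_2$ almost orthonormal, fibres short and almost perpendicular). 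From this one cannot conclude by Bishop--Gromov comparison against a flat model that the lifted ball has Euclidean-order volume; GH-closeness of $\td{U}$ to a $3$-dimensional model is essentially equivalent to the volume bound one is trying to prove.

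What the paper does instead is run a strainer argument in the style of Burago--Gromov--Perelman. In case (i) it takes the $(2,\delta)$-strainer at $x$ provided by (c3), lifts it to $\td{N}$ (comparison angles only increase under the $1$-Lipschitz covering projection), and then manufactures a \emph{third} almost-orthogonal strainer pair from the deck-transformation orbit of $\td{x}$: the lifts of $x$ along the unwrapped fibre are unbounded with consecutive spacing $\leq 2(\vol U)^{1/3}$, so one finds $\td{y}$ with $\dist(\td{x},\td{y})=2\sqrt{\mu_1}$ projecting $\mu_1$-close to $x$, and the pair $(\td{y},\td{x})$ completes a $(3,\delta)$-strainer at the midpoint $\td{m}$ at the smaller scale $\sqrt{\mu_1}$. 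The resulting distance map is shown to be $100$-bilipschitz onto a Euclidean ball, which is what yields $\vol B(\td{m},\lambda\sqrt{\mu_1}) > c(\lambda\sqrt{\mu_1})^3$ and hence the claim. Note also that in cases (ii)--(iv) this is iterated through an intermediate cyclic cover $\widehat{N}=\td{N}/\langle q\rangle$ (one gains one strainer direction at a time, each at a further rescaled scale), and that in case (i) one must first move $x$ into $V_{2,\textnormal{reg}}$ since the local model (c3) is not available at points of $V_{2,\textnormal{cone}}\cup V_{2,\partial}$; your proposal omits both reductions. To repair your argument you would have to replace the appeal to a flat product model by some version of this strainer/bilipschitz machinery (or by Perelman-type volume continuity for non-collapsing Alexandrov limits), which is exactly the technical core you have deferred.
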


In other words, $x$ is $w_0$-good at scale $1$ relatively to $N$ in the sense of Definition \ref{Def:goodness} from subsection \ref{subsec:goodness}.
In the following proofs (for each of the cases (i)--(v)) we will denote by $\delta_k(\mu_1)$ a positive constant, which depends on $\mu_1 > 0$ and which goes to zero as $\mu_1$ goes to zero.
In the end of each proof we will choose $\mu_1$ small enough so that all constants $\delta_k$ are sufficiently small.

\begin{proof}[Proof in case (i).]
We have either $x \in V_{2, \textnormal{reg}}$ or $x \in V_{2, \textnormal{cone}} \cup V_{2, \partial}$.
In the second case, there is an $x' \in B(x, \mu \rho_1(x)) \cap \CC \cap V_{2, \textnormal{reg}}$ and $\frac12 \rho_1(x) < \rho_1(x') < 2 \rho_1(x)$.
So $B(x', \frac14 \rho_1(x')) \linebreak[1] \subset B(x, \linebreak[1] \rho_1(x)) \linebreak[1] \subset N$.
Let $\td{x}' \in \td{N}$ be a lift of $x'$ such that $\dist(x, x') = \dist(\td{x}, \td{x}')$.
Then $B(\td{x}', \frac14 \rho_1(x')) \subset B(\td{x}, \rho_1(x))$ and hence $\vol B(\td{x}, \rho_1(x)) \geq \vol B(\td{x}', \frac14 \rho_1(x'))$.
So if we relax the assumption $B(x, \rho_1(x)) \subset N$ to $B(x, \frac14 \rho_1(x)) \subset N$, then we can replace $x$ by $x'$.
This shows that under this relaxed assumption, we can assume without loss of generality that $x \in V_{2, \textnormal{reg}}$.

Consider now the subset $U$ with $B(x, \frac12 s_2 \rho_1(x)) \subset U \subset B(x, s_2 \rho_1(x)) \subset N$ and the map $p : U \to \IR^2$ from Proposition \ref{Prop:MorganTianMain}(c3).
For the rest of the proof of case (i), we will only work with the metric $g' = s_2^{-2} \rho_1^{-2}(x) g$ on $M$ as opposed to $g$, and we will bound the $g'$-volume of the $1$-ball around $x$ in the universal cover $\td{N}$ from below by a universal constant.
This will then imply the Lemma.
Observe that the sectional curvatures of the metric $g'$ are bounded from below by $-1$ on this ball.
In the following paragraphs, we carry out concepts which can also be found in \cite{BBI} or \cite{BGP}.

By the properties of $x$, we can find a $(2, \delta_1(\mu_1))$-strainer $(a_1, b_1, a_2, b_2)$ of size $\frac12$ around $x$ (here $\delta_1(\mu_1)$ is a suitable constant as mentioned above).
Recall that this means that for the comparison angle $\cangle$ in the model space of constant sectional curvature $-1$ we have
\[ \cangle a_i x b_i > \pi - \delta_1, \quad \cangle a_i x b_j > \tfrac{\pi}2 - \delta_1, \quad \cangle a_i x a_j > \tfrac{\pi}2 - \delta_1, \quad \cangle b_i x b_j > \tfrac{\pi}2 - \delta_1 \]
and $\dist(a_i, x) = \dist(b_i, x) = \frac12$ for all $i \not= j$.
In the universal cover $\widetilde{N}$, we can now choose lifts $\widetilde{x}, \widetilde{a}_i, \widetilde{b}_i$ such that $\dist(\widetilde{a}_i, \widetilde{x}) = \dist(a_i, x) = \frac12$ and $\dist(\widetilde{b}_i, \widetilde{x}) = \dist(b_i, x) = \frac12$.
Since the universal covering map is $1$-Lipschitz, we obtain furthermore $\dist(\widetilde{a}_i, \widetilde{b}_j) \geq \dist(a_i, b_j)$, $\dist(\widetilde{a}_1, \widetilde{a}_2) \geq \dist(a_1, a_2)$ and $\dist(\widetilde{b}_1, \widetilde{b}_2) \geq \dist(b_1, b_2)$.
So all the comparison angles in the universal cover are at least as large as those on $M$ and hence we conclude that $(\widetilde{a}_1, \widetilde{b}_1, \widetilde{a}_2, \widetilde{b}_2)$ is a $(2, \delta_1(\mu_1))$-strainer around $\td{x}$ of size $\frac12$.

Next, we extend this strainer to a $2\frac12$-strainer around $\td{x}$.
By the property of the map $p$ there is a sequence $\widetilde{x}_n$ of lifts of $x$ in $\td{N}$ which is unbounded and whose consecutive distance is at most $2 (\vol U)^{1/3}$.
We can assume that $2 (\vol U)^{1/3} < \mu_1$, because otherwise we have a lower bound on $\vol U$ and we are done.
So for sufficiently small $\mu_1$ we can find a point $\widetilde{y} \in \td{N}$ with $\dist(\td{x},\td{y}) = 2 \sqrt{\mu_1}$ which projects to a point $y \in M$ with $\dist(x,y) < \mu_1$.
It follows that
\[ \dist(\widetilde{y}, \widetilde{a}_i) > \tfrac12 - \mu_1 \qquad \text{and} \qquad \dist(\widetilde{y}, \widetilde{b}_i) > \tfrac12 - \mu_1. \]
This implies
\begin{equation} \label{eq:212atx}
 \cangle \widetilde{y} \widetilde{x} \widetilde{a}_i > \tfrac{\pi}2 - \delta_2(\mu_1) \qquad \text{and} \qquad \cangle \widetilde{y} \widetilde{x} \widetilde{b}_i > \tfrac{\pi}2 - \delta_2(\mu_1).
\end{equation}
Hence $(\td{a}_1, \td{b}_1, \td{a}_2, \td{b}_2, \td{y})$ is a $2\frac12$-strainer around $\td{x}$.

Since $| {\dist(\widetilde{y}, \widetilde{a}_i) - \dist(\widetilde{x}, \widetilde{a}_i)}| < 2 \sqrt{\mu_1}$ and $|{\dist(\widetilde{y}, \widetilde{b}_i) - \dist(\widetilde{x}, \widetilde{b}_i)}| < 2 \sqrt{\mu_1}$, we conclude that $(\td{a}_1, \td{b}_1, \td{a}_2, \td{b}_2)$ is a $(2, \delta_3(\mu_1))$-strainer around $\td{y}$ of size $\geq \frac12 - 2 \sqrt{\mu_1}$.
We now show that symmetrically $(\td{a}_1, \td{b}_1, \td{a}_2, \td{b}_2, \td{x})$ is a $2\frac12$-strainer around $\td{y}$ of arbitrary good precision:
By comparison geometry
\[ \cangle \td{a}_i \td{x} \td{y} + \cangle \td{y} \td{x} \td{b}_i + \cangle \td{a}_i \td{x} \td{b}_i \leq 2 \pi. \]
Together with (\ref{eq:212atx}) and the strainer inequality at $\td{x}$, this yields
\[ \cangle \td{a}_i \td{x}  \td{y} < \tfrac{\pi}2 + \delta_1(\mu_1) + \delta_2(\mu_1). \]
By hyperbolic trigonometry,
\[ \cangle  \td{x} \td{y} \td{a}_i + \cangle \td{a}_i \td{x}  \td{y} + \cangle \td{y} \td{a}_i \td{x} > \pi - \delta_4(\mu_1) \qquad \text{and} \qquad \cangle \td{y} \td{a}_i \td{x}  < \delta_4(\mu_1). \]
Combining the last three inequalities yields
\[ \cangle \td{x} \td{y} \td{a}_i  > \tfrac{\pi}2 - \delta_1(\mu_1) - \delta_2(\mu_1) - 2 \delta_4(\mu_1) = \tfrac{\pi}2 - \delta_5(\mu_1). \]
The same estimate holds for $\cangle \td{x} \td{y} \td{b}_i$.

Let $\widetilde{m}$ be the midpoint of a minimizing segment joining $\widetilde{x}$ and $\widetilde{y}$.
We will now show that $(\td{a}_1, \td{b}_1, \td{a}_2, \td{b}_2, \td{y}, \td{x})$ is a $3$-strainer around $\td{m}$ of arbitrary good precision.
Since the distances of $\widetilde{a}_i$ resp. $\widetilde{b}_i$ to $\widetilde{m}$ differ from the distances to $\widetilde{x}$ by at most $\sqrt{\mu_1}$, we can conclude that $(\widetilde{a}_1, \widetilde{b}_1, \widetilde{a}_2, \widetilde{b}_2)$ is a $(2, \delta_6(\mu_1))$-strainer of size $\geq \frac12 - \sqrt{\mu_1}$ around $\td{m}$.
It remains to bound comparison angles involving the points $\td{x}$, $\td{y}$:
By the monotonicity of comparison angles, we have
\[ \cangle \td{m} \td{x} \td{a}_i \geq \cangle \td{y} \td{x} \td{a}_i > \tfrac{\pi}2 - \delta_2(\mu_1) \qquad \text{and} \qquad \cangle \td{m} \td{x} \td{b}_i \geq \cangle \td{y} \td{x} \td{b}_i > \tfrac{\pi}2 - \delta_2(\mu_1). \]
Now, if we apply the same argument as in the last paragraph, replacing $\td{y}$ with $\td{m}$, we obtain $\cangle \td{x} \td{m} \td{a}_i, \cangle \td{x} \td{m} \td{b}_i > \frac{\pi}2 - \delta_6(\mu_1)$.
For analogous estimates on the opposing angles, we then interchange the roles of $\td{x}$ and $\td{y}$.
Finally, $\cangle \td{x} \td{m} \td{y} = \pi$ is trivially true.

Set $\td{a}_3 = \td{y}$ and $\td{b}_3 = \td{x}$.
We have shown that $(\td{a}_1, \td{b}_1, \td{a}_2, \td{b}_2, \td{a}_3, \td{b}_3)$ is a $(3, \delta_7(\mu_1))$-strainer around $\td{m}$ of size $\geq \sqrt{\mu_1}$ for a suitable $\delta_7(\mu_1)$.
We will now use this fact to estimate the volume of the $\lambda \sqrt{\mu_1}$-ball around $\td{m}$ from below for sufficiently small $\lambda$ and $\mu_1$.
We follow here the ideas of the proof of \cite[Theorem 10.8.18]{BBI}.
Define the function
\begin{multline*}
 f : B(\td{m}, \lambda \sqrt{\mu_1}) \longrightarrow \IR^3 \qquad 
  z \longmapsto (\dist(\td{a}_1, z) - \dist(\td{a}_1, \td{m}), \\ \dist(\td{a}_2, z) - \dist(\td{a}_2, \td{m}), \dist(\td{a}_3, z) - \dist(\td{a}_3, \td{m})).
\end{multline*}
We will show that $f$ is $100$-bilipschitz for sufficiently small $\mu_1$ and $\lambda$.
Obviously, $f$ is $3$-Lipschitz, so it remains to establish the lower bound $\frac1{100}$.
Assume that this was false, i.e. that there are $z_1, z_2 \in B(\td{m}, \lambda \sqrt{\mu_1})$ with $\dist(z_1, z_2) > 100 | f(z_1) - f(z_2) |$.
Then for all $i = 1, 2, 3$
\begin{equation} \label{eq:almostiso}
 \dist(z_1, z_2) > 100 | {\dist(a_i, z_1) - \dist(a_i, z_2)} |.
\end{equation}
It is easy to see that given some $\delta > 0$, we can choose $\lambda > 0$ and $\mu_1 > 0$ sufficiently small, to ensure that $(\td{a}_1, \td{b}_1, \td{a}_2, \td{b}_2, \td{a}_3, \td{b}_3)$ is a $(3, \delta)$-strainer around $z_1$ and around $z_2$.
Now look at the comparison triangle corresponding to the points $z_1, z_2, \td{a}_i$.
By (\ref{eq:almostiso}), it is almost isosceles and hence by elementary hyperbolic trigonometry we conclude for $\lambda$ sufficiently small
\[ \tfrac{9}{10} \tfrac{\pi}2 < \cangle z_2 z_1 \td{a}_i, \; \cangle z_1 z_2 \td{a}_i < \tfrac{11}{10} \tfrac{\pi}2 . \]
Using comparison geometry
\[ \cangle z_1 z_2 \td{b}_i \leq 2 \pi - \cangle \td{a}_i z_2 \td{b}_i - \cangle z_1 z_2 \td{a}_i < \tfrac{11}{10} \tfrac{\pi}2 + \delta. \]
For $\lambda$ sufficiently small, we obtain furthermore by hyperbolic trigonometry
\[  \cangle \td{b}_i z_1 z_2 + \cangle z_1 z_2 \td{b}_i +\cangle z_2 \td{b}_i z_1 > \pi - \delta \qquad \text{and} \qquad \cangle z_2 \td{b}_i z_1 < \delta. \]
So
\[ \cangle \td{b}_i z_1 z_2 > \tfrac{9}{10} \tfrac{\pi}2 - 3 \delta. \]

Now join $z_1$ with $\td{a}_1, \td{b}_1, \td{a}_2, \td{b}_2, \td{a}_3$ by minimizing geodesics.
By comparison geometry, these geodesics enclose angles of at least $\frac{\pi}2 - \delta$ resp. $\pi - \delta$ between each other.
So their unit direction vectors approximate the negative and positive directions of an orthonormal basis.
By the same argument, the minimizing geodesic which connects $z_1$ with $z_2$ however encloses an angle of at least $\frac9{10} \frac{\pi}2 - 3\delta$ with each of these geodesics.
For sufficiently small $\delta$ this contradicts the fact that the tangent space at $z_1$ is $3$-dimensional.
So $f$ is indeed $100$-bilipschitz for sufficiently small $\lambda$ and $\mu_1$.

From the bilipschitz property we can conclude that
\[ \vol B(\td{m}, \lambda \sqrt{\mu_1} ) > c (\lambda \sqrt{\mu_1})^3 \]
for some universal $c > 0$.
Fixing $\mu_1 < \frac14$ and $\lambda < 1$ such that the argument above can be carried out, we obtain
\[ \vol B(\td{x}, 1) > \vol B(\td{m}, \lambda \sqrt{\mu_1}) > c (\lambda \sqrt{\mu_1})^3 = c' > 0. \]
By rescaling, this implies the desired inequality for the metric $g$.
\end{proof}

\begin{proof}[Proof in cases (ii)--(iv).]
By Proposition \ref{Prop:MorganTianMain}(c1) we know that there is an $x' \in \CC$ (or $x' \in \CC'$ in case (iv) where $\CC'$ is the component of $V_1$ adjacent to $\CC$) with $\dist(x, x') < \frac1{10} \rho_1(x)$ such that there is a subset $U$ with $B(x', \frac14 s_1 \rho_1(x')) \subset U \subset B(x', \frac12 s_1 \rho_1(x'))$ which is diffeomorphic to $T^2 \times I$ and incompressible in $N$ and the ball $(B(x', \frac12 s_1 \rho_1(x')), \linebreak[1] 4 s_1^{-2} \rho_1^{-2}(x') g, \linebreak[1] x')$ is $\delta_1(\mu_1)$-close to $((-1,1), g_{\eucl}, 0)$.
As in the proof in case (i) we can replace $x$ by $x'$ and hence assume without loss of generality that $B(x, \frac14 s_1 \rho_1(x)) \subset U \subset B(x, \frac12 s_1 \rho_1(x))$ and that $(B(x, \frac12 s_1 \rho_1(x)), \linebreak[1] 4 s_1^{-2} \rho_1^{-2}(x) g, \linebreak[1] x)$ is $\delta_1(\mu_1)$-close to $((-1,1), \linebreak[1] g_{\eucl}, \linebreak[1] 0)$.

Choose $q \in \pi_1(N)$ corresponding to a non-trivial simple loop in one of the cross-sectional tori of $U$ and denote by $\widehat{N}$ the covering of $N$ corresponding to the cyclic subgroup generated by $q$, i.e. if we also denote by $q$ the deck-transformation of $\widetilde{N}$ corresponding to $q$, then $\widehat{N} = \td{N} / q$.
So we have a tower of coverings $\td{N} \to \widehat{N} \to N$.

Consider first the rescaled metric $g' = 4 s_1^{-2} \rho_1^{-2} (x) g$.
With respect to this metric we can construct a $(1, \delta_2(\mu_1))$ strainer $(a_1, b_1)$ around $x$ of size $\frac12$ on $M$ for a suitable $\delta_2(\mu_1)$.
By the same arguments as in case (i), but using the covering $\widehat{N} \to N$, we can find a point $\widehat{m} \in \widehat{N}$ within $\sqrt{\mu_1}$-distance away from a lift $\widehat{x}$ of $x$ and a $(2, \delta_2(\mu_1))$ strainer $(\widehat{a}_1, \widehat{b}_1, \widehat{a}_2, \widehat{b}_2)$ around $\widehat{m}$ of size $\geq \sqrt{\mu_1}$.
Connect the points $\widehat{a}_i$ and $\widehat{b}_i$ with $\widehat{m}$ by minimizing geodesics and choose points $\widehat{a}'_i$ and $\widehat{b}'_i$ of distance $\sqrt{\mu_1}$ from $\widehat{m}$.
By monotonicity of comparison angles, $(\widehat{a}'_1, \widehat{b}'_1, \widehat{a}'_2, \widehat{b}'_2)$ is a $(2, \delta_2(\mu_1))$-strainer of size $\sqrt{\mu_1}$.

Let $g'' = \frac12 \mu_1^{-1} g'$.
Then $(\widehat{a}'_1, \widehat{b}'_1, \widehat{a}'_2, \widehat{b}'_2)$ has size $\frac12$ with respect to $g''$.
Using this strainer, the metric $g''$ and the covering $\td{N} \to \widehat{N}$, we can apply the same argument from case (i) again and obtain a $(3, \delta_3(\mu_1))$ strainer $(\td{a}_1, \td{b}_1, \td{a}_2, \td{b}_2, \td{a}_3, \td{b}_3)$ around a point $\td{m}' \in \td{N}$ which is $\sqrt{\mu_1}$-close to a lift $\td{m}$ of $\widehat{m}$ in $\td{N}$.

As in case (i), for a sufficiently small $\mu_1$ we can deduce a lower volume bound $\vol_{g''} \td{B} (\td{m}', 1) > c'$.
With respect to $g'$, the point $\td{m}'$ is within a distance of $\mu_1 + \sqrt{\mu_1}$ from a lift $\td{x}$ of $\widehat{x}$.
Hence 
\[ \vol_{g'} B (\td{x}, 1) > \vol_{g'} B(\td{m}', \sqrt{2 \mu_1}) > c' (2 \mu_1)^{3/2} = c'' > 0. \]
The desired inequality follows by rescaling.
\end{proof}

\begin{proof}[Proof in case (v).]
In this case, there is a covering $\widehat{M} \to M$ such that $\widehat{M} \approx T^2 \times \IR$ and whose group of deck transformations is isomorphic to $\IZ$.
Let $\widehat{x} \in \widehat{M}$ be a lift of $x$.
Then $(B(\widehat{x}, \rho_1(x), \rho_1^{-1} (x) g, \widehat{x})$ is $\delta_1(\mu_1)$-close to $((-1,1), g_{\textnormal{eucl}}, 0)$ and there is a subset $U$ with $B(\widehat{x}, \frac12 \rho_1(x)) \subset U \subset B(\widehat{x}, \rho_1(x))$ which is diffeomorphic to $T^2 \times I$ and incompressible in $\widehat{M}$.
We can now apply the previous proof.
\end{proof}

\subsection{Topological consequences} \label{subsec:topimplications}
We now discuss the topological structure of the decomposition obtained in Proposition \ref{Prop:MorganTianMain}.
So let in the following $M$ be an orientable, closed manifold, $M' \subset M$ a closed subset with smooth torus boundary components and consider a decomposition $M' = V_1 \cup V_2 \cup V'_2$, along with the surfaces $\Sigma_i^T, \Sigma_i^S, \Xi_i^T, \Xi_i^O, \Xi_i^A, \Xi_i^E$ and subsets $V_{2, \text{reg}}, V_{2, \text{cone}}, V_{2, \partial}$ which satisfies all the topological assertions of Proposition \ref{Prop:MorganTianMain} (a1)--(a4), (b1)--(b4).
We assume that the very first option of this Proposition, in which the manifold is infra-nil or a torus quotient, does not occur.
For future applications, we will discuss the following three cases:
\begin{description}
\item[case A] $M'$ is closed, i.e. $\partial M' = \emptyset$ and $M'$ is irreducible and not a spherical space form.
\item[case B] $M'$ is irreducible, has a boundary and all its boundary components are tori which are incompressible in $M'$,
\item[case C] $M' \approx S^1 \times D^2$.
\end{description}

The main result of this subsection will be Proposition \ref{Prop:GGpp}.
We first need to make some preparations.

\begin{Definition} \label{Def:GG}
Let $\mathcal{G} \subset M'$ be the union of
\begin{enumerate}[label=(\arabic*)]
\item all components of $V_2$ whose generic $S^1$-fiber is incompressible in $M'$,
\item all components of $V_1$ which are diffeomorphic to $T^2 \times I$, $\Klein^2 \td{\times} I$ and whose generic $T^2$-fiber is incompressible in $M'$, or components of $V_1$ which are diffeomorphic to a $T^2$-bundle over $S^1$ or the union of two copies of $\Klein^2 \td{\times} I$ along their common torus boundary.
\item all components of $V'_2$ which are diffeomorphic to $\Klein^2 \td{\times} I$ and whose generic $T^2$-fiber is incompressible in $M'$.
\end{enumerate}
We call the components of $V_1$, $V_2$ or $V'_2$ which are contained in $\mathcal{G}$ \emph{good (in $M'$)}.
\end{Definition}
Obviously, every good component of $V_2$ does not contain points of $V_{2, \partial}$.

\begin{Lemma} \label{Lem:bdrygoodisV2}
Consider the cases A--C.
Every component of $V_1$, $V_2$ or $V'_2$ which is contained in $\mathcal{G}$ and shares a boundary component with $\mathcal{G}$ either belongs to $V_2$ and is not adjacent to $\partial M'$ or belongs to $V_1$ and is adjacent to $\partial M'$.
\end{Lemma}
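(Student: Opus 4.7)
I will prove the statement via its contrapositive, classifying good components whose boundary lies on the interface $\partial \mathcal{G}$. Cases involving good $V_2$ components (which are automatically disjoint from $\partial M'$ by (a2)) or good $V_1$ components adjacent to $\partial M'$ already match the conclusion, so it suffices to rule out the remaining cases: good $V'_2$ components, and good $V_1$ components not adjacent to $\partial M'$. In both, I will show that every boundary torus of $\mathcal{C}$ is simultaneously a boundary torus of another good component, so no boundary component of $\mathcal{C}$ contributes to $\partial \mathcal{G}$.

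For a good $V'_2$ component $\mathcal{C}$: Definition~\ref{Def:GG}(3) together with (a4) gives $\mathcal{C} \approx \Klein^2 \td\times I$ with a single boundary torus $T$ incompressible in $M'$. The component $\mathcal{C}'$ on the other side of $T$ belongs to $V_1$ (by (a4)), and I enumerate its possible diffeomorphism types from (a3). Since attaching a $T^2 \times I$ collar never changes diffeomorphism type, the union types in (a3) with a free $T^2$ boundary reduce to $T^2 \times I$, $\Klein^2 \td\times I$, or $S^1 \times D^2$. The solid torus case is excluded by incompressibility of $T$ in $M'$: its meridian disk sits inside $M'$ and would compress $T$. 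In the remaining two cases, the generic $T^2$-fiber of $\mathcal{C}'$ supplied by (c1) is isotopic to $T$ inside the $T^2 \times I$-subregion $U$ of (c1) and so is incompressible in $M'$, making $\mathcal{C}'$ good.

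For a good $V_1$ component $\mathcal{C}$ not adjacent to $\partial M'$: the closed cases in Definition~\ref{Def:GG}(2) have no boundary at all. Otherwise $\mathcal{C} \approx T^2 \times I$ or $\Klein^2 \td\times I$ with every boundary torus $T$ incompressible in $M'$, and (by (a2), which rules out direct $V_1$--$V_1$ adjacency) each such $T$ is shared with some $\mathcal{C}' \in V_2 \cup V'_2$. If $\mathcal{C}' \in V_2$: properties (b1)--(b2) extend the $V_{2,\textnormal{reg}}$ $S^1$-fibration across $T$, yielding a generic primitive, non-trivial simple $S^1$-fiber in $T$. The inclusion $\pi_1(T) \hookrightarrow \pi_1(M')$ coming from incompressibility of $T$ then forces this fiber to be incompressible in $M'$, so $\mathcal{C}'$ is good by Definition~\ref{Def:GG}(1). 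If $\mathcal{C}' \in V'_2$: the previous paragraph's argument, applied with the incompressibility of $T$ now coming from $\mathcal{C}$'s side, forces $\mathcal{C}' \approx \Klein^2 \td\times I$ with $T$ as its generic incompressible $T^2$-fiber, so $\mathcal{C}'$ is good.

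The main obstacle is the precise transfer of incompressibility across a shared torus. For excluding a neighboring $S^1 \times D^2$, the meridian disk lives inside $M'$ and directly compresses $T$ there. For a neighboring $V_2$, the technical subtlety is that the generic $S^1$-fiber may be nullhomotopic inside sub-regions such as $V_{2,\textnormal{cone}}$ or $V_{2,\partial}$, and could a priori be non-primitive; nonetheless its embedding as a non-trivial class in the incompressibly embedded torus $T$ forces it to have infinite order in $\pi_1(M')$, which is all that is needed for incompressibility in $M'$ and hence for goodness of $\mathcal{C}'$.
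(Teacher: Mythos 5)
Your proof is correct and is essentially a careful expansion of the paper's own (one-line) argument: the paper simply invokes Proposition \ref{Prop:MorganTianMain}(a2) together with the observation that any component adjacent to a good $V_1$ (or good $V'_2$) component is itself good, which is exactly the content of your two case analyses. The details you supply — excluding the solid-torus neighbour via its meridian disk, and transferring incompressibility from the shared torus to the generic $S^1$- or $T^2$-fiber — are the right ones and match what the paper leaves implicit.
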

\begin{proof}
This follows directly from the definition of $\mathcal{G}$ and Proposition \ref{Prop:MorganTianMain}(a2).
Observe that any component which is adjacent to a good component of $V_1$ is good.
\end{proof}

\begin{Lemma} \label{Lem:SStori}
Consider the cases A or B.
There is a unique subset $\mathcal{S} \subset M'$ which is the disjoint union of finitely many embedded solid tori $\approx S^1 \times D^2$, bounded by some of the $\Sigma^T_i$, such that for any $\Sigma^T_i$ the following statement holds: $\Sigma^T_i$ is compressible in $M'$ if and only if $\Sigma^T_i \subset \mathcal{S}$ (i.e. it either bounds a component of $\mathcal{S}$ or it is contained in its interior).

In particular, $M' = \mathcal{G} \cup \mathcal{S}$.
\end{Lemma}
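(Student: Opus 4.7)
The plan is to classify each compressible $\Sigma^T_i$ via Lemma~\ref{Lem:compressingtorus} and then assemble $\mathcal{S}$ from the resulting solid tori. Since $M'$ is irreducible and not a spherical space form (and, in case B, the incompressibility of $\partial M'$ allows the argument of Lemma~\ref{Lem:compressingtorus} to go through), each compressible $\Sigma^T_i$ separates $M'$ into two components $U_i,V_i$ such that either (b) one of them is a solid torus, or (a) neither is, all compressing disks lie in one of them (say $U_i$), and a tubular neighborhood of $D\cup V_i$ is a $3$-ball $B_i\subset M'$; case (c) is excluded.

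To handle case (a), I would use the adjacency structure from Proposition~\ref{Prop:MorganTianMain}: across $\Sigma^T_i$ one side is a $V_1$-component and the other lies in $V_2\cup V'_2$, and in case (a) the side $V_i\subset B_i$ cannot contain a Klein bottle since the Klein bottle does not embed in $S^3$. Combined with Lemma~\ref{Lem:Kleinandsolidtorus}, this rules out the $\Klein^2\td{\times}I$ possibility for the $V_1$-component meeting $\Sigma^T_i$ on the $V_i$-side; working through the remaining types in Proposition~\ref{Prop:MorganTianMain}(a3) and invoking Lemma~\ref{Lem:coverMbysth} together with the irreducibility of $M'$, one finds another $\Sigma^T_j\subset B_i$ falling under case (b) whose associated solid torus $S_j\subset B_i$ contains $\Sigma^T_i$. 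In case (b) itself one simply takes the solid-torus side $S_i$ with $\partial S_i=\Sigma^T_i$. Thus every compressible $\Sigma^T_i$ is contained in some solid torus $S(\Sigma^T_i)\subset M'$ whose boundary is one of the $\Sigma^T_j$.

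I would then define $\mathcal{S}$ as the union of the outermost members of the finite family $\{S(\Sigma^T_i)\}$. Any two such solid tori have pairwise disjoint boundary tori (among the $\Sigma^T_j$'s), and irreducibility of $M'$ forces them to be either disjoint or nested: if the boundaries of $S(\Sigma^T_i)$ and $S(\Sigma^T_{i'})$ met transversally, surgery along an innermost intersection disk together with a portion of the two solid tori would yield an essential sphere, contradicting irreducibility. Hence the outermost solid tori are pairwise disjoint, $\mathcal{S}$ is a finite disjoint union of embedded solid tori bounded by some $\Sigma^T_j$'s, and every compressible $\Sigma^T_i$ lies in the interior or boundary of $\mathcal{S}$; the incompressible $\Sigma^T_i$'s lie outside $\mathcal{S}$ since every boundary component of $\mathcal{S}$ is compressible in $M'$.

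Finally, to prove $M'=\mathcal{G}\cup\mathcal{S}$, I would pick $x\in M'\setminus\mathcal{G}$ lying in a non-good component $\CC$ of $V_1$, $V_2$, or $V'_2$, and show $\CC\subset\mathcal{S}$. If $\CC\subset V_2$ has compressible generic $S^1$-fiber, Lemma~\ref{Lem:Seifertfiberincompressible} applied to $N=\CC$---with the hypothesis on compressible boundary tori supplied by Step~1---forces some boundary torus of $\CC$ to bound a solid torus on the $\CC$-side, which places $\CC$ inside some $S(\Sigma^T_j)\subset\mathcal{S}$. The non-good $V_1$-components (solid torus, $D^3$, $S^2\times I$, $T^2\times I$ with compressible fiber, or the relevant closed or union types) and non-good $V'_2$-components are handled by direct inspection of their topological type in combination with Proposition~\ref{Prop:MorganTianMain}(c6) and the placement of their boundary spheres and tori in $\mathcal{S}$. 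Uniqueness of $\mathcal{S}$ is then immediate from the required iff characterization, since its boundary tori are forced to be exactly the outermost compressible $\Sigma^T_i$'s bounding solid tori. The principal difficulty is Step~2, where case (a) of Lemma~\ref{Lem:compressingtorus} must be controlled without any a priori solid-torus structure on either side of $\Sigma^T_i$, and an enclosing solid torus with boundary in $\{\Sigma^T_j\}$ must be located by carefully combining the topological classification of $V_1$-components, the Klein-bottle obstruction, and nested irreducibility arguments.
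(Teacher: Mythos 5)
The overall architecture of your argument (classify the compressible $\Sigma^T_i$, assemble $\mathcal{S}$ from maximal solid tori, then check $M'=\mathcal{G}\cup\mathcal{S}$ component by component) matches the paper's, but the central step is not actually carried out, and it is exactly the step you flag as "the principal difficulty." In case (a) of Lemma \ref{Lem:compressingtorus} you assert that "working through the remaining types in Proposition \ref{Prop:MorganTianMain}(a3) and invoking Lemma \ref{Lem:coverMbysth} together with the irreducibility of $M'$, one finds another $\Sigma^T_j\subset B_i$ falling under case (b)." This cannot be done by inspecting $V_1$-components alone: the side of $\Sigma^T_i$ contained in the ball $B_i$ will in general contain components of $V_2$, which are Seifert-fibered over an arbitrary base orbifold with many boundary tori and with cone/boundary pieces $V_{2,\textnormal{cone}}$, $V_{2,\partial}$ attached. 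Showing that such a configuration forces one of its boundary tori to bound a solid torus containing $\Sigma^T_i$ is precisely the content of Lemma \ref{Lem:Seifertfiberincompressible}, which your Step 2 never invokes. Moreover, to verify the hypotheses of that lemma (each compressible boundary torus of the Seifert piece $N=\CC\cap V_{2,\textnormal{reg}}$ either bounds a solid torus on the other side or is incompressible in the complementary component) one needs control on \emph{which side} the compressing disks of the neighbouring tori lie. The paper obtains this by a global extremal choice: among all compressible $\Sigma^T_i$ not already absorbed into $\mathcal{S}$ or the balls, it picks a $\Sigma^T_j$ whose compressing-disk side is "innermost," argues the adjacent component $\CC$ on that side lies in $V_2$, and only then applies Lemma \ref{Lem:Seifertfiberincompressible} to $N=\CC\cap V_{2,\textnormal{reg}}$ (not to $\CC$ itself, which is not Seifert-fibered) to reach a contradiction. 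Your per-torus, local argument has no mechanism to produce this control, so the claim that every case-(a) torus is enclosed in a solid torus bounded by some $\Sigma^T_j$ remains unproved; as stated it is essentially a restatement of the lemma.

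Two smaller points. First, the spheres $\Sigma^S_i$ are not dealt with: the paper must separately show that the maximal balls $B'_i$ they bound are contained in $\mathcal{S}$ (this again uses Lemma \ref{Lem:Seifertfiberincompressible}, since a $V_2$-component adjacent to such a ball has contractible fibers), and without this the "only if" direction of the characterization fails for tori lying inside those balls. Second, your disjoint-or-nested argument via "surgery along an innermost intersection disk" is off target: the boundaries of your candidate solid tori are among the pairwise disjoint $\Sigma^T_j$, so there is no transversal intersection; the correct statement is that two solid tori with disjoint boundaries in an irreducible manifold that is not a spherical space form are disjoint or nested because otherwise $M'$ would be covered by two solid tori, contradicting Lemma \ref{Lem:coverMbysth}(c).
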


Note that an important consequence of this Lemma is that in cases A and B we have $\mathcal{G} \neq \emptyset$.

\begin{proof}
Without loss of generality, we assume in the following that $M'$ is connected.

First observe that by the irreducibility of $M'$ in case A and the fact that $\partial M' \neq \emptyset$ in case B, each sphere $\Sigma^S_i \subset M'$ bounds a ball $B_i \subset M'$ on exactly one side.
By Lemma \ref{Lem:coverMbysth}, any two of those balls are either disjoint or one is contained in the other.
Let $B'_1, \ldots, B'_{m'}$ be a disjoint subcollection of those balls which are maximal with respect to inclusion.

Now, consider all $\Sigma^T_i$ which already bound a solid torus $S_i$ in $M$.
In case B, we have $S_i \cap \partial M' = \emptyset$.
By Lemma \ref{Lem:coverMbysth} again, any two such tori are either disjoint or one is contained in the other.
So there is a unique subcollection of those solid tori which are maximal with respect to inclusion.
Denote the union of those solid tori by $\mathcal{S}$.

We will now show by contradiction that every torus $\Sigma^T_i \subset M' \setminus ( \mathcal{S} \cup B'_1 \cup \ldots \cup B'_{m'})$ is incompressible in $M'$.
Observe that each such torus does not bound a solid torus in $M$.
For each compressible torus $\Sigma^T_i \subset M' \setminus ( \mathcal{S} \cup B'_1 \cup \ldots \cup B'_{m'})$ we choose a spanning disk $D_i \subset M'$.
By Lemma \ref{Lem:compressingtorus} and a maximum argument, it is easy to see that there is at least one such torus $\Sigma^T_j \subset M' \setminus ( \mathcal{S} \cup B'_1 \cup \ldots \cup B'_{m'})$ with the following property:
For any other compressible torus $\Sigma^T_i \subset M' \setminus ( \mathcal{S} \cup B'_1 \cup \ldots \cup B'_{m'})$ which lies in the same component of $M \setminus \Sigma^T_j$ as $D_j$, the disk $D_i$ lies in the same component of $M \setminus \Sigma^T_i$ as $\Sigma^T_j$.

Let $\CC$ be the component of $V_1$, $V_2$ or $V'_2$ whose boundary contains $\Sigma^T_j$ and which lies on the same side of $\Sigma^T_j$ as $D_j$.
It can be seen easily that $\CC \not\subset V_1$.
Moreover $\CC \not\subset V'_2$, because otherwise $\CC$ would be diffeomorphic to $S^1 \times D^2$ by Proposition \ref{Prop:MorganTianMain}(a4) which would contradict the choice of $\Sigma^T_j$.
So $\CC \subset V_2$.

We will now analyze the boundary of $\CC$.
Consider $\Sigma^T_i \subset \partial\CC$ (possibly $\Sigma^T_i = \Sigma^T_j$).
If $\Sigma^T_i$ bounds a solid torus $S_i \subset \mathcal{S}$, then by choice of $\Sigma^T_j$, $S_i$ lies on the opposite side of $\CC$.
If $\Sigma^T_i$ does not bound a solid torus, then $\Sigma^T_i \subset M' \setminus ( \mathcal{S} \cup B'_1 \cup \ldots \cup B'_{m'})$.
So if $\Sigma^T_i$ has compressing disks, then by Lemma \ref{Lem:compressingtorus}(a) and again by the choice of $\Sigma^T_j$, these disks can only lie on the same side as $\CC$.
By Proposition \ref{Prop:incompressibleequiv}, this implies that then $\Sigma^T_i$ is incompressible in the component of $M' \setminus \Sigma^T_i$ which does not contain $\CC$.
For every spherical boundary component $\Sigma^S_i \subset \partial\CC$, the ball $B_i$ lies on the opposite side of $\CC$ and has to be one of the maximal balls $B'_i$ because otherwise $\Sigma^T_j$ would be contained in $B'_1 \cup \ldots \cup B'_{m'}$.

Set now $N = \CC \cap V_{2, \textnormal{reg}}$ and define $\mathcal{S}^*$ to be the union of $\CC \setminus N$ with the balls $B'_i$ whose boundary lies in $\partial \CC$.
So $N$ carries an $S^1$-fibration and is bounded by some of the tori $\Sigma^T_i$ and $\partial\mathcal{S}^*$.
The set $\mathcal{S}^*$ consists of components of $V_{2, \textnormal{cone}}$ (those are are solid tori), components of $V_{2, \partial}$ which are solid tori and chains made out of components of $V_{2, \partial}$ which are solid cylinders and balls $B'_i$.
So (after smoothing out the edges of the chains) all components of $\mathcal{S}^*$ are solid cylinders.
We can hence apply Lemma \ref{Lem:Seifertfiberincompressible} to conclude that either there is a boundary torus $T \subset \partial N$ which bounds a solid torus in $M'$ on the same side as $N$, or every boundary torus of $N$ either bounds a solid torus on the side opposite to $N$ or it is incompressible in $M'$.
The latter case cannot occur, since $\Sigma^T_j$ is compressible and does not bound a solid torus.
In the first case we conclude by Lemma \ref{Lem:coverMbysth} that $T \not\subset \partial\mathcal{S}^*$, so $T = \Sigma^T_i$ for some $i$.
But this would imply that $\Sigma^T_j \subset \CC \subset S_i$ in contradiction to our assumptions.

We have shown so far that every $\Sigma^T_i \subset M \setminus ( \mathcal{S} \cup B'_1 \cup \ldots \cup B'_{m'})$ is incompressible in $M'$.

Now assume that there was some $B'_i$ which is not contained in $\mathcal{S}$.
Then $\partial B'_i \cap \mathcal{S} = \emptyset$ by Lemma \ref{Lem:coverMbysth}.
By maximality, $B'_i$ borders a component $\CC$ of $V_2$ or $V'_2$ on the other side.
$\CC \not\subset V'_2$, since otherwise $M'$ would be a spherical space form by Proposition \ref{Prop:MorganTianMain}(a4), so $\CC \subset V_2$.
Since $\CC$ has a spherical boundary component, $\CC \cap V_{2, \partial} \neq \emptyset$ and hence the $S^1$-fibers on $\CC \cap V_{2, \text{reg}}$ are contractible in $M'$.
So every boundary torus of $\CC$ must be compressible and hence contained in $\mathcal{S} \cup B'_1 \cup \ldots \cup B'_{m'}$ and, in case B, $\partial \CC \cap \partial M' = \emptyset$.
Since $\CC \not\subset \mathcal{S} \cup B'_1 \cup \ldots \cup B'_{m'}$, all boundary tori of $\CC$ bound solid tori on the other side.
Define $N$ and $\mathcal{S}^*$ in the same way as above.
Then $N$ carries an $S^1$-fibration and $\mathcal{S}^*$ is a disjoint union of solid tori.
So every boundary component of $N$ bounds a solid torus on the other side.
In particular by Lemma \ref{Lem:coverMbysth}, no boundary component of $N$ bounds a solid torus on the same side.
Hence by Lemma \ref{Lem:Seifertfiberincompressible}, the $S^1$-fibers on $N$ are incompressible in $M'$ in contradiction to our previous conclusion.

We conclude that $B'_1 \cup \ldots \cup B'_{m'} \subset \mathcal{S}$ and one direction of the first claim follows.
The other direction is clear since $\pi_1(S^1 \times D^2) \cong \IZ$.
It remains to show that $M' = \mathcal{G} \cup \mathcal{S}$.
Let $\CC$ be a component of $V_1$, $V_2$ or $V'_2$ whose interior is contained in $M' \setminus \mathcal{S}$ and assume that $\CC \not\subset \mathcal{G}$.
Observe that since all $\Sigma^S_i$ are contained in $\mathcal{S}$, $\partial \CC$ only consists of tori.

Consider first the case $\CC \subset V_1$.
If $\CC$ has no boundary, then it must be diffeomorphic to either $S^1 \times S^2$ or the union of $D^3$ and $D^3$, $D^3$ and $\IR P^3 \setminus B^3$, two copies of $\IR P^3 \setminus B^3$ along their boundary, two copies of $S^1 \times D^2$ along their boundary or to to the union of $\Klein^2 \td\times I$ and $S^1 \times D^2$ along their boundary.
The first four cases can be excluded immediately, since $M'$ is assumed to be irreducible and not a spherical space form and the last two cases are excluded by Lemma \ref{Lem:coverMbysth} and Lemma \ref{Lem:Kleinandsolidtorus} respectively.
So $\CC$ has a boundary and hence it is diffeomorphic to $T^2 \times I$, $\Klein^2 \td\times I$ or $S^1 \times D^2$.
The last case cannot occur, since otherwise $\CC \subset \mathcal{S}$.
In the other two cases, the boundary component(s) are compressible in $M'$ and hence not contained in $\partial M'$.
So they bound a component of $\mathcal{S}$ on the side opposite to $\CC$, i.e. $M' = \CC \cup \mathcal{S}$.
But this is again ruled out by Lemma \ref{Lem:coverMbysth} and Lemma \ref{Lem:Kleinandsolidtorus}.

Similarly, in the case $\CC \subset V'_2$, $\CC$ would be diffeomorphic to $\Klein^2 \td\times I$ or $S^1 \times D^2$.
The second case can not occur since otherwise $\CC \subset \mathcal{S}$ and in the first case, $M'$ would be the union of $\Klein^2 \td\times I$ with a solid torus which is ruled out by Lemma \ref{Lem:Kleinandsolidtorus}.

Finally, assume that $\CC \subset V_2$.
Since the generic fiber in $\CC$ is assumed to be compressible in $M'$, all boundary tori of $\CC$ are compressible in $M'$ and hence $\partial \CC$ is disjoint from $\partial M'$.
So $M' = \CC \cup \mathcal{S}$ which gives a contradiction already in case B.
In case A, define $N$ and $\mathcal{S}^*$ again in the same way as above.
$N$ carries an $S^1$-fibration, $\mathcal{S}^*$ is a disjoint union of solid tori and $M' = N \cup \mathcal{S} \cup \mathcal{S}^*$.
By Lemma \ref{Lem:Seifertfiberincompressible} and Lemma \ref{Lem:coverMbysth}, we conclude that the generic Seifert fiber of $N$ is in fact incompressible in $M'$ and hence $\CC \subset \mathcal{G}$.
\end{proof}

We now focus on the intersection $\mathcal{G} \cap \mathcal{S}$.
\begin{Lemma} \label{Lem:solidtoriinsideSS}
Consider the cases A--C.
In the cases A, B let $\mathcal{S}$ be the set defined in Lemma \ref{Lem:SStori} and in case C let $\mathcal{S} = M'$.
Then $\mathcal{G} \cap \mathcal{S} \subset V_2$ and every component $\CC$ of $V_2$, which is contained in $\mathcal{G} \cap \mathcal{S}$ is bounded by tori which bound solid tori inside $\mathcal{S}$.
\end{Lemma}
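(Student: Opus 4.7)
The strategy is to prove both claims by exploiting that each component of $\mathcal{S}$ is a solid torus with fundamental group $\IZ$, which cannot accommodate an incompressible torus or a proper closed 3-submanifold.

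For the first claim, suppose $\CC \subset \mathcal{G} \cap \mathcal{S}$ is not a component of $V_2$. Then by Definition \ref{Def:GG}, $\CC$ is either a non-closed $V_1$- or $V'_2$-piece (of shape $T^2 \times I$ or $\Klein^2 \widetilde{\times} I$) whose generic $T^2$-fiber is incompressible in $M'$, or a closed $V_1$-piece (a $T^2$-bundle over $S^1$, or the union of two copies of $\Klein^2 \widetilde{\times} I$). The closed case is immediately excluded: a connected closed 3-manifold inside a disjoint union of solid tori with non-empty boundary would have to coincide with one of those solid tori, which is absurd. In the remaining case $\CC$ lies in some solid torus component $S \subset \mathcal{S}$ carrying the $T^2$-fiber, and the factorization $\pi_1(T^2) \to \pi_1(S) \to \pi_1(M')$ has injective composite (by incompressibility in $M'$), so the first factor is injective, i.e.\ $\IZ^2$ injects into $\IZ$, a contradiction. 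Hence $\mathcal{G} \cap \mathcal{S} \subset V_2$.

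For the second claim, let $\CC \subset \mathcal{G} \cap \mathcal{S}$ be a component of $V_2$. Since $\CC$ is good, it contains no points of $V_{2, \partial}$ (a spherical boundary component would force $\CC$ to contain $V_{2, \partial}$-pieces inside which the $S^1$-fiber is null-homotopic, contradicting incompressibility), so $\partial \CC$ consists solely of tori. In cases A and B the set $\mathcal{S}$ is disjoint from $\partial M'$ (as shown in the proof of Lemma \ref{Lem:SStori}), while in case C any torus in $\partial M' = \partial \mathcal{S}$ trivially bounds $\mathcal{S} = M'$; so we reduce to boundary tori of $\CC$ of the form $T = \Sigma^T_i \subset \mathcal{S}$. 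If $T = \partial S_k$ for some solid-torus component $S_k \subset \mathcal{S}$, then $T$ bounds $S_k$ inside $\mathcal{S}$ and we are done. Otherwise $T \subset \Int S_k$ for some $k$, and since $H_2(S_k) = 0$ the torus $T$ separates $S_k$ into two closed pieces $A$ and $R$, with $\partial S_k$ lying entirely in one of them.

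To finish this remaining case, the loop theorem (Proposition \ref{Prop:incompressibleequiv}) applied to the non-injective map $\pi_1(T) \cong \IZ^2 \to \pi_1(S_k) \cong \IZ$ furnishes a compression disk $D \subset S_k$ for $T$. Compressing $T$ along $D$ produces a $2$-sphere $T' \subset S_k$, which by irreducibility of $S_k$ bounds a $3$-ball in $S_k$. The side of $T$ containing $\partial S_k$ retains that torus as a boundary component after the compression surgery, hence has at least two boundary components and cannot be this ball; the ball therefore lies on the opposite side of $T$. Re-attaching the tubular neighborhood $N(D)$—topologically a regular neighborhood of an unknotted properly embedded arc in a $3$-ball—converts that ball back into a solid torus, namely whichever of $A, R$ does not contain $\partial S_k$, and this solid torus sits inside $S_k \subset \mathcal{S}$. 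The main technical point is the bookkeeping across the compression surgery in the four configurations of positions of $\partial S_k$ and $D$ relative to the sides of $T$; in each configuration, irreducibility of $S_k$ and a count of boundary components force exactly one side to become a ball and uncompress to a solid torus inside $\mathcal{S}$, with no further input from the Seifert structure of $\CC$ required.
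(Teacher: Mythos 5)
Your argument for the first claim ($\mathcal{G} \cap \mathcal{S} \subset V_2$) and your reduction of the second claim to a single boundary torus $T = \Sigma^T_i \subset \Int S_k$ separating $S_k$ into $A \ni \partial S_k$ and $R$ are correct and match the paper's intent. The gap is in the final compression step, and it is exactly at the point where you claim \emph{no further input from the Seifert structure of $\CC$ is required}. The loop theorem gives you a compressing disk $D \subset S_k$ for $T$, but with no control over which side of $T$ it lies on. If $D \subset R$, then $\overline{R \setminus N(D)}$ is the ball and $R$ is a ball plus an orientable $1$-handle, hence a solid torus; this case is fine. But if $D \subset A$, the ball is $B = \overline{R \cup N(D)}$ and $R = B \setminus \Int N(D)$ is the complement of a regular neighborhood of a \emph{properly embedded arc} in $B$. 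Such an arc can be knotted, in which case $R$ is a nontrivial knot complement in a ball and is \emph{not} a solid torus; moreover $T = \partial R$ is then incompressible from the $R$ side, so you cannot escape by finding a second disk in $R$. Your parenthetical ``unknotted properly embedded arc'' is precisely the unproved assumption. No amount of boundary-component bookkeeping rules this configuration out: the boundary of a regular neighborhood of a knotted arc-plus-ball sitting in $\Int S_k$ is a compressible separating torus in $S_k$ that bounds a solid torus on \emph{neither} side inside $\mathcal{S}$.

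The missing ingredient is the goodness of $\CC$. Since $\CC \subset \mathcal{G}$, its generic Seifert fiber is incompressible in $M'$, and the fiber direction in the boundary torus $T$ is isotopic to a generic fiber. In the knotted configuration above, $T$ (and hence that fiber) is contained in an embedded $3$-ball, so the fiber would be null-homotopic in $M'$ --- a contradiction. This is how the paper proceeds: it invokes Lemma \ref{Lem:compressingtorus}, which says that either $T$ bounds a solid torus or $T \cup D$ lies in an embedded ball, and then kills the second alternative using the incompressibility of the $S^1$-fiber of $\CC$ in $T$ (finally placing the resulting solid torus inside $\mathcal{S}$ via Lemma \ref{Lem:coverMbysth}). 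You should restore this use of the fiber incompressibility; the purely topological route you propose is not sound.
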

\begin{proof}
It follows by the definition of $\mathcal{G}$ that $\mathcal{S}$ cannot contain any components of $V_1$ or $V'_2$.
Let now $\CC$ be a component of $V_2$ which is contained in $\mathcal{G} \cap \mathcal{S}$.
So $\CC \cap V_{2, \partial} = \emptyset$ and hence the boundary of $\CC$ is a disjoint union of tori which are of course compressible in $\mathcal{S}$.
Consider a component $T \subset \partial \CC$ and let $D \subset \mathcal{S}$ be a spanning disk for $T$.
Then by Lemma \ref{Lem:compressingtorus}, either $T$ bounds a solid torus, or $T \cup D$ is contained in an embedded ball.
But the latter case cannot occur, since the $S^1$-fiber direction of $\CC$ in $T$ is incompressible in $M'$.
So $T$ bounds a solid torus which by Lemma \ref{Lem:coverMbysth} has to lie inside $\mathcal{S}$.
\end{proof}

\begin{Definition} \label{Def:GGp}
Let the subset $\mathcal{G}' \subset M'$ to be the union of $\mathcal{G}$ with
\begin{enumerate}[label=(\arabic*)]
\item all components of $V_1$ which are diffeomorphic to $\Klein^2 \td{\times} I$ and adjacent to $\mathcal{G}$ or $\partial M'$,
\item all components of $V_1$ which are diffeomorphic to $T^2 \times I$ and which are adjacent to $\mathcal{G} \cup \partial M'$ on both sides,
\item all unions $\CC_1 \cup \CC_2$ where $\CC_1$ is a component of $V_1$ diffeomorphic to $T^2 \times I$ and adjacent to $\mathcal{G}$ or $\partial M'$ on one side and adjacent to $\CC_2$, which is a component of $V'_2$ diffeomorphic to $\Klein^2 \td{\times} I$, on the other side.
\end{enumerate}
\end{Definition}

\begin{Lemma} \label{Lem:GGpisGGoutside}
Consider the cases A--C.
Every component of $V_1$, $V_2$ or $V'_2$ which is contained in $\mathcal{G}'$ and meets the boundary $\partial \mathcal{G}'$, already belongs to $\mathcal{G}$ or it is adjacent to $\partial M'$.
In other words, $\partial \mathcal{G}' \subset \partial \mathcal{G} \cup \partial M'$.
In the cases A and B, the second option can be omitted.

In particular, any such component is either contained in $V_2$ if it is not adjacent to $\partial M'$ or it is contained in $V_1$ if it is adjacent to $\partial M'$.
\end{Lemma}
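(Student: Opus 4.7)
My plan is to reduce the statement to a finite enumeration over the three ``new'' types of components (or unions of components) listed in Definition \ref{Def:GGp} that appear in $\mathcal{G}' \setminus \mathcal{G}$. The key observation to exploit is that each of these pieces has only one or two boundary tori, and that in every case at least one side is by construction already glued to another piece that belongs to $\mathcal{G}'$; so when such a piece touches $\partial \mathcal{G}'$, the touching is forced onto the remaining boundary torus, which will either give the ``$\mathcal{G}$-option'' or an adjacency with $\partial M'$.

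First I would take a component $\CC$ of $V_1$, $V_2$ or $V'_2$ with $\CC \subset \mathcal{G}'$ and $\CC \cap \partial \mathcal{G}' \neq \emptyset$, and assume $\CC \not\subset \mathcal{G}$ (otherwise there is nothing to show). By Definition \ref{Def:GGp} there are then four possibilities: (i) $\CC \in V_1$, $\CC \approx \Klein^2 \td\times I$, adjacent to $\mathcal{G}$ or $\partial M'$; (ii) $\CC \in V_1$, $\CC \approx T^2 \times I$, adjacent to $\mathcal{G} \cup \partial M'$ on both sides; (iii) $\CC = \CC_1 \in V_1$, $\CC_1 \approx T^2 \times I$, adjacent on one side to $\mathcal{G}$ or $\partial M'$ and on the other side to a component $\CC_2 \in V'_2$ with $\CC_2 \approx \Klein^2 \td\times I$; or (iv) $\CC$ is such a $\CC_2$. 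In (iv) the unique boundary torus of $\CC_2$ is glued to $\CC_1 \subset \mathcal{G}'$ and hence lies in the interior of $\mathcal{G}'$, so (iv) cannot occur. In (i), an $\mathcal{G}$-adjacency likewise makes the unique boundary interior to $\mathcal{G}'$, so only $\partial M'$-adjacency remains. In (ii) and (iii) the side(s) adjacent to $\mathcal{G}$ or (in (iii)) to $\CC_2$ are automatically interior, so for $\CC$ to hit $\partial \mathcal{G}'$ the remaining boundary torus must lie in $\partial M'$. This would establish $\partial \mathcal{G}' \subset \partial \mathcal{G} \cup \partial M'$.

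The main point of the improvement in cases A and B is then the step ruling out $\partial M'$-adjacency for pieces in $\mathcal{G}' \setminus \mathcal{G}$; case A is vacuous, and in case B every component of $\partial M'$ is an incompressible $2$-torus. I plan to use Proposition \ref{Prop:MorganTianMain}(c1)$(\alpha)$ to produce in each of the configurations (i)--(iii) a subregion $U \approx T^2 \times I$ inside $\CC$ (respectively inside $\CC_1$) that has the $\partial M'$-boundary torus of $\CC$ (respectively of $\CC_1$, so of $\CC_1 \cup \CC_2$) as one of its faces; the generic $T^2$-fiber of $U$ is isotopic in $U$ to that $\partial M'$-torus, hence incompressible in $M'$. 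By Definition \ref{Def:GG} this places $\CC$ itself (and in (iii) also $\CC_2$, since its $T^2$-fiber is isotopic through $\CC_1$ to the same $\partial M'$-torus) inside $\mathcal{G}$, contradicting $\CC \in \mathcal{G}' \setminus \mathcal{G}$. Finally, for the ``in particular'' statement I would argue: if $\CC$ is not adjacent to $\partial M'$, the first part gives $\CC \subset \mathcal{G}$ and Lemma \ref{Lem:bdrygoodisV2} yields $\CC \subset V_2$; if $\CC$ is adjacent to $\partial M'$, then either $\CC \subset \mathcal{G}$, in which case Lemma \ref{Lem:bdrygoodisV2} gives $\CC \subset V_1$, or $\CC$ is of type (i), (ii) or (iii) above, all of which already place $\CC$ in $V_1$.
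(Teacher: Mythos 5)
Your proposal is correct and follows essentially the same route as the paper, which disposes of this lemma in one line as a direct consequence of Definition \ref{Def:GGp} and Lemma \ref{Lem:bdrygoodisV2}; your case enumeration of the pieces of $\mathcal{G}'\setminus\mathcal{G}$, the observation that their $\mathcal{G}$- and $\CC_2$-faces are interior to $\mathcal{G}'$, and the use of incompressibility of $\partial M'$ in case B to absorb the boundary-adjacent pieces into $\mathcal{G}$ just spell out what the paper leaves implicit.
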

\begin{proof}
This is a direct consequence of the definition of $\mathcal{G}'$ and Lemma \ref{Lem:bdrygoodisV2}.
\end{proof}

We can now state the main result of this subsection.

\begin{Proposition} \label{Prop:GGpp}
Consider the cases A--C.
There is a unique subset $\mathcal{G}'' \subset \mathcal{G}'$ which is the union of certain components of $\mathcal{G}'$ such that
\begin{enumerate}[label=(\alph*)]
\item In cases A and B, every connected component of $M'$ contains exactly one component of $\mathcal{G}''$.
In case C, $\mathcal{G}''$ is connected and possibly empty.
\item Let $\mathcal{S}''$ be the closure of $M' \setminus \mathcal{G}''$.
Then $\mathcal{S}''$ is a disjoint union of finitely many solid tori ($\approx S^1 \times D^2$) each of which is incompressible in $M'$
\end{enumerate}
Moreover, 
\begin{enumerate}[label=(\alph*), start=3]
\item Every component of $V_1$, $V_2$ or $V'_2$ which is contained in $\mathcal{G}''$ and shares a boundary component with $\mathcal{G}''$ is contained in $\mathcal{G}$.
If it is adjacent to $\partial M'$, then it is also contained in $V_1$ and does not intersect $\partial\mathcal{G}'' \setminus \partial M'$ and otherwise it is contained in $V_2$.
\item $\mathcal{G} \setminus \mathcal{G}'' \subset V_2$.
\item For every component $\CC''$ of $\mathcal{S}''$ there is a component $\CC$ of $V_1$ which is contained in $\CC''$ and adjacent to $\partial \CC''$.
Moreover, $\CC$ is diffeomorphic to either $S^1 \times D^2$ (and hence $\CC'' = \CC$) or $T^2 \times I$.
In the latter case, the component $\CC'$ of $V_2$ or $V'_2$, which is adjacent to $\CC$ on the other side, is not contained in $\mathcal{G}$.
More precisely, if $\CC' \subset V'_2$, then $\CC' \approx S^1 \times D^2$ and if $\CC' \subset V_2$, then the generic Seifert fiber of $\CC'$ is contractible in $\CC''$.
\end{enumerate}
\end{Proposition}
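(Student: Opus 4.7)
My plan is to construct $\mathcal{G}''$ by removing from $\mathcal{G}'$ those connected components that sit inside the solid tori of the set $\mathcal{S}$ from Lemma \ref{Lem:SStori}, and then to verify properties (a)--(e) using the structural lemmas established earlier in this subsection. I would work one connected component $N$ of $M'$ at a time: in cases A and B, take $\mathcal{S}_N := \mathcal{S} \cap N$, while in case C, take $\mathcal{S}_N := M'$. Setting $\mathcal{G}_N := \mathcal{G} \cap N$ and $\mathcal{G}'_N := \mathcal{G}' \cap N$, Lemma \ref{Lem:SStori} gives $N = \mathcal{G}_N \cup \mathcal{S}_N$ with $\mathcal{S}_N$ a disjoint union of solid tori. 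I then define $\mathcal{G}''_N$ to be the union of those connected components of $\mathcal{G}'_N$ which are not entirely contained in the interior of any solid torus component of $\mathcal{S}_N$.

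For existence in (a), $\mathcal{G}_N$ is non-empty (otherwise $N = \mathcal{S}_N$ would be a single solid torus, contradicting either irreducibility together with the exclusion of spherical space forms in case A, or the incompressibility of $\partial N$ in case B), and at least one component of $\mathcal{G}_N$ sits outside every solid torus of $\mathcal{S}_N$. Uniqueness is the more delicate point and forms the main obstacle: if two distinct components $\CC, \CC'$ of $\mathcal{G}'_N$ both escape every solid torus of $\mathcal{S}_N$, then since $N \setminus \mathcal{G}'_N \subset \mathcal{S}_N$, any path in $N$ from $\CC$ to $\CC'$ traverses a chain of solid tori. Following such a chain and using that each element of $\mathcal{S}_N$ has a single boundary torus, together with Lemma \ref{Lem:coverMbysth} to rule out degenerate closed unions of solid tori, one deduces that $\CC$ or $\CC'$ must in fact lie inside some solid torus of $\mathcal{S}_N$, a contradiction.

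For (b), set $\mathcal{S}'' := \ov{M' \setminus \mathcal{G}''}$. A component of $\mathcal{S}''$ is obtained from a component $S$ of $\mathcal{S}$ by absorbing all the connected components of $\mathcal{G}'$ that lie inside $\Int S$. By Lemma \ref{Lem:solidtoriinsideSS}, each absorbed piece is a $V_2$ component bounded by tori that already bound sub-solid-tori of $S$, so the resulting object is again a solid torus. Incompressibility of each component of $\mathcal{S}''$ in $M'$ follows from the loop theorem combined with irreducibility of $M'$: a compressible solid torus would be contained in an embedded $3$-ball, and Lemma \ref{Lem:coverMbysth}(c) together with the exclusion of spherical space forms rules this out in cases A and B, while in case C only the admissible option $\mathcal{G}'' = \emptyset$, $\mathcal{S}'' = M'$ remains.

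Finally, (c)--(e) are deduced from the construction together with the earlier lemmas. (d) is immediate: each component of $\mathcal{G}$ removed from $\mathcal{G}''$ sits inside $\mathcal{S}$, and Lemma \ref{Lem:solidtoriinsideSS} places it in $V_2$. For (c), Lemma \ref{Lem:GGpisGGoutside} places $\partial \mathcal{G}''$ inside $\partial \mathcal{G} \cup \partial M'$, and then Lemma \ref{Lem:bdrygoodisV2} gives the $V_1$ versus $V_2$ dichotomy according to whether the boundary piece is or is not adjacent to $\partial M'$. For (e), I would inspect the Morgan--Tian decomposition of $\CC''$ near $\partial \CC''$: the piece adjacent to $\partial \CC''$ from inside must be a $V_1$ component (by Lemma \ref{Lem:GGpisGGoutside} and Lemma \ref{Lem:bdrygoodisV2}), and by the topological list in Proposition \ref{Prop:MorganTianMain}(a3) it is either $S^1 \times D^2$ (so $\CC'' = \CC$) or $T^2 \times I$, in which case the adjacent $V_2$- or $V'_2$-piece must have the claimed compressibility properties, for otherwise $\mathcal{G}''$ would extend into $\CC''$. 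The main obstacle throughout is the uniqueness in (a), which rests on a careful topological analysis of how chains of solid tori and $\mathcal{G}'$-pieces fit together inside each connected component of $M'$.
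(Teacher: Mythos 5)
Your construction of $\mathcal{G}''$ is the same as the paper's: the paper takes $\mathcal{G}''$ to be the union of the components of $\mathcal{G}'$ meeting $M'\setminus\mathcal{S}$ (and, in case C, the component of $\mathcal{G}'$ adjacent to $\partial M'$, if any), which coincides with your ``components of $\mathcal{G}'$ not trapped in a solid torus of $\mathcal{S}$''; assertions (c)--(e) are then read off from Lemmas \ref{Lem:bdrygoodisV2}, \ref{Lem:solidtoriinsideSS} and \ref{Lem:GGpisGGoutside} exactly as you propose. For the uniqueness in (a), which you single out as the main obstacle, you can avoid the chain-of-solid-tori analysis entirely: $M'\setminus\mathcal{S}$ is connected in each component of $M'$ (removing disjoint solid tori, each with connected boundary, does not disconnect), it is contained in $\mathcal{G}\subset\mathcal{G}'$ by Lemma \ref{Lem:SStori}, and hence lies in a single component of $\mathcal{G}'$ — this is the paper's one-line argument.

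Two points in your treatment of (b) do not work as written. First, the set relation is backwards: since $M'\setminus\mathcal{S}\subset\mathcal{G}''$, one has $\mathcal{S}''\subset\mathcal{S}$, so a component of $\mathcal{S}''$ is not a solid torus of $\mathcal{S}$ with trapped $\mathcal{G}'$-pieces ``absorbed'' — those pieces already lie in $M'\setminus\mathcal{G}''$ and are automatically part of $\mathcal{S}''$ — but rather a component of what remains of such a solid torus after \emph{deleting} the incursions of $\mathcal{G}''$ into it; that these remnants are again solid tori is what Lemma \ref{Lem:solidtoriinsideSS} supplies, since the deleted good $V_2$-pieces are bounded by tori bounding solid tori inside $\mathcal{S}$. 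Second, and more substantively, your incompressibility argument fails: knowing that a compressible solid torus $\CC''$ lies in an embedded $3$-ball does not place you in the situation of Lemma \ref{Lem:coverMbysth}(c), because the complement of $\CC''$ in $M'$ need not be a solid torus or a ball, so no contradiction arises from that lemma. The correct reason — the one the paper gives — is that $\partial\CC''$ is adjacent on the $\mathcal{G}''$-side to a good component of $V_2$ (unless $\CC''=M'$ in case C, where there is nothing to prove), so $\partial\CC''$ carries the generic Seifert fiber of that component, which is incompressible in $M'$ by Definition \ref{Def:GG}; this curve is a non-zero multiple of the core of $\CC''$ in $\pi_1(\CC'')\cong\IZ$, so the core has infinite order in $\pi_1(M')$ and $\CC''$ is incompressible. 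This is a local fix, but as cited your step is not a valid deduction.
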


\begin{proof}
In the cases A and B observe that $M' = \mathcal{G} \cup \mathcal{S}$ and every component of $M'$ contains exactly one component of $M' \setminus \mathcal{S}$.
So let $\mathcal{G}''$ be the union of all components of $\mathcal{G}'$ which share points with $M' \setminus \mathcal{S}$.
In case C, let $\mathcal{G}''$ be the component of $\mathcal{G}'$ which is adjacent to $\partial M'$ if it exists.
This establishes assertion (a).

Assertion (b)--(d) follow from Lemmas \ref{Lem:bdrygoodisV2}, \ref{Lem:solidtoriinsideSS} and \ref{Lem:GGpisGGoutside}:
For assertion (b) observe that the solid tori of $\mathcal{S}''$ are incompressible $M'$, because they are adjacent to Seifert fibrations with incompressible $S^1$-fiber or equal to $M'$.
And for assertion (d) observe that $\mathcal{G} \setminus \mathcal{G}'' \subset \mathcal{S}$.

For assertion (e) observe that $\CC''$ is either adjacent to $\partial M'$ or to $\mathcal{G}''$ and hence to $V_2$.
So the component $\CC$ of $V_1$, $V_2$ or $V'_2$, which is adjacent to $\partial \CC''$ inside $\CC''$, is contained in $V_1$.
Since $\CC$ is contained in a solid torus, it cannot be diffeomorphic to $\Klein^2 \td\times I$.
So it is diffeomorphic to $S^1 \times D^2$ or $T^2 \times I$.
The rest follows from the definition of $\mathcal{G}'$.
Observe that in the case $\CC' \subset V_2$, the generic Seifert fiber of $\CC'$ is compressible in $M'$ since $\CC' \not\subset \mathcal{G}$.
Since $\CC''$ is incompressible in $M'$, we conclude that the generic Seifert fiber of $\CC'$ is even contractible in $\mathcal{S}''$.
\end{proof}

\section{The main tools} \label{sec:maintools}
In the following we derive more specialized estimates using Perelman's methods and results presented in section \ref{sec:Perelman}.
Those statements will be used in section \ref{subsec:firstcurvbounds}.

\subsection{The goodness property} \label{subsec:goodness}
The following notion will become important for us.
It is inspired by Lemma \ref{Lem:unwrapfibration}.

\begin{Definition}[goodness] \label{Def:goodness}
Let $(M,g)$ be a complete Riemannian 3-manifold, $r_0 > 0$ and consider the function $\rho_{r_0} : M \to (0, \infty)$ from Definition \ref{Def:rhoscale}.
Let $w > 0$ be a constant and $x \in M$ be a point.
\begin{enumerate}[label=(\arabic*)]
\item Let $\td{x}$ be a lift of $x$ in the universal cover $\td{M}$ of $M$.
Then $x \in M$ is called \emph{$w$-good at scale $r_0$} if $\vol B(\td{x}, \rho_{r_0}(x)) > w \rho_{r_0}^3(x)$.
Here $B(\td{x}, \rho_{r_0}(x))$ denotes the $\rho_{r_0}(x)$-ball in the universal cover $\td{M}$ of $M$.
\item Let $U \subset M$ be an open subset and assume that $x \in U$.
Assume now that $\td{x}$ is a lift of $x$ in the universal cover $\td{U}$ of $U$.
Then $x$ is called \emph{$w$-good at scale $r_0$ relatively to $U$} if either $B(x, \rho_{r_0}(x)) \not\subset U$ or $\vol B(\td{x}, \rho_{r_0}(x)) > w \rho_{r_0}^3(x)$ where now $B(\td{x}, \rho_{r_0}(x))$ denotes the $\rho_{r_0}(x)$-ball in the universal cover $\td{U}$ of $U$.
\item The point $x$ is called \emph{locally $w$-good at scale $r_0$} if it is $w$-good at scale $r_0$ relatively to $B(x, \rho_{r_0}(x))$.
\end{enumerate}
\end{Definition}

Observe that the choice of the lift $\td{x}$ of $x$ is not essential.
We remark that the property ``$w$-good'' implies the properties ``$w$-good relatively to a subset $U$'' and ``locally $w$-good''.
The opposite implication however is generally false: Consider for example a smoothly embedded solid torus $S \subset M$, $S \approx S^1 \times D^2$ and a collar neighborhood $U$ of $\partial S$ in $S$, i.e. $U \subset S$, $U \approx T^2 \times (0,100)$ and $\partial S \subset \partial U$, such that the geometry on $U$ is close to product geometry $T^2 \times (0,100)$ in which the $T^2$-factor is very small.
Then for some $w > 0$ all points of $U$ are $w$-good relatively to $U$ as well as locally $w$-good, but none of the points of $U$ are $w$-good.

We remark that by volume comparison, there is a $\td{c} > 0$ such that if $x \in M$ is $w$-good at scale $r_0 > 0$ for some $w > 0$, then $x$ is also $\td{c} w$-good at any scale $r'_0 \leq r_0$.

\subsection{Universal covers of Ricci flows with surgery}
In the following we will need to carry out Perelman's methods in the universal covering flow $\td\MM$ of a given Ricci flow with surgery $\MM$.
In the case in which $\MM$ is non-singular, $\td\MM$ is just the pullback of the time-dependent metric onto the universal cover of the underlying manifold.
In the general case, the existence of $\td\MM$ is established by the following Lemma.

\begin{Lemma} \label{Lem:tdMM}
Let $\MM$ be a Ricci flow with surgery on a time-interval $I \subset [0, \infty)$ which is performed by precise cutoff and assume that if $\MM$ has surgeries, then there is a minimal surgery time.
Then there is a Ricci flow with surgery $\td\MM$ (called the \emph{universal covering flow}) which is performed by precise cutoff and a family of Riemannian coverings $\pi_t : \td\MM(t) \to \MM(t)$ which are locally constant in time away from surgery points such that the components of all time-slices $\td\MM(t)$ are all simply connected (i.e. $\td\MM(t)$ is the disjoint union of components which are isometric to the universal cover of $\MM(t)$).

Moreover, if $\MM$ is performed by $\delta(t)$-cutoff for some $\delta : I \to (0, \infty)$, then so is $\td\MM$.
If all time-slices of $\MM$ are complete, then the same is true for $\td\MM$.
If the curvature on $\MM$ is bounded on compact time-intervals which don't contain surgery times, then this property also holds on $\td\MM$.
\end{Lemma}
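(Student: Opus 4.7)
My plan is to build $\td\MM$ inductively over the surgery times $T^1 < T^2 < \ldots$, which form a discrete subset of $I$ admitting a minimum by hypothesis.

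On the initial smooth subinterval, I let $\td\MM$ be the pullback Ricci flow under the universal covering map $\td{M^1} \to M^1$ and take $\pi_t$ to be the covering map, which is manifestly constant in $t$ there. Inductively, suppose $\td\MM$ has been built up to (but not including) the surgery time $T^i$, in such a way that the components of every time-slice $\td\MM(t)$ are universal covers of the components of $\MM(t)$. Then the pullback metric extends smoothly as $t \nearrow T^i$ over $\pi^{-1}(\Omega^i) \subset \td\MM(T^{i-})$. To bridge the surgery, set $\td{U^i_-} := \pi^{-1}(U^i_-)$ and construct $\td{M^{i+1}}$ from $\td{U^i_-}$ by attaching, for each post-surgery $3$-disk $D^i_j$ and each connected component $\td{S}$ of the preimage of $\partial D^i_j$ in $\partial \td{U^i_-}$, one fresh isometric copy of $D^i_j$ (equipped with its post-surgery metric) along $\td{S}$. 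The pullback metric on $\td{U^i_-}$ and the cap metrics combine to a smooth Riemannian metric on $\td{M^{i+1}}$; there is an evident local isometry $\pi_{T^i} : \td{M^{i+1}} \to M^{i+1}$ which restricts to the previous covering on $\td{U^i_-}$ and sends each lifted cap identically onto $D^i_j$. Setting $\td{U^i_+}$ to be the image of $\td{U^i_-}$ in $\td{M^{i+1}}$ and $\td\Phi^i$ to be the tautological isometry gives the compatibility $\pi \circ \td\Phi^i = \Phi^i \circ \pi$, and I extend $\td\MM$ over $I^{i+1}$ again as a pullback Ricci flow.

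The central step to verify is that every component of $\td{M^{i+1}}$ is simply connected. Every component $Y$ of $\td\MM(T^{i-})$ is simply connected by the inductive hypothesis, and the removed open region $Y \setminus \td{U^i_-}$ has topological boundary in $Y$ consisting of $2$-spheres, namely the lifts of the boundary spheres of $U^i_-$, which are centers of strong $\delta$-necks by Definition \ref{Def:precisecutoff}(4). A standard innermost-disk argument then shows that each component of $Y \cap \td{U^i_-}$ is already simply connected: given a loop in such a component, fill it by a $2$-disk in $Y$, make the disk transverse to the bounding spheres, and iteratively replace innermost subdisks that fall outside $\td{U^i_-}$ by disks on the corresponding bounding sphere (which exist because $S^2$ is simply connected), until the entire disk lies inside the component. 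Attaching the simply connected surgery $3$-disks along these $2$-spheres then also preserves simple connectivity by Seifert--Van Kampen. Hence each component of $\td{M^{i+1}}$ is simply connected, and therefore it is the universal cover of the component of $M^{i+1}$ that it covers, closing the induction.

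The additional claims all follow from the locality of the conditions defining a $\delta(t)$-precise cutoff Ricci flow together with the fact that $\pi_t$ is a local isometry: items (1)--(5) of Definition \ref{Def:precisecutoff} (the $t^{-1}$-positivity of curvature, the precise-cutoff estimate on caps, the strong $\delta$-neck condition at pre-surgery boundaries, and the trivial-surgery Lipschitz comparison) transfer pointwise from $\MM$ to $\td\MM$, while item (6) is settled by a case check showing that each connected cover of a manifold on the allowed list is again on the list (for instance the universal cover of $\IR P^3 \setminus B^3$ is $S^2 \times I$, the universal cover of $\IR P^3 \# \IR P^3$ is $S^2 \times \IR$, spherical space forms cover spherical space forms, and so on). Completeness of time-slices and boundedness of curvature on compact subintervals free of surgeries are also local and hence inherited. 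The main technical obstacle I foresee is the simple-connectivity step of the second paragraph; the rest is careful bookkeeping.
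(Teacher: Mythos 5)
Your construction is essentially the paper's, organized in the opposite direction. The paper builds the post\nobreakdash-surgery slice by taking one copy of (a component of) the universal cover $\td{M}^{i+1}_0$ of $M^{i+1}$ for each component of $\td{U}^i_-$, so simple connectivity is automatic, and the work goes into producing the gluing isometries $\td\Phi^i$ by lifting $(\Phi^i)^{-1}$: existence of the lift uses that the components of the preimage of $U^i_+$ in $\td{M}^{i+1}_0$ are simply connected (their complement is a disjoint union of $3$-disks), and injectivity of the lift uses that $\pi_1(U^i_-) \to \pi_1(M^i)$ is injective because $U^i_-$ is bounded by $2$-spheres. You instead assemble the cover by hand, attaching one cap per lifted boundary sphere, so the covering map and the gluing data come for free, and the work goes into verifying simple connectivity; your innermost-disk argument plus van Kampen is precisely the same topological input (subsets bounded by $2$-spheres are $\pi_1$-injective, equivalently their preimages in the universal cover have simply connected components). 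The two proofs are interchangeable, and your remaining verifications (the covering is a local isometry, Definition \ref{Def:precisecutoff}(1)--(6) transfer, covers of the manifolds in item (6) stay on the list) are adequate at the level of a plan.

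One step is not justified as written: you assert that every component $Y$ of $\td\MM(T^{i-})$ is simply connected ``by the inductive hypothesis.'' The presurgery slice is the preimage of $\Omega^i$, and the induction only controls the honest time-slices $\td\MM(t) = \td{M}^i$ for $t < T^i$. A component of $(\pi^i)^{-1}(\Omega^i)$ is the cover of a component of $\Omega^i$ corresponding to $\ker\bigl(\pi_1(\Omega^i) \to \pi_1(M^i)\bigr)$, so it is simply connected only if that map is injective -- which you have not shown and which is not part of the inductive hypothesis. The repair is immediate: run the identical innermost-disk argument with ambient space a component of $\td{M}^i$ (simply connected by induction) and with the surface of intersection being the frontier of $\td{U}^i_-$ in $\td{M}^i$, which is still a disjoint union of embedded $2$-spheres since $U^i_-$ is bounded by $2$-spheres in $M^i$ and spheres lift to spheres. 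The conclusion that each component of $\td{U}^i_-$ is simply connected is unchanged, and with that substitution the proof closes.
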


\begin{proof}
Recall that $\MM = ((T^i)_i, (M^i \times I^i, g_t^i)_i, (\Omega^i)_i, (U^i_{\pm})_i, (\Phi^i)_i)$ where each $g^i_t$ is a Ricci flow on the $3$-manifold $M^i$ defined for times $I^i$.
We can lift each of these flows to the universal cover $\widetilde{M}_0^i$ of $M^i$ via the natural projections $\pi^i_0 : \td{M}^i_0 \to M^i$ and obtain families of metrics $\widetilde{g}^i_{0, t}$ which still satisfy the Ricci flow equation.

We will now assemble the flows $(\widetilde{M}_0^i \times I^i, \widetilde{g}_{0, t}^i)$ to a Ricci flow with surgery $\td\MM$.
Each time-slice $\td\MM (t)$ of the resulting flow will be composed of a (possibly infinite) number of copies of components of $(\td{M}_0^i, \td{g}_{0, t}^i)$ if $t \in I^i$.
If there are no surgery times in $I$, i.e. $I = I^1$, then we set $\MM = (\cdot, (\td{M}_0^1, \td{g}_{0, t}^1), \cdot, \cdot, \cdot)$.
For any $i$ let $\MM^i$ be the restriction of $\MM$ to the time-interval $I \cap (-\infty, T^i)$ and if $T^{i-1}$ is the last surgery time, set $\MM^i = \MM$.
By induction, we can assume that $\td\MM^i$ already exists and we only need to prove that we can extend this flow to a Ricci flow with surgery $\td\MM^{i+1}$ which is the universal covering flow of $\MM^{i+1}$.
In order to do this, it suffices to construct the objects $(\td{M}^{i+1} \times I^{i+1}, \td{g}^{i+1}_t)$, $\Omega^i$, $U^i_\pm$, $\Phi^i$ and the projection $\pi^{i+1} : \td{M}^{i+1} \to M^{i+1}$.

Fix $i$ and consider $(\td{M}^i \times I^i, \td{g}^i_t)$ from $\td{\MM}^i$ and the projection $\pi^i : \td{M}^i \to M^i$ corresponding to $\pi_t$.
Denote by $\widetilde{\Omega}^i \subset \td{M}^i$ the preimage of $\Omega^i$ and by $\td{U}^i_- \subset \td{\Omega}^i$ the preimage of $U^i_-$ under $\pi^i$ and let $\td{U}^i_{0, +} \subset \td{M}^{i+1}_0$ be the preimage of $U^i_+$ under $\pi^{i+1}_0$.
Observe that the subset $U^i_- \subset M^i$ is bounded by pairwise disjoint, embedded $2$-spheres.
So for every point $p \in U^i_-$, the natural map $\pi_1(U^i_-, p) \to \pi_1(M^i,p)$ is an injection.
Consider now the set $\widetilde{U}^i_{0, +} \subset \widetilde{M}^{i+1}_0$.
The complement of this subset is still a collection of pairwise disjoint, embedded $3$-disks and hence each component of $\widetilde{U}^i_{0, +}$ is simply connected.
Via $(\Phi^i)^{-1} : U^i_+ \to U^i_-$ we find a covering map $\widetilde{U}^i_{0, +} \to U^i_+ \to U^i_- \subset M^i$.
Since every component $\CC_+$ of $\td{U}^i_{0, +}$ is simply-connected, we find a lift $\phi^i_{\CC_+, \CC_-} : \widetilde{U}^i_{0, +} \to \widetilde{M}^i$ for every component $\CC_-$ of $\td{U}^i_-$ with $\pi^{i+1}_0 (\CC_+) = \Phi^i (\pi^i (\CC_-))$ such that $\phi^i (\CC_+) = \CC_-$.
Using the fact that $U^i_- \to M^i$ is $\pi_1$-injective, we conclude that $\phi^i_{\CC_+, \CC_-}$ is injective.

Choose for every component $\CC_-$ of $\td{U}^i_-$ the (unique) component $\CC_+ = \CC_+(\CC_-)$ of $\td{U}^i_{0, +}$ such that $\pi^{i+1}_0 (\CC_+) = \Phi^i (\pi^i (\CC_-))$.
Let $\td{M}^{i+1}_{0}(\CC_+)$ be the component of $\td{M}^{i+1}_0$ which contains $\CC_+$.
Observe that $\CC_+$ is the only component of $\td{U}^i_{0, +}$ in $\td{M}^{i+1}_{0}(\CC_+)$.
Now let $\td{M}^{i+1}$ the disjoint union of all components $\td{M}^{i+1}_0 (\CC_+(\CC_-))$ where $\CC_-$ runs through all components of $\td{U}^i_-$.
The set $U^i_+$ is the disjoint union of all the $\CC_+(\CC_-)$ and the diffeomorphism $\Phi^i$ is defined to be the inverse of $\phi^i_{\CC_+(\CC_-), \CC_+}$ on each $\CC_-$.
This finishes the proof.
\end{proof}

\subsection{Quotients of necks}
Before we discuss the main tools, we need to establish the following Lemma which asserts that sufficiently precise $\varepsilon$-necks cannot have arbitrarily small quotients.

\begin{Lemma} \label{Lem:neckhasfewquotients}
There are constants $\td\varepsilon_0, \td{w}_0 > 0$ such that:
Let $(M, g)$ be a Riemannian manifold, $\varepsilon \leq \td\varepsilon_0$ and assume that $x_0 \in M$ is a center of an $\varepsilon$-neck and assume that $r < |{\Rm}|^{-1/2} (x)$.
Consider a local Riemannian covering $\pi : (M, g) \to (M', g')$ (i.e. $\pi^* g' = g$) such that $\pi(M) \subset M'$ is not closed and let $x'_0 = \pi (x_0) \in M'$.
Then $\vol B(x', r) > \td{w}_0 r^3$.
\end{Lemma}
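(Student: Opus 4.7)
The plan is to argue by contradiction: assume $\vol B(x_0',r) < \td w_0 r^3$ for a small universal $\td w_0 > 0$ to be chosen. Since the neck chart provides a diffeomorphism $\Phi$ under which $\lambda^{-2}\Phi^* g$ is $C^{[\varepsilon^{-1}]}$-close to $g_{S^2\times\IR}$, and since $r < |{\Rm}|^{-1/2}(x_0)$ is comparable to $\lambda$, volume comparison in the model gives $\vol B(x_0,r) > c_0 r^3$ for a universal $c_0 > 0$ whenever $\td\varepsilon_0$ is small. Thus if $\td w_0 \ll c_0$, the map $\pi$ is far from injective on $B(x_0,r)$. To exploit this, lift via the universal cover: the neck $M \approx S^2 \times I$ is simply connected, so $\pi$ factors as $M \xrightarrow{\tilde\pi} \tilde{M'} \xrightarrow{p} M'$, where $\tilde\pi$ is a diffeomorphism onto the open subset $\tilde\pi(M) \subset \tilde{M'}$ and $p$ is the universal covering with deck group $\Gamma = \pi_1(M')$. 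Each coincidence $\pi(z_1) = \pi(z_2)$ then corresponds to a unique $\gamma \in \Gamma$ with $\gamma \tilde\pi(z_1) = \tilde\pi(z_2)$, and $\gamma$ acts by a global isometry of $\tilde{M'}$.

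Let $\Gamma_0 \subset \Gamma$ be the set of $\gamma$ for which $\gamma\tilde\pi(M) \cap \tilde\pi(M)$ meets a neighborhood of $\tilde\pi(x_0)$ of radius $2r$. Transporting through $\Phi$, the pulled-back metric on $\tilde\pi(M)$ (identified with $M$) is $C^{[\varepsilon^{-1}]}$-close to the product, so each such $\gamma$ restricts on this neighborhood to a local isometry that is $o_{\varepsilon}(1)$-close to an isometry of the model $S^2 \times \IR$; write its model counterpart as $(\alpha_\gamma, \tau_{d_\gamma})$ with $\alpha_\gamma \in O(3)$ and $\tau_{d_\gamma}$ a translation (or reflection) of $\IR$ by $d_\gamma$, where $|d_\gamma| < 3r$. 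The argument now splits on whether some $d_\gamma \neq 0$ can occur. If so, then since the neck extends over an $\IR$-interval of length $\sim \lambda/\varepsilon \gg |d_\gamma|$ for $\td\varepsilon_0$ small, the iterates of $\gamma$ (which lie in $\Gamma_0$ up to re-choosing the base point along the $\IR$ direction) translate $\tilde\pi(M)$ into a configuration whose union covers a full fundamental domain of $\langle\tau_{d_\gamma}\rangle$. Projecting by $p$ shows that $\pi(M)$ contains a closed $S^2$-bundle over the circle $\IR/\langle d_\gamma\rangle$, which is compact and hence both open and closed in its component of $M'$. This contradicts the hypothesis that $\pi(M) \subset M'$ is not closed.

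Hence every $\gamma \in \Gamma_0$ has $d_\gamma = 0$, and $\Gamma_0$ is close to a finite subgroup of $O(3) \subset \Isom(S^2 \times \IR)$ acting freely on $S^2$; the only nontrivial such subgroup is $\{\pm I\}$ (no nontrivial rotation in $SO(3)$ is fixed-point-free on $S^2$, and among the order-$2$ rotoreflections only the antipodal map is free, while higher-order rotoreflections have powers with fixed points). Therefore $|\Gamma_0|\le 2$, so $\pi|_{B(x_0,r)}$ has multiplicity at most $2$, whence $\vol B(x_0',r) \ge \tfrac12 \vol B(x_0,r) \ge \tfrac{c_0}{2}r^3$, contradicting our assumption once $\td w_0 < c_0/2$. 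The main obstacle is the middle step: rigorously upgrading the \emph{local} coincidences of $\pi$ into \emph{global} isometries of $\tilde{M'}$ that are close to honest isometries of the model. This is exactly what the passage to $\tilde{M'}$ accomplishes, combined with the $C^{[\varepsilon^{-1}]}$-closeness of the neck to $S^2 \times \IR$, since the latter ensures that the approximation of $\gamma|_{\tilde\pi(M)}$ by $(\alpha_\gamma, \tau_{d_\gamma})$ improves as $\td\varepsilon_0$ shrinks and in particular is valid uniformly in $\gamma$ and in the size of $|d_\gamma|$ up to the neck length $\sim \lambda/\varepsilon$.
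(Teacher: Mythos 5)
Your overall strategy is the same as the paper's: extract from a coincidence of $\pi$ a self-isometry of the (model) neck, show that if it translates along the $\IR$-axis then the image closes up, and otherwise bound the multiplicity by the classification of free actions on $S^2$. However, two steps as written have genuine gaps. First, the reduction of coincidences of $\pi$ to deck transformations of $\widetilde{M'}$ is not valid: the lift $\td\pi : M \to \widetilde{M'}$ of a local isometry from a simply connected domain need not be injective (a local isometry into a simply connected target can still overlap itself, as for a developing map), and a coincidence $\td\pi(z_1)=\td\pi(z_2)$ with $z_1\neq z_2$ produces \emph{no} element of $\Gamma$ at all. This non-injective case is exactly the ``neck wrapping around'' scenario the lemma must rule out, so it cannot be dismissed. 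The paper avoids this by never leaving $M$: from $\pi(y_1)=\pi(y_2)$ it builds, by path lifting inside a bounded sub-neck $U_0 = S^2\times(-20,20)$, a fixed-point-free \emph{local} deck transformation $\varphi: U_0 \to U_1 \subset M$ preserving orientation and the distinguished Ricci-eigenvector field $X$, and all the subsequent analysis is carried out with these local isometries of the neck itself.

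Second, the dichotomy ``$d_\gamma\neq 0$ forces closedness, $d_\gamma=0$ forces $\alpha_\gamma$ into a free finite subgroup of $O(3)$'' is not robust as stated: an element whose model counterpart is a small rotation composed with a tiny translation is close to the identity and is excluded neither by your closed-image argument nor by the classification $\{\pm I\}$; ``close to a finite free subgroup'' does not follow from each element being close to \emph{some} element of $O(3)$. What actually closes this loophole is a fixed-point argument: the induced map on the central $2$-sphere is an orientation-preserving homeomorphism of $S^2$ (after at most two double covers, which also absorbs the antipodal case), hence has a fixed point $z_0$ by degree/Lefschetz; since a nontrivial local deck transformation is fixed-point-free, it must move $z_0$ a definite distance \emph{along the $X$-trajectory}, i.e.\ one is always in the translating case. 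The paper then iterates, using that the consecutive points $z_k=\varphi^{(k)}(z_0)$ march monotonically along the trajectory with step $<10$, to produce a cross-sectional sphere $\Sigma'=\varphi^{(k_0)}(\Sigma)$ disjoint from $\Sigma$, so that the region enclosed between $\Sigma$ and $\Sigma'$ maps onto a closed component of $M'$, contradicting the hypothesis. Your sketch gestures at this iteration (``the union covers a full fundamental domain'') but supplies neither the control on where the iterates land nor the argument that the projected region is closed; these are the actual content of the proof.
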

\begin{proof}
We may assume without loss of generality that the scale $\lambda$ in Definition \ref{Def:epsneck} is equal to $1$ (and hence $r < 1.1$ for small $\varepsilon$), that $M$ is an $\varepsilon$-neck and that $\pi$ is surjective.
So, we can identify $M = S^2 \times (- \frac1{\varepsilon}, \frac1{\varepsilon} )$ and assume that $\Vert g - g_{S^2 \times \IR} \Vert_{C^{[\varepsilon^{-1}]}} < \varepsilon$.
If $\varepsilon$ is small enough, there is a smooth unit vector field $X$ on $M$ pointing in the direction of the eigenspace of $\Ric$ associated to the smallest eigenvalue which is unique up to sign.
For any $y_1, y_2 \in M$ with $\pi (y_1) = \pi(y_2)$, we have $d\pi (X_{y_1}) = \pm d\pi (X_{y_2})$.
So by possibly passing to a $2$-folded cover of $M'$, we can assume that $d\pi ( X )= X'$ for some smooth vector field $X'$ on $M'$.
Moreover, by possibly passing to another $2$-folded cover, we can assume that $M'$ is orientable.
Let $\Sigma \subset M$ be the embedded $2$-sphere corresponding to $S^2 \times \{ 0 \}$.
If $\varepsilon$ is small enough, the trajectories of $X$ cross $\Sigma$ exactly once and transversely.
Finally, let $U_0 \subset M$ be the open set corresponding to $S^2 \times (-20, 20)$ and assume that $\varepsilon^{-1} > 100$.

We will first show by contradiction that $\pi$ restricted to the ball $B(x_0, 1)$ is injective.
So assume that there are two distinct points $y_1, y_2 \in B(x_0, 1)$ with $\pi(y_1) = \pi(y_2)$.
Consider a geodesic segment $\gamma$ between $y_1$ and $x_0$ and lift its projection $\pi \circ \gamma$ starting from $y_2$.
This produces a point $x_1 \in M$ with $\pi(x_0) = \pi(x_1)$ and $\dist(x_0, x_1) < 2$.
Moreover, we can construct an isometric local deck transformation $\varphi : U_0 \to U_1 \subset M$ with $\varphi(x_0) = x_1$.
We note that $\varphi$ preserves orientation and the vector field $X$.

Now for $x \in \Sigma$ define $\varphi'(x)$ to be the unique intersection point of the $X$-trajectory passing through $\varphi(x)$ with $\Sigma$.
Then $\varphi' : \Sigma \to \Sigma$ is bijective continuous and orientation preserving.
Hence it has a fixed point $z_0 \in \Sigma$.

Let now $z_k = \varphi^{(k)}(z_0) \in U_1$ as long as this is defined.
Those points all lie on the trajectory through $z_0$ and have consecutive distance less than $10$.
Hence, there is a point $z_{k_0} \in U_1$ of distance more than $10$ to $z_0$.
This implies that $\Sigma' = \varphi^{(k_0)}(\Sigma)$ is disjoint from $\Sigma$.
But then the part of $M$ which is enclosed between $\Sigma$ and $\Sigma'$, maps to a closed manifold contradicting the assumptions.

So for all $r \leq 1$ we have $\vol B(x'_0, r) = \vol B(x_0, r) > c r^3$ for some universal $c > 0$.
This finishes the proof.
\end{proof}

\subsection{Bounded curvature around good points}
We start out by presenting a simple generalization of Corollary \ref{Cor:Perelman68} and consequence of Proposition \ref{Prop:genPerelman} which exhibits the flavor of the subsequent results.
We point out that the following Proposition is also a consequence of the far more general Proposition \ref{Prop:curvcontrolincompressiblecollapse} below.

\begin{Proposition} \label{Prop:curvcontrolgood}
There is a continuous positive function $\delta : [0, \infty) \to (0, \infty)$ such that for any $w, \theta > 0$ there are $\tau = \tau(w), \ov{\rho} = \ov{\rho} (w) > 0$ and $K = K(w), T = T(w, \theta) < \infty$ such that: \\
Let $\MM$ be a Ricci flow with surgery on the time-interval $[0, \infty)$ with normalized initial conditions which is performed by $\delta(t)$-precise cutoff.
Let $t_0 > T$ be a time, $x_0 \in \MM (t_0)$ a point and $r _0 > 0$ and assume that
\begin{enumerate}[label=(\roman*)]
\item $\theta \sqrt{t_0} \leq r_0 \leq \sqrt{t_0}$,
\item $x_0$ is $w$-good at scale $r_0$ and time $t_0$.
\end{enumerate}
Then we have $\rho(x_0, t_0) > r_1 = \min \{ \ov{\rho} \sqrt{t_0}, r_0 \}$ and the parabolic neighborhood $P(x_0, t_0,r_1 , - \tau r_1^2)$ is non-singular and $|{\Rm}| < K r_0^{-2}$ on $P(x_0, t_0,r_1 , - \tau r_1^2)$.
\end{Proposition}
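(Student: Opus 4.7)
The plan is to lift to the universal covering flow, apply Proposition \ref{Prop:genPerelman} there, and transfer the conclusion downstairs via the local isometry and Lemma \ref{Lem:HIbound}. Using Lemma \ref{Lem:tdMM}, I would form the universal covering flow $\td{\MM}$ and pick a lift $\td{x}_0 \in \td{\MM}(t_0)$ of $x_0$. Once $\delta(t)$ is chosen below the threshold $\un\delta_{\varepsilon_0}(t)$ of Proposition \ref{Prop:CNThm-mostgeneral}, the canonical neighborhood assumptions hold on $\MM$ and, by the covering-stability remark after Definition \ref{Def:CNA}, also on $\td{\MM}$. Since covering is a local isometry, $\rho(\td{x}_0, t_0) \geq \rho(x_0, t_0)$.

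First I would dispatch the easy case $\rho(x_0, t_0) \geq r_0$: then $\rho(x_0, t_0) \geq r_0 \geq \theta\sqrt{t_0}$ already gives $\rho(x_0, t_0) > r_1$ as soon as $\ov{\rho} \leq \theta$. In the remaining case $\rho(x_0, t_0) < r_0$, the goodness hypothesis collapses to $\vol B(\td{x}_0, \rho(x_0, t_0)) > w\rho(x_0, t_0)^3$, and local isometry yields $\sec \geq -\rho(x_0, t_0)^{-2}$ on $B(\td{x}_0, \rho(x_0, t_0))$. I would then apply Proposition \ref{Prop:genPerelman} in $\td{\MM}$ with basepoint $\td{x}_0$, scale $\rho(x_0, t_0)$, and $A = 2$: either $\rho(x_0, t_0) > \ov{r}(w)\sqrt{t_0}$ and we are done, or the Proposition yields $|{\Rm}| < K(w)\rho(x_0, t_0)^{-2}$ on $B(\td{x}_0, 2\rho(x_0, t_0))$, which projects via the local isometry to the same bound on $B(x_0, 2\rho(x_0, t_0))$.

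Next I would replay the final paragraph of Proposition \ref{Prop:genPerelman}'s proof downstairs: the $t^{-1}$-positivity of $\MM$ (Definition \ref{Def:precisecutoff}) together with Lemma \ref{Lem:HIbound} forces $\sec \geq -\tfrac{1}{2}\rho(x_0, t_0)^{-2} = -(\sqrt{2}\rho(x_0, t_0))^{-2}$ on $B(x_0, \sqrt{2}\rho(x_0, t_0)) \subset B(x_0, 2\rho(x_0, t_0))$ whenever $\rho(x_0, t_0) \leq \ov{r}_{\ref{Lem:HIbound}}(K(w))\sqrt{t_0}$, contradicting the definition of $\rho(x_0, t_0)$ as a supremum in Definition \ref{Def:rhoscale}. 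So $\rho(x_0, t_0) > \widehat{r}\sqrt{t_0}$ with $\widehat{r} = \ov{r}_{\ref{Lem:HIbound}}(K(w))$; setting $\ov{\rho} = \min\{\theta, \widehat{r}, \ov{r}(w)\}$ secures the first conclusion of the Proposition in all cases.

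Finally, for the parabolic non-singularity and curvature bound, I would apply Corollary \ref{Cor:Perelman68}(a) in $\td{\MM}$ with $r \leftarrow r_1$, $A \leftarrow 1$. Observe $r_1 \leq \rho_{r_0}(x_0, t_0)$ in either case and $r_1/\rho_{r_0}(x_0, t_0) \geq \ov{\rho}$, so Bishop--Gromov on $B(\td{x}_0, \rho_{r_0}(x_0, t_0))$ (where $\sec \geq -\rho_{r_0}^{-2}$ in the cover) transfers the goodness volume bound down to scale $r_1$, giving $\vol B(\td{x}_0, r_1) \geq c(w, \theta) r_1^3$. For $T = T(w, \theta)$ sufficiently large the cutoff at time $t_0$ is $c_1 r_1$-precise, so Corollary \ref{Cor:Perelman68}(a) produces a non-singular $P(\td{x}_0, t_0, r_1, -\tau r_1^2)$ with $|{\Rm}| < K_0 r_1^{-2}$; projecting downstairs and using $r_1 \geq \ov{\rho} r_0$ (via $r_0 \leq \sqrt{t_0}$) absorbs the ratio into $K(w) r_0^{-2}$. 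The hard part will be the logical ordering: the $\rho$-bound must be established first via the HI-bound contradiction, so that the scale $r_1 = \ov{\rho}\sqrt{t_0}$ at which Bishop--Gromov and Corollary \ref{Cor:Perelman68}(a) are invoked is comparable to $\rho_{r_0}(x_0, t_0)$, keeping the constants dependent only on $w$ and $\theta$.
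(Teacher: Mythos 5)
Your overall strategy is the same as the paper's: lift to the universal covering flow via Lemma \ref{Lem:tdMM}, note that the canonical neighborhood assumptions pass to the cover, and run Perelman's estimate at the lift $\td{x}_0$. Your first two paragraphs (the dichotomy on $\rho(x_0,t_0)$ versus $r_0$, the application of Proposition \ref{Prop:genPerelman} in $\td\MM$ at scale $\rho(x_0,t_0)$, and the Lemma \ref{Lem:HIbound} contradiction forcing $\rho(x_0,t_0) > \widehat{r}\sqrt{t_0}$) reproduce exactly what the paper does, except that the paper does not need to replay the HI-bound argument by hand: the statement ``if $r_0 = \rho(x_0,t_0)$ then $r_0 > \widehat{r}\sqrt{t_0}$'' is already packaged as a conclusion of Proposition \ref{Prop:genPerelman}, so one application at the scale $r_2 = \min\{\rho_{r_0}(x_0,t_0), \ov{r}\sqrt{t_0}, \tfrac12\sqrt{t_0}\}$ delivers both the lower bound on $\rho$ and the curvature bound.

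The one step that does not go through as written is your final paragraph: you invoke Corollary \ref{Cor:Perelman68}(a) \emph{in $\td\MM$}, but that Corollary is stated only for Ricci flows with surgery whose time-slices are all compact (and with normalized initial conditions), and the universal covering flow of the manifolds relevant here typically has non-compact time-slices. This is precisely why the paper proves the non-compact/boundary version, Proposition \ref{Prop:genPerelman}, and applies \emph{it} to covering flows rather than the Corollary. The repair is immediate and costs nothing: the parabolic neighborhood conclusion (non-singularity of $P(\td{x}_0,t_0,r_1,-\tau r_1^2)$ and $|{\Rm}| < K r_1^{-2}$ there, under the condition $C_1\delta \leq r_1$, which holds for $T$ large since $r_1 \geq \min\{\ov\rho,\theta\}\sqrt{t_0}$) is already part of Proposition \ref{Prop:genPerelman}, which you have verified applies to $\td\MM$ at the relevant scale in your second paragraph. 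Your Bishop--Gromov step transferring the goodness volume bound from scale $\rho_{r_0}(x_0,t_0)$ down to $r_1$ is correct and corresponds to the remark at the end of subsection \ref{subsec:goodness} that $w$-goodness at one scale implies $\td{c}w$-goodness at smaller scales; the paper uses this implicitly.
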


\begin{proof}
The proof is very similar to that of Corollary \ref{Cor:Perelman68}.
Choose the constants $\varepsilon_0$, $\un{E}_{\varepsilon_0}$, $\un\eta$ as well as the functions $\un{r}_{\varepsilon_0}$, $\un{\delta}_{\varepsilon_0}$, $\delta$ as described in the first paragraph of this proof.
Then every point $(x,t) \in \MM$ with $t \in [\frac12 t_0, t_0]$ satisfies the canonical neighborhood assumptions $CNA (\un{r}_{\varepsilon_0} (t_0), \varepsilon_0, \un{E}_{\varepsilon}, \un\eta)$.
This implies that also every point $(x,t) \in \td\MM$  in the universal covering flow (see Lemma \ref{Lem:tdMM}) with $t \in [\frac12 t_0, t_0]$ satisfies the same the canonical neighborhood assumptions $CNA (\un{r}_{\varepsilon_0} (t_0), \varepsilon_0, \un{E}_{\varepsilon_0}, \un\eta)$.
As in the proof of Corollary, we can assume that $\delta (t) \leq \delta_{\ref{Prop:genPerelman}} (\un{r}_{\varepsilon_0} (t_0), w, 1, \un{E}_{\varepsilon_0}, \un\eta, 0)$ for all $t \in [\frac12 t_0, t_0]$ where $\delta_{\ref{Prop:genPerelman}}$ is the constant from Proposition \ref{Prop:genPerelman}.

We now apply Proposition \ref{Prop:genPerelman} to the universal covering flow $\td{\MM}$ at a lift $\td{x}_0$ of $x_0$ with $x_0 \leftarrow \td{x}_0$, $t_0 \leftarrow t_0$, $r_0 \leftarrow r_2 = \min\{ \rho_{r_0}(x_0, t_0), \ov{r}_{\ref{Prop:genPerelman}} \sqrt{t_0}, \frac12 \sqrt{t_0} \}$, $r \leftarrow \un{r}_{\varepsilon_0} (t_0)$, $w \leftarrow w$, $A \leftarrow 1$, $E \leftarrow \un{E}_{\varepsilon_0}$, $\eta \leftarrow \un\eta$, $m \leftarrow 0$.
Here $\ov{r}_{\ref{Prop:genPerelman}}  = \ov{r}_{\ref{Prop:genPerelman}} (w,1, \un{E}_{\varepsilon_0}, \un\eta)$.
So we obtain that  if $r_2 = \rho(x_0, t_0)$, then $r_2 > \widehat{r}_{\ref{Prop:genPerelman}}(w, 1, \un{E}_{\varepsilon_0}, \un\eta) \sqrt{t_0}$.
This implies that $\rho(x_0, t_0) > \min \{ \min \{ \widehat{r}_{\ref{Prop:genPerelman}}, \ov{r}_{\ref{Prop:genPerelman}}, \frac12 \} \sqrt{t_0}, r_0 \}$ and hence the first claim for $\ov{\rho} = \min \{ \widehat{r}_{\ref{Prop:genPerelman}}, \ov{r}_{\ref{Prop:genPerelman}}, \frac12 \}$.

Consider now the constant $C_{1, \ref{Prop:genPerelman}} = C_{1, \ref{Prop:genPerelman}} (w, 1, \un{E}_{\varepsilon_0}, \un\eta)$ from Proposition \ref{Prop:genPerelman} and assume that $T  = T(w, \theta) > 1$ is chosen large enough to ensure that $C_{1, \ref{Prop:genPerelman}} \delta(t) \leq \min \{ \ov\rho, \theta \}$ for all $t \in [\frac12 t_0, t_0]$.
Then in particular
\[ C_{1, \ref{Prop:genPerelman}} \delta(t) \leq  \min \{  \ov\rho, \theta \} \sqrt{t_0} \leq \min \{ \rho_{r_0} (x_0, t_0), \ov{r}_{\ref{Prop:genPerelman}} \sqrt{t_0}, \tfrac12 \sqrt{t_0} \} = r_2 \]
and hence by Proposition \ref{Prop:genPerelman} we have $|{\Rm}| < K_{\ref{Prop:genPerelman}}(w, 1, \un{E}_{\varepsilon_0}, \un\eta) r_2^{-2}$ on $P(x_0, t_0, r_2, \linebreak[1] - \tau_{\ref{Prop:genPerelman}} \linebreak[1] (w, \linebreak[1] 1, \linebreak[1] \un{E}_{\varepsilon_0}, \linebreak[1] \un\eta) r_2^2)$ and this parabolic neighborhood is non-singular.
\end{proof}

\subsection{Bounded curvature at bounded distance in sufficiently collapsed and good regions}
We now extend the curvature bound from Proposition \ref{Prop:curvcontrolgood} to balls of larger radii $A r_0$.
It is crucial here that by assuming sufficient collapsedness around the basepoint (depending on $A$), we don't have to impose an assumption of the form  $r_0 < \ov{r}(w, A) \sqrt{t_0}$ as in Proposition \ref{Prop:genPerelman}.
So the product $A r_0$ can indeed be chosen arbitrarily large.
\begin{Proposition} \label{Prop:curvcontrolincompressiblecollapse}
There is a continuous positive function $\delta : [0, \infty) \to (0, \infty)$ such that for any $w, \theta > 0$ and $1 \leq A < \infty$ there are $\tau = \tau(w), \ov{\rho} = \ov{\rho} (w, A), \ov{w} = \ov{w}(w, A) > 0$ and $K(w, A), T (w, A, \theta) < \infty$ such that: \\
Let $\MM$ be a Ricci flow with surgery on the time-interval $[0, \infty)$ with normalized initial conditions which is performed by $\delta(t)$-precise cutoff and let $t_0 > T$.
Choose $x_0 \in \MM(t_0)$ and $r_0 > 0$ and assume that
\begin{enumerate}[label=(\roman*)]
\item $\theta \sqrt{t_0} \leq r_0 \leq \sqrt{t_0}$,
\item $x_0$ is $w$-good at scale $r_0$ and time $t_0$,
\item $\vol_{t_0} B(x_0, t_0, r_0) < \ov{w} r_0^3$.
\end{enumerate}
Then $|{\Rm}| < K r_0^{-2}$ on $B = \bigcup_{t \in [t_0 - \tau r_0^2, t_0]} B(x_0, t, A r_0)$ and there are no surgery points on $B$.

In particular, if $r_0 \geq \rho(x_0, t_0)$, then $\rho( x_0, t_0) > \ov\rho \sqrt{t_0}$ and the curvature estimate becomes $| {\Rm} | < K t_0^{-1}$.
\end{Proposition}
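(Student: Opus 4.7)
The plan is to lift the setup to the universal covering flow $\td\MM$ (Lemma \ref{Lem:tdMM}), where the goodness hypothesis (ii) becomes a genuine lower volume bound, and then run Perelman's machinery from Proposition \ref{Prop:genPerelman} at a lift $\td{x}_0$ of $x_0$. The collapse hypothesis (iii) plays the role of ruling out unwanted limits in the bounded curvature at bounded distance step.

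First, let $\td{x}_0 \in \td\MM(t_0)$ be a lift of $x_0$. Hypothesis (ii) gives $\vol_{t_0} B(\td{x}_0, t_0, \rho_{r_0}(x_0,t_0)) > w \rho_{r_0}^3(x_0,t_0)$ in $\td\MM$. I would first apply Proposition \ref{Prop:curvcontrolgood} to $\td\MM$ at $\td{x}_0$ to obtain $\rho(\td{x}_0, t_0) > r_1 := \min\{\ov\rho(w)\sqrt{t_0}, r_0\}$ together with a curvature bound $|{\Rm}| < K_0(w) r_0^{-2}$ on a parabolic neighborhood $P(\td{x}_0, t_0, r_1, -\tau(w) r_1^2)$. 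Since $\rho$ is a purely local quantity, this in particular yields $\rho(x_0,t_0) \geq r_1$ and hence $\sec_{t_0} \geq -r_0^{-2}$ on a ball of size comparable to $r_0$ around $\td{x}_0$ in $\td\MM$.

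To extend this bound to a ball of radius $A r_0$, I would apply Proposition \ref{Prop:genPerelman} at $\td{x}_0$ in $\td\MM$ with the large parameter $A$; the hypotheses (iv), (v) of that Proposition are satisfied by Step~1 combined with Bishop--Gromov. The technical difficulty is the constraint $r_0 \leq \ov{r}(w,A)\sqrt{t_0}$ in Proposition \ref{Prop:genPerelman}, which may fail since we only know $r_0 \leq \sqrt{t_0}$ and $\ov{r}(w,A)$ may be much smaller than $\theta$. This is precisely where hypothesis (iii) enters: tracing through the proof of the underlying Lemma \ref{Lem:6.3bc}, the constraint arises solely to exclude the positive $E$-pinched option in the canonical neighborhood assumptions (i.e.\ round $S^3/\Gamma$ components arising as blow-up limits). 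Such components, however, have normalized volume bounded below by a universal constant, both in $\MM$ and in $\td\MM$ (a round $S^3/\Gamma$ lifts to a round $S^3$ of proportional radius). Choosing $\ov{w} = \ov{w}(w,A) > 0$ sufficiently small rules out this possibility by direct comparison of (iii) with the normalized volume of a round space form. The remaining blow-up limits are non-compact $\kappa$-solutions, and the contradiction argument of Lemma \ref{Lem:6.3bc} applies without the $\ov{r}(w,A)\sqrt{t_0}$ constraint.

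This yields $|{\Rm}| < K(w,A) r_0^{-2}$ on $B(\td{x}_0, t_0, A r_0)$, and a similar argument together with Lemma \ref{Lem:shortrangebounds} extends the bound to the parabolic region $\bigcup_{t \in [t_0 - \tau r_0^2, t_0]} B(\td{x}_0, t, A r_0)$. Projecting down to $\MM$ yields the desired bound. The absence of surgery points follows from Definition \ref{Def:precisecutoff}(3), since any surgery point satisfies $|{\Rm}| > c\, \delta^{-2}(t)$, which exceeds $K(w,A) r_0^{-2}$ once $\delta(t)$ is small enough; this is arranged by choosing $T(w,A,\theta)$ large and $\delta$ a sufficiently small continuous function. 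The final ``in particular'' statement when $r_0 \geq \rho(x_0,t_0)$ then follows from Step~1, since in that case $r_1 = \ov\rho \sqrt{t_0}$ and the curvature bound becomes $|{\Rm}| < K t_0^{-1}$. The main obstacle is the technical re-examination of Lemma \ref{Lem:6.3bc} in Step~3 to verify that the $E$-pinched exclusion indeed comes for free from (iii) and that no other part of the argument silently reintroduces the $\ov{r}(w,A)\sqrt{t_0}$ constraint.
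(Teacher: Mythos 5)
Your first step (lift to the universal covering flow, apply Proposition \ref{Prop:curvcontrolgood} at a lift $\td{x}_0$ to get $\rho(x_0,t_0)\geq r_1$ and a curvature bound on a small parabolic neighborhood) matches the paper, as do the surgery-exclusion and the ``in particular'' statement at the end. The gap is in the main step, the extension to radius $Ar_0$. You propose to push Proposition \ref{Prop:genPerelman} (equivalently Lemma \ref{Lem:6.3bc}(b)) through at scale $r_0$ with large $A$, asserting that the constraint $r_0\leq\ov{r}(w,A)\sqrt{t_0}$ there ``arises solely to exclude the positive $E$-pinched option'' and can be traded against hypothesis (iii). That diagnosis is not correct: the constraint enters via Hamilton--Ivey pinching (see Lemma \ref{Lem:HIbound} and its use in Lemma \ref{Lem:6.4} and at the end of the proof of Proposition \ref{Prop:genPerelman}) to convert an upper bound $|{\Rm}|<Kr_0^{-2}$ into a lower sectional curvature bound $\sec\geq-\tfrac12 r_0^{-2}$ and to make the blow-down limits in the bounded-curvature-at-bounded-distance argument nonnegatively curved; none of that is available when $r_0\sim\sqrt{t_0}$, and excluding round components is a separate, easy matter (done at the start of the proof of Lemma \ref{Lem:6.3bc} without any volume hypothesis). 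So the step you yourself flag as ``the main obstacle'' is a real obstruction, not a technicality. As a side remark, your claim that a round $S^3/\Gamma$ has normalized volume bounded below in $\MM$ itself is also false for $|\Gamma|$ large.

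The paper's actual mechanism is different and uses only Lemma \ref{Lem:6.3bc}(a), which carries \emph{no} constraint of the form $r_0\leq\ov{r}\sqrt{t_0}$: applied in $\td\MM$ at scale $\gamma r_0$ with $A\leftarrow\gamma^{-1}(A+1)$, it yields canonical neighborhood assumptions at scale $\sim r_0$ on all of $B(\td{x}_0,t,(A+1)r_0)$. One then sets $K=\gamma^{-2}\max\{\td\rho^{-2},E^2\}$ and runs a maximal-radius argument: if $a<A$ is maximal with $|{\Rm_t}|<Kr_0^{-2}$ on $B(\td{x}_0,t,ar_0)$, there is a point $\td{x}_1$ at distance $ar_0$ which is the center of a strong $\varepsilon_0$-neck or an $(\varepsilon_0,E)$-cap in $\td\MM(t)$. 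This is where hypothesis (iii) does its work, via Lemma \ref{Lem:neckhasfewquotients}: a sufficiently precise neck admits no small local quotients, so the projection $x_1$ of $\td{x}_1$ (or of a neck point inside the cap) satisfies $\vol_t B(x_1,t,r)>\td{w}_0 r^3$ at scale $r\sim\eta K^{-1/2}r_0$, while the collapse hypothesis together with volume comparison forces $\vol_t B(x_1,t,r)<C(w,A)\ov{w}r_0^3$ --- a contradiction once $\ov{w}(w,A)$ is chosen small. Your proposal never invokes Lemma \ref{Lem:neckhasfewquotients}, which is the key new ingredient replacing the $\ov{r}\sqrt{t_0}$ constraint, so as written the argument does not close.
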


\begin{proof}
We first set up an argument in the spirit of the proof of Corollary \ref{Cor:Perelman68}.
Choose $\varepsilon_0 > 0$ to be smaller than the corresponding constants in Lemma \ref{Lem:6.3bc} and the constant $\td\varepsilon_0$ in Lemma \ref{Lem:neckhasfewquotients}.
By Proposition \ref{Prop:CNThm-mostgeneral} there are constants $0 < \eta < \un\eta$, $\un{E}_{\varepsilon_0} < E < \infty$ and decreasing continuous positive functions $\un{r}_{\varepsilon_0}, \un{\delta}_{\varepsilon_0} : [0, \infty) \to (0, \infty)$ such that if $\delta(t) \leq \un\delta_{\varepsilon_0} (t)$ for all $t \in [0, \infty)$, then every point $(x,t) \in \MM$ satisfies the canonical neighborhood assumptions $CNA (\un{r}_{\varepsilon_0} (t), \varepsilon_0, E, \eta)$.
Without loss of generality, we can assume that $E > E_{0, \ref{Lem:6.3bc}} (\varepsilon_0)$ and $\eta < \eta_{0, \ref{Lem:6.3bc}}$ where $E_{0, \ref{Lem:6.3bc}}$ and $\eta_{0, \ref{Lem:6.3bc}}$ are the constants from Lemma \ref{Lem:6.3bc}.
Consider the constant $\delta_{\ref{Lem:6.3bc}} (w', A', r', \varepsilon', E', \eta')$, assume that it depends on its parameters $w', A', r'$ in a monotone way, i.e. $\delta_{\ref{Lem:6.3bc}} (w'', A'', r'', \varepsilon', E', \eta') \leq \delta_{\ref{Lem:6.3bc}} (w', A', r', \varepsilon', E', \eta')$ whenever $w'' \leq w'$, $A'' \geq A'$ and $r'' \leq r'$, and assume that for all $t \geq 0$
\[ \delta(t) < \min \big\{ \delta_{\ref{Lem:6.3bc}} (t^{-1}, t, \un{r}_{\varepsilon_0}(2t), \varepsilon_0, E, \eta), \; \un{\delta}_{\varepsilon_0} (t), \; \delta_{\ref{Prop:curvcontrolgood}}(t) \big\}. \]

By Proposition \ref{Prop:curvcontrolgood}, we have $\rho(x_0, t_0) > r_ 1 = \min \{ \ov{\rho}_{\ref{Prop:curvcontrolgood}} (w)  \sqrt{t_0}, r_0 \}$ and $|{\Rm}|< K_{\ref{Prop:curvcontrolgood}} (w) r_0^{-2}$ on the non-singular parabolic neighborhood $P(x_0, \linebreak[1] t_0, \linebreak[1] r_1, \linebreak[1] -\tau_{\ref{Prop:curvcontrolgood}} \linebreak[1] (w) r_1^2)$ (here we need to assume that $T$ is large enough).
In particular, this shows how the last assertion of the Proposition follows from the first one.

It remains to show the first assertion.
Consider the constants $\tau_{\ref{Prop:curvcontrolgood}}(w)$, $K_{\ref{Prop:curvcontrolgood}}(w)$ from Proposition \ref{Prop:curvcontrolgood} and set
\[ \gamma = \gamma(w) = \tfrac1{10} \min \big\{ 1, \tau_{\ref{Prop:curvcontrolgood}}^{1/2}(w), K_{\ref{Prop:curvcontrolgood}}^{-1/2}(w) \big\}. \]
Consider the universal covering flow $\td\MM$ of $\MM$ as described in Lemma \ref{Lem:tdMM} and let $\td{x}_0 \in \td\MM$ be a lift of $x_0$.
By the choice of $\gamma$ we have
\[ |{\Rm}| < (\gamma r_0)^{-2} \quad \text{on} \quad P(\td{x}_0, t, \gamma r_0, - (\gamma r_0)^2) \quad \text{for all} \quad t \in [t_0 - (\gamma r_0)^2, t_0] \]
and $\vol_t B(\td{x}_0, t, \gamma r_0) > \frac1{10} \td{c} (\gamma r_0)^3$ for all such $t$.
We now argue that for sufficiently large $T$ we can apply Lemma \ref{Lem:6.3bc}(a) with $\MM \leftarrow \td\MM$, $\widetilde{x}_0 \leftarrow x_0$, $t_0 \leftarrow t \in [t_0 - (\gamma r_0)^2, t_0]$, $r_0 \leftarrow \gamma r_0$, $w \leftarrow \frac1{10} \td{c} w$, $A \leftarrow \gamma^{-1} (A + 1)$, $r \leftarrow \un{r}_{\varepsilon_0} (t_0)$, $\eta \leftarrow \eta$, $\varepsilon \leftarrow \varepsilon_0$, $E \leftarrow E$.
In order to do this, we need to assume that $T = T(w, A, \theta)$ is large enough such that $2 T^{-1} < \frac1{10} \td{c} w$ and $\frac12 T > \gamma^{-1} (A+1)$.
Observe that for all $(x',t') \in \MM$ with $[\frac12 t_0, t_0]$ the canonical neighborhood assumptions $CNA (\un{r}_{\varepsilon} (t_0), \varepsilon_0, E, \eta)$ hold.
So these canonical neighborhood assumptions also hold for all $(x',t') \in \td\MM$ with $t' \in [\frac12 t_0, t_0]$.
Moreover, by the choice of $T$ we have $\delta(t') <  \delta_{\ref{Lem:6.3bc}} (\td{c} w, \gamma^{-1}(A+1), \un{r}_{\varepsilon_0}(t_0), \varepsilon_0, E, \eta)$ for all $t' \in [\frac12 t_0, t_0]$.
So Lemma \ref{Lem:6.3bc}(a) can be applied and we conclude that for any $t \in [t_0 - (\gamma r_0)^2, t_0]$ the points on $B(\td{x}_0, t, (A+1)r_0)$ satisfy the canonical neighborhood assumptions $CNA(\gamma \td{\rho}_{\ref{Lem:6.3bc}} r_0, \varepsilon_0, E, \eta)$.
Here $\td{\rho}_{\ref{Lem:6.3bc}} = \td{\rho}_{\ref{Lem:6.3bc}}(\frac1{10} \td{c} w, \gamma^{-1} (A+1), \varepsilon_0, E, \eta)$.
Set $K = \gamma^{-2} \max \{ \td{\rho}^{-2}_{\ref{Lem:6.3bc}}, E^2 \}$.
So, whenever $|{\Rm}|(x,t) \geq K r_0^{-2}$ for some $x \in B(\td{x}_0, t, (A+1)r_0)$, then $(x,t)$ is a center of a strong $\varepsilon_0$-neck or an $(\varepsilon_0, E)$-cap and
\begin{equation} \label{eq:deretabound}
 |\nabla |{\Rm}|^{-1/2}|(x,t) < \eta^{-1}.
\end{equation}

Fix some $t \in [t_0 - (\gamma r_0)^2, t_0]$.
Let $a \leq A$ be maximal with the property that $|{\Rm_t}| < K r_0^{-2}$ on $B(\td{x}_0, t, a r_0)$.
If $a = A$, we are done, so assume $a < A$.
By (\ref{eq:deretabound}), we can conclude that (compare also with Lemma \ref{Lem:shortrangebounds})
\begin{equation} \label{eq:etaextcurvbound}
 |{\Rm_t}| < 4 K r_0^{-2} \qquad \text{on} \qquad B(\td{x}_0, t, a r_0 + \tfrac12 \eta K^{-1/2} r_0).
\end{equation}
By the choice of $a$ we can find a point $\td{x}_1 \in \td\MM(t)$ of time-$t$ distance exactly $a r_0$ from $\td{x}_0$ with $|{\Rm}| (\td{x}_1, t) = K r_0^{-2}$.
So $(\td{x}_1, t_0)$ is either a center an $\varepsilon_0$-neck or an $(\varepsilon_0, E)$-cap in $\td\MM (t)$.

Let $x_1 \in \MM(t)$ be the projection of $\td{x}_1$.
By (\ref{eq:etaextcurvbound}) and volume comparison, we can rudely estimate
\begin{multline}
 \vol_{t} B(x_1, t, \tfrac12 \eta K^{-1/2} r_0) < \vol_t B(x_0, t, a r_0 + \tfrac12 \eta K^{-1/2} r_0) \\
 < C(A, K) \vol_t B(x_0, t, r_0) < C(w, A) \ov{w} r_0^3. \label{eq:etaballsmallvol}
\end{multline}
If $(\td{x}_1, t)$ is a center of an $\varepsilon_0$-neck, then we obtain a contradiction using Lemma \ref{Lem:neckhasfewquotients} assuming $C(w, A) \ov{w} < \td{w}_0 (\frac12 \eta K^{-1/2})^3$.
So assume for the rest of the proof that $(\td{x}_1, t)$ is a center of an $(\varepsilon_0, E)$-cap $U \subset \td\MM (t)$.
Let $K \subset U$ be a compact subset such that $\td{x}_1 \in K$ and $U \setminus K$ is an $\varepsilon_0$-neck and let $\td{y} \in U$ be a center of this neck.
We have $\gamma^{-2} r_0^{-2} \leq E^{-2} K r_0^{-2} \leq |{\Rm}| \leq E^2 K r_0^{-2}$ on $U$.
So $\td{x}_0 \not\in U$ and hence the minimizing geodesic segment between $\td{x}_0$ and $\td{x}_1$ passes through the whole $\varepsilon_0$-neck $U \setminus K$.
So for sufficiently small $\varepsilon_0$ we have $\dist_t (\td{x}_0, \td{y}) < \dist_t(\td{x}_0, \td{x}_1) = a r_0$.
In particular, for the projection $y$ of $\td{y}$ we find $B(y, t, \frac12 \eta K^{-1/2} r_0) \subset B(x_0, t, ar_0 + \frac12 \eta K^{-1/2} r_0)$.
Now again using Lemma \ref{Lem:neckhasfewquotients} and (\ref{eq:etaballsmallvol}), we conclude
\[ \td{w}_0 \big( \tfrac12 \eta E^{-1} K^{-1/2} \big)^3 r_0^3 < \vol_t B(y, t, \tfrac12 \eta E^{-1} K^{-1/2} r_0) < C(w, A) \ov{w} r_0^3. \]
This yields a contradiction for sufficiently small $\ov{w}$.

It remains to show that there are no surgery points on $B$.
To see this, observe that $|{\Rm}| < K \theta^{-2} t_0^{-2}$ on $B$, but by (\ref{eq:surgpointhighcurv}) we have $|{\Rm}|(x, t) > c' \delta^{-2} (t)$ at for every surgery point $(x,t) \in \MM$ for some universal $c' > 0$.
So choosing $T$ large enough yields the desired result.
\end{proof}

\subsection{Curvature control in points which are good relatively to regions whose boundary is geometrically controlled} \label{subsec:curvboundinbetween}
Next, we generalize Proposition \ref{Prop:curvcontrolgood} to include points which are good relatively to some open set $U$.
In order to do this, we need to assume that the metric around the boundary of $U$ is sufficiently controlled on a small time-interval.

\begin{Proposition} \label{Prop:curvboundinbetween}
There is a continuous positive function $\delta : [0, \infty) \to (0, \infty)$ such that for any $w, \theta > 0$ there are $\alpha = \alpha (w), \widehat{r} = \widehat{r}(w) > 0$ and $K = K(w), T = T(w, \theta) < \infty$ such that: \\
Let $\MM$ be a Ricci flow with surgery on the time-interval $[0, \infty)$ with normalized initial conditions which is performed by $\delta(t)$-precise cutoff and let $t_0 > T$.
Let $r_0 > 0$ and consider a sub-Ricci flow with surgery $U \subset \MM$ (see Definition \ref{Def:subRF}) on the time-interval $[t_0 - r_0^2, t_0]$ whose time-slices $U(t)$ are closed subsets of $\MM(t)$ and a point $x_0 \in U (t_0)$ such that
\begin{enumerate}[label=(\roman*)]
\item $\theta \sqrt{t_0} < r_0 \leq \sqrt{t_0}$,
\item for all $x \in \partial U (t_0)$, the parabolic neighborhood $P(x, t_0, r_0, - r_0^2)$ is non-singular and we have $| {\Rm} | < r_0^{-2}$ there,
\item $x_0$ is $w$-good at scale $r_0$ relatively to $U (t_0)$ at time $t_0$.
\end{enumerate}
Then the parabolic neighborhood $P(x_0, t_0,  \alpha r_0, - \alpha^2 r_0^2)$ is non-singular and we have $|{\Rm}| < K r_0^{-2}$ there.
\end{Proposition}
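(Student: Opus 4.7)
The plan is to split the argument according to the distance of $x_0$ from $\partial U(t_0)$, handling the near-boundary case directly from assumption (ii) and treating the far-from-boundary case by applying Proposition \ref{Prop:genPerelman}, the boundary-aware version of Perelman's long-time estimate, to the universal covering flow $\td{U}$ of $U$.

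\textbf{Case 1 (near the boundary):} Suppose $\dist_{t_0}(x_0, \partial U(t_0)) \leq r_0$, and pick $y \in \partial U(t_0)$ with $\dist_{t_0}(x_0, y) \leq r_0$. By assumption (ii), $P(y, t_0, r_0, -r_0^2)$ is non-singular with $|{\Rm}| < r_0^{-2}$, and $x_0$ lies in it. A direct distance distortion argument (Lemma \ref{Lem:distdistortion}(a)) then shows that for $\alpha$ sufficiently small we have $P(x_0, t_0, \alpha r_0, -\alpha^2 r_0^2) \subset P(y, t_0, r_0, -r_0^2)$, giving the claim with $K=1$.

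\textbf{Case 2 (far from the boundary):} Suppose $\dist_{t_0}(x_0, \partial U(t_0)) > r_0$. Then $B(x_0, t_0, \rho_{r_0}(x_0, t_0)) \subset U(t_0)$, so the relative $w$-goodness hypothesis (iii) upgrades to a genuine volume lower bound $\vol B(\td{x}_0, r'_0) > w (r'_0)^3$ in the universal cover $\td{U}(t_0)$, where $r'_0 := \rho_{r_0}(x_0, t_0)$. I construct the universal covering Ricci flow with surgery $\td{U}$ by a verbatim adaptation of Lemma \ref{Lem:tdMM}; since the covering map is a local isometry, the canonical neighborhood assumptions $CNA(\un{r}_{\varepsilon_0}(t_0), \varepsilon_0, \un{E}_{\varepsilon_0}, \un\eta)$ furnished by Proposition \ref{Prop:CNThm-mostgeneral} lift to $\td{U}$. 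Introduce the buffer scale $r'' := \min\{r'_0, r_0/20\}$; volume comparison gives $\vol B(\td{x}_0, r'') \geq w'(w) (r'')^3$ and $\sec \geq -(r'')^{-2}$ on this ball. The plan is to apply Proposition \ref{Prop:genPerelman} to $\td{U}$ at $(\td{x}_0, t_0)$ with $r_0 \leftarrow r''$, $A \leftarrow 1$, $w \leftarrow w'$.

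The main obstacle is verifying the boundary conditions (i) and (ii) of Proposition \ref{Prop:genPerelman} for $\td{U}$. The argument is by contradiction: if some $x \in B(\td{x}_0, t_0, r'')$ survived until $t \in [t_0 - (r'')^2/10, t_0]$ with $\dist_t(x(t), \td y) < 9 r''$ for some $\td y \in \partial\td U$, then the minimizing geodesic from $x(t)$ to $\td{y}$ at time $t$ has length $< 9r'' \leq r_0/2$, so it lies in the lift of $B(\pi(\td y), t, r_0)$ where assumption (ii) gives $|{\Rm}| < r_0^{-2}$. A bootstrap application of Lemma \ref{Lem:distdistortion}(a) on $[t, t_0]$, keeping the distance below $10 r''$ and hence the geodesic inside the curvature-controlled region, yields $\dist_{t_0}(x, \td y) < 10 r'' \cdot e^{r_0^{-2}(r'')^2/10} < 10.1 r''$; but on the other hand $\dist_{t_0}(x, \td y) \geq \dist_{t_0}(\td{x}_0, \partial \td{U}) - r'' > r_0 - r'' \geq 19 r''$, a contradiction. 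So condition (ii) holds, and (i) is vacuous; taking $T$ large enough that $\un{r}_{\varepsilon_0}(t_0) \leq r''$ and $C_1\delta(t) \leq r''$ for $t\in[t_0-r_0^2,t_0]$, Proposition \ref{Prop:genPerelman} supplies a non-singular $P(\td{x}_0, t_0, r'', -\tau (r'')^2)$ with $|{\Rm}| < K_0 (r'')^{-2}$.

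To translate this to the claimed scale, I invoke the ``in particular'' clause of Proposition \ref{Prop:genPerelman}: whenever $r'' = r'_0 = \rho(x_0, t_0) < r_0$, it gives $r'' > \widehat{r}(w)\sqrt{t_0} \geq \widehat{r}(w) r_0$; in the complementary regime one has $r'' = r_0/20$ by construction. In either case $r'' \geq c(w) r_0$, so $K_0(r'')^{-2} \leq K(w) r_0^{-2}$ and the parabolic neighborhood has size at least $\alpha(w) r_0$. Projecting down to $\MM$ via the covering map finishes the proof. The choice $r'' = \min\{r'_0, r_0/20\}$ is what makes the scheme close: it is small enough to secure the boundary separation needed for Proposition \ref{Prop:genPerelman}, yet the self-improvement from that proposition's ``in particular'' clause keeps $r''$ proportional to $r_0$ so that the curvature bound retains the correct $r_0^{-2}$ scaling.
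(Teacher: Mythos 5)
Your overall strategy is the same as the paper's: reduce to the case where $x_0$ is far from $\partial U(t_0)$, pass to the universal covering flow $\td U$, and apply Proposition \ref{Prop:genPerelman} there with $A=1$ at a scale comparable to $\min\{\rho_{r_0}(x_0,t_0),\, \const\cdot r_0\}$, using the ``in particular'' clause to keep that scale proportional to $r_0$. However, there is a genuine gap at the decisive step: you declare condition (i) of Proposition \ref{Prop:genPerelman} --- the requirement that every space-time curve ending near the basepoint and meeting $\partial\td U$ has $\LL$-length $> Z r_0$ --- to be ``vacuous''. It is not. That condition concerns space-time curves, which are free to travel arbitrarily far spatially before ending in the prescribed ball; such curves exist whenever $\partial U\neq\emptyset$, and the condition is exactly what prevents minimizing $\LL$-geodesics (the engine of the noncollapsing argument in Lemma \ref{Lem:6.3a}) from escaping through the boundary. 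Verifying it is the main content of the paper's proof: one shows that any such curve must cross the parabolic collar $\bigcup_{x'\in\partial\td U(t_0)}P(x',t_0,\tfrac12 r_0,-r_0^2)$, where assumption (ii) controls the curvature, and that traversing a spatial distance $\sim\tfrac12 r_0$ within a time interval of length $\lesssim\beta^2r_0^2$ costs $\LL$-length $\gtrsim \tfrac{c}{\beta}r_0$, which exceeds $Z\cdot(\beta r_0)$ only if $\beta$ is chosen small depending on $Z=Z(w,1,\un E_{\varepsilon_0},\un\eta)$. This has a concrete consequence for your construction: the buffer scale cannot be the fixed fraction $r_0/20$; it must be $\beta(w)r_0$ with $\beta$ small in a $w$-dependent way, and this is ultimately where the $w$-dependence of $\alpha$ and $K$ comes from.

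A secondary omission: you assert that the canonical neighborhood assumptions ``lift to $\td U$'' because the covering map is a local isometry. But the CNA must hold for $\td U$ as a Ricci flow with surgery in its own right, so the strong $\varepsilon$-necks and $(\varepsilon,E)$-caps must be contained in $U$ (hence in $\td U$), not merely in $\MM$. The paper checks this by noting that any point with $|{\Rm}|\geq \un r_{\varepsilon_0}^{-2}(t_0)$ must lie outside the boundary collar (where $|{\Rm}|<r_0^{-2}$), so its canonical neighborhood, of diameter $\lesssim \max\{E,2\varepsilon_0^{-1}\}|{\Rm}|^{-1/2}\ll r_0$, fits inside $U$. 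This step should not be skipped, though it is routine once stated.
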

\begin{proof}
The idea of the proof will be to apply Proposition \ref{Prop:genPerelman} to the universal covering flow $\td{U}$ of $U$ (see Lemma \ref{Lem:tdMM}).
The main problem in the proof is to verify assumptions (i) and (ii) of this Proposition.
Apart from this, the proof essentially goes along the lines of the proof of Proposition \ref{Prop:curvcontrolgood}.

We first choose the function $\delta(t)$.
Let $\varepsilon_0 > 0$ be the constant from Proposition \ref{Prop:genPerelman} and consider the constants $\un{E}_{\varepsilon_0}$, $\un\eta$ and the functions $\un\delta_{\varepsilon_0} (t), \un{r}_{\varepsilon_0} (t)$ from Proposition \ref{Prop:CNThm-mostgeneral}.
So if $\delta(t) < \un\delta_{\varepsilon_0}(t)$ for all $t \geq 0$, then $\MM$ satisfies the canonical neighborhood assumptions $CNA (\un{r}_{\varepsilon_0} (t), \varepsilon_0, \un{E}_{\varepsilon_0}, \un\eta)$.
Without loss of generality, we assume that $\un{r}_{\varepsilon_0} (t) \to 0$ as $t \to \infty$.
Similarly as in the proof of Proposition \ref{Prop:curvcontrolgood} or Corollary \ref{Cor:Perelman68}, we assume that
\[ \delta (t) < \min \big\{ \delta_{\ref{Prop:genPerelman}} ( \un{r}_{\varepsilon_0} (2t), t^{-1}, 1, \un{E}_{\varepsilon_0}, \un\eta, 0), \; \un\delta_{\varepsilon_0} (t), \; 1 \big\} \]
where $\delta_{\ref{Prop:genPerelman}}$ is the constant in Proposition \ref{Prop:genPerelman} which we assume to satisfy the before-mentioned monotonicity property.
Furthermore, we choose $T = T(w, \theta)$ such that $2 T^{-1} < \td{c} w$ and that $\un{r}_{\varepsilon_0} (t) < \frac1{20}  \theta \min \{ 1, E^{-1}, \varepsilon_0 \} \sqrt{t}$, $\delta(t) < \frac12 \theta \sqrt{t}$ for all $t \geq \frac12 T$.

We now present the main argument.
By assumption (ii), we can consider the case in which $B(x_0, t_0, r_0) \subset U(t_0)$.
Our goal will be to apply Proposition \ref{Prop:genPerelman} in the universal covering flow $\td{U}$ of $U$ (see Lemma \ref{Lem:tdMM}) at a lift $(\td{x}_0, t_0)$ of $(x_0, t_0)$.
We first check that all points $(x,t) \in \td{U}$ with $t \in [t_0 - \frac12 r_0^2, t_0]$ satisfy the canonical neighborhood assumptions $CNA (\un{r}_{\varepsilon_0} (t_0), \varepsilon_0, \un{E}_{\varepsilon_0}, \un\eta)$.
To do this, consider first a point $(x,t) \in U \subset \MM$ with $t \in [ t_0 - \frac12 r_0^2, t_0] \subset [\frac12 t_0, t_0]$.
By the previous conclusion, $(x,t)$ satisfies the desired canonical neighborhood assumptions in $\MM$.
We now argue that $(x,t)$ satisfies those canonical neighborhood assumptions also in $U$.
If $|{\Rm}|^{-1/2} (x,t) > \un{r}_{\varepsilon_0} (t_0)$, then there is nothing to show.
So assume that
\[|{\Rm}|^{-1/2} (x,t) \leq \un{r}_{\varepsilon_0} (t_0) < \tfrac1{20}  \theta \min \{ 1, E^{-1}, \varepsilon_0 \} \sqrt{t_0} < \tfrac1{10} \big(\max \{ 1, E, 2 \varepsilon_0^{-1} \} \big)^{-1} r_0. \]
Then in particular $|{\Rm}|(x,t) > r_0^{-2}$ which implies by assumption (ii) that $(x,t) \not\in P(x', t_0, r_0, - r_0^2)$ for all $x' \in \partial U(t_0)$ and hence $B(x, t, \frac1{10} r_0) \subset U(t)$.
The point $(x,t)$ is a center of a strong $\varepsilon_0$-neck or an $(\varepsilon_0, E)$-cap in $\MM$.
The time-$t$ slice of this strong $\varepsilon_0$-neck or $(\varepsilon_0, E)$-cap is contained in the ball
\[ B \big( x, t, \max \{ E, 2 \varepsilon_0^{-1} \} |{\Rm}|^{-1/2} (x,t) \big) \subset B(x,t, \tfrac1{10} r_0) \subset U(t). \]
Moreover, if $(x,t)$ is the center of a strong $\varepsilon_0$-neck, then this neck reaches at most until time $t - 2 |{\Rm}|^{-1} (x,t) > t_0 - \frac12 r_0^2 - \frac1{50} r_0^2 > t - r_0^2$.
So $(x,t)$ in fact satisfies the canonical neighborhood assumptions $CNA (\un{r}_{\varepsilon_0} (t_0), \varepsilon_0, \un{E}_{\varepsilon_0}, \un\eta)$ in $U$.
It follows that all points $(x,t) \in \td{U}$ with $t \in [t_0 - \frac12 r_0^2, t_0]$ satisfy those canonical neighborhood assumptions in $\td{U}$.

It is easy to see that all surgeries on $\td{U}$ in the time-interval $[t_0 - \frac12 r_0^2, t_0]$ are performed by $\delta_{\ref{Prop:genPerelman}} ( \un{r}_{\varepsilon_0} (t_0), \td{c} w, 1, \un{E}_{\varepsilon_0}, \un\eta, 0)$-precise cutoff.
So the first paragraph of the assumptions as well as assumption (iv) and (v) of Proposition \ref{Prop:genPerelman} is satisfied for $\MM \leftarrow \td{U}$, $t_0 \leftarrow t_0$, $x_0 \leftarrow \td{x}_0$, $r_0 \leftarrow r_1$, $w \leftarrow \td{c} w$, $A \leftarrow 1$, $r \leftarrow \un{r}_{\varepsilon_0}(t_0)$, $E \leftarrow \un{E}_{\varepsilon_0}$, $\eta \leftarrow \un\eta$ whenever $r_1 \leq \min \{ \rho(x_0, t_0), \frac12 r_0 \}$.
We will now argue that assumptions (i) and (ii) are satisfied for the right choice of $r_1$, i.e. we show that there is a constant $\beta = \beta (w) > 0$ (depending only on $w$) such that these assumptions hold whenever $r_1 \leq \beta r_0$.

Consider first assumption (ii).
Let $x \in B(\td{x}_0, t_0, \beta r_0)$ be a point which survives until some time $t \in [t_0 - \frac1{10} \beta^2 r_0^2, t_0]$.
Then $\dist_{t_0} (x, \partial \td{U}(t_0)) > \frac12 r_0$ for $\beta < \frac12$ and we conclude that $\dist_t (x, \partial \td{U}(t) ) > \frac1{20} r_0$.
So assumption (ii) holds if $\beta < \frac1{200}$.

Assumption (i) requires more work.
Set $Z = Z_{\ref{Prop:genPerelman}} ( \td{c} w, 1, \un{E}_{\varepsilon_0}, \un\eta)$.
Let $t_1, t_2 \in [t_0 - \frac1{10} \beta^2 r_0^2, t_0]$, $t_1 < t_2$ and consider some point $x \in B(\td{x}_0, t_0, \beta r_0)$ which survives until time $t_2$ and a space-time curve $\gamma : [t_1, t_2] \to \MM$ with endpoint $\gamma(t_2) \in B(x, t, 4 \beta r_0)$ and which meets the boundary $\partial \td{U}$.
We want to show that for a sufficiently small choice of $\beta$ we have $\LL(\gamma) > Z \beta r_0$.
Similarly as in the last paragraph, we conclude that $\dist_{t_0} (\gamma(t_2), \partial \td{U}(t_0) ) > \frac12 r_0$ if $\beta < \frac1{100}$.
Let now
\[ P = \bigcup_{x' \in \partial \td{U}(t_0)} P(x', t_0, \tfrac12 r_0, - r_0^2) \]
be a parabolic collar neighborhood of $\partial \td{U}$.
Recall that $P$ is non-singular, $|{\Rm}| < r_0^{-2}$ on $P$ and $(\gamma(t_2), t_2) \not\in P$.
Let $[t'_1, t'_2] \subset [t_1, t_2]$ be a time-interval such that $\gamma(t'_1) \in \partial \td{U}(t'_1)$ and $\dist_{t_0} (\gamma(t'_2), \partial \td{U}' (t'_2)) > \frac12 r_0$ and such that $\gamma( [t'_1, t'_2) ) \subset P$.
Then we can estimate using the $t^{-1}$-positivity condition
\begin{multline*}
 \LL(\gamma) \geq \int_{t_1}^{t_2} \sqrt{t_2 - t} \big( |\gamma'(t)|^2_t - \tfrac32 t^{-1} \big) dt \geq \int_{t'_1}^{t'_2} \sqrt{t_2 - t} \big| \gamma'(t) \big|_t^2 dt - \tfrac32 \beta r_0 \\
 \geq c \int_{t'_1}^{t'_2} \sqrt{t_2' - t} |\gamma'(t)|_{t_0}^2 dt - \tfrac32 \beta r_0. 
\end{multline*}
Substituting $s^2 = t'_2 - t$ with $s_1^2 = t'_2 - t'_1$ yields
\begin{multline*}
 \int_{t'_1}^{t'_2} \sqrt{t_2' - t} |\gamma'(t)|_{t_0}^2 dt = \tfrac12 \int_0^{s_1} \Big| \frac{d}{ds} \gamma (t'_2 - s^2) \Big|^2_{t_0} ds \\ \geq \frac1{2\sqrt{t'_2 - t'_1}} \dist_{t_0}^2 (\gamma(t'_2), \gamma (t'_1)) 
 \geq \frac{r^2_0}{8 \sqrt{t'_2 - t'_1}} \geq \frac1{8 \beta} r_0.
\end{multline*}
Thus
\[ \LL(\gamma) > \Big( \frac{c}{8 \beta} - \tfrac32 \beta \Big) r_0. \]
For sufficiently small $\beta$, the right hand side is larger than $Z \beta r_0$.

We can finally apply Proposition \ref{Prop:genPerelman} with the parameters mentioned above and $r_1 = \min \{ \rho(x_0, t_0), \min \{ \beta, \ov{r}_{\ref{Prop:genPerelman}} (\td{c} w, 1, \un{E}_{\varepsilon_0}, \un\eta ) \} r_0 \}$.
We first obtain, that there is an $\widehat{r}_1 = \widehat{r}_1 (w) > 0$ such that $r_1 > \widehat{r}_1 r_0$.
Consider moreover the constant $C_{1, \ref{Prop:genPerelman}} = C_{1, \ref{Prop:genPerelman}} (\td{c} w, 1, \un{E}_{\varepsilon_0}, \un\eta)$ from Proposition \ref{Prop:genPerelman}.
Assuming $T$ to be large enough, we conclude $C_{1, \ref{Prop:genPerelman}} \delta(t) \leq C_{1, \ref{Prop:genPerelman}} \leq \widehat{r}_1 \theta \sqrt{t_0} < \widehat{r}_1 r_0 < r_1$ for all $t \in [\frac12 t_0, t_0]$.
This implies that the parabolic neighborhood $P(\td{x}_0, t_0, r_1, - \tau_{\ref{Prop:genPerelman}} (\td{c} w, 1, \un{E}_{\varepsilon_0}, \un\eta) r_1^2)$ is non-singular and that we have $|{\Rm}| < K_{0, \ref{Prop:genPerelman}}(\td{c} w, 1, \un{E}_{\varepsilon_0}, \un\eta) r_1^{-2}$ there which finishes the proof.
\end{proof}

\subsection{Controlled diameter growth of regions whose boundary is sufficiently collapsed and good}
In this subsection we show that if a region $U$ in a Ricci flow with surgery has controlled diameter at some time $t_1$, then we can control its curvature and diameter at some slightly later time $t_2 > t_1$ if the geometry around the boundary $\partial U$ satisfies certain collapsedness and goodness assumptions.
The important point is hereby that the size of the time-interval $[t_1, t_2]$ does not depend on the diameter of $U$ at time $t_1$.
We obtain this independence by requiring instead a collapsedness assumption which depends on the diameter of $U$ at time $t_1$.
The idea of the following proof is that by an $\LL$-geometry argument similar to Lemma \ref{Lem:6.3a}, we can deduce a $\kappa$-noncollapsedness result where the constant $\kappa$ only depends on the diameter of $U$ at \emph{time $t_1$}.
Then, an argument similar to the one in the proof of Lemma \ref{Lem:6.3bc}(b) will help us to conclude more uniform canonical neighborhood assumptions on $U$ and finally an argument similar to the one in the proof of Proposition \ref{Prop:curvcontrolincompressiblecollapse} gives us a curvature bound on $U$.

\begin{Proposition} \label{Prop:slowdiamgrowth}
There is a continuous positive function $\delta : [0, \infty) \to (0, \infty)$ and for every $w > 0$ there is a $\tau_0 = \tau_0(w) > 0$ such that for all $\theta > 0$ and $A < \infty$ there are constants $\kappa = \kappa(w, A), \td\rho = \td\rho (w, A), \ov{w} = \ov{w} (w, A) > 0$ and $K = K (w, A), A' = A'(w, A), T = T(w, A, \theta) < \infty$ such that: \\
Let $\MM$ be a Ricci flow with surgery on the time-interval $[0, \infty)$ with normalized initial conditions which is performed by $\delta(t)$-precise cutoff and let $t_0 > T$.
Let $\tau \in (0, \tau_0]$ and consider a sub-Ricci flow with surgery $U \subset \MM$ on the time-interval $[t_0 - \tau r_0^2, t_0]$ whose time-slices are connected.
Let $x_0 \in U(t_0)$ be a point which survives until time $t_0 - \tau r_0^2$ and $r_0 > 0$ be a constant.
Assume that
\begin{enumerate}[label=(\roman*)]
\item $\theta \sqrt{t_0} < r_0 \leq \sqrt{t_0}$,
\item $x_0$ is $w$-good at scale $r_0$ and time $t_0$,
\item $\vol_{t_0} B(x_0, t_0, r_0) < \ov{w} r_0^3$,
\item $\partial U(t) \subset B(x_0, t, A r_0)$ for all $t \in [t_0 - \tau r_0^2, t_0]$,
\item $U(t_0 - \tau r_0^2) \subset B(x_0, t_0 - \tau r_0^2, A r_0)$.
\end{enumerate}
Consider the universal covering flow $\td\MM$ of $\MM$ as described in Lemma \ref{Lem:tdMM} and let $\td{U} \subset \td\MM$ be a sub-Ricci flow with surgery such that $\td{U}(t) \subset \td\MM(t)$ is a family of lifts of $U(t)$.
Then
\begin{enumerate}[label=(\alph*)]
\item For all $t \in [t_0 - \tau r_0^2, t_0]$ all points of $\td{U}(t)$ are $\kappa$-noncollapsed on scales $< r_0$ in $\td\MM$.
\item There are universal constants $\eta > 0$, $E < \infty$ such that for every $t \in [t_0 - \tau r_0^2, t_0]$ the points in $\td{U}(t)$ satisfy the canonical neighborhood assumptions $CNA( \td\rho r_0, \linebreak[1] \td\varepsilon_0, \linebreak[1] E, \linebreak[1] \eta)$ in $\td\MM$.
Here $\td\varepsilon_0$ is the constant from Lemma \ref{Lem:neckhasfewquotients}.
\item There are no surgery points in $U$ (i.e. the Ricci flow with surgery $U$ is non-singular and we can write $U = U(t_0) \times [t_0 - \tau r_0^2, t_0]$) and we have $|{\Rm}| < K r_0^{-2}$ on $U(t_0) \times [t_0 - \tau r_0^2, t_0]$.
\item $U(t) \subset B(x_0, t, A' r_0)$ for all $t \in [t_0 - \tau r_0^2, t_0]$.
\end{enumerate}
\end{Proposition}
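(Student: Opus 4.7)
My plan is to establish (a)--(d) together via a bootstrap in time. Write $t_1 := t_0 - \tau r_0^2$ and, for $A'$ to be fixed later, let $T^* \in [t_1, t_0]$ be the supremum of times $s$ for which $U(t) \subset B(x_0, t, A' r_0)$ holds for all $t \in [t_1, s]$; by (v) we have $T^* \geq t_1$, and by continuity the containment holds at $T^*$ itself. For any $t \in [t_1, T^*]$ and $\td{y} \in \td U(t)$, the projection $y := \pi(\td y) \in U(t)$ lies at distance $\leq 2A' r_0$ from $x_0$ in $\MM(t)$, so lifting a minimizing path yields some lift $\td{x}_0' \in \td\MM(t)$ of $x_0$ with $\dist_t(\td y, \td x_0') \leq 2A' r_0$. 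Via hypothesis (ii) together with Proposition \ref{Prop:curvcontrolgood} lifted to $\td\MM$, I also obtain a non-singular parabolic neighborhood $P(\td x_0', t, r_1, -\tau_1 r_1^2)$ with $r_1 \asymp r_0$ and curvature $\leq K_1(w) r_0^{-2}$ on it; choosing $\tau_0(w) \leq \tau_1(w)/2$ ensures this persists over the whole backward interval $[t_1, t]$.

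The noncollapsing claim (a) then follows from a Perelman $\LL$-geometry argument in $\td\MM$ modelled on Lemma \ref{Lem:6.3a}. Basing $\LL$ at $(\td y, t)$ and using a test curve that moves from $\td y$ to $\td x_0'$ at constant spatial speed over time $\Delta t := t - t_1$, the bound $d \leq 2A' r_0$ on spatial length gives
\[
L(\td x_0', t_1) \;\leq\; C\,(A')^2 r_0^2 /\sqrt{\Delta t},
\]
with scalar-curvature contributions absorbed by the $t^{-1}$-positivity and (where curvature blows up along the curve) the ambient canonical neighborhood assumptions coming from Proposition \ref{Prop:CNThm-mostgeneral}; surgery points along candidate curves are ruled out exactly as in Claim 2 of Lemma \ref{Lem:6.3a} by taking $\delta(t)$ small. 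The resulting $L$-bound at $(\td x_0', t_1)$ propagates to a ball around $\td x_0'$ via the curvature control there, and combining with the volume lower bound $\vol_{t_1} B(\td x_0', r_1/2) > c(w) r_0^3$ from goodness (transported from $t_0$ via the same neighborhood) yields $\widetilde V(\Delta t) \geq v(w, A') > 0$. Monotonicity of reduced volume then forces $\kappa$-noncollapsing at $(\td y, t)$ with $\kappa = \kappa(w, A)$.

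With (a) in place, (b) is the standard blow-up contradiction of Lemma \ref{Lem:6.3bc}(a): if CNA fails at scale $\td\rho r_0$ at some $(\td y, t) \in \td U$ with $t \leq T^*$, point-pick to find a worst-case $(\td y', t')$ failing at scale $\ov q \leq \td\rho r_0$ while CNA holds at scale $\ov q/2$ throughout a parabolic neighborhood, parabolically rescale by $\ov q^{-1}$, extract a limit via Lemma \ref{Lem:lmiitswithCNA} (using (a) for uniform noncollapsing), and contradict via Lemma \ref{Lem:kappasolCNA}. Part (c) proceeds as in Proposition \ref{Prop:curvcontrolincompressiblecollapse}: if $|{\Rm}|(\td y, t) \geq K r_0^{-2}$ for some $\td y \in \td U(t)$, $t \leq T^*$, then (b) makes $(\td y, t)$ the center of a strong $\varepsilon_0$-neck or $(\varepsilon_0, E)$-cap at scale $\lesssim K^{-1/2} r_0$ (in the cap case one passes to a nearby neck); Lemma \ref{Lem:neckhasfewquotients} then gives $\vol_t B(\pi(\td y), c K^{-1/2} r_0) > \td w_0 (c K^{-1/2} r_0)^3$. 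On the other hand, $\pi(\td y) \in U(t) \subset B(x_0, t, A' r_0)$, and the volume hypothesis (iii) transported from $t_0$ via the controlled parabolic neighborhood of $\td x_0$ together with Bishop--Gromov comparison on the sectional curvature bound $-r_0^{-2}$ at $x_0$ gives $\vol_t B(\pi(\td y), c K^{-1/2} r_0) \leq C(A') \ov w r_0^3$, a contradiction for $\ov w = \ov w(w, A)$ small enough. Surgery points in $U$ are excluded from (\ref{eq:surgpointhighcurv}) by the ensuing bound $|{\Rm}| < K \theta^{-2} t_0^{-1}$, for $T = T(w, A, \theta)$ large.

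The bootstrap then closes: with curvature $< K(w, A) r_0^{-2}$ on $U(t)$ for $t \in [t_1, T^*]$, Lemma \ref{Lem:distdistortion}(a) shows that $\dist_t$ changes by factor at most $e^{CK\tau_0}$ over $[t_1, T^*]$, and since $x_0 \in U(t)$ for all $t \in [t_1, t_0]$ by the sub-Ricci flow structure, $U(T^*) \subset B(x_0, T^*, 2A e^{CK\tau_0} r_0)$. Choosing $A'(w, A) := 2A e^{CK(w,A)\tau_0(w)} + 1$ makes the containment strict at $T^*$, so by continuity it extends past $T^*$, forcing $T^* = t_0$ and yielding (d). The main obstacle will be the $\LL$-estimate in (a): the test curve must leave the controlled parabolic neighborhood $P(\td x_0', t, r_1, \cdot)$ and traverse a region where only the weak ambient canonical neighborhood assumptions at scale $\un r_\varepsilon(t_0) \ll r_0$ are available, so the scalar-curvature contribution to $\LL$ must be carefully dominated -- essentially by the maximum-principle argument with cutoff from Lemma \ref{Lem:6.3a} -- and it is this bookkeeping that forces the dependence $\kappa = \kappa(w, A)$ rather than $\kappa(w)$.
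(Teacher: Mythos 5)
The overall architecture you propose for (b), (c), (d) — a point-picking/blow-up argument via Lemma \ref{Lem:lmiitswithCNA} and \ref{Lem:kappasolCNA} for the canonical neighborhoods, a neck-quotient volume contradiction via Lemma \ref{Lem:neckhasfewquotients} for the curvature bound, and distance distortion for the containment — matches the paper's proof. The genuine gap is in (a), which is the crux. Your headline estimate $L(\td x_0', t_1) \leq C(A')^2 r_0^2/\sqrt{\Delta t}$ is obtained from a test curve of "constant spatial speed", but such an estimate requires metric (hence curvature) control along the curve at all intermediate times $t' \in [t_1,t]$: both the $|\gamma'|^2(t')$ term and the $\scal$ term in (\ref{eq:defLLlength}) are uncontrolled in the interior of $U(t')$, which is precisely the region whose curvature the proposition is trying to bound. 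Your fallback, "the maximum-principle argument with cutoff from Lemma \ref{Lem:6.3a}", does not transplant either: there the cutoff is centered at a point with a priori parabolic curvature control and the noncollapsing conclusion is drawn at the \emph{final} time near that point, whereas here the base point $\td y$ sits at an arbitrary intermediate time in the a-priori-uncontrolled interior of $\td U(t)$. The paper's mechanism is different and is the actual content of the proposition: it runs the maximum principle for $\ov L$ (based at $(x_1,t_1)$, in the base manifold $\MM$, not in $\td\MM$) on the domain $U(t)$ \emph{without any cutoff}, using the dichotomy that either $\inf_{U(t)}\ov L \leq 6(t_1-t)$ propagates all the way to the initial time — where hypothesis (v) confines the minimizer to $B(x_0, t_0-\tau r_0^2, Ar_0)$ — or the infimum is attained on $\partial U(t)$, which hypothesis (iv) confines to $B(x_0,t,Ar_0)$. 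In either case one lands in a region where curvature \emph{is} controlled, namely $\bigcup_t B(x_0,t,2Ar_0)$ over a slightly extended backward interval $[t_0-(\tau+\tau')r_0^2, t_0]$; this control is obtained from Proposition \ref{Prop:curvcontrolincompressiblecollapse}, which requires the collapsing hypothesis (iii) that your argument never uses for this purpose. Only after the bound $L(x_0, t_0-(\tau+\tfrac12\tau')r_0^2) < C_3 r_0$ is secured in $\MM$ is a single minimizing $\LL$-geodesic lifted to $\td\MM$, landing at a lift of $x_0$ where hypothesis (ii) supplies the volume lower bound for the reduced volume.

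Two further points. First, your bootstrap on the containment $U(t)\subset B(x_0,t,A'r_0)$ is both unnecessary in the paper's scheme (the boundary-localized maximum principle needs no a priori bound on $\diam_t U(t)$) and mildly circular as written: $A'$ is defined through $K(w,A)$, which in your setup is produced by an argument run on $[t_1,T^*]$ under the containment hypothesis with radius $A'$. Second, when $t$ is close to $t_1$ your reduced-volume base interval $\Delta t = t-t_1$ degenerates; the paper avoids this by always basing the reduced volume at the fixed earlier time $t_0-(\tau+\tau')r_0^2$, which is exactly why the backward extension by $\tau' r_0^2$ (again via Proposition \ref{Prop:curvcontrolincompressiblecollapse} and hypothesis (iii)) is needed before the $\LL$-argument can start.
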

\begin{proof}
Consider the functions $\underline{\delta}_{\td\varepsilon_0} (t), \underline{r}_{\td\varepsilon_0} (t)$ and the constants $\un{E}_{\td\varepsilon_0}, \un\eta$ from Proposition \ref{Prop:CNThm-mostgeneral} ($\td\varepsilon_0$ is the constant from Lemma \ref{Lem:neckhasfewquotients}) and the function $\delta_{\ref{Prop:curvcontrolincompressiblecollapse}} (t)$ from Proposition \ref{Prop:curvcontrolincompressiblecollapse}.
Let furthermore $\delta^* (\Lambda, r, \eta, t_0)$ be the function from Claim 2 in the proof of Lemma \ref{Lem:6.3a}.
We can assume without loss of generality that $\delta^*$ is monotone in the sense that $\delta^* (\Lambda', r', \eta) \leq \delta^* (\Lambda, r, \eta)$ whenever $\Lambda' \geq \Lambda$ and $r' \leq r$.
Assume now that for all $t \geq 0$
\[ \delta(t) < \min \big\{ \delta^*( t, \tfrac12 \un{r}_{\td\varepsilon_0} (2 t), \un\eta ), \; \un\delta_{\td\varepsilon_0} (t), \; \delta_{\ref{Prop:curvcontrolincompressiblecollapse}} (t), \; t^{-1} \un{r}_{\td{\varepsilon}_0} (2 t) \big\}. \]
We note that then the flows $\MM$ and $\td\MM$ satisfy the canonical neighborhood assumptions $CNA (\un{r}_{\td\varepsilon_0} (t), \td\varepsilon_0, \un{E}_{\td\varepsilon_0}, \un\eta)$.

Set $\tau_0(w) = \frac12 \tau_{\ref{Prop:curvcontrolincompressiblecollapse}}(w)$ and assume that $\ov{w} < \ov{w}_{\ref{Prop:curvcontrolincompressiblecollapse}} (w, 2A)$ and $T > T_{\ref{Prop:curvcontrolincompressiblecollapse}} (w, 2A, \theta)$ where $\tau_{\ref{Prop:curvcontrolincompressiblecollapse}}$, $\ov{w}_{\ref{Prop:curvcontrolincompressiblecollapse}}$ and $T_{\ref{Prop:curvcontrolincompressiblecollapse}}$ are the constants from Propsition \ref{Prop:curvcontrolincompressiblecollapse}.
Then there is a constant $0 < \tau' = \tau' (w, A) < \tau$ such that the parabolic neighborhood $P(x_0, t_0 - \tau r_0^2, A r_0, - \tau' r_0^2)$ is non-singular and
\begin{equation}  \label{eq:curvboundonAparnbdtauprime}
 |{\Rm}| < K_1^* (w, A) r_0^{-2} \qquad \text{on} \qquad P(x_0, t_0 - \tau r_0^2, A r_0, - \tau' r_0^2)
\end{equation}
and such that the distance distortion on $P(x_0, t_0 - \tau r_0^2, A r_0, - \tau' r_0^2)$ can be controlled by a factor of $2$, i.e. $U(t) \subset B(x_0, t, 2 A r_0)$ for all $t \in [t_0 - \tau r_0^2, t_0 - (\tau + \tau') r_0^2]$ (note that since the previous parabolic neighborhood is non-singular, we can extend $U$ to the time-interval $[t_0 - (\tau + \tau') r_0^2, t_0]$).
Moreover, we obtain the bound
\begin{equation} \label{eq:curvboundon2Afortau}
 |{\Rm}| < K_1^*(w, A) r_0^{-2} \qquad \text{on} \qquad B = \bigcup_{t \in [t_0 - (\tau + \tau') r_0^2, t_0]} B(x_0, t, 2 A r_0)
\end{equation}
and we can assume that there are no surgery points in $B$.

\textit{Proof of assertion (a). \quad}
Here, we follow a modified version of the proof of Lemma \ref{Lem:6.3a}.
Let $t_1 \in [t_0 - \tau r_0^2, t_0]$, $\td{x}_1 \in \td{U}(t_1) \subset \td\MM (t_1)$, $0 < r_1 < r_0$ such that $P(\td{x}_1, t_1, r_1, - r_1^2)$ is non-singular and $|{\Rm}| < r_1^{-2}$ on $P(\td{x}_1, t_1, r_1, - r_1^2)$.

We first explain that for sufficiently large $T$ we can restrict ourselves to the case $r_1 > \frac12 \underline{r}_{\td\varepsilon_0} (t_1) \geq \frac12 \underline{r}_{\td\varepsilon_0} (t_0)$.
Compare this statement with Claim 1 in the proof of Lemma \ref{Lem:6.3a} (applied to $\td\MM$).
As in the proof of this claim, we chose $s > 0$ to be the supremum over all $r_1$ which satisfy the properties above and if $s \leq \frac12 \un{r}_{\td\varepsilon_0} (t_1)$, we argue as in the cases (2),(3).
Case (1) does not occur since $\td\MM$ has no boundary and case (4) does not occur since we can assume that $s \leq \frac12 \un{r}_{\td\varepsilon_0} (t_1) \leq \theta \sqrt{t_0} < r_0$.

Let $x_1 \in \MM(t_1)$ be the projection of $\td{x}_1$.
Consider the functions $L$, $\ov{L}$ and the family of domains $D_t$ on $\MM$ based in $(x_1, t_1)$.
Our first goal will be to show that $L(x_0, t_0 - (\tau + \frac12 \tau') r_0^2) < C_3 r_0$ for some univsersal $C_3 = C_3 (w, A, a, \tau) < \infty$.
An important tool will hereby be the following claim which is analogous to Claim 2 in the proof of Lemma \ref{Lem:6.3a}:

\begin{Claim}
For any $\Lambda < \infty$ there is a $T^* = T^* (\Lambda) < \infty$ such that whenever $t_0 \geq T^*$, then the following holds: If $t \in [t_0 - (\tau + \tau') r_0^2, t_1)$, $x \in \MM(t)$, $r_1 > \frac12 \un{r}_{\td\varepsilon_0} (t_0)$ and $L(x, t) < \Lambda r_0$, then $x \in D_t$ and $(x,t)$ is not a surgery point.
\end{Claim}

\begin{proof}
This follows by the choice of $\delta$ in (\ref{eq:curvboundon2Afortau}) along with Claim 2 in the proof of Lemma \ref{Lem:6.3a} (applied to $\MM$).
\end{proof}

In contrast to the proof of Lemma \ref{Lem:6.3a}, we don't need to localize the function $\ov{L}$.
So we will make use of the inequality
\begin{equation} \label{eq:evolovLinU}
 \Big( \frac{\partial}{\partial t} - \triangle \Big) \ov{L} (x, t) \geq - 6,
\end{equation}
which holds on $D_t$ (cf \cite[7.1]{PerelmanI}).
We will now apply a maximum principle argument to (\ref{eq:evolovLinU}) to show that either $\inf_{U(t)} \ov{L} (\cdot, t) \leq 6 (t_1 - t)$ for all $t \in [t_0 - \tau r_0^2, t_1)$ or there is a $t \in [t_0 - \tau r_0^2, t_1)$ such that $\inf_{\partial U(t)} \ov{L} (\cdot, t) \leq 6 (t_1 - t)$.
Assume not.
Since, $\ov{L}(x_1, t) \sim \const (t_1 - t)^2$ as $t \to t_1$, there is some $t' \in [t_0 - \tau r_0^2, t_1)$ such that $\inf_{U(t)} \ov{L} (\cdot, t) \leq 6 (t_1 - t)$ for all $t \in [t', t_1)$.
Choose $\nu > 0$ small enough such that $\inf_{\partial U(t)} \ov{L} (\cdot, t) > (6+\nu) (t_1 - t)$ for all $t \in [t_0 - \tau r_0^2, t']$ and choose $t^* \in [t_0 - \tau r_0^2, t']$ minimal with the property that $\inf_{U(t)} \ov{L} (\cdot, t) \leq (6 + \nu) (t_1 - t)$ for all $t \in (t^*, t_1)$.
So $\ov{L} (\cdot, t^*)$ attains its minimum at an interior point $x^* \in U(t^*)$.
This implies that $\triangle \ov{L} (x^* , t^*) \geq 0$.
Since $\ov{L} (x^*, t^*) \leq (6+\nu) (t_1 - t^*)$, we have $L (x^*, t^*) \leq (3 + \nu) \sqrt{t_1 - t^*} \leq 4 r_0$.
Hence by the Claim, assuming $T \geq T^*(4)$, we conclude $x^* \in D_{t^*}$ and $(x^*, t^*)$ is not a surgery point.
By the assumption on $t^*$, we must then either have $t^* = t_0 -  \tau r_0^2$ or $\ov{L} (x^*, t^*) = (6+\nu) (t_1 - t^*)$ and $\frac{\partial}{\partial t} \ov{L} (x^*, t^*) \leq - 6$ which however contradicts (\ref{eq:evolovLinU}).
So $\inf_{U(t)} \ov{L} (\cdot, t) \leq (6 + \nu) (t_1 - t)$ holds for all $\nu > 0$ and $t \in [t_0 - \tau r_0^2, t_1)$ and by letting $\nu$ go to zero, we obtain a contradiction.

Consider now the case in which there is a $t \in [t_0 - \tau r_0^2, t_1)$ such that $\inf_{\partial U(t)} \ov{L} (\cdot, t) \linebreak[1] \leq 6 (t_1 - t)$.
Let $x \in \partial U (t)$ such that $\ov{L} (x, t) \leq 6 (t_1 - t)$, i.e. $L (x, t) \leq 3 \sqrt{t_1 - t} \leq 3 r_0$.
By concatenating an $\LL$-geodesic between $(x_1, t_1)$ and $(x, t)$ with a constant space-time curve on the time-interval $[t_0 - \tau r_0^2, t]$, we conclude using (\ref{eq:curvboundon2Afortau}) and assumption (iv)
\[ L (x, t_0 - \tau r_0^2) \leq L (x, t) + C_1 K^*_1 r_0^{-2} \int^{ t_0 - \tau r_0^2}_{t} \sqrt{t_1 - t'} dt' 
 \leq 3 r_0  + C_1 K^*_1  r_0. \]
Thus, in both cases (i.e. in the case in which the infimum of $\ov{L}$ can be controlled on the boundary of $U$ as well as in the case in which it can be controlled everywhere on $U$), we can find some point $y \in U( t_0 -  \tau r_0^2 )$ such that $L(y, t_0 - \tau r_0^2) < C_2 r_0$ for some constant $C_2 = C_2 (w, A) < \infty$.
Observe that by (v) we have $y \in B( x_0, t_0 - \tau r_0^2, A r_0)$.
So by extending an $\LL$-geodesic between $(x_1, t_1)$ and $(y,  t_0 - \tau r_0^2)$ by a time-$(t_0 - \tau r_0^2)$ geodesic segment, we can conclude using (\ref{eq:curvboundonAparnbdtauprime}) that there is a constant $C_3 = C_3(w, A, \tau') = C_3 (w, A) < \infty$ such that $L(x_0,  t_0 - (\tau + \frac12 \tau') r_0^2) < C_3 r_0$.

By the Claim, assuming $T \geq T^* (C_3)$, we find that there is a smooth minimizing $\LL$-geodesic $\gamma$ between $(x_1, t_1)$ and $(x_0,  t_0 - (\tau + \frac12 \tau') r_0^2)$ which does not hit any surgery points.
We now lift $\gamma$ to an $\LL$-geodesic $\td\gamma$ in $\td\MM$ starting from $(\td{x}_1,  t_1)$ and going backwards in time.
If there are no surgery times on the time-interval $[t_0 - (\tau + \frac12 \tau') r_0^2, t_1]$, then this is trivial.
If there are, then let $T^i$ be the last surgery time which is $\leq t_1$ and lift $\gamma$ on the time-interval $[T^i, t_1]$ to $\td\MM(T^i)$.
Obviously, $\td\gamma (T^i) \in \td{U}^i_+$ and we can use the diffeomorphism $\td{\Phi}^i$ to determine the limit $\lim_{t \nearrow T^i} \td\gamma(t)$.
Starting from this limit point, we can lift $\gamma$ on the interval $[T^{i-1}, T^i)$ or $[t_0 - (\tau + \frac12 \tau') r_0^2), T^i)$ and continue the process until we reach time $t_0 - (\tau + \frac12 \tau') r_0^2$.
Let $\td{x}_0 = \td\gamma (t_0 - ( \tau + \frac12 \tau') r_0^2) \in \td\MM (t_0 - (\tau + \frac12 \tau') r_0^2)$ be the initial point of $\td\gamma$.
Then $\td{x}_0$ is a lift of $x_0$ and there is a $\nu_1 = \nu_1(w) > 0$ such that
\[  \vol_{t_0 - (\tau + \tau') r_0^2} B(\td{x}_0, t_0 - (\tau + \tau') r_0^2, r_0) > \nu_1 r_0^3. \]

We consider now the functions $L^{\td\MM}, \ell^{\td\MM}$, the domains $D^{\td\MM}_t$ and the reduced volume $\td{V}^{\td\MM} (t)$ in $\td\MM$ based in $(\td{x}_1, t_1)$.
By concatenating $\td\gamma$ with time-$(t_0 - (\tau + \frac12 \tau') r_0^2)$ geodesic segments we conclude, using the curvature bound in (\ref{eq:curvboundon2Afortau}), that there is some $C_4 = C_4 (w, A) < \infty$ such that
\[ L^{\td\MM} (x, t_0 - (\tau + \tau') r_0^2) < C_4 r_0 \qquad \text{for all} \qquad x \in B(\td{x}'_0, t_0 - (\tau + \tau') r_0^2, r_0). \]
Again, using the Claim and assuming $T \geq T^*(C_4)$, we conclude that $B(\td{x}'_0, t_0 - (\tau+ \tau') r_0^2, r_0) \subset D^{\td\MM}_{t_0 - (\tau + \tau') r_0^2}$.
So together with the inequality $t_1 - (t_0 - (\tau + \tau') r_0^2) \geq \frac12 \tau' r_0^2$, this implies that there is some $\nu_2 = \nu_2(w, A) > 0$ such that
\[ \td{V}^{\td\MM} (t_0 - (\tau + \tau') r_0^2) > \nu_2. \]
This implies the noncollapsedness in $(\td{x}_1, t_1)$.

\textit{Proof of assertion (b). \quad} The proof of this part goes along the lines of the proof of Lemma \ref{Lem:6.3bc}(a).
The main difference is however that instead of invoking Lemma \ref{Lem:6.3a} for the non-collapsing statement, we make use of assertion (a) of this Proposition.

Observe that by (\ref{eq:curvboundon2Afortau}), (ii) and basic volume comparison, we can choose $\kappa = \kappa (w, A) > 0$ such that the $\kappa$-noncollapsedness from assertion (a) even holds for all $t \in [t_0 - (\tau + \tau') r_0^2, t_0]$.

Let $w, A$ be given and let $E = \max \{ \un{E}_{\td\varepsilon_0}, E_{\ref{Lem:kappasolCNA}}(\td\varepsilon_0) \}$ and $\eta = \min \{ \un\eta, \eta_{\ref{Lem:kappasolCNA}} \}$ where $E_{\ref{Lem:kappasolCNA}}(\td\varepsilon_0)$, $\eta_{\ref{Lem:kappasolCNA}}$ are the constants from Lemma \ref{Lem:kappasolCNA}.

Assume first that the statement is false for some small $\td\rho$, i.e. there is a time $t \in [t_0 - \tau r_0^2, t_0]$ and a point $\td{x} \in \td{U} (t)$ such that $(x,t)$ does not satisfy the canonical neighborhood assumptions $CNA (\td\rho r_0, \td\varepsilon_0, E, \eta)$ on $\td\MM$.
In particular $|{\Rm}|(\td{x}, t) \geq \td\rho^{-2} r_0^{-2}$.

By a point picking argument, much easier than the one used in the proof of Lemma \ref{Lem:6.3bc}(a), we can find a time $\ov{t} \in [t_0 - \tau r_0^2, t_0]$ and a point $\ov{x} \in \td{U} ( \ov{t} )$ which have the same property and which additionally satisfy the following condition:
Set $\ov{q} = |{\Rm}|^{-1/2} (\ov{x}, \ov{t})$.
Then for any $t' \in [t_0 - (\tau + \tau') r_0^2, \ov{t}]$, all points in $\td{U}(t')$ satisfy the canonical neighborhood assumptions $CNA (\frac12 \ov{q}, \td\varepsilon_0, E, \eta)$.
Observe that here we assumed that $\td{\rho}^{-2} > K_1^*$ and hence by (\ref{eq:curvboundon2Afortau}) we did not need to extend the interval $[t_0 - \tau r_0^2, t_0]$ to pick $(\ov{x}, \ov{t})$.
Moreover, this implies that $\dist_{\ov{t}} (\ov{x}, \partial \td{U} (\ov{t})) > A r_0$.

We now assume that there are no uniform constants $\td\rho$ and $T$ such that assertion (b) holds.
Then for some given $w, A$, we can find a sequence $\td\rho^\alpha \to 0$ and a sequence of counterexamples $\td\MM^\alpha$, $U^\alpha$, $t_0^\alpha$, $r_0^\alpha$, $\tau^\alpha$, $\theta^\alpha$, $x_0^\alpha$ with $t_0^\alpha \to \infty$ and $t^\alpha_0 > T_{(a)} (w, A, \theta^\alpha)$ (here $T_{(a)}$ is the constant for which assertion (a) holds) such that there are times $t^\alpha \in [t^\alpha_0 - \tau^\alpha (r_0^\alpha)^2, t_0^\alpha]$ and points $\td{x}^\alpha \in \td{U}^\alpha (t^\alpha)$ which do not satisfy the canonical neighborhood assumptions $CNA (\td\rho^\alpha r_0^\alpha, \td\varepsilon_0, E, \eta)$ on $\td\MM$.
We can additionally assume that $t_0^\alpha \to \infty$.
By the last paragraph, we find times $\ov{t}^\alpha \in [t_0^\alpha - \tau^\alpha (r_0^\alpha)^2, t_0^\alpha]$ and points $\ov{x}^\alpha \in \td{U}(\ov{t}^\alpha)$ such that $(\ov{x}^\alpha, \ov{t}^\alpha)$ does not satisfy the canonical neighborhood assumptions $CNA(\ov{q}^\alpha, \td\varepsilon_0, E, \eta)$ with $\ov{q}^\alpha = |{\Rm}|^{-1/2} (\ov{x}^\alpha, \ov{t}^\alpha)$, but for any $t' \in [\ov{t}^\alpha - (\tau^\alpha + \tau') (r_0^\alpha)^2, \ov{t}^\alpha]$, all points in $\td{U}^\alpha (t')$ satisfy the canonical neighborhood assumptions $CNA (\frac12 \ov{q}^\alpha, \td\varepsilon_0, E, \eta)$.

Recall that we must have $\ov{q}^\alpha > \un{r}_{\td\varepsilon_0} (\ov{t}^\alpha) \geq \un{r}_{\td\varepsilon_0} (t_0^\alpha)$.
Let $(x', t') \in \MM^\alpha$ be a surgery point with $t' \in [t_0^\alpha - \tau^\alpha (r_0^\alpha)^2, t_0^\alpha]$.
Then as in (\ref{eq:surgpointhighcurv}) we have by the choice of $\delta$
\[ |{\Rm}| (x', t') > c' \delta^{-2} (t') \geq c' t^2 \un{r}_{\td\varepsilon_0}^{-2} (2 t') \geq c' (t_0^\alpha)^2 \un{r}_{\td\varepsilon_0}^{-2} (t_0^\alpha) \]
and hence $(\ov{q}^\alpha)^2 |{\Rm}| (x',t') > c' (t_0^\alpha)^2 \to \infty$.
As in the proof of Lemma \ref{Lem:6.3bc}(a) we conclude using Lemma \ref{Lem:shortrangebounds} that there is a constant $c > 0$ such that for large $\alpha$ the parabolic neighborhood $P(\ov{x}^\alpha, \ov{t}^\alpha, c \ov{q}^\alpha, - c(\ov{q}^\alpha)^2)$ is non-singular and we have $|{\Rm}| < 8 (\ov{q}^\alpha)^{-2}$ there.

Again, as in the proof of Lemma \ref{Lem:6.3bc}(a), we choose $\tau^* \geq 0$ maximal with the property that for all $\tau^{**}< \tau^*$ the point $\ov{x}^\alpha$ survives until time $\ov{t}^\alpha - \tau^{**} (\ov{q}^\alpha)^2$ for infinitely many $\alpha$.
After passing to a subsequence, we can assume that for all $\tau^{**}< \tau^*$ the point $\ov{x}^\alpha$ survives until time $\ov{t}^\alpha - \tau^{**} (\ov{q}^\alpha)^2$ for \emph{sufficiently large} $\alpha$.
Recall that $\dist_{\ov{t}^\alpha} (\ov{x}^\alpha, \partial \td{U}^\alpha ( \ov{t}^\alpha )) > A r_0^\alpha$.
By (\ref{eq:curvboundon2Afortau}) and simple distance distortion estimates, we obtain that $\dist_{t} (\ov{x}^\alpha, \partial \td{U}^\alpha ( t )) > b r_0^\alpha$ for all $t \in [ \ov{t}^\alpha - \tau^{**} (\ov{q}^\alpha)^2, \ov{t}^\alpha]$ and some $b = b(w, A) > 0$ (actually we can choose $b = b(w) > 0$).
So for every $a < \infty$ and $\tau^{**} < \tau^*$, we have $\dist_{t} (\partial \td{U}^\alpha (t), \ov{x}^\alpha) > a \ov{q}^\alpha$ for all $t \in [\ov{t}^\alpha - \tau^{**} (\ov{q}^\alpha)^2, \ov{t}^\alpha]$ whenever $\alpha$ is sufficiently large.

So by assertion (a) of this Proposition and the choice of $(\ov{x}^\alpha, \ov{t}^\alpha)$ there is a uniform constant $\kappa > 0$ such that:
For all $\tau^{**} < \tau^*$, $a < \infty$ and sufficiently large $\alpha$ we have that for all $t \in [\ov{t}^\alpha - \tau^{**} (\ov{q}^\alpha)^2, \ov{t}^\alpha]$ the points in the ball $B(\ov{x}^\alpha, t, a \ov{q}^\alpha)$ are $\kappa$-noncollapsed on scales $< r_0^\alpha$ and satisfy the canonical neighborhood assumptions $CNA(\frac12 \ov{q}^\alpha, \td\varepsilon_0, E, \eta)$.
Therefore we can follow the reasoning of the proof of Lemma \ref{Lem:6.3bc}(a) and apply Lemma \ref{Lem:lmiitswithCNA} to conclude that there is a $K^*_2 < \infty$ such that that for all $\tau^{**} < \tau^*$ we have $(\ov{q}^\alpha)^2 |{\Rm}|(\ov{x}^\alpha,t) < K_2^*$ for all $t \in [\ov{t}^\alpha - \tau^{**} (\ov{q}^\alpha)^2, \ov{t}^\alpha]$ for infinitely many $\alpha$.
If $\tau^{**} < \infty$, this implies using Lemma \ref{Lem:shortrangebounds} that there is a constant $c'' > 0$ such that $(\ov{q}^\alpha)^2 |{\Rm}|(\ov{x}^\alpha,t) < 2 K_2^*$ for all $t \in [\ov{t}^\alpha - (\tau^* + c'') (\ov{q}^\alpha)^2, \ov{t}^\alpha]$ for infinitely many $\alpha$.
In particular, $\ov{x}^\alpha$ survives until time $\ov{t}^\alpha - (\tau^* + c'') (\ov{q}^\alpha)^2$ for infinitely many $\alpha$, contradicting the choice of $\tau^{**}$.
So $\tau^* = \infty$ and again Lemma \ref{Lem:lmiitswithCNA} yields that the pointed Ricci flows with surgery $(\MM^\alpha, (\ov{x}^\alpha, \ov{t}^\alpha))$ subconverge to a $\kappa$-solution after rescaling by $(\ov{q}^\alpha)^{-1}$.
Using Lemma \ref{Lem:kappasolCNA}, this yields a contradiciton to the assumption that the points $(\ov{x}^\alpha, \ov{t}^\alpha)$ don't satisfy the canonical neighborhood assumptions $CNA(\ov{q}^\alpha, \td\varepsilon_0, E, \eta)$.

\textit{Proof of assertion (c). \quad}
The proof is similar to the proof of Proposition \ref{Prop:curvcontrolincompressiblecollapse}.
However, instead of using Lemma \ref{Lem:6.3bc}(a), we will invoke the canonical neighborhood assumptions from assertion (b) which are independent of the distance to $x_0$.
Choose $E$ and $\eta$ according to assertion (b) and set $K = \max \{ \td\rho^{-2} (w, A), E^2 K^*_1 \}$.

Fix some time $t \in [t_0 - \tau r_0^2, t_0]$ and let $a < \infty$ be maximal with the property that $|{\Rm_t}| < K r_0^{-2}$ on $B(x_0, t, ar_0) \cap U (t)$.
We assume that $U(t) \not\subset B(x_0, t, a r_0)$.
Analogously to (\ref{eq:etaextcurvbound}) we obtain using assertion (b) that
\[ |{\Rm_t}| < 4 K r_0^{-2} \qquad \text{on} \qquad \big( B(x_0, t, a r_0 + \tfrac12 \eta K^{-1/2} r_0) \cap U(t) \big) \cup B(x_0, t, 2 A r_0). \]
Choose $x_1 \in U(t)$ of time-$t$ distance exactly $a r_0$ from $x_0$ with $|{\Rm}| (x_1, t) = K r_0^{-2}$.
Then $x_1 \not\in B(x_0, t, 2A r_0)$ by (\ref{eq:curvboundon2Afortau}).
Let $\td{x}_1 \in \td{U}(t)$ be a lift of $x_1$ and $\td{x}_0 \in \td{M}(t)$ be a lift of $x_0$ such that $\dist_t (\td{x}_0, \td{x}_1) = \dist_t ( x_0, x_1 ) = a r_0$.
By assertion (b), we conclude that $(\td{x}_1, t_0)$ is either a center of an $\td\varepsilon_0$-neck or an $(\td\varepsilon_0, E)$-cap in $\td\MM (t)$.

Without loss of generality, we can assume that $\tau_0$ is chosen small enough depending on $w$ such that $B(x_0, t, \frac1{10} r_0) \subset B(x_0, t_0, r_0)$.
So we can find a $C_1 = C_1(w) < \infty$ such that $\vol_t B(x_0, t, \frac1{10} r_0) < C_1 \ov{w} r_0^3$ (see assumption (iii)).
Observe that by assumption (iv) any minimizing geodesic in $\MM(t)$ connecting $x_0$ with a pont $x \in U(t)$ is contained in $B(x_0, t, 2A r_0) \cup U(t)$.
So by (\ref{eq:curvboundon2Afortau}) and volume comparison we obtain analogously to (\ref{eq:etaballsmallvol}) that
\begin{multline*}
 \vol_t B(x_1, t, \tfrac12 \eta K^{-1/2} r_0 ) < \vol_t B(x_0, t, a r_0 + \tfrac12 \eta K^{-1/2} r_0) \cap U(t) \\
< C_2 (w, A) \vol_t B(x_0, t, \tfrac1{10} r_0) < C_2 C_1 \ov{w} r_0^3.
\end{multline*}
Using the same reasoning as in the proof of Proposition \ref{Prop:curvcontrolincompressiblecollapse}, we obtain a contradiction if $\ov{w}$ is small enough depending on $w$ and $A$.

The fact that $U$ is free of surgery points for sufficiently large $T$ follows as usual.

\textit{Proof of assertion (d). \quad} Assertion (d) follows immediately from assertion (c) by a simple distance distortion estimate.
\end{proof}

\subsection{Curvature control in large regions which are locally good}
We will now show that if we only have \emph{local} goodness control within some distance to some geometrically controlled region and if we can guarantee this control on a time-interval of uniform size, then we can deduce a curvature bound which is independent of this distance.

In this section, we will use the following notion:
Let $U \subset \MM$ be a sub-Ricci flow with surgery of $\MM$, $t$ be a time for which $U(t)$ is defined and $d \geq 0$.
Then we denote the time-$t$ $d$-tubular neighborhood around $\partial U (t)$ in $U(t)$ by $B^U (\partial U, t, d) = B( \partial U(t), t, d) \cap U(t)$.
The parabolic neighborhood $P^U (\partial U, t, d, \Delta t)$ is defined similarly.

\begin{Proposition} \label{Prop:curvboundnotnullinarea}
There is a continuous positive function $\delta : [0, \infty) \to (0, \infty)$ such that for every $w, \theta > 0$ and $A < \infty$ there are constants $K = K(w, A), \linebreak[1] T (w, \linebreak[1] A, \linebreak[1] \theta) < \infty$ such that the following holds: \\
Let $\MM$ be a Ricci flow with surgery on the time-interval $[0, \infty)$ which is performed by $\delta(t)$-precise cutoff and whose time-slices are compact and let $t_0 > T$.
Consider a sub-Ricci flow with surgery $U \subset \MM$ on the time-interval $[t_0 - r_1^2, t_0]$ whose time-slices $U(t)$ are open subsets of $\MM(t)$.
Let $r_1, r_0, b > 0$ be constants such that
\begin{enumerate}[label=(\roman*)]
\item $\theta \sqrt{t_0} \leq 2 r_1 < r_0 \leq \sqrt{t_0}$,
\item for all $x \in \partial U(t_0)$ we have $| {\Rm} | < A r_1^{-2}$ on $P (x, t_0, r_1, - r_1^2)$,
\item for every $t \in [t_0 - r_1^2, t_0]$ and $x \in B^U (\partial U, t, b)$, either $x$ is locally $w$-good at scale $r_0$ and time $t$ or $|{\Rm}| (x,t) < A r_1^{-2}$.
\end{enumerate}
Then for every $t \in (t_0 - r_1^2, t_0]$ and $x \in B^U (\partial U, t, b)$ we have
\[ | {\Rm} |(x,t) < K \big( (b - \dist_t(\partial U(t), x))^{-2} + (t - t_0 + r_1^2)^{-1} \big). \]
\end{Proposition}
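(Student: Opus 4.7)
The proof proceeds by a point-picking/blow-up contradiction, in the spirit of the proofs of Propositions \ref{Prop:curvcontrolincompressiblecollapse} and \ref{Prop:slowdiamgrowth}, with the novel feature that the non-collapsing input is supplied by the \emph{local} goodness hypothesis in the universal cover of a bounded-radius ball around the bad point. I would first fix $\delta(t)$ small enough so that the canonical neighborhood assumptions $CNA(\un r_{\varepsilon_0}(t), \varepsilon_0, \un E, \un\eta)$ hold on $\MM$ (Proposition \ref{Prop:CNThm-mostgeneral}) and so that Propositions \ref{Prop:curvcontrolgood} and \ref{Prop:curvboundinbetween} are available, and choose $T$ large enough that surgery points have curvature $\gg K r_1^{-2}$.

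Arguing by contradiction, suppose the conclusion fails for arbitrarily large $K$. Introduce the joint scale $s(x,t) = \min\{b - \dist_t(\partial U(t), x),\; \sqrt{t - t_0 + r_1^2}\}$, so that the desired bound is $|{\Rm}|(x,t)\, s^2(x,t) \leq 2K$. A standard parabolic point-picking produces $(\bar x, \bar t) \in U$ with $\bar Q := |{\Rm}|(\bar x, \bar t)$ satisfying $\bar Q\, s^2(\bar x, \bar t) \geq K$ and with $|{\Rm}| \leq 4\bar Q$ throughout $P := P(\bar x, \bar t, H \bar Q^{-1/2}, -H \bar Q^{-1})$ for any prescribed $H$. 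For $K \gg H^2 + A$ this parabolic neighborhood is non-singular, sits inside the region $\{(x,t) : x \in B^U(\partial U, t, b),\; t \in (t_0 - r_1^2, t_0]\}$, and by hypothesis (iii) every point in it is locally $w$-good at scale $r_0$ (since $\bar Q \gg A r_1^{-2}$). In particular, the canonical neighborhood assumptions give $\rho(\bar x, \bar t) \geq c_0 \bar Q^{-1/2}$ for a universal $c_0 > 0$.

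The next step is to extract a curvature bound from local $w$-goodness at $(\bar x, \bar t)$. The plan is to apply Proposition \ref{Prop:genPerelman} at scale $r'_0 := \rho_{r_0}(\bar x, \bar t)$ to a suitable portion of the universal covering flow of $W = B(\bar x, \bar t, r'_0)$, extended in time via the non-singular parabolic neighborhood $P$, and regarded as a sub-Ricci flow with boundary. The boundary hypotheses (i), (ii) of Proposition \ref{Prop:genPerelman} are to be verified by combining two sources of control: (a) the current hypothesis (ii), which provides a parabolic collar of curvature control of size $r_1 \gg r'_0$ around $\partial U$; and (b) the point-picking bound $|{\Rm}| \leq 4\bar Q \sim (r'_0)^{-2}$ on $P$, which forces any space-time curve from near $\widetilde{\bar x}$ reaching the boundary $\partial \widetilde W$ to accumulate large $\LL$-length, by the same scalar-curvature argument used at the end of the proof of Proposition \ref{Prop:curvboundinbetween}. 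The conclusion of Proposition \ref{Prop:genPerelman} (applied at scale $r'_0$, with the volume lower bound coming from local $w$-goodness) would then give $r'_0 > \widehat r(w)\sqrt{\bar t}$, which contradicts
\[ r'_0 \leq \bar Q^{-1/2} \leq K^{-1/2} s(\bar x, \bar t) \leq K^{-1/2} r_1 \leq \tfrac{1}{2} K^{-1/2}\sqrt{\bar t} \]
for $K$ chosen large enough depending on $w$.

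The main obstacle will be implementing this application of Proposition \ref{Prop:genPerelman} cleanly in the universal cover of $W$. Local goodness only gives volume control in $\widetilde W$, and $\partial \widetilde W$ sits at distance $r'_0$ from $\widetilde{\bar x}$, so one cannot directly invoke hypothesis (ii) of Proposition \ref{Prop:genPerelman} with $A \geq 1$. The resolution is either to work with a commensurately smaller scale in the universal cover of $B(\bar x, \bar t, \tfrac{1}{2} r'_0)$, or to recast the final step as an autonomous blow-up analysis: rescale by $\bar Q^{1/2}$ and use Lemma \ref{Lem:lmiitswithCNA} (noncollapsing comes from local goodness together with $\rho \geq c_0 \bar Q^{-1/2}$) to extract an ancient $\kappa$-solution, whose canonical neighborhood structure from Lemma \ref{Lem:kappasolCNA} is then contradicted by Lemma \ref{Lem:neckhasfewquotients} in the spirit of the proof of Proposition \ref{Prop:curvcontrolincompressiblecollapse}. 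Calibrating $H$, the point-picking scale, and $r'_0$ so that the boundary conditions at $\partial U$, $\partial W$, and $\partial P$ are handled simultaneously is the bulk of the technical work.
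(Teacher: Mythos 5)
Your skeleton is right — a maximum/point-picking argument on a curvature quantity weighted by both $b-\dist_t(\partial U(t),x)$ and $t-t_0+r_1^2$, with hypothesis (ii) controlling distance distortion near $\partial U$ and local $w$-goodness supplying volume in a local universal cover — and this matches the paper's setup. But the closing step, which you yourself defer as "the bulk of the technical work," is where the argument breaks, and neither of your two proposed closings can be repaired as stated. The $\kappa$-solution route fails because Lemma \ref{Lem:lmiitswithCNA} requires $\kappa$-noncollapsedness at \emph{all} points of the blow-up region \emph{in a single flow}: local goodness only gives volume lower bounds in point-dependent universal covers of balls of radius $\rho_{r_0}$, and in the intended application (deep inside the collapsed solid tori $S_i$) the manifold itself, and the universal cover of any ball large enough to contain the contractible core circle, really is collapsed — so no complete non-collapsed limit exists. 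The route through Proposition \ref{Prop:genPerelman} fails for a different reason: its boundary conditions (i), (ii) require balls of radius comparable to the canonical neighborhood scale $r=\un{r}_{\varepsilon_0}(t_0)$ to avoid $\partial\td{W}$, whereas $\partial\td{W}$ sits at distance only $\rho_{r_0}(\bar x,\bar t)\sim \bar Q^{-1/2}$, which in the contradiction regime is arbitrarily small compared with $\un{r}_{\varepsilon_0}(t_0)$; moreover the clause "$r_0=\rho(x_0,t_0)\Rightarrow r_0>\widehat r\sqrt{t_0}$" that you invoke applies to the untruncated $\rho$, not to $\rho_{r_0}=\min\{\rho,r_0\}$.

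The missing idea is that no global machinery is needed at all: the correct contradiction comes from a single application of the purely local Lemma \ref{Lem:6.5} in the universal cover of one ball around the near-maximum point, at a scale that \emph{grows relative to the curvature scale} as the assumed supremum $H$ grows. Concretely, from $|{\Rm}|\le 16Qr_1^{-2}$ on a parabolic neighborhood of relative size determined by $H$, together with $t^{-1}$-positivity, one gets $\sec\ge -r^{-2}$ on $P(x_1,t_1,r,-r^2)$ with $r=S(H)Q^{-1/2}r_1$ and $S(H)\to\infty$; local $w$-goodness then gives $\vol \td B(\td x_1,t_1,r)>\td c\,w\,r^3$ in the universal cover of $B(x_1,t_1,r)$; and Lemma \ref{Lem:6.5} applied there yields $Qr_1^{-2}=|{\Rm}|(x_1,t_1)<K_0\tau_0^{-1}S^{-2}(H)\,Qr_1^{-2}$, forcing $S(H)$ and hence $H$ to be bounded by a constant depending only on $w$. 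The contradiction is thus "central curvature versus the inverse square of the radius on which one has lower curvature and volume control," not "curvature scale versus $\sqrt{t}$" as in your sketch; identifying this mechanism is what makes the proof close without canonical neighborhoods, $\LL$-geometry, or compactness arguments.
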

\begin{proof}
Let $\delta(t)$ be an arbitrary function which goes to zero as $t \to \infty$.
Then for sufficiently large $t$ (depending on $w$, $A$ and $\theta$), we can use Definition \ref{Def:precisecutoff}(3) to conclude that no surgery point of $\MM(t)$ is locally $w$-good at scale $r_0$ and the curvature at every surgery point satisfies $|{\Rm}| > A r_1^{-2}$.
So we can assume in the following that there are no surgery points in the space-time neighborhood
\[ B = \bigcup_{t \in (t_0 - r_1^2, t_0]} B^U (\partial U, t, b) . \]
Consider the function
\[ f \quad : 
\quad (x,t) \quad \longmapsto \quad | {\Rm} |(x,t) \big( (b - \dist_t(\partial U(t), x))^{-2} + (t - t_0 + r_1^2)^{-1} \big)^{-1}  \]
on $B$.
Since $B$ is free of surgery points, we find that $|{\Rm}|$ and hence $f$ is bounded on $B$.

Denote by $H$ the supremum of $f$.
Choose some $(x_1, t_1) \in B$ where this supremum is attained up to a factor of $2$, i.e. $f(x_1, t_1) > \frac12 H$ and set $Q = r_1^2 | {\Rm} |(x_1, t_1)$.
Observe that $Q > f (x_1, t_1)$.
Now if $H \leq \max \{ 32, 2 A \}$, then we are done.
So assume in the following that $H > \max \{ 2, 2 A \}$.
This implies in particular that $Q > f(x_1, t_1) > \frac12 H > \max \{ 1, A \}$ and hence by assumption (iii) the point $x_1$ is locally $w$-good at scale $r_0$ and time $t_1$ and by assumption (ii) $(x_1, t_1) \not\in P (x, t_0, r_1, -r_1^2)$ for all $x \in \partial U(t_0)$.

Set $d_1 = \dist_{t_1}(\partial U(t_1), x_1)$, $a = r_1^{-1} (b -d_1)$ and observe that
\begin{equation}  \label{eq:4QaH2}
 Q a^2 >  f(x_1, t_1) > \tfrac12 H  \qquad \text{and} \qquad   Q (t_1 - t_0 + r_1^2) r_1^{-2} > f(x_1, t_1) > \tfrac12 H . 
\end{equation}
For all $t \geq \frac14 (t_1 - t_0 + r_1^2) + t_0 - r_1^2$ and $x \in B^U (\partial U, t, d_1 + \frac12 a r_1)$ we have
\begin{multline}
 | {\Rm} |(x,t) \leq H \big( (b - \dist_t (\partial U (t), x) )^{-2} + (t - t_0 + r_1^2)^{-1} \big) \\
\leq H \big( 4 a^{-2} r_1^{-2} + 4 (t - t_0 + r_1^2)^{-1} \big) < 16 Q r_1^{-2}. \label{eq:8Qr1}
\end{multline}

For a moment fix some arbitrary $x \in B^U (\partial U, t, d_1 + \frac14 a r_1)$ and choose $\Delta t > 0$ maximal with the property that $t_1 - \Delta t \geq \frac14 (t_1 - t_0 + r_1^2) + t_0 - r_1^2$ and $\dist_t(\partial U (t), x) < d_1 + \frac38 a$ for all $t \in [t_1 - \Delta t, t_1]$.
We will now estimate the distance distortion between $x$ and any point $x_0 \in \partial U$ using Lemma \ref{Lem:distdistortion}(b).
Observe that for all $t \in [t_1 - \Delta t, t_1]$ we have $| {\Rm} | < 16 Q r_1^{-2}$ on $B(x, t, \frac18 a r_1)$ by (\ref{eq:8Qr1}).
Moreover, by (\ref{eq:4QaH2}) we have $\frac18 Q^{-1/2} r_1 < \frac18 a r_1$.
By assumption (ii) we can also find a $\beta = \beta(A) > 0$ such that $|{\Rm}| < \beta^{-2} r_1^{-2}$ on $B(x_0, t, \beta r_1)$ and such that $B(x_0, t, \beta r_1) \subset B(x_0, t_0, r_1)$ for all $t \in [t_1 - \Delta t, t_1]$.
So Lemma \ref{Lem:distdistortion}(b) yields
\[ \frac{d}{dt} \dist_t (x_0, x) > - C \big( \sqrt{Q} + \beta^{-1} \big) r_1^{-1} \]
for some universal constant $C$ and hence using (\ref{eq:4QaH2}) we obtain
\begin{multline*}
 \Delta t \geq \min \Big\{ \frac{\tfrac18  a r_1^2}{C (Q^{1/2} + \beta^{-1})}, \tfrac34 (t_1 - t_0 + r_1^2) \Big\} > c' \min \big\{ a Q^{-1/2}, a \beta, H Q^{-1} \big\} r_1^2 \\
> c \min \big\{  H^{1/2}  Q^{-1}, H^{1/2} Q^{-1/2}, H Q^{-1} \big\} r_1^2 = c H^{1/2} Q^{-1} r_1^2
\end{multline*}
for some universal $c' > 0$ and some $c = c( A) > 0$.
So by varying $x$ and $x_0$, we conclude that $| {\Rm} | < 16 Q r_1^{-2}$ on $P ' = P^U (\partial U, t_1, d_1 + \frac14 a r_1, - c H^{1/2} Q^{-1} r_1^2)$ and that
\[ P' \subset \bigcup_{t \in [t_1 - c H^{1/2} Q^{-1} r_1^2, t_1]} B^U (\partial U, t, d_1 + \tfrac12 a r_1). \]

By the $t^{-1}$-positivity of the curvature on $\MM$, we have $\sec \geq - F (Q r_1^{-2} t_0 ) Q r_1^{-2}$ on $P'$ where $F : [0, \infty) \to [0, \infty)$ is a decreasing function which goes to zero on the open end.
Observe that $F(Q r_1^{-2} t_0) \leq F(4Q) \leq F(H)$ and hence we have the bound $\sec \geq - F(H) Q r_1^{-2}$ on $P'$.
Recall now that there is a constant $0 < \beta = \beta(A) < 1$ such that $\dist_{t_1} (\partial U (t_1), x_1) > \beta r_1$.
Then
\[ P(x_1, t_1, \min \{ \beta, \tfrac14 a \} r_1, - c H^{1/2} Q^{-1} r_1^2) \subset P'. \]
Define $S : (0, \infty) \to (0, \infty)$ by $S(x) = \min \{ F^{-1/2} (x), \frac18 x^{1/2}, \frac12 \beta  x^{1/2}, c^{1/2} x^{1/4} \}$.
Then $S(x) \to \infty$ as $x \to \infty$ and we find using (\ref{eq:4QaH2}):
\begin{alignat*}{1}
 \tfrac14 a &> \tfrac18 H^{1/2} Q^{-1/2} \geq S(H) Q^{-1/2}, \\
 \beta &\geq \tfrac12 \beta H^{1/2} Q^{-1/2} \geq S (H) Q^{-1/2}, \\
 c H^{1/2} Q^{-1} &\geq S^2(H) Q^{-1}. 
\end{alignat*}
This yields the bound
\[ \sec \geq - S^{-2} (H) Q r_1^{-2} \qquad \text{on} \qquad P(x_1, t_1, S(H) Q^{-1/2} r_1, -S^2(H) Q^{-1} r_1^2). \]
In particular $\rho_{r_0} (x_1, t_1) \geq  S(H) Q^{-1/2} r_1$ (observe that $S(H) Q^{-1/2} r_1 \leq \beta r_1 \leq r_0$).

So by property (iii), we conclude that for $r = S(H) Q^{-1/2} r_1$ we have $\vol_{t_1} \widetilde{B}(\td{x}_1, \linebreak[1] t_1, \linebreak[1] r) > \td{c} w r^3$ where $\td{B} (\td{x}_1, t_1, r) $ denotes the universal cover of the ball $B(x_1, t_1, r)$.
We can now lift the flow from on $P(x_1, t_1, r, -r^2)$ to this universal cover, rescale it by $r^{-1}$ and use Lemma \ref{Lem:6.5} to conclude
\[ Q r_1^{-2} = |{\Rm}| (x_1, t_1) < K_0 (\td{c} w) \tau_0^{-1}( \td{c} w) r^{-2} = K_0  \tau_0^{-1} S^{-2} (H) Q r_1^{-2}. \]
This implies $S^2(H) < K_0 \tau_0$ and hence $H$ is bounded by some universal constant $H_0 = H_0(w) < \infty$.
This finishes the proof.
\end{proof}

\section{Preparations for the main argument} \label{sec:Preparations}
In this section we list smaller Lemmas which will be used in the main argument in section \ref{sec:mainargument}.
\subsection{Evolution of areas of minimal surfaces} \label{subsec:minsurf}
The following Lemma will be used in the proof of Proposition \ref{Prop:irreducibleafterfinitetime} to show that after some time, all time-slices in a Ricci flow with surgery are irreducible.
\begin{Lemma} \label{Lem:evolsphere}
Let $\MM$ be a Ricci flow with surgery and precise cutoff and closed time-slices, defined on the time-interval $[T_1, T_2]$ $(0 < T_1 < T_2)$, assume that the surgeries are all trivial and that $\pi_2 (\MM(t)) \not= 0$ for all $t \in [T_1, T_2]$.
For every time $t \in [T_1, T_2]$ denote by $A(t)$ the infimum of the areas of all homotopically non-trivial immersed $2$-spheres.
Then the quantity
\[ t^{1/4} \big( t^{-1} A(t) + 16 \pi \big) \]
is monotonically non-increasing on $[T_1, T_2]$.
Moreover,
\[ T_2 < \Big( 1 +  \frac{1 }{16 \pi}  T_1^{-1} A(T_1) \Big)^4 T_1. \]
\end{Lemma}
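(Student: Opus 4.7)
The plan is to derive a differential inequality for $A(t)$ from the first variation of area along an area-minimizing sphere, and then to integrate. First I would fix a non-surgery time $t_0 \in (T_1, T_2)$ and use the non-vanishing of $\pi_2(\MM(t_0))$ together with Sacks--Uhlenbeck bubbling (applied to a minimizing sequence for $A(t_0)$) to produce a branched conformal minimal immersion $f_0 : S^2 \to \MM(t_0)$ realizing $A(t_0)$. Write $\Sigma$ for the image. Under the Ricci flow $\partial_t g = -2\Ric$, the first variation of area yields
\[
\frac{d}{dt}\bigg|_{t=t_0}\!\! \area_t(f_0) = -\int_\Sigma \tr_\Sigma \Ric \, dA.
\]

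Next I would rewrite the integrand by the elementary $3$-dimensional identity $\tr_\Sigma \Ric = \tfrac12 \scal + K_M(T_\Sigma)$, where $K_M(T_\Sigma)$ is the ambient sectional curvature of the tangent plane. Combining this with the Gauss equation $K_\Sigma = K_M(T_\Sigma) - \tfrac12|II|^2$ (valid because $f_0$ is minimal) and with Gauss--Bonnet $\int_\Sigma K_\Sigma\, dA = 4\pi \chi(S^2) = 8\pi$ gives
\[
\frac{d}{dt}\bigg|_{t=t_0}\!\! \area_t(f_0) \;\leq\; -8\pi - \tfrac12 \int_\Sigma \scal\, dA.
\]
The $t^{-1}$-positivity of the curvature (preserved by Ricci flow, cf.\ the remark after the definition of $\varphi$-positive curvature) implies $\scal \geq -\tfrac{3}{2t}$, so the right-hand side is bounded above by $-8\pi + \tfrac{3}{4 t_0} A(t_0)$. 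Since $\area_t(f_0)$ is an upper barrier for $A(t)$ with equality at $t_0$, the upper-right Dini derivative obeys $\frac{d^+}{dt} A(t) \leq \tfrac{3}{4t} A(t) - 8\pi$ at every non-surgery time. A direct computation then shows
\[
\frac{d}{dt}\Big[ t^{1/4}\big(t^{-1} A(t) + 16\pi\big)\Big] = t^{-3/4} \Big[ A'(t) - \tfrac{3 A(t)}{4t} + 4\pi \Big] \leq -4\pi\, t^{-3/4} < 0,
\]
with slack (the argument yields $8\pi$ on the right whereas only $4\pi$ is needed). Integrating and using $A(T_2) \geq 0$ gives $16\pi\, T_2^{1/4} \leq T_1^{1/4}(T_1^{-1}A(T_1) + 16\pi)$, which rearranges to the stated bound on $T_2$.

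The main obstacle I expect is the behavior across surgery times, which must be handled to extend the monotonicity from the union of non-surgery intervals to all of $[T_1, T_2]$. Here I would use that each surgery is trivial: in the notation of Definition~\ref{Def:precisecutoff}(5), the surgery replaces a region that is diffeomorphic to a $3$-disk $(D')^i_j$ with another $3$-disk $D^i_j$ via an almost-isometric identification on the boundary neck, and for every $\chi > 0$ there exists a $(1+\chi)$-Lipschitz map $\xi : (D')^i_j \to D^i_j$ fixing the boundary. Consequently, a non-trivial sphere at time $T^{i-}$ can be pushed off (or through, via $\xi$) the surgery region at the expense of multiplying its area by $(1+\chi)^2$, and the filling caps introduce only null-homotopic spheres, so no new non-trivial homotopy classes are created. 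Letting $\chi \to 0$ shows that $A$ does not jump upward across a trivial surgery time, which is exactly what is needed for the monotone quantity to remain non-increasing across $T^i$. Modulo this verification the two assertions of the lemma follow.
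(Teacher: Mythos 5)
Your proof follows essentially the same route as the paper: minimize area in the non-trivial homotopy class via Sacks--Uhlenbeck/Gulliver, compute the first variation of area under the flow, convert $\tr_\Sigma\Ric$ into $\tfrac12\scal + \sec^{\MM}(T\Sigma)$, use minimality plus Gauss--Bonnet and the $t^{-1}$-positivity of curvature, and handle surgery times by the $(1+\chi)$-Lipschitz maps of Definition \ref{Def:precisecutoff}(5) to get lower semicontinuity of $A$.

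There is, however, one arithmetic slip you should fix: Gauss--Bonnet gives $\int_\Sigma K_\Sigma\, dA = 2\pi\chi(S^2) = 4\pi$, not $4\pi\chi(S^2) = 8\pi$. Consequently the differential inequality you can actually prove is
\[
\frac{d}{dt^+} A(t) \;\leq\; \frac{3}{4t}\,A(t) - 4\pi,
\]
not $-8\pi$. Plugging this into your own computation of $\frac{d}{dt}\bigl[t^{1/4}(t^{-1}A(t)+16\pi)\bigr] = t^{-3/4}\bigl[A'(t) - \tfrac{3A(t)}{4t} + 4\pi\bigr]$ gives exactly $\leq 0$: the estimate is sharp, and your remark that the argument has ``slack'' is false. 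The conclusions of the lemma are unaffected (monotonicity still holds, and the strict bound on $T_2$ comes from $A(T_2) > 0$ rather than from any slack in the derivative estimate), but as written the intermediate inequality $A' \leq \tfrac{3A}{4t} - 8\pi$ is unjustified. One further small point: the paper notes that the image $f_0(S^2)$ may be an immersed $\IR P^2$ (in which case one counts the area twice); working with $\area_t f_0^*(g(t))$ as you do sidesteps this cleanly, so no change is needed there.
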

\begin{proof}
Compare also with \cite[Lemma 18.10 and 18.11]{MTRicciflow}.
Let $t_0 \in [T_1, T_2)$.
By \cite{SU81} and \cite{Gul} or \cite{MY}, there is a non-contractible, conformal, minimal immersion $f : S^2 \to \MM(t_0)$ with $\area_{t_0} f = \area_{S^2} f^* (g(t_0)) = A(t_0)$.
We remark, that using the methods in the proof of Lemma \ref{Lem:evolminsurfgeneral} below, it is enough to assume that $f$ is only smooth.
Call $\Sigma = f(S^2) \subset \MM(t)$.
Then $\Sigma$ is either a $2$-sphere or an $\IR P^2$ with a finite number of self-intersections.
We can estimate the infinitesimal change of the area of $\Sigma$ (we count the area twice if $\Sigma$ is an $\IR P^2$) while we vary the metric in positive time direction (and keep $f$ constant!).
Using the $t_0^{-1}$-positivity of the curvature on $\MM(t_0)$, the fact that the interior sectional curvatures are not larger than the ambient ones as well as Gau\ss-Bonnet, we conclude:
\begin{multline*}
\frac{d}{dt^+} \Big|_{t = t_0} \area_{t} (\Sigma ) = - \int_{\Sigma} \tr_{t_0} (\Ric_{t_0} |_{T \Sigma}) d {\vol}_{t_0} \\
= - \frac12 \int_{\Sigma} \scal_{t_0} d {\vol}_{t_0} - \int_{\Sigma} \sec_{t_0}^{\MM(t_0)}(T \Sigma) d {\vol}_{t_0} 
 \leq \frac3{4t_0} \area_{t_0} (\Sigma) - \int_{\Sigma} \sec^\Sigma d {\vol}_{t_0} \\
 \leq \frac3{4t_0} \area_{t_0}(\Sigma) - 2 \pi \chi(\Sigma) = \frac3{4t_0} A(t_0) - 4 \pi.
\end{multline*}
Here, $\sec^{\MM(t_0)}_{t_0}(T\Sigma)$ denotes the ambient sectional curvature of ${\MM(t_0)}$ tangential to $\Sigma$ and $\sec^\Sigma_{t_0}$ denotes the interior sectional curvature of $\Sigma$.
We conclude from this calculation that $\frac{d}{dt^+}|_{t = t_0} (t^{1/4} (t^{-1} A(t) + 16 \pi )) \leq 0$ in the barrier sense and hence, the quantity $t^{1/4} (t^{-1} A(t) + 16 \pi )$ is monotonically non-increasing in $t$ away from the singular times.

We will now show that $A(t)$ is lower semi-continuous.
We can restrict ourselves to the case in which $t_0$ is a surgery time.
Let $t_k \nearrow t_0$ be a sequence converging to $t_0$ and choose minimal $2$-spheres $\Sigma_k \subset \MM(t_k)$ with $\area_{t_k} \Sigma_k = A(t_k)$.
By property (5) of Definition \ref{Def:precisecutoff}, we find diffeomorphisms $\xi_k : \MM(t_k) \to \MM(t_0)$ which are $(1+\chi_k)$-Lipschitz for $\chi_k \to 0$.
So $A(t_0) \leq \lim \inf_{k \to \infty} (1+\chi_k)^2 A(t_k) = \lim \inf_{k \to \infty} A(t_k)$.

The lower semi-continuity implies that $t^{1/4} (t^{-1} A(t) + 16 \pi )$ is monotonically non-increasing on $[T_1, T_2]$.
The bound on $T_2$ follows from the fact that $A(T_2) > 0$.
\end{proof}

The next Lemma estimates the evolution of the areas of minimal surfaces of higher genus which can have boundary.
It describes two different kinds of behaviors:
First, it shows that the area of a minimal disk spanning a short geodesic loop of controlled geodesic curvature and speed, goes to zero in finite time.
This fact is highlighted in part (a) of the Lemma and will be used in the proof of Theorem \ref{Thm:MainTheorem} to exclude the long-time existence of short contractible loops as asserted in Proposition \ref{Prop:structontimeinterval}.
Secondly, it demonstrates that the normalized area of a minimal surface of higher genus is asymptotically bounded by a constant which essentially only depends on its Euler characteristic.
See part (b) of the Lemma for more details.
We will use this asymptotic bound in the proof of Theorem \ref{Thm:MainTheorem} to deduce the existence of filling surfaces of controlled area at large times.
This will then enable use to apply Proposition \ref{Prop:structontimeinterval}.

\begin{Lemma} \label{Lem:evolminsurfgeneral}
Let $\MM$ be a Ricci flow with surgery and precise cutoff and closed time-slices, defined on the time-interval $[T_1, T_2]$ $(T_2 > T_1 > 0)$, assume that the surgeries are all trivial and that $\pi_2(\MM(t)) = 0$ for all $t \in [T_1, T_2]$.

Let $\gamma_{1,t}, \ldots, \gamma_{m,t}  \subset \MM(t)$ be families of smoothly embedded, pairwise disjoint loops in $\MM(t)$ parameterized by $t \in [T_1, T_2]$, which move by isotopies in $t$ and which don't meet surgery points.
Let $\Sigma$ be an orientable (not necessarily connected) surface none of whose components are spheres which has $m$ boundary circles and let $f_{T_1} : \Sigma \to \MM(T_1)$ be an incompressible map (i.e. the induced map $\pi_1(\Sigma) \to \pi_1(\MM(T_1))$ is injective) such that $f_{T_1}$ restricted to the boundary circles of $\Sigma$ provides parameterizations of the loops $\gamma_{1, T_1}, \ldots, \gamma_{m, T_1}$.
For every time $t \in [T_1, T_2]$ denote by $A(t)$ the infimum of the areas of all smooth maps $f' : \Sigma \to \MM(t)$ whose restriction to $\partial \Sigma$ parameterizes the loops $\gamma_{1,t}, \ldots, \gamma_{m,t}$ and for which there is a homotopy to $f_{T_1}$ in space-time which extends the isotopies given by the $\gamma_{i,t}$.
(In the case $m = 0$, this means that $f_{T_1}$ and $f'$ are homotopic.)

Assume that there are constants $\Gamma, a, b > 0$ such that for all $t \in [T_1, T_2]$ and $i = 1, \ldots, m$
\begin{enumerate}[label=(\roman*)]
\item the geodesic curvatures along $\gamma_{i, t}$ satisfy the bound $|\kappa(\gamma_{i, t})| < \Gamma t^{-1}$,
\item the normalized length of $\gamma_{i, t}$ satisfies the bound $\ell(\gamma_{i, t}) < a t^{-1/2}$,
\item the velocity by which $\gamma_{i, t}$ moves satisfies the bound $|\partial_t \gamma_{i, t}| < b t^{-1/2}$.
\end{enumerate}

Then the quantity
\[ t^{1/4} \big( t^{-1} A(t) + 4 (2\pi \chi(\Sigma) - m a ( \Gamma + b) ) \big) \]
is monotonically non-increasing on $[T_1, T_2]$.

In particular,
\begin{enumerate}[label=(\alph*)]
\item if $m a (\Gamma + b) < 2 \pi \chi(\Sigma)$, then
\[ T_2 < \Big( 1 +  \frac{ T_1^{-1} A(T_1) }{4 (2\pi \chi(\Sigma) - m a (\Gamma + b))} \Big)^4 T_1. \]
\item if the conditions even hold for $[T_1, \infty)$ instead of $[T_1, T_2]$, then
\[ \limsup_{t \to \infty} t^{-1} A(t) \leq 4\big( - 2\pi \chi(\Sigma) + m a (\Gamma + b) \big). \]
Note that if $\Sigma$ is a torus, then this implies $\lim_{t \to \infty} t^{-1} A(t) = 0$.
\end{enumerate}
\end{Lemma}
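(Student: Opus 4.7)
The proof will mirror that of Lemma \ref{Lem:evolsphere}, with appropriate modifications to handle higher genus and boundary.

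First I would establish existence of a minimizer at each regular time $t_0 \in [T_1,T_2]$. Since $f_{T_1}$ is incompressible and no component of $\Sigma$ is a sphere, and since $\pi_2(\MM(t)) = 0$ prevents sphere bubbling, standard results in the style of Schoen--Yau, Freedman--Hass--Scott and Meeks--Simon--Yau yield a smooth (possibly branched) minimal map $f'_{t_0} : \Sigma \to \MM(t_0)$ with $\partial$-behavior given by the $\gamma_{i,t_0}$, admissible space-time homotopy to $f_{T_1}$, and $\area_{t_0} f'_{t_0} = A(t_0)$. The presence of branch points contributes a term of good sign in what follows and so does not affect the inequality.

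Next I would extend $f'_{t_0}$ to a smooth one-parameter family $f'_t$ for $t$ near $t_0$ by transporting the boundary along the isotopy $\gamma_{i,t}$ (and making the interior variation purely normal to $\Sigma$). Since $f'_t$ is an admissible competitor for $A(t)$, it suffices to bound $\frac{d}{dt^+}\area_t(f'_t)|_{t=t_0}$. This derivative splits into a bulk term from the metric's Ricci evolution and a boundary term: because $f'_{t_0}$ is minimal, the first variation under the interior deformation vanishes, leaving only
\[ \frac{d}{dt^+}\bigg|_{t_0}\area_t(f'_t) = -\int_\Sigma \tr_{t_0}(\Ric_{t_0}|_{T\Sigma}) \, d\vol_{t_0} + \int_{\partial\Sigma} \langle \partial_t \gamma, \nu^\Sigma \rangle \, ds. \]
For the bulk term, I write $\tr(\Ric|_{T\Sigma}) = \tfrac12 \scal - \sec^{\MM}(T\Sigma)$, use $t^{-1}$-positivity $\scal \geq -\tfrac{3}{2t}$, use the Gauss equation $\sec^\Sigma \leq \sec^{\MM}(T\Sigma)$ (valid since minimality implies $\det II \leq 0$ in this codimension), and apply Gauss--Bonnet $\int_\Sigma K^\Sigma = 2\pi\chi(\Sigma) - \int_{\partial\Sigma}\kappa_g^\Sigma \, ds$. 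The geodesic curvature of $\partial \Sigma$ in $\Sigma$ is bounded by the ambient geodesic curvature of $\gamma_i$, so $|\kappa_g^\Sigma| < \Gamma t^{-1}$. The boundary motion term is controlled by $|\partial_t \gamma_i| < bt^{-1/2}$. Combined with $\ell(\gamma_{i,t_0}) < a t_0^{-1/2}$ and the elementary estimates $t^{-3/2}, t^{-1} \leq 1$ (absorbing the small-$t$ range by the hypothesis $T_1 > 0$ and scale choice of $a$), the contributions from each boundary circle are bounded by $a(\Gamma + b)$, yielding the pointwise differential inequality
\[ \tfrac{d}{dt^+} A(t) \leq \tfrac{3}{4t} A(t) - 2\pi\chi(\Sigma) + ma(\Gamma + b). \]
A direct computation shows that this is exactly equivalent to the monotone non-increase of $t^{1/4}(t^{-1}A(t) + 4(2\pi\chi(\Sigma) - ma(\Gamma+b)))$ away from surgery times.

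To extend the monotonicity across surgery times, I would argue lower semicontinuity of $A(t)$ as in Lemma \ref{Lem:evolsphere}: since all surgeries are trivial, Definition \ref{Def:precisecutoff}(5) provides $(1+\chi_k)$-Lipschitz maps identifying the pre- and post-surgery slices, and pulling back a near-minimizer through these maps shows $A(T^i) \leq \liminf_{t \nearrow T^i} A(t)$. The isotopies $\gamma_{i,t}$ extend continuously across surgery times by hypothesis. Finally, the two consequences are purely algebraic: from $Q(T_2) \leq Q(T_1)$ with $A(T_2) \geq 0$ and $2\pi\chi - ma(\Gamma+b) > 0$ one solves for $T_2$ to get (a); and from $t^{-3/4}A(t) + 4(2\pi\chi - ma(\Gamma+b))t^{1/4} \leq $ const, taking $t \to \infty$ with $2\pi\chi - ma(\Gamma+b) < 0$ gives (b).

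The main obstacle is Step~1: justifying existence of a sufficiently regular minimizer in the given relative homotopy class, since the classical literature for minimal surfaces with prescribed boundary in a 3-manifold allows branch points and requires care with incompressibility to rule out degeneration. Once one admits a branched minimal representative, Gauss--Bonnet acquires a non-positive branch-point correction of the correct sign, and the variational computation proceeds unchanged.
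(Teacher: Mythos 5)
Your proposal is correct and follows essentially the same route as the paper: minimize in the relative homotopy class (Morrey, Sacks--Uhlenbeck/Schoen--Yau, Gulliver, Heinz--Hildebrandt), vary the metric while holding the map fixed, use $t^{-1}$-positivity, the Gauss equation for minimal surfaces and Gauss--Bonnet to get $\frac{d}{dt^+}A \leq \frac{3}{4t}A - 2\pi\chi(\Sigma) + ma(\Gamma+b)$, then lower semicontinuity across the trivial surgeries via Definition \ref{Def:precisecutoff}(5). The one point where you diverge is the treatment of branch points: you invoke Gauss--Bonnet for branched immersions directly and observe the defect has a favorable sign, whereas the paper sidesteps branch points (in particular the degeneration of the conormal at boundary branch points) by passing to the embedded graph $(f,h_\varepsilon):\Sigma\to\MM(t_0)\times(\Sigma,\varepsilon g')$ and letting $\varepsilon\to 0$ --- both are legitimate.
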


\begin{proof}
In the case in which $\Sigma$ has no boundary, the proof is almost the same as the proof of Lemma \ref{Lem:evolsphere}.
We now have to make use of \cite{Sacks-Uhlenbeck-1982} or \cite{Schoen-Yau-1979} and \cite{Gul} to show that at every time $t$ there is a representative $f_t : \Sigma \to \MM(t)$ in the homotopy class of $f$ which minimizes area and which is an immersion.

If $\Sigma$ has a boundary, we have to be more careful due to the existence of possible branch points.
Let $t_0 \in [T_1, T_2]$.
In the case in which $\Sigma$ is a disk, we can use the results of \cite{Mor} to find a time-$t_0$ area minimizing continuous map $f : \Sigma \to \MM(t_0)$ with the following properties: $f$ is homotopic to $f_{T_1}$ in the sense explained above (in fact all such maps are homotopic to one another in $\MM(t_0)$ relative boundary) and $f$ restricted to the boundary $\partial \Sigma$ represents a parameterization of $\gamma$. Then $f$ is moreover smooth, conformal and harmonic on the interior of $\Sigma$ (with respect to a conformal structure on $\Sigma = D^2$) and we have $A(t_0) = \area f^*(g(t_0))$.
By combining the methods of \cite{Mor} and \cite{Sacks-Uhlenbeck-1982} or \cite{Schoen-Yau-1979}, it is possible to see that this construction also works in the case in which $\Sigma$ has higher genus.
Note that however in this case, the conformal structure on $\Sigma$ is not unique and depends on the geometric setting.
We use \cite{HH} to conclude that $f$ is smooth up to the boundary.

Analogously to the proof of Lemma \ref{Lem:evolsphere}, we can carry out the first part of the computation of the infinitesimal change of the area of $f$ as we vary the metric only:
\begin{multline*}
 \frac{d}{dt} \Big|_{t = t_0} \area f^*(g(t)) = - \int_{\Sigma} \tr f^*(\Ric_{t_0}^{\MM(t_0)}) \\
 \leq \frac{3}{4t_0} A(t_0) - \int_{\Sigma}  \sec^{\MM(t_0)}( df) d {\vol}_{f^* ( g(t_0) )},
\end{multline*}
where $\sec^{\MM(t_0)} (df)$ denotes the sectional curvature in the normalized tangential direction of $f$. 
Observe that the last integrand is a continuous function on $\Sigma$ since the volume form vanishes wherever this tangential sectional curvature is not defined.

In order to avoid issues arising from possible branch points (especially on the boundary of $\Sigma$), we employ the following trick (compare with \cite{PerelmanIII}):
Fix a metric $g'$ on $\Sigma$ which represents the conformal structure with respect to which $f$ is conformal and harmonic.
Let $\varepsilon > 0$ be a small constant and consider the Riemannian manifold $(N_\varepsilon = \Sigma, \varepsilon g')$.
The identity map $h_\varepsilon : \Sigma \to (\Sigma, \varepsilon g')$ is a conformal and harmonic diffeomorphism and hence the map $f_\varepsilon = (f, h_\varepsilon) : \Sigma \to \MM(t_0) \times N_\varepsilon$ is a conformal and harmonic \emph{embedding}.
Denote its image by $\Sigma_\varepsilon = f_\varepsilon(\Sigma)$.
Since the sectional curvatures on the target manifold are bounded, we have
\[ \lim_{\varepsilon \to 0} \int_{\Sigma_\varepsilon} \sec^{\MM(t_0) \times N_\varepsilon} ( T \Sigma_\varepsilon) d {\vol}_{t_0} =  \int_{\Sigma}  \sec^{\MM(t_0)}( df) d {\vol}_{f^*(g(t_0))}. \]
We can now proceed as in the proof of Lemma \ref{Lem:evolsphere}, using the fact that the interior sectional curvatures of $\Sigma_\varepsilon$ are not larger than the corresponding ambient ones as well as the Theorem of Gau\ss-Bonnet:
\[ \int_{\Sigma_\varepsilon} \sec^{\MM(t_0) \times N_\varepsilon} ( T \Sigma_\varepsilon) d {\vol}_{t_0} \geq \int_{\Sigma_\varepsilon} \sec^{\Sigma_\varepsilon} ( T \Sigma_\varepsilon) d {\vol}_{t_0} = 2 \pi \chi(\Sigma_\varepsilon) + \int_{\partial \Sigma_\varepsilon} \kappa^{\Sigma_\varepsilon}_{\partial \Sigma_\varepsilon} d s_{t_0}. \]
We now estimate the last integral.
Let $\gamma_{i,\varepsilon} : S^1(l_{i, \varepsilon}) \to \partial \Sigma_{\varepsilon}$, $i = 1, \ldots, m$ be unit-speed parameterizations of the boundary of $\Sigma_\varepsilon$.
Denote by $\gamma^{\MM(t_0)}_{i, \varepsilon}(s)$ their component functions in $\MM(t_0)$ and by $\gamma^{N_\varepsilon}_{i, \varepsilon}(s)$ those in $N_\varepsilon$.
Furthermore, let $\nu_{i, \varepsilon}(s)$ be the outward-pointing unit-normal fields along $\gamma_{i, \varepsilon}(s)$ which are tangent to $\Sigma_{\varepsilon}$.
It is not difficult to see that due to conformality, the $\MM(t_0)$-component of $\nu_{i, \varepsilon}(s)$ has the same length as the component of the velocity vector $(\gamma^{\MM(t_0)}_{i, \varepsilon})' (s)$ at that point.
Hence, we can compute
\begin{multline*}
 - \int_{\gamma_{i, \varepsilon}} \kappa_{\gamma_{i, \varepsilon}}^{\Sigma_\varepsilon} d s_{t_0} = - \int_0^{l_\varepsilon} \Big\langle \frac{D}{ds} \Big( \frac{d}{ds}  \gamma^{\MM(t_0)}_{i, \varepsilon} (s) \Big), \nu^{\MM(t_0)}_{i, \varepsilon}(s) \Big\rangle ds \\
 - \int_0^{l_{i, \varepsilon}} \Big\langle \frac{D}{ds} \Big( \frac{d}{ds}  \gamma^{N_\varepsilon}_{i, \varepsilon} (s) \Big), \nu^{N_\varepsilon}_{i, \varepsilon}(s) \Big\rangle ds
\end{multline*}
The first integral is bounded by $ \Gamma t_0^{-1/2} l_{i, \varepsilon}$ and the second integral is bounded by $|\kappa_{\partial N_\varepsilon}| | \nu^{N_\varepsilon}_{i, \varepsilon}|$ which goes to zero as $\varepsilon \to 0$.
So passing to the limit $\varepsilon \to 0$ and using $l_{i, \varepsilon} \to \ell(\gamma_{t_0}) < a t_0^{1/2}$, we hence obtain
\[ \frac{d}{dt} \Big|_{t = t_0} \area f^* ( g(t)) \leq \frac{3}{4 t_0} A(t_0) - 2 \pi \chi(\Sigma) + m a \Gamma . \]

In order to bound the derivative of $A(t)$ in the barrier sense, we have to account for the fact that the boundary curves move by isotopies.
The maximal additional infinitesimal increase per boundary curve is then
\[ \ell(\gamma_{i, t_0}) \sup_{\gamma_{i, t_0}} | \partial_t \gamma_{i, t_0} | < a b . \]
So in the barrier sense
\[ \frac{d}{dt^+} \Big|_{t = t_0} A(t) \leq \frac{3}{4 t_0} A(t_0) - 2 \pi \chi(\Sigma) + m a \Gamma + m ab. \]
Thus
\[
 \frac{d}{dt^+} \big[ t^{1/4} \big( t^{-1} A(t) + 4 (2\pi \chi(\Sigma) - m a (\Gamma + b)) \big) \big] \leq 0.
\]

Analogously as in the proof of Lemma \ref{Lem:evolsphere}, we conclude that $A(t)$ is lower semi-continuous.
The desired monotonicity follows now immediately.
The bound in assertion (a) follows again from the fact that $A(T_2) > 0$ and assertion (b) is clear.
\end{proof}

\subsection{Torus structures and torus collars}
We will make use of the following terminology to describe the geometry of collar neighborhoods in an approximate sense.

\begin{Definition} \label{Def:torusstruc}
Let $a > 0$.
A subset $P \subset M$ of a Riemannian manifold $(M, g)$ is called \emph{torus structure of width $\leq a$} if there is a diffeomorphism $\Phi : T^2 \times [0,1] \to P$ such that $\diam \Phi(T^2 \times \{ s \}) \leq a$ for all $s \in [0,1]$.
The \emph{length} of $P$ is the distance between the two boundary components of $P$ inside $P$. \\
If $h, r_0 > 0$, then we say that $P$ is \emph{$h$-precise (at scale $r_0$)} if it has width $\leq h r_0$ and length $> h^{-1} r_0$.
\end{Definition}
Obviously, every torus structure of width $\leq a$ and length $L_1$ can be shortened to a torus structure of width $\leq a$ and length $L_2$ for any $L_2 < L_1$.

In the proof of Proposition \ref{Prop:firstcurvboundstep2}, we will moreover make use of the following related notion:
\begin{Definition} \label{Def:toruscollars}
Consider a constant $a> 0$, a Riemannian manifold $(M,g)$ and be a smoothly embedded solid torus $S \subset M, S \approx S^1 \times D^2$.
We say that $S$ \emph{has torus collars of width $\leq a$ and length up to $b$}, if for every point $x \in \Int S$ with $\dist(x, \partial S) \leq b$ there is a set $P \subset S$ which is diffeomorphic to $T^2 \times I$ such that:
$P$ is bounded by $\partial S$ and another smoothly embedded $2$-torus $T \subset S$ with $x \in T$ and $\diam T \leq a$.
\end{Definition}
So if $P \subset S$ (such that $\partial S \subset \partial P$) is a torus structure of width $\leq a$ and length $b$, then $S$ has torus collars of width $\leq a$ and length up to $b$.

We mention two conclusions which we will use frequently.
\begin{Lemma} \label{Lem:collardiameter}
Assume that $S$ has torus collars of width $\leq a$ and length up to $b$.
Let $x \in \Int S$ with $\dist(x, \partial S) < b - 2a$ and choose $P \subset S$ according to Defintion \ref{Def:toruscollars}.
Then $\dist(x, \partial S) \leq \diam P \leq \dist(x, \partial S) + 4a$.
\end{Lemma}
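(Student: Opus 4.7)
The lower bound $\dist(x,\partial S) \le \diam P$ is immediate: both $x$ and $\partial S$ lie in $P$, so for every $y \in \partial S$ one has $\dist(x,y) \le \diam P$, and taking the infimum over $y \in \partial S$ yields $\dist(x,\partial S) \le \diam P$.

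For the upper bound, set $d := \dist(x,\partial S)$; the hypothesis $d < b - 2a$ leaves enough room to invoke Definition \ref{Def:toruscollars} at auxiliary points of depth up to $d + 2a$ in $\Int S$. My plan proceeds in two steps: first, show that $\diam \partial S \le 2a$ by an auxiliary-torus construction; second, route the triangle inequalities through the two small-diameter reference tori $\partial S$ and $T$. For the first step, fix $q_1, q_2 \in \partial S$ and a small $\epsilon > 0$; pick $y_i \in \Int S$ on the inward normal from $q_i$ at depth $\epsilon$, and apply the torus collar hypothesis at each to produce tori $T_{y_i} \ni y_i$ with $\diam T_{y_i} \le a$. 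Because the depth function $\dist(\cdot,\partial S)$ is $1$-Lipschitz, both $T_{y_i}$ are contained in the $(a+\epsilon)$-tubular neighborhood of $\partial S$. A topological nestedness argument—that two disjoint incompressible tori in the solid torus $S$, each bounding a $T^2 \times I$-region with $\partial S$, must be linked by a $T^2 \times I$-region between them—together with the common shallow-depth containment then gives a Hausdorff comparison yielding $\dist(q_1, q_2) \le 2a + O(\epsilon)$; letting $\epsilon \to 0$ produces $\diam \partial S \le 2a$.

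With $\diam \partial S \le 2a$ and $\diam T \le a$ available, the bound $\diam P \le d + 4a$ follows. Fix $q_x \in \partial S$ realizing $\dist(x, q_x) = d$. For any $p \in P$, apply Definition \ref{Def:toruscollars} at $p$ itself (valid since $p$ has depth at most $d + a < b$) to get an auxiliary torus $T_p \ni p$ with $\diam T_p \le a$, parallel to and nested with both $\partial S$ and $T$. Combining the nestedness with the $1$-Lipschitz depth estimate yields the key bound $\dist(p, \partial S \cup T) \le \min\bigl(\dist(p,\partial S),\, d - \dist(p,\partial S)\bigr) + O(a) \le d/2 + O(a)$: every point of $P$ lies close to one of the two boundary tori. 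Given $p_1, p_2 \in P$, routing through the closer of $\partial S, T$ and then chaining across $q_x \to x$ gives in the worst case $\dist(p_1, p_2) \le a + \diam \partial S + d + \diam T \le a + 2a + d + a = d + 4a$.

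The main obstacle I anticipate is the first step: turning the purely topological torus collar hypothesis into the metric bound $\diam \partial S \le 2a$. Two independently produced auxiliary tori $T_{y_1}, T_{y_2}$ need not intersect and are only constrained a priori to be parallels of $\partial S$ in $S$; the Hausdorff comparison between them rests on combining topological nestedness of parallel incompressible tori in a solid torus with the $1$-Lipschitz depth function, and the details of letting $\epsilon \to 0$ must be handled carefully. Once this step is in place, the remainder of the argument—in particular the bound on $\dist(p, \partial S \cup T)$ for interior points $p \in P$ via the auxiliary torus $T_p$—reduces to triangle-inequality bookkeeping with careful tracking of the numerical constants so that the final estimate lands exactly at $d + 4a$.
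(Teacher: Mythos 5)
Your lower bound is fine, but the upper bound has two genuine gaps. First, you invoke Definition \ref{Def:toruscollars} at an arbitrary $p \in P$ ``since $p$ has depth at most $d+a<b$'' --- but that depth bound is essentially what the lemma asserts. A priori $P$ is just some embedded $T^2\times I$ between $\partial S$ and $T$, and nothing prevents it from bulging out to depth $>b$, where the collar hypothesis no longer applies. The paper closes this with an open--closed argument: whenever $y\in P\setminus\partial S$ satisfies $\dist(y,\partial S)\le b$, the estimate one extracts actually forces $\dist(y,\partial S)<b$ strictly, so the set of such $y$ is open and closed in $P\setminus\partial S$ and therefore equals all of it. You need some version of this before you may apply the hypothesis throughout $P$.

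Second, your final estimate does not close. The ``key bound'' $\dist(p,\partial S\cup T)\le \min\bigl(\dist(p,\partial S),\,d-\dist(p,\partial S)\bigr)+O(a)$ is both unjustified (there is no reason why $\dist(p,T)\le d-\dist(p,\partial S)$) and insufficient: for $p_1,p_2$ in the middle of $P$, routing each through the nearer of $\partial S$, $T$ and then across $q_x\to x$ costs roughly $\tfrac{d}{2}+d+\tfrac{d}{2}=2d$, not $d+4a$; your displayed chain $a+\diam\partial S+d+\diam T$ tacitly assumes $\dist(p_1,\partial S)\le a$ and $p_2\in T$. The fix --- and the paper's actual argument --- is to measure distance to the minimizing geodesic $\gamma$ from $\partial S$ to $x$ rather than to $\partial S\cup T$: the auxiliary torus $T'$ through any $y\in P$ bounds a product region with $\partial S$ and hence separates, so it must meet either $\gamma$ or $T$; in the first case $\dist(y,\gamma)\le\diam T'\le a$, in the second $\dist(y,x)\le\diam T'+\diam T\le 2a$. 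Thus every point of $P$ lies within $2a$ of $\gamma$, giving $\diam P\le 2a+\ell(\gamma)+2a=d+4a$ directly. This also makes your Step 1 ($\diam\partial S\le 2a$) unnecessary; as sketched (a ``Hausdorff comparison'' from both auxiliary tori lying in a thin neighborhood of $\partial S$) it is not a proof, although the statement itself can be rescued by the same separation/blocking argument.
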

\begin{proof}
The first inequality is clear.
For the second inequality consider a minimizing geodesic $\gamma$ joining $\partial S$ with $x$.
By minimality, $\gamma \subset S$ and all points of $\gamma$ have distance $< b - 2a$ from $\partial S$.
Let $y \in P \setminus \partial S$ and assume that $\dist(y, \partial S) \leq b$.
So there is an embedded $2$-torus $T' \subset S$ with $\diam T' \leq a$ and a set $P'$ which is diffeomorphic to $T^2 \times I$ and bounded by $\partial S$ and $T'$.
Then $T'$ intersects either $\gamma$ or $T$.
In the first case, $\dist(y, \gamma) \leq a$ and in the second case $\dist(y, \gamma) \leq \dist(y, x) \leq 2a$.
So in fact $\dist(y, \partial S) < b$.
This implies that \emph{all} points of $P \setminus \partial S$ have distance less than $b$ from $\partial S$ and hence are not more than $2a$ away from $\gamma$.
This implies the diameter bound.
\end{proof}

\begin{Lemma} \label{Lem:2T2timesI}
Consider two subsets $P_1, P_2 \subset M$ of a smooth $3$-manifold which are diffeomorphic to $T^2 \times I$.
Assume that one boundary component, $T_1$, of $P_1$ is contained in the interior of $P_2$ and the other boundary component, $T'_1$, is disjoint from $P_2$.
Assume also that conversely one boundary component, $T_2$, of $P_2$ is contained in the interior of $P_1$ and the other boundary component, $T'_2$ is disjoint from $P_1$.
Then $P_1 \cup P_2$ is diffeomorphic to $T^2 \times I$
\end{Lemma}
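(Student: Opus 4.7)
The plan is to show that the intersection $P_1 \cap P_2$ is a smoothly embedded product region diffeomorphic to $T^2 \times I$ with boundary tori $T_1 \sqcup T_2$. Once this is established, the union $P_1 \cup P_2$ is obtained from $P_1$ by attaching along $T_1$ the complementary part of $P_2$ (itself a product region between $T_1$ and $T'_2$), which yields again a $T^2 \times I$.

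The first step is to verify $\partial(P_1 \cap P_2) = T_1 \sqcup T_2$. This follows directly from the hypotheses: $\partial P_1 \cap P_2 = T_1$ because $T'_1 \cap P_2 = \emptyset$ and $T_1 \subset \Int P_2$; symmetrically $\partial P_2 \cap P_1 = T_2$; and $\partial P_1 \cap \partial P_2 = \emptyset$ because the boundary tori of $P_i$ lie either in $\Int P_{3-i}$ or in $M \setminus P_{3-i}$.

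The heart of the argument is to prove that $T_1$ is incompressible in $P_2$ (by symmetry, $T_2$ is then incompressible in $P_1$). Suppose for contradiction that $T_1$ compresses in $P_2 \approx T^2 \times I$. By irreducibility of $P_2$ together with the loop theorem, $T_1$ bounds an embedded solid torus $S \subset P_2$. Since $T_2 = \partial P_2$ is incompressible in $P_2$, the inclusion $\pi_1(T_2) \hookrightarrow \pi_1(P_2) \cong \IZ^2$ is injective; if $T_2 \subset S$, this injection would factor through $\pi_1(S) \cong \IZ$, a contradiction. Hence $T_2 \cap S = \emptyset$, and symmetrically $T'_2 \cap S = \emptyset$, so $S \subset \Int P_2$. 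Now $T_1 = \partial P_1$ separates a collar neighborhood in $M$ into an $\Int P_1$-side and an $(M\setminus P_1)$-side, and since $\Int S$ is connected and disjoint from $\partial P_1$, the open set $\Int S$ lies entirely on one of these sides. In the case $\Int S \subset \Int P_1$ one obtains $S \subset P_1 \cap P_2$, so $S$ is clopen in $P_1 \cap P_2$ and hence a full connected component; the remaining component $R$ of $P_1 \cap P_2$ must satisfy $\partial R = T_2$, which combined with incompressibility of $T_2$ in $R$ (via the injection $\pi_1(T_2) \hookrightarrow \pi_1(P_2)$ factoring through $\pi_1(R)$) forces $R \subset P_1$ to be a submanifold with single torus boundary $T_2$ whose existence is incompatible with the fact that every torus component of $P_1 \setminus T_2$ in $P_1 \approx T^2 \times I$ must either have $T_1$ or $T'_1$ in its closure too, giving a contradiction. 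The remaining case $\Int S \cap P_1 = \emptyset$ is handled symmetrically, exchanging the roles of $P_1$ and $P_2$ and using the analogous solid torus bounded by $T_2$.

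Having established incompressibility of $T_1$ in $P_2$, the classification of incompressible tori in $T^2 \times I$ implies $T_1$ is isotopic in $P_2$ to a boundary torus. Since $T_2 \subset \Int P_1$ lies on the $\Int P_1$-side of $T_1$ (a connectedness argument on $\Int P_1 \cap P_2$ pins down which side), $T_1$ must be parallel to $T_2$, cobounding a product region $R' \approx T^2 \times I$ in $P_2$. A boundary matching argument then gives $P_1 \cap P_2 = R'$, and the gluing description from the first paragraph concludes the proof. The main obstacle is the case analysis for incompressibility, which requires careful bookkeeping of on which side of each separating torus the various pieces lie; however the tools are the standard 3-manifold lemmas already collected in Section \ref{sec:3dtopology}, applied (where necessary by doubling along the boundary) to the irreducible manifolds $P_1, P_2 \approx T^2 \times I$.
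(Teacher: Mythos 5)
Your skeleton --- prove that $T_1$ is incompressible in $P_2$ and then invoke boundary-parallelism of incompressible tori in $T^2\times I$ --- matches the second half of the paper's proof, but your incompressibility argument has genuine gaps. First, the assertion that a compressible torus in the irreducible manifold $P_2$ must bound a solid torus is false: the correct dichotomy (this is exactly the content of the paper's Lemma \ref{Lem:compressingtorus}) is that such a torus either bounds a solid torus or is contained in an embedded ball, and in the latter case neither side need be a solid torus (e.g.\ the boundary of a knot exterior embedded in a small ball in $\Int P_2$). Your case analysis never treats the ``contained in a ball'' alternative; it can be excluded using the hypotheses (the compact complementary region $W$ of $T_1$ inside the ball would either have to equal $P_1$, contradicting $T'_1\cap P_2=\emptyset$, or would make $P_1\cup W$ a compact region with boundary $T'_1$ containing $T_2$ but not $T'_2$, forcing the connected $P_2$ to meet $T'_1$), but that argument is absent. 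Second, the ``fact'' that every component of $P_1\setminus T_2$ must have $T_1$ or $T'_1$ in its closure is also false: a torus in $\Int P_1$ may bound a solid torus or a knot exterior inside a ball, and incompressibility of $T_2$ in $R$ alone (which a knot exterior satisfies) does not exclude this. The contradiction actually available in that sub-case is different: $R$ would have all of its boundary on $\partial P_2$, hence be clopen in $P_2$ and equal to $P_2$, which is impossible since $T'_2\not\subset P_1$. Third, the remaining case is dismissed as ``symmetric, using the analogous solid torus bounded by $T_2$,'' but nothing guarantees that $T_2$ is compressible in $P_1$, so no such solid torus is available; here one should instead observe that gluing $S$ to $P_1$ along $T_1$ produces a compact region with boundary $T'_1$ separating $T_2$ from $T'_2$.

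For comparison, the paper obtains incompressibility with no case analysis: collapsing $P_2$ onto $T_2\subset\Int P_1$ shows that $P_1\setminus P_2$ is a deformation retract of $P_1\cup P_2$, so in the chain $\pi_1(T'_1)\to\pi_1(P_1\setminus P_2)\to\pi_1(P_1)\to\pi_1(P_1\cup P_2)$ both the composite of the first two maps and the composite of the last two maps are isomorphisms, whence every map is an isomorphism; in particular $\pi_1(T_1)\to\pi_1(P_1\cup P_2)$ is an isomorphism and $T_1$ is incompressible in $P_2$. You should either adopt that route or repair the three points above.
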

\begin{proof}
Observe that $P_1 \setminus P_2$ and $P_2 \setminus P_1$ are connected.
Consider the sequence of maps which are induced by inclusion
\[ \pi_1( T'_1 ) \longrightarrow \pi_1 (P_1 \setminus P_2) \longrightarrow \pi_1 (P_1) \longrightarrow \pi_1 (P_1 \cup P_2). \]
The composition of the first two maps, $\pi_1(T'_1) \to \pi_1(P_1)$, is an isomorphism and since $P_1 \setminus P_2$ is a deformation retract of $P_1 \cup P_2$, the composition of the last two maps, $\pi_1 ( P_1 \setminus P_2 ) \to \pi_1 ( P_1 \cup P_2)$, is also an isomorphism.
So all maps are isomorphisms.
In particular $\pi_1 (T'_1) \to \pi_1 (P_1 \cup P_2)$ and hence also $\pi_1 (T_1) \to \pi_1 (P_1 \cup P_2)$ are isomorphisms.
We thus conclude that $T_1$ is incompressible in $P_2$.

By elementary $3$-manifold topology (see e.g. the proof of \cite[Proposition 1.7]{Hat}), this implies that $P_2 \setminus P_1 \approx T^2 \times [0,1)$ and hence $P_1 \cup P_2 = (P_2 \setminus P_1) \cup P_1 \approx T^2 \times I$.
\end{proof}

The next Lemma asserts that under the presence of a curvature bound, we can find a torus structure of small width around a cross-section of small diameter inside a given torus structure.
This fact will be used in the proof of Proposition \ref{Prop:firstcurvboundstep3}.
In the subsequent Lemma \ref{Lem:2loopstorus} we show that such a small cross-section exists if we can find two short loops which represent linearly independent homotopy classes inside the torus structure.

\begin{Lemma} \label{Lem:bettertorusstructure}
For any $K < \infty$, $L < \infty$ and $h > 0$ there is a constant $0 < \td\nu = \td\nu(K, L, h) < 1$ such that: \\
Let $(M,g)$ be a complete Riemannian manifold and consider a torus structure $P' \subset M$ of width $\leq 1$ and assume that $|{\Rm}| < K$ on $P'$.
Consider a subset $T \subset P'$  (e.g. a cross-sectional torus) which separates the two boundary components of $P'$ and which has distance $\geq \frac12 L + 30$ from the boundary components of $P'$ and assume that $\diam T < \td\nu$.

Then there is a torus structure $P \subset P'$ of width $\leq h$ and length $> L$ such that $T \subset P'$ and such that the pair $(P', P)$ is diffeomorphic to $(T^2 \times [-2,2], T^2 \times [-1,1])$.
\end{Lemma}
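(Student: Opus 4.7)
The plan is to argue by contradiction. Suppose the lemma fails for some fixed $K, L, h > 0$; then there is a sequence of counterexamples $(M_n, g_n, P'_n, T_n, x_n)$ with $x_n \in T_n$, $\diam T_n < \td\nu_n \to 0$, $|{\Rm}| < K$ on $P'_n$, $\dist(x_n, \partial P'_n) \geq L/2 + 30$, and the torus structure $\Phi_n : T^2 \times [0,1] \to P'_n$ of width $\leq 1$, but with no torus structure $P \subset P'_n$ of width $\leq h$, length $>L$, containing $T_n$ and of the required diffeomorphism type. I will construct such a $P$ directly for large $n$, yielding a contradiction.

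The key step is to show that the two shortest independent generators $\alpha_n, \beta_n$ of $\pi_1(P'_n, x_n) \cong \IZ^2$ have lengths $\leq C(K)\td\nu_n \to 0$. I work in the universal cover $\widetilde{P'_n} \approx \IR^2 \times [0,1]$ with a fixed lift $\widetilde{x}_n$ of $x_n$, and let $\widetilde{T}_n \ni \widetilde{x}_n$ be the component of $\pi^{-1}(T_n)$ containing $\widetilde{x}_n$. Since $T_n$ separates the two boundary tori of $P'_n$, the union $\bigcup_{\gamma \in \IZ^2} \gamma \cdot \widetilde{T}_n$ of translates (each of diameter $< \td\nu_n$) separates the two boundary copies of $\IR^2$ in $\widetilde{P'_n}$. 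Taking a transverse arc of length $O(1)$ from any point of the lift $\widetilde{T}^*_n \ni \widetilde{x}_n$ of the fiber through $x_n$ to one boundary, the arc must cross some $\gamma \cdot \widetilde{T}_n$, and because that translate has diameter $< \td\nu_n$ the crossing lies within $\td\nu_n$ of $\gamma \cdot \widetilde{x}_n$. Using $|{\Rm}| < K$ to compare the ambient metric on scale $\td\nu_n \ll K^{-1/2}$ with an exponential chart at $\widetilde{x}_n$, I conclude that the orbit $\IZ^2 \cdot \widetilde{x}_n \subset \widetilde{T}^*_n \cong \IR^2$ is $C(K)\td\nu_n$-dense, and Minkowski's basis theorem then gives $\ell(\alpha_n), \ell(\beta_n) \leq C'(K)\td\nu_n$.

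Given these short loops, the exponential image under $\exp_{\widetilde{x}_n}$ of the parallelogram spanned by $\dot\alpha_n(0), \dot\beta_n(0) \in T_{\widetilde{x}_n}\widetilde{P'_n}$ projects via the covering map to an embedded, approximately-flat cross-sectional torus $T^{**}_n \subset P'_n$ through $x_n$, of diameter $O(\td\nu_n)$ and isotopic to the fibers of $\Phi_n$. To extend $T^{**}_n$ to the desired torus structure, choose a smooth unit vector field $X$ on $P'_n$ transverse to all fibers of $\Phi_n$ (obtained by normalizing a smoothed version of $\partial_s \Phi_n$), and define $\Psi_n : T^{**}_n \times [-L/2-15, L/2+15] \to P'_n$ as the flow of $X$ starting from $T^{**}_n$. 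Jacobi-field estimates along the trajectories of $X$ under $|{\Rm}| < K$ give
\[ \diam \Psi_n(T^{**}_n \times \{s\}) \leq \diam(T^{**}_n) \cdot \cosh(\sqrt{K}\,|s|) \leq C(K,L)\td\nu_n, \]
which is $\leq h$ once $\td\nu_n$ is small enough depending on $K, L, h$. Setting $P = \Psi_n(T^{**}_n \times [-L/2-15, L/2+15])$ gives a torus structure of width $\leq h$ and length $> L$ containing $T^{**}_n$ and therefore $T_n$ (since $\diam T_n < \td\nu_n$ and $x_n \in T^{**}_n$). Because $\dist(x_n, \partial P'_n) \geq L/2 + 30$, the complement $P'_n \setminus \Int P$ consists of two components, each bounded by an incompressible torus and homotopy equivalent to $T^2$, hence diffeomorphic to $T^2 \times I$ by Waldhausen's theorem; this establishes $(P'_n, P) \approx (T^2 \times [-2,2], T^2 \times [-1,1])$, contradicting the counterexample assumption.

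The main obstacle will be producing the diffeomorphism $\Psi_n$ with thin cross-sections over the whole length $L+30$, since the naive normal exponential map from $T^{**}_n$ may fail past the focal radius $\sim K^{-1/2}$ which is independent of $\td\nu_n$. The fix is precisely to flow along the ambient transversal $X$ coming from $\Phi_n$ rather than the intrinsic normal to $T^{**}_n$: since $T^{**}_n$ is isotopic to every fiber of $\Phi_n$ and the fibers foliate $P'_n$ smoothly, $\Psi_n$ is defined on the full range $|s| \leq L/2+15$ without cut-locus issues, and the Jacobi estimate for the $X$-flow depends only on $K$ and $L$. A secondary technicality is the density-in-lifted-fiber argument in Step~2, which requires showing that the relevant translates $\gamma \cdot \widetilde{x}_n$ lie in (or very close to) $\widetilde{T}^*_n$; this is automatic since $x_n$ lies in the fiber $T^*_n$ and the deck action preserves the lift of $T^*_n$.
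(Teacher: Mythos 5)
Your route (separating set of diameter $\td\nu$ $\Rightarrow$ short generators of $\pi_1(P')$ $\Rightarrow$ small cross-sectional torus $\Rightarrow$ push it along $P'$) is genuinely different from the paper's, but the step that carries all the weight --- propagating the thinness over the full length $L$, which is far larger than the curvature scale $K^{-1/2}$ --- does not work as written. You define $\Psi_n$ as the flow of a unit field $X$ obtained by normalizing a smoothed $\partial_s\Phi_n$ and then invoke ``Jacobi-field estimates'' to get $\diam \Psi_n(T^{**}_n\times\{s\}) \leq \diam(T^{**}_n)\cosh(\sqrt{K}\,|s|)$. Jacobi-field comparison controls geodesic variations (equivalently the normal exponential map inside the focal radius); it says nothing about the flow of an arbitrary vector field, whose distance distortion is governed by $\Vert\nabla X\Vert$. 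Since $\Phi_n$ is merely \emph{some} diffeomorphism with fibers of diameter $\leq 1$, there is no bound whatsoever on $\nabla X$, and its flow can blow the tiny torus $T^{**}_n$ up to diameter $\sim 1$ instantly; you correctly identify the focal-radius obstruction for the normal exponential map, but the proposed fix replaces one uncontrolled object with another. The paper's mechanism for crossing the length $L$ is entirely different: it fixes a single minimizing geodesic $\gamma$ between points $z_1,z_2$ on the two ends (which must cross $T$ at some $z$), observes via Toponogov that \emph{every} minimizing geodesic from $z_1$ to a point $x$ in the middle region also threads the $\td\nu$-small set $T$ and hence shadows $\gamma$, so that $x$ lies within $\beta_1(\td\nu,L,K)\to 0$ of the point of $\gamma$ at the same distance from $z_1$. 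The level sets of $\dist(z_1,\cdot)$ are therefore automatically thin over the whole length, and a Grove--Shiohama type smoothing of this distance function (using the regularity estimate $\cangle z_1 x z_2 > 0.9\pi$) produces the fibration $P$. No small torus, short loop, or transverse flow is needed.

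Two further problems. (i) Your density claim for the orbit $\IZ^2\cdot\td{x}_n$ in the lifted fiber is not established by the argument given: an arc from a point of $\td{T}^*_n$ to a boundary plane has length at least $\frac12 L+29$, so the fact that it crosses some translate $\gamma\cdot\td{T}_n$ only places an orbit point within (length of arc)${}+\td\nu_n$ of the starting point, i.e.\ it gives $O(1)$-density rather than $C(K)\td\nu_n$-density. The correct argument is the contrapositive --- a gap of size $\gg\td\nu_n$ in the orbit would let a path slip past the $\td\nu_n$-neighborhood of the orbit, contradicting separation --- and making that precise requires the curvature bound to set up controlled local coordinates. (ii) ``The exponential image of the parallelogram spanned by $\dot\alpha_n(0),\dot\beta_n(0)$ projects to an embedded torus'' is not a proof: deck transformations do not act linearly on the tangent space, so this surface need not close up, be embedded, or be a torus. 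The paper extracts an embedded small torus from two short independent loops only via the Cheeger--Fukaya--Gromov nilpotent structure, and keeps that statement as the separate Lemma \ref{Lem:2loopstorus} precisely because it is not elementary; note also that in the paper's logic that lemma is an input \emph{alongside} the present one, not a consequence of its hypotheses.
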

\begin{proof}
By chopping off $P'$, we first construct a torus structure $P'_1 \subset P'$ of width $\leq 1$ and length $< L + 100$ such that the boundary tori of $P'_1$ have distance at least $5$ from the boundary tori of $P'$ and such that $T$ has distance of at least $\frac12 L + 20$ from $\partial P'_1$.
Then still $T \subset P'_1$.
Choose points $z_1, z_2 \in \partial P'_1$ in each boundary component of $P'_1$ and let $\gamma \subset M$ be a minimizing geodesic from $z_1$ to $z_2$.
Then $\gamma \subset P'$ and $\gamma$ intersects $T$ in a point $z$.

By the same construction as above, we choose $P'_2 \subset P'_1$ such that the boundary tori of $P'_2$ have distance at least $5$ from the boundary tori of $P'_1$ and such that $T$ has distance of at least $\frac12 L + 10$ from $\partial P'_2$.
We still have $T \subset P'_2$.
Let now $x \in P'_2$ be an arbitrary point.
Consider minimizing geodesics $\gamma_1, \gamma_2 \subset M$ from $x$ to $z_1$, $z_2$.
Then again $\gamma_1, \gamma_2 \subset P'$ and one of these geodesics have to intersect $T$, without loss of generality assume that this geodesic is $\gamma_1$ and choose a point $x_1 \in \gamma \cap T$.
Let $y_1 \in \gamma$ be a point with $\dist(z_1, y_1) = \dist(z_1, x)$ (we can find such a point since $\dist(z_1,x) < \dist(z_1, z_2)$).
We now apply Toponogov's Theorem using the lower sectional curvature bound $- K$:
Observe that $\dist(z_1, x_1), \dist(z_1, z) < L + 100$ and $\dist(x_1, z) < \td\nu$.
So the comparison angle $\beta = \cangle x_1 z_1 z$ (in the model space of constant sectional curvature $-K$) is bounded by a quantity $\beta_0 = \beta_0 (\td\nu, L, K)$ which goes to zero in $\td\nu$ whenever $L$ and $K$ are kept fixed.
By Toponogov's Theorem, we have $\cangle x z_1 y_1 \leq \beta \leq \beta_0$ and since the comparison triangle $\td{\triangle} x z_1 y_1$ is isosceles and the lengths of the hinges are bounded by $L+100$, we conclude that $\dist(x, y_1) < \beta_1(\td\nu, L, K)$, where $\beta_1(\td\nu, L, K)$ is a quantity which goes to zero in $\td\nu$ if $L$ and $K$ are kept fixed.
This implies in particular that
\[ \dist(z_1, z_2) \leq \dist(z_1, x) + \dist( z_2, x) \leq \dist(z_1, z_2) + 2 \beta_1(\td\nu, L, K). \]
Hence, if $\td\nu$ is small enough depending on $L$ and $K$, then we have the following bound for the comparison angle at $x$:
\begin{equation} \label{eq:cangleatxtorusstruct}
 \cangle z_1 x z_2 > 0.9 \pi.
\end{equation}
For the rest of the proof, fix such a $\td\nu > 0$ for which also $\beta_1(\td\nu, L, K) < 0.1h$.

By (\ref{eq:cangleatxtorusstruct}) the function $p : \Int P'_2 \to \IR$, $p(x) = \dist(z_1, x)$ is regular in a uniform sense and hence we can find a unit vector field $\chi$ on $\Int P'_2$ such that the directional derivative of $p$ is uniformly positive everywhere, i.e.  $\chi \cdot p > c > 0$.
We can moreover choose a smoothing $p'$ of $p$ with $| p- p' | < 0.1h$ and $\chi \cdot p' > 0$ everywhere (compare with \cite{Grove-Shiohama-1977} and \cite{Meyer-1989}).
Let $P = (p')^{-1}(I)$ be the preimage of a closed subinterval $I \subset p'(P'_2)$ whose endpoints have distance $3$ from the endpoints of $p' (P'_2)$.
This implies that the preimage $P = (p')^{-1}(t)$ of every point $t \in I$ is far enough from the boundary of $P'_2$ and hence is compact.
Then in particular $T \subset P$.
So $P \approx \Sigma \times I$, for some connected, closed surface $\Sigma$ and $p'$ is the projection onto the second factor.
Since $T \subset P$, it follows that that $\pi_1(\Sigma)$ contains a subgroup isomorphic to $\IZ^2$ which implies that $\Sigma \approx T^2$.

We now estimate the diameter of $(p')^{-1}(t)$ for each $t \in I$.
Let again $x \in P$ and consider as above the geodesics $\gamma_1, \gamma_2$ as well as the point $y_1 \in \gamma$ with $\dist(z_1, y_1) = \dist (z_1, x) = p(x)$.
Additionally, we construct $y_2 \in \gamma$ with $\dist (z_2, y_2) = \dist(z_2, x)$.
Then $\dist(y_1, y_2) \leq 0.2 h$.
In the case in which $\gamma_1$ intersects $T$, we conclude as above that $\dist(x, y_1) \leq 0.1 h$.
Analogously, if $\gamma_2$ intersects $T$, we have $\dist(x, y_2) \leq 0.1 h$ and hence $\dist(x,y_1) \leq 0.3 h$.
Let $y' \in \gamma$ now be a point with $\dist(z_1, y') = p'(x)$.
Then $\dist(y', y_1) \leq | p (x) - p' (x) | < 0.1 h$ and hence $\dist(y', x) < 0.4 h$.
This implies that $\diam (p')^{-1}(t) < 0.8 h < h$ for all $t \in I$.
So $P$ has width $\leq h$.

Finally, we bound the length of $P$ from below.
Consider points $x_1, x_2 \in \partial P$ in each boundary component and let $y'_1, y'_2 \in\gamma$ be points with $\dist (z_1, y'_1) = p'(x_1)$ and $\dist (z_1, y'_2) = p'(x_2)$.
Then by the last paragraph
\begin{multline*}
 \dist(x_1, x_2) > \dist (y'_1, y'_2) - 2 \cdot 0.4 h = | p'(x_1) - p'(x_2) | - 0.8 h \\
  = \ell( p'(P'_2) ) - 2 \cdot 3 - 0.8 h > \ell( p (P'_2) ) - 6 - h.
\end{multline*}
where $\ell( p (P'_2) )$ denotes the length of the interval $p(P'_2)$.
By assumption $p(P'_2) \geq 2 (\frac12 L + 10) = L + 20$.
So $\dist(x_1, x_2) > L + 14 - h > L$ for $h < 1$.
This implies that $P$ has length $> L$.
\end{proof}

\begin{Lemma} \label{Lem:2loopstorus}
For every $K < \infty$ there is a constant $\td\varepsilon_1 = \td\varepsilon_1(K) > 0$ such that: \\
Let $(M,g)$ a complete Riemannian manifold with boundary which is diffeomorphic to $T^2 \times I$ and $p \in M$ such that $B(p, 1) \subset M \setminus \partial M$.
Assume that $| {\Rm} | < K$ and assume that there are loops $\gamma_1, \gamma_2$ based in $p$ which represent two linearly independent homotopy classes in $\pi_1(M) \cong \IZ^2$.
Then if $m = \max \{ \ell(\gamma_1), \ell(\gamma_2) \} < \td\varepsilon_1$, there is an embedded incompressible torus $T \subset M$ which separates the two ends of $M$ such that $p \in T$ and $\diam T < 10 m$.
\end{Lemma}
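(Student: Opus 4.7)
The plan is to construct the torus by pulling back a small parallelogram from the universal cover. Since $M \approx T^2 \times I$, we have $\tilde M \approx \IR^2 \times I$ with a free $\IZ^2 \cong \pi_1(M)$ action by deck transformations. Fix a lift $\tilde p$ of $p$ and let $g_1, g_2 \in \pi_1(M)$ be the deck transformations represented by $\gamma_1,\gamma_2$; by hypothesis they are linearly independent, so $H = \langle g_1, g_2\rangle$ is a finite-index subgroup of $\pi_1(M)$. For $\td\varepsilon_1 = \td\varepsilon_1(K)$ chosen small enough, the ball $B(\tilde p, 1/2) \subset \tilde M$ is simply connected with $|{\Rm}| < K$, so there is an $r_0 = r_0(K) > 0$ (much smaller than $\pi/\sqrt K$) such that $\exp_{\tilde p}$ is a diffeomorphism from $B(0,r_0) \subset T_{\tilde p}\tilde M$ onto its image, and by Rauch II comparison $\exp_{\tilde p}$ differs from a linear isometry by an error of order $K|\cdot|^3$.

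First I would replace $(g_1, g_2)$ by a \emph{short basis} $(h_1, h_2)$ of $H$: let $h_1 \in H \setminus\{e\}$ minimize $d(\tilde p, h\tilde p)$, and let $h_2 \in H \setminus \langle h_1\rangle$ minimize the same distance. Comparing with the candidates $g_1,g_2$ shows $d(\tilde p, h_i \tilde p) \leq m$ for $i = 1, 2$. Set $v_i = \exp_{\tilde p}^{-1}(h_i \tilde p)$. The minimality of the basis, combined with the Rauch-near-Euclidean structure of the orbit $H\tilde p$ in exponential coordinates (so that $h_1^a h_2^b \tilde p$ lies $O(Km^3(|a|+|b|)^3)$-close to $av_1 + bv_2$), forces the angle between $v_1$ and $v_2$ to be bounded below by some $\theta_0 > 0$ (say $\pi/3$), by the same reasoning as Minkowski's short-basis theorem for rank-$2$ lattices in $\IR^3$: otherwise $h_2 h_1^{-k}$ for an integer $k$ close to $\langle v_1,v_2\rangle/|v_1|^2$ would provide a shorter representative of the nontrivial coset $h_2\langle h_1\rangle$, contradicting the minimal choice.

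Next I would build the torus. Consider the parallelogram $P = \{sv_1 + tv_2 : (s,t) \in [0,1]^2\}$ and the smooth immersion $\Phi_0(s,t) = \exp_{\tilde p}(sv_1 + tv_2) : [0,1]^2 \to \tilde M$. By the angle bound on $(v_1, v_2)$, the Rauch comparison makes $\Phi_0$ a bilipschitz embedding with constant close to $1$ on $P$, and $\diam \Phi_0([0,1]^2) \leq |v_1| + |v_2| + O(Km^3) \leq 3m$. Its corners are $\tilde p, h_1 \tilde p, h_2 \tilde p, h_1 h_2 \tilde p$, but the opposite edges of $\Phi_0$ do not exactly match their $h_i$-translates; they differ by $O(Km^3)$ by Rauch II. I would repair this by interpolating, via a partition of unity in a thin boundary collar of $[0,1]^2$, between $\Phi_0$ and the edge-parametrizations prescribed by the exact identifications $\Phi(1,t) = h_1\Phi(0,t)$ and $\Phi(s,1) = h_2\Phi(s,0)$. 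The smoothed map $\Phi$ remains a $(1+O(Km^2))$-bilipschitz embedding, descends to an embedded immersed torus $T \subset M$ through $p$ with $\diam T \leq 3m(1 + O(Km^2)) < 10m$ once $\td\varepsilon_1$ is small. The induced map $\pi_1(T) \to \pi_1(M)$ has image $H$, which has rank $2$ in $\pi_1(M) \cong \IZ^2$, so it is injective and $T$ is incompressible; as an incompressible torus in $M \approx T^2 \times I$ it automatically separates the two ends.

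The main obstacle is the interpolation step: ensuring that after forcing the boundary identifications to hold exactly, the parallelogram remains embedded with controlled diameter. The key estimate is that the Rauch defect is $O(Km^3)$ while the parallelogram has size $m$, so the boundary correction is smaller than $m$ by a factor of $Km^2$, which is negligible for small $\td\varepsilon_1(K)$. This smallness, combined with the uniform lower bound on the angle between $v_1$ and $v_2$ coming from the short-basis step, prevents the smoothing from creating self-intersections and preserves the diameter bound $< 10m$.
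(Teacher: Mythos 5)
Your route --- building the torus as the image of a nearly Euclidean parallelogram spanned by a short basis of deck transformations in the universal cover --- is genuinely different from the paper's, which invokes the local Cheeger--Fukaya--Gromov structure theory \cite{CFG} to obtain (after a controlled change of metric) an isometric action of a nilpotent group $N$ on a cover, and takes $T$ to be the $2$-dimensional $N$-orbit through a lift of $p$. Unfortunately your version has a gap at its central step. The estimate that $h_1^ah_2^b\td p$ lies $O(Km^3(|a|+|b|)^3)$-close to $av_1+bv_2$ in normal coordinates presupposes that the isometries $h_i$ are approximately \emph{translations}, i.e.\ that their rotational parts $A_i\in O(3)$ (the differentials $dh_i|_{\td p}$ read back to $T_{\td p}\td M$ via parallel transport) are $O(Km^2)$-close to the identity. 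Rauch comparison controls only the deviation of $\exp_{\td p}$ from a linear isometry; it gives no information about $A_i$. An isometry with displacement $\le m$ at $\td p$ can a priori have a rotational part of definite size (the model is a screw motion whose axis passes within distance $O(m)$ of $\td p$), in which case the orbit $\{h_1^n\td p\}$ curls away from the line $\IR v_1$ at a rate governed by $\Vert A_1-\id\Vert$ rather than by $Km^3$, and both your Minkowski-type angle bound and the bilipschitz control of $\Phi_0$ collapse. Ruling this out is exactly the nontrivial content of the Margulis-lemma/almost-flat-manifolds machinery --- it does not follow from bounded curvature plus short generators, and the fact that $\pi_1(M)\cong\IZ^2$ is abelian does not give it for free (the commutator identity $[h_1,h_2]=e$ only yields a relation between $(A_1-\id)v_2$ and $(A_2-\id)v_1$, not smallness of the $A_i$). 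This is precisely the point at which the paper outsources the work to \cite{CFG}. To keep your elementary construction you would have to prove the smallness of the rotational parts first.

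A second, more easily repaired, problem: your identifications only make the parallelogram descend to an embedded torus in the intermediate cover $\td M/H$, where $H=\langle g_1,g_2\rangle$. If $H$ has index $d>1$ in $\pi_1(M)$, two points of $\Phi([0,1]^2)$ can be identified in $M$ by an element $g\in\pi_1(M)\setminus H$ of displacement $\le 6m$ at $\td p$, so the image in $M$ need not be embedded. You should run the construction with a short basis of the full group $\pi_1(M)$ instead; such a basis still has displacement $\le m$, because the rank-$2$ subgroup $H$ already contains elements of displacement $\le m$ outside any cyclic subgroup of $\pi_1(M)$. Then every identification produced by a short element of $\pi_1(M)$ is accounted for by the torus gluing, and incompressibility and separation follow as you say.
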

\begin{proof}
By a local version of the results of Cheeger, Fukaya and Gromov \cite{CFG}, there are universal constants $\rho = \rho(K) > 0$, $k < \infty$ such that we can find an open neighborhood $B(p, \rho) \subset V \subset M$ and a metric $g'$ on $V$ with $0.9 g < g' < 1.1 g$ with the following properties: 
There is a Lie group $H$ with at most $k$ connected components whose identity component $N$ is nilpotent and which acts isometrically and effectively on the universal cover $(\td{V}, \td{g}')$.
The fundamental group $\Lambda = \pi_1(V)$ can be embedded into $H$ such the action of $H$ on $(\td{V}, \td{g}')$ restricted to $\Lambda$ is the action by deck transformations.
Moreover, $H$ is generated by $\Lambda$ and $N$.
Lastly, the injectivity radius of $(\td{V}, \td{g}')$ at any lift $\td{p}$ of $p$ is larger than $\rho$.

Consider the dimension $d$ of the orbit $\td{T}$ of a lift $\td{p}$ under the action of $N$.
Since $V$ has to be non-compact, we have $d \leq 2$.
On the other hand, assuming $\td{\varepsilon}_1 < \rho$, the loops $\gamma_1, \gamma_2$ generate an infinite subgroup in $\Lambda = \pi_1(V)$ which does not have a finite index subgroup isomorphic to $\IZ$.
So $d = 2$.
Since $N \cap \Lambda$ is nilpotent and acts discontinuously on $\td{T}$, we have $N \cap \Lambda \cong \IZ^2$ and all orbits of the $N$-action are $2$ dimensional.
Consider the cover $\widehat\pi : (\widehat{V}, \widehat{g}') \to (V, g')$ corresponding to $N \cap \Lambda$.
Then $\widehat{V} \approx T^2 \times (0,1)$ and $\widehat{V} \to V$ has at most two sheets.
The action of $N$ on $(\widehat{V}, \widehat{g}')$ exhibits $(\widehat{V}, \widehat{g}')$ as a warped product of a flat torus over an interval.
We can find loops $\gamma'_1, \gamma'_2$ based at a lift $\widehat{p}$ of $p$ each of which have $\widehat{g}'$-length $< 2 (1.1)^{1/2} m$.
This implies that the $T^2$-orbit $\widehat{T}$ of $\widehat{p}$ under $N$ has $g'$-diameter $< 4 \cdot 1.1^{1/2} m$.
Let $T = \widehat\pi (\widehat{T})$ be the projection of $\widehat{T}$.
Then $\diam_{g} T < 4 \cdot 1.1^{1/2} \cdot 0.9^{-1/2} m < 10 m$ and $\widehat\pi$ restricted to $\widehat{T}$ induces a map $f : T^2 \to M$ with $f(T^2) = T$ which has at most two sheets.

We show that the intersection number of $f$ with the line $\{ \mathop{\text{pt}} \} \times I \subset M \approx T^2 \times I$ is non-zero:
Consider the composition of $f$ with the projection $M \approx T^2 \times I \to T^2$.
This is a smooth and incompressible map of the form $T^2 \to T^2$. 
Hence, its degree is non-zero which is equal to the sought intersection number.
We conclude that $T \subset M$ is a $2$-torus which separates the two boundary components of $M$.
\end{proof}

The next Lemma will be used in the proof of Lemma \ref{Lem:smallloopintorusstruc}.
\begin{Lemma} \label{Lem:1loop2d}
For every $K < \infty$, there are constants $\td\varepsilon_2 = \td\varepsilon_2 (K) > 0$ and $\Gamma' = \Gamma' (K) < \infty$ such that the following holds: \\
Let $(M,g)$ a $2$-dimensional, orientable Riemannian manifold with and $p \in M$ a point such that the $1$-ball around $x$ is relatively compact in $M$.
Assume that $| {\Rm} |  < K$ on $M$ and assume that there is a loop $\gamma : S^1 \to M$ based in $p$ which is non-contractible in $M$ and has length $\ell (\gamma) < \td\varepsilon_2$.
Then there is an embedded loop $\gamma' \subset M$ which is also based in $p$, homotopic to $\gamma$ and which satisfies the following properties:
$\ell(\gamma') < 2 \ell(\gamma)$ and the geodesic curvatures of $\gamma'$ are bounded by $\Gamma'$.
\end{Lemma}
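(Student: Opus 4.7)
The plan is to construct $\gamma'$ by first extracting a short geodesic loop at $p$ via a universal cover argument and then smoothing its corner at $p$, using bounded curvature to control how sharp the corner can be. Fix a scale $R = R(K) > 0$ small enough that every geodesic ball $B(x, R) \subset B(p, 1)$ has $C^2$-geometry uniformly close to Euclidean (a consequence of $|\Rm| < K$), and choose $\td\varepsilon_2 < R/100$. Then $\gamma \subset B(p, R/50)$, and since $\gamma$ is non-contractible in $M$ it is also non-contractible in $B(p, R)$. The smallness of $B(p, R)$ relative to the curvature scale forces it to be diffeomorphic to either a disk or an annulus (this can be argued elementarily, or derived from the local Cheeger--Fukaya--Gromov theorem applied as in the proof of Lemma \ref{Lem:2loopstorus}). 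The disk case is excluded, so $\pi_1(B(p, R)) \cong \IZ$, and after replacing $\gamma$ by a generator of its infinite cyclic subgroup if necessary, $[\gamma]$ is a generator.

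\textbf{A short geodesic loop at $p$.} Pass to the universal cover $\pi \colon \td B \to B(p, R)$ with cyclic deck group $\langle g \rangle$, and pick a lift $\td p$ of $p$. Then $\dist_{\td B}(\td p, g\td p) \leq \ell(\gamma) < \td\varepsilon_2 \ll R$, and since $\td B$ is simply connected with bounded curvature, there is a unique minimal geodesic $\td\eta \colon [0, L] \to \td B$ from $\td p$ to $g\td p$ with $L \leq \ell(\gamma)$. Its projection $\eta := \pi \circ \td\eta$ is a smooth geodesic segment in $B(p, R)$ with $\eta(0) = \eta(L) = p$, both freely and based homotopic to $\gamma$. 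Using that $\td\eta$ lies deep inside the injectivity radius at $\td p$ and that the nontrivial deck iterates $g^k \td p$ have mutual distance $\geq |k| L (1 - O(K R^2))$ in the nearly-flat $\td B$, one checks that $\eta$ is embedded as a subset of $B(p, R)$.

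\textbf{Corner angle and smoothing.} The curve $\eta$ is smooth on $(0, L)$; the only possible non-smoothness is a corner at $p$ of angle $\theta := \angle\bigl(v_L,\, dg|_{\td p}(v_0)\bigr)$ in $T_{g\td p} \td B$, where $v_0 = \td\eta'(0)$ and $v_L = \td\eta'(L)$ is the parallel transport of $v_0$ along $\td\eta$. In the Euclidean model, where $g$ is an ordinary translation and parallel transport is trivial, $\theta$ vanishes; in the curved setting a standard Jacobi field or Toponogov comparison in $\td B$ yields the bound $\theta \leq C_1(K) L$. Now set $\delta = L/4$ and, using the normal exponential chart at $p$, excise $\eta|_{[0, \delta]}$ and $\eta|_{[L-\delta, L]}$, replacing them by a smooth arc through $p$ joining $\eta(L-\delta)$ and $\eta(\delta)$ with matched tangents, constructed for instance as a circular arc in normal coordinates. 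This arc has length at most $(1 + O(\theta)) \cdot 2\delta$ and geodesic curvature at most $C_2 \cdot \theta/\delta \leq 4 C_1(K) C_2$. The resulting curve $\gamma'$ is embedded, based at $p$, homotopic to $\gamma$, of length at most $L (1 + O(\theta)) < 2 \ell(\gamma)$ for $\td\varepsilon_2$ small enough depending on $K$, and has geodesic curvature bounded by $\Gamma'(K) := 4 C_1(K) C_2$.

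\textbf{Main obstacle.} The delicate step is the corner angle bound $\theta \leq C_1(K) L$, which quantitatively expresses the fact that a short geodesic loop in a nearly flat region is nearly smooth. An alternative route that bypasses this direct estimate is to apply local Cheeger--Fukaya--Gromov to a neighborhood of $p$ as in the proof of Lemma \ref{Lem:2loopstorus}: one obtains a perturbed metric $g'$ close to $g$ admitting an isometric action of a Lie group whose identity component $N$ is one-dimensional and nilpotent (hence abelian), and takes $\gamma'$ to be the $N$-orbit of $p$, which is an embedded smooth closed curve through $p$ with bounded $g'$-geodesic curvature coming from the isometric action, and hence bounded $g$-geodesic curvature as well.
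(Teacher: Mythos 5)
Your overall plan is sound and your fallback route is in fact the paper's actual proof, but as written both routes have a gap at their decisive quantitative step. In the main route, the corner-angle bound $\theta \leq C_1(K)\,L$ does not follow from a ``standard Jacobi field or Toponogov comparison'': the hinge at $g\td p$ with two sides of length $L$ and interior angle $\pi-\theta$ only yields $\dist(\td p, g^2\td p) \geq 2L\cos(\theta/2) - O(KL^3)$, while the only a priori bound in the other direction is the triangle inequality $\dist(\td p, g^2\td p)\leq 2L$; these are consistent for every $\theta\in[0,\pi]$, so no pointwise comparison rules out a large corner (the ``lasso'' picture). The estimate is true, but proving it requires a global input --- e.g.\ that a broken geodesic with turning $\theta$ at each vertex $g^k\td p$ would confine the orbit $\{g^k\td p\}$ to a region of diameter roughly $L/\theta$, contradicting proper discontinuity once $\theta \gg L/\rho(K)$ --- which is exactly the Margulis/Cheeger--Fukaya--Gromov structure you invoke in the fallback. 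So the main route cannot be closed by the tools you cite.

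The fallback also skips its one substantive step: an isometric circle action does \emph{not} by itself give bounded geodesic curvature of its orbits (near a point where the Killing field is small the orbits can curve arbitrarily sharply). Writing $(V,g')$ as a warped product $dr^2+\varphi^2(r)\,d\theta^2$ over $(-a,b)$ with $p$ in the fiber over $0$, the orbit through $p$ has geodesic curvature $|\varphi'/\varphi|(0)$, whereas the curvature bound only controls $|\varphi''/\varphi|$. One must run the Riccati argument: setting $u=\varphi'/\varphi$, the inequality $u' \leq K'-u^2$ shows that if $|u(0)|$ exceeds $\max\{\sqrt{2K'}, C/\rho\}$ then $u$ blows up and $\varphi\to 0$ within distance $\ll \rho$ of the origin, contradicting $a,b>0.9\rho(K)$; this, together with the comparability of $g$ and $g'$ and the check that $\ell_{g'}(\gamma')<1.5\,\ell_{g'}(\gamma)$, is precisely the content of the paper's proof. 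With that paragraph supplied, your fallback coincides with the paper's argument; without it, neither of your routes is complete.
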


\begin{proof}
Similarly as in the proof of Lemma \ref{Lem:2loopstorus} there is a universal constant $\rho = \rho(K) > 0$ such that we can find a neighborhood $B(p, \rho) \subset V \subset M$ and a metric $g'$ on $V$ with $0.9 g < g' < 1.1 g$ such that the same conditions as above hold.
Note that $g'$ can moreover be chosen such that $|\nabla - \nabla'| < 0.1$ and such that the curvature of $g'$ is bounded by a universal constant $K' = K' (K) < \infty$ (see \cite{CFG}).
Hence, it suffices to construct loop $\gamma'$ with $\ell_{g'} (\gamma') < 1.5 \ell_{g'} (\gamma)$ and on which the geodesic curvatures with respect to $g'$ are bounded.

As in the proof of Lemma \ref{Lem:2loopstorus} we conclude that either $(V, g')$ is a flat torus (in which case the Lemma is clear), or all orbits under the action of $N$ on the universal cover $(\td{V}, \td{g}')$ of $(V,g')$ are 1 dimensional.
In the latter case, this implies that $\Lambda \subset N$ and $(V, g')$ is a warped product of a circle over an interval $(-a,b)$ with $a, b > 0.9 \rho$ (we assume that $p$ lies in the fiber over $0 \in (-a, b)$).
Let $\varphi : (-a,b) \to (0,\infty)$ be the warping function.
By the curvature bound on $g'$ we have $| \varphi'' \varphi^{-1} | < K'$ on $(-a,b)$.
This implies a bound $| \varphi' \varphi^{-1} | < C = C (K')$ on $(- \frac12 a, \frac12 b)$.
Hence the geodesic curvature on the circle $\gamma'$ through $p$ is bounded by $C$ and for sufficiently small $\ell_{\gamma'} (\gamma)$ we have $\ell_{g'} (\gamma') < 1.5 \ell_{g'} (\gamma)$.
\end{proof}

\subsection{Existence of short loops}
In this subsection we establish the existence of short geodesic loops on surfaces of large diameter, but controlled area.
The main result of this subsection, Lemma \ref{Lem:smallloopintorusstruc}, will be used in the proof of Propositions  \ref{Prop:firstcurvboundstep3} and \ref{Prop:structontimeinterval}.
In the proof of Proposition  \ref{Prop:firstcurvboundstep3} it will enable us to apply Lemmas \ref{Lem:bettertorusstructure} and \ref{Lem:2loopstorus} and hence to find torus structures of small width.
\begin{Lemma} \label{Lem:annulus}
Let $\Sigma$ be a topological annulus and let $g$ be a symmetric non-negative $2$-form on $\Sigma$ (i.e. a possibly degenerate Riemannian metric).
Assume that with respect to $g$ any smooth curve connecting the two boundary components of $\Sigma$ has length $\geq a$ and every embedded, closed loop of non-zero winding number in $\Sigma$ has length $\geq b$.
Then $\area \Sigma \geq a b$.
\end{Lemma}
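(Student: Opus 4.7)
The plan is to apply a Besicovitch--type coarea argument to the distance function from one of the boundary components of $\Sigma$. Denote by $\partial_0, \partial_1$ the two boundary circles of $\Sigma$. The degeneracy of $g$ is a minor technical inconvenience that we circumvent by an approximation. Fix any smooth Riemannian background metric $g_0$ on $\Sigma$ and set $g_\eps = g + \eps g_0$ for $\eps > 0$. Then $g_\eps$ is a genuine smooth Riemannian metric, and since $g_\eps \geq g$ pointwise as quadratic forms, the $g_\eps$-length of any smooth curve is at least its $g$-length. In particular, the hypotheses of the lemma persist: every curve connecting $\partial_0$ to $\partial_1$ has $g_\eps$-length $\geq a$, and every embedded loop of non-zero winding number has $g_\eps$-length $\geq b$.

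Next I would set $f_\eps(x) = \dist_{g_\eps}(x, \partial_0)$. This is $1$-Lipschitz with respect to $g_\eps$, and by standard regularity of distance functions on smooth Riemannian manifolds with boundary it is smooth almost everywhere with $|\nabla_{g_\eps} f_\eps|_{g_\eps} \leq 1$ a.e. The coarea formula on $(\Sigma, g_\eps)$ therefore yields
\[
 \area_{g_\eps}(\Sigma) \;\geq\; \int_\IR \mathcal{H}^1_{g_\eps}\bigl(f_\eps^{-1}(t)\bigr)\, dt \;\geq\; \int_0^a \mathcal{H}^1_{g_\eps}\bigl(f_\eps^{-1}(t)\bigr)\, dt,
\]
where the lower limit of integration is justified by the hypothesis $\dist_{g_\eps}(\partial_0, \partial_1) \geq a$.

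The key geometric observation is the lower bound $\mathcal{H}^1_{g_\eps}(f_\eps^{-1}(t)) \geq b$ for a.e. $t \in (0,a)$. The sublevel set $\{f_\eps < t\}$ is an open neighborhood of $\partial_0$ disjoint from $\partial_1$, so its topological boundary in $\Sigma$ separates $\partial_0$ from $\partial_1$. By Sard's theorem applied to $f_\eps$ (using that the distance function is smooth away from a closed set of measure zero in the smooth Riemannian setting), for almost every $t \in (0,a)$ the level set $f_\eps^{-1}(t)$ is a smooth compact $1$-submanifold, and it must contain at least one embedded circle of non-zero winding number in $\Sigma \approx S^1 \times I$. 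The hypothesis on loops then gives $\mathcal{H}^1_{g_\eps}(f_\eps^{-1}(t)) \geq b$, and we conclude $\area_{g_\eps}(\Sigma) \geq ab$.

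Finally I would pass to the limit $\eps \to 0$. Writing both areas with respect to the fixed smooth volume form of $g_0$, we have $\sqrt{\det g_\eps} \to \sqrt{\det g}$ pointwise, with $\sqrt{\det g_\eps} \leq \sqrt{\det(g + g_0)}$ for $\eps \in (0,1]$. Assuming $\area_g(\Sigma) < \infty$ (otherwise the inequality is trivial) this dominating function is integrable on the compact annulus $\Sigma$, so dominated convergence gives $\area_{g_\eps}(\Sigma) \to \area_g(\Sigma)$, yielding $\area_g(\Sigma) \geq ab$ as desired. The main subtlety is the topological step asserting that almost every level set contains an embedded loop of non-zero winding number; everything else reduces to the classical coarea formula plus a clean approximation argument to absorb the degeneracy of $g$.
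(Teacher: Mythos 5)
Your overall strategy (coarea applied to the distance function from one boundary circle) is a legitimate Besicovitch-type route, but it is genuinely different from the paper's proof, and as written it has one real gap: the invocation of Sard's theorem for $f_\eps = \dist_{g_\eps}(\cdot,\partial_0)$. A distance function is only Lipschitz; it is smooth away from the cut locus of $\partial_0$, but ``smooth off a closed measure-zero set'' is not a hypothesis under which Sard's theorem applies, and it does not follow that almost every level set $f_\eps^{-1}(t)$ is a smooth compact $1$-submanifold. The Lipschitz coarea formula only gives that a.e.\ level set is countably $\mathcal{H}^1$-rectifiable, and extracting from a rectifiable separating set an \emph{embedded} loop of non-zero winding number of no greater length requires additional work (and the naive ``traverse a connected separating component'' argument loses a factor of $2$, which would destroy the constant $ab$). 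The step is repairable: replace $f_\eps$ by a smooth mollification $f_{\eps,\delta}$ with $|\nabla f_{\eps,\delta}|\le 1+\delta$, $f_{\eps,\delta}\le\delta$ on $\partial_0$ and $f_{\eps,\delta}\ge a-\delta$ on $\partial_1$; then Sard applies honestly, a regular level set in $(\delta,a-\delta)$ is a disjoint union of embedded circles which separates the boundary components and hence contains a circle of non-zero winding number (a union of null-homotopic circles cannot separate), and the coarea formula gives $(1+\delta)\area_{g_\eps}(\Sigma)\ge (a-2\delta)b$; let $\delta\to 0$ and then $\eps\to 0$ as you do.

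For comparison, the paper avoids all level-set regularity issues by the classical length--area (extremal length) method: after the same $\eps$-regularization it uniformizes, writing $\Sigma = A(r_1,r_2)\subset\IC$ with $g = r^{-2}f^2(r,\theta)\,g_{\eucl}$, so that the radial segments and the concentric circles are explicit competitors for the two length hypotheses; integrating $\int r^{-1}f$ in the two orders and applying Cauchy--Schwarz yields $\area\Sigma\ge ab$ directly. That argument only uses the hypotheses on radial curves and round circles, needs no topology of level sets, and gives the sharp constant with no approximation beyond the initial regularization, whereas your approach, once the mollification step is inserted, is also correct but requires the separation argument for regular level sets.
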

\begin{proof}
Let $g'$ be an arbitrary metric on $\Sigma$.
If the Lemma is true for $g + \varepsilon g'$ for all $\varepsilon > 0$, then we obtain the result for $g$ by letting $\varepsilon \to 0$.
So we can assume in the following that $g$ is a Riemannian metric.

We can furthermore assume that $\Sigma = A(r_1, r_2) \subset \IC$ with $0 \leq r_1 < r_2 \leq \infty$ and that $g = r^{-2} f^2(r, \theta) g_{\textnormal{eucl}}$ for polar coordinates $(r, \theta)$.
By assumption, we have
\[ \int_{r_1}^{r_2} r^{-1} f(r, \theta) dr \geq a \qquad \text{for all} \qquad \theta \in [0, 2\pi] \]
and
\[ \int_0^{2\pi} f(r, \theta) d\theta \geq b \qquad \text{for all} \qquad r \in (r_1, r_2). \]
Hence
\[  \int_0^{2\pi} \int_{r_1}^{r_2}r^{-1} f(r, \theta) dr d\theta \geq 2 \pi a \]
and
\[ \int_{r_1}^{r_2} \int_0^{2\pi} r^{-1} f(r, \theta) d\theta dr \geq b \int_{r_1}^{r_2} r^{-1} dr = b \log \Big( \frac{r_2}{r_1} \Big). \]
So
\begin{multline*}
 2 \pi a b \log \Big( \frac{r_2}{r_1} \Big) \leq \bigg( \int_0^{2\pi} \int_{r_1}^{r_2} r^{-1} f(r, \theta) dr d \theta \bigg)^2 \\ \leq \bigg( \int_0^{2\pi} \int_{r_1}^{r_2} r^{-1} f^2(r, \theta) dr d\theta \bigg) \bigg( \int_0^{2\pi} \int_{r_1}^{r_2} r^{-1} dr d \theta \bigg)
 = 2 \pi (\area \Sigma) \log \Big( \frac{r_2}{r_1} \Big) \qedhere
\end{multline*}
\end{proof}
\vspace{2mm}

\begin{Lemma} \label{Lem:multiannulus}
Let $\Sigma$ be a closed topological multi-annulus which is bounded by circles $C_0, \ldots, C_m$.
As in Lemma \ref{Lem:annulus} consider a symmetric non-negative $2$-form $g$ on $\Sigma$.
Let moreover $w_1, \ldots, w_m \in \IR$ be weights with $w_1 + \ldots + w_n \not= 0$. \\
Assume now that the distance of $C_0$ to any $C_k$ ($k=1, \ldots, m$) is $\geq a$ and every embedded closed loop $\gamma \subset \Sigma$ which encloses boundary circles $C_{k_1}, \ldots, C_{k_{m'}}$ with $w_{k_1} + \ldots + w_{k_{m'}} \not= 0$ has length $\ell(\gamma) \geq b$.
Distances and lengths are measured with respect to $g$.
Then $\area \Sigma \geq ab$.
\end{Lemma}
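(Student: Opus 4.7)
The plan is to mimic the proof of Lemma~\ref{Lem:annulus} by a coarea-type estimate for the distance function to $C_0$, with the new combinatorial ingredient that each generic level set contains at least one embedded loop of non-zero enclosed weight. As there, I would first reduce to the case where $g$ is a genuine smooth Riemannian metric by replacing $g$ with $g + \varepsilon g_0$ for an auxiliary Riemannian metric $g_0$ on $\Sigma$ and letting $\varepsilon \searrow 0$ at the end.

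Next I would let $d : \Sigma \to [0,\infty)$ denote the $g$-distance to $C_0$. Since $\dist_g(C_0, C_k) \geq a$ for $k \geq 1$, every level set $d^{-1}(r)$ with $r \in (0,a)$ lies in $\Int \Sigma$, and by Sard's theorem it is, for a.e.\ such $r$, a disjoint union of smoothly embedded closed curves $L_1^{(r)}, \ldots, L_{k(r)}^{(r)}$. The coarea inequality combined with $|\nabla d| \leq 1$ yields
\[
\area_g \Sigma \;\geq\; \int_0^a \sum_{i=1}^{k(r)} \ell\bigl(L_i^{(r)}\bigr) \, dr,
\]
so it suffices to prove that for a.e.\ $r \in (0,a)$ some $L_i^{(r)}$ has $\ell(L_i^{(r)}) \geq b$.

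For this combinatorial step I would fix a regular $r$, abbreviate $L_i = L_i^{(r)}$, let $U = \{d < r\}$ (which is connected, being the $r$-tubular neighborhood of the connected set $C_0$), and let $V_1, \ldots, V_s$ be the connected components of $\Sigma \setminus U$. Since the multi-annulus $\Sigma$ embeds in $S^2$, every embedded closed curve in $\Sigma$ separates, which implies that the dual graph $T$ with vertex set $\{U, V_1, \ldots, V_s\}$ and one edge per $L_i$ (joining the two components on its two sides) is a tree; root it at $U$. Setting $W(V_l) = \sum_{C_j \subset \partial V_l,\, j \geq 1} w_j$, one has $\sum_l W(V_l) = \sum_{j=1}^m w_j \neq 0$. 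For each edge $L_i$ of $T$, deleting $L_i$ disconnects $T$ into two subtrees; letting $S_i$ denote the one not containing $U$, the definition of enclosure gives
\[
w(L_i) \;=\; \sum_{V_l \in S_i} W(V_l).
\]
If every $w(L_i)$ vanished, summing this identity over the edges incident to the root $U$ would yield $\sum_l W(V_l) = 0$, a contradiction. Hence some $w(L_{i_0}) \neq 0$, and the hypothesis on $g$ forces $\ell(L_{i_0}) \geq b$.

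Combining the two parts will then give $\area_g \Sigma \geq \int_0^a b\, dr = ab$. The regularization and coarea steps are standard, so the main point requiring care is the tree argument, in particular the verification that the dual graph really is a tree; this is where the planarity of the multi-annulus $\Sigma$ enters essentially, via the Jordan-type fact that every embedded closed curve in $\Sigma$ separates it.
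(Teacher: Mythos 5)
Your strategy is sound but genuinely different from the paper's. The paper argues by induction on the number $m$ of boundary circles: it locates the threshold radius $a_1$ past which the circles $C_1,\ldots,C_m$ no longer all lie in one component of $\{\dist(\cdot,C_0)>a'\}$, extracts from $\{\dist(\cdot,C_0)<a_1-\varepsilon\}$ (filled in with the disk components of its complement) an annulus to which Lemma \ref{Lem:annulus} applies, and then applies the induction hypothesis to a component of $\{\dist(\cdot,C_0)>a_1+\varepsilon\}$ containing fewer boundary circles whose weights still have non-zero sum; the two area contributions add up to $(a-3\varepsilon)b$. You instead run a single coarea estimate over all radii $r\in(0,a)$ and show directly that each generic level set of the distance to $C_0$ contains an embedded loop of non-zero enclosed weight, via the dual tree of the level-set decomposition. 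That combinatorial step is correct: planarity of $\Sigma$ makes every embedded circle separating, so the dual graph is a tree, and summing the identity $w(L_i)=\sum_{V_l\in S_i}W(V_l)$ over the edges at the root recovers $\sum_j w_j\neq 0$, forcing some $w(L_{i_0})\neq 0$ and hence $\ell(L_{i_0})\ge b$. Your route avoids the induction entirely and is arguably more transparent; the paper's route avoids any regularity discussion of level sets because it only ever measures areas of sublevel sets and lengths of curves supplied by Lemma \ref{Lem:annulus}.

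The one step that does not work as written is the appeal to Sard's theorem for $d=\dist_g(\cdot,C_0)$: even after you have perturbed $g$ to a genuine Riemannian metric, $d$ is only Lipschitz, Sard's theorem does not apply to it, and its level sets need not be embedded $1$-manifolds for a.e.\ $r$. The standard repair is to mollify: choose a smooth $d_\eta$ with $\|d_\eta-d\|_{C^0}<\eta$ and $|\nabla d_\eta|\le 1+\eta$, apply Sard and the coarea formula to $d_\eta$, run your tree argument on the regular level sets $d_\eta^{-1}(r)$ for $r\in(\eta',a-\eta')$ (rooting the tree at the component of $\{d_\eta<r\}$ containing $C_0$; any other sublevel components are just additional vertices and cause no harm), and let $\eta\to 0$ at the end together with the metric regularization. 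With that adjustment your proof is complete.
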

\begin{proof}
We will proceed by induction on $m$.
The Claim follows from Lemma \ref{Lem:annulus} for $m = 1$.
So assume $m \geq 2$.

Let $a_1$ be the supremum of all $a' > 0$ such that all boundary circles $C_1, \ldots, C_m$ are contained in the same component of the set $\{ x \in \Sigma \; : \; \dist(x, C_0) > a' \}$.
Let $\varepsilon > 0$ and consider the multi-annulus $\Sigma_1 = \{ x \in \Sigma \; : \; \dist(x, C_0) < a_1 - \varepsilon \}$.
The complement of its closure in $\Sigma$ consists of one component which contains all boundary circles $C_1, \ldots, C_m$ plus components which are topologically open disks.
Let $\Sigma_2$ be the union of the closure of $\Sigma_1$ with those open disks.
So $\Sigma_1$ is an annulus and its boundary components of $\Sigma_2$ are at least $a_1 - \varepsilon$ away from each other.
Moreover, every non-contractible circle in $\Sigma_2$ encloses all boundary components $C_1, \ldots, C_m$ and hence has length $\geq b$.
So by Lemma \ref{Lem:annulus} we have $\area \Sigma_2 \geq (a_1 - \varepsilon) b$.
In the case $a_1 \geq a$, the claim follows by letting $\varepsilon$ go to $0$.
So assume in the following that $a_1 < a$, and assume $\varepsilon$ is so small that $a_1 + 2 \varepsilon < a$.

Let $\Sigma_3$ be a connected component of $\{ x \in \Sigma \; : \; \dist(x, C_0) > a_1 + \varepsilon \}$ with the property that the weights corresponding to the boundary circles $C_{k_1}, \ldots, C_{k_{m'}}$ that it contains don't add up to zero, i.e. $w_{k_1} + \ldots + w_{k_{m'}} \neq 0$.
By assumption this is always possible and by the choice of $a_1$ we have $m' < m$.
Since $\Sigma \setminus \Sigma_3$ is connected, we conclude that $\Sigma_3$ has exactly one open end.
Choose a closed multi-annulus $\Sigma_4 \subset \Sigma_3$ which is still bounded by the circles $C_{k_1}, \ldots, C_{k_{m'}}$ and a circle $C'_0$ which approximates this open end such that $\dist(x, C_0) < a_1 + 2 \varepsilon$ for all $x \in C'_0$.
So the distance of $C'_0$ to any of the circles $C_{k_1}, \ldots, C_{k_{n'}}$ is at least $a- a_1 - 2 \varepsilon$.
By induction, this implies $\area \Sigma_4 \geq (a - a_1 - 2 \varepsilon)$.

Hence $\area \Sigma \geq \area \Sigma_2 + \area \Sigma_4 > (a_1 - \varepsilon) b + (a - a_1 - 2 \varepsilon) b = (a - 3 \varepsilon) b$.
The claim follows by letting $\varepsilon$ go to $0$.
\end{proof}

\begin{Lemma} \label{Lem:smallloopintorusstruc}
For every $\alpha > 0$ and every $A, K < \infty$ there are constants $\td{L}_0 = \td{L}_0 (\alpha, A) < \infty$ and $\td\alpha_0 = \td\alpha_0 (K) > 0$, $\td\Gamma = \td\Gamma(K) < \infty$ such that: \\
Let $(M, g)$ be a closed, irreducible Riemannian manifold which is not a spherical space form and consider a decomposition $M = M_{\textnormal{hyp}} \cup M_{\textnormal{Seif}}$ along embedded, incompressible $2$-tori such that $M_{\textnormal{Seif}}$ is graph (see Definition \ref{Def:geomdec}).

Consider a smoothly embedded solid torus $S \subset \Int M_{\textnormal{Seif}}$ ($S \approx S^1 \times D^2$) which is incompressible in $M$.
Assume that there is a Seifert decomposition of $M_{\textnormal{Seif}}$ such that one of its generic Seifert fibers is contained and non-contractible in $S$.
 
Let $P \subset S$ be a torus structure of width $\leq 1$ and length $L > \td{L}_0$ with $\partial S \subset \partial P$ (i.e. the pair $(S, \Int S \setminus \Int P)$ is diffeomorphic to $(S^1 \times D^2(1), S^1 \times D^2(\frac12))$).
Moreover assume that we have the curvature derivative bound $|{\Rm}| < K$ on $P$.

Finally, let $f : \Sigma \to M$ a filling surface for the pair $(M_{\textnormal{hyp}}, M_{\textnormal{Seif}})$ (in the sense of Definition \ref{Def:filling}) of area $\area f < A$.

Then there is a closed loop $\gamma \subset P$ which is non-contractible in $P$, but contractible in $S$.
It has length $\ell(\gamma) < \alpha$, its distance from $\partial P$ is at least $\frac13 L - 2$ and it spans a disk $D \subset M$ of area $< A$. 
Moreover if $\alpha \leq \td\alpha_0$ and if we also have the bound $|{\nabla \Rm}| < K$ on $P$, then we can choose $\gamma$ such that its geodesic curvatures are bounded by $\td\Gamma$.
\end{Lemma}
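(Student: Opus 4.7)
The strategy is to combine a coarea argument on the filling surface $f$ with the topological filling property to produce, at some cross-sectional torus of $P$, a short loop whose class in $\pi_1(T_{s^*}) \cong \IZ^2$ is a nonzero multiple of the meridian of $S$ (hence non-contractible in $P$ but contractible in $S$). Using the torus-structure diffeomorphism $\Phi\colon T^2 \times [0,L] \to P$, construct a smooth, essentially $1$-Lipschitz function $p\colon P \to [0,L]$ approximating the distance from $\partial S$ inside $P$ via the regularization technique in the proof of Lemma~\ref{Lem:bettertorusstructure}. After a $C^{\infty}$-small homotopy I arrange $f$ to be transverse to every regular level set of $p\circ f$ on $f^{-1}(P)$. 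The coarea inequality then reads
\[
  \int_0^L \ell\bigl(f((p\circ f)^{-1}(s))\bigr)\, ds \;\leq\; \area\bigl(f|_{f^{-1}(P)}\bigr) \;\leq\; A,
\]
so pigeonhole on the middle third of $[0,L]$ yields a regular value $s^{*} \in [L/3,\,2L/3]$ with $\ell(f((p\circ f)^{-1}(s^{*}))) \leq 3A/L$. Choosing $\td L_0 := 6A/\alpha$ forces each component loop of $f((p\circ f)^{-1}(s^{*})) \subset T_{s^{*}}$ to have length less than $\alpha/2$ and distance $\geq L/3 - 2$ from $\partial P$.

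\textbf{Main obstacle: the homotopy class.} Set $S^{*} := S \setminus \Phi(T^2 \times [0,s^{*}))$, a solid torus with $\partial S^{*} = T_{s^{*}}$ and $\pi_1(S^{*}) \cong \IZ$; after a small isotopy of the Seifert decomposition inside $S$, one may place the given Seifert fiber $\gamma_{\mathrm{Seif}}$ in $\Int S^{*}$, where it represents $n_0 \neq 0$ times the generator of $\pi_1(S^{*})$. The filling property applied to $\gamma_{\mathrm{Seif}}$ supplies a component $\Sigma_0 \subset \Sigma$ meeting $\gamma_{\mathrm{Seif}}$ and admitting no sub-loop mapping to a nonzero multiple of $\gamma_{\mathrm{Seif}}$ in $M_{\mathrm{Seif}}$. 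Fix a component $\Sigma_0^{\bullet}$ of $\Sigma_0 \cap f^{-1}(S^{*})$ meeting $\gamma_{\mathrm{Seif}}$. Since $S^{*}$ is aspherical, the fact that the geometric intersection of $f|_{\Sigma_0^{\bullet}}$ with $\gamma_{\mathrm{Seif}}$ cannot be made empty by any relative homotopy forces the algebraic intersection number to be nonzero; in the solid torus $S^{*}$ this number equals $n_0$ times the total meridional winding of $\partial \Sigma_0^{\bullet} \subset T_{s^{*}}$. Hence some boundary circle $c$ of $\Sigma_0^{\bullet}$ (a short loop in $\Sigma_0$) has image $\gamma^{*} := f(c) \subset T_{s^{*}}$ with nonzero meridional class $m \neq 0$ in $\pi_1(T_{s^{*}}) \cong \IZ^2$. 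If its longitudinal class $n$ were also nonzero, the $n_0$-th iterate $c^{n_0}$ of $c$ would be a loop in $\Sigma_0$ whose image $(\gamma^{*})^{n_0}$ has class $n\cdot n_0$ in $\pi_1(S^{*}) \hookrightarrow \pi_1(M_{\mathrm{Seif}})$ (injective by incompressibility of $S$ in $M$), and this class equals $n \cdot [\gamma_{\mathrm{Seif}}]$, a nonzero multiple of $[\gamma_{\mathrm{Seif}}]$---contradicting the second clause of the filling property. Therefore $n = 0$ and $\gamma^{*}$ is purely meridional.

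\textbf{Spanning disk and geodesic-curvature smoothing.} The disk $D$ is assembled from $\Sigma_0^{\bullet}$ together with meridional capping disks in $S$ for each other boundary component of $\Sigma_0^{\bullet}$ (which are meridional or null-homotopic by the same argument applied to each component): one obtains an immersed disk $D \subset M$ with $\partial D = \gamma^{*}$ and $\area D \leq \area(f|_{\Sigma_0^{\bullet}}) + (\text{small cap contributions}) < A$. For the geodesic-curvature refinement, note that $T_{s^{*}}$, as a regular level set of the smoothed function $p$ whose Hessian is controlled by Lemma~\ref{Lem:bettertorusstructure}, inherits a Riemannian metric whose intrinsic Gauss curvature is bounded by some $K' = K'(K)$ via the Gauss--Codazzi equations from the bounds on $|\Rm|$ and $|\nabla \Rm|$ on $P$. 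Applying Lemma~\ref{Lem:1loop2d} to $\gamma^{*} \subset T_{s^{*}}$, valid provided $\alpha \leq \td\alpha_0 := \td\varepsilon_2(K')$, produces an embedded loop $\gamma' \subset T_{s^{*}}$ homotopic to $\gamma^{*}$ in $T_{s^{*}}$ of length less than $2\cdot(\alpha/2) = \alpha$ and geodesic curvatures bounded by $\td\Gamma(K) := \Gamma'(K')$. Since $\gamma'$ is homotopic to $\gamma^{*}$ inside $T_{s^{*}}$ it represents the same class in $\pi_1(P)$ and in $\pi_1(S)$, and $D$ extends to a disk bounded by $\gamma'$ by gluing in a thin annulus in $T_{s^{*}}$ (whose area contribution is negligible after slightly enlarging $\td L_0$). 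This yields the loop $\gamma := \gamma'$ claimed by the lemma.
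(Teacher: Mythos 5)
Your high-level strategy (use the area bound on the filling surface together with the filling property to extract a short meridional loop deep inside $P$) is the same as the paper's, but two of your key steps have genuine gaps.

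First, the spanning disk. You assemble $D$ from $\Sigma_0^{\bullet}$ plus ``meridional capping disks in $S$'' for the remaining boundary circles. This fails on two counts: (a) $\Sigma_0^{\bullet}$ is an arbitrary component of $\Sigma_0\cap f^{-1}(S^*)$ and need not be planar, so capping all but one boundary circle does not yield a disk; and (b) the capping disks are meridian disks of the solid torus $S^*$, which must pass through $\Int S\setminus P$, a region on which the lemma assumes \emph{no} geometric control whatsoever --- so their areas cannot be bounded by anything, let alone be ``small''. Since the area bound $\area D<A$ is exactly what the disk is needed for downstream (it is fed into Lemma \ref{Lem:evolminsurfgeneral}(a)), this is fatal as written. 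The paper avoids both problems by first proving that the relevant piece of $\Sigma_0$ is already a \emph{disk} $Q'_j$ (using incompressibility of $f$ and clause (3) of Definition \ref{Def:filling} to rule out annuli and higher genus), and then locating $\gamma$ via the weighted multi-annulus estimate (Lemma \ref{Lem:multiannulus}) so that $\gamma$ bounds a \emph{subdisk of $Q'_j\subset\Sigma_0$}; the area bound is then automatic. Relatedly, your inference ``cannot be homotoped off $\gamma_{\mathrm{Seif}}$ rel boundary $\Rightarrow$ algebraic intersection number nonzero'' is asserted from asphericity but not proved for surfaces of arbitrary topology; the paper only ever needs it for disks, where it is elementary.

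Second, the geodesic-curvature refinement. You apply Lemma \ref{Lem:1loop2d} inside the cross-sectional torus $T_{s^*}$, claiming its intrinsic curvature and second fundamental form are controlled ``via the Hessian of the smoothed $p$ from Lemma \ref{Lem:bettertorusstructure}''. Neither the definition of a torus structure nor Lemma \ref{Lem:bettertorusstructure} gives any bound on second derivatives of $p'$ or on the second fundamental form of its level tori; the cross-sections are only controlled in diameter, so $T_{s^*}$ may be intrinsically very curved and extrinsically wild even though $|{\Rm}|<K$ on $P$. Consequently neither the hypothesis of Lemma \ref{Lem:1loop2d} nor the passage from curvature-in-$T_{s^*}$ to curvature-in-$M$ is justified. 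The paper's route is different and is the reason the minimal-surface machinery appears: it replaces $f$ by a stable minimal (hence immersed, by Gulliver) representative, invokes Schoen's estimate to bound the second fundamental form of $f(\Sigma_0)$ where the ambient curvature is bounded, and then runs Lemma \ref{Lem:1loop2d} \emph{inside the minimal surface}, where the induced metric has controlled curvature and where the new loop again bounds a subdisk of $\Sigma_0$ of area $<A$.
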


\begin{proof}
Assume that without loss of generality $\alpha < 0.1$ and set
\[ \td{L}_0 (\alpha, A) = 12 \frac{A}\alpha + 6. \]
We divide $P$ into three torus structures $P_1, P_2, P_3$ of width $\leq 1$ and length $> \frac13 L - 1$ in such a way that: $\partial S \subset \partial P_1$ and $P_k$ shares a boundary with $P_{k+1}$.
For later use, we set
\[ S_1 = S, \qquad S_2 = \Int S \setminus \Int P_1 \; \approx \; S^1 \times D^2,  \qquad S_ 3 = \Int S \setminus \Int (P_1 \cup P_2). \]
Moreover, let $\sigma \subset S \setminus P$ be an embedded loop which generates $\pi_1 (S) \cong \IZ$.
By Definition \ref{Def:filling}, there is a component $\Sigma_0 \subset \Sigma$ such that every map $f' : \Sigma_0 \to M$, which is homotopic to $f|_{\Sigma_0} : \Sigma_0 \to M$ relative boundary, intersects $\sigma$ and moreover, there is no loop $\gamma' : S^1 \to \Sigma_0$ for which $f \circ \gamma' : S^1 \to M$ is homotopic in $M$ to a non-zero multiple of $\sigma$.
Let $\varepsilon > 0$ be a small constant that we will determine later ($\varepsilon$ may depend on $M$ and $g$).
We can find a small homotopic perturbation $f_0 : \Sigma_0 \to M$ of $f : \Sigma \to M$ with $f_0|_{\partial \Sigma_0} = f|_{\partial \Sigma_0}$ which is not more than $\varepsilon$ away from $f$ such that the following holds: $f_0$ is transverse to $\partial P_2$ and its area is still bounded: $\area f_0 < A$.

We will first find a disk of bounded area whose boundary lies in $\partial S_2$ and which has non-zero intersection number with $\sigma$.
Consider for this all components $Q_1, \ldots, Q_p$ of $f_0^{-1} (S_2) \subset \Sigma_0$.
By the choice of $\Sigma_0$ we have $p > 0$.
For every $j = 1, \ldots, p$ and every topological disk $D \subset \Sigma_0 \setminus \Int Q_j$, the loop $f_0 |_{\partial D}$ is contractible in $M$ and hence also in $S_2$ (since $S_2$ is incompressible in $M$).
So we can find a map $f_{D} : D \to S_2$ whose image is contained in $S_2$ and which agrees with $f_0$ on $\partial D$.
Since $\pi_2 (M) = 0$ (see Proposition \ref{Prop:pi2irred}), the maps $f_0 |_D$ and $f_D$ are homotopic.
Let $Q'_j$ be the union of $Q_j$ with all open topological disks of $\Sigma_0 \setminus Q_j$.
Our discussion has shown that we can homotope $f_0$ to a map $f_j : \Sigma_0 \to M$ relative boundary such that $f_0 |_{Q_j} = f_j |_{Q_j}$ and $f(Q'_j) \subset S_2$.
We now show that $Q'_j$ is a disk:
Assume not.
By construction, $\Sigma_0 \setminus Q'_j$ does not contain any topological disk and hence each boundary circle of $Q'_j$ is incompressible in $\Sigma_0$.
This implies that $\pi_1(Q'_j) \to \pi_1(\Sigma_0)$ is injective and so, since $f_j$ is incompressible, also the induced map $\pi_1(Q'_j) \to \pi_1(S_2) \cong \IZ$ is injective.
So $Q'_j$ can only be a disk (in contradiction to our assumption) or an annulus and if it is an annulus, then $f_0$ restricted to either of its boundary circles needs to be non-contractible in $S_2$.
But this contradicts the choice of $\Sigma_0$.
So $Q'_j$ is in fact a disk for all $j = 1, \ldots, p$.
It remains to show that $f_0$ restricted to one of the disks $Q'_1, \ldots, Q'_p$ has non-trivial intersection number with $\sigma$.
Observe that two such disks are either disjoint or one disk is contained in the other.
So without loss of generality, we can assume that $Q_1, \ldots, Q_p$ are arranged in such a way that there is a $p' \leq p$ such that $Q'_1, \ldots, Q'_{p'}$ are pairwise disjoint and $Q'_1 \cup \ldots \cup Q'_p \subset Q'_1 \cup \ldots \cup Q'_{p'}$.
It is easy to see that if $f_0$ restricted to $Q'_j$ has zero intersection number with $\sigma$, then $f_0 |_{Q'_j}$ (being homotopic to $f_j |_{Q'_j}$) can be homotoped relative boundary into $\partial S_j$.
So if $f_0$ restricted to any of the disks $Q'_1, \ldots, Q'_{p'}$ has intersection number zero with $\sigma$, then $f_0$ can be homotoped away from $\sigma$ completely, contradicting the choice of $\Sigma_0$.

For the rest of the proof choose $j = 1, \ldots, p$ such that $f_0$ restricted to the closed disk $Q'_j$ has non-zero intersection number with $\sigma$ and such that $Q'_j$ is chosen minimal with this property, i.e. for every other $Q'_{j'} \subset Q'_j$ the intersection number of $f_0$ restricted to $Q'_{j'}$ with $\sigma$ is zero.
Let $C_0 = \partial Q'_j$ and consider the multi-annulus of $Q'' = Q'_j \setminus f_0^{-1} ( \Int S_3 )$ which is bounded by $C_0$.
Call its other boundary circles $C_1, \ldots, C_q \subset \partial Q''$.
Then $f_0(C_0)$ has distance of at least $\frac13 L - 1$ to each $f_0(C_l)$, $l = 1,\ldots, q$.
For every $l = 1, \ldots q$ define the weight $w_l$ of $C_l$ to be the intersection number of $f_0$ restricted to the disk which is bounded by $C_l$ with $\sigma$.
Observe that by the choice of $j$, the intersection number of $f_0 |_{Q'_j}$ with $\sigma$ is $w_1 + \ldots + w_q \neq 0$.
We now apply  Lemma \ref{Lem:multiannulus} to find an embedded loop $\gamma' \subset Q''$ of length 
\[ \ell (f_0 |_{\gamma'} ) < \frac{A}{\frac13 L - 1} < \frac{A}{\frac13 \td{L}_0 - 1} < \tfrac14 \alpha \]
which encloses boundary circles $C_l$ whose weights $w_l$ don't add up to zero.
Denote by $D' \subset Q''$ the disk which is bounded by $\gamma'$.
Then $f_0$ restricted to $D'$ has non-zero intersection number with $\sigma$.
We now argue that $\gamma' \cap f_0^{-1} (S_2) = \gamma' \cap (Q_1 \cup \ldots \cup Q_p) \neq \emptyset$:
If not, then $\gamma'$ is contained in an open topological disk of $Q'_j \setminus Q_j$ and $f_0^{-1} (S_2) \cap D'$ is contained in the disjoint union of some of the $Q'_1, \ldots, Q'_p$ which are contained in $Q'_j$.
However, by the choice of $j$, this implies that $f_0$ restricted to $D'$ has intersection number zero with $\sigma$ which is a contradiction.
So $\gamma'$ in fact intersects $f^{-1}_0 (S_2)$ and hence by the shortness of its length under $f_0$, we conclude that $f_0(\gamma') \subset P$ and that $f_0 (\gamma')$ has distance $> \frac13 L -1.1$ from $\partial P$.
Since $f_0 |_{\gamma'}$ is contractible in $M$ (via $f_0 |_{D'}$), it is also contractible in $S$.
Moreover, $f_0 |_{\gamma'}$ is non-contractible in $P$ for the following reason:
If not then there is a nullhomotopy $f' : D' \to P$ with $f'|_{\gamma'} = f_0|_{\gamma'}$.
Since $\pi_2(M) = 0$, the maps $f'$ and $f_0|_{D'}$ are homotopic relative boundary, which is impossible since their intersection numbers with $\sigma$ are different.
Hence, $\gamma = f_0 (\gamma')$ satisfies the first part of the claim.

We now consider the case in which $\alpha \leq \td\alpha_0$ for some $\td\alpha_0 = \td\alpha_0 ( K)$ that we will determine in the course of the proof and we will construct a curve $\gamma \subset P$ on which the geodesic curvature can be bounded by a constant depending only on $K$.
For the rest of the proof we assume without loss of generality that $f$ is area minimizing amongst all maps $f' : \Sigma_0 \to M$ which are homotopic to $f$ relative boundary.
So, $f$ parameterizes a \emph{stable} minimal surface on $\Int \Sigma_0$.
By \cite{Gul}, $f$ is an immersion on $\Int \Sigma_0$.
So we can additionally assume that the perturbation $f_0$ is a graph over $f$.

Consider the following regions:
Let $B(P_2, 1)$ and $B(P_2, 2)$ be the (open) $1$ and $2$-tubular neighborhoods of $P_2$ and let $\Sigma_1$ and $\Sigma_2$ be the components of $f_0^{-1}( B(P_2, 1 ))$ and $f_0^{-1} ( B(P_2, 2))$ which contain $\gamma'$, i.e. $\gamma' \subset \Sigma_1 \subset \Sigma_2$.
Moreover, $B(P_2, 2) \subset P$ and every point of $B(P_2, 2)$ has distance of at least $1$ from $\partial P$.
By the results of \cite{Sch}, we obtain a bound on the second fundamental form of $f(\Sigma_1)$ which only depends on $K$: $| II_{f(\Sigma_1)}| < K' (K)$.
Since $f_0$ was assumed to be a graph over $f(\Sigma)$, we conclude that
\[ \ell (f|_{\gamma'})< 2 \ell ( f_0 |_{\gamma'}) < \tfrac12 \alpha   \]
if $\varepsilon$ is small enough depending on these bounds.
Moreover, this bound and the bound on the curvature on $B(P_2, 1)$ yields a curvature bound $K'' = K'' (K) < \infty$ of the metric $f^* ( g )$ on $\Sigma_1$ which only depends on $K$.
The loop $\gamma'$ is non-contractible in $\Sigma_1$, because otherwise $f_0 |_{\gamma'}$ would be contractible in $P$.
So we can apply Lemma \ref{Lem:1loop2d} to conclude that if $\td\alpha_0 < \td\varepsilon_2 (K'')$, then there is a loop $\gamma'' \subset \Sigma_1$ which intersects $\gamma'$, is homotopic to $\gamma'$ in $\Sigma_1$ and has the following properties: $\ell(f|_{\gamma''}) < 2 \ell (f|_{\gamma'}) < \alpha$ and the geodesic curvature on $\gamma''$ in $\Sigma_1$ is bounded by $\Gamma' (K'')$.
Obviously, $\gamma''$ bounds a disk in $\Sigma_0$ whose area under $f$ is bounded by $A$.
Let now $\gamma = f (\gamma'')$.
Then the geodesic curvature on $\gamma$ in $M$ is bounded by some constant $\td\Gamma = \td\Gamma(K) <\infty$ which can be computed in terms of $\Gamma' (K'')$ and $K'(K)$.
\end{proof}

\section{The main argument} \label{sec:mainargument}
\subsection{The geometry on late, but short time-intervals} \label{subsec:firstcurvbounds}
Using the tools from section \ref{sec:maintools}, we will give a bound on the curvature and the geometry in certain areas of the manifold on a time-interval of small, but uniform size.
This description is achieved in three steps, the last step being Proposition \ref{Prop:firstcurvboundstep3}.

In the first step, we bound the curvature by a uniform constant away from some embedded solid tori.
Inside those solid tori, but in controlled distance to their boundary, we will establish a curvature bound which however deteriorates with the distance.
We will also give an approximate characterization of the geometry inside those regions of controlled distance.

\begin{Lemma}[first step] \label{Lem:firstcurvboundstep1}
There are a continuous function $\delta : [0, \infty) \to (0, \infty)$ and constants $K_1 < \infty$ and $\tau_1, \ov{w}, w^\#, \mu^\# > 0$ as well as continuous, non-decreasing functions $\Delta_1, K'_1 : (0, \infty) \to (0, \infty)$ with $\Delta_1(d) \to \infty$ as $d \to \infty$ and a non-increasing functions $\tau'_1 : (0, \infty) \to (0, \infty)$ such that the following holds: \\
For every $L < \infty$ and $\nu > 0$ there are constants $T_1 = T_1(L) < \infty$, $w_1 = w_1(L, \nu) > 0$ such that: \\
Let $\MM$ be a Ricci flow with surgery on the time-interval $[0, \infty)$ with normalized initial conditions which is performed by $\delta(t)$-precise cutoff.
Consider the constant $T_0 < \infty$ and the function $w : [T_0, \infty) \to (0, \infty)$ from Proposition \ref{Prop:thickthindec}.
Assume that $t_0 = r_0^2 \geq \max \{ T_1, 2 T_0 \}$ and that $w(t) < w_1$ for all $t \in [\frac12 t_0, t_0]$.
Assume moreover that all components of $\MM (t_0)$ are irreducible and not diffeomorphic to spherical space forms and that all surgeries on the time-interval $[\frac12 t_0, t_0]$ are trivial. \\
Let $U \subset \MM(t_0)$ be a subset with either $U = \MM(t_0)$ or
\begin{enumerate}[label=$-$]
\item $U$ is a smoothly embedded solid torus ($\approx S^1 \times D^2$).
\item There is a closed subset $U' \subset U$ which is diffeomorphic to $T^2 \times I$ with $\partial U \subset \partial U'$ whose boundary components have time-$t_0$ distance of at least $2 r_0$ and a fibration $p : U \to I$ such that the $T^2$-fiber through every $x \in U'$ has time-$t_0$ diameter $< \mu^\# \rho_{r_0} (x, t_0)$.
\item All points of $\partial U$ are $w^\#$-good at scale $r_0$ and time $t_0$.
\end{enumerate} \vspace{2mm}
Then there are sub-Ricci flows with surgery $S_1, \ldots, S_m \subset \MM$ on the time-interval $[(1-\tau_1) t_0, t_0]$ such that their final time-slices $S_1(t_0), \ldots, S_m (t_0)$ form a collection pairwise disjoint, incompressible solid tori ($\approx S^1 \times D^2$) in $\Int U$.
Moreover, there are subsubets $W_i \subset S_i(t_0)$ ($i = 1, \ldots, m$) such that for all $i = 1, \ldots, m$
\begin{enumerate}[label=(\alph*)]
\item The pair $(S_i (t_0), W_i)$ is diffeomorphic to $(S^1 \times D^2(1), S^1 \times D^2(\frac12))$.
\item For all $x \in U$ with $\dist_{t_0} (x, U \setminus (S_1(t_0) \cup \ldots \cup S_m(t_0))) \leq 100 r_0$, the point $(x, t_0)$ survives until time $(1- \tau_1) t_0$ and for all $t \in [(1- \tau_1) t_0, t_0]$ we have
\[  |{\Rm}| (x,t) < K_1 t_0^{-1} . \]
\item We have $\diam_{t_0} S_i(t_0) > 100 r_0$ and if $\diam_{t_0} S_i(t_0) \leq L r_0$, then we have the curvature bound
\begin{multline*}
 \qquad\qquad | {\Rm_t} | < K'_1 ( r_0^{-1} \diam_{t_0} S_i (t_0)) t_0^{-1} \quad \text{on} \quad S_i(t) \quad \\
  \text{for all} \quad t \in \big[ (1-\tau'_1( r_0^{-1} \diam_{t_0} S_i(t_0)) t_0, t_0 \big].
\end{multline*}
Furthermore, $S_i$ is non-singular on the time-interval $[ (1-\tau'_1( r_0^{-1} \diam_{t_0} S_i \linebreak[1] (t_0)) t_0, \linebreak[1] t_0]$.
\item For all $t \in [(1-\tau_1) t_0, t_0]$ we have
\[  \diam_t S_i(t) > \min \big\{ \Delta_1 (r_0^{-1} \diam_{t_0} S_i(t_0)), L \big\} r_0. \]
\item At time $t_0$, the closure of $S_i(t_0) \setminus W_i$ is a torus structure of width $\leq r_0$ and length 
\[ \dist_{t_0} (\partial S_i (t_0), \partial W_i) = \min \big\{ \diam_{t_0} S_i(t_0) - 2r_0 , Lr_0 \big\}. \]
\item All points of $S_i(t_0) \setminus W_i$ are locally $\ov{w}$-good at scale $r_0$ and time $t_0$.
\item For every point $x \in S_i(t_0) \setminus W_i$, there is a loop $\sigma \subset \MM(t_0)$ which is based at $x$, which is incompressible in $\MM(t_0)$ and which has length $\ell_{t_0} (\sigma) < \nu r_0$.
\item In the case in which $U = \MM(t_0)$, we have for all $t \in [(1-\tau_1) t_0, t_0]$:
\begin{enumerate}[label=$-$]
\item There is a closed subset $U'_{i,t} \subset S_i (t)$ which is diffeomorphic to $T^2 \times I$ with $\partial S_i (t) \subset \partial U'_{i,t}$ whose boundary components have time-$t$ distance of at least $2 \sqrt{t}$ and a fibration $p_{i,t}$ such that the $T^2$-fiber through every $x \in U'_{i,t}$ has time-$t$ diameter $< \mu^\# \rho_{\sqrt{t}} (x, t)$.
\item The points on $\partial S_i$ are $w^\#$-good at scale $\sqrt{t}$ and time $t$.
\end{enumerate}
\end{enumerate}
\end{Lemma}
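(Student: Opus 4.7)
My plan is to combine the topological decomposition of the thin part from Section~\ref{sec:thinpart} with the various curvature estimates of Section~\ref{sec:maintools} in order to simultaneously produce the solid tori $S_i$ and bound the curvature elsewhere. The rough pipeline is: first, use Propositions~\ref{Prop:thickthindec} and~\ref{Prop:MorganTianMain} to decompose $\MM(t_0)$ geometrically; second, feed this into Proposition~\ref{Prop:GGpp} to obtain a partition $M'=\mathcal{G}''\cup\mathcal{S}''$ where $\mathcal{S}''$ is a disjoint union of incompressible solid tori and $\mathcal{G}''$ is ``good'' in the sense of Lemma~\ref{Lem:unwrapfibration}; third, take the $S_i(t_0)$ to be slight enlargements of the components of $\mathcal{S}''$ that meet $U$ and derive the curvature bound on the complement using Proposition~\ref{Prop:curvcontrolincompressiblecollapse}; fourth, analyze each $S_i$ individually by diameter, applying Proposition~\ref{Prop:slowdiamgrowth} when the diameter is bounded by $L r_0$ and Proposition~\ref{Prop:curvboundinbetween} near $\partial S_i$ in general.

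More concretely, by hypothesis $w(t)<w_1$ is arbitrarily small on $[\tfrac12 t_0,t_0]$, so for $w_1$ small enough every point of $U$ lies in the thin part and Proposition~\ref{Prop:MorganTianMain} applies with arbitrarily small $\mu$ at the scale $r_0=\sqrt{t_0}$. The hypothesis on $\partial U$ (the collar $U'$ with small $T^2$-fibers and $w^\#$-goodness of $\partial U$) matches the boundary condition (i) of Proposition~\ref{Prop:MorganTianMain}; consequently we are in case~B or~C of Section~\ref{subsec:topimplications}, so Proposition~\ref{Prop:GGpp} produces $\mathcal{G}''$ and $\mathcal{S}''$. Lemma~\ref{Lem:unwrapfibration} then guarantees the existence of a uniform constant $w_2>0$ such that every point of $\mathcal{G}''$ is $w_2$-good at scale $\rho_{r_0}$; we push this along with Proposition~\ref{Prop:curvcontrolincompressiblecollapse} (taking $A$ large enough to cover a $100 r_0$-tubular neighborhood) to obtain assertion~(b), with the $K_1$ there independent of $L$. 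The $S_i(t_0)$ are then defined by extending the components of $\mathcal{S}''\cap U$ by a collar of length $2 r_0$ into $\mathcal{G}''$; the boundary curvature bound from (b) forces these extended boundaries to be smooth and their $T^2$-fibers to have diameter $\ll r_0$, which in turn forces the $W_i\subset S_i(t_0)$ with $(S_i(t_0),W_i)\approx(S^1\times D^2,S^1\times\tfrac12 D^2)$ demanded in~(a) and~(e).

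For a solid torus $S_i$ with $\diam_{t_0} S_i(t_0)\le L r_0$, I would apply Proposition~\ref{Prop:slowdiamgrowth} (with $A$ slightly larger than $L$ and $w$ equal to the goodness constant of $\partial S_i$) to get both the curvature bound~(c) and the diameter bound~(d) on a time interval of length $\tau'_1 t_0$ depending only on $L$. This requires verifying goodness of $\partial S_i$ at time $t_0$: here I use that $\partial S_i$ lies in $\mathcal{G}''$, so Lemma~\ref{Lem:unwrapfibration} supplies this automatically. For (e) and~(f), the torus structure is inherited from the collar neighborhood of $\partial\mathcal{S}''$ in $\mathcal{G}''$ provided by Proposition~\ref{Prop:MorganTianMain}(c1,c3); local goodness of points in $S_i(t_0)\setminus W_i$ follows from Lemma~\ref{Lem:unwrapfibration} applied to the $T^2$-fibered neighborhood, and the short loop in~(g) is just the $S^1$-fiber through $x$ (which, since $S_i(t_0)$ is incompressible in $\MM(t_0)$, is incompressible in $\MM(t_0)$).

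The main obstacle is assertion~(h) in the case $U=\MM(t_0)$, where we need to propagate the thin-part geometric structure and the goodness of $\partial S_i$ backwards in time on the full interval $[(1-\tau_1)t_0,t_0]$; the point is that the $\tau_1$ has to be uniform, independent of $L$, yet the collar must persist on scale $\sqrt t$ (not $\sqrt{t_0}$). I plan to argue that by Proposition~\ref{Prop:thickthindec} applied at each time $t\in[(1-\tau_1)t_0,t_0]$ and Proposition~\ref{Prop:MorganTianMain}, these geometric properties hold at each such time individually, and then use Proposition~\ref{Prop:curvboundinbetween} in a collar of $\partial S_i$ (with $r_0\leftarrow$ the small fiber scale) to show that the approximate fibration structure cannot be destroyed on a short time interval. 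Combined with the curvature bound from~(b), which gives almost-isometries of the boundary region back to time $t_0$, this yields the $w^\#$-goodness and fibered collar at every earlier time as required. The continuity and monotonicity of $\Delta_1,K'_1,\tau'_1$ will then follow by a standard monotone extraction from the non-explicit constants produced by Propositions~\ref{Prop:slowdiamgrowth} and~\ref{Prop:curvboundinbetween}.
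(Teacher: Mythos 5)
Your architecture is the same as the paper's: thick--thin decomposition, Morgan--Tian decomposition of the thin part, the topological discussion of Proposition \ref{Prop:GGpp} to produce $\mathcal{G}''\cup\mathcal{S}''$, goodness via Lemma \ref{Lem:unwrapfibration}, curvature bounds via Propositions \ref{Prop:curvcontrolincompressiblecollapse} and \ref{Prop:curvboundinbetween}, and diameter control at earlier times via Proposition \ref{Prop:slowdiamgrowth}. However, two steps as you describe them would not go through.

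First, assertion (g): the $S^1$- or $T^2$-fiber through $x$ has diameter of order $\mu^\#\rho_{r_0}(x,t_0)$, and $\mu^\#$ is a \emph{fixed} universal constant appearing in the statement, so this loop need not have length $<\nu r_0$ once $\nu<\mu^\#$. You cannot fix this by rerunning Proposition \ref{Prop:MorganTianMain} with $\mu$ depending on $\nu$, since $\mu^\#$ must remain universal. The actual mechanism is a volume comparison: from the goodness of $\partial S_i$ and the curvature bound one gets $\vol_{t_0}B(\td x,t_0,\nu r_0)\gtrsim \nu^3 r_0^3$ in the universal cover, while collapsedness gives $\vol_{t_0}B(x,t_0,\nu r_0)<w(t_0)r_0^3$; if no non-contractible loop of length $<\nu r_0$ existed these two balls would be isometric, so one needs $w(t_0)\lesssim\nu^3$ --- which is precisely why $w_1$ depends on $\nu$, a dependence your sketch does not account for. (One also needs the resulting loop to be non-contractible in $S_i(t_0)$, not merely in the collar $T^2\times I$, whose fundamental group has a meridional kernel in $\pi_1(S^1\times D^2)$; this follows since the loop is non-contractible in $\MM(t_0)$ and lies in the incompressible $S_i(t_0)$.) Second, assertion (c) cannot be obtained from Proposition \ref{Prop:slowdiamgrowth}: its hypothesis (v) requires the diameter bound at the \emph{earlier} time $t_0-\tau r_0^2$, whereas in (c) you only know $\diam_{t_0}S_i(t_0)\le Lr_0$ at the \emph{final} time, and the whole danger (cigar-type behaviour) is that $S_i$ may have been much larger earlier. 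The correct tool for (c) is Proposition \ref{Prop:curvcontrolincompressiblecollapse} applied from the good boundary points of $\partial S_i\subset\mathcal{G}$ with $A$ comparable to $r_0^{-1}\diam_{t_0}S_i(t_0)$; Proposition \ref{Prop:slowdiamgrowth} is then used only contrapositively, to deduce the lower diameter bound (d) at earlier times.
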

\begin{proof}
Observe that it suffices to construct such functions $\Delta_1$ and $K'_1$ which satisfy all the claimed properties except for continuity, since all properties stay true after decreasing the values of $\Delta_1$ and increasing the values of $K'_1$.

The function $\delta(t)$ will be assumed to be bounded by the corresponding functions from Corollary \ref{Cor:Perelman68} and Propositions \ref{Prop:thickthindec}, \ref{Prop:curvcontrolgood}, \ref{Prop:curvcontrolincompressiblecollapse}, \ref{Prop:curvboundinbetween} and \ref{Prop:slowdiamgrowth}.
We also set
\[ \mu^\# = \min \{ w_0(\min \{ \mu_1, \tfrac1{10} \}, \ov{r}( \cdot, 1), K_2 (\cdot, 1)), \mu_1, \tfrac1{10} \}, \]
where $w_0$ is the constant from Proposition \ref{Prop:MorganTianMain}, $\mu_1$ is the constant from Lemma \ref{Lem:unwrapfibration} and $\ov{r}, K_2$ are the functions from Corollary \ref{Cor:Perelman68}.
If $U = \MM(t_0)$, then we set $\mu^\circ = \mu^\#$ and if $U$ is a solid torus, we set $\mu^\circ = \min \{ \mu_1, \frac1{10} \}$.

Next, we make a remark on the constant $w^\#$.
It appears in the conditions of the Lemma in the case in which $U$ is a solid torus and in assertion (h) which holds in the case $U = \MM(t_0)$.
In the following proof, both of these cases will be dealt with simultaneously.
In the case in which $U = \MM(t_0)$, the constant $w^\#$ will be determined and will never be used.
In the case in which $U$ is a solid torus, $w^\#$ will be assumed to be given and all universal constants, that are determined in this case, may depend on it.
Note that this does not create a circular argument since one could carry out the following proof first for the case $U = \MM(t_0)$, obtaining a set of constants and functions 
\begin{equation} \label{eq:constantsfromstep1}
 K_1, \tau_1, \ov{w}, \Delta_1, K'_1, \tau'_1, T_1, w_1
\end{equation}
as well as $w^\#$ and then one could carry out the proof again in the case in which $U$ is a solid torus, obtaining another set of constants and functions as listed in (\ref{eq:constantsfromstep1}).
The final set of constants and functions will then be the minima of the two values obtained for each $\tau_1, \ov{w}, \Delta_1, \tau'_1, T_1, w_1$ in each case and the maxima of the two values obtained for each $K_1, K'_1, T_1$ in each case.

We now carry out the main argument.
Apply Proposition \ref{Prop:thickthindec} to obtain a decomposition $\MM(t) = \MM_{\thick}(t) \cup \MM_{\thin}(t)$ for all $t \in [\frac12 t_0, t_0]$.
Consider for a moment the case in which $U$ is a solid torus.
Since $U$ cannot contain any incompressible torus, none of the boundary tori of $\MM_{\thick}(t_0)$ can be contained in $U$.
Let $T' \subset U'$ be a $T^2$-fiber of $p$ with $\dist_{t_0} (\partial U, T') = r_0$.
Then $\diam_{t_0} T' < \mu^\# r_0 \leq \frac1{10} r_0$.
Assuming $w(t_0) < \frac1{10}$, every component of $\partial \MM_{\thick} (t_0)$ has diameter $< \frac1{10} r_0$ (see Proposition \ref{Prop:thickthindec}(b)).
This implies that if $T' \cap \MM_{\thick} (t_0) \neq \emptyset$, then $U$ is contained in the $2r_0$-tubular neighborhood of $\MM_{\thick}(t_0)$.
In this case we have a curvature bound on $U$ on a small time-interval with final time $t_0$ (see Proposition \ref{Prop:thickthindec}(c) and (d)) and we are done by setting $m = 0$.
On the other hand, if $T' \cap \MM_{\thick} (t_0) = \emptyset$ then $U$ is contained in the $2r_0$-tubular neighborhood of $\MM_{\thin}(t_0)$.
We will assume from now on that in case in which $U$ is a solid torus, $U$ is contained in a $2r_0$-tubular neighborhood of $\MM_{\thin}(t_0)$.

We will now apply Proposition \ref{Prop:MorganTianMain} with $\mu \leftarrow \mu^\circ$.
Observe for this that next to each component of $\partial \MM_{\thick} (t_0)$ there is a torus structure of width $\leq 10 (t_0) r_0$ and length $2 r_0$ inside $\MM_{\thick} (t_0)$.
In the case in which $U = \MM (t_0)$ let $M'$ be the union of $\MM_{\thin} (t_0)$ with these torus structures.
If $U$ is a solid torus, let $M' = U$.
So either by the assumption of the Lemma or by Proposition \ref{Prop:thickthindec} for sufficiently small $w(t_0)$, condition (i) of Proposition \ref{Prop:MorganTianMain} is satisfied.
Condition (ii) follows from Proposition \ref{Prop:thickthindec}(e) assuming $w(t_0) <  \min \{ \frac1{10} w_0 (\mu^\circ, \ov{r} (\cdot, 1), \linebreak[1] K_2 (\cdot, 1)), \frac1{10} \}$.
Condition (iii) is a consequence of Corollary \ref{Cor:Perelman68} if  $t_0 > T_{\ref{Cor:Perelman68}} (w_0 (\mu^\circ, \ov{r} (\cdot, 1), K_2 (\cdot, \linebreak[1] 1)), \linebreak[1] 1, 2)$.
Note that we can assume that $T_{\ref{Cor:Perelman68}}$ is monotone in the first parameter.
Now look at the conclusions of Proposition \ref{Prop:MorganTianMain}.
We first consider the case in which a component $M''$ of $M'$ is diffeomorphic to an infra-nilmanifold or a manifold which carries a metric of non-negative sectional curvature and we have $\diam_{t_0} M'' < \mu^\circ \rho_{r_0} (x, t_0)$ for all $x \in M''$.
By the assumptions of the Lemma, $M''$ is not a spherical space form or a quotient of $S^1 \times S^2$, so it is either an infra-nilmanifold or a quotient of $T^3$.
By Lemma \ref{Lem:unwrapfibration}(v), all points in $M''$ are $w_1(\mu^\circ)$-good at scale $r_0$ and time $t_0$.
Thus for large $t_0$ we obtain a curvature bound on all of $M''$ at time $t_0$ and slightly before by Proposition \ref{Prop:curvcontrolgood}.
For the rest of the proof, we can exclude these components from $M'$ and assume that we we have a decomposition $M' = V_1 \cup V_2 \cup V'_2$ satisfying the properties (a1)--(c6) of Proposition \ref{Prop:MorganTianMain}.

Next, we apply the discussion of subsection \ref{subsec:topimplications}---in particular Proposition \ref{Prop:GGpp}---to this decomposition.
Consider the set $\mathcal{G} \subset M'$ that we obtained there in Definition \ref{Def:GG}.
\begin{Claim1}
There are universal constants $w^*_1, {w'_1}^*, \alpha^*_1 > 0$ and $K^*_1, T^*_1 < \infty$ such that if $t_0 > \max \{ T^*_1, 2 T_0 \}$ and $w(t) < w^*_1$ for all $t \in [\frac12 t_0, t_0]$, then
\[ |{\Rm}| < K^*_1 t_0^{-1} \quad \text{on} \quad P(x, t_0, 2 r_0, - (\alpha^*_1 r_0)^2) \quad  \text{for all} \quad x \in \mathcal{G} \cup \MM_{\thick}(t_0) \cup \partial U
\]
where the parabolic neighbhorhood $P(x, t_0, r_0, - (\alpha^*_1 r_0)^2)$ is always non-singular.
Moreover, all points of $\mathcal{G} \cup \partial U$ are ${w'_1}^*$-good at scale $r_0$ and time $t_0$.
\end{Claim1}
\begin{proof}
If $x \in \mathcal{G}$, then $x$ is $w_1(\mu^\circ)$-good at scale $r_0$ and time $t_0$ by Lemma \ref{Lem:unwrapfibration}.
If $x \in \partial U$, then $x$ is $w^\#$-good at scale $r_0$ and time $t_0$.
So in these cases, the curvature bound follows from Proposition \ref{Prop:curvcontrolincompressiblecollapse} for sufficiently large $t_0$ and small $w(t_0)$.
If $x \in \MM_{\thick}(t_0)$, then the curvature bound is a direct consequence of Proposition \ref{Prop:thickthindec}(d).
\end{proof}

Now consider the set $\mathcal{G}' \supset \mathcal{G}$ as in Definition \ref{Def:GGp}.
In the next claim, we extend the curvature control onto $\mathcal{G}'$.
Recall that $\partial \mathcal{G}' \subset \partial \mathcal{G} \cup \partial U$.
\begin{Claim2}
There are constants $w^*_2, \alpha^*_2 > 0$ and $K^*_2, T^*_2 < \infty$ such that:
If $t_0 > \max \{ T^*_2, 2 T_0 \}$ and $w(t) < w^*_2$ for all $t \in [\frac12 t_0, t_0]$, then
\[ |{\Rm}| < K^*_2 t_0^{-1} \quad \text{on} \quad P(x, t_0, \alpha^*_2 r_0, - (\alpha^*_2 r_0)^2) \quad \text{for all} \quad x \in \mathcal{G}' \cup \MM_{\thick}(t_0) \cup \partial U \]
where the parabolic neighborhood $P(x, t_0, \alpha^*_2 r_0, - (\alpha^*_2 r_0)^2)$ is always non-singular.
\end{Claim2}
\begin{proof}
We only need to consider the case in which $x \in \mathcal{G} \setminus \mathcal{G}'$ and $\dist_{t_0} (x, \partial \mathcal{G} \cup \partial U) > r_0$.
Let $N$ be the component of $\mathcal{G}' \setminus \mathcal{G}$ which contains $x$.
Then $\partial N \subset \partial \mathcal{G} \cup \partial U$ and $B(x, t_0, \rho_{r_0} (x, \linebreak[1] t_0)) \subset N$.
So we can apply Lemma \ref{Lem:unwrapfibration}(ii) and (iv) to conclude that for any $\td{x} \in \td{N}$ in the universal cover of $N$ we have $\vol_{t_0} B(\td{x}, t_0, \rho_{r_0} (x, t_0)) \linebreak[1] > w_1(\mu^\circ) \rho^3_{r_0} (x, t_0)$.
This implies that $x$ is $\td{c} w_1(\mu^\circ)$-good at any scale $r \leq r_0$ relatively to $N$ (here $\td{c} > 0$ is the constant from subsection \ref{subsec:goodness}).

Since all points in $\partial N$ survive until time $(1-(\alpha^*_1)^2) t_0$ and all surgeries on $[\frac12 t_0, t_0]$ are trivial, we can extend $N$ to a sub-Ricci flow with surgery $N' \subset \MM$ on the time-interval $[(1-(\alpha^*_1)^2) t_0, t_0]$.
We now apply Proposition \ref{Prop:curvboundinbetween} for $r_0 \leftarrow \min \{ \alpha_1^*, (K_1^*)^{-1/2} \} r_0$, $U \leftarrow N'$ and $w \leftarrow \td{c} w$ to obtain the desired curvature bound for sufficiently large $t_0$.
\end{proof}

Next, we find a curvature estimate in controlled distance to $\mathcal{G}'$ which however deteriorates with larger distances.
\begin{Claim3}
For every $A < \infty$ there are constants $w^*_3 = w^*_3(A), \alpha^*_3 = \alpha^*_3(A) > 0$ and $K^*_3 = K^*_3(A), T^*_3 = T^*_3(A) < \infty$ such that if $t_0 > \max \{ T^*_3, 2 T_0 \}$ and $w(t) < w^*_3$ for all $t \in [\frac12 t_0, t_0]$, then
\[ |{\Rm}| < K^*_3 t_0^{-1} \quad \text{on} \quad P(x, t_0, A r_0, - (\alpha^*_3 r_0)^2) \quad \text{for all} \quad x \in \mathcal{G}' \cup \MM_{\thick}(t_0) \cup \partial U \]
where the parabolic neighborhoods $P(x, t_0, A r_0, - (\alpha^*_3 r_0)^2)$ are non-singular.
\end{Claim3}
\begin{proof}
The case $x \in \mathcal{G}' \cup \partial U$ can be reduced to the case $x \in \partial \mathcal{G}' \subset \partial \mathcal{G} \cup \partial U$.
So in this case the claim follows immediately from Proposition \ref{Prop:curvcontrolincompressiblecollapse} for $A \leftarrow A+1$ together with distance distortion, Claim 1.
The case $x \in \MM_{\thick}(t_0)$ follows directly from Proposition \ref{Prop:thickthindec}(d).
\end{proof}

Now consider the sets $\mathcal{G}'' \subset \mathcal{G}'$ and $\mathcal{S}'' \subset M'$ as introduced in Proposition \ref{Prop:GGpp}.
Recall that $\mathcal{S}''$ is a disjoint union of smoothly embedded solid tori.
The next claim is rather geometric.
It ensures that there are no components of $V_2$ outside of $\mathcal{G}''$ in controlled distance to $\mathcal{G}''$ if $w(t)$ is assumed to be sufficiently small.
\begin{Claim4}
For every $A < \infty$ there is a $w^*_4 = w^*_4(A) > 0$ and a $T^*_4 = T^*_4(A) < \infty$ such that if $t_0 > \max \{ T^*_4, 2 T_0 \}$ and $w(t) < w^*_4(A)$ for all $t \in [\frac12 t_0, t_0]$, then the following holds: \\
For every component $\CC''$ of $\SS''$ there is a component $\CC$ of $V_1$ with $\CC \subset \CC''$ and $\partial \CC'' \subset \partial \CC$.
Moreover, one of the following cases applies:
\begin{enumerate}[label=(\alph*)]
\item $\CC \approx S^1 \times D^2$ or 
\item $\CC \approx T^2 \times I$ and $\CC$ is adjacent to a component $\CC'$ of $V'_2$ which is diffeomorphic to $S^1 \times D^2$ on the other side or
\item $\CC \approx T^2 \times I$ and the boundary components of $\CC$ have time-$t_0$ distance of at least $A r_0$.
\end{enumerate}
So in particular the components of $V_2$ which are not contained in $\mathcal{G}''$ have time-$t_0$ distance of at least $A r_0$ from $\mathcal{G}''$.
\end{Claim4}
\begin{proof}
By Proposition \ref{Prop:GGpp}(e), we only have to consider the case in which $\CC \approx T^2 \times I$ and $\CC$ is adjacent to a component $\CC'$ of $V_2$ on the other side.
Observe that then the generic Seifert fiber of $\CC'$ is contractible in $\MM(t_0)$.
Assume that $\CC'$ has time-$t_0$ distance of less than $A r_0$.
Then we can find points $x_0 \in \partial \mathcal{G} \cup \partial U$ and $x_1 \in \CC'$ with $\dist_{t_0} (x_0, x_1) < A r_0$.
Without loss of generality, we can assume that $x_1 \in \CC' \cap V_{2, \text{reg}}$ (e.g. by assuming $x_1 \in \partial \CC$).
Let $\td{x}_0, \td{x}_1 \in \td{\MM}(t_0)$ be lifts of $x_0, x_1$ in the universal cover with $\dist_{t_0}(\td{x}_0, \td{x}_1) = \dist_{t_0}(x_0, x_1)$.
Using Claim 1, we can deduce a lower bound on $\rho_{r_0} (x_0, t_0)$ and hence find a universal constant $w_1^{**} > 0$ such that $\vol_{t_0} B(\td{x}_0, t_0, r_0) >  w^{**}_1 r_0^3$.
Using Claim 3 (applied with $A \leftarrow 2A+1$), we find a curvature bound on $B(\td{x}_1, t_0, (A+1) r_0)$ for large $t_0$.
So by volume comparison we have $\vol_{t_0} B(\td{x}_1, t_0, \rho_{r_0} (x_1, t_0)) > w^{**}_2 \rho_{r_0}^3(x_1, t_0)$ for some $w_2^{**} = w_2^{**}(A) > 0$.

We will now derive a contradiction to the local collapsedness around $x_1$ for small enough $w(t_0)$.
By Proposition \ref{Prop:MorganTianMain}(c3), there is a universal constant $0 < s = s_2 (\mu^\circ, \ov{r}( \cdot, 1), K_2(\cdot, 1)) < \frac1{10}$ and a subset $U_2$ with
\[ B(x_1, t_0, \tfrac12 s \rho_{r_0} (x_1, t_0)) \subset U_2 \subset B(x_1, t_0, s \rho_{r_0}(x_1, t_0)) \]
which is diffeomorphic to $B^2 \times S^1$ such that the $S^1$ directions are isotopic to the $S^1$-fibers in $\CC' \cap V_{2, \text{reg}}$ and hence contractible in $\MM(t_0)$.
So if $\td{U}_2 \subset \td{\MM}(t_0)$ is the lift of $U_2$ which contains $\td{x}_1$, then the universal covering projection is injective on $\td{U}_2$.
Hence
\begin{multline*}
 \vol_{t_0} B(x_1, t_0, \rho_{r_0}(x_1, t_0))  \geq \vol_{t_0} U_2 = \vol_{t_0} \td{U}_2 \geq \vol_{t_0} B(\td{x}_1, t_0, \tfrac12 s \rho_{r_0}(x_1, t_0)) \\
 \geq \tfrac18 \td{c} s^3 \vol_{t_0} B(\td{x}_1, t_0, \rho_{r_0} (x_1, t_0))
 \geq \tfrac18 \td{c} w_2^{**} s^3 \rho_{r_0}^3 (x_1, t_0).
\end{multline*}
Since $\dist_{t_0} (x_1, \MM_{\thin}(t_0)) < 2 r_0$, we obtain a contradiction if we choose $w_4^*(A) < \frac18 \td{c} w_2^{**}(A) s^3$.
This finishes the proof.
\end{proof}

Next we show that the diameter of each component of $\SS''$ cannot grow too fast on a time-interval of small, but uniform size.
\begin{Claim5}
There is a constant $\alpha^*_5 > 0$ and for every $A < \infty$ there are constants $B^*_5 = B^*_5(A), T^*_5 = T^*_5(A) < \infty$ and $w^*_5(A) > 0$ such that if $t_0 > \max \{ T^*_5, 2 T_0 \}$ and $w(t) < w^*_5$ for all $t \in [\frac12 t_0, t_0]$, then we have: \\
Let $\CC$ be a component of $\SS''$.
Then there is a unique sub-Ricci flow with surgery $N \subset \MM$ on the time-interval $[t_0 - (\alpha^*_5 r_0)^2, t_0]$ with $\CC = N(t_0)$ such that the following holds:
If $\diam_{t_0} N(t_0) > B^*_5 r_0$, then $\diam_t N(t) > A r_0$ for all $t \in [t_0 - (\alpha^*_5 r_0)^2, t_0]$.
\end{Claim5}
\begin{proof}
It is clear that by Claim 1 and the fact that all surgeries on $[\frac12 t_0, t_0]$ are trivial, we can extend $\CC$ to a sub-Ricci flow with surgery $N \subset \MM$ on the time-interval $[t_0 - (\alpha_1^* r_0)^2, t_0]$.

The rest of the claim is a consequence of Proposition \ref{Prop:slowdiamgrowth}.
Choose $x_0 \in \partial \CC \subset \mathcal{G} \cup \partial U$.
So $x_0$ is ${w'_1}^*$-good at scale $r_0$.
Let $\tau^* = \min \{ (\alpha^*_1)^2, (K_1^*)^{-1}, \tau_0({w'_1}^*) \}$ where $\tau_0$ is the constant from Proposition \ref{Prop:slowdiamgrowth}.
Obviously, $\partial N (t) \subset B(x_0, t, r_0)$ for all $t \in [(1- \tau^*) t_0, t_0]$.
We can now apply Proposition \ref{Prop:slowdiamgrowth}(d) with $U \leftarrow N$, $r_0 \leftarrow r_0$, $x_0 \leftarrow x_0$, $w \leftarrow {w'_1}^*$, $A \leftarrow A$ to conclude that if for any $\tau \in (0, \tau^*]$ we have $N \subset B(x_0, t_0 - \tau r_0^2, A r_0)$, then $\CC = N(t_0) \subset B(x_0, t_0, A'({w'_1}^*, A) r_0)$.
This implies the claim for sufficiently large $t_0$ and small $w(t)$ (depending on $A$).
\end{proof}

It is clear that we can assume the functions $w_3^*(A), \alpha_3^* (A), w_4^*(A), w_5^*(A)$ to be non-increasing and the functions $K_3^*(A), T_3^*(A), T_4^*(A), B_5^*(A), T_5^*(A)$ to be non-decreasing in $A$.
In the following, we define the sub-Ricci flows with surgery $S_i$ and the sets $W_i$ and show that they satisfy the assertions (a)--(h).
In order to do this, we will denote the components of $\mathcal{S}''$ by $S''_1, \ldots, S''_{m''}$ and choose a subcollection $S^*_1, \ldots, S^*_m$ of the $S''_1, \ldots, S''_{m''}$ in the next pargraph.
The final time-slices  $S_1(t_0), \ldots, S_m(t_0)$ will arise from the sets $S^*_1, \ldots, S^*_m$ be removing a collar neighborhood of diameter $\leq 1.5 r_0$.
This is described in the following paragraph.
Fix from now on the constant $L$ and assume that $L > 102$.

Assume first that $t_0 > \max \{ T_1^*, T_3^*(L+2), 2 T_0 \}$ and $w(t) < \min \{ w_1^*, w_3^*(L+2) \}$ for all $t \in [\frac12 t_0, t_0]$.
If $d_i = r_0^{-1} \diam_{t_0} S''_i < L+2$ for some $i$, then by Claim 3, all points in $S''_i$ survive until time $t_0 - (\alpha^*_3(d_i) r_0)^2$ and we have $|{\Rm}| < K_3^*(d_i) t_0^{-1}$ on $S''_i \times [t_0 - (\alpha^*_3(d_i) r_0)^2, t_0]$.
Given the fact that the sets $S_1, \ldots, S_m$ are chosen in the way described above, this establishes the second part of assertion (c) for $K' (d) = K_3^*(d+2)$ and $\tau'_1 (d) = (\alpha^*_3 (d+2))^2$.
Moreover, assuming $\tau_1 < \tau'_1(102)$, we can remove all $S''_i$ with $\diam_{t_0} \leq 102 r_0$ and define the sets $S^*_1, \ldots, S^*_m$ to be the sets $S''_i$ with $\diam_{t_0} S''_i > 102 r_0$.
So, by a reapplication of Claim 3 assertion (b) is verified and by Claim 1 the second part of assertion (h) is true for some small but universal $\tau_1$.
Also, the first part of assertion (c) is clear.
Note that by assertion (b) and the fact that the surgeries on $[\frac12 t_0, t_0]$ are trivial, we can extend every set $S^*_i$ to a sub-Ricci flow with surgery on the time-interval $[(1-\tau_1) t_0, t_0]$ which we will in the following also denote by $S^*_i \subset \MM$.

Now assume that also $t_0 > T_4^*(L+2)$ and $w(t) < w_4^*(L+2)$ for all $t \in [\frac12 t_0, t_0]$.
For each $S^*_i$ there is a component $\CC_i$ of $V_1$, which is contained in $S^*_i (t_0)$ and shares a boundary with it.
Consider the cases (a)--(c) from Claim 4.
In cases (b), (c) we set $P_i = \CC_i$.
In case (a), we can apply Proposition \ref{Prop:MorganTianMain}(c1)($\alpha$) to find a torus structure $P_i \subset \CC_i$ such that $\partial \CC_i \subset \partial P_i$ and such that $\diam_{t_0} \CC_i \setminus P_i \leq \frac1{10} r_0$.
Observe that in all cases, the torus structure $P_i$ has width $\leq \mu^\circ r_0 \leq \frac1{10} r_0$.
In case (c) it has length $\geq (L+3) r_0$ by Claim 4 and in cases (a), (b) it has length $> \diam_{t_0} S^*_i(t_0) - \diam_{t_0} (S^*_i(t_0) \setminus P_i) - \frac1{10} r_0 > \diam_{t_0} S^*_i (t_0) - \frac2{10} r_0$ at time $t_0$.
Chop off $P_i$ on both sides such that the new boundary tori have distance of exactly $r_0$ from the corresponding boundary tori of $P_i$ and call the result $P'_i$.
Then define $S_i (t_0)$ to be the union of $P'_i$ with the component of $S^*_i(t_0) \setminus P'_i$ whose closure is diffeomorphic to a solid torus.
By assertion (b) we can extend $S_i(t_0)$ to a sub-Ricci flow with surgery $S_i \subset \MM$ on the time-interval $[(1-\tau_1) t_0, t_0]$.
Note that in all cases the torus structure $P'_i$ has length $\geq \min \{ L r_0, \diam_{t_0} S_i (t_0) - 2 r_0 \}$ at time $t_0$.
We can hence chop off $P'_i$ on the side which is contained in the interior of $S_i(t_0)$ and produce a torus structure $P''_i$ of width $\leq \frac1{10} r_0$ and length $= \min \{ L r_0, \diam_{t_0} S_i (t_0) - 2 r_0 \}$.
Let $W_i$ be the closure of $S_i (t_0) \setminus P''_i$.
Then assertion (e) holds.
Moreover, the first part of assertion (h) follows from Proposition \ref{Prop:MorganTianMain}(c1).
Assertion (a) is clear.
Observe also that $\diam_{t_0} S^*_i(t_0) - \frac{11}{10} r_0 < \diam_{t_0} S_i(t_0) \leq \diam_{t_0} S^*_i(t_0)$.

We discuss assertion (f).
Let $x \in P''_i$.
By Lemma \ref{Lem:unwrapfibration}(ii) and (iii), we conclude that $x$ is $w_1(\mu^\circ)$-good at scale $r_0$ and time $t_0$ relatively to $P_i$.
Since $B(x, t_0, \rho_{r_0} (x, t_0)) \subset P_i$ this implies that $x$ is also locally $w_1(\mu^\circ)$-good at scale $r_0$ and time $t_0$.

Next we establish assertion (d).
Observe that choosing $\tau_1$ small enough, we have at least $\diam_t S_i(t) > 50 r_0$ for all $t \in [(1-\tau_1) t_0, t_0]$ by assertion (b) and the fact that $\diam_{t_0} S_i (t_0) > 100 r_0$.
Assume now that $t_0 > T_5^*(L)$ and $w(t) < w_5^*(L)$ for all $t \in [\frac12 t_0, t_0]$.
By Claim 5 for $\CC = S_i(t_0)$ we conclude that for all $A \leq L$ we have: if $\diam_{t_0} S^*_i (t_0) > B_5^*(A) r_0$, then $\diam_t S^*_i (t) > A r_0$ for all $t \in [t_0 - (\alpha^*_5 r_0)^2, t_0]$.
So assertion (d) holds for the function
\[ \Delta_1 (d) = \sup \{ A > 0 \;\; : \;\; B_5^*(A+2) < d \} \cup \{ 50 \}. \]
It is clear that $\Delta_1$ is monotonically non-decreasing and $\lim_{d \to \infty} \Delta_1(d) = \infty$.

Finally, we prove assertion (g).
Assume that $t_0 > T_3^* (2L+10)$ and that $w(t) < w_3^*(2L + 10)$ for all $t \in [\frac12 t_0, t_0]$.
Let $x \in P''_i$ and choose an arbitrary point $x_0 \in \partial S^*_i(t_0)$.
Let $\td{x}$, $\td{x}_0$ be lifts of $x$, $x_0$ in the universal cover $\td{\MM}(t_0)$ with $\dist_{t_0} (\td{x}, \td{x}_0) = \dist_{t_0} (x, x_0)$.
As in the proof of Claim 4 we have
\[ \vol_{t_0} B(\td{x}_0, t_0, r_0) > w_1^{**} r_0^3 \]
for some universal $w_1^{**} > 0$.
By Claim 3, we have curvature control $| {\Rm_{t_0}} | < K_3^*(2 L+10) t_0^{-1}$ on $B(x, t_0, (L+5) r_0) \subset B(x_0, t_0, (2 L+10) r_0)$.
In particular, there is a $\rho^* = \rho^*(L) > 0$ such that $\rho_{r_0}(x, t_0) > \rho^* r_0$.
Without loss of generality, we can assume that $\nu < \min \{ \rho^*, 1 \}$.
Hence, by volume comparison there is some $c^* = c^*(L) > 0$ such that
\begin{multline*}
 \vol_{t_0} B(\td{x}, t_0, \nu r_0) \geq \nu^3 c^* \vol_{t_0} B(\td{x}, t_0, (L+5)r_0) \\ > \nu^3 c^* \vol_{t_0} B(\td{x}_0, t_0, r_0) > \nu^3 c^* w_1^{**} r_0^3.
\end{multline*}
On the other hand, 
\[ \vol_{t_0} B(x, t_0, \nu r_0) < \vol_{t_0} B(x, t_0, \rho_{r_0}(x,t_0)) < w(t_0) \rho_{r_0}^3(x_0,t_0) < w(t_0) r_0^3. \]
Assume first that there is no loop based at $x$ which is non-contractible in $\MM(t_0)$ and has length $< \nu r_0$.
Then
\[ w(t_0) r_0^3 > \vol_{t_0} B(x, t_0, \nu r_0) = \vol_{t_0} B(\td{x}, t_0, \nu r_0) > \nu^3 c^* w_1^{**} r_0^3. \]
So if $w(t_0) < \nu^3 c^* w_1^{**}$, we obtain a contradiction.
We conclude that if $w(t_0)$ is sufficiently small depending on $L$ and $\nu$, there is a non-contractible loop $\sigma \subset \MM(t_0)$ based at $x$ which has length $\ell_{t_0} (\sigma) < \nu r_0$.
This implies $\sigma \subset P_i \subset S^*_i (t_0)$ and hence $\sigma$ is even incompressible in $\MM(t_0)$.
\end{proof}

In the second step, we extend the uniform curvature control from Lemma \ref{Lem:firstcurvboundstep1}(b) further into the regions $S_i(t_0) \setminus W_i(t_0)$.
This will follow from Proposition \ref{Prop:curvboundnotnullinarea} and Lemma \ref{Lem:firstcurvboundstep1}(f).

\begin{Proposition}[second step] \label{Prop:firstcurvboundstep2}
There are a positive continuous function $\delta : [0, \infty) \to (0, \infty)$, constants $K_2 < \infty$, $\tau_2 > 0$ and functions $\Lambda_2, K'_2, \tau'_2 : (0, \infty) \to (0, \infty)$ with the property that $\tau'_2$ is non-increasing, $K'_2$ and $\Lambda_2$ are non-decreasing and $\Lambda_2(d) \to \infty$ as $d \to \infty$ such that: \\
For every $L < \infty$ and $\nu > 0$ there are constants $T_2 = T_2(L) < \infty$, $w_2 = w_2 (L, \nu) > 0$ such that: \\
Let $\MM$ be a Ricci flow with surgery on the time-interval $[0, \infty)$ with normalized initial conditions which is performed by $\delta(t)$-precise cutoff.
Consider the constant $T_0 < \infty$ and the function $w : [T_0, \infty) \to (0, \infty)$ obtained in Proposition \ref{Prop:thickthindec} and assume that
\begin{enumerate}[label=(\roman*)]
\item $r_0^2 = t_0 \geq \max \{ 4 T_0, T_2 \}$,
\item $w(t) < w_2$ for all $t \in [\frac14 t_0, t_0]$,
\item all components of $\MM(t_0)$ are irreducible and not diffeomorphic to spherical space forms and all surgeries on the time-interval $[\frac14 t_0, t_0]$ are trivial.
\end{enumerate}
Then there are sub-Ricci flows with surgery $S_1, \ldots, S_m \subset \MM$ on the time-interval $[(1-\tau_2) t_0, t_0]$ such that $S_1(t_0), \ldots, \linebreak[1] S_m(t_0)$ is a collection of pairwise disjoint, incompressible solid tori in $\MM(t_0)$ and there are sub-Ricci flows with surgery $W_i \subset S_i$ ($i=1,\ldots, m$) on the time-interval $[(1-\tau_2) t_0, t_0]$ such that for all $i = 1, \ldots, m$:
\begin{enumerate}[label=(\alph*)]
\item The pair $(S_i (t_0), W_i (t_0))$ is diffeomorphic to $(S^1 \times D^2(1), S^1 \times D^2(\frac12))$.
\item The set $\MM (t_0) \setminus (W_1 (t_0) \cup \ldots \cup W_m (t_0))$ is non-singular on the time-interval $[(1 - \tau_2) t_0, t_0]$ and
\[ \qquad \qquad | {\Rm} | < K_2 t_0^{-1} \qquad \text{on} \qquad \big(\MM (t_0) \setminus (W_1(t_0) \cup \ldots \cup W_m (t_0)) \big) \times [(1-\tau_2) t_0, t_0]. \]
\item If $\diam_{t_0} S_i (t_0) \leq L r_0$, then $S_i$ is non-singular on the time-interval $[(1- \tau'_2( r_0^{-1} \diam_{t_0} S_i ) ) t_0, t_0 ]$ and we have the curvature bound
\[  \qquad\quad |{\Rm}| < K'_2 ( r_0^{-1} \diam_{t_0} S_i ) t_0^{-1}  \quad \text{on} \quad S_i (t_0) \times  [(1- \tau'_2( r_0^{-1} \diam_{t_0} S_i (t_0) ) ) t_0, t_0 ].
\]
\item The set $S_i(t_0) \setminus \Int W_i(t_0)$ is a torus structure of width $\leq r_0$ and length
\[ \dist_{t_0} (\partial S_i (t_0), \partial W_i (t_0)) = \min \big\{ \Lambda_2 ( r_0^{-1} \diam_{t_0} S_i (t_0)), L \big\} r_0. \]
\item For every point $x \in S_i (t_0) \setminus W_i (t_0)$, there is a loop $\sigma \subset \MM(t_0)$ based at $x$ which is incompressible in $\MM(t_0)$ and has length $\ell_{t_0}(\sigma) < \nu r_0$.
\end{enumerate}
\end{Proposition}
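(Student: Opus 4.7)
The plan is to derive Proposition \ref{Prop:firstcurvboundstep2} from the first-step Lemma \ref{Lem:firstcurvboundstep1} by using Proposition \ref{Prop:curvboundnotnullinarea} to propagate the curvature bound from the outer shell of each solid torus across its full torus structure. First, I would apply Lemma \ref{Lem:firstcurvboundstep1} at time $t_0$ with $U = \MM(t_0)$ and with length parameter enlarged to $L^\ast := L + c$ for a universal constant $c$, producing sub-Ricci flows $S^{(1)}_i \subset \MM$ on $[(1 - \tau_1) t_0, t_0]$ and subsets $W^{(1)}_i \subset S^{(1)}_i(t_0)$ whose associated torus structures $P^{(1)}_i := S^{(1)}_i(t_0) \setminus \Int W^{(1)}_i$ have length $\min\{L^\ast r_0, \diam_{t_0} S^{(1)}_i - 2 r_0\}$. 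This supplies the bound $|{\Rm}| < K_1 t_0^{-1}$ on the outer $100 r_0$-shell of each $S^{(1)}_i$ over the full time interval, local $\ov{w}$-goodness at scale $r_0$ and time $t_0$ throughout $P^{(1)}_i$, and, via assertion (h), a torus collar and $w^\#$-goodness at scale $\sqrt{t}$ of $\partial S^{(1)}_i(t)$ for every $t \in [(1-\tau_1) t_0, t_0]$.

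With this in hand I would set $S_i := S^{(1)}_i$ and take $W_i$ to be the sub-Ricci flow with $W_i(t) \subset S^{(1)}_i(t)$ the set of points at time-$t$ distance at least $\min\{L r_0, \diam_{t_0} S^{(1)}_i - 2 r_0\}$ from $\partial S^{(1)}_i(t)$; since $\partial W_i(t_0)$ lies in the $100 r_0$-shell of $\partial S^{(1)}_i$, where the first-step curvature bound applies, $W_i$ extends as a sub-Ricci flow over the entire time interval. Assertions (a), (d), and (e) of the proposition then follow from the corresponding items (a), (e), (g) of Lemma \ref{Lem:firstcurvboundstep1}, with $\Lambda_2(d) := \Delta_1(d) - 2$, and assertion (c) is inherited directly from (c) of Lemma \ref{Lem:firstcurvboundstep1}.

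For the essential assertion (b), the curvature bound $|{\Rm}| < K_2 t_0^{-1}$ is already known on $\MM(t) \setminus \bigcup_i S^{(1)}_i(t)$ and on the outer $100 r_0$-shells, so the work lies in the interior of each torus structure $S^{(1)}_i(t) \setminus W_i(t)$. I would apply Proposition \ref{Prop:curvboundnotnullinarea} separately for each $i$ to the sub-Ricci flow $U = S^{(1)}_i$ with thickness parameter $b := (L+1) r_0$ and with $r_1$ a small universal fraction of $r_0$. Hypothesis (i) is automatic, hypothesis (ii) follows from the first-step curvature bound at $\partial S^{(1)}_i$, and the conclusion of Proposition \ref{Prop:curvboundnotnullinarea}, evaluated at $x \in S_i(t) \setminus W_i(t)$ where $b - \dist_t(\partial S^{(1)}_i(t), x) \geq r_0$ (up to the distance distortion controlled by the shell bound), gives the desired bound $K_2 r_0^{-2} = K_2 t_0^{-1}$ on the interval $[t_0 - r_1^2, t_0] = [(1-\tau_2) t_0, t_0]$.

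The main obstacle is verifying hypothesis (iii) of Proposition \ref{Prop:curvboundnotnullinarea} at every $t \in [t_0 - r_1^2, t_0]$, not merely at $t_0$: every $x$ in the $b$-neighborhood of $\partial S^{(1)}_i(t)$ inside $S^{(1)}_i(t)$ must be either locally $\ov{w}$-good at scale $r_0$ or satisfy $|{\Rm}|(x,t) < A r_1^{-2}$. At $t_0$ this is immediate from the first step. At an earlier $t$ I would reapply Lemma \ref{Lem:firstcurvboundstep1} at time $t$ with $U = S^{(1)}_i(t)$, this time in the solid-torus case of the hypothesis; the preconditions of that case are precisely the content of assertion (h) of the original application. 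The reapplication yields new solid tori $\widetilde{S}_{i,j}(t) \subset S^{(1)}_i(t)$ with torus structures $\widetilde{P}_{i,j}(t)$, with $|{\Rm}| \leq K_1 t^{-1}$ on the complement of $\bigcup_j \widetilde{S}_{i,j}(t)$ in $S^{(1)}_i(t)$ together with their outer $100\sqrt{t}$-shells, and local $\ov{w}$-goodness at scale $\sqrt{t}$ on each $\widetilde{P}_{i,j}(t)$. For $L^\ast$ chosen large enough compared to $L$, every $\widetilde{W}_{i,j}(t)$ is contained in $W_i(t)$ (the inner ``bad'' regions at time $t$ lie deep inside $S^{(1)}_i(t)$), so on $B^U(\partial U, t, b) \cap (S_i(t) \setminus W_i(t))$ one of the two alternatives in hypothesis (iii) always holds, with $K_1 t^{-1} \lesssim r_1^{-2}$ absorbing into the constant $A$. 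This completes the verification and yields assertion (b).
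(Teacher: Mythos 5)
Your overall architecture — first-step lemma at $t_0$, then Proposition \ref{Prop:curvboundnotnullinarea} applied to each $S_i$ with the goodness hypothesis supplied by re-applying Lemma \ref{Lem:firstcurvboundstep1} at earlier times $t$ in the solid-torus case — is exactly the paper's strategy. However, there is a genuine gap at the decisive step, namely your claim that ``for $L^\ast$ chosen large enough compared to $L$, every $\widetilde{W}_{i,j}(t)$ is contained in $W_i(t)$.'' This is asserted, not proved, and it is precisely the hard part of the argument. When you re-apply Lemma \ref{Lem:firstcurvboundstep1} at time $t < t_0$ inside $S^{(1)}_i(t)$, the new solid tori $\widetilde S_{i,j}(t)$ bear no a priori relation to the time-$t_0$ decomposition: a new solid torus of moderate diameter (anything between $102\sqrt t$ and $(L+1)r_0$) can sit directly adjacent to $\partial S^{(1)}_i(t)$, in which case its core $\widetilde W_{i,j}(t)$ lies well inside the $b$-neighborhood of $\partial S^{(1)}_i(t)$ at a depth of only $\approx \diam_t \widetilde S_{i,j}(t)$, no matter how large $L^\ast$ is. On $\widetilde W_{i,j}(t)$ you have neither local goodness nor a curvature bound with a \emph{universal} constant (Lemma \ref{Lem:firstcurvboundstep1}(c) only gives $K'_1(\cdot)\,t^{-1}$ with a diameter-dependent constant, and letting $A$ in Proposition \ref{Prop:curvboundnotnullinarea}(iii) depend on $L$ would destroy the $L$-independence of $K_2$ in assertion (b)). So hypothesis (iii) is not verified.

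The paper closes this gap with an argument you are missing entirely: it shows that a new solid torus close to $\partial S_i(t)$ forces $S_i(t) = P \cup S^*_{i^*}(t)$ for a short torus collar $P$, and then invokes the \emph{lower} diameter bound $\diam_t S_i(t) > \min\{\Delta_1(d_i), L^\circ\}r_0$ at earlier times (Lemma \ref{Lem:firstcurvboundstep1}(d), which ultimately rests on Proposition \ref{Prop:slowdiamgrowth}) to conclude that the intruding solid torus must itself be large, hence its torus structure is long and its core is far from $\partial S_i(t)$ — contradiction. Making this rigorous requires propagating ``torus collars of controlled width and length'' from time $t_0$ backwards via a continuity-in-time argument (the paper's Claims 1 and 2, using Lemma \ref{Lem:collardiameter} and Lemma \ref{Lem:2T2timesI}), since the collar structure at time $t$ is itself needed to run the contradiction at time $t$. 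Your proposal uses neither assertion (d) of the first step nor any mechanism to control the position of the new bad regions, so as written the proof of assertion (b) does not go through. (A secondary, fixable issue: your $\Lambda_2(d) = \Delta_1(d)-2$ does not account for the losses incurred when propagating the uniform curvature bound only partway along the torus structure and then shrinking it again for distance-distortion control.)
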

The most important statement of this Lemma is the fact that the uniform curvature bound in (b) also holds on $S_i (t_0) \setminus W_i (t_0)$ and on a time-interval whose size does not depend on $r_0^{-1} \diam_{t_0} S_i(t_0)$.
Since this enables us to estimate the metric distortion of the regions $S_i(t_0) \setminus W_i(t_0)$ on this time-interval, we don't need to list the lower diameter estimate from Lemma \ref{Lem:firstcurvboundstep1}(d).
We have also omitted the statement from Lemma \ref{Lem:firstcurvboundstep1}(f) since we won't make use of it anymore.

Observe that we can only establish the curvature bound in assertion (c) at times \emph{close to time $t_0$} and that the length of the torus structure in assertion (d) cannot be bounded from below by a constant depending on the diameter of $S_i$ at time $(1-\tau_2) t_0$.
The reason for this comes from the fact that the geometry on $S_i$ could be close to that of a cigar soliton times $S^1$.
In fact after rescaling by a proper constant, regions of the cigar soliton of large diameter can shrink very rapidly under the Ricci flow.
It is the content of Proposition \ref{Prop:slowdiamgrowth}, which we have applied in the proof of Lemma \ref{Lem:firstcurvboundstep1}, that however the opposite behaviour cannot occur, i.e. regions of bounded diameter can never grow too fast in a short time.

\begin{proof}
We will fix several constants and functions which we will use in the course of the proof.
First we assume that $\delta(t)$ is bounded by the corresponding functions from Lemma \ref{Lem:firstcurvboundstep1} and Proposition \ref{Prop:curvboundnotnullinarea}.
Consider moreover the functions $\Delta_1$, $K'_1$, $\tau'_1$ from Lemma \ref{Lem:firstcurvboundstep1} and choose $D^* < \infty$ such that
\[ D^* > 100 \qquad \text{and} \qquad \Delta_1(D^*) > 100. \]
Define the functions $L_1^*, \ldots, L_5^*(d) : [D^*, \infty) \to (1, \infty)$ by
\begin{alignat*}{1}
 L_1^*(d) &= \tfrac14 \Delta_1(d) - 10  \\
 L_2^*(d) &= \tfrac12 \min \{ d-3, L_1^*(d) \} \\
 L_3^* (d) &= \min \{ L_1^*(d) - 1, L_2^* (d) \} \\
 L_4^* (d) &= L_3^*(d) - 1 \\
 L_5^* (d) &= \tfrac12 L_4^*(d)
\end{alignat*}
Observe, that $L_1^*, \ldots , L_5^*$ are continuous, monotonically non-decreasing and $L^*_i (d) \to \infty$ as $d \to \infty$.
Using these functions we define
\[ \Lambda_2 (d) = \begin{cases} \min \{ L^*_5(d) - 1, d-2 \} & \text{if $d \geq D^*$} \\ 1 & \text{if $d < D^*$} \end{cases} \]
Then $\Lambda_2(d)$ is also non-increasing and $\Lambda_2(d) \to \infty$ as $d \to \infty$.

Given the constant $L$, we pick
\[ L^\circ = L^\circ (L) > \max \{ D^*, 10L + 100 \}. \]
Finally, using the constants $w_1$ and $T_1$, from Lemma \ref{Lem:firstcurvboundstep1} we assume
\[ w_2 (L, \nu) < w_1 (L^\circ, \nu) \qquad \text{and} \qquad T_2 (L, \nu) > 2 T_1 (L^\circ, \nu) . \]

We first apply Lemma \ref{Lem:firstcurvboundstep1} at time $t_0 \leftarrow t_0$ with $U \leftarrow \MM(t_0)$, $L \leftarrow L^\circ$ and $\nu \leftarrow \nu$.
We obtain sub-Ricci flows with surgery on the time-interval $[(1-\tau^*_0) t_0, t_0]$ which we will denote by $S'_1, \ldots, S'_{m'} \subset \MM$ and subsets which we will denote by $W''_i \subset S'_i (t_0)$ for $i = 1, \ldots, m'$.
By Lemma \ref{Lem:firstcurvboundstep1}(c), if $\diam_{t_0} S'_i (t_0) \leq D^* r_0$, then $S'_i$ is non-singular on the time-interval $[(1-\tau'_1(D^*)) t_0, t_0]$ and we have a curvature bound there.
Let now $S_1, \ldots, S_m$ be the subcollection of the $S'_1, \ldots, S'_{m'}$ for which $d_i = r_0^{-1}\diam_{t_0} S'_i (t_0) > D^*$ and pick the sets $W'_i \subset S_i(t_0)$ accordingly.
Consider the torus structures $P'_i = \Int S_i(t_0) \setminus W'_i$ of width $\leq r_0$ and length 
\[ \min \{ d_i - 2, L^\circ \} r_0 \geq \min \{ \Lambda_2(d_i), L + 1\} r_0. \]
Chop off each $P'_i$ on the side which is not adjacent to $\partial S_i (t_0)$ and produce torus structures $P_i$ of width $\leq r_0$ and length exactly $\min \{ \Lambda_2(d_i), L \} r_0$.
Then we set $W_i = \Int S_i(t_0) \setminus \Int P_i$.
We will later be able to extend $W_i$ to a sub-Ricci flow with surgery on a small, but uniform time-interval. 

\begin{Claim0}
There are universal constants $\tau^*_0, w^*_0 > 0$ and $K^*_0 < \infty$ such that: \\
For all $x \in \MM(t_0)$ with $\dist_{t_0} (x, \MM(t_0) \setminus (S_1(t_0) \cup \ldots \cup S_m(t_0))) \leq 100 r_0$ the point $(x, t_0)$ survives until time $(1-\tau^*_0) t_0$ and
\[
|{\Rm}|(x,t) < K^*_0 t_0^{-1} \qquad \text{for all} \qquad t \in [(1-\tau^*_0) t_0, t_0].
\]
Moreover, assertions (a), (c), (d) and (e) of this Proposition hold.
\end{Claim0}

\begin{proof}
The first statement is a direct consequence of Lemma \ref{Lem:firstcurvboundstep1}(b) and (c).
Here we assume that $\tau^*_0 < \min \{ \tau_1, \tau'_1(D^*) \}$ and $K^*_0 > \max \{ K_1, K'_1(D^*) \}$.

Assertion (a) is clear and assertion (c) is a consequence of Lemma \ref{Lem:firstcurvboundstep1}(c).
Assertion (d) follows by the choice of $\Lambda_2$ and assertion (e) by Lemma \ref{Lem:firstcurvboundstep1}(g) and the fact that $W'_i(t_0) \subset W_i$.
\end{proof}

So it remains to extend the curvature bound from Claim 0 to the subsets $S_i (t_0) \setminus W_i$ on a uniform time-interval.
The proof of this fact will involve the application of Lemma \ref{Lem:firstcurvboundstep1} at times $t \in [(1-\tau_0^*) t_0, t_0]$ for $U \leftarrow S_i(t)$, $L \leftarrow L^\circ$ and $\nu \leftarrow \nu$.
By assertion (h) from the previous application of Lemma \ref{Lem:firstcurvboundstep1}, the extra conditions of Lemma \ref{Lem:firstcurvboundstep1} in the solid torus case are satisfied.
The remaining conditions hold by the choice of $w_2$ and $T_2$.

The desired curvature bound is established in the following Claims 1--5.
In Claims 1--3 we will first derive a local goodness bound for points which are in controlled distance from $\partial S_i(t)$ for any $t$ of a uniform time-interval.
An important tool will hereby be the notion of ``torus collars of length up to'' a certain constant as introduced in Definition \ref{Def:toruscollars}.
In Claim 4 we will derive a curvature bound using this local goodness bound together with Proposition \ref{Prop:curvboundnotnullinarea}.
Claim 5 will translate this result into the final form.

In the following, fix some $i = 1, \ldots, m$ and recall that $d_i = r_0^{-1} \diam_{t_0} S_i(t_0) > D^*$.

\begin{Claim1}
There are universal constants $K_1^* < \infty$ and $0 < \tau_1^* < \tau^*_0$ such that for all $t \in [(1-\tau_1^*) t_0, t_0]$ the following holds:
Consider numbers
\[ 0 < \td{L} \leq \min \{ L_1^*(d_i), 4 (L+2) \}, \qquad 1 \leq a \leq 2 \]
and assume that $S_i(t)$ does not have torus collars of width $\leq ar_0$ and length up to $\td{L} r_0$, but it has torus collars of width $\leq a r_0$ and length up to $(\td{L}-1) r_0$ if $\td{L} > 1$. \\
Then $|{\Rm}| (x, t) < K_1^* t_0^{-1}$ for all $x \in S_i(t)$ with $\dist_t (x, \partial S_i(t)) < (\td{L} + 10) r_0$.
\end{Claim1}
\begin{proof}
Observe first that in the case $\td{L} \leq 1$ we are done by Claim 0 and a sufficiently small choice of $\tau^*_1$.
So assume in the following that $\td{L} > 1$.

Assume that $\tau_1^* < \tau_0^*$ and fix some $t \in [(1-\tau_1^*) t_0, t_0]$.
So we can apply Lemma \ref{Lem:firstcurvboundstep1} at time $t$ with $U \leftarrow S'_i$ and $L \leftarrow L^{\circ}$ and obtain the sub-Ricci flows with surgery $S_1^*, \ldots, S^*_{m^*} \subset \MM$ and the subsets $W^*_i \subset S^*_i( t)$.
Observe that the parameter $r = \sqrt{t}$ changes only slightly, i.e. we may assume that for the right choice of $\tau_1^*$ we have $0.9 r_0 < r \leq r_0$.
Moreover, we assume that $\tau^*_1$ is chosen small enough that $\diam_t \partial S_i(t) < 2 \diam_{t_0} \partial S_i(t) \leq2 r_0$.

If $\dist_t (S^*_{i^*}(t), \partial S_i (t)) \geq (\td{L} - 30) r_0$ for all $i^* = 1, \ldots, m^*$, then we are done by Lemma \ref{Lem:firstcurvboundstep1}(b) applied at time $t$.
So all we need to do is to assume that there is an $i^* = 1, \ldots, m^*$ with 
\begin{equation} \label{eq:torcollarcontradiction}
 \dist_t (S_{i^*}^*(t), \partial S_i(t)) < (\td{L} - 30) r_0
\end{equation}
and derive a contradiction.

Observe that by Lemma \ref{Lem:firstcurvboundstep1}(c) and (e) applied at time $t$ we have $\dist_t (\partial S^*_{i^*} (t), \linebreak[1] \partial W^*_{i^*}) \geq \min \{ 97 r, L^\circ r \} \geq 50 r_0$.
So we can choose a point $y \in S_{i^*}^*(t) \setminus W^*_{i^*}$ such that $\dist_t( y, \partial S^*_{i^*}(t)) = 3 r_0$.
Then we have at least $\dist_t (y, \partial S_i(t)) < (\td{L} - 20)r_0$ and hence by our assumption, there is a set $P \subset S_i(t)$ which is bounded by $\partial S_i(t)$ and a torus $T \subset S_i(t)$ with $y \in T$ and $\diam_t T \leq a r_0 \leq 2 r_0$.
By the choice of $y$ we have $T \subset S_{i^*}^*(t)$.
This implies
\begin{equation} \label{eq:SpPSs}
 S_i(t) = P \cup S_{i^*}^*(t)
\end{equation}
and we conclude using assertion (d) of Lemma \ref{Lem:firstcurvboundstep1} applied at time $t_0$ that
\[ \diam_t S^*_{i^*} (t) + \diam_t P \geq \diam_t S_i(t) > \min \{ \Delta_1(d_i), L^{\circ} \} r_0. \]
Observe now that by Lemma \ref{Lem:collardiameter}, we know that
\begin{equation} \label{eq:diamboundonP}
 \diam_t P < (\td{L} - 20 + 4a) r_0 < (\td{L} - 10) < \min \{ L_1^*(d_i), \linebreak[1] 4 (L+2) \} r_0.
\end{equation}
So using the fact that $r \leq r_0$, we obtain
\begin{multline*}
 \diam_t S^*_{i^*}(t) > \big( \min \{ \Delta_1(d_i), L^{\circ} \} - \min \{ L_1^* (d_i), 4(L+2) \}  \big) r_0 \\
 \geq \min \big\{ \Delta_1(d_i) - L_1^*(d_i) , L^\circ - 4 (L + 2) \big\} r .
\end{multline*}
Observe that the right hand side is larger than $10 r$.
We conclude further using Lemma \ref{Lem:firstcurvboundstep1}(e) applied at time $t$ that
\begin{multline}
 \dist_t (W^*_{i^*}, \partial S_i(t)) \geq \dist_t (\partial S^*_{i^*}(t), \partial W^*_{i^*}) = \min \{ \diam_t S^*_{i^*}(t) - 2 r, L^\circ r \} \\
\geq 0.9 \min \big\{ \Delta_1(d_i) - L_1^*(d_i) - 2, L^\circ - 4 (L + 3), L^\circ \big\} r_0 \\
> \min \big\{ L_1^* (d_i) + 1, 4(L+2) \big\} r_0 \geq \td{L} r_0. \label{eq:distSpWs}
\end{multline}

So far we have only used the first assumption that $S_i (t)$ \emph{does} have torus collars of width $\leq a r_0$ and length up to $(\td{L} - 1)r_0$.
We will now show that in contradiction to the second assumption of the claim, $S_i(t)$ also has torus collars of width $\leq a r_0$ an length up to $\td{L} r_0$.
So assume that $x \in S_i(t)$ with $(\td{L}-1) r_0 < \dist_t(x, \partial S_i(t)) \leq \td{L} r_0$.
By (\ref{eq:SpPSs}), the diameter bound (\ref{eq:diamboundonP}) on $P$ and (\ref{eq:distSpWs}), we conclude $x \in S_{i^*}^*(t) \setminus W_{i^*}^*$.
So, we can find a set $P^* \subset S_{i^*}^*(t) \setminus W_{i^*}^*$ which is diffeomorphic to $T^2 \times I$ and bounded by $\partial S_{i^*}^*(t)$ and a $2$-torus $T^* \subset S^*_{i^*} (t) \setminus W^*_{i^*}$ with $x \in T^*$ and $\diam_t T^* \leq r \leq r_0 \leq a r_0$.
Again by (\ref{eq:diamboundonP}) we find $T^* \cap P = \emptyset$.
It follows from Lemma \ref{Lem:2T2timesI}, that $P \cup P^*$ is diffeomorphic to $T^2 \times I$.
This finishes the contradiction argument and shows that (\ref{eq:torcollarcontradiction}) does not hold for any $i^* = 1, \ldots, m^*$.
\end{proof}

\begin{Claim2}
There are universal constants $0 < \tau_2^* < \tau^*_0$ and $T_2^* < \infty$ such that if $t_0 > T_2^*$ then at all times $t \in [(1-\tau_2^*) t_0, t_0]$ the set $S_i(t)$ has torus collars of width $\leq 2r_0$ and length up to $\min \{ L_2^*(d_i), 2(L+2) \} r_0$.
\end{Claim2}
\begin{proof}
Choose $\tau_2^* < \tau_1^*$ such that $\exp(2K_1^* \tau_2^*) < 2$.
By Lemma \ref{Lem:firstcurvboundstep1}(e) we already know that \emph{at time $t_0$}, the set $S_i(t_0)$ has torus collars of width $\leq r_0$ and length up to $L_i r_0$ where
\[ L_i = 2\min \{ L_2^* (d_i) , 2(L+2) \} \leq \min \{ d_i-2, L^\circ \}. \]
Let $t^* \in [(1-\tau_2^*) t_0, t_0]$ be minimal with the property that for all $t \in (t^*, t_0]$ the set $S_i(t)$ has torus collars of width $\leq \exp (2K_1^* t_0^{-1} (t_0 - t)) r_0$ and length up to $\exp (-2K_1^* t_0^{-1} (t_0 - t)) L_i r_0$ at time $t$.
We are done if $t^* = (1-\tau_2^*) t_0$.
So consider the case $t^* > (1-\tau_2^*) t_0$.

Let $\varepsilon > 0$ be a small constant which we will determine later.
It will not be a universal constant.
By the choice of $t^*$, we find times $t_1 \leq t^* \leq t_2$ with $t_2 - t_1 < \varepsilon$ such that at time $t_2$ the set $S_i(t_2)$ has torus collars of width $\leq \exp (2K_1^* t_0^{-1} (t_0 - t_2))r_0$ and length up to $\exp ( -2 K_1^* t_0^{-1} (t_0 - t_2) ) L_i r_0$, but at time time $t_1$ it does not have torus collars of width $\leq \exp (2 K_1^* t_0^{-1} (t_0 - t_1))r_0$ and length up to $\exp ( - 2 K_1^* t_0^{-1} (t_0 - t_1) ) L_i r_0$.

Choose $\td{L} \leq \exp ( - 2 K_1^* t_0^{-1} (t_0 - t_1) ) L_i$ such that at time $t_1$ the set $S_i(t_1)$ does not have torus collars of width $\leq \exp (2 K_1^* t_0^{-1} (t_0 - t_1))r_0$ and length up to $\td{L}r_0$, but it does have torus collars of width $\leq \exp (2 K_1^* t_0^{-1} (t_0 - t_1))r_0$ and length only up to $(\td{L} - 1) r_0$ if $\td{L} > 1$.
Observe that $\td{L} \leq \min \{ L^*_1 (d_i), 4(L+2) \}$ and $\exp (2 K_1^* t_0^{-1} (t_0 - t_1))r_0 < 2r_0$.
So we can apply Claim 1 to conclude that
\[ |{\Rm}| (x,t_1)  < K_1^* t_0^{-1} \qquad \text{if} \qquad x \in S_i(t_1) \quad \text{and} \quad  \dist_{t_1} (x, \partial S_i(t_1)) < (\td{L} + 10) r_0. \]
Let $Q < \infty$ be a bound on the curvature around all surgery points in $\MM$ on the time-interval $[t_1, t_2]$.
Then all strong $\delta(t)$-necks around surgery points as described in Definition \ref{Def:precisecutoff}(4) are defined on a time-interval of length $> \frac1{100} Q^{-1/2}$ and the curvature $|{\Rm}|$ there is bounded from below by $> c' \delta^{-2}(t)$ for some $t \in [t_1, t_2]$ and a universal $c' > 0$.
So if we choose $\varepsilon < \frac1{100} Q^{-1/2}$ and assume $t_0$ to be large enough, then we can exclude surgery points of the form $(x,t)$ with $\dist_{t_1} (x, \partial S_i(t_1)) < (\td{L} + 10) r_0$ and $t \in [t_1, t_2]$.
Moreover, again by choosing $\varepsilon$ sufficiently small, we can assume that curvatures at points which survive until time $t_2$ cannot grow by more than a factor of $2$ such that we have
\[ | {\Rm} | (x,t) < 2 K_1^* t_0^{-1} \quad \text{if} \quad (x,t) \in S_i(t) \times [t_1, t_2] \; \text{and} \; \dist_{t_1} (x, \partial S_i(t)) < (\td{L} + 10) r_0. \]
(We remark, that we could have also excluded surgery points using property (2) of the canonical neighborhood assumptions in Definition \ref{Def:CNA}.)

Now let $x \in S_i(t_1)$ be a point with $\dist_{t_1} (x, \partial S_i(t_1)) \leq \td{L} r_0$.
Then by the curvature bound we conclude
\[ \dist_{t_2} (x, \partial S_i(t_2)) \leq \exp(2 K_1^* t_0^{-1} (t_2 - t_1)) \td{L} r_0 \leq \exp(-2 K_1^* t_0^{-1} (t_0 - t_2)) L_i r_0 . \]
So there is a set $P \subset S_i(t_2)$ which is bounded by $\partial S_i(t_2)$ and an embedded $2$-torus $T \subset S_i(t_2)$ with $x \in T$ and $\diam_{t_2} T \leq \exp ( 2K_1^* t_0^{-1} (t_0 - t_2)) r_0$.
By Lemma \ref{Lem:collardiameter} we have under the assumption that $\varepsilon$ so small that $\exp (2 K^*_1 \varepsilon) L_i < L_i + 1$
\[ \diam_{t_2} P \leq \exp (2 K^*_1 t_0^{-1} (t_2 - t_1)) \td{L} r_0 + 8 r_0 < (\td{L} + 9) r_0. \]
Again by assuming $\varepsilon$ small, we conclude that the distance distortion on $P$ for times $[t_1, t_2]$ is bounded by $r_0$ and hence $\MM$ is non-singular on $P \times [t_1, t_2]$.
So at time $t_1$ the set $P$ is bounded by $\partial S_i(t_1)$ and a torus of diameter $\leq \exp ( 2 K_1^* t_0^{-1} (t_0 - t_1)) r_0$.

We have just showed that $S_i(t_1)$ does indeed have torus collars of width $\leq \exp ( 2 K_1^* t_0^{-1} (t_0 - t_1)) r_0$ and length up to $\td{L} r_0$ contradicting our assumption.
\end{proof}

\begin{Claim3}
There are constants $0 < \tau_3^* < \tau_0^*$, $w_3^* > 0$ and $K_3^*, T_3^* < \infty$ such that:
Assume that $t_0 > T_3^*$.
Then for every $t \in [(1-\tau_3^*)t_0, t_0]$ and every point $x \in S_i(t)$ with $\dist_t (\partial S_i(t), x) < \min \{ L_3^*(d_i), 2(L+2) \} r_0$ either $|{\Rm}|(x,t) < K^*_3 t_0^{-1}$ or $x$ is locally $w_3^*$-good at scale $r_0$ and time $t$.
\end{Claim3}
\begin{proof}
Assume that $\tau_3^* < \min \{ \tau^*_0, \tau^*_1, \tau_2^* \}$ and fix some time $t \in [(1-\tau_3^*) t_0, t_0]$.
We will argue as in the first part of the proof of Claim 1 with $\td{L} = \min \{ L^*_3 (d_i), 2 (L+2) \} + 1 \leq \min \{ L^*_1(d_i), 4 (L+2) \}$ and $a = 2$.
Observe hereby that by Claim 2 the set $S_i(t)$ has torus collars of width $\leq 2 r_0$ and length up to $\min \{ L^*_3 (d_i), 2 (L+2) \} \leq \min \{ L^*_2(d_i), 2(L+2) \}$.

So we apply again Lemma \ref{Lem:firstcurvboundstep1} at time $t$ with $U \leftarrow S_i(t)$ and $L \leftarrow L^\circ$ and obtain pairs of subsets $(S^*_1(t), W^*_1), \ldots, (S^*_{m^*}(t), W^*_{m^*})$ of $S_i(t)$.
If $\dist_t ( S^*_{i^*} (t), \partial S_i (t) ) \geq (\td{L} - 30) r_0$ for all $i^* = 1, \ldots, m^*$, then we obtain a curvature bound as before using Lemma \ref{Lem:firstcurvboundstep1}(b).
If not, i.e. if (\ref{eq:torcollarcontradiction}) is satisfied for some $i^* = 1, \ldots, m^*$, then we obtain from (\ref{eq:distSpWs}) that
\[ \dist_t ( W^*_{i^*}, \partial S_i(t) ) > \min \{ L^*_2 (d_i), 2 (L+2) \} r_0. \]
This implies that every $x \in S_i(t)$ with $\dist_t (\partial S_i(t), x) < \min \{ L_2^*(d_i), 2(L+2) \} r_0$ is either contained in $S_i(t) \setminus (S^*_1(t) \cup \ldots \cup S^*_{m^*} (t))$ in which case we obtain a curvature bound from Lemma \ref{Lem:firstcurvboundstep1}(b) or contained in $S^*_{i^*} (t) \setminus W^*_{i^*}$ in which case $x$ is $\ov{w}$-good at scale $r$ and time $t$ by Lemma \ref{Lem:firstcurvboundstep1}(f).
\end{proof}

\begin{Claim4}
There are universal constants $K_4^*, T_4^* < \infty$ and $0 < \tau_4^* < \tau_0^*$ such that if $t_0 > T_4^*$, then $|{\Rm}| (x,t) \linebreak[1] < K^*_4 t_0^{-1}$ for all $t \in [(1-\tau_4^*) t_0, t_0]$ and $x \in S_i(t)$ with 
\[ \dist_t(\partial S_i(t), x) \leq \min \{ L^*_4( d_i), 2 (L + 1 ) \} r_0 \]
and none of these points are surgery points.
\end{Claim4}
\begin{proof}
We use Claim 3 and Proposition \ref{Prop:curvboundnotnullinarea} with $U \leftarrow S_i$, $r_0 \leftarrow r_0$, $r_1 \leftarrow (\tau^*_3)^{1/2} r_0$, $A \leftarrow K_0^* \tau^*_3$, $w \leftarrow w^*_3$, $b \leftarrow \min \{ L^*_3( d_i), 2 (L+2) \} r_0$ to conclude that for all $t \in [(1-\frac12 \tau^*_3) t_0, t_0]$ and $x \in S_i(t)$ with $\dist_t(\partial S_i(t), x) \leq \min \{ L^*_2 (d_i), 2(L+1) \} r_0 \leq (b -1)r_0$ we have
\[ |{\Rm}| (x,t) < K_{\ref{Prop:curvboundnotnullinarea}} (w^*_3, K_0^* \tau^*_3) \big( r_0^{-2} + (\tfrac12 \tau^*_3 t_0)^{-1} \big) .  \]
This implies the claim.
\end{proof}

\begin{Claim5}
There are constants $K_5^*, T_5^* < \infty$ and $0 < \tau_5^* < \tau^*_0$ such that if $t_0 > T^*_5$, then for all $x \in S_i(t_0)$ for which 
\[ \dist_{t_0}(\partial S_i(t), x) \leq \min \{ L^*_5 ( d_i) ,  L + 1 \} r_0 \]
the point $(x,t_0)$ survives until time $(1-\tau^*_5) t_0$ and for all $t \in [(1-\tau^*_5) t_0, t_0]$ we have $|{\Rm}|(x,t) < K^*_5 t_0^{-1}$.
\end{Claim5}
\begin{proof}
This follows by a distance distortion estimate and Claim 4.
We just need to choose $\tau_5^* < \tau_4^*$ so small that distances don't shrink by more than a factor of $2$ on a time-interval of size $\leq \tau_5^* t_0$ in a region in which the curvature is bounded by $K_4^* t_0^{-1}$.
\end{proof}

To conclude the proof of the Proposition, we just need to use Claim 5 and observe that the torus structure $\Int S_i (t_0) \setminus \Int W_i$ has width $\leq r_0$ and length $\leq \min \{ L_5^* (d_i) - 1, L \} r_0$.
Hence, all its points satisfy the distance bound from Claim 5.
So assuming $\tau_2 < \tau^*_5$, we obtain a curvature bound on the non-singular neighborhood $(\Int S_i (t_0) \setminus \Int W_i) \times [(1-\tau_2) t_0, t_0]$ and we can extend $W_i$ to a sub-Ricci flow with surgery $W_i \subset \MM$ on the time-interval $[(1-\tau_2) t_0, t_0]$.
\end{proof}

In a third step we make the additional fact that we can find filling surfaces of controlled area inside a time-slice.
This will enable us to improve the bound on the width of the torus structures.
We will also present the dependences of the involved parameters in a way which will be more suitable for the following subsection.
\begin{Proposition}[third step] \label{Prop:firstcurvboundstep3}
There are a positive continuous function $\delta : [0, \infty) \linebreak[1] \to (0, \infty)$ and constants $K < \infty$, $\tau > 0$ and for every $A < \infty$ there are non-increasing functions $D_A, K'_A : (0,1] \to (0, \infty)$ such that for every $\eta > 0$ there are $w_3 = w_3(\eta, A) > 0$, $T_3 = T_3(\eta, A) < \infty$ such that: \\
Let $\MM$ be a Ricci flow with surgery on the time-interval $[0, \infty)$ with normalized initial conditions which is performed by $\delta(t)$-precise cutoff.
Consider the constant $T_0 < \infty$, the function $w : [T_0, \infty) \to (0, \infty)$ as well as the decomposition $\MM(t) = \MM_{\thick} (t) \cup \MM_{\thin} (t)$ for all $t \in [T_0, \infty)$ obtained in Proposition \ref{Prop:thickthindec}.
Assume that
\begin{enumerate}[label=(\roman*)]
\item $r_0^2 = t_0 \geq \max \{ 4T_0, T_3 \}$,
\item $w(t) < w_3$ for all $t \in [\frac14 t_0, t_0]$,
\item all components of $\MM(t_0)$ are irreducible and not diffeomorphic to spherical space forms and all surgeries on the time-interval $[\frac12 t_0, t_0]$ are trivial,
\item there is a filling map $f : \Sigma \to \MM(t_0)$ (in the sense of Definitinon \ref{Def:filling}) for the pair $(\MM_{\thick}(t_0), \MM_{\thin}(t_0))$ of area $\area_{t_0} f < A t_0$.
\end{enumerate}
Then there are closed subsets $P_1, \ldots, P_m \subset \MM(t_0)$ and sub-Ricci flows with surgery $U_1, \ldots, U_m \subset \MM$ on the time-interval $[(1-\tau) t_0, t_0]$ as well as numbers $h_1, \linebreak[1] \ldots, \linebreak[1] h_m \in [\eta, 1]$ such that the sets $P_1 \cup U_1(t_0), \ldots, P_m \cup U_m(t_0)$ are pairwise disjoint and such that for all $i = 1, \ldots, m$
\begin{enumerate}[label=(\alph*)]
\item The set $U_i(t_0)$ is a smoothly embedded, incompressible solid torus ($\approx S^1 \times D^2$), $P_i \approx T^2 \times I$ and $P_i, U_i(t_0)$ share a torus boundary, i.e. $P_i \cap U_i(t_0) = \partial U_i(t_0)$.
So $(P_i \cup U_i(t_0), U_i(t_0)) \approx (S^1 \times D^2(1), S^1 \times D^2(\frac12))$.
\item The set $P_i$ is an $h_i$-precise torus structure at scale $r_0$ and time $t_0$.
\item If $h_i > \eta$, then 
\begin{multline*}
 \qquad\qquad \diam_{t_0} (P_i \cup U_i (t_0)) < D(h_i) r_0 \qquad \text{and} \\ \qquad  | {\Rm_{t_0}} | < K'(h_i) t_0^{-1} \quad \text{on} \quad P_i \cup U_i (t_0).
\end{multline*}
\item The set $\MM(t_0) \setminus (U_1(t_0) \cup \ldots \cup U_m(t_0))$ is non-singular on the time-interval $[(1-\tau) t_0, t_0]$ and
\[ \qquad\qquad | {\Rm} | < K t_0^{-1} \qquad \text{on} \qquad \big( \MM(t_0) \setminus (U_1(t_0) \cup \ldots \cup U_m(t_0)) \big) \times [(1-\tau) t_0, t_0]. \]
\end{enumerate}
\end{Proposition}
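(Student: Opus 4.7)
The plan is to apply Proposition \ref{Prop:firstcurvboundstep2} with parameters $L$ (large) and $\nu$ (small) depending on $\eta$ and $A$, and then refine the resulting solid tori using the filling surface hypothesis. This gives sub-Ricci flows with surgery $S_1,\ldots,S_m$ with $W_i\subset S_i$, uniform curvature control outside the $W_i(t_0)$ on a uniform time-interval $[(1-\tau_2)t_0,t_0]$, and torus structures $P^\ast_i = S_i(t_0)\setminus\Int W_i(t_0)$ of width $\leq r_0$ and length $\min\{\Lambda_2(r_0^{-1}\diam_{t_0}S_i(t_0)),L\}r_0$. For those $S_i(t_0)$ whose diameter is bounded (by some function of $\eta$ to be made precise), one uses assertion (c) of Proposition \ref{Prop:firstcurvboundstep2} directly: set $U_i(t_0)=W_i(t_0)$, $P_i=P^\ast_i$, and pick $h_i>\eta$ determined by the length bound $\Lambda_2$. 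The diameter and curvature bounds required in (c) of the present proposition then follow from Proposition \ref{Prop:firstcurvboundstep2}(c).

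The interesting case is when $r_0^{-1}\diam_{t_0}S_i(t_0)$ is large, so $P^\ast_i$ has length $Lr_0$ but only width $\leq r_0$ --- too fat to qualify as $\eta$-precise for small $\eta$. To thin it out I would invoke Lemma \ref{Lem:smallloopintorusstruc} with some $\alpha$ to be chosen very small depending on $\eta,A$ and on the curvature bound $K_2$ from Proposition \ref{Prop:firstcurvboundstep2}(b). The hypotheses of that lemma are met: irreducibility and non-sphericalness come from (iii), the filling map $f$ from (iv) (with $A(t_0)<At_0$), the $T^2\times I$ torus structure $P^\ast_i\subset S_i(t_0)$ of width $\leq r_0$ and length $Lr_0$ from Proposition \ref{Prop:firstcurvboundstep2}(d), and the generic-Seifert-fiber hypothesis from the construction underlying Lemma \ref{Lem:firstcurvboundstep1} (which produces the $S_i$'s as incompressible solid tori inside the thin Seifert region of the geometric/graph decomposition of Propositions \ref{Prop:MorganTianMain} and \ref{Prop:GGpp}, and hence a generic Seifert fiber of the ambient fibration can be arranged to lie in $S_i(t_0)$ and be non-contractible there). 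Choosing $L>\td{L}_0(\alpha,A)$ the lemma yields an embedded loop $\gamma'\subset P^\ast_i$, at distance $\geq \tfrac13 Lr_0-2$ from $\partial P^\ast_i$, of length $<\alpha r_0$, non-contractible in $P^\ast_i$ but contractible in $S_i(t_0)$.

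At the basepoint $p$ of $\gamma'$, assertion (e) of Proposition \ref{Prop:firstcurvboundstep2} furnishes a second loop $\sigma\subset\MM(t_0)$ based at $p$ of length $<\nu r_0$, incompressible in $\MM(t_0)$; since $S_i(t_0)$ is an incompressible solid torus in $\MM(t_0)$, $\sigma$ is non-contractible in $S_i(t_0)$, and since $\pi_1(P^\ast_i)\to\pi_1(S_i(t_0))\cong\IZ$ is surjective with kernel generated by the meridian $[\gamma']$, the classes $[\gamma']$ and $[\sigma]$ are linearly independent in $\pi_1(P^\ast_i)\cong\IZ^2$. With $\alpha,\nu$ small enough (both $<\td\varepsilon_1(K_2)$), Lemma \ref{Lem:2loopstorus} applied to a $T^2\times I$-neighborhood of $p$ inside $P^\ast_i$ produces an embedded incompressible torus $T\subset P^\ast_i$ through $p$ with $\diam_{t_0}T<10\max\{\alpha,\nu\}r_0$. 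Given the curvature bound $|{\Rm}|<K_2t_0^{-1}$ on $P^\ast_i$ from Proposition \ref{Prop:firstcurvboundstep2}(b), Lemma \ref{Lem:bettertorusstructure} (applied with $h=\eta$, $K=K_2$, $L=\eta^{-1}$, and requiring $\alpha,\nu<\td\nu(K_2,\eta^{-1},\eta)$) upgrades $T$ to an $\eta$-precise torus structure $P_i\subset P^\ast_i$; the distance estimate of Lemma \ref{Lem:smallloopintorusstruc} ensures that $T$ lies sufficiently deep in $P^\ast_i$ for this to apply, provided $L$ is large enough. Finally I redefine $U_i(t_0)$ to be the union of $W_i(t_0)$ with the component of $S_i(t_0)\setminus P_i$ that contains $W_i(t_0)$; this is still a smoothly embedded, incompressible solid torus, and $(P_i\cup U_i(t_0),U_i(t_0))$ is diffeomorphic to $(S^1\times D^2,S^1\times D^2(\tfrac12))$. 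Set $h_i=\eta$. Assertion (d) of the proposition survives the redefinition because $U_i(t_0)\subset S_i(t_0)$, so $\MM(t_0)\setminus\bigcup_i U_i(t_0)\supset \MM(t_0)\setminus\bigcup_i S_i(t_0)$, on which Lemma \ref{Lem:firstcurvboundstep1}(b) provides the required control.

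The main obstacle is the careful orchestration of the quantitative dependencies: $\eta$ dictates via Lemma \ref{Lem:bettertorusstructure} how small $\td\nu$, hence $\alpha$ and $\nu$, must be; the choice of $\alpha$ dictates via Lemma \ref{Lem:smallloopintorusstruc} how large $L=L(\eta,A)$ must be chosen in the application of Proposition \ref{Prop:firstcurvboundstep2}, which in turn determines the thresholds $w_3=w_3(\eta,A)$ and $T_3=T_3(\eta,A)$. One must also verify that the hypothesis of Lemma \ref{Lem:smallloopintorusstruc} that a generic Seifert fiber lies non-contractibly in $S_i(t_0)$ is genuinely inherited from the construction of the $S_i$'s --- this is where the topological structure of Proposition \ref{Prop:GGpp}(e), encoded in Lemma \ref{Lem:firstcurvboundstep1}, is essential, since it identifies each $S_i$ as a solid torus to which a Seifert component with compressible generic fiber is attached through a $T^2\times I$ collar. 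The universal curvature constant $K$ and time $\tau$ in the statement are obtained as $K=K_2$ and $\tau=\tau_2$ from Proposition \ref{Prop:firstcurvboundstep2}; the functions $D_A$ and $K'_A$ in (c) are constructed by taking maxima over the finitely many diameter regimes delineated by $\Lambda_2$.
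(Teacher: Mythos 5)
Your proposal follows essentially the same route as the paper's proof: apply Proposition \ref{Prop:firstcurvboundstep2} with $L$ and $\nu$ tuned to $\eta$ and $A$, use the filling surface via Lemma \ref{Lem:smallloopintorusstruc} to produce a short meridional loop deep inside the fat torus structure, combine it with the short incompressible loop from Proposition \ref{Prop:firstcurvboundstep2}(e) to get a small cross-sectional torus via Lemma \ref{Lem:2loopstorus}, thin the structure with Lemma \ref{Lem:bettertorusstructure}, and enlarge $W_i$ to $U_i$ accordingly; the quantitative dependencies are orchestrated as in the paper. The only cosmetic difference is that the paper interpolates $h_i$ continuously through the functions $L_A^{**}, L_A^*$ rather than using a two-case dichotomy, and your citation of Lemma \ref{Lem:firstcurvboundstep1}(b) for assertion (d) should read Proposition \ref{Prop:firstcurvboundstep2}(b), which you in fact invoke correctly earlier since $U_i(t_0) \supset W_i(t_0)$.
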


\begin{proof}
We first define the constants $K, \tau$, the functions $D_A, K'_A$ and the quantities $w_3$, $T_3$.
Consider the functions $\Lambda_2, K'_2, \tau'_2$ and the constants $K_2, \tau_2$ from Proposition \ref{Prop:firstcurvboundstep2}.
Choose $D^* < \infty $ such that $\Lambda_2 (D^*) > 1000$ and set
\[ K = \max \{ K_2, K'_2 (D^*) \} \qquad \text{and} \qquad \tau = \min \{ \tau_2, \tau'_2 (D^*) \}. \]

Now fix the constant $A < \infty$.
Before defining $D_A$ and $K'_A$, we need to fix a few other quantities and functions that we will be important in the course of the proof.
Using the constants $\td{L}_0$ from Lemma \ref{Lem:smallloopintorusstruc}, $\td\nu$ from Lemma \ref{Lem:bettertorusstructure} and $\td\varepsilon_1$ from Lemma \ref{Lem:2loopstorus} we set
\[ \ov{L}_{A} = \max \Big\{ \td{L}_0 \big( \min \big\{ \tfrac1{10} \td\nu ( K_2, 1, 1 ), \td\varepsilon_1 (K_2) \big\}, \; A \big), \; 10 \big(\td\nu ( K_2, 1, 1 ) \big)^{-1} \Big\}.  \]
Then we define the functions $L^{**}_A, L^*_A : (0,1] \to (0, \infty)$ by
\begin{alignat*}{1}
 L^{**}_A(h) &= \max \Big\{ \ov{L}_A, \; \td{L}_0 \big( \min \big\{ \tfrac1{10} \td\nu ( K_2, h^{-1}, h ), \td\varepsilon_1(K_2) \big\}, \; A \big) , \\
 & \qquad\qquad \qquad\qquad\qquad\qquad 2 h^{-1} + 100, \; 10 \big(\td\nu ( K_2, h^{-1}, h ) \big)^{-1} \Big\}. \\
 L^*_A(h) &= \inf \big\{ L^{**}_A (h'') \;\; : \;\; 0 < h'' \leq \tfrac12 h \big\}.
\end{alignat*}
Then $L^*_A$ is non-increasing.
Using this function, we define the functions $D_A, K'_A$ by
\begin{alignat*}{1}
 D_A(h) &= \sup \{ d > 0 \;\; : \;\; \Lambda_2(d) \leq L^*_A (h) \}. \\
 K'_A(h) &= K'_2(D_A(h))
\end{alignat*}
Observe that also $D_A$ and $K'_A$ are non-decreasing.

Now also fix the constant $\eta > 0$ and set
\[
 L^\circ = \max \{ L^*_A(\eta) + 1, D_A(\eta), 1000 \} \qquad \text{and} \qquad \nu^\circ = \min \Big\{ \frac{1}{L^\circ}, \td\varepsilon_1 (K_2) \Big\} .
\]
Then we define
\[ w_3(\eta, A) = w_2(L^\circ, \nu^\circ) \qquad \text{and} \qquad T_3(\eta, A) = T_2 (L^\circ). \]

By this choice of $w_3$ and $T_3$, we can apply Proposition \ref{Prop:firstcurvboundstep2} with $L \leftarrow L^\circ$, $\nu \leftarrow \nu^\circ$ and obtain sub-Ricci flows with surgery $S_1, \ldots, S_m \subset \MM$ on the time-interval $[(1-\tau) t_0, t_0]$ and subsets $W_i \subset S_i(t_0)$.
For each $i = 1, \ldots, m$ set $d_i = r_0^{-1} \diam_{t_0} S_i$.
Then $P'_i = S_i \setminus \Int W_i$ are torus structures of width $\leq r_0$ and length $L_i r_0$ for
\[ L_i = \min \{ \Lambda_2 ( d_i), L^\circ \}. \]
We can assume without loss of generality that $L_i \geq 1000$ for all $i=1, \ldots, m$: If $L_i < 1000$ for some $i = 1, \ldots, m$, then by Proposition \ref{Prop:firstcurvboundstep2}(d) $\Lambda_2(d_i) < 1000 \leq L^\circ$.
So by monotonicity of $\Lambda_2$ we have $d_i \leq D^*$ and by Proposition \ref{Prop:firstcurvboundstep2}(c) and the choice of $\tau$, $K$ the flow $S_i$ is non-singular on the time-interval $[(1-\tau) t_0, t_0]$ and we have $| {\Rm} | < K t_0^{-1}$ on $S_i(t_0) \times [(1-\tau)t_0, t_0]$.
Hence we can remove the pair $S_i$ and $W_i$ from the list.
We summarize
\[
1000 \leq L_i \leq L^\circ \qquad \text{for all} \qquad i = 1, \ldots, m.
\]

Next, we define numbers $h'_i > 0$ which will give rise to the $h_i$.
If $L_i \leq \ov{L}_A$, then we just set $h'_i = 1$.
Observe that by definition of $L^*_A$, we then immediately get $L_i \leq L^*_A(h_i)$.
In the case in which $L_i > \ov{L}_A$, we choose an $h'_i \in (0,1]$ such that the following three equations are satisfied:
\begin{alignat}{1}
L_i &> \td{L}_0 \big( \min \big\{ \tfrac1{10} \td\nu ( K_2, (h'_i)^{-1}, h'_i ), \td\varepsilon_1(K_2) \big\}, \; A \big) \notag  \\
L_i &> 2 (h'_i)^{-1} + 100 \label{eq:Listwicehinverse} \\
L_i &>  10 (\td\nu ( K_2, (h'_i)^{-1}, h'_i ))^{-1} \label{eq:Li10nuinverse}
\end{alignat}
The conditions $L_i > \ov{L}_A$ and $L_i \geq 1000$ ensure that we can find such an $h'_i$.
We can furthermore assume that $h'_i$ is so small such that for any $h'' \leq \frac12 h'_i$, at least one of these conditions is not fulfilled.
So for any $h'' \leq \frac12 h'_i$ we have $L_i \leq L^{**}_A (h'')$ and thus $L_i \leq L^*_A (h'_i)$.
Furthermore, observe that by (\ref{eq:Li10nuinverse}) we have
\begin{equation} \label{eq:lowerboundon110nu}
 \tfrac1{10} \td\nu (K_2, (h'_i)^{-1}, h'_i) > \frac{1}{L_i} \geq \frac1{L^\circ} = \nu^\circ.
\end{equation}
Lastly, we define
\[ h_i = \max \{ h'_i, \eta \}. \]
So for all $i = 1, \ldots, m$ for which $h_i > \eta$ we have if $L_i \leq L^*_A(h'_i) = L^*_A (h_i)$.
Proposition \ref{Prop:firstcurvboundstep2}(d) together with the fact that $L^\circ \leq L^*_A (h_i)$ implies that $\Lambda_2(d_i) \leq L^*_A (h_i)$.
Hence $d_i \leq D_A(h_i)$ and we have the curvature bound $|{\Rm_{t_0}}| < K'(h_i) t_0^{-1}$ on $S_i(t_0)$.
Since $P_i \cup U(t_0)$ will be strictly contained in $S_i(t_0)$, this establishes assertion (c).

For the next paragraphs fix $i = 1, \ldots, m$.
We will now construct the sets $P_i$ and the sub-Ricci flows with surgery $U_i \subset \MM$.
$U_i$ and $P_i$ will be a modification of the sets $W_i$ and $P'_i$.
The torus structure $P_i$ will be a subset of $P'_i$.

First consider the case $L_i \leq \ov{L}_A$.
Then $h_i = 1$.
In this case, we simply set $P_i = S_i \setminus \Int W_i$ and $U_i = W_i$.
Obviously, $P_i$ is $1$-precise.

Assume in the following that $L_i > \ov{L}_A$.
From Lemma \ref{Lem:smallloopintorusstruc} applied to $M \leftarrow \MM(t_0)$, $M_{\textnormal{hyp}} \leftarrow \MM_{\thick} (t_0)$, $M_{\textnormal{Seif}} \leftarrow \MM_{\thin} (t_0)$, $P \leftarrow P'_i$, $S \leftarrow S_i$ and $f \leftarrow f$, we obtain a closed loop $\gamma_i \subset P'_i$ which is non-contractible in $P'_i$, but contractible in $S_i(t_0)$, has length
\[ \ell_{t_0} (\gamma_i) < \min \big\{ \tfrac1{10} \td\nu ( K_2, (h'_i)^{-1}, h'_i ), \td\varepsilon_1(K_2) \big\} r_0 \]
and which has time-$t_0$ distance of at least $\frac13 L_i - 2$ from $P'_i$.
Let $p_i \in \gamma_i$ be an arbitrary base point.
By Proposition \ref{Prop:firstcurvboundstep2}(e), there is a closed loop $\sigma_i \subset P'_i$ based at $p_i$ which is non-contractible in $S_i$ and has length (see (\ref{eq:lowerboundon110nu})) 
\[ \ell_{t_0} (\sigma_i) < \nu^\circ r_0 \leq \min \big\{ \tfrac1{10} \td\nu ( K_2, (h'_i)^{-1}, h'_i ), \td\varepsilon_1 (K_2) \big\} r_0. \]
In particular, $\gamma_i$ and $\sigma_i$ represent two linearly independent homotopy classes in $\pi_1(P'_i) \cong \IZ^2$.
By Lemma \ref{Lem:2loopstorus}, there is an embedded torus $T_i \subset P'_i$ with $p_i \in T_i$ which separates the two ends of $P'_i$ and which has diameter
\[ \diam_{t_0} T_i < \td\nu ( K_2, (h'_i)^{-1}, h'_i ) r_0. \]
Observe that $T_i$ has distance of at least $\frac13 L_i - 3 \geq \frac12 (h'_i)^{-1} + 30$ from $\partial P'_i$ (see (\ref{eq:Listwicehinverse})).
We can hence apply Lemma \ref{Lem:bettertorusstructure} and obtain a torus structure $P_i \subset P'_i$ of width $\leq h'_i r_0$ and length $> (h'_i)^{-1} r_0$ such that the pair $(P'_i, P_i)$ is diffeomorphic to $(T^2 \times [-2, 2], T^2 \times [-1,1])$.
Finally, we let $U_i (t_0)$ be the closure of the component of $S_i(t_0) \setminus P_i$ which is diffeomorphic to a solid torus.
It is clear that we can extend it to a sub-Ricci flow with surgery $U_i \subset \MM$ on the time-interval $[(1-\tau) t_0, t_0]$.
Then assertions (a), (b), (d) are clear and (c) was established before.
\end{proof}

\subsection{The geometry on late and long time-intervals}
In this subsection, we relate the conclusions from Proposition \ref{Prop:firstcurvboundstep3} applied at each time of a larger time-interval towards one another and obtain a geometric description of the flow on the given time-interval.
More precisely, we will be able to conclude that, if the size of the time-interval is controlled and its initial time is large enough, then we either have a curvature bound on the final time-slice or we can find a curve whose length at all times of the time-interval is very small and which bounds a disk of controlled area.
\begin{Proposition} \label{Prop:structontimeinterval}
There is a positive continuous function $\delta : [0, \infty) \to (0, \infty)$ and for every $L, A < \infty$, $\alpha > 0$ there are constants $K_4 = K_4 (L, \linebreak[1] A, \linebreak[1] \alpha), \linebreak[1] \Gamma_4 = \Gamma_4 (L, A), T_4 = T_4 (L, A, \alpha) < \infty$ and $w_4 = w_4 (L, A, \alpha) > 0$ (observe that $\Gamma_4$ does not depend on $\alpha$) such that: \\
Let $\MM$ be a Ricci flow with surgery on the time-interval $[0, \infty)$ with normalized initial conditions which is performed by $\delta(t)$-precise cutoff.
Consider the constant $T_0 < \infty$, the function $w : [T_0, \infty) \to (0, \infty)$ as well as the decomposition $\MM (t) = \MM_{\thick} (t) \cup \MM_{\thin} (t)$ for all $t \in [T_0, \infty)$ obtained in Proposition \ref{Prop:thickthindec} and assume that
\begin{enumerate}[label=(\roman*)]
\item $t_\omega > r_0^2 = t_0 \geq \max \{ 4T_0, T_4 \}$ and $t_\omega \leq L t_0$,
\item $w(t) < w_4$ for all $t \in [\frac14 t_0, t_\omega]$,
\item for every $t \in [\frac14 t_0, t_\omega]$ all components of $\MM(t)$ are irreducible and not diffeomorphic to spherical space forms and all surgeries on the time-interval $[\frac14 t_0, t_\omega]$ are trivial,
\item for every $t \in [t_0, t_\omega]$ there is a filling map $f : \Sigma \to \MM(t)$ (in the sense of Definitinon \ref{Def:filling}) for the pair $(\MM_{\thick}(t), \MM_{\thin}(t))$ of area $\area_t f < A t$.
\end{enumerate}
Then there is a collection of sub-Ricci flows with surgery $U_1, \ldots, U_m \subset \MM$ on the time-interval $[t_0, t_\omega]$ such that for all $t \in [t_0, t_\omega]$, the sets $U_1 (t), \ldots, U_m(t) \subset \MM (t)$ are pairwise disjoint, incompressible, solid tori.
Moreover, there are collars $P_i \subset \MM (t_\omega) \setminus \Int (U_1(t_\omega) \cup \ldots \cup U_m (t_\omega))$ of each $U_i (t_\omega)$ which are diffeomorphic to $T^2 \times I$ and non-singular on the time-interval $[t_0, t_\omega]$ and there are closed loops $\gamma_i \subset P_i$ which at time $t_0$ bound disks $D_i \subset \MM(t_0)$ such that
\begin{enumerate}[label=(\alph*)]
\item $| {\Rm_{t_\omega}} | < K_4 t_\omega^{-1}$ on $\MM(t_\omega) \setminus (U_1 (t_\omega) \cup \ldots \cup U_{m} (t_\omega))$,
\item $\ell_t (\gamma_i) < \alpha \sqrt{t}$ and $\diam_t \partial U_i(t) < \alpha \sqrt{t}$ for all $t \in [t_0, t_\omega]$ and $i = 1, \ldots, m$,
\item $\max \curv_t \gamma_i < \Gamma_4 t_0^{-1}$ for all $t \in [t_0, t_\omega]$ and all $i = 1, \ldots, m$,
\item $\area_{t_0} D_i < A t_0$ for all $i = 1, \ldots, m$,
\item $\gamma_i$ is non-contractible in $P_i$.
\end{enumerate}
\end{Proposition}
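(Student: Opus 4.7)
The plan is to combine three ingredients: apply Proposition~\ref{Prop:firstcurvboundstep3} at time $t_\omega$ to produce the solid tori $U_i(t_\omega)$ with precise collars $P_i$ and the curvature bound of assertion (a); propagate the $U_i$ backwards through $[t_0,t_\omega]$ by iteratively re-applying Proposition~\ref{Prop:firstcurvboundstep3} at intermediate times, using the topological uniqueness in Proposition~\ref{Prop:GGpp}(a) and the diameter control of Proposition~\ref{Prop:slowdiamgrowth} to identify the tori across time; and, at time $t_0$, extract the short loops $\gamma_i$ and bounding disks $D_i$ from the filling surface $f$ via Lemma~\ref{Lem:smallloopintorusstruc}.

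First, I would choose $\eta_0 = \eta_0(\alpha,L) > 0$ sufficiently small and apply Proposition~\ref{Prop:firstcurvboundstep3} at time $t_\omega$ with parameters $A \leftarrow A$, $\eta \leftarrow \eta_0$. This produces pairwise disjoint solid tori $U_i(t_\omega)$ together with $h_i$-precise torus collars $P_i$, $h_i \in [\eta_0,1]$, and assertion (a) follows directly from Proposition~\ref{Prop:firstcurvboundstep3}(d) with $K_4 \geq K$. For $i$ with $h_i > \eta_0$, Proposition~\ref{Prop:firstcurvboundstep3}(c) in addition gives bounded diameter and curvature on $P_i \cup U_i(t_\omega)$. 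I would then extend each $U_i$ backwards to a sub-Ricci flow with surgery defined on $[t_0,t_\omega]$: Proposition~\ref{Prop:firstcurvboundstep3} itself covers the initial increment $[(1-\tau)t_\omega,t_\omega]$, and I iterate by, at each intermediate time $t'$, re-applying Proposition~\ref{Prop:firstcurvboundstep3} with the same parameters to obtain a fresh family of tori, then matching them to the propagated $U_i(t')$ via the topological uniqueness of Proposition~\ref{Prop:GGpp}(a) together with hypothesis (iii) that every surgery on $[\tfrac14 t_0, t_\omega]$ is trivial. Proposition~\ref{Prop:slowdiamgrowth}, applied to each collar, then controls $\diam_t \partial U_i(t)$ by $\alpha\sqrt{t}$ (second half of (b)) and guarantees that $P_i$ remains non-singular and uniformly thin throughout $[t_0,t_\omega]$.

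Next, at time $t_0$ I would apply Lemma~\ref{Lem:smallloopintorusstruc} with $M \leftarrow \MM(t_0)$, $M_{\textnormal{hyp}} \leftarrow \MM_{\thick}(t_0)$, $M_{\textnormal{Seif}} \leftarrow \MM_{\thin}(t_0)$, $S \leftarrow P_i \cup U_i(t_0)$, $P \leftarrow P_i$, and $f \leftarrow f$. For each surviving $i$ this produces a loop $\gamma_i \subset P_i$ which is non-contractible in $P_i$ (giving (e)), satisfies $\ell_{t_0}(\gamma_i) < \alpha\sqrt{t_0}$, and, using the second half of the same lemma together with the curvature-derivative bound on $P_i$ inherited from Proposition~\ref{Prop:firstcurvboundstep2}(b), has geodesic curvature at most $\Gamma_4 t_0^{-1}$; moreover $\gamma_i$ bounds a disk $D_i \subset \MM(t_0)$ with $\area_{t_0} D_i < A t_0$, giving (d). The remaining length and curvature bounds in (b) and (c) at times $t \in (t_0,t_\omega]$ follow by transporting $\gamma_i$ through the non-singular flow on $P_i$: since $|{\Rm}|$ is bounded by $K t^{-1}$ on $P_i$, the metric distortion over the interval $[t_0,t_\omega] \subset [t_0,L t_0]$ depends only on $L$, so by taking $\eta_0$ sufficiently small relative to $\alpha$ and $L$ we obtain the uniform bounds $\ell_t(\gamma_i) < \alpha\sqrt{t}$, $\diam_t \partial U_i(t) < \alpha\sqrt{t}$, and $\max \curv_t \gamma_i < \Gamma_4 t^{-1}$.

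I expect the main obstacle to be the backward extension step: the time-increment supplied by Proposition~\ref{Prop:firstcurvboundstep3} has length $\tau t_0$, independent of $L$, so chaining across $[t_0,t_\omega]$ requires careful bookkeeping to ensure that the labeling of the tori $U_i$ remains consistent at each re-application (using Proposition~\ref{Prop:GGpp}(a) and the incompressibility of the $U_i$ established in section~\ref{sec:3dtopology}), and that the precision parameter $h_i$ of the collar $P_i$ does not deteriorate along the chain. A delicate point is that the filling-surface extraction in the third step takes place at time $t_0$, so $U_i$ must first be extended all the way back to $t_0$ before $\gamma_i$ can be constructed; only after this is the space-time propagation of $\gamma_i$ inside $P_i \times [t_0,t_\omega]$ available to produce the uniform time-dependent bounds.
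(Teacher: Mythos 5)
Your skeleton matches the paper's: apply Proposition \ref{Prop:firstcurvboundstep3} at (a discretisation of) $[t_0,t_\omega]$, arrange for a collar that survives back to $t_0$, extract the loop and disk there via Lemma \ref{Lem:smallloopintorusstruc}, and propagate the bounds forward. But the step you flag as ``the main obstacle'' is in fact the entire content of the proof, and the two tools you propose for it do not do the job. Proposition \ref{Prop:GGpp}(a) asserts uniqueness of a certain subset within a \emph{single} decomposition of a \emph{single} time-slice; it gives no correspondence between the solid tori produced by applications of Proposition \ref{Prop:firstcurvboundstep3} at different times. Those tori genuinely need not match up: a torus with an imprecise collar at time $t_k$ may contain, strictly inside it, a much more precise collar produced at time $t_{k-1}$, the families can nest and shift, and a collar produced at one time may well run into a surgery region $U^{(k')}_{i'}$ produced at another. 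Likewise Proposition \ref{Prop:slowdiamgrowth} controls how small a region could have been at \emph{earlier} times given its diameter now; it does not bound $\diam_t\partial U_i(t)$ forward in time, which in the paper comes instead from the curvature bound $Kt^{-1}$ holding on the collar throughout the interval together with distance distortion.

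What is actually needed, and what the paper supplies in its Claims 1--3, is a combinatorial chain argument. One subdivides $[t_0,t_\omega]$ into $N$ steps of relative length $\tau$ (so $N$ depends only on $L$), subdivides each collar $P^{(k)}_i$ into $N+2$ segments, and introduces a relation $(k_1,i_1)\prec(k_2,i_2)$ recording that a segment of one collar meets a solid torus of another application without first meeting any intermediate one. The claims show that any chain starting from a sufficiently precise collar at time $t_\omega$ has strictly decreasing time-indices (ruling out $k_m>k_{m-1}$ requires a separate covering argument via Lemma \ref{Lem:coverMbysth}), has length at most $N+1$, and terminates in a segment $P^{(k^*)}_{i^*,1}$ that is disjoint from \emph{every} $U^{(k')}_{i'}(t_{k'})$ and hence, by (\ref{eq:summarycurvboundgoodpart}), is non-singular with $|{\Rm}|<Kt^{-1}$ on all of $[t_{-1},t_\omega]$; moreover its precision controls $h^{(N)}_i$ after at most $N$ iterations of a fixed degradation function $\varphi_1^*$, so choosing $\eta^\circ$ small enough yields a collar that is simultaneously $\alpha$-precise at every time and $(\td{L}_0)^{-1}$-precise at $t_0$. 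Without some version of this argument you cannot certify that any single collar is precise enough at $t_0$ for the minimal-disk extraction \emph{and} at $t_\omega$ for assertion (b), which is the whole point of the proposition. (Separately: in the final list $U_1,\ldots,U_m$ the tori with $h_i^{(N)}\geq\eta_4^*$ must be dropped and absorbed into the region where assertion (a) holds via Proposition \ref{Prop:firstcurvboundstep3}(c); your write-up leaves their fate ambiguous.)
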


\begin{proof}
Let $\tau$ be the constant from Proposition \ref{Prop:firstcurvboundstep3} and pick $N \in \IN$ minimal with the property that $(1+\tau)^N t_0 \geq t_\omega$.
Subdivide the time-interval $[t_0, t_\omega]$ by times $t_k = (1+\tau)^k t_0$ for $k = 0, 1, \ldots, N$.
For simplicity, we assume that $t_{N} = t_{\omega}$ (if not, we can decrease $\tau$ slightly or extend the flow past time $t_\omega$) and that $N \geq 3$.
Observe that $N$ depends on $L$.

In the following proof, we will apply Proposition \ref{Prop:firstcurvboundstep3} at the times $t_0 \leftarrow t_k$ with $A \leftarrow A$ and $\eta \leftarrow \eta^\circ$.
Here $\eta^\circ = \eta^\circ(L, A, \alpha) > 0$ is a constant which we are going to determine in the course of the proof.
It will be clear that $\eta^\circ$ can be chosen such that it only depends on $L$, $A$ and $\alpha$.
In order to be able to apply Proposition \ref{Prop:firstcurvboundstep3}, we assume $t_0 > T_3(\eta^\circ, A)$ and that $w(t) < w_3(\eta^\circ, A)$ for all $t \in [\frac14 t_0, t_\omega]$.
Then at each time $t_k$, Proposition \ref{Prop:firstcurvboundstep3} provides subsets $P_i^{(k)} \subset \MM(t_k)$, sub-Ricci flows with surgery $U_i^{(k)} \subset \MM$ on the time-interval $[t_{k-1}, t_k]$ as well as numbers $h_i^{(k)} > 0$ for $i = 1, \ldots, m^{(k)}$.
The $P_i^{(k)}$ are $h_i^{(k)}$-precise torus structures at scale $\sqrt{t_k}$ and time $t_k$ and using the universal constant $K$ from Proposition \ref{Prop:firstcurvboundstep3} we have for each $k \in \{ 0, \ldots, N \}$
\begin{equation} \label{eq:summarycurvboundgoodpart} 
| {\Rm} | < K t^{-1} \quad \text{on} \quad \big( \MM(t_k) \setminus \big( U_1^{(k)} (t_k) \cup \ldots \cup U_{m^{(k)}}^{(k)} (t_k) \big)\big) \times [t_{k-1}, t_k].
\end{equation}

It is not difficult to see that for all $k = 0, \ldots, N$ and $i = 1, \ldots, m^{(k)}$ we can divide $P_i^{(k)}$ into $N + 2$ approximately equally long torus structures as
\[ P_i^{(k)} = P_{i,1}^{(k)} \cup \ldots \cup P_{i, N+2}^{(k)} \]
such that if $h^{(k)}_i < \frac1{2(N+1)}$, then the $P_{i,j}^{(k)}$ are $2N h_i^{(k)}$-precise at scale $\sqrt{t_k}$ and time $t_k$ and in such a way that $P_{i,j}^{(k)}$ and $P_{i,j+1}^{(k)}$ are adjacent for $j < N + 2$ and $P_{i, N+2}^{(k)}$ is adjacent to $U_i^{(k)}(t_k)$.
Using this subdivision, we define the sub-Ricci flows with surgery $V^{(k)}_{i,1}, \ldots, V^{(k)}_{i, N+3} \subset \MM$ on the time-interval $[t_{k-1}, t_k]$ as follows:
$V^{(k)}_{i,j}$ is the extension of the subset
\[ P^{(k)}_{i, j} \cup \ldots \cup P^{(k)}_{i, N+2} \cup U^{(k)}_i (t_k) \subset \MM(t_k) \]
to the time-interval $[t_{k-1}, t_k]$.

\begin{Claim1}
There are a constant $\eta^*_1 = \eta^*_1 (L, A) > 0$ and a non-decreasing function $\varphi^*_1 = \varphi^*_{1, L, A} : (0, \infty) \to (0, \infty)$ which both depend on $L$ and $A$ such that $\varphi^*_1(h) < h$ for all $h > 0$ and such that the following holds: \\
Let $k_1, k_2 \in \{0, \ldots, N \}$, $k_1 \neq k_2$, $i_1 \in \{ 1, \ldots, m^{(k_1)} \}$, $i_2 \in \{ 1, \ldots, m^{(k_2)} \}$.
Assume that there is some $j \leq N+1$ such that
\begin{enumerate}[label=(\roman*)]
\item if $k_1 < k_2$: all points in $P_{i_1, j}^{(k_1)} \cup P_{i_1, j+1}^{(k_1)}$ survive until time $t_{k_2-1}$ and $P_{i_1, j}^{(k_1)} \cap  U_{i_2}^{(k_2)} (t_{k_2 - 1}) \neq \emptyset$.
Moreover, we have $(P_{i_1, j}^{(k_1)} \cup P_{i_1, j+1}^{(k_1)}) \cap U_{i'}^{(k')}(t_{k'}) = \emptyset$ for all $k'$ strictly between $k_1$ and $k_2$ and all  $i' \in \{ 1, \ldots, m^{(k')} \}$.
\item if $k_2 < k_1$: all points in $P_{i_1, j}^{(k_1)} \cup P_{i_1, j+1}^{(k_1)}$ survive until time $t_{k_2}$ and $P_{i_1, j}^{(k_1)} \cap  U_{i_2}^{(k_2)} (t_{k_2}) \neq \emptyset$.
Moreover, we have $(P_{i_1, j}^{(k_1)} \cup P_{i_1, j+1}^{(k_1)}) \cap U_{i'}^{(k')}(t_{k'}) = \emptyset$ for all $k'$ strictly between $k_1$ and $k_2$ and all  $i' \in \{ 1, \ldots, m^{(k')} \}$.
\end{enumerate}
Then if $h_{i_1}^{(k_1)} < \eta^*_1$, we can uniquely extend the sub-Ricci flow with surgery $V_{i_1, j+2}^{(k_1)}$ to the time-interval $[t_{k_1-1}, t_{k_2 - 1}]$ in case (i) and $[t_{k_2}, t_{k_1}]$ in case (ii).
These extensions satisfy $V_{i_1, j+2}^{(k_1)} (t_{k_2 - 1}) \subsetneq U_{i_2}^{(k_2)} (t_{k_2 - 1} )$ in case (i) and $V_{i_1, j+2}^{(k_1)} (t_{k_2}) \subsetneq U_{i_2}^{(k_2)} (t_{k_2} )$ in case (ii).

Lastly in case (ii), without imposing any restriction on $h^{(k_1)}_{i_1}$, we have $\varphi_1^* (h_{i_2}^{(k_2)}) \leq \max \{ h_{i_1}^{(k_1)}, \varphi_1^*(\eta^\circ) \}$.
\end{Claim1}

\begin{proof}
By (\ref{eq:summarycurvboundgoodpart}), we have $|{\Rm_t}| < K t^{-1}$ on $P_{i_1, j}^{(k_1)} \cup P_{i_1, j-1}^{(k_1)}$ for all $t \in [t_{k_1-1}, t_{k_2-1}]$ (in case (i)) or $t \in [t_{k_2}, t_{k_1}]$ (in case (ii)).
So $P_{i_1, j}^{(k_1)}$ and $P_{i_1, j+1}^{(k_1)}$ are still $\varphi'(h_{i_1}^{(k_1)})$-precise torus structures at scale $\sqrt{t_{k_2-1}}$ or $\sqrt{t_{k_2}}$ and time $t_{k_2-1}$ or $t_{k_2}$ (depending on whether we are in case (i) or (ii)) for some $\varphi' : (0, \infty) \to (0, \infty)$ with $\varphi'(h) \to 0$ as $h \to 0$ (observe that $\varphi'$ can be chosen depending only on $L$).
It is clear that we can extend the sub-Ricci flows with surgery $V_{i_1, j+2}^{(k_1)}$ to the desired time-interval

We now show that if $h_{i_1}^{(k_1)}$ is small enough, then we must have $V_{i_1, j+2}^{(k_1)} (t_{k_2 /- 1}) \subsetneq U_{i_2}^{(k_2)} (t_{k_2 /- 1} )$ (by ``$k_2/-1$'' we mean $k_2$ in case (i) and $k_2 -1$ in case (ii)):
By the previous conclusion and by assuming $h_{i_1}^{(k_1)}$ to be small enough, we can assume that at time $t_{k_2 /- 1}$, the set $P^{(k_1)}_{i_1, j}$ is far enough away from $V^{(k_1)}_{i_1, j+2} (t_{k_2 /- 1})$ to ensure that $\partial U^{(k_2)}_{i_2}(t_{k_2 /- 1})$ cannot intersect both $P^{(k_1)}_{i_1, j}$ and $V^{(k_1)}_{i_1, j+2}(t_{k_2 /- 1})$.
Consider first the case in which $V^{(k_1)}_{i_1, j+2}(t_{k_2 /- 1}) \cap \partial U^{(k_2)}_{i_2} (t_{k_2 /- 1}) = \emptyset$.
Then either $V_{i_1, j+2}^{(k_1)} (t_{k_2 /- 1}) \subsetneq U_{i_2}^{(k_2)} (t_{k_2 /- 1} )$ and we are done or $V_{i_1, j+2}^{(k_1)} (t_{k_2 /- 1}) \cap U_{i_2}^{(k_2)} (t_{k_2 /- 1} )= \emptyset$.
However, in the latter case we conclude---assuming $h_{i_1}^{(k_1)}$ to be small---that a cross-sectional torus of $P^{(k_1)}_{i_1, j}$ is contained in $P^{(k_2)}_{i_2} \cup U^{(k_2)}_{i_2} (t_{k_2 /-1})$ and hence $P^{(k_1)}_{i_1, j} \cup P^{(k_1)}_{i_1, j+1} \cup V^{(k_1)}_{i_1}(t_{k_2/-1})$ and $P^{(k_2)}_{i_2} \cup U^{(k_2)}_{i_2}(t_{k_2 /-1 })$ cover a component of $\MM(t_{k_2 /- 1})$.
This however contradicts Lemma \ref{Lem:coverMbysth} and assumption (iii).
It remains to consider the case in which $P^{(k_1)}_{i_1, j} \cap \partial U^{(k_2)}_{i_2} (t_{k_2 /- 1}) = \emptyset$, but $V^{(k_1)}_{i_1, j+2} (t_{k_2/-1})\cap \partial U^{(k_2)}_{i_2} (t_{k_2/-1}) \neq \emptyset$.
This implies together with the assumption in the claim that $P^{(k_1)}_{i_1, j} \subset U^{(k_2)}_{i_2} (t_{k_2 /-1})$ and so a component of $\MM (t_{k_2/-1})$ is covered by $P^{(k_1)}_{i_1, j} \cup P^{(k_1)}_{i_1, j+1} \cup V^{(k_1)}_{i_1}(t_{k_2/-1})$ and $U^{(k_2)}_{i_2}(t_{k_2 /-1})$, contradicting again Lemma \ref{Lem:coverMbysth}.
We have hence established the first part of the claim.

For the second part of the claim we can choose $\varphi^*_1$ such that $\varphi_1^*(\eta) < \eta_1^*$ for all $\eta > 0$, so we only need to consider the case $h^{(k_1)}_{i_1} < \eta_1^*$.
Moreover, we can assume that $h^{(k_2)}_{i_2} > \eta^\circ$, because otherwise the statement is trivial.
Observe now that since $U_{i_2}^{(k_2)} (t_{k_2})$ intersects both $P_{i_1, j}^{(k_1)}$ and $V_{i_1, j+2}^{(k_1)} (t_{k_2})$ and the boundary components of $P_{i_1, j+1}^{(k_1)}$, two sets which have time-$t_{k_2}$ distance of at least $(\varphi'(h_{i_1}^{(k_1)}))^{-1} \sqrt{t_{k_2}}$, we find using Proposition \ref{Prop:firstcurvboundstep3}(c)
\[ \big( \varphi'(h_{i_1}^{(k_1)}) \big)^{-1} \sqrt{ t_{k_2}} \leq \diam_{t_{k_2}} U_{i_2}^{(k_2)} (t_{k_2})
 < D(h_{i_2}^{(k_2)}) \sqrt{t_{k_2}}. \]
This establishes the second part.
\end{proof}

Consider now the index set $I = \{ (k, i) \; : \; 0 \leq k \leq N, 1 \leq i \leq m^{(k)} \}$.
We will write $(k_1, i_1) \prec (k_2, i_2)$ whenever we are in the situation of Claim 1, i.e. if there is a $j  \leq N + 1$ such case (i) or (ii) of this claim holds.

\begin{Claim2}
There are a constant $\eta_2^* = \eta_2^*(L,A)  > 0$ and a monotonically non-decreasing function $\varphi_2^* = \varphi_{2, L, A}^* : (0, \infty) \to (0, \infty)$ which both depend on $L$ and $A$ such that if $\eta^\circ < \eta_2^*$, then the following holds: \\ Whenever we have a chain
\[ (k_1, i_1) \prec (k_2, i_2) \prec \ldots \prec (k_m, i_m) \]
such that $k_2, \ldots, k_m \leq k_1$ and $h_{i_1}^{(k_1)} < \eta_3^*$, then $m \leq N + 1$ and $k_1 > k_2 > \ldots > k_m$.
Moreover, there are indices $j_1, \ldots, j_{m-1} \in \{ 1, \ldots, N+1 \}$ such that the sub-Ricci flow with surgery $V_{i_1, j_1+2}^{(k_1)}$ can be extended to the time-interval $[t_{k_2}, t_{k_1}]$, $V_{i_2, j_2+2}^{(k_2)}$ can be extended to the time-interval $[t_{k_3}, t_{k_2}]$, \ldots and such that
\begin{multline*}
 V_{i_1, j_1+2}^{(k_1)} (t_2) \subsetneq U_{i_2}^{(k_2)} (t_2), \qquad V_{i_2, j_2+2}^{(k_2)} (t_3) \subsetneq U_{i_3}^{(k_3)} (t_3),  \qquad \ldots, \\
V_{i_{m-1}, j_{m-1}+2}^{(k_{m-1})} (t_m) \subsetneq U_{i_m}^{(k_m)} (t_m)
\end{multline*}
Lastly, $\varphi_2^* (h_{i_m}^{(k_m)}) \leq \max \{ h_{i_1}^{(k_1)}, \eta^\circ \}$.
\end{Claim2}
\begin{proof}
Set $\eta_2^* = {\varphi_1^*}^{(N)}( \eta_1^* )$ and $\varphi_2^* (h) = {\varphi_1^*}^{(N)} (h)$ where $\eta_1^*, \varphi_1^*$ are taken from Claim 1 and the upper index in parentheses indicates multiple application.

Without loss of generality, we can assume that $m \leq N + 2$, because otherwise we can shorten the chain to size $N + 2$.
We will first show the claim without the last line by induction on $m$.
Additionally, we show that
\begin{equation} \label{eq:extraindasspt}
 h_{i_m}^{(k_m)} \leq  {\varphi_1^*}^{(N - m +1 )} (\eta_1^*),
\end{equation}
if $m \leq N + 1$.
For $m=1$ there is nothing to show.
Assume that the induction hypothesis holds for $m-1$, i.e. that we can extend the sub-Ricci flows with surgery $V_{i_1, j_1+2}^{(k_1)}, \ldots, V_{i_{m-2}, j_{m-2}+2}^{(k_1)}$ to the appropriate time-intervals so that they satisfy the inclusion property above, that $k_1 > k_2 > \ldots > k_{m-1}$ and that $h_{i_m}^{(k_{m-1})}$ satisfies inequality (\ref{eq:extraindasspt}) above with $m$ replaced by $m-1$.

So $h_{i_{m-1}}^{(k_{m-1})} \leq {\varphi^*_1}^{(N - m +2 )} (\eta_1^*) < \eta_1^*$ and we conclude by Claim 1 that there is a $j_{m-1}$ such that we can extend the sub-Ricci flow with surgery $V^{(k_{m-1})}_{i_{m-1}, j_{m-1} +2}$ to the time-interval $[t_{k_m}, t_{k_{m-1}}]$ or $[t_{k_{m-1} - 1}, t_{k_m - 1}]$ depending on whether $k_m < k_{m-1}$ or $k_m > k_{m-1}$ and we have
\begin{equation} \label{eq:VkminUkm}
V^{(k_{m-1})}_{i_{m-1}, j_{m-1} + 2} (t_{k_m /- 1}) \subsetneq U^{(k_m)}_{i_m}( t_{k_m /-1}).
\end{equation}

We now show in the next two paragraphs that we must have $k_m < k_{m-1}$:
Assume that $k_m > k_{m-1}$.
Then there is an $l = 1, \ldots, m-2$ such that $k_{l+1} < k_m \leq k_l$.
We first show that
\begin{equation} \label{eq:Ulp1inVkm}
U^{(k_{l+1})}_{i_{l+1}} (t_{k_{l+1}}) \subset V^{(k_{m-1})}_{i_{m-1}, j_{m-1} + 2} (t_{k_{l+1}}).
\end{equation}
Choose $l+1 \leq l^* \leq m-1$ minimal with the property that $U^{(k_{l^*})}_{i_{l^*}} (t_{k_{l^*}})\subset V^{(k_{m-1})}_{i_{m-1}, j_{m-1} + 2} (t_{k_{l^*}})$.
This is possible, since the inclusion is true for $l^* = m-1$.
Then by the induction assumption
\[ V^{(k_{l^*-1})}_{i_{l^*-1}, j_{l^*-1} + 2} (t_{k_{l^*}}) \subset U^{(k_{l^*})}_{i_{l^*}} (t_{k_{l^*}})\subset V^{(k_{m-1})}_{i_{m-1}, j_{m-1} + 2} (t_{k_{l^*}}) \]
which implies that if $l^* > l+2$, then
\[ U^{(k_{l^*-1})}_{i_{l^*-1}} (t_{k_{l^*-1}}) \subset V^{(k_{l^*-1})}_{i_{l^*-1}, j_{l^*-1} + 2} (t_{k_{l^*-1}}) \subset V^{(k_{m-1})}_{i_{m-1}, j_{m-1} + 2} (t_{k_{l^*-1}}) \]
in contradiction to the choice of $l^*$.
So $l^* = l+2$ and (\ref{eq:Ulp1inVkm}) holds.
Furthermore, we conclude that $k_m < k_l$, because otherwise by the same argument as before, but with $l+1$ replaced by $l$, we had
\[ U^{(k_{l})}_{i_{l}} (t_{k_{l}-1}) \subset V^{(k_{m-1})}_{i_{m-1}, j_{m-1} + 2} (t_{k_{l}-1}) \subsetneq U^{(k_m)}_{i_m}( t_{k_l - 1}). \]

Since $(k_l, i_l) \prec (k_{l+1}, i_{l+1})$, we know that that the flow is non-singular on $( P^{(k_l)}_{i_l, j_l} \cup P^{(k_l)}_{i_l, j_l+1} ) \times [t_{k_{l+1}}, t_{k_l}]$ and that $P^{(k_l)}_{i_l, j_l} \cap U^{(k_{l+1})}_{i_{l+1}} (t_{k_{l+1}}) \neq \emptyset$.
So by (\ref{eq:Ulp1inVkm}), we have $P^{(k_l)}_{i_l, j_l} \cap V^{(k_{m-1})}_{i_{m-1}, j_{m-1} + 2} (t_{k_{l+1}}) \neq \emptyset$.
But this implies using (\ref{eq:VkminUkm}) that $P^{(k_l)}_{i_l, j_l} \cap U^{(k_m)}_{i_m} (t_{k_m }) \neq \emptyset$ in contradiction to the definition of the relation $(k_l, i_l) \prec (k_{l+1}, i_{l+1})$.
So, indeed we have $k_m < k_{m-1}$.
We can thus apply Claim 1 to conclude from (\ref{eq:extraindasspt}) for $m-1$ that
\begin{multline*}
 \varphi_1^* ( h_{i_m}^{(k_m)} ) \leq \max \big\{ {\varphi_1^*}^{(N - m + 2 )} (\eta_1^*), \varphi_1^*(\eta^\circ) \big\} \\ \leq \max \big\{ {\varphi_1^*}^{(N - m + 2 )} (\eta_1^*), \varphi_1^*(\eta_2^*) \big\} 
 \leq  {\varphi_1^*}^{(N - m + 2 )} (\eta_1^*).
\end{multline*}
This implies (\ref{eq:extraindasspt}) for $m$, by the monotonicity of $\varphi_1^*$ and finishes the induction.

For the last line in the claim, we can use Claim 1 to conclude
\begin{alignat*}{1}
\varphi_2^*( h^{(k_m)}_{i_m}) &= {\varphi_1^*}^{(N)}( h^{(k_m)}_{i_m} ) \leq {\varphi_1^*}^{(N-1)} \big( \max \{ h^{(k_{m-1})}_{i_{m-1}} , \varphi_1^*(\eta^\circ) \} \big) \\
&\leq \max \big\{ {\eta_2^*}^{(N-1)} (  h^{(k_{m-1})}_{i_{m-1}} ), \eta^\circ \big\}
\\
& \qquad \leq \max \big\{ {\varphi_1^*}^{(N-2)} \big(  \max \{ h^{(k_{m-2})}_{i_{m-2}}, \varphi_1^*(\eta^\circ) \} \big), \eta^\circ \big\} \\
 &\leq \max \big\{ {\varphi_1^*}^{(N-2)} ( h^{(k_{m-2})}_{i_{m-2}} ), \eta^\circ \big\} 
 \\ &\leq \ldots \\ 
 &\leq \max \big\{ {\varphi_1^*}^{(N-m+1)} ( h^{(k_1)}_{i_1} ), \eta^\circ \big\}
 \leq  \max \big\{ h^{(k_1)}_{i_1} , \eta^\circ \big\} \\[-2.1\baselineskip]
\end{alignat*}
\end{proof}

\begin{Claim3}
Assume that $\eta^\circ < \eta_2^*$.
If there are indices $(k, i) \in I$ such that $h_i^{(k)} < \eta_2^*$, then there are indices $(k^*, i^*) \in I$ with $k^* \leq k$ such that the following holds:
The set $P^{(k^*)}_{i^*, 1}$ is non-singular on $[t_{-1}, t_k]$ and we can extend $V^{(k^*)}_{i^*, 2}$ to a sub-Ricci flow with surgery on the time-interval $[t_{-1}, t_k]$.
In particular, $P^{(k^*)}_{i^*, 1} \cap U^{(k')}_{i'} (t_{k'}) = \emptyset$ for all $(k', i') \in I$ with $k' \leq k$ and we have $U^{(k)}_i (t_k) \subsetneq V^{(k^*)}_{i^*, 2} (t_k)$.
Lastly, $\varphi_2^* ( h_{i^*}^{(k^*)} ) \leq \max \{ h_i^{(k)}, \eta^\circ \}$.
\end{Claim3}
\begin{proof}
Consider a maximal chain as in Claim 2 with $(k_1, i_1) = (k, i)$ and $k_2, \ldots, k_m \linebreak[2] \leq k_1$ and set $(k^*, i^*) = (k_m, i_m)$.
By Claim 2, we have $k_1 > k_2 > \ldots > k_m$ and we obtain indices $j_1, \ldots, j_{m-1} \in \{ 1, \ldots, N+1 \}$ together with extensions of the flows $V^{(k_1)}_{i_1, j_1 +2}, \ldots, V^{(k_{m-1})}_{i_{m-1}, j_{m-1} + 2}$ which satisfy the inclusion property mentioned above.
Moreover, $\varphi^*_2( h^{(k^*)}_{i^*} ) \leq \max \{ h_i^{(k)}, \eta^\circ \}$ which establishes the last part of the claim.

Assume now that $P^{(k^*)}_{i^*, 1}$ is singular on the time-interval $[t_{-1}, t_k]$.
Then we can find some $k' \leq k$ such that $P^{(k^*)}_{i^*, 1}$ is non-singular on $[t_{k^*}, t_{k'-1}]$ (if $k' > k^*$) or on $[t_{k'}, t_{k^*}]$ (if $k' < k^*$) and there is an index $i' \in \{ 1, \ldots, m^{(k')} \}$ such that $P^{(k^*)}_{i^*, 1} \cap U^{(k')}_{i'} (t_{k' - 1}) \neq \emptyset$ (if $k' > k^*$) or $P^{(k^*)}_{i^*, 1} \cap U^{(k')}_{i'} (t_{k'}) \neq \emptyset$ (if $k' < k^*$).
Then consider all triples $(k'', i'', j^*)$ of indices with $(k'', i'') \in I$, $k'' \leq k$ and $j^* \in \{ 1, \ldots, N + 2 \}$ for which $P^{(k^*)}_{i^*, j^*}$ is non-singular on $[t_{k^*}, t_{k''-1}]$ (if $k'' > k^*$) or on $[t_{k''}, t_{k^*}]$ (if $k'' < k^*$) and
\[ P_{i^*, j^*}^{(k^*)} \cap U_{i''}^{(k'')} (t_{k''-1})  \not= \emptyset \quad \text{if $k'' > k^*$} \quad \text{or} \quad P_{i^*, j^*}^{(k^*)} \cap U_{i''}^{(k'')} (t_{k''})  \not= \emptyset \quad \text{if $k'' < k^*$}. \]
(Note that we have exchanged the $1$ for $j^*$. So $(k', i', 1)$ is one of these triples.)
We can assume that we have picked $(k'', i'', j^*)$ amongst all these triples of indices, such that $j^* + |k^* - k''|$ is minimal and amongst such triples of indices for which this number is the same, we can assume that $| k^* - k'' |$ is minimal.

Now observe that by maximality of $(k_m, i_m)$ with respect to $\prec$ we must have $(k^*, i^*) \not\prec (k'', i'')$.
So either $j^* = N + 2$ or $j^* \leq N+1$ and there are indices $(k''', i''') \in I$ with $k'''$ strictly between $k^*$ and $k''$ such that the following holds:
The set $P_{i^*, j^* + 1}^{(k^*)}$ is non-singular on the time-interval $[t_{k^*}, t_{k'''-1}]$ (if $k'' > k^*$) or on $[t_{k'''}, t_{k^*}]$ (if $k'' < k^*$) and we have $(P_{i^*, j^*}^{(k^*)} \cup P_{i^*, j^* + 1}^{(k^*)} ) \cap U_{i'''}^{(k''')} (t_{k''' - 1}) \neq \emptyset$ (if $k'' > k^*$) or $(P_{i^*, j^*}^{(k^*)} \cup P_{i^*, j^* + 1}^{(k^*)} ) \cap U_{i'''}^{(k''')} (t_{k'''}) \neq \emptyset$ (if $k'' < k^*$).
But the latter possibility implies that we could replace the triple $(k'', i'', j^*)$ by either $(k''', i''', j^*)$ or $(k''', i''', j^* + 1)$, contradicting its minimal choice.
So $j^* = N + 2$.
However, the triple $(k', i', 1)$ would make $j^* + |k^* - k'|$ smaller than the triple $(k'', i'', N+2)$.
This yields the desired contradiction and shows that $P^{(k^*)}_{i^*, 1}$ is non-singular on the time-interval $[t_{-1}, t_k]$ as well as the fact that $P^{(k^*)}_{i^*, 1} \cap U^{(k')}_{i'} (t_{k'}) = \emptyset$ for all $(k', i') \in I$ with $k' \leq k$.
Moreover, it is clear that the sub-Ricci flow with surgery $V^{(k^*)}_{i^*, 2}$ can be extended to the time-interval $[t_{-1}, t_k]$.

We finally show by induction that $U^{(k_l)}_{i_l} (t_{k_l}) \subset V^{(k^*)}_{i^*, 2} (t_l)$ for all $l = m, \ldots, 1$.
This implies the claim for $l = 1$.
The statement is clear for $l = m$, so assume that $l < m$ and that it holds for $l + 1$.
By Claim 2 we have $V^{(k_l)}_{i_l, j_l + 2} (t_{l+1}) \subsetneq U^{(k_{l+1})}_{i_{l+1}} (t_{k_{l+1}}) \subset V^{(k^*)}_{i^*, 2} (t_{l+1})$.
So $U^{(k_l)}_{i_l} (t_{k_l}) \subset V^{(k_l)}_{i_l, j_l + 2} (t_{l}) \subsetneq V^{(k^*)}_{i^*, 2} (t_{l})$, finishing the induction.
\end{proof}

Assume in the following that $\eta^\circ < \eta_2^*$.
We now apply Claim 3 for $k = N$.
So for every $i \in \{ 1, \ldots, m^{(N)} \}$ for which $h_i^{(N)} < \eta_2^*$, we can find indices $(k^*_i, i^*_i) \in I$ such that the following holds: $P^{(k^*_i)}_{i^*_i, 1}$ is non-singular on the time-interval $[t_{-1}, t_\omega]$ and $P^{(k^*_i)}_{i^*_i, 1} \cap U^{(k')}_{i'} (t_{k'}) = \emptyset$ for all indices $(k', i') \in I$.
Moreover, we can extend $V^{(k^*_i)}_{i^*_i, 2}$ to the time-interval $[t_0, t_\omega]$ and we have $U^{(k)}_i (t_\omega) \subsetneq V^{(k^*_i)}_{i^*_i, 2} (t_\omega)$.
Observe that this implies that $| {\Rm} | < K t^{-1}$ on $P^{(k^*_i)}_{i^*_i, 1}$ for all $t \in [t_{-1}, t_\omega]$ and by Shi's estimates there is a universal $K_1 > K$ such that $ |{\nabla \Rm}| < K_1 t^{-3/2}$ on $P^{(k^*_i)}_{i^*_i, 1}$, but at time-$t$ distance of at least $\sqrt{t}$ away from its boundary for all $t \in [t_0, t_\omega]$.
So as in the proof of Claim 1, there is a function $\varphi'' : (0, \infty) \to (0, \infty)$ with $\varphi''(h) \to 0$ as $h \to 0$ such that $P^{(k^*_i)}_{i^*_i, 1}$ is a $\varphi''( h^{(k^*_i)}_{i^*_i} )$-precise torus structure at scale $\sqrt{t}$ at every time $t \in [t_{-1}, t_\omega]$.
We remark, that $\varphi''$ can be chosen depending only on $L$ and $A$.
Using Claim 3, it is then easy to see that there is a constant $\eta_4^* = \eta_4^*(L,A, \alpha) > 0$ with $\eta_4^* < \eta_2^*$ such that assuming $\eta^\circ < \eta_4^*$, the following holds: 
For all $i \in \{ 1, \ldots, m^{(N)} \}$ with $h_i^{(N)} < \eta_4^*$, the set $P^{(k^*_i)}_{i^*_i, 1}$ is a $\min \{ \alpha, \frac1{10} \}$-precise torus structure at scale $\sqrt{t}$ and at every time $t \in [t_0, t_\omega]$ and at time $t_0$, the torus structure $P^{(k^*_i)}_{i^*_i, 1}$ is even $(\td{L}_0 (\min \{ e^{-LK} \alpha, \td{\alpha}_0 (K_1) \}, A) + 10)^{-1}$-precise.
Here $\td{L}_0$ and $\td{\alpha}_0$ are the constants from Lemma \ref{Lem:smallloopintorusstruc}.

We finally choose $\eta^\circ = \eta^\circ (L, A, \alpha) > 0$ such that $\eta^\circ < \eta_4^*$.
For each $i \in \{ 1, \ldots, m^{(N)} \}$ with $h_i^{(N)} < \eta_4^*$ we set $U'_i = V^{(k^*_i)}_{i^*_i, 2}$ and $P'_i = P^{(k^*_i)}_{i^*_i, 1}$.
We now pick a subcollection of the $U'_1, \ldots, U'_{m^{(N)}}$ which are pairwise disjoint at time $t_\omega$.
If $U'_{i_1} (t_\omega) \cap U'_{i_2} (t_\omega) \neq \emptyset$, then by the fact that $P'_{i_1}$ and $P'_{i_2}$ are $\frac1{10}$-precise and Lemma \ref{Lem:coverMbysth} we have $U'_{i_1} (t_\omega) \subset P'_{i_2} \cup U'_{i_2} (t_\omega)$ or $U'_{i_2} (t_\omega) \subset P'_{i_1} \cup U'_{i_1} (t_\omega)$.
In the first case we remove the index $i_1$ from the list and in the second case, we remove $i_2$ (if both cases hold we remove either $i_1$ or $i_2$).
We can repeat this process until we arrive at a collection $U_1, \ldots, U_m$ whose time-$t_\omega$ slices are pairwise disjoint.
This implies that the time-$t$ slices are pairwise disjoint as well for any $t \in [t_0, t_\omega]$.
Let $P_1, \ldots, P_m$ be the corresponding collection of torus structures.
Observe that at each step of this process, the set $\bigcup_i U'_i (t_\omega) \setminus \bigcup_i P'_i$ does not decrease.
Thus $U_1(t_\omega) \cup \ldots \cup U_m (t_\omega) \supset \bigcup_i U'_i (t_\omega) \setminus \bigcup_i  P'_i$.
We conclude that every point of $\MM(t_\omega) \setminus ( U_1(t_\omega) \cup \ldots \cup U_m (t_\omega))$, which does not belong to $\MM(t_\omega) \setminus ( U^{(N)}_1 (t_\omega) \cup \ldots \cup U^{(m)}_{m^{(N)}} (t_\omega))$, is either contained in some $U^{(N)}_j(t_\omega)$ for which $h_j^{(N)} \geq \eta_4^*$ or it is contained in some $U'_j (t_\omega)$ in which case it must belong to $\bigcup_i P'_i$.
So assertion (a) holds for $K_4 = \max \{ K, K'(\eta^*_4) \}$ (see Proposition \ref{Prop:firstcurvboundstep3}(c)).
The second part of assertion (b) holds by the choice of $\eta^*_4$.

It remains to construct the loops $\gamma_i \subset P_i$ and the disks $D_i$ such that assertions (b)--(e) hold.
Fix $i = 1, \ldots, m$.
By the choice of $\eta_4^*$ we find a torus structure $P^*_i \subset P_i$ which is $(\td{L}_0 ( \min \{ e^{-LK} \alpha, \td{\alpha}_0 (K_1) \}, A ) )^{-1}$-precise at scale $\sqrt{t_0}$ and time $t_0$ and which has time-$t_0$ distance of at least $\sqrt{t_0}$ from the boundary of $P_i$ in such a way that the pair $(P_i, P^*_i)$ is diffeomorphic to $(T^2 \times [-2,2], T^2 \times [-1,1])$.
So $|{\Rm}| < K_1 t_0^{-1}$ and $|{\nabla \Rm}| < K_1 t_0^{-3/2}$ on $P^*_i$.
We can hence apply Lemma \ref{Lem:smallloopintorusstruc} with $\alpha \leftarrow \min \{ e^{-LK} \alpha, \td{\alpha}_0 (K_1) \}$, $A \leftarrow A$, $K \leftarrow K_1$, $M = \MM(t_0)$, $M_{\textnormal{hyp}} \leftarrow \MM_{\thick} (t_0)$, $M_{\textnormal{Seif}} \leftarrow \MM_{\thin} (t_0)$.
This gives us a loop $\gamma_i \subset P^*_i \subset P_i$ of length $\ell_{t_0} (\gamma_i) < e^{-LK} \alpha$ which is non-contractible in $P^*_i$, which spans a disk $D_i \subset \MM(t_0)$ of time-$t_0$ area $\area_{t_0} D_i < A t_0$ and whose geodesic curvatures at time $t_0$ are bounded by $\td\Gamma(K_1)$.
So assertions (d) and (e) hold and the first part of assertion (b) follows by a distance distortion estimate.
For assertion (d) observe that $|{\Rm}| < K t^{-1}$ and $|{\nabla \Rm}| < K_1 t^{-3/2}$ on $\gamma_i$ for all $t \in [t_0, t_\omega]$.
\end{proof}

\subsection{The final argument}
We will finally show that the general picture presented in Proposition \ref{Prop:structontimeinterval} cannot persist for a long time, i.e. that for an appropriate choice of the parameters $L$, $A$ and $\alpha$ we must have $m = 0$.
This will imply a curvature bound on the final time-slice $\MM (t_\omega)$ and hence establish Theorem \ref{Thm:MainTheorem}.

As a preparation, we first prove that after some large time, all time-slices are irreducible and all surgeries are trivial (see also \cite[Proposition 18.9]{MTRicciflow}).
\begin{Proposition} \label{Prop:irreducibleafterfinitetime}
Let $\MM$ be a Ricci flow with surgery and precise cutoff whose time-slices are closed manifolds, defined on the time-interval $[T, \infty)$ $(T\geq0)$.
Then there is some $T_1 \in [T, \infty)$ such that all surgeries on $[T_1, \infty)$ are trivial and we can find a sub-Ricci flow with surgery $\MM' \subset \MM$ on the time-interval $[T, \infty)$ (whose time-slices have no boundary) such that:
For all $t \in [T, \infty)$ the complement $\MM(t) \setminus \MM'(t)$ consists of a disjoint union of spheres and for all $t \in [T_1, \infty)$ all components of $\MM'(t)$ are irreducible and not diffeomorphic to spherical space forms.

Moreover, if there is a time $T^* \geq T_1$ such that if $\MM'$ is non-singular on $[T^*, \infty)$, then there is also a time $T^{**} \geq T^*$ such that $\MM$ is non-singular on $[T^{**}, \infty)$ and $\MM'(t) = \MM(t)$ for all $t \in [T^{**}, \infty)$.
\end{Proposition}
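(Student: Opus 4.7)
The plan is to construct $\MM'$ by ``peeling off'' the components whose entire history has been $S^3$. Since surgeries may split or discard components but never merge them, each component $\CC$ of $\MM(t)$ has a unique ancestor $\CC_0$ among the components of $\MM(T)$, obtained by tracing backward through the diffeomorphisms $\Phi^i$. Let $\mathcal{R}(t)\subset \MM(t)$ be the union of those components whose ancestor $\CC_0$ is diffeomorphic to $S^3$. Because $S^3$ is irreducible, every component obtained from an $S^3$ by surgery (cap addition and possible splitting along a separating sphere) is again $S^3$ or is discarded entirely as one of the admissible components in Definition \ref{Def:precisecutoff}(6); thus $\mathcal{R}(t)$ is a disjoint union of $S^3$'s at every time. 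Moreover the partition $\MM(t) = \MM'(t)\sqcup \mathcal{R}(t)$ is preserved by the surgery maps $\Phi^i$, because the ancestor relation is preserved. Setting $\MM' = \MM\setminus\mathcal{R}$ therefore produces a sub-Ricci flow with surgery (its boundary in $\MM(t)$ is empty, verifying the remaining condition in Definition \ref{Def:subRF}), and the first assertion of the Proposition---that the complement is a disjoint union of spheres---follows by construction.

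To produce $T_1$ I would combine a complexity argument with finite-time extinction. First, each non-trivial surgery in $\MM'$ either splits a component along an essential $2$-sphere (strictly decreasing the Kneser--Milnor prime count of that component) or converts a component of the form $S^1\times S^2$ or $\IR P^3\#\IR P^3$ into something simpler before discarding a summand; since $\sum_\CC (\text{number of prime factors of }\CC)$ is finite for the initial slice and this quantity is non-increasing and strictly decreases under non-trivial surgery, only finitely many non-trivial surgeries can occur in $\MM'$, say all of them before time $T'_1$. For $t\geq T'_1$, the diffeomorphism type of each surviving component of $\MM'$ is frozen except for components being discarded at subsequent (trivial) surgeries. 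Second, by Perelman's finite-time extinction theorem (and its refinement by Colding--Minicozzi), every closed orientable $3$-manifold whose prime decomposition contains no aspherical factor becomes extinct in finite time under Ricci flow with surgery; Lemma \ref{Lem:evolsphere} gives this in the reducible case via the evolution of minimal spheres, and for the remaining prime non-aspherical components (spherical space forms) the same sweepout methods using $\pi_3$, or Hamilton's classical positive-Ricci convergence applied after enough pinching, yield extinction. Since $\MM'(T'_1)$ has only finitely many prime non-aspherical components, there is a finite $T_1\geq T'_1$ after which every surviving component of $\MM'(t)$ is prime and aspherical, hence irreducible and not a spherical space form.

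For the moreover statement, note that $\mathcal{R}$ is itself a sub-Ricci flow with surgery every slice of which is a disjoint union of $S^3$'s, and $\mathcal{R}(T)$ has only finitely many components; applying the same finite-time extinction result to each produces a time $T^{\dagger}<\infty$ with $\mathcal{R}(t)=\emptyset$ for all $t\geq T^{\dagger}$. Taking $T^{**}=\max\{T^*,T^{\dagger}\}$ then forces $\MM(t)=\MM'(t)$ on $[T^{**},\infty)$, so non-singularity of $\MM'$ beyond $T^*$ upgrades to non-singularity of $\MM$ beyond $T^{**}$. The principal technical obstacle I expect is the finite-time extinction step: one must be careful that the argument applies uniformly across the possibly infinite sequence of trivial surgeries accumulating in $[T'_1,T_1]$, and that it covers all spherical space forms, including those that do not initially carry a metric of positive Ricci curvature. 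This is precisely where one leans on Perelman's and Colding--Minicozzi's min-max constructions, and is the only non-elementary input into the proof.
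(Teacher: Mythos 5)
Your construction of $\MM'$ (by ancestry rather than by the time-$t$ topology, a cosmetic difference from the paper's choice of ``all non-sphere components of $\MM(t)$'') and your finiteness-of-topology-changes argument follow the paper's route. The genuine gap is in the irreducibility step. You conclude that after some $T_1$ every surviving component of $\MM'$ is ``prime and aspherical'' from finite-time extinction of the finitely many non-aspherical prime pieces. This does not follow: a component such as $T^3 \# T^3$ is reducible, is built entirely from aspherical primes, and is untouched by any extinction theorem --- it does not go extinct, and your complexity count does not force it to split, since that count only shows non-trivial surgeries are \emph{finitely many}, not that one must occur. What rules out such a component is Lemma \ref{Lem:evolsphere} used in a different role than the one you assign it: it is not an extinction statement but the bound $T_2 < (1+\frac{1}{16\pi}T_1^{-1}A(T_1))^4\, T_1$ on how long a time-slice with $\pi_2\neq 0$ can persist under purely trivial surgeries. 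Once your complexity argument has frozen the topology of $\MM'$ past some $T_1'$, a component that is still reducible there has $\pi_2\neq 0$ (Proposition \ref{Prop:pi2irred}) for all later times with only trivial surgeries, and letting $T_2\to\infty$ in the lemma gives a contradiction. This is exactly the paper's argument; your write-up cites the right lemma but with the wrong hypothesis (``no aspherical factor'') and the wrong conclusion (``extinction''), so the reducible-but-aspherical case is left open.

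Two smaller points. First, $\sum_\CC(\text{number of prime factors of }\CC)$ does not strictly decrease under every non-trivial surgery --- splitting $P\# Q$ along a separating essential sphere into $P\sqcup Q$ preserves it --- so you need a slightly refined complexity (e.g.\ additionally tracking the number of non-sphere components, which is bounded by the prime count) to conclude finiteness. Second, ``all surgeries on $[T_1,\infty)$ are trivial'' is asserted for $\MM$, not only for $\MM'$: splitting an $S^3$-component along a neck is a non-trivial surgery in the sense of Definition \ref{Def:precisecutoff}(5), so the extinction of the sphere part $\mathcal{R}$ has to be invoked already for the first assertion, not only in the ``moreover'' clause where you place it.
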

\begin{proof}
Let $\MM = (( T^i )_i, (M^i \times I^i), (g^i_t)_i, (\Omega_i)_i, (U^i_\pm)_i)$ (see Definition \ref{Def:RFsurg}).
By Definition \ref{Def:precisecutoff}, for any surgery time $T^i$, the topological manifold $M^i$ can be obtained from $M^{i+1}$ by possibly adding spherical space forms or copies of $S^1 \times S^2$ to the components of $\MM(t_2)$ and then performing connected sums between some of those components.
So every component of $M^{i+1}$, which is not diffeomorphic to a sphere, forms the building block of a component of $M^i$ which is also not diffeomorphic to a sphere.
It is then clear that we can choose $\MM' \subset \MM$ such that for every $t \in [T, \infty)$ the set $\MM'(t)$ is the union of all components of $\MM(t)$ which are not diffeomorphic to spheres.

By the existence and uniqueness of the prime decomposition (see e.g. \cite[Theorem 1.5]{Hat}) and the conclusion above, there are only finitely many times when the topology of $\MM'(t)$ can change.
By finite-time extinction of spherical components (see \cite{PerelmanIII}, \cite{ColdingMinicozziextinction}), we conclude that $\MM'(t)$ cannot have components which are diffeomorphic to spherical space forms for $t \in [T_1, \infty)$.
This implies that there is some $T_1 \in [T, \infty)$ such that the time-slices $\MM'(t)$ are all diffeomorphic to each other for all $t \in [T_1, \infty)$, i.e. that all surgeries on the time-interval $[T_1, \infty)$ are trivial on $\MM'$ and hence also on $\MM$.

Assome that $\MM' (T_1)$ was not irreducible.
Then by Proposition \ref{Prop:pi2irred}, we have $\pi_2(N) \not= 0$ for some component $N$ of $\MM'(T_1)$.
We can thuse use Lemma \ref{Lem:evolsphere} to obtain a contradiction.

The last part of the proposition follows again from finite-time extinction.
\end{proof}

\begin{proof}[Proof of Theorem \ref{Thm:MainTheorem}]
Let the function $\delta(t)$ be the one given in Proposition \ref{Prop:structontimeinterval}.

Consider the constant $T_0 < \infty$ and the function $w : [T_0, \infty) \to (0, \infty)$ from Proposition \ref{Prop:thickthindec} as well as the constant $T_1 < \infty$ and the sub-Ricci flow with surgery from Proposition \ref{Prop:irreducibleafterfinitetime} and set $T_2 = \max \{ T_0, T_1 \}$.
By the last statement of Proposition \ref{Prop:irreducibleafterfinitetime}, we can assume without loss of generality that $\MM' = \MM$.
So all surgeries on the time-interval $[T_2, \infty)$ are trivial and for all times $t \in [T_2, \infty)$ the components of $\MM(t)$ are irreducible and not diffeomorphic to spherical space forms and there is a thick-thin decomposition $\MM (t) = \MM_{\thick}(t) \cup \MM_{\thin}(t)$ as described in Proposition \ref{Prop:thickthindec}.

Denote by $\MM_{\textnormal{torus}} (T_2)$ the union of all components of $\MM_{\textnormal{thin}} (T_2)$ which are diffeomorphic to $T^2 \times I$ and set $\MM_{\textnormal{hyp}} (T_2) = \MM_{\thick}(T_2) \cup \MM_{\textnormal{torus}} (T_2)$ and $\MM_{\textnormal{Seif}} (T_2) = \MM_{\thin}(T_2) \setminus \MM_{\textnormal{torus}} (T_2)$.
We will now show that the decomposition $\MM (T_2) = \MM_{\textnormal{hyp}} (T_2) \cup \MM_{\textnormal{Seif}} (T_2)$ can be refined to a minimal geometric decomposition.
By the results of \cite{MorganTian} or \cite{KLcollapse}, which led to the resolution of the Geometrization Conjecture (essentially their statement is Proposition \ref{Prop:MorganTianMain} plus a topological discussion), we know that $\MM_{\textnormal{Seif}} (T_2)$ is a graph manifold (see Definition \ref{Def:geomdec} and the subsequent discussion).
So there are pairwise disjoint, embedded, incompressible $2$-tori $T^*_1, \ldots, T^*_k \subset \MM_{\textnormal{Seif}} (T_2)$ which induce a minimal Seifert decomposition of $\MM_{\textnormal{Seif}} (T_2)$ (observe that these tori are even incompressible in $\MM (T_2)$).
Let moreover $T^*_{k+1}, \ldots, T^*_{k'} \subset \MM_{\textnormal{hyp}} (T_2)$ be pairwise disjoint, embedded, incompressible $2$-tori, which decompose $\MM_{\textnormal{hyp}} (T_2)$ into its hyperbolic components.
Then $T^*_1, \ldots, T^*_{k'}$ together with the boundary tori in $\partial \MM_{\textnormal{hyp}} (T_2)$ provide a geometric decomposition of $\MM (T_2)$.
After removing some of these tori, we obtain a \emph{minimal} geometric decomposition.
If we had removed a torus of $\partial \MM_{\textnormal{hyp}} (T_2)$ in this process, then we could find a component $\CC$ of the new and minimal decomposition which contains a Seifert component $\CC_1$ of $\MM_{\textnormal{Seif}} (T_2) \setminus (T^*_1 \cup \ldots \cup T^*_k)$ and a hyperbolic component $\CC_2$ of $\MM_{\textnormal{hyp}} (T_2) \setminus (T^*_{k+1} \cup \ldots \cup T^*_{k'})$.
Then $\CC$ cannot be Seifert.
So it is hyperbolic and hence $\CC_1 \approx T^2 \times I$ which contradicts the minimal choice of $T^*_1, \ldots, T^*_k$.

Now observe that $\MM(T_2)$ satisfies property $\TT'_2$ (see Definition \ref{Def:TT2}).
So there is a filling surface $f : \Sigma \to \MM (T_2)$ for the pair $(\MM_{\textnormal{hyp}} (T_2), \MM_{\textnormal{Seif}} (T_2))$.
For every component $\CC$ of $\MM_{\textnormal{torus}} (T_2)$ we consider a pair of immersed annuli which connect the boundary components of $\CC$ and whose central loops generate the fundamental group of $\CC$.
Observe that by an intersection number argument, every non-contractible loop in $\CC$ intersects every homotopic deformation (relative boundary and inside $\MM(T_2)$) of at least one of these annuli.
Hence, the union $f' : \Sigma' \to \MM(T_2)$ of $f$ with all these pairs of immersed annuli is filling for the pair $(\MM_{\thick} (T_2), \MM_{\thin} (T_2))$.

We now apply Lemma \ref{Lem:evolminsurfgeneral} to construct a homotopic representative of $f'$ of bounded area.
To do this, we choose almost geodesic representatives $\gamma_{i, T_2}$ inside $\partial \MM_{\thick} \MM (T_2)$ of the boundary circles of $f' |_\gamma$, $\gamma \subset \partial \Sigma'$ which are homotopic to $f' |_\gamma$ inside $\partial\MM_{\thick}(T_2)$.
By Proposition \ref{Prop:thickthindec}, it is possible to move these circles by isotopies and obtain families of curves $\gamma_{i, t} \subset \partial\MM_{\thick}(t)$, parameterized by $t \in [T_2, \infty)$, which satisfy the properties of Lemma \ref{Lem:evolminsurfgeneral} for certain constants $\Gamma, a, b$.
So by Lemma \ref{Lem:evolminsurfgeneral}(b), we obtain filling surfaces $f'_t : \Sigma' \to \MM (t)$ for the pair $(\MM_{\thick} (t), \MM_{\thin} (t))$ and all $t \in [T_2, \infty)$ such that $\limsup_{t \to \infty} t^{-1} \area_t f'_t \leq 4 ( - 2 \pi \chi(\Sigma') + m a (\Gamma + b))$.
Hence, there is a constant $A < \infty$ such that $t^{-1} \area_t f'_t < A$ for all $t \in [T_2, \infty)$.

Next, we set
\[ L = \Big( 1 + \frac{A}{4 \pi} \Big)^4 \]
and consider the constant $\Gamma_4 = \Gamma_4 (L, A)$ from Proposition \ref{Prop:structontimeinterval}.
Set
\[ \alpha = \frac{\pi}{\Gamma_4} \]
and choose $T_4 = T_4 (L, A, \alpha)$ and $w_4 = w_4(L, A, \alpha)$ according to this Proposition.
Choose now $T^* > \max \{ 4 T_2, T_4 \}$ such that $w(t) < w_4$ for all $t \in [\frac14 T^*, \infty)$.
Hence, we can apply Proposition \ref{Prop:structontimeinterval} with the parameters $L, A, \alpha$ for any $t_\omega > L T^*$ and $t_0 = L^{-1} t_\omega$.
We obtain sub-Ricci flows with surgery $U_1, \ldots U_m \subset \MM$ and disks $D_1, \ldots, D_m \subset \MM(t_0)$ with $\area_{t_0} D_i < A t_0$ such that the geodesic curvature of their boundary loops are bounded by $\Gamma_4 t^{-1}$ and their lengths are bounded by $\alpha \sqrt{t}$ for all $t \in [t_0, t_\omega]$.
Assume first that $m \geq 1$.
Since $\alpha \Gamma_4 = \pi < 2 \pi$, we obtain a contradiction by Lemma \ref{Lem:evolminsurfgeneral}(a):
\[ t_\omega < \Big( 1 + \frac{A}{4 (2 \pi - \alpha \Gamma_4 )} \Big)^4 t_0 = L t_0 = t_\omega. \]
So $m = 0$, and thus we have $|{\Rm_{t_\omega}}| < K_4 t_\omega^{-1}$ on $\MM(t_\omega)$.

This shows that $|{\Rm_t}| < K_4 t^{-1}$ for all $t \geq L T^*$.
So in particular, the Ricci flow with surgery $\MM$ does not develop any singularities past time $L T^*$ and hence there are no singularities on $[L T^*, \infty)$.
This finishes the proof of the Theorem.
\end{proof}

\end{document}